\def\EE{{\mathbb E}}
\def\NN{{\mathbb N}}
\def\ZZ{{\mathbb Z}}
\def\RR{{\mathbb R}}
\def\TT{{\mathbb T}}
\def\EE{{\mathbb E}}
\def\PP{{\mathbb P}}
\def\proj{{\mathbb P}}
\def\prodspa1{E_1(\vep_1)^2}
\def\prodspar{\fE_r(\vep_r)^2}
\def\gene{\operatorname{span}}
\def\sign{{\operatorname{sign}}}
\def\id{{\operatorname{id}\,}}
\def\supp{\operatorname{supp}}
\def\grass{\operatorname{Gr}}
\def\GL{\operatorname{GL}}
\def\SL{\operatorname{SL}}
\def\OO{\operatorname{O}}
\def\const{\operatorname{const}}
\def\VA{\operatorname{VA}}
\def\VP{\operatorname{VP}}
\def\SVA{\operatorname{SVA}}
\def\SVP{\operatorname{SVP}}
\def\Diag{\operatorname{Diag}}
\def\ev{\mathfrak{E}}
\def\li{\mathfrak{L}}
\def\ug{{\mathfrak{g}}}
\def\vg{{\mathfrak{g_\pm}}}
\def\Ext{\Lambda}
\def\cMc{{\cP_c(G)}}
\def\vep{\varepsilon}
\def\uomega{{\overset \to \omega}}
\def\quand{\text{ and }}
\def\cA{{\mathcal A}}
\def\cB{{\mathcal B}}
\def\cD{{\mathcal D}}
\def\cE{{\mathcal E}}
\def\cF{{\mathcal F}}
\def\cH{{\mathcal H}}
\def\cP{{\mathcal P}}
\def\cQ{{\mathcal Q}}
\def\cT{{\mathcal T}}
\def\cW{{\mathcal W}}
\def\cY{{\mathcal Y}}
\def\Chi{{\mathcal X}}
\def\Bd{\mathcal{B}}
\def\Cd{\mathcal{C}}
\def\tq{\tilde{q}}
\def\teta{{\tilde\eta}}
\def\ttheta{{\tilde\theta}}
\def\tsigma{{\tilde\sigma}}
\def\tvarphi{{\tilde\varphi}}
\def\tpsi{{\tilde\psi}}
\def\tomega{{\tilde\omega}}
\def\tT{{\widetilde\cT}}
\def\tQ{{\widetilde\cQ}}
\def\tE{{\widetilde\cE}}
\def\heta{{\hat\eta}}
\def\htheta{{\hat\theta}}
\def\hsigma{{\hat\sigma}}
\def\hpsi{{\hat\psi}}
\def\hT{{\widehat\cT}}
\def\hE{{\widehat\cE}}
\def\chq{\check{q}}
\def\cheta{{\check\eta}}
\def\chtheta{{\check\theta}}
\def\chsigma{{\check\sigma}}
\def\chnu{{\check\nu}}
\def\chomega{{\check\omega}}
\def\chT{{\widecheck\cT}}
\def\fpi{\pi^{\diamond}}
\def\fE{E^{\diamond}}
\def\fcT{\cT^{\diamond}}
\def\ftT{\tT^{\diamond}}
\def\fhT{\hT^{\diamond}}
\def\fsigma{\sigma^{\diamond}}
\def\ftsigma{\tsigma^{\diamond}}
\def\fhsigma{\hsigma^{\diamond}}
\def\feta{\eta^{\diamond}}
\def\fheta{\heta^{\diamond}}
\def\fcheta{\cheta^{\diamond}}
\def\qhT{\hT^{\cQ}}
\def\qchT{\chT^{\cQ}}
\def\qchsigma{\chsigma^{\cQ}}
\def\qheta{\heta^{\cQ}}
\def\qcheta{\cheta^{\cQ}}
\theoremstyle{plain}
\newtheorem{main}{Theorem}
\newtheorem{theorem}{Theorem}[section]
\newtheorem{lemma}[theorem]{Lemma}
\newtheorem{proposition}[theorem]{Proposition}
\newtheorem{corollary}[theorem]{Corollary}
\theoremstyle{definition}
\newtheorem{remark}[theorem]{Remark}
\newtheorem{example}[theorem]{Example}
\numberwithin{equation}{section}
\begin{document}

\title{Continuity of the Lyapunov exponents of random matrix products}
\author{Artur Avila, Alex Eskin, and Marcelo Viana}

\address{IMPA, Est. D. Castorina 110, Jardim Bot\^{a}nico, 22460-320 Rio de Janeiro, Brazil and
Institut f{\"u}r Mathematik, Universit{\"a}t Z{\"u}rich, Winterthurerstrasse 190, 8057 Z{\"u}rich,
Switzerland}
\email{avila@impa.br}

\address{Department of Mathematics, University of Chicago, Chicago, IL 60637, USA}
\email{eskin@math.uchicago.edu}

\address{IMPA, Est. D. Castorina 110, Jardim Bot\^{a}nico, 22460-320 Rio de Janeiro, Brazil}
\email{viana@impa.br}

\thanks{M.V. was partially supported by CNPq, FAPERJ, and  Fondation
  Louis D--Institut de France.}
\thanks{A.E. was partially supported by NSF grants DMS-1201422,
  DMS-1500702, DMS-1800646 and the Simons Foundation}
\date{\today}

\begin{abstract}
We prove that the Lyapunov exponents of random products in a (real or
complex) matrix group
depends continuously on the matrix coefficients
and probability weights. More generally, the Lyapunov exponents of the
random product defined by any compactly supported probability distribution
on $\GL(d)$ vary continuously with the distribution,
in a natural topology corresponding to weak$^*$-closeness of the
distributions and Hausdorff-closeness of their supports.
\end{abstract}

\maketitle

\tableofcontents

\part{Lyapunov exponents and random walks}\label{p_one}

\section{Introduction}\label{s_introduction}

\subsection*{Lyapunov exponents}
The notion of Lyapunov exponents is rooted in the stability theory of
differential equations created
by Lyapunov~\cite{Lya92} at the end of the 19th century.
Consider a differential equation
\begin{equation}\label{eq.quasi-linear}
x'= L(t)x + R(t,x),
\end{equation}
where $L(t):\RR^d\to\RR^d$ is linear and $R(t,\cdot)$ is a perturbation
of order greater than $1$.
The \emph{Lyapunov exponent function} $v \mapsto \lambda(v)$ is defined by
\begin{equation}\label{eq.limsup}
\lambda(v) = \limsup_{t\to\infty} \frac 1t \log \|x_v(t)\|
\end{equation}
where $x_v$ is the solution of the linear equation $x'=L(t)x$
with initial condition $v$.
When $\lambda<0$ the constant solution $x_0(t) \equiv 0$ is exponentially
stable for this linear equation.
The stability theorem of Lyapunov asserts that, under
a  technical \emph{Lyapunov regularity} condition,
it remains exponentially stable for \eqref{eq.quasi-linear}.

In 1960, Furstenberg, Kesten~\cite{FK60} proved that the
limit in \eqref{eq.limsup} exists for almost
every $x$, relative to any probability measure invariant under the flow.
A few years later, Oseledets~\cite{Ose68} showed that
Lyapunov regularity also holds for almost every point.
Such results brought the subject of Lyapunov exponents to the
realm of ergodic theory, where it has
prospered since. Three main problems have a central role in the theory.

The first one is \emph{non-triviality} of the
Lyapunov spectrum: \emph{when is it the case that not all
Lyapunov exponents are equal?} This was founded by
Furstenberg~\cite{Fur63} in the 1960s and has been
much studied since, especially in the last couple of decades
or so. See Viana~\cite{LLE} and references
therein. A product of this theory much exploited recently is the
Invariance Principle~\cite{Led86,BGV03,Extremal,ASV13}, a general
statement to the effect that systems
with trivial Lyapunov spectra are very rigid.

A related issue is that of \emph{simplicity} of the Lyapunov spectrum:
\emph{when are all the Lyapunov exponents distinct, with multiplicity $1$?}
This was initiated by Guivarc'h and Raugi~\cite{GuR86} and by
Gol'dsheid and Margulis~\cite{GoM89},
and has also been the object of considerable interest in recent years.
See~\cite{LLE} for references and a detailed discussion. An application
was the proof of the Zorich--Kontsevich conjecture on the
Lyapunov spectrum of the Teichm\"uller flow on the moduli space of
Abelian differentials~\cite{AvV07b}.

\subsection*{Continuity theorem}
In the present paper we are mostly concerned with
the \emph{dependence} problem: \emph{how do the Lyapunov
exponents depend on their underlying system?} Several references to the
literature on this problem will be
given in a while. Right now, the following special case of our main result
illustrates the kind of goals we pursue here.

Let $(A_1, \dots, A_m)$ be an $m$-uple of matrices in
$G = GL(\RR^d)$ and $(p_1, \dots, p_m)$ be an
element of the open simplex $\Delta_m$ of dimension $m-1$, that is,
an $m$-uple of numbers $p_i \in (0,1)$
such that $\sum_{i=1}^m p_i = 1$. Let $\nu$ be the probability measure
on $G$ given by
\begin{displaymath}
\nu = \sum_{i=1}^m p_i \delta_{A_i}
\end{displaymath}
where $\delta_A$ denotes the Dirac mass at any $A\in G$.
Let $\lambda_1 \ge \dots \ge \lambda_d$ be the Lyapunov exponents
of the random matrix product
induced by $\nu$ (definitions will appear in
Section~\ref{s_Lyapunov_exponents}). We prove:

\begin{main}\label{theorem:main}
For each $1 \le j \le n$, the number $\lambda_j$ depends continuously
on the $A_i$ and the $p_i$ at every
point of the domain $G^m \times \Delta_m$.
\end{main}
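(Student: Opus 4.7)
The plan is to establish continuity of the top Lyapunov exponent $\lambda_1$ in full generality and then deduce the statement for each $\lambda_j$ by passing to exterior powers. Since $\sum_{j=1}^{d}\lambda_j=\int \log|\det A|\,d\nu(A)$ is manifestly continuous in $\nu$, it suffices to prove continuity of each partial sum $\lambda_1+\cdots+\lambda_k$; and that partial sum is the top Lyapunov exponent of the random walk on $\Ext^k\RR^d$ obtained by pushing $\nu$ forward under $A\mapsto \Ext^k A$. Because this pushforward operation is continuous on distributions supported in compact subsets of $\GL(d)$, everything reduces to continuity of $\lambda_1$. Upper semicontinuity of $\lambda_1$ is classical: by Furstenberg--Kesten,
\[
\lambda_1(\nu)=\inf_{n\geq 1}\frac{1}{n}\int \log\|A_n\cdots A_1\|\,d\nu^{\otimes n}(A_1,\dots,A_n),
\]
an infimum of functions that are continuous in $\nu$ in the topology of the theorem (supports stay in a fixed compact of $\GL(d)$ along the convergence).

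\textbf{Lower semicontinuity via stationary measures.} The core tool is Furstenberg's variational formula
\[
\lambda_1(\nu)=\sup_{\mu}\int_{\GL(d)\times\proj(\RR^d)}\log\frac{\|Av\|}{\|v\|}\,d\nu(A)\,d\mu(\bar v),
\]
where the supremum ranges over $\nu$-stationary Borel probability measures $\mu$ on the projective space and is attained. Given $\nu_n\to\nu$, I would select $\mu_n$ realizing the supremum for $\nu_n$ and, using weak-$*$ compactness of probability measures on the compact space $\proj(\RR^d)$, pass to a subsequential limit $\mu$. Stationarity is preserved in the limit because $\nu_n*\mu_n\to\nu*\mu$, so $\mu$ is $\nu$-stationary; and since the Furstenberg integrand $\log(\|Av\|/\|v\|)$ is continuous and uniformly bounded on the compact set where all $\supp(\nu_n)$ and $\supp(\nu)$ live, one gets
\[
\lim_n \lambda_1(\nu_n)=\int \log\frac{\|Av\|}{\|v\|}\,d\nu(A)\,d\mu(\bar v)\leq \lambda_1(\nu).
\]

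\textbf{Main obstacle and its resolution.} The genuine difficulty is upgrading this last inequality to an equality: the weak-$*$ limit $\mu$ of the maximizing measures for $\nu_n$ need not itself be a maximizer for $\nu$. This is the technical heart of the proof, and the natural route is induction on $d$ guided by the invariant-subspace structure of the semigroup $T_\nu$ generated by $\supp(\nu)$. If $T_\nu$ is strongly irreducible and its image in $\mathrm{PGL}(d)$ is non-precompact (the Furstenberg contracting condition), then the maximizer of the variational formula is essentially unique, which forces $\mu$ to be this maximizer and closes the argument. In the reducible case, a common invariant subspace $V\subset\RR^d$ allows one to split the spectrum of $\nu$ into those of the action on $V$ and on $\RR^d/V$ and to invoke the inductive hypothesis. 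The real subtlety, and where I expect the hardest work, is that invariant subspaces can appear or disappear in the limit: the $\nu_n$ may be strongly irreducible while $\nu$ is not, or a common invariant subspace of the $\nu_n$ may degenerate as $n\to\infty$. Handling this requires tracking simultaneously the stationary measures on all relevant Grassmannians and flag manifolds, and carefully analyzing how their mass can concentrate on lower-dimensional invariant configurations in the limit.
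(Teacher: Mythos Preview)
Your reduction to $\lambda_1$ via exterior powers and your identification of the obstacle are correct and match the paper. (One terminological slip: your ``lower semicontinuity'' paragraph actually re-derives the upper bound $\limsup_n\lambda_1(\nu_n)\le\lambda_1(\nu)$; what remains is precisely whether the limiting stationary measure $\mu$ is a maximizer in the Furstenberg formula.) The genuine gap is in your proposed resolution. Induction on $d$ by splitting along a $\nu$-invariant subspace $V$ does not get off the ground: since the $\nu_n$ need not preserve $V$---the case you yourself flag as the real subtlety---there is no decomposition of the $\nu_n$-problem into pieces on $V$ and $\RR^d/V$, hence nothing to which an inductive hypothesis on the ambient dimension could apply. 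Your closing sentence gestures at Grassmannians and flag varieties but offers no mechanism for controlling how mass concentrates.

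The paper's resolution is of a completely different nature and constitutes essentially the entire paper. One first shows that the \emph{equator} $E$ (the maximal $\nu_\infty$-invariant subspace on which the growth rate is strictly below $\lambda_1(\nu_\infty)$) is a quantitative repeller for the $\nu_\infty$-random walk on $P$, via a large-deviations estimate on the quotient action. The heart of the argument is then the construction of an \emph{additive Margulis function} on products of neighborhoods of the equator---first in $P\times P$, later in products of flag varieties $\cF(r,d)^2$---together with carefully engineered self-couplings of the Markov operators, so that the Margulis inequality forces the $\nu_k$-stationary measures not to accumulate on $E$. Making this inequality survive the perturbation from $\nu_\infty$ to $\nu_k$ and the loss of control near the boundary of the localized region requires several nonobvious devices (stabilization of the vertical-angle function, a cutoff, a \emph{recoupling} modification of the self-coupling, and for $r>1$ a ``spreading out'' averaging over partial flags). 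The induction is on $r=1,\dots,\dim E$, not on $d$: from a limit measure charging $E_r\subset\grass(r,d)$ one manufactures, via the coupling on pairs and the map $(x,x')\mapsto F_r+F_1'$, a measure on $\grass(r+1,d)$ charging $E_{r+1}$; reaching $r=\dim E$ yields a contradiction. None of this machinery is visible in your outline.
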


The 2-dimensional case of Theorem~\ref{theorem:main} has
been proved by Bocker and Viana~\cite{BoV17}.
A different proof of that case that introduces a few of
the ideas in this paper appeared in Chapter~10
of the book~\cite{LLE}. Kifer~\cite{Ki82b} observed the
Lyapunov exponents may actually jump when some
weight $p_i$ goes to zero, and that is why $\Delta_m$ is taken to
be an \emph{open} simplex.

A crucial point in  Theorem~\ref{theorem:main} is that the conclusion
holds even at \emph{reducible} points,
that is, when the matrices $A_1, \dots, A_n$ share one or more
invariant proper subspaces.
Indeed, continuity of the Lyapunov exponents at irreducible points had
already been proved in the 1980s,
independently, by Furstenberg and Kifer~\cite{FK83}
and by Hennion~\cite{Hen84}.
As often happens in this field, the reducible case
is a lot more subtle, requiring a whole different set of ideas.

Our approach relies on a quantitative analysis of the random walk
$$
x \mapsto gx, \quad\text{$g\in G$ a random variable with distribution $\nu$,}
$$
defined on the projective space $P=\PP(\RR^d)$ by the probability measure $\nu$.
By Furstenberg and Kifer~\cite{FK83}, discontinuity of
the Lyapunov exponents can only occur if there is
some proper subspace invariant under all the matrices
and containing all the ``most contracting'' directions for
the cocycle (see Section~\ref{s_stationary_measures_and_equator} for the
precise statement).

In a nutshell, we prove that if such an
invariant subspace $E$ (the ``equator'') does exist,
typical trajectories of nearby generic
random walks spend very little time in
its vicinity, rendering the presence
of the equator effectively harmless.
A bit more precisely, we consider generic distributions $\nu_k$
converging to $\nu$ as $k\to\infty$,
and we show that the stationary measures $\eta_k$ for the
corresponding random walks cannot accumulate on
the equator: any limit point $\eta_\infty$ as $k\to\infty$
(which is automatically a stationary measure
for $\nu$) must satisfy $\eta_\infty(E)=0$. These notions
and their basic properties will also be recalled
in Section~\ref{s_stationary_measures_and_equator}.

\subsection*{Margulis functions}
The key technical tool to do this is the concept of Margulis function.
Such functions have been introduced to the dynamics literature
by Margulis in \cite{EMM98}. (In the probability setting,
a Margulis function is also called a Foster-Lyapunov (or drift) function,
and has been used extensively. See the book \cite{MT09} for further references.)

In a few words, $\Phi:X\to[0,\infty]$ is
a \emph{(multiplicative) Margulis function}
for a Markov operator $\cT$ on some space $X$, relative to
a set $Y \subset X$, if $\Phi \equiv \infty$
on $Y$ and there exist constants $c < 1$ and $b<\infty$ satisfying
\begin{equation}\label{eq.multiplicative}
\cT\Phi(x) \le c \Phi(x) + b
\text{ for all $x\in X$.}
\end{equation}
The distinctive feature implied by this inequality is
that $\cT\Phi(x)$ is much smaller than $\Phi(x)$ near $Y$,
even if it may be somewhat bigger on other parts of $X$.
Such functions have been used, for example, in
\cite{EsM01,EsM04,EMM05,Ath06,
AvF07,BGTM22,EsM11,BeQ12,EK12,AvG13,KKLM17,MaV15,BBB18,GKM22,GLT22}
and \cite[Chapter~10]{LLE}. For a fairly recent survey
  on this topic see \cite{EsMo22}.

Nevertheless, our application of Margulis functions in the present
setting comes with a number of novelties.
To begin with, we need a different kind of Margulis function, which we
introduce here:
given a partition $(A,B)$ of the space $X$, $\Phi$ is an \emph{(additive)
Margulis function} if there exist positive
constants $\kappa_A$ and $\kappa_B$ such that
\begin{equation}\label{eq.additive}
\begin{aligned}
\cT\Psi(x) & \le \Psi(x) - \kappa_A \text{ for every } x\in A\\
\cT\Psi(x) & \le \Psi(x) + \kappa_B \text{ for every } x\in B.
\end{aligned}
\end{equation}

If $\Phi$ is a multiplicative Margulis function then $\log\Phi$
is an additive Margulis function relative to a
suitable partition $(A,B)$ (see Remark~\ref{r.additive_multiplicative}).
On the other hand, it is not true that if $\Psi$ is an additive Margulis
function then $\exp\Psi$ is a multiplicative one.
That is because the inequality \eqref{eq.multiplicative} is very sensitive
to the ``worst case'' behavior of $\Psi$,
whereas \eqref{eq.additive} depend more on the ``average case'' behavior.
For this reason, it is often much easier to
construct an additive Margulis function than a multiplicative one. In
fact, we do not know how to construct a useful
multiplicative Margulis function in our setting beyond the case $d=2$
(see \cite[Chapter~10]{LLE} and \cite{MaV15}).

Another point worth emphasizing is that in all the previous
constructions (apart from \cite{MaV15,LLE,BBB18}),
the dynamical system is fixed. Instead, in the present paper
we try to make the same Margulis function work for
a whole family of dynamical systems, which have very different
behaviors near the equator. This introduces quite
a lot of new issues. Furthermore, the dynamical behavior near
the equator is totally different from the behavior
in other parts of phase-space. Thus, we need to carry
out a localized analysis of the random walk,
which is another important source of difficulties.

\subsection*{Further perspectives}

Random products of matrices may be represented as a special kind of
\emph{linear cocycle}
$$
F: M \times \RR^d \to M \times \RR^d, \quad F(x,v) = (f(x),A(x)v)
$$
where the base dynamics $f:M\to M$ is a shift map $f((x_n)_n) = (x_{n+1})_n$
endowed with a Bernoulli measure,
and the cocycle function $A$ depends only on the coordinate $x_0$.
The dependence problem extends naturally to this general setting
of linear cocyles: usually one takes the base
dynamics $f$ and the corresponding invariant probability measure
$\mu$ to be fixed, and one is interested in
understanding how the Lyapunov exponents depend on $A$.

A natural step is to try and allow for much more general
cocycle functions $A$.
For reasons that we will soon discuss, it is convenient
to assume some regularity, like H\"older continuity.
Moreover, essentially all known results assume the
cocycle to satisfy a kind of quasi-conformality condition
called \emph{fiber-bunching} (see \cite{BGV03,Extremal})
which also involves H\"older continuity.

Another natural way to broaden the scope of the theory is to
weaken the assumptions on the base dynamics,
to consider general dynamical systems more general than shift maps,
as well as invariant measures satisfying
much milder independence conditions. In this latter
direction, Theorem~\ref{theorem:main} has been extended to
Markov products of 2-dimensional matrices by Malheiro and Viana~\cite{MaV15}.

For H\"older cocycles, Backes, Brown and Butler~\cite{BBB18}
extended the 2-dimensional case of Theorem~\ref{theorem:main}
to general fiber-bunched cocycles whose base transformation $f$ is a
hyperbolic homeomorphism on a compact
metric space (in the sense of \cite{Almost}) and whose invariant probability
measure has \emph{local product structure}
(see \cite{BGV03,Almost}), a mild requirement meaning, roughly speaking, that
the future depends only weakly on the past. In fact, their statement
extends to the class of linear cocycles with invariant holonomies.
Both versions had been conjectured in \cite[Section~10.6]{LLE}.

Another interesting path to possibly generate
further progress is to consider linear cocycles over partially
hyperbolic diffeomorphisms, volume-preserving or not.
Groundwork in this direction has been laid in~\cite{ASV13}
and some continuity results have been derived in~\cite{Extremal}.
See also Avila, Viana and Wilkinson~\cite{AVW15,AVW22}, and Poletti
and Viana~\cite{PoV19}.

The need for some regularity of the cocycle function is highlighted by the
following result of Bochi~\cite{Boc02}:
if the system $(f,\mu)$ is aperiodic, $d=2$,
and the linear cocycle is continuous and not uniformly hyperbolic,
then it may be $C^0$-approximated by linear cocycles with trivial
Lyapunov spectra (the two Lyapunov exponents
are equal). Thus, continuity can only hold at cocycles which either
are uniformly hyperbolic or have trivial spectra.

In fact, the same is true restricted to the class
of derivative cocycles of area-preserving surface diffemorphisms,
a much harder fact which was discovered by Ma\~n\'e~\cite{Man84}
and whose proof was completed by Bochi~\cite{Boc02}.
These results have been extended to arbitrary dimensions by Bochi
and Viana~\cite{BcV05,Boc09}.
They are generally not true for cocycles over non-invertible maps,
even in the $\SL(\RR^2)$ case, according to
Viana and Yang~\cite{ViY19}.

On the other hand, the actual relevance of the fiber-bunching
condition in this context is presently not entirely
clear, indeed this remains one of the outstanding open questions
in this area.
Examples of discontinuity of the Lyapunov exponents for
H\"older continuous linear cocycles which are not
fiber-bunched have been found in \cite[Section~9.3]{LLE}
and Butler~\cite{But18}.

We have restricted our attention to matrix groups, for good reason.
While the basic concepts discussed here (such as Lyapunov exponents,
Oseledets regularity, etc.) extend to the more general setting of all
(not necessarily invertible) matrices, there is no hope to obtain any
general regularity result for Lyapunov exponents in this more general
setting.

To explain why, let us consider the Lyapunov exponents of the random
product of two real matrices $A_1$ and $A_2$, with probability weights
$p_1=p_2=1/2$.  The Lyapunov exponents are well defined, but the bottom
one is equal to $-\infty$ if one of the matrices is not invertible.
Moreover, if some finite matrix product involving $A_1$ and $A_2$ is
zero then the top Lyapunov exponent is $-\infty$ as well.

For instance, let $L(\theta)$ denote the top Lyapunov exponent for
$$
A_1=\begin{pmatrix} 1&0\\0&0\end{pmatrix}
\text{ and }
A_2=\begin{pmatrix} \cos 2 \pi \theta&-\sin 2 \pi \theta\\
\sin 2 \pi \theta&\cos 2 \pi \theta \end{pmatrix},
$$
viewed as a function of $\theta \in \RR$.
For $\theta=p/4 q$ with $p$ odd and $q$ a non-zero integer, we have that $A_1 A_2^q A_1=0$,
and so $L(\theta)=-\infty$.  By an upper semi-continuity argument, it follows that
$L(\theta)=-\infty$ for Baire-generic $\theta$.
In fact, it is not difficult to give an explicit generic quantitative condition ensuring
that $L(\theta)=-\infty$, and even a sharp one, using the easily checked formula
$$
L(\theta)=\sum_{k=0}^\infty 2^{-k-2} \log |\cos 2 \pi k \theta|,
$$
which also shows that the Lyapunov exponent $L(\theta)$ is finite (and discontinuous)
at a full Lebesgue measure set of $\theta$.

What happens in this sort of situation is that, while one can still analyze the Lyapunov
exponents of cocycles such as this one using a stationary measure $\eta$ on the projective space,
just as we do  in the present paper for the invertible case (but taking care of issues such as
indeterminacy), in the present setting the measure $\eta$ becomes atomic, being the sum of
Dirac masses with weights $2^{-k-1}$ on the lines through $(\cos 2 \pi k \theta, \sin 2 \pi k \theta)$.

\subsection*{Significance and applications of continuity}
Knowledge that the Lyapunov exponents are continuous at some $f$ can in itself
give information about the dynamics of $f$, as pointed out in Bochi, Viana~\cite{BcV05},
and abstract facts about the existence of many (in the Baire sense) continuity points can
be leveraged to a fine understanding of the dynamics from the generic point of view.

For instance, continuity of Lyapunov exponents ensures that the Oseledets decomposition
varies continuously in a suitable sense: see Backes, Poletti~\cite{BaP17}.
Moreover, the convergence in the Oseledets theorem is locally uniform on the cocycle.
This sort of uniformity is useful in various situations in dynamical systems.
An example is the following relevant question in the ergodic theory of volume-preserving
diffeomorphisms $f:M \to M$. See~\cite{FHY83} or~\cite{PSh89} for background.

Let $K\subset M$ be a Pesin block, that is, a compact (non-invariant) set where the Lyapunov
exponents are all bounded away from zero and the estimates in the Oseledets theorem hold uniformly.
Over such a set, the Pesin stable and unstable manifolds are well defined and depend continuously
on the point. In particular they have a definite size, and so nearby points in the Pesin block
must belong to the same ergodic component. One may thus ask about the stability of Pesin blocks:
is it the case that a smooth perturbation of $g$ must also possess a Pesin block $K_g$ nearby
(in the sense that the symmetric difference $K \Delta K_g$ has small measure) such that its
Pesin manifolds are close to the unperturbed ones? This can be shown to follow from a suitable
control of the dependence of the Lyapunov exponents, and in particular if the averaged Lyapunov
exponents depend continuously on the diffeomorphism at the point $f$.

One setting where knowledge about the continuity of the Lyapunov exponents has been used as an
essential ingredient in the understanding of the dynamics is in the study of quasiperiodic
Schr\"odinger operators.
Here $f=f_\alpha:x \mapsto x+\alpha$ is a translation on a finite dimensional torus $\TT^d$,
and the cocycle function takes the form
$$
A(x) = \begin{pmatrix}
E-v(x)&-1\\1&0 \end{pmatrix} \in \SL(2,\RR).
$$
In this case the continuity of Lyapunov exponents with respect to both $\alpha$ and $A$ has been
proved for analytic $A$ and totally irrational $\alpha$ (meaning that $f_\alpha$ is minimal),
by Bourgain and Jitomirskaya~\cite{BJ02b} when $d=1$ and by Bourgain~\cite{Bou05} in the general case.

It was used, for instance, in the solution by Avila and Jitomirskaya~\cite{AvJ09} of the
Ten Martini Problem, which asked whether the Almost Mathieu Operator has a Cantor spectrum
(this can be rephrased as density of uniform hyperbolicity within certain one-parameter families of cocycles).
It also appears prominently in the proof of the quantization of the acceleration for $d=1$,
which is the starting point of the so-called global theory of one-frequency Schr\"odinger operators
(Avila~\cite{Avi15}).

Continuity is very subtle in this context: for instance,
the aforementioned result of Bourgain and Jitomirskaya does not hold
when $A$ is merely $C^\infty$ (a result of
Wang-You~\cite {WY}).  Continuity as a function of $\alpha$ also may fail at
rational $\alpha$ even when $A$ is
analytic, see \cite{BJ02b} for a discussion.
%Examples of discontinuous dependence of the Lyapunov exponents
%on the energy $E$ for Schr\"odinger cocycles over quasi-periodic
%systems have been exhibited by Johnson~\cite{Joh84}.

Another setting which connects with the ideas discussed in this paper is that of
$\SL(2,\RR)$-actions on moduli spaces of Abelian or quadratic differentials.
Let $\eta$ be an $\SL(2,\RR)$-invariant probability measure.
By Eskin-Mirzakhani~\cite {EsM18}, $\eta$ is equivalent to Lebesgue measure on some submanifold.
The Kontsevich-Zorich cocycle over the Teichm\"uller flow, which plays a fundamental role
in the ergodic theory of translation surfaces, has a nice behavior with respect to $\eta$:
it is basically a random matrix product (involving countably many matrices),
except that the products are merely ``quasi-independent''.
See \cite {AvV07b} for an application of this idea to the issue of simplicity of the Lyapunov
spectrum.

Let $(\nu_k)_k$ be a sequence of $\SL(2,\RR)$-invariant probability measures converging
in the weak$^*$ sense to some probability measure $\nu$.
It would be tempting to use the techniques of our paper to address the issue of continuity of
the Lyapunov exponents in this context. However, it turns out that the difficulties addressed
in our paper do not show up, and hence simpler techniques can be applied, as was done in
Bonatti, Eskin, and Wilkinson~\cite{BEW20}. Indeed, by Theorem~2.3 in Eskin, Mirzakhani,
and Mohammadi~\cite{EMM15} (see also Theorem~2.6 in \cite{BEW20}), the support of $\nu_k$ is
contained in $\supp\nu$ for every large $k$.  So, in terms of the random matrix models,
if there is an invariant subspace for $\eta$ then it is also invariant for the $\eta_k$ for
large $k$.  This allows one to quotient out bad invariant spaces, and establish continuity
by the usual Furstenberg--Kifer argument~\cite {FK83}.

%Thus, weak$^*$
%convergence implies that $(\nu_k)_k$ converges to $\nu$
%in the sense considered in the present paper
%(see Section~\ref{s_Lyapunov_exponents}).
%Our main result (Theorem~\ref{theorem:main} can then be used to conclude that their Lyapunov exponents converge as well,
%as we are going to explain. ARTUR COMPLETES

\subsection*{Quantitative regularity}
Another natural question is how much can the regularity of Lyapunov be upgraded from mere continuity.
An old result of Ruelle~\cite{Rue79a} asserts that if all the matrix coefficients are positive then
the largest Lyapunov exponent is a real-analytic function of those coefficients.
For locally constant cocycles over Markov shifts, Peres~\cite{Pe91} has shown that if the Lyapunov
exponents are simple then they depend real-analytically on the transition data, assuming the cocycle
function itself is fixed.

For parametrized random matrix products satisfying strong irreducibility and the contraction property,
Le Page~\cite{LP89} has proved that the largest Lyapunov exponent is a H\"older continuous function of
the parameter. This function is even $C^\infty$ if the probability distributions are absolutely continuous.
In the opposite direction, a construction of Halperin (see Simon and Taylor~\cite[Appendix~3]{ST85})
shows that for every $\alpha>0$ one can find random Schr\"odinger cocycles near which the Lyapunov
exponents fail to be $\alpha$-H\"older continuous.

These results have been sharpened by Duarte and Klein, who developed a unified approach to proving
generic moduli of continuity of the Lyapunov exponents for different classes of linear cocycles,
both random and quasi-periodic, especially in the $2$-dimensional case. See~\cite{DuK19a,DuK19b}
and also~\cite{DuK16b} for an account of their approach and many applications.

Still in the 2-dimensional case, Tal and Viana~\cite{TVi20} have shown that H\"older continuity holds
at every point where the Lyapunov spectrum is simple. This is an application of estimates obtained from
the methods we develop here, namely a uniform bound
$$
\eta(E(r)) \le C r^\beta
$$
for the weight of the neighborhood $E(r)$ of the equator relative to stationary measures $\eta$ of
nearby random walks. Tal and Viana~\cite{TVi20} have also shown that, while H\"older continuity may fail
when the two Lyapunov exponents coincide, a weaker log-H\"older modulus of continuity does hold at every
point. It would be interesting to extend these results to arbitrary dimension.

Still regarding products of finitely many matrices, one problem that has proved to be very resistent to
all techniques so far is whether the dependence of the Lyapunov exponent can be much better than H\"older
in some non-trivial region of the parameter space. For instance, let us consider random matrix products
of two $\SL(2,\RR)$-matrices $A_1$ and $A_2$, with probability weights $p_1=p_2=1/2$.
Over the open set $\mathbb {UH}$ of uniformly hyperbolic pairs $(A_1,A_2)$, the top Lyapunov exponent is
clearly a real analytic function. Fix $1 \leq k<\infty$. \emph{Is there a non-empty open subset in the
complement of $\mathbb{UH}$ over which the top Lyapunov exponent is $C^k$?}

It is tempting to try to answer (affirmatively) this question by establishing a suitable spectral gap.
Unfortunately the current approaches to the spectral gap use algebraic properties of the matrix coefficients,
and thus do not apply over any open set, see \cite {Bou14}.

\section{Statement of main result}\label{s_Lyapunov_exponents}

We state our main result, Theorem~\ref{theorem:second} below, of which Theorem~\ref{theorem:main}
is an easy consequence. Initially, we recall the notion of Lyapunov exponents and

Given any compactly supported probability measure $\nu$ on $G=\GL(\RR^d)$,
let $\nu^\NN$ and $\nu^\ZZ$ denote the
corresponding Bernoulli measures on $G^\NN$ and $G^\ZZ$, respectively. Consider the shift maps
$\sigma:G^\NN \to G^\NN$ and $\sigma:G^\ZZ \to G^\ZZ$ given by
$$
\sigma\left((g_n)_n\right) = (g_{n+1})_n.
$$

By the Oseledets multiplicative ergodic theorem (see~\cite[Theorems~4.1 and~4.2]{LLE}),
there exist $k\in\{1, \dots, d\}$ and real numbers
\begin{equation}\label{eq_sec2_exponents}
\chi_1(\nu) > \cdots > \chi_k(\nu)
\end{equation}
such that for $\nu^\NN$-almost every $\ug = (g_0, \dots, g_n, \dots) \in G^\NN$
there exists a decreasing family of vector subspaces
\begin{equation}\label{eq_sec2_filtration}
\RR^d = V^1(\ug) > \cdots > V^k(\ug) > V^{k+1}(\ug)=\{0\}
\end{equation}
and for $\nu^\ZZ$-almost every $\vg = (\dots, g_{-n}, \dots, g_0, \dots, g_n, \dots) \in G^\ZZ$
there exists a direct sum decomposition
\begin{equation}\label{eq_sec2_splitting}
\RR^d = E^1(\vg) \oplus \cdots \oplus E^k(\vg)
\end{equation}
such that, for every $i=1, \dots, k$,
\begin{itemize}
\item $g_0 V^i(\ug) = V^i(\sigma(\ug))$ and $g_0 E^i(\vg) = E^i(\sigma(\vg))$ for $\nu$-almost every $g_0\in G$;
\item $V^i(\ug) = E^i(\vg) \oplus V^{i+1}(\ug)$ for $\ug=\pi(\vg)$, where $\pi:G^\ZZ\to G^\NN$
denotes the canonical projection.

\item for every non-zero $v_i \in V^i(\ug)\setminus V^{i+1}(\ug)$ and $\nu^\NN$-almost every $\ug\in G^\NN$
\begin{equation}\label{eq_sec2_filtration_limit}
\lim_n \frac 1n \log\|g_{n-1}\cdots g_0 v_i\| = \chi_i(\nu);
\end{equation}

\item for every non-zero $v_i\in E^i(\vg)$ and $\nu^\ZZ$-almost every $\vg\in G^\ZZ$
\begin{equation}\label{eq_sec2_splitting_limit}
\lim_n \frac 1n \log\|g_{n-1}\cdots g_0 v_i\| = \chi_i(\nu) = \lim_n \frac 1{-n} \log\|g_{-n}^{-1} \cdots g_{-1}^{-1} v_i\|.
\end{equation}
\end{itemize}

The maps $\ug \mapsto V^i(\ug)$ and $\vg \mapsto E^i(\vg)$ with values in the Grassmannian of $\RR^d$
are measurable and the dimensions $\dim V^i(\ug)$ and $\dim E^i(\vg)$ are constant on full measure sets.
The number $m_i = \dim V^i - \dim V^{i+1}=\dim E^i$ is called the \emph{multiplicity} of the \emph{Lyapunov
exponent} $\chi_i(\nu)$. Denote by $\lambda_1(\nu) \ge \cdots \ge \lambda_d(\nu)$ the Lyapunov exponents
\emph{counted with multiplicity}.

Let $P=\PP(\RR^d)$.  The \emph{random walk} defined by $\nu$ is described by the pair $(F,\nu^\NN)$,
where
\begin{equation}\label{eq_sec2_randomwalk}
F:G^\NN\times P \to G^\NN\times P,
\quad (\ug,v) \mapsto (\sigma(\ug), g_0v).
\end{equation}
The Lyapunov exponents and the Oseledets filtration may also be obtained from it, as follows.
Define
\begin{equation}\label{eq_sec2_Phi}
\Phi:G \times P \to \RR, \quad \Phi(g,v) = \log \frac{\|g v\|}{\|v\|}.
\end{equation}
(For notational simplicity, we use the same symbol ($v$, say) to denote both a non-zero vector in $\RR^d$
and the corresponding element of $P$; analogously, we use the same notation ($L$, say) for a vector
subspace of $\RR^d$ and the subset of $P$ associated to it.)
A result of Ledrappier (see~\cite[Theorem~6.1]{LLE}) asserts that:
\begin{itemize}
\item Given any $F$-invariant ergodic probability measure $m$ on $G^\NN\times P$ that projects
to $\nu^\NN$, there exists $j\in\{1, \dots, k\}$ such that
\begin{equation}\label{eq_sec2_Ledrappier}
\int_{G^\NN\times P} \Phi \,dm = \chi_j(\nu) \quand m\left(\{(\ug,v): v \in V^j(\ug) \setminus V^{j+1}(\ug)\}\right)=1.
\end{equation}
\item Given any $j\in\{1, \dots, k\}$ there is an ergodic $F$-invariant probability measure $m$ projecting
to $\nu^\NN$ and satisfying \eqref{eq_sec2_Ledrappier}.
\end{itemize}

Let $(A_{1,k}, \dots, A_{m,k})$, $k\in\NN$ be a sequence of $m$-uples of matrices converging to some
$(A_{1,\infty}, \dots, A_{m,\infty})\in G^m$ and $(p_{1,k}, \dots, p_{m,k})$, $k\in\NN$ be a sequence
of probability $m$-vectors real numbers converging to some $(p_{1,\infty}, \dots, p_{m,\infty})\in \Delta_m$.
Let $\nu_k$ and $\nu_\infty$ be the probability measures in $G$ given by
\begin{equation}\label{eq_sec2_muk_muinf}
\nu_k = \sum_{i=1}^m p_{i,k} \delta_{A_{i,k}}
\quand
\nu_\infty = \sum_{i=1}^m p_{i,\infty} \delta_{A_{i,\infty}}.
\end{equation}
We want to prove that $\lambda_j(\nu_k)\to\lambda_j(\nu_\infty)$ when $k\to\infty$,
for every $j=1, \dots, d$.

In fact, we prove a stronger statement, involving probability measures whose supports need not be
finite. Let $\cMc$ be the space of compactly supported probability measures on $G$, with the
smallest topology $\cT$ that contains both:
\begin{itemize}
\item $\cW=$ the restriction of the weak$^*$ topology in the space of probability measures on $G$
\item $\supp^*\cH=$ the pull-back under $\nu\mapsto\supp\nu$ of the Hausdorff topology $\cH$ in the space
of compact subsets of $G$.
\end{itemize}
This topology $\cT$ is metrizable, because both $\cW$ and $\cH$ are. A sequence $(\nu_k)$ converges to
$\nu_\infty$ in $\cMc$ if and only if
\begin{itemize}
\item[(i)] $(\nu_k)_k \to \nu_\infty$ in the weak$^*$ topology and
\item[(ii)] $(\supp\nu_k)_k \to \supp\nu_\infty$ in the Hausdorff topology.
\end{itemize}
That is the case for the measures in \eqref{eq_sec2_muk_muinf} if $A_{i,k} \to A_{i,\infty}$ and
$p_{i,k} \to p_{i,\infty}$ for every $i=1, \dots, m$; here, the assumption that $p_{i,\infty}>0$
is important to ensure continuity of the supports.

Related to this, the example of Kifer~\cite{Ki82b} shows that part (i) alone is not enough to ensure
continuity of the Lyapunov exponents: for our results to hold one cannot omit part (ii) of the
definition of the topology.

\begin{remark}
If $(\nu_k)_k\to\nu_\infty$ in the weak$^*$ topology then, given any $\vep>0$, the support of $\nu_\infty$
is contained in the $\vep$-neighborhood of $\supp\nu_k$ for all large $k$.
Thus, the condition (i) in the definition contains half of the condition (ii).
The other half is that, given any $\vep>0$, the support of $\nu_k$ is contained in the $\vep$-neighborhood
of $\supp\nu_\infty$ for all large $k$. This will be used repeatedly.
\end{remark}

Also, let us point out that for proving Theorem~\ref{theorem:main} it suffices to consider the case $j=1$.
That is because of the following construction.
Let $E=\RR^d$ and $1 \le l \le d$. The \emph{exterior $l$-power} $\Ext^l E$ of $E$ is the
vector space of alternating $l$-linear forms $\omega:E^*\times \cdots \times E^*\to\RR$ on the
dual space $E^*$. The \emph{exterior product} of vectors $v_1, \dots, v_l \in E$, is the
alternating $l$-linear form $v_1 \wedge \cdots \wedge v_l :E^*\times \cdots \times E^*\to\RR$
defined by
$$
\left(v_1 \wedge \cdots \wedge v_l\right) (\phi_1, \ldots, \phi_l)
= \sum_\sigma \sign(\sigma) \, \phi_{\sigma(1)}(v_1) \cdots \phi_{\sigma(l)}(v_l),
$$
where the sum is over all permutations of $\{1, \dots, l\}$. If $\{e_j: j=1, \dots, d\}$ is a basis of $E$
then $\{e_{j_1} \wedge \cdots \wedge e_{j_l}: 1 \le j_1 < \cdots < j_l \le d\}$ is a basis of $\Ext^l E$.
So,
$$
\dim\Ext^l E = \begin{pmatrix} d \\ l \end{pmatrix}.
$$

Every $g\in G$ induces an invertible linear map $\Ext^l g:\Ext^l E \to \Ext^l E$, defined by
$$
\Ext^l g(\omega): (\phi_1, \ldots, \phi_l) \mapsto \omega(\phi_1 \circ g, \ldots, \phi_l \circ g),
$$
for $\omega\in\Ext^l E$ and $\phi_1, \dots, \phi_l \in E^*$. Thus, any measure $\nu$ in $G$
induces a measure $\Ext^l\nu$ in $\GL(\Ext^l E)$, by push-forward under $g\mapsto \Ext^l g$.
Moreover, the maps $\nu \mapsto\Ext^l\nu$ are continuous.
One can check (see~\cite[Proposition~4.17]{LLE}) that the Lyapunov exponents of the random
walk defined by $\Ext^l\nu$, counted with multiplicity, are the sums
$$
\lambda_{i(1)}(\nu) + \cdots + \lambda_{i(l)}(x)
\text{ with $1 \le i_1 < \cdots < i_l \le d$}.
$$
In particular,
$$
\lambda_1(\Ext^l\nu)=\lambda_1(\nu) + \cdots + \lambda_l(\nu).
$$
Thus, proving  that $\nu \mapsto \lambda_1(\Ext^l\nu)$ is continuous, for every $1 \le l \le d$,
will entail that $\nu\mapsto\lambda_j(\nu)$ is continuous, for every $1 \le j \le d$.

In view of these observations, Theorem~\ref{theorem:main} will follow immediately from:

\begin{main}\label{theorem:second}
The function $\lambda_1:\cMc\to\RR$, $\nu \mapsto \lambda_1(\nu)$ is continuous, in any dimension $d\ge 2$.
\end{main}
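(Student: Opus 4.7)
The plan is to prove lower semi-continuity of $\lambda_1$, since upper semi-continuity is standard: by Kingman's theorem $\lambda_1(\nu) = \inf_n \frac1n \int_{G^\NN} \log\|g_{n-1}\cdots g_0\|\, d\nu^\NN$, and each integrand is continuous in $\nu\in\cMc$ (the Hausdorff component of the topology $\cT$ keeps the supports in a common compact set, on which $\log\|g_{n-1}\cdots g_0\|$ is bounded continuous). An infimum of continuous functions is upper semi-continuous, hence so is $\lambda_1$.

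Now take $\nu_k \to \nu_\infty$ in $\cMc$. Using the second bullet in Ledrappier's theorem at $j=1$, choose for each $k$ an $F$-invariant ergodic probability $m_k$ on $G^\NN\times P$ projecting to $\nu_k^\NN$ and realizing $\chi_1(\nu_k) = \lambda_1(\nu_k) = \int \Phi\, dm_k$; let $\eta_k$ be the associated $\nu_k$-stationary probability on $P$. Since $\supp\nu_k$ lies in a fixed compact set and $P$ is compact, a subsequence of $(\eta_k)$ converges weakly to some $\eta_\infty$, which is automatically $\nu_\infty$-stationary, and along this subsequence
\[
\lim_k \lambda_1(\nu_k) = \int_G\int_P \Phi(g,v)\, d\eta_\infty(v)\, d\nu_\infty(g).
\]
By the first bullet in Ledrappier's theorem applied to $\nu_\infty$, this integral equals $\chi_j(\nu_\infty)$ for some $j$, determined by the condition $v\in V^j(\ug)\setminus V^{j+1}(\ug)$ for $m_\infty$-a.e.\ $(\ug,v)$. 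The goal is to force $j=1$. By Furstenberg--Kifer, this can fail only if $\eta_\infty$ concentrates on an ``equator'' $E$: a proper subspace of $\RR^d$, invariant under every $g\in\supp\nu_\infty$, whose Lyapunov spectrum under $\nu_\infty|_E$ does not contain $\lambda_1(\nu_\infty)$. The task is thus reduced to proving $\eta_\infty(E) = 0$ for every such equator.

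The main obstacle, and the real novelty of the argument, is the construction of an \emph{additive Margulis function} $\Psi\colon P\to[0,\infty]$ with $\Psi \equiv +\infty$ on $E$, a partition $(A,B)$ of $P$ with $A$ a fixed neighborhood of $E$, and positive constants $\kappa_A,\kappa_B$ such that the Markov operators $\cT_{\nu_k}$ of the projective random walks satisfy
\[
\cT_{\nu_k}\Psi \le \Psi - \kappa_A \text{ on } A, \qquad \cT_{\nu_k}\Psi \le \Psi + \kappa_B \text{ on } B,
\]
\emph{uniformly in $k$}, including $k=\infty$. A natural ansatz is a smoothed negative log-distance $\Psi(v) \sim -\beta\log d(v,E) + (\text{lower-order correction})$, with $\beta>0$ chosen small enough that the averaged exponential drift away from $E$ under $\nu_\infty$ (which exists precisely because $\chi_1(\nu_\infty|_E)<\lambda_1(\nu_\infty)$) dominates the logarithmic singularity of $\Psi$, while the correction absorbs rotational components of the transverse dynamics. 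The hard point is that $E$ need not be invariant under $\nu_k$ for finite $k$, so the $\cT_{\nu_k}$-dynamics near $E$ can differ dramatically from $\cT_{\nu_\infty}$; one must perform a localized analysis in a neighborhood of $E$ and patch it to a global estimate on $B$ with $\kappa_A,\kappa_B$ independent of $k$. This uniformity over a family with very different behaviors near the equator is the difficulty that multiplicative Margulis functions could not accommodate beyond $d=2$.

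Once $\Psi$ is in hand, the conclusion is essentially routine. Applying the additive inequality to the $\nu_k$-stationary $\eta_k$ (using truncations $\Psi_r=\min(\Psi,r)$ to avoid infinities) and exploiting stationarity $\int \cT_{\nu_k}\Psi_r\, d\eta_k = \int \Psi_r\, d\eta_k$, one rearranges to obtain a uniform power-law bound
\[
\eta_k\bigl(\{v\in P : d(v,E) < r\}\bigr) \le C r^\beta
\]
for all small $r$ and all large $k$. Letting $k\to\infty$ preserves the bound for $\eta_\infty$, so $\eta_\infty(E)=0$. Running this for every equator of $\nu_\infty$ forces $j=1$ in Ledrappier's alternative, yielding $\lim_k \lambda_1(\nu_k) = \chi_1(\nu_\infty) = \lambda_1(\nu_\infty)$, and combined with upper semi-continuity this proves the continuity of $\lambda_1$.
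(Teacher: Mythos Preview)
Your reduction is correct: upper semi-continuity is standard, and the Furstenberg--Kifer dichotomy reduces the question to showing that no weak limit $\eta_\infty$ of $\nu_k$-stationary measures can charge the equator $E$. But the heart of the matter --- constructing the Margulis function --- is where your proposal has a genuine gap. A function of the form $\Psi(v)\sim -\beta\log d(v,E)$ on $P$ alone cannot be made to satisfy $\cT_{\nu_k}\Psi\le\Psi-\kappa_A$ on a \emph{fixed} neighborhood $A$ of $E$ uniformly in $k$: for finite $k$, $E$ is not $\nu_k$-invariant, and at scales below some $k$-dependent threshold the random walk may push points \emph{toward} $E$ rather than away. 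You acknowledge this is ``the hard point'' but offer no mechanism to resolve it; the ``localized analysis and patching'' you allude to is exactly what fails. A second, related gap: even granting an additive Margulis function on $P$, you would only get $\eta_k(A)\le\kappa_B/(\kappa_A+\kappa_B)$ for a fixed $A$, not the power-law decay $\eta_k(\{d(\cdot,E)<r\})\le Cr^\beta$ you claim --- that would require a \emph{multiplicative} Margulis function, which the paper explicitly says it cannot construct beyond $d=2$.

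The paper's solution is structurally quite different. Rather than working on $P$, it builds the Margulis function on the \emph{product} $P\times P$ (and, inductively, on products of flag varieties $\cF(r,d)^2$), using geometric quantities like the ``vertical angle'' $\VA_r(x,x')=d(F_r+F_1',E)$ and a ``vertical projection'' $\VP_r$. The partition is into three pieces $A_k,B_k',B_k''$, and the additive inequality yields a \emph{lower} bound on the mass of $B_k''=\{\VA_r\le\omega_{k,r}\}$. When $\dim E=r$ this set is empty (since $F_r+F_1'$ has dimension $r+1$), giving a contradiction; otherwise the mass on $B_k''$ is pushed forward to produce a $\nu_k$-adapted measure on $\grass(r+1,d)$ that still charges $E_{r+1}$ in the limit, and one iterates. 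Making this work requires several further devices you have not anticipated: a ``stabilization'' cutoff to control $\VA_r$ below the $k$-dependent threshold $\omega_{k,r}$, a ``recoupling'' of the Markov operator on the product to tame discontinuities introduced by a second cutoff near the border, and (for $r>1$) a ``spreading out'' operator on flags to keep the function bounded where needed. None of this is visible from the single-space ansatz you propose.
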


The rest of the paper is devoted to proving Theorem~\ref{theorem:second}.
In Sections~\ref{s_stationary_measures_and_equator} and~\ref{s_uniform_estimate}
we present a useful large deviations principle for Lyapunov exponents (Theorem~\ref{t_sec4_uniform_estimate}).
In Sections~\ref{s_random_repeller} and~\ref{s_tool_box} we introduce several useful tools.
In Section~\ref{s_outline_of_the_proof} we reduce the proof of Theorem~\ref{theorem:second}
to a main technical result, Theorem~\ref{theorem:inductive}.
The proof of the latter result is  by induction on the dimension $r$ of the equator,
as outlined in Section~\ref{s_outline_of_the_proof}.
The case $r=1$ is carried out in detail in Sections~\ref{s_step1_preparing_a_Margulis_function}
through~\ref{s_step_1_recoupling_and_conclusion}.
The inductive step is dealt with in Sections~\ref{s_preparing_a_Margulis_function}
through~\ref{s_recoupling_conclusion}.

Before stating  Theorem~\ref{theorem:inductive} and outlining its proof, we must introduce several
general notions and a number of auxiliary results. On the other hand, the proofs of those results,
given in Sections~\ref{s_stationary_measures_and_equator} to~\ref{s_tool_box}, are in themselves
not used for establishing Theorem~\ref{theorem:inductive}. Thus the reader is encouraged to skip
them at first reading, proceeding as directly as possible to Section~\ref{s_outline_of_the_proof}.

\section{Invariant subspaces}\label{s_stationary_measures_and_equator}

In this section we introduce some background material, due mostly to Furstenberg and Kifer~\cite{Fur63,FK83}.
This also allows us to introduce some notations that will be useful in the following.
Proofs and more information can also be found in Chapters 4 through 6 of \cite{LLE}.

\subsection{Stationary measures}\label{ss_stationary_measures}

Fix $\nu\in\cMc$. We say that a probability measure $\eta$ on $P$ is \emph{$\nu$-stationary} if
$$
\int_P \psi(x) \, d\eta(x) = \int_{G \times P} \psi(gy) \, d\nu(g) \, d\eta(y)
$$
for every bounded measurable function $\psi:P\to\RR$. In other words, $\eta$ is $\nu$-stationary if and
only if $\cP_\nu^*\eta=\eta$, where $\cP_\nu^*$ is the operator defined by
\begin{equation}\label{eq_sec3_Markov_operator1}
\cP_\nu^*\eta = \int_G \left(g_*\eta\right) \, d\nu(g)
\end{equation}
in the space of probability measures. Moreover (see \cite[Proposition~5.5]{LLE}), $\eta$ is $\nu$-stationary
if and only if the probability measure $\nu^\NN\times \eta$ is invariant under the projective cocycle
$F:G^\NN \times P \to G^\NN \times P$ defined in \eqref{eq_sec2_randomwalk}.
Stationary measures always exist (see \cite[Proposition~5.6]{LLE}).

We also consider the operator $\cP_\nu:\Bd(P)\to\Bd(P)$ defined in the space $\Bd(P)$ of
measurable bounded functions $\psi:P\to\RR$ by
\begin{equation}\label{eq_sec3_Markov_operator2}
\cP_\nu\psi(v) = \int_G \psi(gv) \, d\nu(g).
\end{equation}
A function $\psi$ is said to be \emph{$\nu$-stationary} if $\cP_\nu\psi=\psi$. A $\nu$-stationary measure $\eta$
is \emph{ergodic} if every $\nu$-stationary function is constant on some full $\eta$-measure set.
This happens (see \cite[Proposition~5.13]{LLE}), if and only if the $F$-invariant measure
$\nu^\NN\times\eta$ is ergodic for $F$. The ergodic decomposition theorem (see \cite[Theorem~5.14]{LLE})
asserts that every $\nu$-stationary measure is a convex combination of ergodic $\nu$-stationary measures.

Let $\Phi$ be as in \eqref{eq_sec2_Phi}.
%(on the left-hand side $v$ denotes any projective class, that is, any element of the projective
%space $P$, whereas on the right-hand side it denotes any vector $v\in\RR^d$ in that projective class).
Then (see \cite[Proposition~6.7]{LLE}),
\begin{equation}\label{eq_sec3_Furstenberg_formula}
\lambda_1(\nu) = \max\big\{\int_{G \times P} \Phi \, d(\nu\times\eta): \text{$\eta$ is a $\nu$-stationary measure}\big\}.
\end{equation}
Denote $\alpha(\eta)= \int_{G \times P} \Phi \, d(\nu\times\eta)$ for each $\nu$-stationary measure $\eta$.
The ergodic decomposition theorem implies that the maximum does not change if we restrict to ergodic
$\nu$-stationary measures.

We may also view $\Phi$ as a function on $G^\NN\times P$ that depends only on $g_0$ an $v$:
$$
\Phi(\ug,v) = \log \frac{\|g_0 v\|}{\|v\|}.
$$
Then
$$
\begin{aligned}
%\lim_n \frac 1n \log \|g_{n-1}\cdots g_0 v\|
% =
\lim_n \frac 1n \log\frac{\|g_{n-1}\cdots g_0 v\|}{\|v\|}
= \lim_n \frac 1n \sum_{j=0}^{n-1} \log\frac{\|g_jg_{j-1}\cdots g_0 v\|}{\|g_{j-1}\cdots g_0 v\|}
= \lim_n \frac 1n \sum_{j=0}^{n-1} \Phi(F^j(\ug, v)).
\end{aligned}
$$
So, for any ergodic $\nu$-stationary measure $\eta$, we have
\begin{equation}\label{eq_sec3_Birkhoff_exponent}
\lim_n \frac 1n \log \frac{\|g_{n-1}\cdots g_0 v\|}{\|v\|}
= \int_{G \times P} \Phi \, d(\nu\times\eta)
\end{equation}
$\nu^\NN\times\eta$-almost everywhere in $G^\NN\times P$.

Furstenberg and Kifer have shown (see \cite[Theorem~2.1]{FK83}) that if $\alpha(\eta)=\lambda_1(\nu)$
for every (ergodic) $\nu$-stationary measure $\eta$ then for every $v\in P$
\begin{equation}\label{eq_sec3_FK}
\lim_n \frac 1n \log \frac{\|g_{n-1}\cdots g_0 v\|}{\|v\|} = \lambda_1(\nu)
\text{ for $\nu^\NN$-almost every $\ug$.}
\end{equation}
This fact is also contained in Theorem~\ref{t_sec4_uniform_estimate} below.
In the next section we analyze what happens when the hypothesis of \eqref{eq_sec3_FK}
is not fulfilled.

\subsection{The equator}\label{ss_equator}

A vector subspace $L$ of $\RR^d$ is said to be $\nu$-invariant if $gL=L$ for $\nu$-almost every $g$ or,
equivalently, for every $g\in\supp\nu$. Observe that if $L$ is a $\nu$-invariant subspace then
$$
\lim_n \frac 1n \log \|g_{n-1}\cdots g_0\mid L\|
$$
exists for every $\ug$ in a full $\nu^\NN$-measure subset of $G^\NN$ and is constant on
this subset. This direct consequence of Kingman's subadditive ergodic theorem
(see~\cite[Theorem~3.3]{LLE}), together with the fact that the Bernoulli shift $(\sigma,\nu^\NN)$
is ergodic, will be used repeatedly. It is part of the proof of the Oseledets theorem
(see~\cite[Proposition~4.11]{LLE}) that for $L=\RR^d$ the limit coincides with the largest Lyapunov
$\lambda_1(\nu)$.

For any ergodic $\nu$-stationary measure $\eta$, define $L(\eta)$ to be
the smallest vector subspace such that $\eta(L(\eta))=1$. Equivalently, $L(\eta)$ is the vector subspace
spanned by the support of $\eta$.
Then $L(\eta)$ is $\nu$-invariant:
$$
\int_{G \times P} \chi_{L(\eta)}(gv) \, d\nu(g) d\eta(v)
= \int_P \chi_{L(\eta)}(v) \, d\eta(v) = \eta(L) =1
$$
and this implies
$$
\eta(g^{-1}L(\eta)) = \int_P \chi_{L(\eta)}(gv) \, d\eta(v) =1 \text{ for $\nu$-almost every $g$.}
$$
So, it follows from the definition that $g^{-1}L(\eta)=L(\eta)$ for $\nu$-almost every $g$.

Since the support of $\eta$ spans $L(\eta)$, it follows from \eqref{eq_sec3_Birkhoff_exponent} that we may find a basis
$v_1, \dots, v_l$ of $L(\eta)$ and a full $\nu^\NN$-measure subset of $\ug$ such that
$$
\lim_n \frac 1n \log \frac{\|g_{n-1}\cdots g_0 v_i\|}{\|v_i\|}
= \alpha(\eta)
\text{ for $i=1, \dots, l$.}
$$
Then
\begin{equation}\label{eq_sec3_FK1}
\lim_n \frac 1n \log \|g_{n-1}\cdots g_0 \mid L(\eta)\| = \alpha(\eta).
\end{equation}
It also follows that if $\eta$ is such that $\alpha(\eta) < \lambda_1(\nu)$ then $L(\eta)$ is a proper subspace.

We call the \emph{equator} of $\nu$ a maximal $\nu$-invariant subspace $E$ such that
\begin{equation}\label{eq_sec3_FK2}
\lim_n \frac 1n \log \|g_{n-1} \cdots g_0 \mid E\| <\lambda_1(\nu).
\end{equation}
Such a subspace is necessarily proper,  but it may not exist. It follows from the previous paragraph that the
equator does exist if $\alpha(\eta) < \lambda_1(\nu)$ for some (ergodic) $\nu$-stationary measure;
the converse is also true.
Moreover, the equator is unique and contains $L(\eta)$ for every ergodic $\nu$-stationary measure $\eta$:
both claims follow immediately from the observation that if two subspaces satisfy \eqref{eq_sec3_FK2} then
so does their sum.

If the equator does exist, every $g$ in (the group generated by) the support of $\nu$ may be written as
\begin{equation}\label{eq_sec3_triangular_matrix}
g = \left(\begin{array}{cc} g^E & h \\ 0 & g^\perp\end{array}\right)
\text{ with $g^E\in \GL(E)$ and $g^\perp \in \GL(E^\perp)$. }
\end{equation}
Let $\nu^E$ and $\nu^\perp$ be the push-forwards of $\nu$ under the maps $g \mapsto g^E$ and
$g\mapsto g^\perp$.
Furstenberg and Kifer~\cite[Lemma~3.6]{FK83} observed that
$$
\lambda_1(\nu) = \max\{\lambda_1(\nu^E), \lambda_1(\nu^\perp)\big\}.
$$
The property \eqref{eq_sec3_FK2} means that $\lambda_1(\nu^E)  < \lambda_1(\nu)$. Hence,
$\lambda_1(\nu^\perp) = \lambda_1(\nu)$.

\begin{remark}\label{r_sec3_perp}
Any $g\in G$ that preserves $E$ may written in the form \eqref{eq_sec3_triangular_matrix}.
Then $(gu)^\perp=g^\perp u^\perp$ for every $u$, where $\perp$ denotes the component
orthogonal to $E$. Do not mistake this for $gu^\perp=hu^\perp + g^\perp u^\perp$.
It is equally clear that $\|g^\perp\| \le \|g\|$.
These simple facts will be used several times.
\end{remark}

Suppose that there exists some ergodic $\nu^\perp$-stationary measure $\eta^\perp$ such that
$$
\alpha^\perp(\eta^\perp) = \int_{G\times P} \Phi \, d(\nu^\perp \times \eta^\perp)
$$
is strictly less than $\lambda_1(\nu^\perp)=\lambda_1(\nu)$. Then $L(\eta^\perp)$ is a proper
$\nu^\perp$-invariant subspace of $E^\perp$ and, according to \eqref{eq_sec3_FK1},
$$
\lim_n \frac 1n \log \|g^\perp_{n-1}\cdots g_0^\perp \mid L(\eta^\perp)\| = \alpha^\perp(\eta^\perp).
$$
Then $E'=E\oplus L(\eta^\perp)$ is a proper $\nu$-invariant subspace of $\RR^d$ and it satisfies the
equator property \eqref{eq_sec3_FK2}, because
$$
\begin{aligned}
\lim_n \frac 1n & \log \|g_{n-1} \cdots g_0 \mid E\oplus L(\eta^\perp)\| \\
& \le \max \big\{ \lim_n \frac 1n \log \|g_{n-1} \cdots g_0 \mid E\|,
                      \lim_n \frac 1n \log \|g_{n-1} \cdots g_0 \mid L(\eta^\perp)\|\big\}
\end{aligned}
$$
Since $E'$ contains $E$ strictly, this contradicts the definition of the equator. This contradiction
proves that $\alpha^\perp(\eta^\perp) = \lambda_1(\nu)$ for every $\nu^\perp$-stationary measure $\eta^\perp$.

So, by \eqref{eq_sec3_FK}, for every $v^\perp \in \proj(E^\perp)$ there exists a full
$\nu^\NN$-measure subset of $\ug$ for which
\begin{equation}\label{eq_sec3_FK3}
\lim_n \frac 1n \log \frac{\|g_{n-1}^\perp \cdots g_0^\perp v^\perp\|}{\|v^\perp\|} = \lambda_1(\nu).
\end{equation}
This also implies (see~\cite[Proposition~4.14]{LLE}) that for every $v \in\RR^d \setminus E$ there
exists a full $\nu^\NN$-measure subset of $\ug$ for which
\begin{equation}\label{eq_sec3_FK4}
\lim_n \frac 1n \log \|g_{n-1} \cdots g_0 v\|=
\lim_n \frac 1n \log \frac{\|g_{n-1} \cdots g_0 v\|}{\|v\|} = \lambda_1(\nu).
\end{equation}

The following example shows that the equator need not be any of the subspaces $V^i$ in the
Oseledets flag \eqref{eq_sec2_filtration}:

\begin{example}
Let $d=3$ and $m=2$ and $\nu=p_1\delta_{A_1}+p_2\delta_{A_2}$ where the matrices $A_1$ and $A_2$
are given by
\begin{align*}
& A_1 = \left(\begin{array}{cc}B & 0 \\ 0 & 1\end{array}\right)
\text{ with }
B=\left(\begin{array}{cc}\sigma& 0 \\ 0 & \sigma^{-1}\end{array}\right)
\text{ and $\sigma>1$} \\
& A_2=\left(\begin{array}{cc}R_\theta B & 0 \\ 0 & 1\end{array}\right)
\text{ with }
R_\theta=\left(\begin{array}{cc}\cos\theta& \sin\theta \\ -\sin\theta & \cos\theta\end{array}\right)
\text{ and $\theta\neq 0$ small.}
\end{align*}
The subspace $E_3=\{(0,0)\}\times\RR$ is $\nu$-invariant and corresponds to a zero Lyapunov exponent.
The only other $\nu$-invariant subspace is $E_{12}=\RR^2\times\{0\}$. Given any $\vep>0$, the cone
$$
C^u=\{(x,y,0)\in\RR^3: |y| \le \vep |x|\}\subset E_{12}
$$
is forward invariant under both $A_1$ and $A_2$, as long as $\theta$ is close enough to zero.
This implies that some Lyapunov exponent is close to $\log\sigma$. Analogously, the cone
$$
C^s=\{(x,y,0)\in\RR^3: |x| \le \vep |y|\}\subset E_{12}
$$
is backward invariant under both $A_1$ and $A_2$, which implies that some Lyapunov exponent is close
to $-\log\sigma$. So, the Lyapunov exponents are
$$
\lambda_1(\nu) \approx \log\sigma
\quand
\lambda_2(\nu)=0
\quand
\lambda_3(\nu) \approx - \log\sigma.
$$
Thus, $E_3$ is the equator of $\nu$ and corresponds to the middle eigenvalue $\lambda_2(\nu)=0$.
Note that both $A_1^\perp = B$ and $A_2^\perp=R_\theta B$ have determinant $1$.
\end{example}

\section{Uniform convergence in measure}\label{s_uniform_estimate}

We need to prove that the limit in \eqref{eq_sec3_FK3} is \emph{uniform in measure} with respect to
$v\in\proj(E^\perp)$. This follows directly from a corresponding fact for the limit in \eqref{eq_sec3_FK},
that we state precisely as follows:

\begin{theorem}\label{t_sec4_uniform_estimate}
Assume that $\alpha(\eta)=\lambda_1(\nu)$ for every $\nu$-stationary measure $\eta$.
Then for any $\vep>0$ there exist constants $C=C(\nu,\vep)>0$ and $c=c(\nu,\vep)>0$ such that
for any $v\in P$ and $N\in\NN$, there exists a measurable set $\cE=\cE(\nu,\vep,v,N)\subset G^\NN$
satisfying:
\begin{enumerate}
\item $\nu^\NN(\cE^c) \le C e^{-cN}$ and
\item
$\displaystyle{\frac 1n \log \frac{\|g_{n-1}\cdots g_0v\|}{\|v\|} \in (\lambda_1(\nu)-\vep,\lambda_1(\nu)+\vep)}$
for every $\ug \in \cE$ and $n\ge N$.
\end{enumerate}
\end{theorem}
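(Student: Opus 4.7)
The plan is to bound the upper and lower tails of $S_n(\ug,v):=\log(\|g_{n-1}\cdots g_0 v\|/\|v\|)$ separately, establishing in each case an exponential estimate of the form $\nu^\NN\{|S_n/n - \lambda_1(\nu)|>\vep\}\leq C_0 e^{-c_0 n}$ with $C_0,c_0>0$ independent of $v\in P$ and $n$. The set $\cE$ is then produced by a union bound over $n\geq N$. All relevant constants stay finite because $C := \max_{g\in\supp\nu}\max(\|g\|,\|g^{-1}\|) < \infty$, by compactness of $\supp\nu$.

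For the upper tail, submultiplicativity $\|g_{n-1}\cdots g_0 v\|/\|v\|\leq \|g_{n-1}\cdots g_0\|$ eliminates the dependence on $v$ and reduces the problem to a large-deviations bound for the operator norm. Since $n\mapsto \log\|g_{n-1}\cdots g_0\|$ is subadditive with increments uniformly bounded by $\log C$, combining the Furstenberg--Kesten $L^1$-convergence with a martingale decomposition and the Azuma--Hoeffding inequality yields the required exponential bound.

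The lower tail is the substantive part, and the hypothesis $\alpha(\eta)=\lambda_1(\nu)$ for every $\nu$-stationary $\eta$ enters here. First I would establish \emph{uniform convergence in the mean}: for every $\vep'>0$ there is $N_0=N_0(\nu,\vep')$ such that
$$\frac{1}{n}\int_{G^\NN} S_n(\ug,v)\,d\nu^\NN(\ug) > \lambda_1(\nu) - \vep'\quad \text{for all } n\geq N_0\ \text{and } v\in P.$$
Indeed, if uniformity failed, sequences $v_k\in P$ and $n_k\to\infty$ witnessing the failure would, via a weak-$*$ subsequential limit of the $\nu^\NN$-expected Cesaro empirical measures of the random walk started at $v_k$, yield a $\nu$-stationary measure $\eta_\infty$ on $P$ with $\alpha(\eta_\infty)\leq \lambda_1(\nu) - \vep'$, contradicting the hypothesis. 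Next I would decompose
$$S_n = M_n + \sum_{j=0}^{n-1} f(v_j),$$
where $f(u)=\int\log(\|gu\|/\|u\|)\,d\nu(g)$, $v_j = g_{j-1}\cdots g_0 v$, and $M_n$ is a martingale adapted to $\cF_j = \sigma(g_0,\dots,g_{j-1})$ with increments $|\Delta M_j|\leq 2\log C$. Azuma--Hoeffding controls $M_n$ uniformly in $v$; for $\sum f(v_j)$, an iterated martingale decomposition (the differences $f(v_j)-\cP_\nu f(v_{j-1})$ and their higher analogues obtained by recursively expanding $\cP_\nu^{k}f(v_{j-k})$ are each bounded martingale differences), combined with the uniform mean bound applied to $\sum_{k=0}^{n-1}\cP_\nu^k f(v)$, delivers the desired exponential concentration of the drift around a quantity $\geq n(\lambda_1(\nu)-2\vep/3)$.

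The main obstacle is this last exponential concentration of the Markov drift $\sum f(v_j)$: the chain $(v_j)$ on $P$ need not be uniformly mixing and $\cP_\nu$ may possess no spectral gap on natural function spaces, so one cannot simply invoke a standard Markov-chain LDP. What rescues the argument is the uniform control of the Cesaro means $\frac{1}{n}\sum_{k=0}^{n-1}\cP_\nu^k f \to \lambda_1(\nu)$, a consequence of the hypothesis via the compactness/contradiction argument above; it is precisely the constancy of $\alpha(\eta)$ over all $\nu$-stationary $\eta$ that permits the iterative martingale decomposition to close with constants depending only on $\nu$ and $\vep$, and not on $v$.
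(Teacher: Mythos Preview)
Your overall architecture is sound: uniform convergence of the means $\frac{1}{n}\sum_{k<n}\cP_\nu^k f(v)\to\lambda_1(\nu)$ does follow from the hypothesis by the compactness argument you sketch, and the decomposition $S_n=M_n+\sum_{j<n}f(v_j)$ with Azuma on $M_n$ is fine. The gap is in the ``iterated martingale decomposition'' for the drift $\sum_{j<n}f(v_j)$. If you iterate all the way down, the resulting martingale difference at time $j$ is
\[
D_j \;=\; F_{n-j}(v_j)-\EE\big[F_{n-j}(v_j)\,\big|\,\cF_{j-1}\big],\qquad F_m:=\sum_{k=0}^{m-1}\cP_\nu^k f,
\]
and even using the uniform mean bound you only get $|D_j|\le 2\|F_{n-j}-(n-j)\lambda_1\|_\infty=O(n-j)$. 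The Azuma variance is then $\sum_j|D_j|^2=O(n^3)$, which gives concentration $\exp(-c/n)$, not $\exp(-cn)$. Stopping after finitely many iterations leaves a residual drift $\sum_j\cP_\nu^K f(v_j)$ that you cannot control either, since $\cP_\nu^K f$ need not converge pointwise (only its Ces\`aro averages do). A simple repair is a \emph{block} argument: fix $K$ so large that $\EE[S_K(\cdot,u)]>K(\lambda_1-\vep/3)$ for every $u\in P$, write $S_n$ as a sum of $\lfloor n/K\rfloor$ block increments with conditional means $>K(\lambda_1-\vep/3)$ and uniform bound $K\log C$, and apply Azuma to that single martingale.

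The paper takes a different and somewhat cleaner route that avoids separating upper and lower tails. Working with the operator $\cQ_\nu\psi(g',v)=\int\psi(g,gv)\,d\nu(g)$ on $C(S\times P)$, a Hahn--Banach argument shows that for any $\varphi\ge 0$ and $\vep>0$ one can write $\varphi=\cQ_\nu\psi-\psi+\xi$ with $\|\xi\|\le\Sigma(\nu,\varphi)+\vep$, where $\Sigma(\nu,\varphi)$ is the supremum of $\int\varphi$ over $\cQ_\nu$-invariant probabilities. The Birkhoff sum of $\varphi$ along the random walk then equals a single bounded martingale (the telescoped $\cQ_\nu\psi-\psi$ part) plus two boundary terms plus a drift bounded by $n\|\xi\|$; Azuma controls the martingale. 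Applying this to $\varphi=\Phi\circ\iota^{-1}$ and to $-\varphi$, and using that the hypothesis forces $\Sigma(\nu,\varphi)=-\Sigma(\nu,-\varphi)=\lambda_1(\nu)$ (via the bijection between $\cQ_\nu$-invariant measures and $\nu$-stationary measures), yields both tail bounds simultaneously. Compared with your approach, the coboundary lemma replaces the block trick and the separate treatment of the operator-norm upper tail by a single estimate.
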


In what follows we prove Theorem~\ref{t_sec4_uniform_estimate}.
The proof will not be used in the rest of the paper, so the reader may choose to skip the remainder
of this section at first reading.

Recall that $S=\supp\nu$ is taken to be compact.
Let $C(S\times P)$ be the Banach space of continuous functions $\psi:S \times P \to \RR$ with the norm
$\|\psi\|=\sup|\psi|$. Consider the operator $\cQ_\nu$ defined on $C(S\times P)$ by
$$
\cQ_\nu\psi(g',v) = \int_S \psi(g,gv) \, d\nu(g).
$$
Note that $\cQ_\nu\psi(g',v)$ does not depend on $g'$. The dual operator $\cQ_\nu^*$ acts in the space of finite signed measures
on $S\times P$ by
$$
\int_{S\times P} \psi \, d\cQ_\nu^*\lambda = \int_{S\times P} \cQ_\nu\psi \, d\lambda
\text{ for every $\psi\in C(S\times P)$.}
$$
A signed measure $\lambda$ is said to be \emph{$\cQ_\nu$-invariant} if $\cQ_\nu^*\lambda=\lambda$. Given
$\varphi\in C(S\times P)$, denote
$$
\Sigma(\nu,\varphi)=\sup \left\{\int_{S \times P} \varphi \, d\lambda:
\lambda \text{ is a $\cQ_\nu$-invariant probability measure}\right\}.
$$

\begin{lemma}\label{l_sec4_cohomology}
Given any non-negative $\varphi \in C(S \times P)$ and any $\vep>0$ there exist
$\psi, \xi \in C(S \times P)$ such that $\varphi=\cQ_\nu\psi-\psi+\xi$ and
$\|\xi\| \le \Sigma(\nu,\varphi) +\vep$.
\end{lemma}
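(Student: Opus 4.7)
The plan is to construct $\psi$ and $\xi$ from the Cesàro averages of the iterates of $\cQ_\nu$ applied to $\varphi$, reducing the lemma to an upper bound on $\|\xi_n\|_\infty$; this bound is then obtained via a weak-$^*$ compactness argument identifying the asymptotic sup with $\Sigma(\nu,\varphi)$.

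For each $n \ge 1$ I would set
$$
\psi_n \;=\; -\sum_{k=0}^{n-1} \frac{n-k}{n}\,\cQ_\nu^k \varphi, \qquad \xi_n \;=\; \frac{1}{n}\sum_{j=1}^{n} \cQ_\nu^j \varphi,
$$
both of which lie in $C(S\times P)$ since $\cQ_\nu$ preserves continuity. A direct telescoping computation then gives the identity $\cQ_\nu \psi_n - \psi_n = \varphi - \xi_n$, so that $\varphi = \cQ_\nu \psi_n - \psi_n + \xi_n$. Because $\varphi \ge 0$ and $\cQ_\nu$ is a positive operator, one has $\xi_n \ge 0$ and hence $\|\xi_n\|_\infty = \sup \xi_n$; thus the lemma reduces to proving that $\sup \xi_n \le \Sigma(\nu,\varphi) + \vep$ for some $n$ large enough, after which I set $\psi = \psi_n$ and $\xi = \xi_n$.

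To prove this bound I would argue by contradiction. If it failed, some subsequence $n_k \to \infty$ would provide points $(g'_k, v_k) \in S \times P$ with $\xi_{n_k}(g'_k, v_k) > \Sigma(\nu,\varphi) + \vep$. I then form the empirical measures
$$
\mu_k \;=\; \frac{1}{n_k} \sum_{j=1}^{n_k} (\cQ_\nu^*)^j \delta_{(g'_k, v_k)},
$$
which are probability measures on the compact metric space $S \times P$ and satisfy $\int \varphi \, d\mu_k = \xi_{n_k}(g'_k, v_k) > \Sigma(\nu,\varphi) + \vep$ by duality. Weak-$^*$ compactness of the probability measures on $S\times P$ yields, after extraction, a limit $\mu_k \to \mu$ with $\int \varphi \, d\mu \ge \Sigma(\nu,\varphi) + \vep$ by continuity of $\varphi$.

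The central and least routine step is to verify that the limit $\mu$ is $\cQ_\nu$-invariant, which then contradicts the definition of $\Sigma(\nu,\varphi)$ and finishes the proof. For this, the standard telescoping
$$
\mu_k - \cQ_\nu^* \mu_k \;=\; \frac{1}{n_k}\bigl(\cQ_\nu^* \delta_{(g'_k, v_k)} - (\cQ_\nu^*)^{n_k+1}\delta_{(g'_k, v_k)}\bigr)
$$
has total variation at most $2/n_k \to 0$. Testing against any $\eta \in C(S\times P)$ (and using that $\cQ_\nu\eta$ is continuous, since $\cQ_\nu$ preserves continuity) gives $\int \eta\,d\mu_k - \int \cQ_\nu\eta\,d\mu_k \to 0$; passing to the weak-$^*$ limit yields $\int \eta\,d\mu = \int \cQ_\nu\eta\,d\mu$, i.e.\ $\cQ_\nu^*\mu = \mu$. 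The main obstacle is precisely this construction of a $\cQ_\nu$-invariant probability as a weak-$^*$ limit of empirical measures, where the compactness of $S$ is used essentially, both for existence of the weak-$^*$ limit and for the continuity of $\cQ_\nu \eta$.
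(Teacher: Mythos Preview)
Your proof is correct but follows a genuinely different route from the paper's. The paper argues non-constructively via duality: it applies Hahn--Banach to obtain a norm-one functional $L$ vanishing on the coboundary subspace $\cW=\{\cQ_\nu\psi-\psi\}$ with $L(\varphi)=d(\varphi,\cW)$, represents $L$ by a signed measure $\lambda$ (Riesz--Markov), shows $\lambda$ is $\cQ_\nu$-invariant, and extracts an invariant probability $\lambda_0$ from its Hahn decomposition. Non-negativity of $\varphi$ then gives $d(\varphi,\cW)\le\int\varphi\,d\lambda_0\le\Sigma(\nu,\varphi)$, and any $\psi$ with $\|\varphi-(\cQ_\nu\psi-\psi)\|\le d(\varphi,\cW)+\vep$ finishes the job.

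Your approach is more elementary and constructive: the explicit Ces\`aro coboundaries $\psi_n$ give remainders $\xi_n$ that are ergodic averages of $\varphi$, and a Krylov--Bogolyubov compactness argument bounds $\sup\xi_n$ by $\Sigma(\nu,\varphi)$ asymptotically. You avoid Hahn--Banach and the Hahn decomposition entirely, at the cost of a slightly longer limiting argument. The paper's version is more conceptual in that it identifies $d(\varphi,\cW)$ directly with a value on an invariant measure, whereas yours never computes $d(\varphi,\cW)$ but produces the decomposition by hand. Both rely in the same essential way on compactness of $S\times P$ and on $\cQ_\nu$ preserving $C(S\times P)$; both use $\varphi\ge 0$ at exactly one point (you to get $\|\xi_n\|=\sup\xi_n$, the paper to compare $\int\varphi\,d\lambda_0$ with $\int\varphi\,d\lambda$).
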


\begin{proof}
Let $\cW\subset C(S\times P)$ be the subspace of functions of the form $\varphi=\cQ_\nu\psi-\psi$ for some $\psi\in C(S\times P)$.
By Hahn--Banach, given any $\varphi\in C(S\times P)$ there exists a continuous linear functional
$L:C(S\times P) \to \RR$ such that $\|L\|=1$ and
$L \mid \cW \equiv 0$ and $L(\varphi) = d(\varphi,\cW)$.
By Riez--Markov, there exists some signed measure $\lambda$ on
$S \times P$ such that $\|\lambda\|=1$ and $L(\psi) = \int_{S\times P} \psi \, d\lambda$ for every
$\psi\in C(S\times P)$. The fact that $L$ vanishes on $W$ ensures that $\cQ_\nu^*\lambda=\lambda$:
$$
\int_{S\times P} \left(\cQ_\nu\psi-\psi\right) \, d\lambda = L(\cQ_\nu\psi-\psi) = 0
\text{ for every $\psi\in C(S\times P)$.}
$$
Let $\lambda=\lambda^+-\lambda^-$ be the Hahn decomposition of $\lambda$. Then
$\|\lambda^+\|+\|\lambda^-\|=\|\lambda\|=1$ and
$\lambda=\cQ_\nu^*\lambda=\cQ_\nu^*\lambda^+ - \cQ_\nu^*\lambda^-$.
Since $\cQ_\nu^*\lambda^\pm$ are non-negative measures,
the latter implies that $\cQ_\nu^*\lambda^\pm \ge \lambda^\pm$.
Since $\cQ_\nu 1 = 1$ we have
$$
\|\cQ_\nu^*\lambda^\pm\| = \int_{S\times P} 1 \, d\cQ_\nu^*\lambda^\pm
= \int_{S\times P} \cQ_\nu 1 \, d\lambda^\pm = \int_{S\times P} 1 \, d\lambda^\pm =\|\lambda^\pm\|.
$$
Hence, $\cQ_\nu^*\lambda^{\pm} = \lambda^{\pm}$. Let $\lambda_0=\lambda^+/\|\lambda^+\|$.
Then $\cQ_\nu^*\lambda_0=\lambda_0$ and $\lambda_0 \ge \lambda^+ \ge \lambda$.
Since $\varphi$ is assumed to be non-negative, it follows that
\begin{equation}\label{eq_sec4_uniform_estimate}
\int_{S\times P} \varphi \, d\lambda_0 \ge \int_{S\times P} \varphi \, d\lambda = L(\varphi) = d(\varphi,\cW)
\end{equation}
Take $\psi \in C(S \times P)$ such that $\|\varphi - (\cQ_\nu\psi - \psi)\| \le d(\varphi,\cW) + \vep$  and then define
$\xi = \varphi - (\cQ_\nu\psi-\psi)$. Then, by \eqref{eq_sec4_uniform_estimate},
$$
\|\xi\| \le d(\varphi,W) + \vep \le  \int_{S\times P} \varphi \, d\lambda_0 + \vep
$$
and this implies that $\|\xi\| \le \Sigma(\nu,\varphi) +\vep$.
\end{proof}

For $n\ge 0$, let $\cB_n$ be the $\sigma$-algebra of $S^\NN$ generated by the family of cylinders
$[0;A_0, \dots, A_n]$, where the $A_j$ are measurable subsets of $S$.
For a measurable function $X:S^\NN\to\RR$, let $\EE(X)=\int_{G^\NN} X \, d\nu^\NN$ and
$\EE(X \mid \cB_n)$ denote the \emph{expectation} of $X$ conditioned to $\cB_n$, that is,
the (essentially unique) $\cB_n$-measurable function such that
$$
\int_{G^\NN} \EE(X \mid \cB_n) \, d\nu^\NN = \int_{G^\NN} X \, d\nu^\NN.
$$

The next lemma is a particular instance of the Azuma--Hoeffding inequality (Azuma~\cite{Azu67},
Hoeffding~\cite{Hoe63}) for sums of bounded random variables:

\begin{lemma}\label{l_sec3_Azuma-Hoeffding}
Let $Y_n: G^\NN \to \RR$, $n\in\NN$ be such that $A = \sup_n \|Y_n\|$ is finite
and
\begin{equation}\label{eq_sec4_Azuma-Hoeffding}
\EE(Y_{i_1} \cdots Y_{i_k})=0
\text{ for every $1 \le i_1 < \cdots < i_k$.}
\end{equation}
Then, for any $s>0$ and $n\in\NN$,
$$
\nu^\NN\left(\{\ug:|Y_1 + \cdots + Y_n|(\ug) \ge s\}\right) \le 2\exp\left(-\frac{s^2}{2nA^2}\right).
$$
\end{lemma}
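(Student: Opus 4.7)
The plan is to apply the Chernoff scheme and reduce everything to a moment generating function bound that can use hypothesis~\eqref{eq_sec4_Azuma-Hoeffding}. The difficulty is that the hypothesis concerns only products of \emph{distinct} $Y_i$'s, so the higher powers $Y_i^k$ are unconstrained and a direct Taylor expansion of $e^{tS_n}$ (with $S_n=Y_1+\cdots+Y_n$) is useless. The trick, borrowed from Hoeffding's original argument, is to bound each exponential by an \emph{affine} function of $Y_i$: products of affine functions expand as sums of products of distinct $Y_i$'s, which is precisely where the hypothesis kicks in.

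Concretely, for $t>0$ start with the Chernoff inequality
$$
\nu^\NN\bigl(\{\ug:S_n(\ug)\ge s\}\bigr) \le e^{-ts}\,\EE\bigl(e^{tS_n}\bigr).
$$
Since $x\mapsto e^{tx}$ is convex on $[-A,A]$ and $\|Y_i\|\le A$, the pointwise inequality
$$
e^{tY_i} \le \cosh(tA) + \frac{\sinh(tA)}{A}\,Y_i
$$
holds, with a positive right-hand side. Multiplying these bounds over $i=1,\ldots,n$ (legitimate because both sides are nonnegative) and expanding,
$$
e^{tS_n} \le \sum_{S\subseteq\{1,\ldots,n\}} \cosh(tA)^{n-|S|}\Bigl(\frac{\sinh(tA)}{A}\Bigr)^{|S|}\prod_{i\in S}Y_i.
$$
Taking expectations, hypothesis~\eqref{eq_sec4_Azuma-Hoeffding} kills every summand with $S\ne\emptyset$, leaving only the $S=\emptyset$ term: $\EE(e^{tS_n})\le\cosh(tA)^n\le e^{nt^2A^2/2}$, where the last step is the elementary inequality $\cosh(x)\le e^{x^2/2}$.

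Plugging back into Chernoff gives $\nu^\NN(\{S_n\ge s\}) \le \exp(-ts+nt^2A^2/2)$; optimizing with $t=s/(nA^2)$ produces the one-sided tail bound $e^{-s^2/(2nA^2)}$. The opposite tail $\nu^\NN(\{S_n\le -s\})$ follows by applying exactly the same argument to the family $(-Y_n)_n$, which satisfies~\eqref{eq_sec4_Azuma-Hoeffding} as well, and summing the two contributions produces the factor $2$ in the statement. The only nontrivial step is the convexity replacement of exponentials by affine functions: this is what makes the algebraically restrictive hypothesis~\eqref{eq_sec4_Azuma-Hoeffding} usable; the rest is bookkeeping.
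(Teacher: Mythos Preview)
Your proof is correct and follows essentially the same route as the paper's: bound each $e^{tY_i}$ by an affine function of $Y_i$ via convexity, expand the product so that the hypothesis~\eqref{eq_sec4_Azuma-Hoeffding} annihilates all cross terms, use $\cosh x\le e^{x^2/2}$, and then optimize the Chernoff bound. The only cosmetic difference is that the paper keeps track of the individual norms $\|Y_i\|$ (obtaining $\prod_i\cosh(t\|Y_i\|)$ before bounding by $\exp\bigl(\tfrac{t^2}{2}\sum_i\|Y_i\|^2\bigr)$) while you use the uniform bound $A$ from the start; this makes no difference for the stated conclusion.
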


\begin{proof}
Since the exponential function is convex,
$$
\begin{aligned}
e^{ax}
= \exp\left(a\left(\frac{1+x}{2}\right) - a \left(\frac{1-x}{2}\right)\right)
\le \frac{1+x}{2}e^a + \frac{1-x}{2}e^{-a}
= \cosh a + x \sinh a
\end{aligned}
$$
for every $x\in [-1,1]$ and $a\in\RR$. Taking $x=Y_i/\|Y_i\|$ and $a=t\|Y_i\|$, we get that
$$
e^{tY_i} % = e^{aX} \le \cosh a + x \sinh a  =
\le \cosh (t\|Y_i\|) + \frac{Y_i}{\|Y_i\|}\sinh(t\|Y_i\|)
$$
for any $t\in\RR$. The hypothesis \eqref{eq_sec4_Azuma-Hoeffding} implies that
$$
\EE\left(\prod_{i=1}^n (a_i + b_i Y_i)\right)= \prod_{i=1}^n a_i
$$
for  any real numbers $a_1, \dots, a_n$ and $b_1, \dots, b_n$. Hence,
$$
\begin{aligned}
\EE\left(e^{t\sum_{i=1}^n Y_i}\right)
& %= \EE\left(\prod_{i=1}^n e^{t Y_i}\right)
\le \EE\left(\prod_{i=1}^n \cosh (t\|Y_i\|) + \frac{Y_i}{\|Y_i\|}\sinh(t\|Y_i\|)\right)
= \prod_{i=1}^n \cosh (t\|Y_i\|).
\end{aligned}
$$
Using the fact that, for every $x\in\RR$,
$$
\cosh x
= \sum_{k=0}^\infty \frac{x^{2k}}{(2k)!}
\le \sum_{k=0}^\infty \frac{x^{2k}}{2^k k!}
= e^{x^2/2}
$$
we conclude that
$$
\EE\left(e^{t\sum_{i=1}^n Y_i}\right)
\le \prod_{i=1}^n \exp\left(\frac{t^2}{2} \|Y_i||^2\right)
= \exp\left(\frac{t^2}{2}\sum_{i=1}^n \|Y_i\|^2\right)
$$
for any $t\in\RR$. Then, by the Chebyshev inequality,
$$
\begin{aligned}
\nu^\NN\left(\{\ug:\sum_{j=1}^n Y_j (\ug)\ge s\}\right)
& = \nu^\NN\left(\{\ug: e^{t \sum_{j=1}^n Y_j(\ug)} \ge e^{ts} \}\right)\\
& \le e^{-ts} \EE\left(e^{t\sum_{i=1}^nY_j}\right)
\le \exp\left(-ts + \frac{t^2}{2}\sum_{i=1}^n \|Y_i\|^2\right)
\end{aligned}
$$
for any $t \in \RR$.  Taking $t=s/\sum_{i=1}^n \|Y_i\|^2$, we conclude that
$$
\nu^\NN\left(\{\ug:\sum_{i=1}^n Y_j(\ug) \ge s\}\right)
\le \exp\left(-\frac{s^2}{2\sum_{i=1}^n \|Y_i\|^2}\right)
\le \exp\left(-\frac{s^2}{2nA^2}\right)
$$
Analogously, replacing each $Y_j$ with $-Y_j$,
$$
\nu^\NN\left(\{\ug:\sum_{i=1}^n Y_j (\ug) \le - s\}\right)
%\le \exp\left(-\frac{s^2}{2\sum_{i=1}^n \sum_i=1}^n \|\|Y_i\|^2}\right),
\le \exp\left(-\frac{s^2}{2nA^2}\right).
$$
Adding these two inequalities, we get the conclusion of the lemma.
\end{proof}

\begin{lemma}\label{l_sec4_FK1}
For any $\psi\in C(S\times P)$ and $\vep>0$ there exist $C_1=C_1(\psi,\vep)>0$ and $c_1=c_1(\psi,\vep)>0$
such that for any $v\in\RR^d$ and $N\in\NN$ there exists a measurable set $\cE_1=\cE_1(\nu,\psi,\vep,v,N)$
satisfying:
\begin{enumerate}
\item $\nu^\NN(\cE_1^c) \le C_1 e^{-c_1N}$ and
\item $\displaystyle{\frac 1n \sum_{j=0}^{n-1} \cQ_\nu\psi(g_{j-1}, g_{j-1}\cdots g_0v)-\psi(g_j,g_j\cdots g_0v) \in (-\vep,\vep)}$
      for any $\ug\in\cE_1$ and $n\ge N$.
\end{enumerate}
\end{lemma}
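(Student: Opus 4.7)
The plan is to recognize the partial sums in property (2) as those of a uniformly bounded martingale difference sequence, and then apply the Azuma--Hoeffding estimate just proved in Lemma~\ref{l_sec3_Azuma-Hoeffding}, followed by a union bound over $n\ge N$.

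For each $j\ge 0$ I would set
$$Y_j(\ug) = \cQ_\nu\psi(g_{j-1},\, g_{j-1}\cdots g_0 v) - \psi(g_j,\, g_j\cdots g_0 v),$$
with the convention that the empty product $g_{-1}\cdots g_0 v$ is read as $v$ when $j=0$ (this is harmless because $\cQ_\nu\psi(g',\cdot)$ does not depend on $g'$). Since $|\psi|\le\|\psi\|$ and $|\cQ_\nu\psi|\le \|\psi\|$, each $Y_j$ is bounded by $A := 2\|\psi\|$. Using the identity $\cQ_\nu\psi(g',w) = \int_S \psi(g,gw)\,d\nu(g)$, the independence of $g_j$ from $\cB_{j-1}$ under $\nu^\NN$, and the fact that $g_{j-1}\cdots g_0 v$ is $\cB_{j-1}$-measurable, one checks that
$$\EE\bigl(\psi(g_j, g_j\cdots g_0 v)\mid \cB_{j-1}\bigr) = \cQ_\nu\psi(g_{j-1}, g_{j-1}\cdots g_0 v),$$
so $\EE(Y_j\mid \cB_{j-1})=0$. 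For indices $0\le i_1<\cdots<i_k$, conditioning on $\cB_{i_k-1}$ and pulling out the $\cB_{i_{k-1}}$-measurable factors $Y_{i_1},\dots,Y_{i_{k-1}}$ then yields $\EE(Y_{i_1}\cdots Y_{i_k})=0$, so the hypothesis of Lemma~\ref{l_sec3_Azuma-Hoeffding} is satisfied (after a trivial reindexing).

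Applying the Azuma--Hoeffding inequality with $s=n\vep$ then gives
$$\nu^\NN\Bigl(\bigl\{\ug : \bigl|\textstyle\sum_{j=0}^{n-1}Y_j(\ug)\bigr|\ge n\vep\bigr\}\Bigr) \le 2\exp\bigl(-n\vep^2/(2A^2)\bigr).$$
Setting $\cE_1 = \bigcap_{n\ge N}\bigl\{\ug : \bigl|\sum_{j=0}^{n-1}Y_j(\ug)\bigr| < n\vep\bigr\}$, a geometric series bound gives $\nu^\NN(\cE_1^c) \le C_1 e^{-c_1 N}$ with $c_1 = \vep^2/(2A^2)$ and $C_1 = 2/(1-e^{-\vep^2/(2A^2)})$, depending only on $\psi$ and $\vep$. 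On $\cE_1$ the averaged sum lies in $(-\vep,\vep)$ for every $n\ge N$, which is exactly property (2). The argument is essentially mechanical once the martingale identification is made; the only slightly delicate point is confirming that the constants are uniform in $v$ and $N$, which holds because $A$ depends only on $\|\psi\|$ and the tail bound does not see the initial vector $v$.
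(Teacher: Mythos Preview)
Your proof is correct and essentially identical to the paper's own argument: the paper also defines $Y_j$ exactly as you do, verifies $\EE(Y_j\mid\cB_{j-1})=0$ from the definition of $\cQ_\nu$, checks the mixed-moment hypothesis of Lemma~\ref{l_sec3_Azuma-Hoeffding} by conditioning on $\cB_{i_k-1}$, and concludes with the same union bound and the same constants $c_1=\vep^2/(8\|\psi\|^2)$, $C_1=2\sum_{j\ge0}e^{-c_1 j}$.
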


\begin{proof}
We may suppose that $\|\psi\|>0$ for otherwise the statement is trivial. Given $v\in\RR^d$ and $n\in\NN$, define
$$
Y_n(\ug) = \cQ_\nu\psi(g_{n-1}, g_{n-1}\cdots g_0v)-\psi(g_n,g_n\cdots g_0v).
$$
Clearly, $\|Y_n\| \le 2 \|\psi\|$. Moreover, $Y_n$ is $\cB_n$-measurable and, by the definition of $\cQ_\nu$,
$$
\EE(Y_n\mid \cB_{n-1}) = \cQ_\nu\psi(g_{n-1}, g_{n-1}\cdots g_0v)-\int_G\psi(g,gg_{n-1}\cdots g_0v)\, d\nu(g) \equiv 0.
$$
Thus, given any $k$-uple $(i_1, \dots, i_{k-1},i_k)$ with $i_k>i_j$ for every $j=1, \dots, k-1$,
$$
\EE\left(Y_{i_1}\cdots Y_{i_{k-1}}Y_{i_k}\right) =
\EE\left(Y_{i_1}\cdots Y_{i_{k-1}}\EE(Y_{i_k}\mid B_{i_k-1})\right)=0.
$$
So, $(Y_n)_n$ satisfies the hypotheses of Lemma~\ref{l_sec4_FK1}. Thus,
$$
\nu^\NN\left(\{\ug\in G^n: \frac 1n |Y_0 (\ug) + \cdots + Y_{n-1}(\ug)| \ge \vep\}\right)
\le 2 \exp\left(-\frac{n\vep^2}{8\|\psi\|^2}\right)
$$
for every $\vep>0$ and every $n\in\NN$. The conclusion follows by letting $\cE_1$ be the set of
all $\ug$ such that the claim in part (2) holds for every $n\ge N$ and taking $c_1=\vep^2/(8\|\psi\|^2)$ and
$C_1=2\sum_{j=0}^\infty e^{-cj}$.
\end{proof}

\begin{lemma}\label{l_sec4_FK2}
For every $\varphi\in C(S\times P)$ and $\vep>0$ there exist $C_2=C_2(\nu, \varphi,\vep)>0$ and
$c_2=c_2(\nu, \varphi,\vep)>0$ such that, for every $v\in P$ and $N\in\NN$ there exists a measurable set
$\cE_2=\cE_2(\nu, \varphi,\vep,v,N)\subset G^\NN$ satisfying:
\begin{enumerate}
\item $\nu^\NN(\cE_2^c) \le C_2 e^{-c_2N}$ and
\item$\displaystyle{\frac 1n \sum_{j=0}^{n-1} \varphi(g_j,g_j\dots g_0v) < \Sigma(\nu,\varphi)+\vep}$ for every $\ug\in\cE_2$ and $n\ge N$.
\end{enumerate}
\end{lemma}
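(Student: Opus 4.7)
The plan is to combine the cohomological decomposition of Lemma~\ref{l_sec4_cohomology} with the martingale large-deviation bound of Lemma~\ref{l_sec4_FK1}. The former replaces the arbitrary observable $\varphi$ by a small error plus a coboundary, and the latter controls the coboundary through its telescoping structure.

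First, since Lemma~\ref{l_sec4_cohomology} requires $\varphi\ge 0$, I would apply it to $\varphi+\|\varphi\|$ instead. This is harmless: $\cQ_\nu$-invariant probability measures integrate constants to themselves, so both $\Sigma(\nu,\cdot)$ and the Birkhoff average shift by the same constant $\|\varphi\|$. With tolerance $\vep/4$, Lemma~\ref{l_sec4_cohomology} yields $\psi,\xi\in C(S\times P)$ with $\varphi+\|\varphi\|=\cQ_\nu\psi-\psi+\xi$ and $\|\xi\|\le\Sigma(\nu,\varphi)+\|\varphi\|+\vep/4$. Setting $\tilde\xi=\xi-\|\varphi\|$ then gives the decomposition $\varphi=\cQ_\nu\psi-\psi+\tilde\xi$ with the pointwise upper bound $\tilde\xi(g,v)\le\Sigma(\nu,\varphi)+\vep/4$.

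Substituting this decomposition into the Birkhoff sum evaluated along the orbit $(g_j,g_j\cdots g_0 v)$, the $\tilde\xi$-contribution is pointwise bounded above by $\Sigma(\nu,\varphi)+\vep/4$. For the coboundary piece $\sum_{j=0}^{n-1}[\cQ_\nu\psi-\psi](g_j,g_j\cdots g_0 v)$, I would shift the $\cQ_\nu\psi$-index down by one to match the form appearing in Lemma~\ref{l_sec4_FK1}; doing so changes the sum only by the boundary term $\cQ_\nu\psi(g_{n-1},g_{n-1}\cdots g_0 v)-\cQ_\nu\psi(\cdot,v)$, which is bounded in absolute value by $2\|\psi\|$, hence $\le\vep/4$ when divided by $n\ge N$ provided $N\ge 8\|\psi\|/\vep$. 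Lemma~\ref{l_sec4_FK1}, applied with $\vep/4$ in place of $\vep$, then produces a set $\cE_2=\cE_1(\nu,\psi,\vep/4,v,N)$ with $\nu^\NN(\cE_2^c)\le C_1 e^{-c_1 N}$ on which the martingale-style sum lies in $(-\vep/4,\vep/4)$ for every $n\ge N$.

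Summing the three contributions gives
$$
\frac{1}{n}\sum_{j=0}^{n-1}\varphi(g_j,g_j\cdots g_0 v) \;<\; \Sigma(\nu,\varphi)+\tfrac{\vep}{4}+\tfrac{\vep}{4}+\tfrac{\vep}{4} \;<\; \Sigma(\nu,\varphi)+\vep
$$
on $\cE_2$ for all $n\ge N$, with the constants $C_2,c_2$ in the statement inherited from $C_1,c_1$ of Lemma~\ref{l_sec4_FK1}. I do not foresee a conceptual obstacle: the only point of care is the bookkeeping needed to reconcile the two slightly different ``cohomological'' forms used in Lemmas~\ref{l_sec4_cohomology} and~\ref{l_sec4_FK1} (same-index difference versus the martingale-style shifted difference), but this discrepancy is a boundary term of size $O(1/n)$ and causes no real trouble.
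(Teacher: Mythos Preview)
Your proposal is correct and follows essentially the same route as the paper: reduce to $\varphi\ge 0$ (you do it by shifting by $\|\varphi\|$, the paper says ``WLOG''), apply Lemma~\ref{l_sec4_cohomology} with tolerance $\vep/4$, then control the coboundary via Lemma~\ref{l_sec4_FK1} plus an $O(\|\psi\|/n)$ boundary term from the index shift. The only detail you leave implicit is how to handle $N<8\|\psi\|/\vep$; the paper does this by enlarging $C_2$ to $\max\{C_1,e^{c_2 L}\}$ and taking $\cE_2=G^\NN$ for small $N$, which you should mention since the statement requires the constants to work for \emph{all} $N$.
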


\begin{proof}
Neither the hypothesis nor the conclusion are affected if one replaces $\varphi$ with $\varphi+\const$
(clearly, $\Sigma_{\varphi+\const}=\Sigma(\nu,\varphi)+\const$). Thus, it is no restriction to suppose that
$\varphi$ is non-negative. Then, by Lemma~\ref{l_sec4_cohomology}, for any $\vep>0$ we may find
$\psi, \xi \in C(S\times P)$ such that $\varphi=\cQ_\nu\psi-\psi + \xi$ and $\|\xi\| < \Sigma(\nu,\varphi)+\vep/4$.
Then
$$
\begin{aligned}
\frac 1n \sum_{j=0}^{n-1} \varphi(g_j,g_j\dots g_0v)
& = \frac 1n \sum_{j=1}^n \cQ_\nu\psi(g_{j-1},g_{j-1}\dots g_0v) - \psi(g_j,g_j\dots g_0v)\\
& + \frac 1n \sum_{j=0}^{n-1} \xi(g_j,g_j\dots g_0v) + \frac 1n \psi(g_n,g_n\cdots g_0v) - \frac 1n \psi(g_0,g_0v).
\end{aligned}
$$
Let $C_1$, $c_1$ and $\cE_1$ be as in Lemma~\ref{l_sec3_Azuma-Hoeffding}, with $\vep$ replaced with $\vep/4$.
The  first term in the previous sum is less than $\vep/4$ for every $\ug\in\cE_1$.
The second term is bounded above by $\|\xi\| < \Sigma(\nu,\varphi) + \vep/4$.
Let $L=[\|\psi\|/(4\vep)]$.  The third and fourth terms are less than $\vep/4$ if $n > L$.
Thus, for every $\ug\in\cE_1$ and $n\ge N > L$,
$$
\frac 1n \sum_{j=0}^{n-1} \varphi(g_j,g_j\dots g_0v) < \Sigma(\nu,\varphi)+\vep.
$$
Take $c_2=c_1$ and $C_2=\max\{C_1, e^{c_2L}\}$.
Define $\cE_2 = \cE_1$ if $N > L$ and $\cE_2 = G^\NN$ otherwise.
\end{proof}

\begin{corollary}\label{c_sec4_FK}
Let $\varphi\in C(S\times P)$ be such that $\int_{S\times P} \varphi \, d\lambda=\Sigma(\nu,\varphi)$
for every $\cQ_\nu$-invariant probability measure $\lambda$. For $\vep>0$ there exist $C_3=C_3(\nu, \varphi,\vep)>0$
and $c_3=c_3(\nu, \varphi,\vep)>0$ such that, for every $v\in P$ and $N\in\NN$
there exists a measurable set $\cE_3=\cE_3(\nu, \varphi,\vep,v,N)\subset G^\NN$ satisfying:
\begin{enumerate}
\item $\nu^\NN(\cE_3^c) \le C_3 e^{-c_2N}$ and
\item $\displaystyle{\frac 1n \sum_{j=0}^{n-1} \varphi(g_j,g_j\dots g_0v) \in(\Sigma(\nu,\varphi)-\vep, \Sigma(\nu,\varphi)+\vep)}$
      for $\ug\in\cE_3$ and $n\ge N$.
\end{enumerate}
\end{corollary}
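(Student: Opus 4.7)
The plan is to obtain the two-sided estimate of Corollary~\ref{c_sec4_FK} by applying Lemma~\ref{l_sec4_FK2} separately to $\varphi$ and to $-\varphi$, and then intersecting the resulting favorable sets. The hypothesis that $\int_{S\times P}\varphi\,d\lambda=\Sigma(\nu,\varphi)$ for \emph{every} $\cQ_\nu$-invariant probability measure $\lambda$ is what makes this symmetric application possible: it immediately yields the identity
$$
\Sigma(\nu,-\varphi)=\sup_{\lambda}\int_{S\times P}(-\varphi)\,d\lambda=-\Sigma(\nu,\varphi),
$$
because every $\cQ_\nu$-invariant probability measure gives $\int(-\varphi)\,d\lambda=-\Sigma(\nu,\varphi)$, so the supremum is attained on every such $\lambda$.

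First, I apply Lemma~\ref{l_sec4_FK2} to $\varphi$ itself, with the given $\vep$. This produces constants $C_2^+,c_2^+>0$ and, for each $v\in P$ and $N\in\NN$, a set $\cE_2^+\subset G^\NN$ with $\nu^\NN((\cE_2^+)^c)\le C_2^+ e^{-c_2^+ N}$ on which
$$
\frac{1}{n}\sum_{j=0}^{n-1}\varphi(g_j,g_j\cdots g_0 v)<\Sigma(\nu,\varphi)+\vep
$$
for every $\ug\in\cE_2^+$ and $n\ge N$. This is the upper half of condition~(2).

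Second, I apply the same lemma to $-\varphi\in C(S\times P)$, again with tolerance $\vep$. This yields constants $C_2^-,c_2^-$ and a set $\cE_2^-$ of $\nu^\NN$-measure at least $1-C_2^- e^{-c_2^- N}$ on which, using the identity $\Sigma(\nu,-\varphi)=-\Sigma(\nu,\varphi)$ established above,
$$
-\frac{1}{n}\sum_{j=0}^{n-1}\varphi(g_j,g_j\cdots g_0 v)<-\Sigma(\nu,\varphi)+\vep,
$$
which is exactly the lower half of condition~(2).

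Finally, I set $\cE_3=\cE_2^+\cap\cE_2^-$, $C_3=C_2^++C_2^-$, and $c_3=\min(c_2^+,c_2^-)$. A union bound gives $\nu^\NN(\cE_3^c)\le C_3 e^{-c_3 N}$, and both inequalities above hold simultaneously on $\cE_3$, proving condition~(2). There is no real obstacle here: the whole content lies in recognizing that the hypothesis forces $\Sigma$ to behave linearly under negation of $\varphi$, after which Lemma~\ref{l_sec4_FK2} applied twice does all of the work.
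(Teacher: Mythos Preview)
Your proof is correct and essentially identical to the paper's: both observe that the hypothesis forces $\Sigma(\nu,-\varphi)=-\Sigma(\nu,\varphi)$, apply Lemma~\ref{l_sec4_FK2} separately to $\varphi$ and $-\varphi$, and then intersect the resulting good sets with $C_3$ the sum of the two $C_2$'s and $c_3$ the minimum of the two $c_2$'s.
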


\begin{proof}
The assumption implies that $\Sigma(\nu,-\varphi)=-\Sigma(\nu,\varphi)$. So, applying Lemma~\ref{l_sec4_FK2}  also to
the function $-\varphi$ we get $C_2(-\varphi,\vep)$ and $c_2(-\varphi,\vep)$ and
$\cE_2(-\varphi,\vep,v,N)$ such that
$$
\frac 1n \sum_{j=0}^{n-1} \varphi(g_j,g_j\dots g_0v) > \Sigma(\nu,\varphi)-\vep.
$$
for any $\ug\in\cE_2(-\varphi,\vep,v,N)$ and $n\ge N$.
To conclude, take $C_3$ to be the sum of $C_2(\nu,\pm\varphi,\vep)$ and $c_3$ to be the minimum of
$c_2(\nu,\pm\varphi,\vep)$ and $\cE_3$ to be the intersection of $\cE_2(\nu,\pm\varphi,\vep,v,N)$.
\end{proof}

We also need to interpret $\Sigma(\nu,\varphi)$ in terms of the $\nu$-stationary measures. That is the purpose
of the next lemma:

\begin{lemma}\label{l_sec4_FK3}
Let $\iota:S \times P \to S \times P$, $\iota(g,v) = (g,gv)$. Then $\eta \mapsto \iota_*(\nu\times\eta)$ maps
the set of $\nu$-stationary measures bijectively to the set of $\cQ_\nu$-invariant probability measures.
Its inverse is the push-forward $\pi_*$ of the canonical projection $\pi:S \times P \to P$.
\end{lemma}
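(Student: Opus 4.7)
The plan is to verify the two compositions are the identity, using in an essential way the fact that $\cQ_\nu\psi(g',v)$ does not depend on $g'$. First I would check that if $\eta$ is $\nu$-stationary then $\lambda:=\iota_*(\nu\times\eta)$ is $\cQ_\nu$-invariant. Unpacking the definitions,
$$
\int_{S\times P} \cQ_\nu\psi \, d\lambda
= \int \!\!\int \psi(h,hgv)\,d\nu(h)\,d\nu(g)\,d\eta(v).
$$
For each fixed $h\in S$, applying $\nu$-stationarity of $\eta$ to the function $v\mapsto \psi(h,hv)$ collapses the inner pair of integrals in $(g,v)$ to $\int \psi(h,hv)\,d\eta(v)$, and integrating against $d\nu(h)$ gives exactly $\int\psi\,d\lambda$. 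Thus $\cQ_\nu^*\lambda=\lambda$.

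Next I would verify that $\pi_*\iota_*(\nu\times\eta)=\eta$ whenever $\eta$ is $\nu$-stationary. Since $\pi\circ\iota(g,v)=gv$, for any measurable $A\subset P$,
$$
\pi_*\iota_*(\nu\times\eta)(A) = \int_{S\times P}\chi_A(gv)\,d\nu(g)\,d\eta(v) = \eta(A),
$$
where the last equality uses stationarity applied to $\chi_A$. Together with the first step this shows that $\eta\mapsto\iota_*(\nu\times\eta)$ is injective with left inverse $\pi_*$, and takes values in the set of $\cQ_\nu$-invariant probability measures.

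It remains to prove surjectivity: given a $\cQ_\nu$-invariant probability $\lambda$ on $S\times P$, set $\eta:=\pi_*\lambda$; I claim $\eta$ is $\nu$-stationary and $\iota_*(\nu\times\eta)=\lambda$. The key point — which I expect to be the only mildly delicate step — is to exploit the degeneracy of $\cQ_\nu$ in the first coordinate: for any $\psi\in C(S\times P)$, the function $\cQ_\nu\psi$ depends only on $v$, so
$$
\int \psi\,d\lambda = \int \cQ_\nu\psi\,d\lambda = \int \cQ_\nu\psi(v)\,d\eta(v) = \int\!\!\int \psi(g,gv)\,d\nu(g)\,d\eta(v) = \int\psi\,d\iota_*(\nu\times\eta),
$$
which forces $\lambda=\iota_*(\nu\times\eta)$ by density of $C(S\times P)$ in the relevant duality. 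Stationarity of $\eta$ is then an immediate corollary: applying the displayed identity to $\psi(g,v)=\phi(v)$ for $\phi\in C(P)$ gives $\int\phi\,d\eta=\int\phi(gv)\,d\nu(g)\,d\eta(v)$. There is no real obstacle beyond carefully juggling the two projections and the single stationarity identity; the whole argument is a short bookkeeping exercise once one notices that $\cQ_\nu$-invariance effectively determines $\lambda$ from its $P$-marginal.
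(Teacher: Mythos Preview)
Your proof is correct and follows essentially the same approach as the paper's: both arguments hinge on the observation that $\cQ_\nu\psi$ depends only on the $P$-coordinate, which forces any $\cQ_\nu$-invariant $\lambda$ to equal $\iota_*(\nu\times\pi_*\lambda)$, with stationarity of $\pi_*\lambda$ then read off by testing against functions of $v$ alone. The only cosmetic difference is ordering (the paper treats the $\lambda\to\eta$ direction first) and that you spell out $\pi_*\iota_*(\nu\times\eta)=\eta$ explicitly, which the paper leaves implicit.
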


\begin{proof}
Given any probability measure $\lambda$ with $\cQ_\nu^*\lambda=\lambda$, let $\eta$ be its projection to $P$.
For any bounded measurable function $\varphi:S \times P \to \RR$,
$$
\begin{aligned}
\int_{S \times P} \varphi(g',v) & \, d\lambda(g',v)
= \int_{S \times P} \cQ_\nu\varphi (g',v) \, d\lambda(g',v)\\
& = \int_{S \times S \times P} \varphi(g,gv) \,d\nu(g) \, d\lambda(g',v)
= \int_{S \times P} \varphi(g,g v) \,d\nu(g)\,d\eta(v)
\end{aligned}
$$
(because the integrand does not depend on $g'$). Moreover, the special case when $\varphi$ does not depend on $g$
means that
$$
\begin{aligned}
\int_P \phi(v) \, d\eta(v)
& = \int_{S \times P} \phi(v) \, d\lambda(g', v)
= \int_{S \times P} \cQ_\nu\phi(g',v) \, d\lambda(g', v)\\
& = \int_{S \times S \times P} \phi(gv) \, d\nu(g) \, d\lambda(g',v)
= \int_{S \times P} \phi(gv) \, d\nu(g) \, d\eta(v)
\end{aligned}
$$
for any bounded measurable function $\phi:P\to\RR$. In other words, $\eta$ is $\nu$-stationary.

Conversely, given any $\nu$-stationary measure $\eta$, define $\lambda$ on $S \times P$ by
$$
\int_{S \times P} \varphi(g',v) \, d\lambda(g',v)
= \int_{S \times P} \varphi(g,g v) \,d\nu(g)\,d\eta(v).
$$
for any bounded measurable function $\varphi:S \times P \to \RR$. Then
$$
\begin{aligned}
\int_{S \times P} \cQ_\nu\varphi(g',v) \, d\lambda(g',v)
& = \int_{S \times P} \cQ_\nu\varphi(g',g'v) \, d\nu(g') \, d\eta(v)\\
& = \int_{S \times S \times P} \varphi(g,gg'v) \, d\nu(g) \, d\nu(g') \, d\eta(v).
\end{aligned}
$$
Since $\eta$ is $\nu$-stationary, the right-hand side may be rewritten as
$$
\int_{S \times P} \varphi(g,gv) \, d\nu(g) \, d\eta(v)
= \int_{S \times P} \varphi(g,v) \, d\lambda(g,v).
$$
Combining these two identities, one sees that $\cQ_\nu^*\lambda=\lambda$.
\end{proof}

\begin{proof}[Proof of Theorem~\ref{t_sec4_uniform_estimate}]
Consider $\varphi=\Phi\circ\iota^{-1}$, that is,
$$
\varphi:S \times P \to\RR, \quad \varphi(g,v) = \log\frac{\|v\|}{\|g^{-1}v\|}.
$$
Lemma~\ref{l_sec4_FK3} implies that
$$
\begin{aligned}
\Sigma(\nu,\varphi)
& = \sup \big\{\int_{S \times P} \varphi \, d\lambda: \cQ_\nu^*\lambda=\lambda\big\}\\
%= \sup \big\{\int_{S \times P} \varphi \circ \iota \, d(\nu\times\eta): \text{ $\eta$ is a $\nu$-stationary measure}\big\}
& = \sup \big\{\int_{S \times P} \Phi \, d(\nu\times\eta): \text{ $\eta$ is a $\nu$-stationary measure}\big\}
= \lambda_1(\nu)
\end{aligned}
$$
and the hypothesis of Corollary~\ref{c_sec4_FK} corresponds precisely to the hypothesis of Theorem~\ref{t_sec4_uniform_estimate}.
Take $C=C_3(\nu, \varphi,\vep)$ and $c=c_3(\nu, \varphi,\vep)$ and
$\cE=\cE_3(\nu, \varphi,\vep,v,N)$. Then $\nu^\NN(\cE^c) \le C e^{-cn}$ and for any $\ug\in\cE$,
$$
\begin{aligned}
\frac 1n \log \frac{\|g_{n-1}\cdots g_0 v\|}{\|v\|}
& = \frac 1n \sum_{j=0}^{n-1} \Phi\left(g_j, g_{j-1} \cdots g_0 v\right)
= \frac 1n \sum_{j=0}^{n-1} \varphi\left(g_j, g_j g_{j-1} \cdots g_0 v\right) \\
& \in (\Sigma(\nu,\varphi)-\vep,\Sigma(\nu,\varphi)+\vep)
= (\lambda_1(\nu)-\vep,\lambda_1(\nu)+\vep).
\end{aligned}
$$
This completes the proof of the theorem.
\end{proof}

\part{Preliminaries and outline of the proof}\label{p_two}

\section{The equator is a repeller}\label{s_random_repeller}

Let $(\nu_k)_k$ be a sequence converging to some $\nu_\infty$ in the space $\cMc$.
By \eqref{eq_sec3_Furstenberg_formula},
for every $k$ there exists a $\nu_k$-stationary measure $\eta_k$ on $P$ such that
$$
\lambda_1(\nu_k) = \int_{G \times P} \Phi \, d(\nu_k\times\eta_k).
$$
Since the space of probability measures on $P$ is weak$^*$-compact,
to prove that $(\lambda_1(\nu_k))$ converges to $\lambda_1(\nu_\infty)$ it is no restriction
to suppose that the sequence $(\eta_k)_k$ converges to some probability measure $\eta_\infty$.
Then
$$
\lambda_1(\nu_k) \to \int_{G \times P} \Phi \, d(\nu_\infty\times\eta_\infty)
\text{ when $k\to\infty$.}
$$
The measure $\eta_\infty$ is necessarily $\nu_\infty$-stationary (see \cite[Proposition~5.9]{LLE}).

Now there are two alternatives. If $\int_{G \times P} \Phi \, d(\nu_\infty\times\eta_\infty)=\lambda_1(\nu_\infty)$
then
$$
\lambda_1(\nu_k) \to \lambda_1(\nu_\infty),
$$
as we wanted to prove. Otherwise, we are in the setting of Section~\ref{s_uniform_estimate}:
there is a proper subspace $E$ of $\RR^d$ such that
\begin{itemize}
\item[(i)] $E$ is $\nu_\infty$-invariant and
$$
\lim_n \frac 1n \log \|g_{n-1}\cdots g_0 \mid E\| < \lambda_1(\nu_\infty).
$$
\item[(ii)] For any $\vep>0$ and $\delta>0$ there exists $N=N(\nu_\infty,\vep, \delta)\in\NN$ and for every
$v^\perp\in\proj(E^\perp)$ there exists $\cE=\cE(\nu_\infty,\vep,\delta,v^\perp)$ such that
$\nu_\infty^\NN(\cE^c) < \delta$ and
$$
\frac 1n \log \frac{\|g_{n-1}^\perp \cdots g_0^\perp v^\perp\|}{\|v^\perp\|} \in (\lambda_1(\nu_\infty)-\vep,\lambda_1(\nu_\infty)+\vep)
$$
for any $\ug\in\cE$ and $n \ge N$.
\item[(iii)] $\eta_\infty(E)>0$.
\end{itemize}

We are going to see that the properties (i) - (iii) are incompatible with the fact that $\eta_\infty$
is the limit of stationary measures for nearby random walks.
Indeed, if $E$ satisfies (i) and (ii) then it is a kind of repeller for the random walk on $P$ associated
to $\nu_\infty$ (the precise statements are in Section~\ref{ss_repelling_behavior}).
As we are going to see, that implies that the $\eta_k$-measure of any neighborhood of $E$
is small when $k$ is large, so that the limit $\eta_\infty$ cannot satisfy (iii);
the proof will use some general tools that we introduce in
Sections~\ref{ss_couplings} through~\ref{ss_Margulis_functions}.
Hence the second alternative above cannot actually occur,
and thus Theorem~\ref{theorem:second} will follow.

\subsection{Generic measures}\label{ss_generic_measures}

Actually, it suffices to carry these arguments in a special case, as we are going to explain.
For each $1 \le r \le d$, let $\grass(r,d)$ be the Grassmannian manifold of $r$-dimensional
subspaces of $\RR^d$. Moreover, let $\cF(r,d)$ be the space of flags
$$
F_1 \subset F_2 \subset \cdots \subset F_{r-1} \subset F_r \subset \RR^d,
$$
where each $F_i$ has dimension $i$.
Note that $\grass(1,d)=\cF(1,d)$ coincides with the projective space $P$.

The natural action of $G$ on the projective space extends to group actions on every $\grass(r,d)$ and $\cF(r,d)$.
Thus, in particular, to each probability measure $\nu$ on $G$ we may associate operators $\cP_\nu$ and $\cP_\nu^*$
acting, respectively, on bounded measurable functions and on measures of $\grass(r,d)$ or $\cF(r,d)$,
just as we did for $P$ in \eqref{eq_sec3_Markov_operator1} and \eqref{eq_sec3_Markov_operator2}:
\begin{equation}\label{eq_sec5_Markov_operator12}
\cP_\nu\psi(v) = \int_G \psi(gv) \, d\nu(g)
\text{ and }
\cP_\nu^*\eta = \int_G \left(g_*\eta\right) \, d\nu(g).
\end{equation}
We continue to say that a function $\psi$ is \emph{$\nu$-stationary} if $\cP_\nu\psi=\psi$
and a probability measure $\zeta$ is \emph{$\nu$-stationary} if $\cP_\nu^*\zeta=\zeta$.

A subset of an algebraic variety $X$  is \emph{Zariski-dense} if it is not contained in any proper
algebraic subvariety of $X$. The cases we are interested in are $X = \grass(r,d)$, $\cF(r,d)$, or $G$.
We call a measure $\eta$ on the algebraic variety \emph{generic} if $\eta(M)=0$ for any proper algebraic subvariety $M$. Then, in particular, $\eta$ is \emph{non-atomic},
meaning that $\eta(\{p\}) = 0$ for every point $p$ in the domain.

\begin{remark}\label{r_sec5_generic_elementary}
The restrictions $\eta \mid U$ of generic measures are generic, and so are the products
$\eta_1 \times \eta_2$ of generic measures. Moreover, if $\{\eta_t: t \in T\}$ is a family of
generic measures, and $\xi$ is a measure on $T$ then $\eta = \int_T \eta_t \, d\xi(t)$ is
a generic measure. In particular, the generic measures form a vector subspace.
\end{remark}

\begin{proposition}\label{p_sec5_GM}
Let $X = \grass(r,d)$ or $X=\cF(r,d)$. If $\nu$ is a probability measure whose support is Zariski-dense
in $G$ then $\nu$ admits a unique stationary measure $\eta$ on $X$, and this measure is generic.
\end{proposition}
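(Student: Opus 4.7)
Existence of a $\nu$-stationary measure on the compact variety $X$ is routine: $\cP_\nu^*$ from \eqref{eq_sec5_Markov_operator12} is weak$^*$ continuous on probabilities, so any weak$^*$ accumulation point of the Ces\`aro averages $n^{-1}\sum_{j<n}(\cP_\nu^*)^j\zeta$ is $\nu$-stationary. The substantive content is (a) every stationary $\eta$ is generic, and (b) uniqueness, which I would deduce from (a) via the classical Furstenberg--Guivarc'h contraction principle.

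\textbf{Genericity.} Let $\eta$ be $\nu$-stationary and suppose, for contradiction, it charges some proper subvariety. Let $d_0$ be the smallest dimension of an $\eta$-charged subvariety and put
$$c=\sup\{\eta(W):W\subset X\text{ irreducible of dimension }d_0\}>0.$$
Two distinct irreducibles of dimension $d_0$ meet in dimension $<d_0$ and hence in an $\eta$-null set, so the family $\cC_{\max}$ of $W$ attaining $\eta(W)=c$ satisfies $|\cC_{\max}|\cdot c\le 1$ and is finite and nonempty. Stationarity gives $c=\int_G\eta(g^{-1}W)\,d\nu(g)$ for each $W\in\cC_{\max}$, and since $\eta(g^{-1}W)\le c$ always and $g^{-1}W$ is again irreducible of dimension $d_0$, equality must hold $\nu$-a.e., forcing $g^{-1}W\in\cC_{\max}$ for $\nu$-a.e.\ $g$. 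By injectivity $g$ then acts as a permutation of the finite set $\cC_{\max}$, whence $\supp\nu$ is contained in the setwise stabilizer
$$H=\{g\in G:g\cdot\cC_{\max}=\cC_{\max}\}=\bigcup_{\sigma}\bigcap_{W\in\cC_{\max}}\{g:gW=\sigma(W)\},$$
a finite union of Zariski-closed loci indexed by permutations of $\cC_{\max}$, hence Zariski-closed in $G$. Zariski-density of $\supp\nu$ forces $H=G$, and then $\bigcup_{W\in\cC_{\max}}W$ is a proper $G$-invariant subvariety of $X$. This contradicts the transitivity of $G$ on $\grass(r,d)$ and on $\cF(r,d)$.

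\textbf{Uniqueness.} The genericity argument just given yields strong irreducibility: no finite union of proper subvarieties of $X$ is $\supp\nu$-invariant. Zariski-density of $\supp\nu$ additionally supplies proximality, because the semigroup generated by $\supp\nu$ must meet the Zariski-open locus of matrices with $r$ distinct top singular values, and such matrices act on $X=\grass(r,d)$ or $\cF(r,d)$ with a unique attracting fixed point. Under strong irreducibility and proximality, the Furstenberg--Guivarc'h contraction principle produces, for $\nu^\NN$-a.e.\ $\ug=(g_0,g_1,\ldots)$, a point $\xi(\ug)\in X$ such that $(g_0g_1\cdots g_n)_*\zeta\to\delta_{\xi(\ug)}$ weak$^*$ for every generic probability measure $\zeta$ on $X$. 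Applied to any generic $\nu$-stationary $\eta$, this gives $\eta=\int_{G^\NN}\delta_{\xi(\ug)}\,d\nu^\NN(\ug)$, pinning $\eta$ down uniquely.

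\textbf{Main obstacle.} The existence and uniqueness steps each reduce to standard ingredients once the setup is right; the real work is the genericity argument. The crux is to extract, from a hypothetical stationary measure that charges a proper subvariety, a \emph{finite} $G$-stable collection of subvarieties whose setwise stabilizer is genuinely Zariski-closed, so that Zariski-density of $\supp\nu$ can be leveraged to a contradiction with the transitivity of $G$ on $X$. Minimality in dimension together with the fact that distinct $d_0$-dimensional irreducibles are $\eta$-essentially disjoint is what makes the finiteness work.
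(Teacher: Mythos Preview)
The paper does not give a self-contained argument here; it simply cites Lemmas~4.2 and~4.5 of Gol'dsheid--Margulis~\cite{GoM89}. Your genericity argument is essentially the standard Furstenberg-type one and is correct, with a small omission: you assert $\cC_{\max}$ is nonempty without saying why the supremum $c$ is attained. Since distinct irreducible $d_0$-dimensional subvarieties meet in $\eta$-null sets, at most $\lfloor 2/c\rfloor$ of them can have measure exceeding $c/2$, so the supremum is taken over a finite set and is a maximum.

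The genuine gap is in the proximality step. You claim the semigroup generated by $\supp\nu$ ``must meet the Zariski-open locus of matrices with $r$ distinct top singular values.'' Singular-value conditions are semialgebraic, not Zariski-open, so Zariski-density of $\supp\nu$ gives no such conclusion. Even if you meant eigenvalues, the Zariski-open condition ``distinct eigenvalues'' is insufficient over $\RR$: complex-conjugate pairs have equal moduli, so a matrix with distinct eigenvalues need not be proximal on $\grass(r,d)$, and a singular-value gap for a single $g$ says nothing about iterates. What you actually need is the nontrivial theorem that a sub-semigroup of $\GL(V)$ whose Zariski closure is a group acting strongly irreducibly and containing a proximal element must itself contain a proximal element; this is precisely part of what Gol'dsheid--Margulis prove (cf.\ also Abels--Margulis--Soifer), and it does not reduce to a Zariski-openness argument. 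Once proximality is supplied by that theorem, your contraction argument for uniqueness goes through.
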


\begin{proof}
Lemmas~4.2 and~4.5 in Gol'dsheid, Margulis~\cite{GoM89}.
\end{proof}

\begin{proposition}\label{p_sec5_Zariski1}
Every $\nu\in\cMc$ is approximated by generic probability measures whose supports
are Zariski-dense.
\end{proposition}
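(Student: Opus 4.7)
The plan is to set $\nu_\vep = \nu * \mu_\vep$, where $\mu_\vep$ is a small absolutely continuous probability measure supported on a neighborhood of the identity in $G$. This single construction simultaneously produces a measure that is close to $\nu$ in $\cT$, generic, and has Zariski-dense support.

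Concretely, for each $\vep > 0$ I would fix a symmetric open neighborhood $U_\vep$ of the identity in $G$ of diameter at most $\vep$, pick a continuous function $\rho_\vep:G\to[0,\infty)$ with $\{\rho_\vep>0\}=U_\vep$, and define $\mu_\vep$ to be the probability measure on $G$ with density $\rho_\vep/\int\rho_\vep\,dm$ relative to Haar measure $m$. I then set $\nu_\vep=\nu*\mu_\vep$.

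Four checks then suffice. First, $\nu_\vep$ is Haar-absolutely-continuous: it can be written as the $\nu$-average of the right-translates $(R_g)_*\mu_\vep$, each of which has a smooth density with respect to Haar measure. Second, any proper algebraic subvariety $M$ of $G=\GL(\RR^d)$ is contained in the zero set of some non-zero polynomial on $\operatorname{Mat}_d(\RR)\cong\RR^{d^2}$, hence has Lebesgue measure zero, hence $\nu_\vep$-measure zero; so $\nu_\vep$ is generic. Third, one has $\supp\nu_\vep=\overline{\supp\nu\cdot U_\vep}$, which contains the non-empty Euclidean-open set $\supp\nu\cdot U_\vep$; since no non-zero polynomial can vanish on a non-empty Euclidean-open subset of $\RR^{d^2}$, this open set is Zariski-dense in $G$, and therefore so is $\supp\nu_\vep$. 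Fourth, $\mu_\vep\to\delta_e$ in the weak$^*$ topology, which forces $\nu_\vep=\nu*\mu_\vep\to\nu$ in weak$^*$; and the inclusions $\supp\nu\subset\supp\nu_\vep\subset\supp\nu\cdot\overline{U_\vep}$ (using $e\in\supp\mu_\vep$ and $\diam(\supp\mu_\vep)\le\vep$) give $\supp\nu_\vep\to\supp\nu$ in Hausdorff distance. Combining the two, $\nu_\vep\to\nu$ in $\cT$.

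I do not expect any serious obstacle: the whole argument consists of routine manipulations of convolution on a Lie group. The mildest technical point is the Zariski-density assertion, which reduces to the elementary fact that a non-zero real polynomial cannot vanish identically on a non-empty open ball in $\RR^{d^2}$.
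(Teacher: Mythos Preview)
Your proposal is correct and follows essentially the same approach as the paper: the paper also sets $\nu_j=\nu*\xi_j$ where $\xi_j$ is the normalized restriction of Haar measure to the ball of radius $1/j$ around the identity, and then checks genericity (via absolute continuity), Zariski-density of the support (via non-empty interior), and convergence in $\cT$ exactly as you outline. The only cosmetic difference is that you use a continuous density $\rho_\vep$ rather than the indicator of a ball.
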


\begin{proof}
For each $j\in\NN$, let $\xi_j\in\cMc$ be the normalized restriction of the Haar measure of $G$ to the ball of radius
$1/j$ around the identity $I$ relative to some left-invariant distance on $G$.
It is clear that $\supp\xi_j$ coincides with that ball, and $\xi_j$ vanishes on any proper subvariety of $G$.
Moreover, $(\xi_j)_j$ converges to the Dirac mass $\delta_I$ in the topology of $\cMc$.
For each $j\in\NN$, let $\nu_j=\nu*\xi_j$ be the probability measure on $G$ such that,
for any limited function $\phi:G\to\RR$,
$$
\int_G \phi \, d\nu_j = \int_G \phi(gh) \, d\nu(g) \, d\xi_j(h).
$$
The assumption on $\xi_j$ ensures that $\nu_j$ is generic.
Moreover, $(\nu_j)_j$ converges to $\nu$ in the weak$^*$ topology.
Furthermore, $\supp\nu_j$ coincides with the $(1/j)$-neighborhood of the support of $\nu$,
and so $(\supp\nu_j)_j \to \supp\nu$ in the Hausdorff topology.
This proves that $(\nu_j)_j\to\nu$ in $\cMc$.
The fact that $\supp\nu_j$ has non-empty interior implies that it is Zariski-dense, for every $j\in\NN$.
\end{proof}

\begin{corollary}\label{c_sec5_Zariski2}
Suppose that $(\lambda_1(\nu'_k))_k \to \lambda_1(\nu_\infty)$ for any sequence $(\nu'_k)_k$
converging to $\nu_\infty$ in $\cMc$ such that $\nu'_k$ is generic and $\supp\nu'_k$
is Zariski-dense in $G$ for every $k$.
Then $(\lambda_1(\nu_k))_k \to \lambda_1(\nu_\infty)$ for every sequence $(\nu_k)_k$
converging to $\nu_\infty$ in $\cMc$.
\end{corollary}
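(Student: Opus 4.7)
The plan is to deduce continuity at $\nu_\infty$ (along arbitrary sequences) from continuity at $\nu_\infty$ along generic, Zariski-dense sequences, using a diagonal approximation. The only additional ingredient needed is upper semi-continuity of $\lambda_1$ on $\cMc$, and that is where a bit of care is required.

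First I would establish that $\lambda_1:\cMc\to\RR$ is upper semi-continuous. Writing
$$
\Lambda_n(\nu) = \frac{1}{n}\int_{G^n}\log\|g_{n-1}\cdots g_0\|\, d\nu^n(g_0,\dots,g_{n-1}),
$$
Kingman's subadditive ergodic theorem yields $\lambda_1(\nu) = \inf_n \Lambda_n(\nu)$. For each fixed $n$, $\Lambda_n$ is continuous on $\cMc$: if $(\nu_k)_k\to\nu_\infty$ in $\cMc$, Hausdorff convergence of supports forces all $\supp\nu_k$ into a common compact subset of $G$, so the integrand $\log\|g_{n-1}\cdots g_0\|$ is continuous and bounded on the relevant compact domain, and weak$^*$ convergence $\nu_k^n\to\nu_\infty^n$ yields $\Lambda_n(\nu_k)\to\Lambda_n(\nu_\infty)$. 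An infimum of continuous functions is USC, so
$$
\limsup_k \lambda_1(\nu_k) \le \lambda_1(\nu_\infty)
\quad\text{for every }(\nu_k)_k\to\nu_\infty\text{ in }\cMc.
$$

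Next, let $d$ be a metric realizing the topology $\cT$ on $\cMc$, and fix an arbitrary sequence $(\nu_k)_k\to\nu_\infty$ in $\cMc$. For each $k$, Proposition \ref{p_sec5_Zariski1} produces a sequence of generic, Zariski-dense measures converging to $\nu_k$. Applying the upper semi-continuity established above at the point $\nu_k$ to this approximating sequence, I can select a measure $\nu'_k$ that is generic with Zariski-dense support and satisfies simultaneously
$$
d(\nu'_k,\nu_k) < \frac{1}{k} \quand \lambda_1(\nu'_k) < \lambda_1(\nu_k) + \frac{1}{k}.
$$
By the triangle inequality $d(\nu'_k,\nu_\infty)\le d(\nu'_k,\nu_k)+d(\nu_k,\nu_\infty)\to 0$, so $(\nu'_k)_k\to\nu_\infty$ in $\cMc$. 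The standing hypothesis applies to $(\nu'_k)_k$ and gives $\lambda_1(\nu'_k)\to\lambda_1(\nu_\infty)$, whence
$$
\lambda_1(\nu_\infty)=\lim_k \lambda_1(\nu'_k)\le \liminf_k \Bigl(\lambda_1(\nu_k)+\tfrac{1}{k}\Bigr)=\liminf_k \lambda_1(\nu_k).
$$
Combined with the USC bound of the previous paragraph, this gives $\lambda_1(\nu_k)\to\lambda_1(\nu_\infty)$, as desired.

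There is no real obstacle beyond bookkeeping; the only step that needs attention is upper semi-continuity, because the integrand $\log\|g_{n-1}\cdots g_0\|$ is unbounded on $G$. It is precisely at this step that part (ii) of the definition of $\cT$ (Hausdorff convergence of supports) is indispensable: without it, weak$^*$ convergence alone would not control the integrals $\Lambda_n(\nu_k)$, in line with Kifer's counterexample recalled after Theorem \ref{theorem:main}.
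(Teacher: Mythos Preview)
Your proof is correct and follows essentially the same approach as the paper: use Proposition~\ref{p_sec5_Zariski1} and upper semi-continuity of $\lambda_1$ to pick generic Zariski-dense approximants $\nu'_k$ with $d(\nu'_k,\nu_k)<1/k$ and $\lambda_1(\nu'_k)\le\lambda_1(\nu_k)+1/k$, apply the hypothesis to $(\nu'_k)_k$, and combine the resulting $\liminf$ bound with upper semi-continuity. The only cosmetic difference is that you justify upper semi-continuity via $\lambda_1=\inf_n\Lambda_n$ and Kingman's theorem, whereas the paper simply cites it as a well-known consequence of the Furstenberg formula~\eqref{eq_sec3_Furstenberg_formula}.
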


\begin{proof}
Let $(\nu_k)_k$  be any sequence converging to $\nu_\infty$ in $\cMc$,
and $d$ be any distance generating the topology of $\cMc$.
By Proposition~\ref{p_sec5_Zariski1}, for each $k$ we may find a generic probability measure
$\nu_k'$ arbitrarily close to $\nu_k$ and whose support is Zariski-dense.
Take $\nu_k'$ such that $d(\nu_k,\nu_k') < 1/k$.
Using the well-known fact that the function $\zeta \mapsto \lambda_1(\zeta)$ is upper
semi-continuous (this is a consequence of \eqref{eq_sec3_Furstenberg_formula} below),
we may also suppose that $\lambda_1(\nu_k') \le \lambda_1(\nu_k) + 1/k$.
Then $(\nu_k')_k \to \nu_\infty$ and so $\lim_k \lambda_1(\nu_k') = \lambda_1(\nu_\infty)$.
Moreover,
$$
\liminf_k \lambda_1(\nu_k) \ge \lim_k \lambda_1(\nu_k')= \lambda_1(\nu_\infty).
$$
Using semi-continuity once more, this implies $\lim_k \lambda_1(\nu_k) = \lambda_1(\nu_\infty)$.
\end{proof}

Thus, to prove Theorem~\ref{theorem:second} it suffices to consider sequences
$(\nu_k)_k \to \nu_\infty$ of generic measures such that every $\supp\nu_k$ is Zariski-dense in $G$.
We do so in all that follows.
Then, by Proposition~\ref{p_sec5_GM}, the $\nu_k$-stationary measure $\eta_k$ is unique and generic.

\subsection{Repelling behavior}\label{ss_repelling_behavior}

Given any subspace $W\subset\RR^d$, let $\Pi_W:\RR^d\to W^\perp$ be the orthogonal projection along $W$.
When $W$ is $1$-dimensional we also write $\Pi_W = \Pi_w$ for any non-zero vector $w\in W$.

For any $g\in G$ and $v\in P$, let $Dg_v:T_vP \to T_{gv}P$ denote the derivative of $g:G\to G$ at the point
$v\in P$. The tangent space $T_vP$ is naturally identified with the orthogonal space $\{v\}^\perp$. Then
\begin{equation}\label{eq_sec5_derivative1}
Dg_v\dot{v} = \Pi_{gv} g\dot{v} \frac{\|v\|}{\|gv\|}.
\end{equation}
For $v\in E$, let
\begin{equation}\label{eq_sec5_derivative2}
Dg^\perp_v= \Pi_E \circ Dg_v\mid E^\perp:E^\perp \to E^\perp,
\quad  Dg^\perp_vv^\perp %= \Pi_{E} gv^\perp \frac{\|v\|}{\|gv\|}
                                = g^\perp v^\perp \frac{\|v\|}{\|gv\|}.
\end{equation}

For $n\in\NN$ and any  probability measure $\nu$ on $G$, let $\nu^{(n)}$ denote the
$n$-\emph{convolution}, that is, the push-forward of $\nu^\NN$ under the map
$G^\NN \to G$ defined by $\ug \mapsto g_{n-1} \cdots g_0$:
\begin{equation}\label{eq_sec5_convolution}
\nu^{(n)} = \nu^\NN(\{\ug\in G^\NN: g_{n-1}\cdots g_0 \in B\})
\end{equation}
for any measurable set $B \subset G$.
%Equivalently, $\nu_k^{(n)}$ is the push-forward of $\nu_k^\ZZ$ under the map $G^\ZZ \to G$ defined by
%$\vg \mapsto g_{n-1} \cdots g_0$.
Note that the map $\nu \mapsto \nu^{(n)}$ is continuous relative to the weak$^*$ topology.

\begin{proposition}\label{p_sec5_derivative_estimate}
There exists $\kappa_0=\kappa_0(\nu_\infty)>0$ and for each $\delta>0$ there exist $N_0=N_0(\nu_\infty,\delta)\in\NN$
and $\tau_0=\tau_0(\nu_\infty,\delta)>0$ such that for every $n\ge N$ and $v^\perp \in \proj(E^\perp)$
there exists $\cE_0=\cE_0(\nu_\infty,\delta,n,v^\perp)\subset \supp\nu_\infty^{(n)}$ with $\nu_\infty^{(n)}(\cE_0^c) < \delta$ and
\begin{enumerate}
\item $\displaystyle{\log \frac{\|Dg^\perp_vv^\perp\|}{\|v^\perp\|} > \kappa_0 n}$ for any $g\in\cE_0$ and $v \in E$;
\item $\displaystyle{\frac{\|g^\perp v^\perp\|}{\|gv^\perp\|} > \tau_0}
$ for any $g\in\cE_0$.
\end{enumerate}
\end{proposition}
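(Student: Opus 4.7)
The plan is to derive both parts from two simultaneous uniform exponential estimates along the random walk: a sharp lower bound on $\|g^\perp v^\perp\|$ for $v^\perp\in\proj(E^\perp)$, and an upper bound on the growth of $g$ restricted to the equator. The lower bound comes from applying Theorem~\ref{t_sec4_uniform_estimate} to $\nu^\perp:=\nu_\infty^\perp$; as established at the end of Section~\ref{ss_equator}, the equator property forces $\alpha^\perp(\eta^\perp)=\lambda_1(\nu^\perp)=\lambda_1(\nu_\infty)$ for every $\nu^\perp$-stationary measure $\eta^\perp$, so the hypothesis of Theorem~\ref{t_sec4_uniform_estimate} is satisfied. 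The upper bound uses $\chi_E:=\lim_n\tfrac{1}{n}\log\|g_{n-1}\cdots g_0|E\|$, which exists by Kingman and is strictly less than $\lambda_1(\nu_\infty)$ by the very definition of the equator; compactness of $\supp\nu_\infty$ together with an Azuma--Hoeffding style argument (as in Lemma~\ref{l_sec3_Azuma-Hoeffding}) yields exponential upper-tail bounds $\nu_\infty^\NN(\|g^E_{n-1}\cdots g^E_{k+1}\|>e^{(\chi_E+\epsilon)(n-k)})\le Ce^{-c(n-k)}$ for any fixed $\epsilon>0$ and all $0\le k<n$.

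Fix $\epsilon\in(0,(\lambda_1(\nu_\infty)-\chi_E)/3)$ and set $\kappa_0:=\lambda_1(\nu_\infty)-\chi_E-2\epsilon>0$. For part (1), the formula $Dg^\perp_v v^\perp=g^\perp v^\perp\,\|v\|/\|gv\|$ together with $\|gv\|=\|g^E v\|\le\|g|E\|\|v\|$ for $v\in E\cap P$ gives, on the intersection of the two good sets above,
$$\log\frac{\|Dg^\perp_v v^\perp\|}{\|v^\perp\|}=\log\frac{\|g^\perp v^\perp\|}{\|v^\perp\|}-\log\frac{\|gv\|}{\|v\|}\ge(\lambda_1(\nu_\infty)-\epsilon)n-(\chi_E+\epsilon)n=\kappa_0 n,$$
uniformly in $v\in E\cap P$.

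For part (2), iterating the block form of the $g_k$ expresses the $E$-component of $gv^\perp$ as
$$h\,v^\perp=\sum_{k=0}^{n-1}G_k\,h_k\,b_k,\qquad G_k:=g^E_{n-1}\cdots g^E_{k+1},\quad b_k:=g^\perp_{k-1}\cdots g^\perp_0 v^\perp,$$
with $\|h_k\|$ uniformly bounded by compactness of $\supp\nu_\infty$. Combining $\|G_k\|\le Ce^{(\chi_E+\epsilon)(n-k)}$ with the lower bound $\|b_n\|/\|b_k\|\ge ce^{(\lambda_1(\nu_\infty)-\epsilon)(n-k)}$---which is Theorem~\ref{t_sec4_uniform_estimate} applied to $\nu^\perp$ along the partial product $g^\perp_{n-1}\cdots g^\perp_k$ starting from the direction $b_k/\|b_k\|$, legitimate because $b_k$ is measurable with respect to $(g_0,\ldots,g_{k-1})$ and thus independent of $(g_k,\ldots,g_{n-1})$ under $\nu_\infty^\NN$---each term in the sum is at most $C'e^{(\chi_E-\lambda_1(\nu_\infty)+2\epsilon)(n-k)}\|g^\perp v^\perp\|$. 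By choice of $\epsilon$ the exponent is strictly negative, so the geometric series sums to a finite constant $M=M(\nu_\infty,\delta)$, yielding $\|h v^\perp\|\le M\|g^\perp v^\perp\|$. Since $\|gv^\perp\|^2=\|hv^\perp\|^2+\|g^\perp v^\perp\|^2$, taking $\tau_0:=1/\sqrt{1+M^2}$ finishes part (2).

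The main obstacle is ensuring that all the uniform bounds hold simultaneously on a single event $\cE_0$ of $\nu_\infty^{(n)}$-measure at least $1-\delta$. A naive union bound over $k\in\{0,\ldots,n-1\}$ of bad events of probabilities $Ce^{-c(n-k)}$ only produces a bounded---not small---total. The remedy is to introduce a cutoff $L=L(\nu_\infty,\delta)$ with $\sum_{j\ge L}Ce^{-cj}<\delta/3$, apply the sharp estimates only in the regime $n-k\ge L$, and on the $O(L)$ boundary indices substitute the crude bounds $\|G_k\|\le K^L$ and $\|b_k\|/\|b_n\|\le K'^L$ furnished by compactness of the support. Their contribution is a finite boundary constant to be absorbed into $M$, which is precisely why $\tau_0$ and $N_0$ are allowed to depend on $\delta$ while $\kappa_0$ is not. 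Intersecting the two initial good sets with the indexed good set for $n-k\ge L$ produces the required $\cE_0$.
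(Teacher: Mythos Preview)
Your argument for part~(1) matches the paper's exactly: combine the uniform large deviations estimate of Theorem~\ref{t_sec4_uniform_estimate} for $\nu^\perp$ with the sub-$\lambda_1$ growth of $g\mid E$.

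For part~(2) the two proofs diverge. The paper works with the two-sided Oseledets splitting $\RR^d=E^1(\vg)\oplus E^*(\vg)$: decomposing $v^\perp=v_1+v_*$, it shows that the $E^1$-component dominates after $n$ steps, so that $gv^\perp$ lies close to $E^1(\sigma^n\vg)$, and then invokes the angular separation $|\sin\angle(E^1,E)|\ge b$ on a Pesin-type set $\cA\cap\sigma^{-n}\cA$. Your route is more direct and avoids Oseledets altogether: you expand the off-diagonal block $h$ of $g$ as $\sum_k G_k h_k (g^\perp_{k-1}\cdots g^\perp_0)$ and estimate termwise, using exponential upper-tail bounds on $\|G_k\|$ and the conditional application of Theorem~\ref{t_sec4_uniform_estimate} to $g^\perp_{n-1}\cdots g^\perp_k$ acting on the random direction $b_k/\|b_k\|$. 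The cutoff $L$ to tame the union bound, with the boundary terms absorbed by compactness, is exactly right and explains cleanly why $\tau_0,N_0$ depend on $\delta$ while $\kappa_0$ does not. Your approach is arguably more elementary; the paper's is more geometric and connects to the Oseledets picture used elsewhere.

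One small point: the exponential upper-tail estimate $\nu_\infty^\NN(\|g^E_{m-1}\cdots g^E_0\|>e^{(\chi_E+\epsilon)m})\le Ce^{-cm}$ does not follow from Lemma~\ref{l_sec3_Azuma-Hoeffding} as stated, since $\log\|G_m\|$ is not a martingale sum. The standard route is a blocking argument: choose $m_0$ with $m_0^{-1}\EE\log\|g^E_{m_0-1}\cdots g^E_0\|<\chi_E+\epsilon/2$, write $m=qm_0+r$, bound $\log\|G_m\|$ by a sum of $q$ i.i.d.\ bounded variables plus a bounded remainder, and apply Hoeffding. This is routine and the rest of your argument is unaffected.
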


\begin{proof}
Fix numbers $\alpha=\alpha(\nu_\infty)$ and $\beta=\beta(\nu_\infty)$ such that
$$
\lim_n \frac 1n \log \|g_{n-1}\cdots g_0 \mid E\| < \alpha < \beta < \lambda_1(\nu_\infty)
$$
and then choose $0 < \kappa_0 < \beta-\alpha$. Let $\delta>0$. By property (i), there exist $N'\in\NN$ and
$\cE'\subset G^\NN$ such that $\nu_\infty^\NN((\cE')^c) < \delta/4$ and
\begin{equation}\label{eq_sec5_derivative3}
\log\frac{\|g_{n-1} \cdots g_0 v\|}{\|v\|} \le \alpha n
\text{ for any $\ug\in\cE'$ and $v\in E$ and $n\ge N'$.}
\end{equation}
By property (ii), there exists $N''\in\NN$ and for each $v^\perp\in E^\perp$ there exists $\cE''\subset G^\NN$
such that $\nu_\infty^\NN((\cE'')^c) < \delta/4$
and
\begin{equation}\label{eq_sec5_derivative4}
\log\frac{\|g^\perp_{n-1} \cdots g_0^\perp v^\perp\|}{\|v^\perp\|}\ge \beta n
\text{ for any $\ug\in\cE''$ and $n\ge N''$.}
\end{equation}
Let $N'''=\max\{N',N''\}$ and for each $n\ge N'''$ and $v^\perp \in \proj(E^\perp)$ define
$$
\cE'''=\{g_{n-1} \cdots g_0: \ug \in \cE' \cap \cE''\}.
$$
The definition \eqref{eq_sec5_convolution} gives that
$\nu_\infty^{(n)}((\cE''')^c) \le \nu_\infty^\NN((\cE')^c \cup (\cE'')^c) < \delta/2$.
Moreover, if $g\in\cE'''$ then \eqref{eq_sec5_derivative3} and \eqref{eq_sec5_derivative4} give that
\begin{equation}\label{eq_sec5_derivative5}
\log\frac{\|Dg^\perp_vv^\perp\|}{\|v^\perp\|}
= \log\frac{\|g^\perp v^\perp\|}{\|v^\perp\|} - \log\frac{\|gv\|}{\|v\|}
\ge (\beta-\alpha) n > \kappa_0 n
\end{equation}
for any $v \in E$ and $n\ge N'''$. This gives claim (1), as long as we choose $N_0\ge N'''$ and $\cE_0\subset\cE'''$,
which we will do in the next paragraph.

Now let us explain how to obtain claim (2). The following elementary inequality will be used a couple of times:
\begin{equation}\label{eq_sec5_sine_distance_estimate}
|\sin\angle(v_1,v_2)| \le \frac{\|v_1 \pm v_2\|}{\|v_2\|}
\text{ for any non-zero } v_1, v_2 \in\RR^d.
\end{equation}
Let $a = (\chi_1 -\chi_2)/8$ where $\chi_1>\chi_2$ are the two largest Lyapunov exponents in \eqref{eq_sec2_exponents}.
Clearly, we may assume that $\beta$ has been chosen greater than
$\chi_1-a$ (keep in mind that $\chi_1=\lambda_1$). Consider the Oseledets splitting in \eqref{eq_sec2_splitting} and denote
$E^*=E^2\oplus\cdots\oplus E^k$. By the Oseledets theorem (see \cite[Theorem~4.2]{LLE}), there exists a measurable
function $\vg \mapsto c(\vg)$ with values in $(0,1)$ such that
\begin{align}
\label{eq_sec5_Oseledets_estimate1}
c(\vg) \, e^{n(\chi_1-a)} ||v_1\|\le \|g_{n-1}\cdots g_0 v_1\| & \le c(\vg)^{-1} e^{n(\chi_1+a)}\|v_1\|\\
\label{eq_sec5_Oseledets_estimate2}
\|g_{n-1}\cdots g_0 v_*\| & \le c(\vg)^{-1} e^{n(\chi_2+a)}\|v_*\|
\end{align}
for any $v_1\in E^1(\vg)$, $v_*\in E^*(\vg)$ and $n\in\NN$, and for $\nu_\infty^\ZZ$-almost every $\vg$.
In particular, $E^1(\vg) \cap E^*(\vg) = \{0\}$.
Since the growth rate of every vector $v\in E$ is strictly less than $\lambda_1=\chi_1$ (property (i) above),
we also have that $E^1(\vg) \cap E = \{0\}$. Thus, up to reducing the function $c(\vg)$, we may suppose that
\begin{equation}\label{eq_sec5_Oseledets_estimate3}
|\sin\angle(E^1(\vg),E^*(\vg))|\ge c(\vg) \quand |\sin\angle(E^1(\vg),E)|\ge c(\vg)
\end{equation}
for $\nu_\infty^\ZZ$-almost every $\vg$. Fix $b=b(\nu_\infty,\delta)>0$ small enough that the set
$$
\cA=\{\vg\in G^\ZZ: c(\vg) \ge b\}
$$
has $\nu_\infty^\ZZ(\cA^c) < \delta/4$. Then fix $M=M(\nu_\infty,\delta)\in\NN$ such that
\begin{equation}\label{eq_sec5_angle_estimate1}
e^{- a n} < \frac{b^2}{2}
\text{ for every $n\ge M$.}
\end{equation}
Let $N_0=\max\{N',N'',M\}$ and for each $n\ge N_0$ and $v^\perp \in \proj(E^\perp)$ define
$$
\cE_0=\{g_{n-1} \cdots g_0: \vg \in \pi^{-1}\cE' \cap \pi^{-1}\cE'' \cap \cA \cap \sigma^{-n}\cA\}
\cap \supp\nu_\infty^{(n)}
$$
($\pi:G^\ZZ\to G^\NN$ is the canonical projection). The choices of $\cE'$, $\cE''$ and $\cA$ ensure that
$\nu_\infty^{(n)}(\cE_0^c)<\delta$. For each $g\in\cE_0$, take $\vg\in \cE'\cap\cE''\cap\cA\cap\sigma^{-n}\cA$
such that $g=g_{n-1}\cdots g_0$ and then let $v^\perp = v_1 + v_*$ be the decomposition of $v^\perp$ with respect
to the splitting $\RR^d=E^1(\vg)\oplus E^*(\vg)$. From property \eqref{eq_sec5_Oseledets_estimate3}
and the fact that $\vg\in\cA$ we get
\begin{equation}\label{eq_sec5_angle_estimate2}
\|v_*\|
\le \frac{\|v^\perp\|}{|\sin\angle(E^1(\vg),E^*(\vg))|}
\le c(\vg)^{-1} \|v^\perp\| \le b^{-1}\|v^\perp\|.
\end{equation}
Since $\beta>\chi_1-a$,
the inequality \eqref{eq_sec5_derivative4} gives
\begin{equation}\label{eq_sec5_angle_estimate3}
\|g v^\perp\| \ge \|g^\perp v^\perp\| \ge e^{(\chi_1-a)n}\|v^\perp\|.
\end{equation}
Properties \eqref{eq_sec5_Oseledets_estimate1} and \eqref{eq_sec5_Oseledets_estimate2} give
\begin{equation}\label{eq_sec5_angle_estimate4}
\|g v^\perp\|
%\le \|g v_1\| + \|g v_*\|
\le b^{-1}e^{(\chi_1+a)n}\|v_1\| + b^{-1} e^{(\chi_2+a)n}\|v_*\|.
\end{equation}
Putting the relations \eqref{eq_sec5_angle_estimate2} through \eqref{eq_sec5_angle_estimate4} together,
and using \eqref{eq_sec5_angle_estimate1}, we obtain
$$
\begin{aligned}
\|v^\perp\|
& \le b^{-1} e^{2a n} \|v_1\| + b^{-1} e^{(\chi_2-\chi_1+2a)n}\|v_*\|\\
& \le b^{-1} e^{2a n} \|v_1\| + b^{-2} e^{-6 an}\|v^\perp\|
\le b^{-1} e^{2a n} \|v_1\| + \frac 12 \|v^\perp\|
\end{aligned}
$$
This proves that $\|v_1\| \ge (b/2) e^{-2a n} \|v^\perp\|$. Combining this inequality with \eqref{eq_sec5_angle_estimate2}
and properties \eqref{eq_sec5_Oseledets_estimate1} and \eqref{eq_sec5_Oseledets_estimate2},
$$
\begin{aligned}
\|gv_1\| & \ge b e^{(\chi_1-a) n} \|v_1\| \ge (b^2/2) e^{(\chi_1-3a) n} \|v^\perp\| \\
\|gv_*\| & \le b^{-1} e^{(\chi_2+a) n} \|v_*\| \le b^{-2} e^{(\chi_2+a) n}\|v^\perp\|
\end{aligned}
$$
In view of our choice of $a$ and the relation \eqref{eq_sec5_angle_estimate2}, this implies that
$$
\frac{\|gv_*\|}{\|gv_1\|}
\le 2b^{-4} e^{(\chi_2-\chi_1+4a)n}
\le 2b^{-4} e^{-4 a n}
< \frac b 2 e^{-2 a n}
\le \frac b2.
$$
Then it follows that
$$
\begin{aligned}
|\sin\angle(gv^\perp,E^1(\sigma^n\vg))|
& \le |\sin\angle(gv^\perp,gv_1)|
%\le |\tan\angle(gv^\perp,gv_1)|\\
\le \frac{\|gv_*\|}{\|gv_1\|}
< b/2.
\end{aligned}
$$
Now,  property \eqref{eq_sec5_Oseledets_estimate3} implies that $|\sin\angle(E^1(\sigma^n\vg),E)| \ge b$,
because we have taken $\vg$ such that $\sigma^n\vg\in\cA$.
Since $|\sin|$ is a subadditive function, these two inequalities
imply that $|\sin\angle(gv^\perp,E))| > b/2$, which means that
$\|g^\perp v^\perp\| > (b/2) \|gv^\perp\|$. This proves (2) with $\tau_0=b/2$.
\end{proof}

Let $d(\cdot,\cdot)$ be the distance defined on the projective space $P$ by
\begin{equation}\label{eq_sec5_distance_P}
d(u,v)=\left\| \Pi_{u} \frac{v}{\|v\|}\right\| = |\sin\angle(u,v)|.
\end{equation}
Note that $0 \le d(u,v) \le 1$ for every $u, v \in P$.
Next, we formulate the infinitesimal estimate in part (1) of Proposition~\ref{p_sec5_derivative_estimate}
in terms of the distance to the equator:

\begin{corollary}\label{c_sec5_distance_contraction1}
For each $n\ge N$ there exists $\rho_0=\rho_0(\nu_\infty, n)>0$ and for each $x\in P$ with $d(x,E)\le\rho_0$
there exists $\cD_\infty(x)=\cD_\infty(\nu_\infty,\delta,n,x)\subset\supp\nu_\infty^{(n)}$ with
$\nu_\infty^{(n)}(\cD_\infty(x)^c) < \delta$
and
\begin{equation}\label{eq_sec5_distance_contraction1}
- \log d(g x,E) \le - \log d(x,E) - \frac{3\kappa_0}{4} n \quad \text{for any $g\in\cD_\infty(x)$.}
\end{equation}
\end{corollary}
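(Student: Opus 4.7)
The plan is to convert the infinitesimal expansion of Proposition~\ref{p_sec5_derivative_estimate}(1) into a finite multiplicative estimate on $d(\cdot,E)$, using part~(2) to control the mixing between $E$ and $E^\perp$. I would apply Proposition~\ref{p_sec5_derivative_estimate} with $\nu_\infty$ and $\delta$ to obtain $\kappa_0$, $N_0$ and $\tau_0$, enlarging $N_0$ if necessary so that $e^{-\kappa_0 n}\le\tfrac12 e^{-3\kappa_0 n/4}$ for every $n\ge N_0$. For $n\ge N_0$, set
$$\rho_0=\rho_0(\nu_\infty,n)=\tfrac{\tau_0}{2}\,e^{-3\kappa_0 n/4}.$$

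Given $x\in P$ with $d(x,E)\le\rho_0$, let $\theta=\angle(x,E)$, so that $d(x,E)=\sin\theta$ by \eqref{eq_sec5_distance_P}. Represent $x$ by a unit vector of the form $\cos\theta\,e+\sin\theta\,u$ with unit vectors $e\in E$ and $u\in E^\perp$, and set $\cD_\infty(x)=\cE_0(\nu_\infty,\delta,n,u)\subset\supp\nu_\infty^{(n)}$, so that $\nu_\infty^{(n)}(\cD_\infty(x)^c)<\delta$ comes for free from the proposition. For any $g\in\cD_\infty(x)$, the $\nu_\infty$-invariance of $E$ gives $ge\in E$; writing $gu=w+g^\perp u$ with $w\in E$ as in Remark~\ref{r_sec3_perp}, we obtain
$$gx=\bigl(\cos\theta\,ge+\sin\theta\,w\bigr)+\sin\theta\,g^\perp u,$$
with the first summand in $E$ and the second in $E^\perp$. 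The elementary identity $\sin\angle(z,E)=\|z^\perp\|/\|z\|$, applied to $z=gx$, therefore yields
$$d(gx,E)=\frac{\sin\theta\,\|g^\perp u\|}{\|gx\|}.$$

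To conclude I would bound $\|gx\|\le\|ge\|+\sin\theta\,\|gu\|$. Formula \eqref{eq_sec5_derivative2} with $\|e\|=\|u\|=1$ gives $\|Dg^\perp_e u\|=\|g^\perp u\|/\|ge\|$, so part~(1) of the proposition yields $\|ge\|<e^{-\kappa_0 n}\|g^\perp u\|$, while part~(2) yields $\|gu\|<\tau_0^{-1}\|g^\perp u\|$. Substituting,
$$\frac{d(gx,E)}{d(x,E)}=\frac{\|g^\perp u\|}{\|gx\|}>\frac{1}{e^{-\kappa_0 n}+\sin\theta/\tau_0}\ge e^{3\kappa_0 n/4},$$
where the last step uses the choice of $N_0$ together with $\sin\theta\le d(x,E)\le\rho_0$ to see that the denominator is at most $e^{-3\kappa_0 n/4}$. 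Taking logarithms gives the desired inequality.

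The essential subtlety is to isolate the $E^\perp$-component of $gx$ without losing control of $\|gx\|$: a generic $g\in\supp\nu_\infty^{(n)}$ can expand vectors in $E^\perp$ by a factor exponentially larger than $\|ge\|$, so the term $\sin\theta\,\|gu\|$ in the upper bound for $\|gx\|$ is the dangerous one. This forces $\rho_0$ to shrink exponentially with $n$, and makes part~(2) of Proposition~\ref{p_sec5_derivative_estimate}---which controls the potentially large quantity $\|gu\|$ by the expanding quantity $\|g^\perp u\|$---indispensable alongside the infinitesimal estimate in part~(1).
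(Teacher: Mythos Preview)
Your argument is correct, but it takes a genuinely different route from the paper's. The paper proves this corollary by a soft linearization argument: writing $x=\exp_v v^\perp$ with $v\in E$ and using compactness of $\supp\nu_\infty^{(n)}$, it observes that
\[
\lim_{x\to E}\frac{d(gx,E)}{\|Dg_v^\perp v^\perp\|}=1
\]
uniformly in $g$, so for $\rho_0$ small enough (inexplicit, obtained by compactness) the logarithmic ratio $\log\bigl(d(gx,E)/d(x,E)\bigr)$ differs from $\log\bigl(\|Dg_v^\perp v^\perp\|/\|v^\perp\|\bigr)$ by at most $\kappa_0 n/4$. Only part~(1) of Proposition~\ref{p_sec5_derivative_estimate} is invoked; part~(2) plays no role.

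You instead compute the exact identity $d(gx,E)/d(x,E)=\|g^\perp u\|/\|gx\|$ and control $\|gx\|$ term by term via the triangle inequality, using part~(1) for $\|ge\|$ and part~(2) for $\|gu\|$. This yields an explicit formula $\rho_0=\tfrac{\tau_0}{2}e^{-3\kappa_0 n/4}$. One small discrepancy: your $\rho_0$ inherits a dependence on $\delta$ through $\tau_0=\tau_0(\nu_\infty,\delta)$, whereas the statement records $\rho_0=\rho_0(\nu_\infty,n)$. This is harmless for the rest of the paper (everywhere $\rho_0$ is used, $\delta$ is already fixed), and you could eliminate it by replacing the appeal to part~(2) with the cruder compactness bound $\|gu\|\le\sup_{g\in\supp\nu_\infty^{(n)}}\|g\|$ together with a lower bound on $\|g^\perp u\|$---at which point your argument and the paper's converge. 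What your approach buys is an explicit $\rho_0$ and a transparent role for the angle control in part~(2); what the paper's buys is a cleaner dependency structure and a one-line reduction to the derivative estimate.
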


\begin{proof}
Let $\exp$ denote the exponential map of the Riemannian manifold $P$. For each $x\in P$ close to the equator
we may write $x=\exp_v v^\perp$ for a (unique) $v^\perp\in E^\perp$ with $\|v^\perp\| = d(x,v) = d(x,E)$. Then
$$
\lim_{x\to E} \frac{d(gx,E)}{\|Dg^\perp_v v^\perp\|}
= \lim_{x\to E} \frac{d(\exp_{gv}Dg_v v^\perp, E)}{\|Dg^\perp_v v^\perp\|}
= 1
$$
and the limits are uniform in $g\in\supp\nu^{(n)}_\infty$. In particular, there exists
$\rho_0=\rho_0(\nu_\infty,n)>0$ such that
\begin{equation}\label{eq_sec5_rho0_continuity}
d(x,E) \le \rho_0 \quad \Rightarrow \quad
\Big|\log\frac{d(gx,E)}{d(x,E)} - \log\frac{\|Dg_v^\perp v^\perp\|}{\|v^\perp\|}\Big| < \frac{\kappa_0}{4} n
\end{equation}
for every $g\in\supp\nu^{(n)}_\infty$. Define $\cD_\infty(x)$ to be the set $\cE_0(\nu_\infty,\delta,n,v^\perp)$
given by Proposition~\ref{p_sec5_derivative_estimate}.
Then \eqref{eq_sec5_distance_contraction1} follows from \eqref{eq_sec5_rho0_continuity} and part (1)
of Proposition~\ref{p_sec5_derivative_estimate}.
\end{proof}

We also need to extend these estimates from $k=\infty$ to every large $k\in\NN$:

\begin{corollary}\label{c_sec5_distance_contraction2}
For any $n\ge N$ and $\rho \in (0,\rho_0)$ there is $k_0=k_0(\nu_\infty, \delta, n, \rho)\in\NN$ and for any
$x\in P$  with $\rho\le d(x,E)\le\rho_0$ there is $\cD_k(x)=\cD_k(\nu_\infty,\delta,n,\rho,x)\subset \supp\nu_k^{(n)}$
such that $\nu_k^{(n)}(\cD_k(x)^c) < \delta$ and
\begin{equation}\label{eq_sec5_distance_contraction2}
- \log d(g x,E) \le -\log d(x,E) - \frac{\kappa_0}{2} n
\quad \text{for any $g\in\cD_k(x)$ and $k\ge k_0$.}
\end{equation}
\end{corollary}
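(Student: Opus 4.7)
The goal is to transfer the estimate of Corollary~\ref{c_sec5_distance_contraction1} from $\nu_\infty^{(n)}$ to the nearby measures $\nu_k^{(n)}$. The gap between the two contraction constants---$3\kappa_0/4$ in the limit and $\kappa_0/2$ in the perturbed statement---will provide the slack needed for a weak-$*$ approximation. Since convolution is continuous in $\cMc$, we have $\nu_k^{(n)}\to\nu_\infty^{(n)}$ in $\cMc$; in particular, by Hausdorff convergence of supports, there exists a compact neighborhood $\cK\subset G$ of $\supp\nu_\infty^{(n)}$ containing $\supp\nu_k^{(n)}$ for all sufficiently large $k$.

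For each $x\in K_\rho:=\{x\in P:\rho\le d(x,E)\le\rho_0\}$, introduce the closed set
\[
F(x)=\{g\in G:d(gx,E)\le d(x,E)\,e^{(5\kappa_0/8)n}\}.
\]
Since any $g\in\cD_\infty(x)$ satisfies $d(gx,E)\ge d(x,E)\,e^{(3\kappa_0/4)n}$ by Corollary~\ref{c_sec5_distance_contraction1}, we have $F(x)\cap\cD_\infty(x)=\emptyset$, and therefore $\nu_\infty^{(n)}(F(x))<\delta$. Taking
\[
\cD_k(x):=\{g\in\supp\nu_k^{(n)}:d(gx,E)\ge d(x,E)\,e^{(\kappa_0/2)n}\},
\]
the strict inequality defining $\cD_k(x)^c$ together with $\kappa_0/2<5\kappa_0/8$ give $\cD_k(x)^c\subset F(x)$, so it suffices to bound $\nu_k^{(n)}(F(x))<\delta$ uniformly in $x\in K_\rho$ for $k$ large.

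The uniformity step is the main obstacle, and I will handle it by compactness together with a regularity argument. For each $x_0\in K_\rho$, the family $F^\mu(x_0):=\{g\in G:d(gx_0,E)\le d(x_0,E)e^{(5\kappa_0/8)n}+\mu\}$ decreases to $F(x_0)$ as $\mu\downarrow 0$, so by downward continuity of the finite measure $\nu_\infty^{(n)}$ one may choose $\mu_0=\mu_0(x_0)>0$ with $\nu_\infty^{(n)}(F^{\mu_0}(x_0))<\delta$. Uniform continuity of $(x,g)\mapsto d(gx,E)$ on the compact product $K_\rho\times\cK$ then provides a neighborhood $W_{x_0}\subset K_\rho$ of $x_0$ such that $F(x)\cap\cK\subset F^{\mu_0}(x_0)$ for every $x\in W_{x_0}$. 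Extract a finite subcover $W_{x_1},\ldots,W_{x_M}$ of $K_\rho$.

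Finally, each $F^{\mu_0}(x_i)$ is closed in $G$, so the Portmanteau theorem applied to the weak-$*$ convergence $\nu_k^{(n)}\to\nu_\infty^{(n)}$ yields $\limsup_k \nu_k^{(n)}(F^{\mu_0}(x_i))\le\nu_\infty^{(n)}(F^{\mu_0}(x_i))<\delta$ for each $i$. Choose $k_0$ so that $\supp\nu_k^{(n)}\subset\cK$ and $\nu_k^{(n)}(F^{\mu_0}(x_i))<\delta$ for every $i$ and every $k\ge k_0$. For an arbitrary $x\in K_\rho$, pick $i$ with $x\in W_{x_i}$; then $\cD_k(x)^c\subset F(x)\cap\supp\nu_k^{(n)}\subset F(x)\cap\cK\subset F^{\mu_0}(x_i)$, which gives $\nu_k^{(n)}(\cD_k(x)^c)<\delta$. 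This is the desired uniform transfer, and the inequality $-\log d(gx,E)\le-\log d(x,E)-(\kappa_0/2)n$ holds by construction of $\cD_k(x)$.
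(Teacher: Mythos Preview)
Your proof is correct and follows essentially the same strategy as the paper's: exploit the gap between $3\kappa_0/4$ and $\kappa_0/2$, use uniform continuity on the compact set $K_\rho\times\cK$ together with a finite cover, and transfer the measure bound from $\nu_\infty^{(n)}$ to $\nu_k^{(n)}$ via Portmanteau. The only cosmetic difference is that the paper works with the log-ratio $\log\frac{d(gx,E)}{d(x,E)}$, chooses a single uniform $\theta$, and defines $\cD_k(x)$ as the $\theta$-neighbourhood of $\cD_\infty(x_j)$ intersected with $\supp\nu_k^{(n)}$ (applying Portmanteau to open sets), whereas you define $\cD_k(x)$ directly by the target inequality and apply Portmanteau to the closed sets $F^{\mu_0}(x_i)$.
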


\begin{proof}
Let $n\ge N$ and $\rho\in(0,\rho_0)$ be fixed. The set $K=\{x \in P: \rho \le d(x,E) \le \rho_0\}$ is compact.
By continuity, there exists $\theta=\theta(\nu_\infty, n, \rho)>0$ (keep in mind that $\kappa_0=\kappa_0(\nu_\infty)$
and $\rho_0=\rho_0(\nu_\infty,n)$), such that
\begin{equation}\label{eq_sec5_theta_continuity}
\Big|\log \frac{d(gx,E)}{d(x,E)}-\log \frac{d(hy,E)}{d(y,E)}\Big| < \frac{\kappa_0}{4} n
\end{equation}
for any $g\in B(h,\theta)$ and $x\in B(y,\theta)$ and $h\in\supp\nu_\infty^{(n)}$ and $y\in K$.
Choose a finite set $\{x_j: j=1, \dots, l\}\subset K$ such that the balls $B(x_j,\theta)$, $j=1, \dots, l$ cover $K$.
For each $x\in K$ choose $j\in\{1, \dots, l\}$ such that $x\in B(x_j,\theta)$ and define
\begin{equation}\label{eq_sec5_definition_cDk}
\cD_k(x) = \big[\text{ $\theta$-neighborhood of $\cD_\infty(x_j)$}\big] \cap \supp\nu_k^{(n)}.
\end{equation}
Let $x\in K$ and $g\in\cD_k(x)$. By definition, there exist $x_j\in K$ and $h\in\cD_\infty(x_j)$ such
that $d(x,x_j)<\theta$ and $d(g,h)<\theta$. Then \eqref{eq_sec5_theta_continuity} gives that
$$
\begin{aligned}
- \log d(gx,E)
& \le - \log d(x,E) - \log d(hx_j,E) + \log d(x_j,E) + \frac{\kappa_0}{4}n,
\end{aligned}
$$
whereas \eqref{eq_sec5_distance_contraction1} gives that $- \log d(hx_j,E) \le - \log d(x_j,E) - (3/4)\kappa_0 n$.
Substituting the latter in the former one obtains \eqref{eq_sec5_distance_contraction2}.
Since $\nu_k^{(n)}$ converges to $\nu_\infty^{(n)}$ in the weak$^*$ topology, the limit inferior of the
$\nu^{(n)}_k$-measures of \eqref{eq_sec5_definition_cDk} is greater than or equal to
$$
\nu^{(n)}_\infty\left(\cD_\infty(x_j)\right) > 1 - \delta
$$
for any $j=1, \dots, l$. In particular, there is $k_0=k_0(\nu_\infty,\delta, n,\rho)\in\NN$ such that
$$
\nu_k^{(n)}(\cD_k(x)) > 1 - \delta
\text{ for every $k\ge k_0$ and $x\in K$.}
$$
This completes the argument.
\end{proof}

\section{A toolbox}\label{s_tool_box}

Here we collect several fairly general ideas and facts that are required for the continuation of our arguments.
The proofs themselves will not be used in what follows, so the reader is advised to skip them at this stage,
and to return to this section for reference along the way, as needed. In our applications, the metric spaces
$X$ and $X'$ will be suitable subsets of Grassmannian manifolds or flag varieties.

\subsection{Couplings}\label{ss_couplings}

Let $\eta$ and $\eta'$ be measures on measurable spaces $X$ and $X'$, respectively,
with $\eta(X)=\eta'(X')$.
A measure $\teta$ on $X \times X'$ is a \emph{coupling} of $\eta$ and $\eta'$ if it projects to $\eta$
on the first factor and to $\eta'$ on the second factor, that is, if
$$
\teta(A\times X') = \eta(A) \quand \teta(X\times A')=\eta'(A')
$$
for any measurable sets $A\subset X$ and $A'\subset X'$. When $\eta=\eta'$ (and $X=X'$) we
call $\teta$ a \emph{self-coupling} of $\eta$. A self-coupling is \emph{symmetric} if it is invariant
under the involution $\iota:(x,x') \mapsto (x',x)$.

For example, the \emph{diagonal embedding} of a measure $\eta$ on $X$ is the symmetric
self-coupling $\tilde\eta$ of $\eta$ defined by
\begin{equation}\label{eq_sec6_diagonal_embedding}
\tilde\eta(B) = \eta(\{x\in X: (x,x) \in B\}).
\end{equation}
Another simple example of a coupling is the \emph{scaled product}
\begin{equation}\label{eq_sec6_scaled_product}
\teta = \frac{1}{c} (\eta \times \eta'), \text{ where $c=\eta(X)=\eta'(X')$.}
\end{equation}

Couplings are far from being unique, which turns out to be a very convenient feature in our context.
Especially, we will use the following elementary construction:

\begin{lemma}\label{l_sec6_coupling_noatoms1}
Suppose that $A\subset X$ and $A'\subset X'$ are such that $\eta(A) < \eta(X\setminus A)$ and
$\eta'(A') < \eta'(X'\setminus A')$. Then there exists a coupling $\teta$ of $\eta$ and $\eta'$
such that $\teta(A \times A') =0$.
\end{lemma}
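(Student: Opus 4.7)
The plan is to construct the coupling explicitly. After rescaling, we may assume $\eta(X)=\eta'(X')=1$; set $a=\eta(A)$ and $b=\eta'(A')$. The hypothesis then reads $a<1/2$ and $b<1/2$, so in particular $a+b<1$. The key idea is that the ``diagonal'' block $A\times A'$ is small enough that the remaining three blocks in the partition of $X\times X'$ by $A\sqcup(X\setminus A)$ and $A'\sqcup(X'\setminus A')$ can jointly absorb the masses of $\eta$ and $\eta'$ with the correct projections.

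Accordingly, I would look for $\tilde\eta$ as a linear combination of product measures on the three off-diagonal blocks:
\[
\tilde\eta \;=\; \alpha\,(\eta|_A)\times(\eta'|_{X'\setminus A'}) \;+\; \beta\,(\eta|_{X\setminus A})\times(\eta'|_{A'}) \;+\; \gamma\,(\eta|_{X\setminus A})\times(\eta'|_{X'\setminus A'}),
\]
with nonnegative constants $\alpha,\beta,\gamma$ to be determined. Any such $\tilde\eta$ is a nonnegative measure automatically vanishing on $A\times A'$, so the only remaining task is to satisfy the two marginal conditions.

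Imposing that the first marginal on $A$ equal $\eta|_A$ forces $\alpha(1-b)=1$, and on $X\setminus A$ forces $\beta b+\gamma(1-b)=1$. Matching the second marginal similarly gives $\beta(1-a)=1$ and $\alpha a+\gamma(1-a)=1$. These linear equations are consistent and uniquely yield
\[
\alpha=\frac{1}{1-b},\qquad \beta=\frac{1}{1-a},\qquad \gamma=\frac{1-a-b}{(1-a)(1-b)},
\]
as one verifies by substituting back (the two formulas for each of the first marginal on $X\setminus A$ and the second marginal on $X'\setminus A'$ agree). The one place where the hypothesis is really used is in ensuring $\gamma\ge 0$, which is equivalent to $a+b\le 1$; this is precisely what $a,b<1/2$ guarantees. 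With these coefficients the measure $\tilde\eta$ above is the desired coupling. I do not expect any serious obstacle here: once the ansatz is written down the proof reduces to solving a $2\times 2$ linear system and checking a sign.
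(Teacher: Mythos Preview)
Your proposal is correct and is essentially identical to the paper's own proof: the paper writes down exactly the same linear combination of the three off-diagonal product blocks, with the same coefficients (just expressed in terms of $\eta(B)=1-a$, $\eta'(B')=1-b$ rather than $a,b$).
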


\begin{proof}
Let $B=X \setminus A$ and $B'=X' \setminus A'$ and then take
\begin{equation}\label{eq_sec6_crop}
\begin{aligned}
\teta
& = \frac{1}{\eta'(B')} (\eta \mid A) \times (\eta' \mid B') + \frac{1}{\eta(B)} (\eta \mid B) \times (\eta' \mid A')\\
& \qquad\qquad+ \left(\frac{1}{\eta(B)} + \frac{1}{\eta'(B')} - \frac{c}{\eta(B)\eta'(B')}\right)(\eta \mid B) \times (\eta'\mid B'),
\end{aligned}
\end{equation}
where $c=\eta(X)=\eta'(X')$.
\end{proof}

\begin{lemma}\label{l_sec6_coupling_noatoms2}
Let $A_j \subset C_j\subset X$ and $A_j'\subset C_j' \subset X'$, $j=1, \dots, k$ be such that
\begin{enumerate}
\item $\eta(C_j) < \eta(X\setminus C_j)$ and $\eta'(C_j') < \eta'(X'\setminus C_j')$ for $1 \le j \le k$;
\item $A_j \times (X' \setminus C'_j)$ and $(X \setminus C_j) \times A'_j$ are disjoint from $A_i \times A'_i$ for $1 \le i < j \le k$.

\end{enumerate}
Then there exists a coupling $\teta$ of $\eta$ and $\eta'$ such that $\teta(A_j \times A_j')=0$ for $j=1, \dots, k$.
\end{lemma}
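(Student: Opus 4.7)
I would prove this by induction on $k$, with the base case $k=1$ being Lemma~\ref{l_sec6_coupling_noatoms1} applied with the pair $(A_1, A_1')$: the hypothesis $A_1 \subset C_1$ together with condition~(1) gives $\eta(A_1) \le \eta(C_1) < \eta(X \setminus C_1) \le \eta(X \setminus A_1)$, and similarly on $X'$, which is exactly what that lemma needs.

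For the inductive step, let $\teta_0$ be a coupling of $\eta$ and $\eta'$ satisfying $\teta_0(A_j \times A_j') = 0$ for $j < k$, as furnished by the inductive hypothesis. Write $B := X \setminus C_k$ and $B' := X' \setminus C_k'$, let $\mu := \teta_0|_{A_k \times A_k'}$ with total mass $\alpha$ and marginals $\rho$ on $A_k$ and $\rho'$ on $A_k'$, and perform a local version of the Lemma~\ref{l_sec6_coupling_noatoms1} surgery by setting
\[
\teta := \teta_0 - \mu + \frac{\rho \times \eta'|_{B'}}{\eta'(B')} + \frac{\eta|_B \times \rho'}{\eta(B)} - \alpha\, \frac{\eta|_B \times \eta'|_{B'}}{\eta(B)\,\eta'(B')}.
\]
A marginal computation mirroring the one in the proof of Lemma~\ref{l_sec6_coupling_noatoms1} shows that the three signed correction terms together share the first and second marginals of $\mu$, so $\teta$ is still a coupling of $\eta$ and $\eta'$. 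By construction $\teta(A_k \times A_k') = 0$. The first two correction terms are supported in $A_k \times B'$ and $B \times A_k'$, which by condition~(2) (with $j=k$) are disjoint from every $A_i \times A_i'$ with $i < k$, so they add no mass to previously forbidden rectangles; the third correction only subtracts mass, hence cannot create new mass anywhere. Thus $\teta(A_j \times A_j') = 0$ for all $j \le k$.

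The main obstacle I anticipate is the non-negativity of $\teta$: the subtraction in the third term has to be absorbed by mass already present in $\teta_0$ on $B \times B'$, and this is not automatic for an arbitrary coupling. To deal with this I would strengthen the inductive hypothesis and build $\teta_0$ not as a black-box coupling but as an explicit non-negative combination of rescaled product measures on rectangles — in the shape of the output of formula~\eqref{eq_sec6_crop}, iterated once for each earlier $j$. Condition~(1), which in particular forces $\eta(C_k) < \eta(X \setminus C_k)$ and its primed analogue, then plays exactly the role it played in the proof of Lemma~\ref{l_sec6_coupling_noatoms1}: it bounds the corner coefficient $\alpha/(\eta(B)\eta'(B'))$ by the product mass already sitting on $B \times B'$ inside $\teta_0$, keeping the surgery non-negative. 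Processing the constraints in order $j = 1, 2, \dots, k$ and carrying along this structural invariant is what makes the induction close.
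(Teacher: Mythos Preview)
Your inductive strategy and the shape of your surgery match the paper's proof closely, but the non-negativity obstacle you flag is real, and your proposed workaround (carrying a structural invariant that $\teta_0$ is a non-negative combination of rescaled products) is both vague and unnecessary. The paper avoids the issue entirely by a different choice of the subtracted term: instead of removing a piece of the \emph{product} $\eta|_B \times \eta'|_{B'}$, it removes a fraction of the \emph{actual restriction} $\teta_0|_{B \times B'}$.

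Concretely, the paper first observes that for \emph{any} coupling $\teta_0$ of $\eta$ and $\eta'$ one has
\[
\teta_0(C_k \times C_k') < \teta_0(C_k^c \times (C_k')^c),
\]
which follows from condition~(1) by the identities $\teta_0(C_k \times C_k') + \teta_0(C_k^c \times C_k') = \eta'(C_k')$ and $\teta_0(C_k^c \times C_k') + \teta_0(C_k^c \times (C_k')^c) = \eta(C_k^c)$. Since $A_k \times A_k' \subset C_k \times C_k'$, the ratio $\theta_k := \alpha / \teta_0(B \times B')$ is strictly less than $1$. The paper then sets
\[
\teta := \teta_0 - \mu - \theta_k\,\big(\teta_0|_{B \times B'}\big) + \zeta + \zeta',
\]
where $\zeta$ is the scaled product of $\rho$ and $\theta_k\,\pi'_*(\teta_0|_{B \times B'})$ (supported in $A_k \times B'$), and $\zeta'$ is the symmetric term (supported in $B \times A_k'$). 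Non-negativity is now automatic: on $B \times B'$ one is left with $(1-\theta_k)\,\teta_0|_{B \times B'} \ge 0$. The marginal bookkeeping and the use of condition~(2) to protect earlier rectangles go through exactly as in your write-up. The point is that the inequality above holds for an arbitrary coupling, so no inductive tracking of product structure is needed.
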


\begin{proof}
We are going to construct couplings $\teta_j$, $j=1, \dots, k$ of $\eta$
and $\eta'$ such that
\begin{equation}\label{eq_sec6_diagonal_cover}
\teta_j(A_i \times A'_i)=0
\text{ for any $1 \le i \le j$.}
\end{equation}
Then it suffices to take $\teta=\teta_k$.

The case $j=1$ of \eqref{eq_sec6_diagonal_cover} is contained in Lemma~\ref{l_sec6_coupling_noatoms1}.
We proceed by induction.
Let $j\in\{2, \dots, k\}$ and assume that we have constructed a coupling $\teta_{j-1}$ of $\eta$
and $\eta'$ such that $A_i \times A_i$ has zero measure for every $i=1, \dots, {j-1}$.
We claim that
\begin{equation}\label{eq_sec6_not_balanced}
\teta_{j-1}(C_j \times C'_j) % \le \teta_{j-1}(C_j \times C'_j)
< \teta_{j-1}(C_j^c \times (C'_j)^c),
\end{equation}
where $C_j^c=X \setminus C_j$ and $(C'_j)^{c}=X' \setminus C'_j$.
Indeed, suppose that $\eta'(C'_j) \le \eta(C_j)$. Recall that $\eta(C_j) < \eta(C_j^c)$, by assumption.
Moreover,
\begin{itemize}
\item $\teta_{j-1}(C_j \times C'_j) + \teta_{j-1}(C_j^c \times C'_j) = \eta'(C'_j)$,
\item $\teta_{j-1}(C_j^c \times C'_j) + \teta_{j-1}(C_j^c \times (C'_j)^c) = \eta(C_j^c)$.
\end{itemize}
Combining these relations we get the inequality in \eqref{eq_sec6_not_balanced}.
The case $\eta(C_j) \le \eta'(C'_j)$ is analogous, and so the claim is proved.
In particular, the following number is smaller than $1$:
$$
\theta_j=\frac{\teta_{j-1}(A_j \times A'_j)}{\teta_{j-1}(C_j^c \times (C'_j)^c)}.
$$

Let $\pi:X \times X' \to X$ and $\pi':X \times X' \to X'$ be the canonical projections.
Let $\zeta_j$ be the scaled product of $\pi_*\left(\teta_{j-1} \mid A_j \times A'_j\right)$
and $\theta_j \pi'_*\left(\teta_{j-1} \mid C_j^c \times (C'_j)^c\right)$,
and $\zeta_j'$ be the scaled product of
$\theta_j \pi_*\left(\teta_{j-1} \mid C_j^c \times (C'_j)^c\right)$ and
$\pi'_*\left(\teta_{j-1} \mid A_j \times A'_j\right)$. Then define
\begin{equation}\label{eq_sec6_etaij}
\teta_j = \teta_{j-1} - \left(\teta_{j-1} \mid A_j   \times A'_j\right)
             - \theta_j \left(\teta_{j-1} \mid C_j^c \times (C'_j)^c\right) + \zeta_j + \zeta_j'.
\end{equation}
It is clear that $\teta_j$ is a (positive) measure, because $\theta_j<1$.
It is also clear that $\teta_j$ is a coupling of $\eta$ and $\eta'$:
$$
\begin{aligned}
\pi_* \teta_j
& = \pi_* \teta_{j-1} - \pi_* \left(\teta_{j-1} \mid A_j \times A'_j\right)
 - \theta_j \pi_* \left(\teta_{j-1} \mid C_j^c \times (C'_j)^c\right) \\
& \hspace*{1.4cm} + \pi_* \left(\teta_{j-1} \mid A_j \times A'_j\right)
+ \theta_j \pi_* \left(\eta_{j-1} \mid C_j^c \times (C'_j)^c\right)
 = \pi_* \teta_{j-1} = \eta
\end{aligned}
$$
and, analogously, $\pi'_*\teta_j = \eta'$. Note also that $\teta_j(A_j \times A'_j)=0$.
Moreover, $\zeta_j$ is concentrated in $A_j \times (C'_j)^c$
and $\zeta_j'$ is concentrated in $C_j^c \times A'_j$.
Since both sets are assumed to be disjoint from $A_i \times A'_i$, we get that
$$
\teta_j\left(A_i\times A'_i\right) \le \teta_{j-1}\left(A_i \times A'_i\right) = 0
\text{ for $i=1, \ldots, j-1$.}
$$
This completes the induction.
\end{proof}

\begin{remark}\label{r_sec6_continuous_dependence}
By construction, the coupling $\teta$ varies continuously with $\eta$ and $\eta'$
in the weak$^*$-topology at all points such that the boundaries of all $A_j$ and
$C_j$ have zero $\eta$-measure and the boundaries of all $A'_j$ and $C'_j$ have
zero $\eta'$-measure.
\end{remark}

\begin{remark}\label{r_sec6_preserve_generic}
The constructions in Lemmas~\ref{l_sec6_coupling_noatoms1} and~\ref{l_sec6_coupling_noatoms2}
preserve the class of non-atomic measures and, when $X$ is an algebraic variety, also the class
of generic measures: if $\eta$ and $\eta'$ are generic then so is the coupling $\teta$.
That follows directly from Remark~\ref{r_sec5_generic_elementary} and the expressions
\eqref{eq_sec6_crop} and \eqref{eq_sec6_etaij}.
\end{remark}

\begin{remark}\label{r_sec6_involution_invariant1}
When $X=X'$, these constructions are \emph{involution-invariant} in the following sense.
First,  in Lemma~\ref{l_sec6_coupling_noatoms1} the coupling $\teta$ given by \eqref{eq_sec6_crop} is
replaced with $\iota_*\teta$ when one  exchanges the  roles of $\eta, A$ and $\eta', A'$.
In particular, if $\eta=\eta'$ and $A=A'$ then the self-coupling $\teta$ is symmetric.
In Lemma~\ref{l_sec6_coupling_noatoms2} we may take $\zeta'_j=\iota_*\zeta_j$ and then
the coupling $\teta_j$ is replaced with $\iota_*\teta_j$ when the roles of $\eta, A_i, C_i$
and $\eta', A'_i, C'_i$ are exchanged. In particular, if $\eta=\eta'$, $A_i=A'_i$,
and $C_i=C'_i$ then the self-coupling $\teta=\teta_k$ is symmetric.
\end{remark}

\begin{proposition}\label{p_sec6_coupling_Kset}%{l_sec6_coupling_diagonal}
Let $K$ be a compact subset of the product $X \times X'$ such that
$$
K(x') = \{y \in X: (y,x') \in K\} \text{ and } K'(x) = \{y'\in X': (x,y') \in K\}.
$$
satisfy
\begin{equation}\label{eq_nonatomicK}
\eta\left(K(x')\right) < \eta\left(X \setminus K(x')\right)
\text{ and }
\eta'\left(K'(x)\right) < \eta'\left(X' \setminus K'(x)\right)
\end{equation}
for every $(x, x') \in X \times X'$. Then there exists a coupling $\teta$ of $\eta$ and $\eta'$
that vanishes on a neighborhood of $K$.
\end{proposition}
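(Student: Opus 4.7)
The plan is to cover a neighborhood of $K$ by finitely many product rectangles $A_j \times A'_j$, equip each with auxiliary sets $C_j \supset A_j$ and $C'_j \supset A'_j$ satisfying the hypotheses of Lemma~\ref{l_sec6_coupling_noatoms2}, and then invoke that lemma to produce $\teta$ vanishing on the union of those rectangles, which will be an open neighborhood of $K$.

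First I would promote the pointwise measure hypothesis to a uniform one. Since $K$ is closed, the set-valued map $x' \mapsto K(x')$ is upper semi-continuous (and $K(x') = \emptyset$ for $x'$ outside the compact projection $\pi'(K)$), so $x' \mapsto \eta(K(x'))$ is upper semi-continuous on the compact set $\pi'(K)$. Combined with the hypothesis, this gives $\sup_{x'} \eta(K(x')) < \eta(X)/2$, and a standard argument using upper semi-continuity of $x' \mapsto \eta(K(x')^\delta)$ jointly in $\delta$ then produces a single $\delta > 0$ with $\sup_{x'} \eta(K(x')^\delta) < \eta(X)/2$, where $K(x')^\delta$ denotes the open $\delta$-enlargement in $X$. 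Symmetrically one finds $\sup_x \eta'(K'(x)^\delta) < \eta'(X')/2$, possibly after shrinking $\delta$. Next, the usual uniform upper semi-continuity on the compact sets $\pi(K)$ and $\pi'(K)$ yields some $\vep \in (0,\delta/2)$ such that $d(x,x_0) < 2\vep$ implies $K'(x) \subset K'(x_0)^{\delta/2}$, and symmetrically for $K(\cdot)$.

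Pick an $\vep$-dense family $\{(x_j,x'_j)\}_{j=1}^k$ in $K$ and set
$$
A_j = B(x_j,\vep), \quad A'_j = B(x'_j,\vep), \quad C_j = K(x'_j)^\delta, \quad C'_j = K'(x_j)^\delta.
$$
Then $\bigcup_j A_j \times A'_j$ is an open neighborhood of the compact set $K$, and I would verify the hypotheses of Lemma~\ref{l_sec6_coupling_noatoms2} as follows. The inclusions $A_j \subset C_j$ and $A'_j \subset C'_j$ follow from $(x_j,x'_j) \in K$, which gives $x_j \in K(x'_j)$ and hence $B(x_j,\vep) \subset K(x'_j)^\vep \subset C_j$ (symmetrically for the primed sets); the measure conditions hold by the uniform bounds obtained above. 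For $i \ne j$ with $A_i \cap A_j \ne \emptyset$ we have $d(x_i,x_j) < 2\vep$, so $K'(x_i) \subset K'(x_j)^{\delta/2}$ by the choice of $\vep$; since $x'_i \in K'(x_i)$, this yields $A'_i = B(x'_i,\vep) \subset K'(x_j)^{\delta/2 + \vep} \subset C'_j$. The condition involving $A'_i \cap A'_j$ is handled in exactly the same way. Lemma~\ref{l_sec6_coupling_noatoms2} then provides a coupling $\teta$ with $\teta(A_j \times A'_j) = 0$ for every $j$, so $\teta$ vanishes on the open neighborhood $\bigcup_j A_j \times A'_j$ of $K$.

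The main obstacle is the careful choice of the two scales $\vep \ll \delta$: the $\delta$ must be large enough that the constraint (2) of Lemma~\ref{l_sec6_coupling_noatoms2} holds whenever two rectangles overlap in one projection, yet small enough to preserve the strict measure inequality $\eta(C_j) < \eta(X \setminus C_j)$. Everything rests on uniform upper semi-continuity of the section maps over the compact projections of $K$, and the construction is compatible with Remarks~\ref{r_sec6_preserve_generic} and~\ref{r_sec6_continuous_dependence} so that genericity and continuous dependence are inherited from Lemma~\ref{l_sec6_coupling_noatoms2}.
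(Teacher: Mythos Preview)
Your verification of condition (2) of Lemma~\ref{l_sec6_coupling_noatoms2} rests on the claim that upper semi-continuity of $x\mapsto K'(x)$ admits a \emph{uniform} modulus on $\pi(K)$: some $\vep>0$ with $d(x,x_0)<2\vep\Rightarrow K'(x)\subset K'(x_0)^{\delta/2}$. This is false in general. Take $X=X'=[0,1]$ with Lebesgue measure and
\[
K=\bigl([0,\tfrac13]\times\{0\}\bigr)\cup\{(0,1)\}.
\]
The hypotheses of the proposition hold (all sections have measure $\le\tfrac13$ or $0$), and $x\mapsto K'(x)$ is upper semi-continuous since $K$ is closed. But $K'(0)=\{0,1\}$ while $K'(t)=\{0\}$ for $t\in(0,\tfrac13]$, so for any $\vep>0$ and any $\delta<2$, taking $x=0$ and $x_0=\vep$ gives $d(x,x_0)<2\vep$ yet $K'(0)\not\subset K'(\vep)^{\delta/2}$. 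Concretely, with covering centers $(0,1)$ and $(\vep,0)$ your rectangles $A_i=B(0,\vep)$ and $A_j=B(\vep,\vep)$ overlap, but $A'_i=B(1,\vep)\not\subset C'_j=K'(\vep)^\delta=(-\delta,\delta)$, so condition (2) fails. Upper semi-continuity only controls $K'(x)$ in terms of $K'(x_0)$ when $x_0$ is the \emph{base point}; there is no symmetric uniform modulus, and your argument needs exactly the wrong direction.

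This is not a cosmetic issue: the two-scale choice $\vep\ll\delta$ cannot be salvaged, because the required $\vep(x_0)$ genuinely degenerates to $0$ as $x_0\to 0$ in the example above. The paper confronts the same obstruction and resolves it differently. It uses only the pointwise moduli $\delta(x),\delta'(x')$ from \eqref{eq_valid_in_closure}, defines the initial rectangles $A_j,A'_j,C_j,C'_j$ as in \eqref{eq_sec6_A_C_A_C}, and then accepts that condition~(b) may fail for some pairs. The key extra step is a \emph{trimming} procedure (Figure~\ref{f_disjoint}): whenever $(A_j\times A'_j)\cap\bigl((X\setminus C_i)\times A'_i\bigr)\ne\emptyset$, one shows that $A_j\times\bar{A'_i}$ is actually disjoint from $K$ (using the pointwise modulus at $x'_i$), so one may replace $A'_j$ by $A'_j\setminus\bar{A'_i}$ without destroying the covering property~(c). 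Your write-up is missing precisely this mechanism for repairing the disjointness condition.
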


\begin{proof}
We are going to find open sets $A_j \subset C_j \subset X$ and $A'_j \subset C'_j \subset X'$,
$j=1, \dots, k$, such that
\begin{itemize}
\item[(a)] $\eta(C_j) < \eta(X \setminus C_j)$ and $\eta(C'_j) < \eta(X \setminus C'_j)$;
\item[(b)] $A_j \times A'_j$ is disjoint from $(X \setminus C_i) \times A'_i$ and $A_i \times (X \setminus C'_i)$;
\item[(c)] and the union of the products $A_j \times A'_j$ contains $K$.
\end{itemize}
Then, by Lemma~\ref{l_sec6_coupling_noatoms2}, there exists a coupling $\teta$ of $\eta$ and $\eta'$
vanishing on the union of $A_j \times A'_j$, $j=1, \dots, k$, which gives the claim of the
present lemma. So let us explain how to construct such sets.

By compactness, there exists $\vep>0$ such that \eqref{eq_nonatomicK} remains valid when $K(x')$
and $K'(x)$ are replaced with their $10\vep$-neighborhoods. Let $\vep$ be fixed.
Also by compactness, the maps $x \mapsto K'(x)$ and $x' \mapsto K(x')$ are upper semicontinuous.
In particular, for any $x \in X$ and $x' \in X'$ there exist $\delta(x)>0$ and $\delta'(x')>0$ such that
\begin{equation}\label{eq_valid_in_closure}
\begin{aligned}
& K'(y) \subset B_\vep\left(K'(x)\right) \text{ if } d(x,y) < 4\delta(x)
\text{ and }\\
& K(y') \subset B_\vep\left(K(x')\right) \text{ if } d(x',y') < 4\delta(x').
\end{aligned}
\end{equation}
It is no restriction to assume that $\delta(x)$ and $\delta'(x')$ are bounded by $\vep$, and we do so.
Let $(x_1,x'_1), \dots, (x_k,x'_k) \in K$ be such that $B(x_j,\delta(x_j)) \times B(x'_j,\delta'(x'_j))$,
$j =1, \dots, k$ cover $K$. Initially, define
\begin{equation}\label{eq_sec6_A_C_A_C}
\begin{aligned}
A_j = B(x_j,3\delta(x_j)),& \quad A'_j=B(x'_j,3\delta'(x'_j))\\
C_j = B\left(K(x'_j),10\vep\right),& \quad C'_j = B\left(K'(x_j),10\vep\right).
\end{aligned}
\end{equation}
Note that $A_j \subset C_j$ and $A'_j \subset C'_j$ since $(x_j,x'_j) \in K$ and $\delta, \delta' \le \vep$.
Conditions (a) and (c) are clearly satisfied, but not necessarily (b).
In the following we replace the sets $A_j$ and $A'_j$ with suitable subsets,
in such a way as to achieve (b) while preserving (c). The condition (a) is clearly not affected.

\begin{figure}[ht]
\begin{center}
\psfrag{xj}{$x_j$}\psfrag{xil}{$x'_i$}\psfrag{xjl}{$x'_j$}
\psfrag{Aj}{$A_j$}\psfrag{Ail}{$A'_i$}\psfrag{Ajl}{$A'_j$}
\psfrag{Ci}{$X \setminus C_i$}
\psfrag{y}{$y$}\psfrag{yl}{$y'$}\psfrag{z}{$z$}
\includegraphics[height=2in]{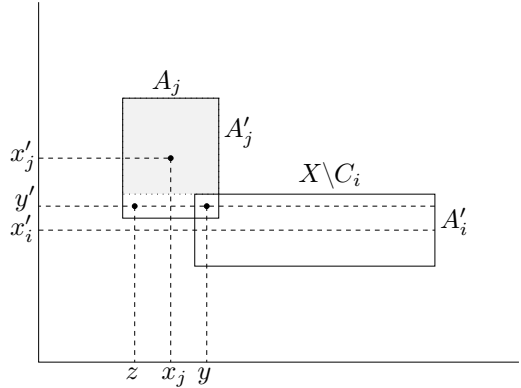}
\caption{\label{f_disjoint}
Trimming the sets $A_j$ and $A'_j$: to achieve the disjointness condition (c), in the situation described
in the figure $A_j \times A'_j$ is replaced with $A_j \times (A'_j \setminus \bar{A'_i})$, which corresponds
to the shaded region. A dual operation is applied to $A_j$, and the whole procedure is repeated for every $i$ and $j$.}
\end{center}
\end{figure}

Consider any $i, j = 1, \dots, k$. If $A_j\times A'_j$ is disjoint from $A_i \times (X' \setminus C'_i)$
and $(X \setminus C_i) \times A'_i$ there is nothing to do. Next, let us consider the case when
there exists $(y,y')$ in $\left(A_j\times A'_j\right) \cap \left((X \setminus C_i) \times A'_i\right)$.
See Figure~\ref{f_disjoint}. We claim that $A_j \times B(x'_i,4\delta(x'_i)$ is disjoint from $K$.
Indeed, suppose there existed $z \in A_j$ and $y'' \in B(x'_i,4\delta(x'_i))$ such that $(z,y')\in K$.
Since $d(z,y) < 6\delta(x_j)\le 6\vep$, it would follow that $y\in B(K(y''),6\vep)$.
On the other hand, $K(y'') \subset B(K(x'_i),\vep)$ because $d(y'',x'_i) < 4\delta(x'_i)$.
Hence, we would have $y \in B(K(x'_i),7\vep)$, which would contradict the fact that
$y \in X \setminus C_i$. This contradiction proves our claim.
Now, this ensures that $A_j \times \bar{A'_i}$ is disjoint and at a definite distance from $K$.
Thus, we may replace $A'_j$ with $A'_j \setminus \bar{A'_i}$ in our construction without affecting condition (d) and, by doing it, we get that $A_j \times A'_j$ becomes disjoint from
$(X \setminus C_i) \times A'_i$.
The case when there exists $(y,y')$ in $(A_j\times A'_j) \cap \left(A_i \times (X' \setminus C'_i)\right)$
is treated in the same way, trimming $A'_j$ instead.
Repeating this procedure for each $i$ and $j$, we get all three conditions (a) to (c).
\end{proof}

\begin{remark}\label{r_sec6_boundaries}
The union of the boundaries $\partial A_j$ over all $j=1, \dots, k$ does not increase under the trimming operation,
and the same holds for the union of the boundaries $\partial A'_j$ over $j=1, \dots, k$.
\end{remark}

Next we want to state and prove a parametrized version of Proposition~\ref{p_sec6_coupling_Kset}.
The following elementary fact will be useful at other places as well:

\begin{lemma}\label{l_sec6_elementary2}
Let $g:Z \times T \to \RR$ be a bounded measurable function, where $Z$ is a metric space and $(T,\mu)$ is a probability space.
Let $z_0\in Z$ be such that the set $D(z_0)$ of values of $t \in T$ such that $z \mapsto g(z,t)$ is discontinuous at $z_0$
has zero $\mu$-measure. Then $z \mapsto \int_T g(z,t) \, d\mu(t)$ is continuous at $z=z_0$.
\end{lemma}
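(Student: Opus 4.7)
The plan is to reduce the statement to the dominated convergence theorem via the sequential characterization of continuity, which is available because $Z$ is a metric space. So first I would take an arbitrary sequence $(z_n)_n$ in $Z$ converging to $z_0$ and aim to show
\[
\int_T g(z_n,t)\,d\mu(t) \longrightarrow \int_T g(z_0,t)\,d\mu(t).
\]

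The key observation is that for every $t\in T\setminus D(z_0)$, the map $z\mapsto g(z,t)$ is by definition continuous at $z_0$, so $g(z_n,t)\to g(z_0,t)$. Since $\mu(D(z_0))=0$, this pointwise convergence holds for $\mu$-almost every $t$. Thus the sequence of integrands $t\mapsto g(z_n,t)$ converges $\mu$-a.e. to $t\mapsto g(z_0,t)$.

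To pass the limit under the integral I would invoke the dominated convergence theorem, with dominating function the constant $\sup_{Z\times T}|g|$, which is finite by hypothesis and $\mu$-integrable since $\mu$ is a probability measure. This yields the desired convergence of the integrals, and since the sequence $(z_n)_n$ was arbitrary, continuity at $z_0$ follows. One small technical point worth checking is that the functions $t\mapsto g(z_n,t)$ are measurable (they are by hypothesis on $g$), and that the limit $t\mapsto g(z_0,t)$ is measurable as the $\mu$-a.e. limit of measurable functions; no further obstacle arises. There is really no hard step here; the statement is essentially a packaging of dominated convergence tailored for the way it will be applied later (for instance in combination with Remark~\ref{r_sec6_continuous_dependence}, where the exceptional set $D(z_0)$ consists of parameters whose associated measures charge certain boundaries).
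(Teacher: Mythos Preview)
Your proof is correct. The paper takes a slightly different but equivalent route: rather than passing to sequences and invoking the dominated convergence theorem, it gives a direct $\varepsilon$--$\delta$ argument. For a given $\varepsilon>0$ it defines $T_k$ to be the set of $t$ for which $|g(z,t)-g(z_0,t)|\le\varepsilon$ whenever $d(z,z_0)<1/k$; since $\bigcup_k T_k$ has full measure, one can choose $k$ with $\mu(T_k^c)<\varepsilon$, and then splitting the integral over $T_k$ and $T_k^c$ gives the bound $(1+2\|g\|)\varepsilon$ on the difference of integrals for all $z$ in the $(1/k)$-ball. This is essentially dominated convergence unwound by hand; the advantage is that it produces an explicit neighborhood of $z_0$ rather than arguing along an arbitrary sequence. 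Your approach is the textbook one and is arguably cleaner; both work equally well here.
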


\begin{proof}
Fix any $\vep>0$.
For each $k\in\NN$, denote by $T_k$ the set of values of $t \in T$ such that $|g(z_0,t) - g(z,t)|\le \vep$
for any $z$ in the $(1/k)$-neighborhood of $z_0$.
The sequence $T_k$ is non-decreasing and the assumption ensures that $\cup_k T_k$ has full $\mu$-measure.
Fix $k$ such that $\mu(T_k^c) < \vep$. Then for any $z$ in the $(1/k)$-neighborhood of $z_0$,
$$
\left| \int_T g(z_0,\cdot) \, d\mu - \int_T g(z,\cdot) \, d\mu \right|
\le \vep + \left| \int_{T_k^c} g(z_0,\cdot) \, d\mu \right| + \left| \int_{T_k^c} g(z,\cdot) \, d\mu \right|
\le (1 + 2\|g\|) \vep.
$$
Since $\vep>0$ is arbitrary, this proves that $z_0$ is a continuity point.
\end{proof}

\begin{proposition}\label{p_sec6_coupling_Kset_parametrized}
Let $X$, $X'$, $Y$, and $Y'$ be compact metric spaces, $K$ be a compact subset of $X \times X'$,
and $\eta=\{\eta_y:y\in Y\}$ and $\eta'=\{\eta'_{y'}:y' \in Y'\}$ be continuous families of probability
measures on $X$ and $X'$, respectively, such that
\begin{equation}\label{eq_nonatomicKY}
\eta_{y}\left(K(x')\right) < \eta_{y}\left(X \setminus K(x')\right)
\text{ and }
\eta'_{y'}\left(K'(x)\right) < \eta'_{y'}\left(X' \setminus K'(x)\right).
\end{equation}
for every $(x, x') \in X \times X'$ and $(y, y') \in Y \times Y'$.
Then there exists a continuous family $\teta=\{\teta_{y,y'}: (y, y') \in Y \times Y'\}$
of probability measures on $X \times X'$ such that each $\teta_{y,y'}$ is a coupling of $\eta_y$ and
$\eta'_{y'}$ vanishing on a uniform neighborhood of $K$.
\end{proposition}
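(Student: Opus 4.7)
The plan is to adapt the proof of Proposition~\ref{p_sec6_coupling_Kset} by making uniform choices in $(y, y')$ wherever possible, and then handling continuity in these parameters by a partition-of-unity argument.

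First, I would upgrade the pointwise strict inequalities \eqref{eq_nonatomicKY} to a uniform version. Using compactness of $Y \times Y' \times X \times X'$, weak$^*$-continuity of the families $\eta$ and $\eta'$, the lower/upper semi-continuity of $\mu \mapsto \mu(U)$ on open/closed $U$, and the upper semi-continuity of the set-valued maps $x' \mapsto K(x')$ and $x \mapsto K'(x)$, one finds a single $\vep > 0$ such that
\[
\eta_y\bigl(B_{10\vep}(K(x'))\bigr) < \eta_y\bigl(X \setminus B_{10\vep}(K(x'))\bigr)
\]
and the analogous inequality for $\eta'_{y'}$, for all $(x, x', y, y') \in X \times X' \times Y \times Y'$. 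With this uniform $\vep$, I would carry out the geometric construction of Proposition~\ref{p_sec6_coupling_Kset}: pick a finite cover $\{B(x_j, \delta_j) \times B(x'_j, \delta'_j)\}_{j=1}^k$ of $K$, define sets $A_j, A'_j, C_j, C'_j$ via \eqref{eq_sec6_A_C_A_C}, and apply the trimming procedure to enforce the disjointness condition (b). The resulting sets are independent of $(y, y')$, satisfy the hypotheses of Lemma~\ref{l_sec6_coupling_noatoms2} uniformly, and their union $V = \bigcup_j A_j \times A'_j$ is a uniform neighborhood of $K$ on which the couplings will vanish.

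For each $(y, y')$, the explicit iterative formulas \eqref{eq_sec6_crop} and \eqref{eq_sec6_etaij} from the proofs of Lemmas~\ref{l_sec6_coupling_noatoms1} and~\ref{l_sec6_coupling_noatoms2}, applied to $\eta_y$, $\eta'_{y'}$ and the fixed sets, produce a candidate coupling $\teta_{y, y'}$ vanishing on $V$. By Remark~\ref{r_sec6_continuous_dependence}, this candidate is weak$^*$-continuous at any $(y_0, y'_0)$ at which neither $\eta_{y_0}$ nor $\eta'_{y'_0}$ charges the boundaries of the fixed sets. To obtain continuity everywhere, I would proceed by a partition of unity: for each $(y_0, y'_0)$, let the radii of the defining balls depend continuously on $(y, y')$ within an open neighborhood $U_{(y_0, y'_0)}$ in such a way that the boundaries of all auxiliary sets have zero mass at every point of $U_{(y_0, y'_0)}$, while remaining within the slack of Step~1 so that the uniform strict inequalities persist. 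A finite subcover $\{U_\alpha\}$ of the compact space $Y \times Y'$, corresponding local formulas $\teta^\alpha_{y, y'}$ that are continuous on $U_\alpha$ and vanish on $V$, and a subordinate partition of unity $\{\rho_\alpha\}$ would then yield
\[
\teta_{y, y'} = \sum_\alpha \rho_\alpha(y, y') \, \teta^\alpha_{y, y'};
\]
convex combinations of couplings are couplings, each summand vanishes on $V$, and continuity follows from the local continuity of the summands.

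The main obstacle is this last point: propagating continuity from a single parameter value to a whole neighborhood. At any fixed pair $(y_0, y'_0)$, only countably many radii are ``bad'' for the two fixed measures $\eta_{y_0}, \eta'_{y'_0}$, so avoiding boundary mass at a single point is trivial; but along a continuous family, the set of bad radii moves with the measures and can sweep out entire intervals. The key technical step is therefore to exhibit a continuous local choice of radii that avoids boundary charges throughout a neighborhood, so that the partition-of-unity combination produces a globally continuous coupling family; one plausible route is to combine a local Fubini argument with the uniform strict slack established in Step~1, or alternatively to replace the explicit construction by a continuous selection via Michael's theorem applied to the lower hemi-continuous convex-valued map $(y, y') \mapsto \{\text{couplings of } \eta_y, \eta'_{y'} \text{ vanishing on } V\}$.
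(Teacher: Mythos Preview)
You correctly set up the uniform-in-$(y,y')$ geometric construction and correctly identify the real obstacle: continuity of $(y,y')\mapsto\teta_{y,y'}$ when the boundaries of the fixed sets $A_j,C_j,A'_j,C'_j$ may acquire positive mass along the family. However, your proposed resolution via a partition of unity does not close this gap. It requires, near each $(y_0,y'_0)$, a \emph{continuous} choice of radii whose boundary spheres are null for \emph{every} measure in a neighborhood; as you yourself note, the bad radii can sweep out intervals as $(y,y')$ varies, and there is no reason such a continuous local selection exists. Your fallback suggestions (a ``local Fubini argument'' or Michael's selection theorem) are left as hints rather than arguments, so the proposal as written is incomplete.

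The paper's proof bypasses the partition-of-unity idea entirely and instead implements exactly the Fubini-type averaging you allude to. One introduces a single extra scalar parameter $s\in[0,1]$ and lets the radii in \eqref{eq_sec6_A_C_A_C} shrink linearly in $s$, obtaining sets $A_{j,s},C_{j,s},A'_{j,s},C'_{j,s}$ whose boundaries are pairwise disjoint as $s$ varies. For each $s$ the construction of Lemma~\ref{l_sec6_coupling_noatoms2} yields a coupling $\teta_{y,y',s}$ vanishing on a neighborhood of $K$ independent of $s$ and $(y,y')$. By Remark~\ref{r_sec6_continuous_dependence}, for each fixed $(z,z')$ the map $(y,y')\mapsto\teta_{y,y',s}$ can fail to be continuous at $(z,z')$ only for the countably many $s$ at which one of the boundary spheres has positive $\eta_z$- or $\eta'_{z'}$-mass. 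One then sets
\[
\teta_{y,y'}=\int_0^1 \teta_{y,y',s}\,ds
\]
and invokes Lemma~\ref{l_sec6_elementary2} (a dominated-convergence continuity lemma) to conclude that this integral is continuous at every $(z,z')$. The point is that one does not need continuity in $(y,y')$ for any single $s$; it suffices that, at each base point, the set of bad $s$ has Lebesgue measure zero. This replaces your local-choice problem by a global averaging, and is both simpler and complete.
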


\begin{proof}
By compactness, the assumption \eqref{eq_nonatomicKY} implies that there exists $\vep>0$ such that
\begin{equation*}
\begin{aligned}
& \eta_{y}(B(K(x'),10\vep))<\eta_{y}(X \setminus B(K(x'),10\vep))
\text{ and } \\
& \eta'_{y'}(B(K'(x),10\vep))<\eta'_{y'}(X' \setminus B(K'(x),10\vep))
\end{aligned}
\end{equation*}
for every $(x,x') \in X \times X'$ and $(y,y') \in Y \times Y'$. Fix $\vep$ and let
$\delta(x), \delta'(x') \in(0,\vep)$ be as in \eqref{eq_valid_in_closure}.
Let $(x_1,x'_1), \dots, (x_k,x'_k) \in K$ be such that $B(x_j,\delta(x_j)) \times B(x'_j,\delta'(x'_j))$,
$j =1, \dots, k$ cover $K$. For each $s\in[0,1]$ and $j =1, \dots, k$, define
\begin{equation}\label{eq_sec6_As_Cs_As_Cs}
\begin{aligned}
A_{j,s} = B(x_j,(3-s)\delta(x_j)),& \quad A'_{j,s}=B(x'_j,(3-s)\delta'(x'_j))\\
C_{j,s} = B\left(K(x'_j),(10-s)\vep\right),& \quad C'_{j,s} = B\left(K'(x_j),(10-s)\vep\right).
\end{aligned}
\end{equation}

Applying to the measures $\eta_y$ and $\eta'_{y'}$, and the sets $A_{j,s}$, $C_{j,s}$, $A'_{j,s}$,
and $C'_{j,s}$ the same construction in Lemma~\ref{l_sec6_coupling_noatoms2},
we find for each $s\in [0,1]$ and $(y,y')\in Y \times Y'$ a coupling $\teta_{y,y',s}$ of $\eta_y$
and $\eta'_{y'}$ which vanishes on a neighborhood of $K$ independent of both $s$ and $(y,y')$.
(recall that the trimming is always done at a definite distance from $K$).

By Remark~\ref{r_sec6_continuous_dependence}, for each fixed $s \in [0,1]$ the map
$(y,y') \mapsto \teta_{y, y',s}$ is discontinuous at a given point $(z,z')\in Y \times Y'$ only if
the union of the boundaries of $A_{j,s}$ and $C_{j,s}$ has positive measure for $\eta_{z}$
or the union of the boundaries of $A'_{j,s}$ and $C'_{j,s}$ has positive measure for $\eta'_{z'}$.
In the setting of \eqref{eq_sec6_As_Cs_As_Cs}, the boundaries of the $A_{j,s}$ are pairwise disjoint for each
fixed $j$, and the same is true for the $C_{j,s}$, $A'_{j,s}$, and $C'_{j,s}$.
Thus, positive measure may occur only for a countable subset of values of $s$.
Remark~\ref{r_sec6_boundaries} ensures that the latter conclusion remains valid after the trimming.
In conclusion, every $(z,z')\in Y \times Y'$ is a continuity point of $(y,y') \mapsto \teta_{y, y',s}$
for all but countably many values of $s\in [0,1]$. Then, using Lemma~\ref{l_sec6_elementary2},
\begin{equation*}
\teta_{y,y'} = \int_0^1 \teta_{y,y',s} \, ds
\end{equation*}
is a coupling of $\eta_{y}$ and $\eta'_{y'}$ depending continuously on $y$ and $y'$ and vanishing on
a uniform neighborhood of $K$.
\end{proof}

\begin{remark}\label{r_sec6_involution_invariant2}
When $X=X'$ and the set $K \subset X \times X$ is symmetric, one may exchange
$A_j, C_j$ with $A'_j, C'_j$ in \eqref{eq_sec6_A_C_A_C}, and $A_{j,s}, C_{j,s}$ with
$A'_{j,s}, C'_{j,s}$ in \eqref{eq_sec6_As_Cs_As_Cs}.
Thus (recall Remark~\ref{r_sec6_involution_invariant1}) when the roles of $\eta$ and $\eta'$
are exchanged the coupling $\teta$ is replaced with $\iota_*\teta$ in Proposition~\ref{p_sec6_coupling_Kset}
and each $\teta_{y',y}$ is replaced with $\iota_*\teta_{y,y'}$ in Proposition~\ref{p_sec6_coupling_Kset_parametrized}.
In particular, if $\eta=\eta'$ and $Y=Y'$ then the self-coupling $\teta$ in
Proposition~\ref{p_sec6_coupling_Kset} is symmetric, and the family $\teta$
in Proposition~\ref{p_sec6_coupling_Kset_parametrized} is involution-invariant, meaning that
$\teta_{y',y}=\iota_*\teta_{y,y'}$ for every $(y,y') \in Y \times Y$.
\end{remark}

The following special case of Proposition~\ref{p_sec6_coupling_Kset} will be useful:

\begin{corollary}\label{c_sec6_coupling_diagonal}
Let $X=X'$ and assume that $\eta$ and $\eta'$ satisfy $\eta(\{x\})<\eta(X\setminus\{x\})$ and
$\eta'(\{x\})<\eta'(X\setminus\{x\})$  for every $x\in X$.
Then there exists a coupling $\teta$ of $\eta$ and $\eta'$ that vanishes on a neighborhood
of the diagonal of $X \times X$. If $\eta=\eta'$ then the self-coupling $\teta$ may be chosen
to be symmetric.
\end{corollary}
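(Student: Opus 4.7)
The plan is to derive this as a direct specialization of Proposition~\ref{p_sec6_coupling_Kset}, taking $K$ to be the diagonal $\Delta = \{(x,x) : x \in X\}$ of $X \times X$. Since the toolbox is ultimately applied to compact spaces (subsets of Grassmannian manifolds and flag varieties), $\Delta$ is a compact subset of $X \times X$, as required by the proposition. With this choice of $K$, the slices collapse to singletons: $K(x') = \{x'\}$ and $K'(x) = \{x\}$ for every $(x,x') \in X \times X$. Consequently the nonatomicity condition~\eqref{eq_nonatomicK} reduces to
\[
\eta(\{x'\}) < \eta(X \setminus \{x'\}) \quad \text{and} \quad \eta'(\{x\}) < \eta'(X \setminus \{x\}),
\]
which is precisely the hypothesis of the corollary. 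Proposition~\ref{p_sec6_coupling_Kset} then furnishes a coupling $\teta$ of $\eta$ and $\eta'$ that vanishes on an open neighborhood of $\Delta$, establishing the first claim.

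For the symmetry statement, suppose $\eta = \eta'$. Then $X=X'$, the two marginals coincide, and the set $\Delta$ is invariant under the involution $\iota(x,x') = (x',x)$, so Remark~\ref{r_sec6_involution_invariant2} applies directly: it says that under exactly these hypotheses the self-coupling produced by the construction in Proposition~\ref{p_sec6_coupling_Kset} is symmetric. I do not anticipate any real obstacle here: the corollary is a clean specialization of the preceding proposition, and the only thing to check is that the hypotheses match the setting (which they do, tautologically, once $K$ is taken to be the diagonal); the substantive work has already been done in Lemma~\ref{l_sec6_coupling_noatoms2} and Proposition~\ref{p_sec6_coupling_Kset}.
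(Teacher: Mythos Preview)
Your proposal is correct and matches the paper's approach exactly: the paper states the corollary as a ``special case of Proposition~\ref{p_sec6_coupling_Kset}'' without further proof, and your argument (take $K=\Delta$, so the slices are singletons and \eqref{eq_nonatomicK} becomes the corollary's hypothesis, then invoke Remark~\ref{r_sec6_involution_invariant2} for symmetry) is precisely the intended specialization.
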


\subsection{Markov operators}\label{ss_Markov_operators}

Let $X$ be a metric space. We denote by  $\Bd(X)$ the Banach space of bounded measurable functions on $X$,
with the norm
$$
\|\psi\|=\sup\{|\psi(x)|: x\in X\}.
$$
A \emph{Markov operator} is a linear operator $\cT:\Bd(X)\to \Bd(X)$ of the form
\begin{equation}\label{eq_sec6_Markov_op1}
\cT\psi(x) = \int_X \psi(y) \, d\sigma_x(y)
\end{equation}
where $\{\sigma_x: x\in X\}$ is a measurable family of probability measures on $X$.
It is clear that $\cT$ is a bounded operator, with $\|\cT\|=1$.
We call it \emph{continuous} if the map $x \mapsto \sigma_x$ is continuous relative to the weak$^*$ topology.
Then $\cT$ preserves the subspace $\Cd(X)$ of bounded continuous functions.

The \emph{dual operator} $\cT^*$ is defined on the space of bounded finitely additive signed
measures $\eta$ on $X$ with the total variation norm (see \cite[IV.4.5]{DSI}) by%\cite[Theorem~5.12]{PuS99}
\begin{equation}\label{eq_sec6_Markov_op2}
\cT^*\eta = \int_X \sigma_x \, d\eta(x).
\end{equation}
The two are related by
\begin{equation}\label{eq_sec6_Markov_dual}
\int_X \psi \, d\left(\cT^*\eta\right) = \int_X \left(\cT\psi\right) \, d\eta
\text{ for every $\psi$ and $\eta$.}
\end{equation}
A measure $\eta$ is \emph{$\cT$-invariant} if $\cT^*\eta=\eta$. If $\cT$ is continuous then $\cT^*$ preserves the
subspace of bounded (countably additive) signed measures on $X$.

\begin{remark}\label{r_sec6_must_be_generic}
If $\{\sigma_x: x \in X\}$ is such that every $\sigma_x$ is a generic measure then,
cf. Remark~\ref{r_sec5_generic_elementary}, the range of the associated dual Markov operator
$\cT^*\eta = \int_X \sigma_x\, d\eta(x)$  is contained in the space of generic measures.
In particular, any $\cT$-invariant measure is a generic measure.
\end{remark}

Suppose that $X$ comes with a transitive $G$--action $(g,x) \mapsto gx$. Grassmannian manifolds $\grass(r,d)$ and
flag varieties $\cF(r,d)$ are the examples we have in mind.
Then, to any probability measure $\nu$ on $G$ we may associate the Markov operators $\cP_\nu$ and $\cP^*_\nu$
in \eqref{eq_sec3_Markov_operator1}, \eqref{eq_sec3_Markov_operator2}, and \eqref{eq_sec5_Markov_operator12}
\begin{equation}\label{eq_sec6_Markov_op3}
\cP_\nu\psi(x) = \int_G \psi(gx) \, d\nu(g)
\quand
\cP^*_\nu\eta = \int_G \left(g_*\eta\right) \, d\nu(g).
\end{equation}
This corresponds to \eqref{eq_sec6_Markov_op1} with $\sigma_x =$ the push-forward of $\nu$ under the map $g \mapsto gx$.
Note that $\cP_\nu$ is continuous if the $G$-action is continuous, and a measure $\eta$ on $Z$ is $\cP_\nu$-invariant
precisely if it is $\nu$-stationary. These are the fundamental examples of Markov operators in our context,
but we will have to deal with other types as well.

One reason is that  the kind of conclusion we are seeking, namely, that stationary measures give small weights to a neighborhood
of the equator is local in nature. That is consistent with the fact that the information on the dynamics we can
extract from Proposition~\ref{p_sec5_derivative_estimate} is clearly local.
In contrast, the property of being a stationary measure is \emph{not} local: the restriction of a $\cP_\nu$-invariant
measure $\eta$ to some set $U \subset X$ is usually not a $\cP_\nu$-invariant measure.
The way we handle this is by finding a ``localized'' Markov operator, related to the original one and to the domain $U$
in an explicit manner, with respect to which the restriction $\eta \mid U$ is indeed an invariant measure.

\begin{remark}\label{r_sec6_measure_lift}
The assumption that the $G$-action on $X$ is transitive means that $G\to X$, $g \mapsto gx$ is surjective for any fixed $x\in X$.
Then every probability measure $\sigma$ on $X$ lifts (non-uniquely) to a probability measure $\mu$ on $G$: use the Hahn--Banach and Riesz--Markov theorems.
In particular, \eqref{eq_sec6_Markov_op1} may be written in the form
$$
\cT\psi(x) = \int_G \psi(gx) \, d\mu_x(g)
$$
for some family $\{\mu_x: x \in X\}$ of probability measures on $G$.
This general statement is not used in the present paper, but an explicit construction
in a special case will appear in Lemma~\ref{l_lifting}.
\end{remark}

\begin{proposition}\label{p_sec6_localized1}
Let $\cT:\Bd(X)\to\Bd(X)$ be a Markov operator, $\eta$ be a $\cT$-invariant measure, and
$U\subset X$ be such that $\eta(U)>0$.
Then there exists a Markov operator $\cT_U:\Bd(U)\to\Bd(U)$ that leaves invariant the
normalized restriction $\eta_U$  of $\eta$ to the subset $U$.
 \end{proposition}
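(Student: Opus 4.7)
The plan is to define $\cT_U$ as the first-return Markov operator to $U$. Concretely, for each $x \in U$ and measurable $A \subset U$, set
\[
\tau_x(A) = \sigma_x(A) + \sum_{n=2}^\infty \int_V \cdots \int_V \sigma_{y_{n-1}}(A) \, d\sigma_{y_{n-2}}(y_{n-1}) \cdots d\sigma_x(y_1),
\]
with $V = X \setminus U$, where the $n$-th summand involves $n-1$ integrations against the kernel restricted to $V$. A priori this is only a sub-probability measure on $U$; heuristically, $\tau_x(A)$ is the probability that the Markov chain with kernel $(\sigma_x)$ started at $x$ enters $U$ for the first time inside $A$. Measurability of $x \mapsto \tau_x$ follows from iterated Fubini.

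The core step is to establish the invariance identity $\int_U \tau_x(A) \, d\eta(x) = \eta(A)$ for every measurable $A \subset U$. Consider the Markov chain $(X_n)_{n\ge 0}$ with $X_0 \sim \eta$ and transition kernel $(\sigma_x)$; by $\cT$-invariance, $X_n \sim \eta$ for all $n$. Decomposing $\{X_N \in A\}$ according to the last time $j < N$ at which $X_j \in U$ (with a residual event that no such $j$ exists), stationarity rewrites the $j$-th piece as $I_{N-j}$, where
\[
I_n := \int_U \PP_x(X_1,\dots,X_{n-1}\in V,\,X_n\in A)\,d\eta(x),
\]
so that $\eta(A) = \sum_{n=1}^N I_n + R_N$ with $R_N = \PP(X_0,\dots,X_{N-1} \in V,\,X_N \in A)$. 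Setting $\tau = \inf\{n \ge 0: X_n \in U\}$, the events $\{\tau=N,\,X_\tau \in A\}$ are disjoint with total probability at most $1$, so $R_N \to 0$; passing to the limit gives $\eta(A) = \sum_{n=1}^\infty I_n = \int_U \tau_x(A) \, d\eta(x)$.

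Specializing to $A = U$ yields $\int_U \tau_x(U)\,d\eta = \eta(U)$, so $\tau_x(U) = 1$ for $\eta_U$-a.e.\ $x$; the first-return kernel is automatically a probability kernel almost everywhere, with no separate recurrence argument required. On the $\eta_U$-null exceptional set, I would redefine $\tau_x$ to be any measurable probability measure on $U$ (e.g.\ $\delta_x$), which leaves all integrals against $\eta$ unchanged. Then $\cT_U\psi(x) = \int_U \psi \, d\tau_x$ is a Markov operator on $\Bd(U)$, and dividing the invariance identity by $\eta(U)$ gives $\cT_U^*\eta_U = \eta_U$.

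The main delicate point is the vanishing of $R_N$, i.e.\ that $\eta$-almost every trajectory eventually enters $U$. In the form above this is essentially free once one observes the disjointness of the residual events, but the underlying mechanism --- that a finite $\cT$-invariant measure rules out persistent escape from a set of positive measure --- is the single place where $\cT$-invariance of $\eta$ is used in an essential way. The remaining pieces (Fubini exchanges to swap $\sum$ and $\int$, countable additivity of $\tau_x$, and measurability of the family) are routine bookkeeping.
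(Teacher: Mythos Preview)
Your argument is correct: the first-return (induced) kernel is a legitimate Markov operator on $U$ preserving $\eta_U$, and your decomposition of $\{X_N\in A\}$ by the last visit to $U$ before time $N$, together with the disjointness of the residual events, cleanly gives both the invariance identity and the a.e.\ stochasticity of $\tau_x$ in one stroke. This is the classical Kac/induced-system construction.

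The paper, however, takes a different and much shorter route. Instead of following the chain until it returns to $U$, it uses a \emph{one-step redistribution}: from $x\in U$ it keeps the part $\sigma_x|U$ of the original step, and replaces the escaping mass $\sigma_x(U^c)$ by a single fixed measure on $U$, namely $J^{-1}\int_{U^c}(\sigma_z|U)\,d\eta(z)$, where $J=\int_U\sigma_x(U^c)\,d\eta(x)=\int_{U^c}\sigma_z(U)\,d\eta(z)$. Invariance of $\eta_U$ is then a two-line computation using this balance identity. The two constructions yield genuinely different operators $\cT_U$; the statement only asks for existence, so either suffices. What the paper's choice buys is an explicit closed formula with no infinite series, which is exactly what is needed immediately afterward: Proposition~\ref{p_sec6_localized2_strong} produces a \emph{continuous} localized operator by averaging a one-parameter family of such formulas, and Proposition~\ref{p_sec6_adapted_strong} reads off the ``adapted'' properties directly from the expression for $\sigma_{U,x}$. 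Your first-return kernel would make both of these follow-ups substantially harder to carry out, since continuity and the adaptedness conditions would have to be propagated through the infinite sum.
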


\begin{proof}
We are going to find $\{\sigma_{U,x}: x\in U\}$ such that
$\cT_U\psi(x) = \int_U \psi(y) \, d\sigma_{U,x}(y)$ preserves $\eta_U$.
Let $\chi_U$ denote the characteristic function of $U$. Since $\eta$ is $\cT$-invariant,
$$
\begin{aligned}
0
& = \int_X \left(\cT\chi_U-\chi_U\right) \, d\eta
= \int_X\left[\int_X \chi_U(y) \, d\sigma_x(y) - \chi_U(x)\right] \, d\eta(x) \\
& = \int_U\left[\sigma_x(U) - 1\right] \, d\eta(x)
+ \int_{U^c} \sigma_x(U) \, d\eta(x),
\end{aligned}
$$
that is,
\begin{equation}\label{eq_sec6_localized1}
\int_U \sigma_x(U^c) \, d\eta(x) = \int_{U^c} \sigma_x(U) \, d\eta(x).
\end{equation}
Let $J$ be this number. If $J=0$, there is not much to do: $\eta_U$ turns out to be $\cT$-invariant,
and it suffices to take $\sigma_{U,x}=\sigma_x$ for $x\in U$. If $J>0$, define
\begin{equation}\label{eq_sec6_localized2}
\sigma_{U,x} = (\sigma_x \mid U) + \sigma_x(U^c) \frac{1}{J} \int_{U^c} (\sigma_z \mid U) \, d\eta(z)
\end{equation}
for each $x\in U$. In other words,
$$
\begin{aligned}
\cT_U\psi(x)
& = \int_U \psi(y) \, d\sigma_x(y) + \sigma_x(U^c) \frac{1}{J}  \int_{U^c} \int_U \psi(y) \, d\sigma_z(y) \, d\eta(z) \\
\text{and }\cT_U^* \xi
& = \int_U (\sigma_x \mid U) \, d\xi(x) + \int_U \sigma_x(U^c) \, d\xi(x) \frac{1}{J} \int_{U^c} (\sigma_z \mid U) \, d\eta(z)
\end{aligned}
$$

Observe that $\sigma_{U,x}$ is a probability on $U$:
$$
\sigma_{U,x}(U)
= \sigma_x(U) + \sigma_x(U^c) \frac{1}{I} \int_{U^c} \sigma_x(U) \, d\xi(x)
=  \sigma_x(U) + \sigma_x(U^c)  = 1.
$$
Moreover, by the definition of $J$,
$$
\begin{aligned}
\cT^*\eta_U
& = \int_U (\sigma_x \mid U) \, d\eta_U(x) + \int_U \sigma_x(U^c) \, d\eta_U(x) \frac{1}{J} \int_{U^c} (\sigma_z \mid U) \, d\eta(z)\\
& = \frac{1}{\eta(U)} \left(\int_U (\sigma_x \mid U) \, d\eta(x) +  \int_{U^c} (\sigma_z \mid U) \, d\eta(z)\right)
= \frac{1}{\eta(U)} \int_X (\sigma_x \mid U) \, d\eta(x).
\end{aligned}
$$
Each $\psi\in\Bd(U)$ may be viewed as an element of $\Bd(X)$ that vanishes outside $U$.
Then
%Thus, $\eta$ and each $\sigma_x$ may also be viewed as measures on $U$, coinciding with $\eta\mid U$ and $\sigma_x \mid U$, respectively.
%With these identifications,
%$$
%\int_X (\sigma_x \mid U) \, d\eta(x)
%= \int_X \sigma_x \, d\eta(x)
%= \cT^*\eta
%= \eta
%= (\eta \mid U).
%$$
$$
\begin{aligned}
\int_U \psi \, d\left(\cT_U^*\eta_U\right)
& = \frac{1}{\eta(U)} \int_X \int_U  \psi(y) \, d\sigma_x(y) \, d\eta(x)
= \frac{1}{\eta(U)} \int_X \int_X  \psi(y) \, d\sigma_x(y) \, d\eta(x)\\
& = \frac{1}{\eta(U)} \int_X \cT \psi(x) \, d\eta(x)
= \frac{1}{\eta(U)} \int_X \psi(x) \, d\eta(x)
= \int_U \psi \, d\eta_U.
\end{aligned}
$$
Thus, $\cT^*\eta_U = \eta_U$, as we wanted to prove.
\end{proof}

The operator $\cT_U$ in Proposition~\ref{p_sec6_localized1} need not be continuous, in general.
In the next proposition we fix that problem.

\begin{proposition}\label{p_sec6_localized2_strong}
Let $\cT:\Bd(X)\to\Bd(X)$ be a Markov operator, $\eta$ be a $\cT$-invariant measure, and
$U\subset X$ be such that $\eta(U)>0$. Assume that $x \mapsto \sigma_x$ is continuous
on $U$. Then there is a continuous Markov operator $\cT_U:\Bd(U)\to\Bd(U)$ such that the
normalized restriction $\eta_U$ is $\cT_U$-invariant.
\end{proposition}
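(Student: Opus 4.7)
The plan is to refine the construction from Proposition~\ref{p_sec6_localized1}. The source of discontinuity there is the use of the indicator $\chi_U$: the maps $x\mapsto\sigma_x\vert_U$ and $x\mapsto\sigma_x(U^c)$ jump precisely at points where $\sigma_x$ charges $\partial U$. The remedy is to replace $\chi_U$ by a continuous cutoff $\chi:X\to[0,1]$ that vanishes off $U$, and to redirect the ``missing'' mass $1-\cT\chi(x)$ into a fixed probability measure $\tau$ on $U$ carefully chosen so that the invariance of $\eta_U$ is preserved.

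Concretely, fix any such bounded continuous $\chi$ with $\|\chi\|_\infty<1$ (the degenerate choice $\chi\equiv 0$ is permitted and gives the trivial constant kernel $\sigma_{U,x}\equiv\eta_U$). Introduce the auxiliary ``flux'' measure $\lambda=\int_{U^c}\sigma_x\,d\eta(x)$ on $X$ and the positive constant
$$
C \;=\; \eta(U)-\int_U\chi\,d\eta+\int_U\chi\,d\lambda \;\ge\; (1-\|\chi\|_\infty)\,\eta(U) \;>\; 0.
$$
Set
$$
\tau \;=\; C^{-1}\bigl[(1-\chi)\,\eta+\chi\,\lambda\bigr]\bigr\vert_U, \qquad \sigma_{U,x} \;=\; \chi\,\sigma_x+\bigl(1-\cT\chi(x)\bigr)\,\tau \quad (x\in U).
$$
Then $\sigma_{U,x}$ is a probability on $U$: its total mass is $\cT\chi(x)+(1-\cT\chi(x))\cdot\tau(U)=1$, with $\tau(U)=1$ verified directly from the definition of $C$. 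Continuity of $x\mapsto\sigma_{U,x}$ on $U$ is immediate from the hypothesis that $x\mapsto\sigma_x$ is weak$^*$-continuous on $U$ together with the fact that $\chi$ is bounded continuous on $X$: for any $\psi\in\Cd(U)$ (extended by $0$) the integrand $\psi\chi$ is bounded continuous on $X$, and the scalar $\cT\chi(x)$ is continuous in $x$ by the same token.

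The remaining task is to verify $\cT_U^*\eta_U=\eta_U$. For measurable $A\subset U$, applying $\cT^*\eta=\eta$ to $\chi\chi_A$ gives $\int_X\cT(\chi\chi_A)\,d\eta=\int_A\chi\,d\eta$, and a Fubini computation identifies $\int_{U^c}\cT(\chi\chi_A)\,d\eta$ with $\int_A\chi\,d\lambda$; similarly $\int_U(1-\cT\chi)\,d\eta=C$. Substituting these into the formula for $\cT_U^*\eta_U(A)$ and invoking the definition of $\tau$, the $\int_A\chi\,d\lambda$ contributions cancel and one is left with $\eta(U)^{-1}\int_A[\chi+(1-\chi)]\,d\eta=\eta_U(A)$. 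The one mild obstacle is ensuring $C>0$, which is built into the normalization $\|\chi\|_\infty<1$; beyond that the argument is a direct algebraic verification.
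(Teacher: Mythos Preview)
Your proof is correct and takes a genuinely different, more elementary route than the paper. The paper keeps hard cutoffs $\chi_{U_t}$ but varies a level parameter $t\in[0,1]$: for each $t$ it builds a (generally discontinuous) kernel $\sigma_{x,t}=(\sigma_x\mid U_t)+\sigma_x(U_t^c)\,\xi_t$, observes that for fixed $x$ discontinuity occurs only for the countably many values of $t$ with $\sigma_x(\partial U_t)>0$, and then sets $\sigma_{U,x}=\int_0^1\sigma_{x,t}\,dt$, invoking two auxiliary continuity lemmas to pass the averaging through. You instead replace the hard cutoff by a single continuous one $\chi$ with $\|\chi\|_\infty<1$, which makes continuity immediate (since $\psi\chi\in\Cd(X)$ for any $\psi\in\Cd(U)$ extended by zero, as $\chi$ vanishes on $\partial U$) and reduces invariance to the direct algebraic identity you sketch; as you also note, even $\chi\equiv 0$ already proves the proposition as literally stated, via the constant kernel $\sigma_{U,x}\equiv\eta_U$.

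What the paper's more elaborate construction buys is compatibility with the follow-up Proposition~\ref{p_sec6_adapted_strong}: there one needs $\sigma_{U,x}=\nu_x$ on a neighborhood of the $\nu$-core, and the paper's operator delivers this because $\sigma_x(U_1^c)=0$ on that region forces $\sigma_{x,t}=\sigma_x$ for every $t$, hence $\sigma_{U,x}=\sigma_x$. Your normalization $\|\chi\|_\infty<1$ precludes this, since $\sigma_{U,x}$ then always carries a nonzero $\tau$-component everywhere. That is not a gap in your argument for the present statement, but it explains why the paper works harder here.
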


\begin{proof}
The strategy is to consider a monotone family $\{U_t: t\in [0,1]\}$ of subsets of $U$ with
pairwise disjoint boundaries, and to associate to each $U_t$ a Markov operator $\cT_t$ such that
$\cT_t^*\eta_U = \eta_U$. These operators will still be discontinuous, but we can get rid of the
discontinuities by integrating with respect to $t$. The details follow.

As before, let $J$ be the number in \eqref{eq_sec6_localized1}. When $J=0$ there is nothing to do,
because in that case the construction in Proposition~\ref{p_sec6_localized1} does yield a continuous
Markov operator $\cT_U$. From now on, assume that $J>0$. Let $a>0$ be a small number.
For each $t\in [0,1]$, define $U_t=\{x\in U: d(x,U^c)\ge at\}$ and
\begin{equation*}%\label{eq_sec6_localized3}
J(t) = \int_U \sigma_x(U_t^c) \, d\eta(x).
\end{equation*}
Note that $J(t) \ge J > 0$. Then let $\xi_t$ be the probability measure defined on $U$ by
\begin{equation}\label{eq_sec6_localized4}
\int_U (\sigma_x \mid U_t) \, d\eta(x) + J(t) \xi_t = \eta \mid U.
\end{equation}
Observe that $\xi_t$ is well defined (each $\sigma_x \mid U_t$ may be viewed as a measure on $U$,
since $U_t\subset U$, and so all the terms in this identity are measures on $U$) and it is indeed a
probability measure.

Now let $\cT_t:\Bd(U)\to\Bd(U)$ be the Markov operator associated to the family
\begin{equation}\label{eq_sec6_localized8}
\sigma_{x,t} =  (\sigma_x \mid U_t) + \sigma_x(U_t^c) \xi_t
\end{equation}
of probability measures on $U$.
Condition \eqref{eq_sec6_localized4} means that $\eta_U$ is $\cT_t$-invariant:
\begin{equation}\label{eq_sec6_localized5}
\begin{aligned}
\cT_t^*\eta_U
& =  \int_U (\sigma_x \mid U_t) \, d\eta_U(x) + \int_U \sigma_x(U_t^c) \xi_t \, d\eta_U(x)\\
& =  \int_U (\sigma_x \mid U_t) \, d\eta_U(x) + \frac{J(t)}{\eta(U)} \xi_t = \eta_U.
\end{aligned}
\end{equation}
Next, define $\cT_U$ to be the Markov operator associated to the family of probability measures
\begin{equation}\label{eq_sec6_localized8b}
\sigma_{U,x} = \int_0^1 \sigma_{x,t} \, dt.
\end{equation}
It is clear from \eqref{eq_sec6_localized5} that $\eta_U$ is $\cT_U$-invariant.

We are left to show that the map
$x \mapsto \sigma_{U,x}$ is continuous with respect to the weak$^*$ topology, that is, that
\begin{equation}\label{eq_sec6_localized6}
x \mapsto \int \varphi \, d\sigma_{U,x}
= \int_0^1 \left(\int_{U_t} \varphi(y) \, d\sigma_x(y) + \sigma_x(U_t^c) \int_U \varphi(z) \, d\xi_t(z)\right) dt
\end{equation}
is continuous for any bounded continuous function $\varphi:U\to\RR$.
This will be a consequence of the following fact:

\begin{lemma}\label{l_sec6_elementary1}
Let $Z$ be a metric space, $\sigma_0$ be a probability measure on $Z$ and $g:Z\to\RR$ be a
measurable function such that the closure of the set of discontinuity points has zero $\sigma_0$-measure.
Then $\sigma_0$ is a continuity point of the map $\sigma \mapsto \int _Z g \, d\sigma$ in the space of
probability measures on $Z$ with the weak$^*$ topology.
\end{lemma}

\begin{proof}
Denote by $R$ the closure of the set of discontinuity points. Given $\vep>0$, let $V$ be an open neighborhood
of $R$ whose closure $\overline{V}$ satisfies $\sigma_0(\overline{V})<\vep$.
Then $\sigma(\overline{V})<\vep$ for any $\sigma$ in a weak$^*$ neighborhood of $\sigma_0$.
By the Tietze extension theorem, there exists a continuous function $h:Z\to\RR$ coinciding with $g$ on the
complement of $V$ and satisfying $\|h\|\le\|g\|$. Then
$$
\left|\int_Z g \, d\sigma - \int_Z g \, d\sigma_0\right|
\le \left|\int_Z h \, d\sigma - \int_Z h \, d\sigma_0\right| + 4 \|g\| \vep
\le \left(1+4\|g\|\right) \vep
$$
for any $\sigma$  in a weak$^*$ neighborhood of $\sigma_0$. Thus, $\sigma_0$ is a continuity point.
\end{proof}

Going back to proving the proposition, fix any $z\in U$ and consider $Z = X$ and $\sigma_0 = \sigma_{z}$.
Keep in mind that $x\mapsto \sigma_x$ is assumed to be continuous on $U$.
Thus, applying Lemma~\ref{l_sec6_elementary1} twice, to $g=\varphi \chi_{U_t}$
and to $g=\chi_{U_t^c}$, we see that $z$ is a point of discontinuity of
\begin{equation}\label{eq_sec6_localized7}
x \mapsto \int_{U_t} \varphi(y) \, d\sigma_x(y) + \sigma_x(U_t^c) \int_U \varphi(z) \, d\xi_t(z)
\end{equation}
only if the boundary $\partial U_t = \partial U_t^c$ has positive measure for $\sigma_{z}$.
Since these boundaries are pairwise disjoint when $t$ varies, the latter can only happen for countably many values of $t$.
Thus, we may apply Lemma~\ref{l_sec6_elementary2} to $Z=X$ and the function $g(x,t)$ given by the right-hand
side of \eqref{eq_sec6_localized7}, to conclude that \eqref{eq_sec6_localized6} is continuous.
\end{proof}

\begin{remark}\label{r_sec6_nonatomic}
The localization procedure in Propositions~\ref{p_sec6_localized1}
and~\ref{p_sec6_localized2_strong} preserves the class of non-atomic measures and,
when $X$ is an algebraic variety, also the class of generic measures.
That is a direct consequence of \eqref{eq_sec6_localized2},
 \eqref{eq_sec6_localized4}, \eqref{eq_sec6_localized8}, and
Remark~\ref{r_sec5_generic_elementary}.
\end{remark}

\subsection{Invariant couplings}\label{ss_invariant_couplings}

Let $X$ be a metric space $X$ and $\cT:\Bd(X)\to \Bd(X)$ be a Markov operator, given by
$$
\cT\psi(x) = \int_{X} \psi(y) \, d\sigma_x(y).
$$
A \emph{self-coupling} of $\cT$ is a Markov operator $\tT:\Bd(X \times X) \to \Bd(X \times X)$
of the form
\begin{equation}\label{eq_sec6_Markov_operator4}
\tT\tpsi(x,x') = \int_{X \times X'} \tpsi(y,y') \, d\tsigma_{x,x'}(y,y')
\end{equation}
where each $\tsigma_{x,x'}$ is a coupling of $\sigma_x$ and $\sigma'_{x'}$.
The self-coupling $\tT$ is \emph{continuous} if the map $(x,x') \mapsto \sigma_{x,x'}$ is continuous
on $X \times X$.

\begin{lemma}\label{l_sec6_coupling_iterate}
If $\teta$ is a coupling of $\eta$ and $\eta'$ and $\tT$ is a self-coupling of $\cT$ then
$\tT^*\teta$ is a coupling of $\cT^*\eta$ and $\cT^*\eta'$.
\end{lemma}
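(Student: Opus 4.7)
The plan is to verify the two marginal conditions defining a coupling: that the first marginal of $\tT^*\teta$ equals $\cT^*\eta$ and the second equals $\cT^*\eta'$. Both follow by essentially the same computation, so I describe the first one. By the duality relation \eqref{eq_sec6_Markov_dual}, it is enough to show that
\begin{equation*}
\int_{X \times X} \tpsi \, d(\tT^*\teta) = \int_X \psi \, d(\cT^*\eta)
\end{equation*}
for every $\psi \in \Bd(X)$, where $\tpsi(y,y') = \psi(y)$ denotes the pullback to $X \times X$ through the first projection.

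First I would compute $\tT\tpsi$ directly from \eqref{eq_sec6_Markov_operator4}: since $\tsigma_{x,x'}$ is a coupling of $\sigma_x$ and $\sigma_{x'}$, its first marginal is $\sigma_x$, and hence
\begin{equation*}
\tT\tpsi(x,x') = \int_{X \times X} \psi(y) \, d\tsigma_{x,x'}(y,y') = \int_X \psi(y) \, d\sigma_x(y) = \cT\psi(x).
\end{equation*}
In particular, $\tT\tpsi$ depends only on the first coordinate. Then, applying \eqref{eq_sec6_Markov_dual} to $\tT$ and using that $\teta$ has first marginal $\eta$,
\begin{equation*}
\int_{X \times X} \tpsi \, d(\tT^*\teta) = \int_{X \times X} \tT\tpsi \, d\teta = \int_{X \times X} \cT\psi(x) \, d\teta(x,x') = \int_X \cT\psi \, d\eta = \int_X \psi \, d(\cT^*\eta),
\end{equation*}
which is precisely what was needed.

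The second marginal is handled identically, taking instead $\tpsi(y,y') = \psi(y')$ and using that the second marginal of $\tsigma_{x,x'}$ is $\sigma_{x'}$ and the second marginal of $\teta$ is $\eta'$. There is no real obstacle here; the only thing to double-check is the measurability of $(x,x') \mapsto \tsigma_{x,x'}$ (which is part of the definition of a self-coupling Markov operator) so that all the integrals make sense and Fubini applies. No continuity of $\cT$ or $\tT$ is needed.
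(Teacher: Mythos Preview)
Your proof is correct and follows essentially the same approach as the paper: test against functions $\tpsi(y,y')=\psi(y)$ depending only on one variable, use that $\tsigma_{x,x'}$ has marginal $\sigma_x$ to reduce $\tT\tpsi$ to $\cT\psi$ composed with the projection, and then use that $\teta$ has marginal $\eta$. The paper's write-up combines these into a single chain of equalities rather than isolating $\tT\tpsi=\cT\psi\circ\pi_1$ as a separate step, but the argument is the same.
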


\begin{proof}
Let $\tpsi:X \times X \to \RR$ be any bounded measurable function that depends only on the first
variable: $\tpsi(x,x')=\psi(x)$ for some $\psi\in \Bd(X)$. By definition,
$$
\begin{aligned}
\int_{X \times X} \tpsi \, d(\tT^*\teta)
&= \int_{X \times X} \int_{X \times X} \tpsi(y,y') \, d\tsigma_{x,x'}(y,y')  \, d\teta(x,x')\\
& = \int_{X \times X} \int_{X \times X} \psi(y) \, d\tsigma_{x,x'}(y,y')  \, d\teta(x,x').
\end{aligned}
$$
Since $\tsigma_{x,x'}$ projects to $\sigma_x$ and $\teta$ projects to $\eta$ on the first factor,
this last expression may be written as
$$
\int_{X \times X} \int_X \psi(y) \, d\sigma_x(y)  \, d\teta(x,x')
= \int_X \int_X \psi(y) \, d\sigma_x(y)  \, d\eta(x)
 = \int_X \psi \, d(\cT^*\eta).
$$
This proves that $\tT^*\teta$ projects to $\cT^*\eta$ on the first factor.
Analogously, it projects to $\cT^*\eta'$ on the second factor.
\end{proof}

\begin{lemma}\label{l_sec6_coupling_invariant_exists}
Assume that $X$ is compact and $\tT$ is continuous, and let $\eta$ and $\eta'$ be
$\cT$-invariant probability measures on $X$.
Given any coupling $\teta_0$ of $\eta$ and $\eta'$, every accumulation point $\teta$ of the sequence
\begin{equation*}%\label{eq_sec6_tetan}
\teta_n = \frac{1}{n} \sum_{j=0}^{n-1} \tT^{n*}\teta_0
\end{equation*}
is a $\tT$-invariant coupling of $\eta$ and $\eta'$. In particular, $\tT$-invariant couplings do exist.
\end{lemma}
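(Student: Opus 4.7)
The plan is to run a Krylov--Bogolyubov type argument, exploiting the two facts that (i) the space of couplings of $\eta$ and $\eta'$ is convex and weak$^*$-closed, and (ii) the continuous self-coupling $\tT$ induces a weak$^*$-continuous dual operator on the space of probability measures on the compact space $X \times X$.

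First I would verify that each $\teta_n$ is itself a coupling of $\eta$ and $\eta'$. By Lemma~\ref{l_sec6_coupling_iterate}, if $\tT^{j*}\teta_0$ couples $\eta$ and $\eta'$ then $\tT^{(j+1)*}\teta_0=\tT^*(\tT^{j*}\teta_0)$ couples $\cT^*\eta$ and $\cT^*\eta'$, which equal $\eta$ and $\eta'$ by $\cT$-invariance. Induction starting from the hypothesis on $\teta_0$ gives that every $\tT^{j*}\teta_0$ is such a coupling, and since the set of couplings is convex, so is their Cesàro average $\teta_n$. Note also that $\teta_n(X\times X)=1$, so the sequence lives in a weak$^*$-compact subset of probability measures on $X\times X$ (here I use that $X\times X$ is compact).

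Next I would show that the set of couplings of $\eta$ and $\eta'$ is weak$^*$-closed, which together with the previous step implies that any accumulation point $\teta$ is a coupling. For any continuous $\psi \in C(X)$, the function $(x,x')\mapsto\psi(x)$ is continuous on $X\times X$, and its integral against a coupling equals $\int\psi\, d\eta$; passing to a weak$^*$-limit preserves this identity, and similarly for the second marginal.

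For $\tT$-invariance, the key computation is the telescoping identity
\begin{equation*}
\tT^*\teta_n - \teta_n = \frac{1}{n}\bigl(\tT^{n*}\teta_0 - \teta_0\bigr).
\end{equation*}
Applied to any $\tpsi\in C(X\times X)$, the right-hand side is bounded in absolute value by $\tfrac{2\|\tpsi\|}{n}$, so it tends to zero uniformly in $\tpsi$. Now suppose $\teta_{n_i}\to\teta$ along a subsequence. Continuity of $\tT$ means that for every $\tpsi\in C(X\times X)$ the function $\tT\tpsi$ is again continuous on $X\times X$, so
\begin{equation*}
\int \tpsi\, d(\tT^*\teta_{n_i}) = \int \tT\tpsi\, d\teta_{n_i} \longrightarrow \int \tT\tpsi\, d\teta = \int \tpsi\, d(\tT^*\teta).
\end{equation*}
Combining with the telescoping estimate, $\int\tpsi\, d(\tT^*\teta)=\int\tpsi\, d\teta$ for every $\tpsi\in C(X\times X)$, hence $\tT^*\teta=\teta$. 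Since $\teta$ is also a coupling of $\eta$ and $\eta'$ by the previous paragraph, it is a $\tT$-invariant coupling, which proves existence as well.

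The main (mild) obstacle is making sure the weak$^*$-continuity of $\tT^*$ is genuinely used; this is precisely where the continuity hypothesis on $\tT$ (equivalently, on $(x,x')\mapsto\tsigma_{x,x'}$) enters, via the fact that $\tT$ preserves $C(X\times X)$. Everything else is the standard Krylov--Bogolyubov machinery on the compact space $X\times X$.
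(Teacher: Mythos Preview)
Your proof is correct and follows essentially the same Krylov--Bogolyubov argument as the paper: show each Ces\`aro average is a coupling via Lemma~\ref{l_sec6_coupling_iterate} and $\cT$-invariance, pass to a weak$^*$ accumulation point using compactness, and verify $\tT$-invariance by the telescoping identity combined with the fact that continuity of $\tT$ ensures $\tT\tpsi$ is continuous for $\tpsi\in C(X\times X)$. You are slightly more explicit than the paper about why the set of couplings is weak$^*$-closed, but the substance is identical.
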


\begin{proof}
By Lemma~\ref{l_sec6_coupling_iterate}, every $\teta_n$ is a coupling of $\eta$ and $\eta'$.
By compactness, there exists $(n_i)_i\to\infty$ such that $(\teta_{n_i})_i$ converges to some
$\teta$ in the weak$^*$ topology. Clearly, $\teta$ is still a coupling of $\eta$ and $\eta'$.
Let $\tpsi: X \times X \to \RR$ be any bounded continuous function.
The assumption ensures that $\tT\tpsi$ is also continuous. Thus,
$$
\begin{aligned}
\int_{X \times X}\left(\tT\tpsi - \tpsi\right)\,d\teta
& = \lim_i \int_{X \times X}\left(\tT\tpsi - \tpsi\right)\,d\teta_{n_i}\\
& = \lim_i \int_{X \times X} \frac{1}{n_i} \left(\tT^{n_i}\tpsi - \tpsi\right)\,d\teta_0 = 0
\end{aligned}
$$
(recall that $\|\tT^{n}\tpsi\| \le \|\tpsi\|$ for every $n$). Thus, $\tT^*\teta=\teta$,
as we wanted to prove.
\end{proof}

\begin{remark}\label{r_sec6_symmetric_preserved}
A self-coupling $\tT$ is \emph{symmetric} if $\sigma_{x',x} = \iota_*\sigma_{x,x'}$ for all $x,x' \in X$.
If $\tT$ is symmetric and $\teta$ is a symmetric self-coupling of $\eta'$ then
$\tT^*\teta$ is a symmetric self-coupling of $\tT^*\eta$. Moreover, the $\tT$-invariant self-coupling
$\teta$ in Lemma~\ref{l_sec6_coupling_invariant_exists} may be taken to be symmetric.
\end{remark}

\subsection{Margulis functions}\label{ss_Margulis_functions}

As before, let
$$
\cT:\Bd(X)\to \Bd(X),\quad \cT\psi(x) = \int_X\psi(y) \, d\sigma_x(y)
$$
be a Markov operator on a metric space $X$. Let $X=A \cup B$ be a partition of $X$ into disjoint
sets $A$ and $B$. An \emph{(additive) Margulis function} for $\cT$ relative to $(A,B)$ is a
measurable function $\Psi:X\to [0,\infty]$ such that there exist $\kappa_A>0$ and $\kappa_B>0$
such that
\begin{align}
\cT\Psi(x) & \le \Psi(x) - \kappa_A \text{ for every } x\in A \label{eq_sec6_Margulis2}\\
\cT\Psi(x) & \le \Psi(x) + \kappa_B \text{ for every } x\in B. \label{eq_sec6_Margulis3}
\end{align}
($\Psi$ is usually not bounded, but its image under $\cT$ is easily defined using monotone convergence: let $\cT\Psi = \lim_n \cT(\min\{\Psi, n\})$.
Then $\int_X \cT\Psi \, d\zeta = \int \Psi \, d\left(\cT^*\zeta\right)$ for any probability measure
$\zeta$ on $X$.)
We make following technical assumption, which is used in the context of \eqref{eq_sec6_semi-continuous}:
there exists $L>0$ such that $\Psi$ is lower semi-continuous on $\Psi^{-1}([L,\infty])$.

Margulis functions are a very effective tool for estimating the spatial distribution of $\cT$-invariant
measures. The simple lemma that follows illustrates this idea:

 \begin{lemma}\label{l_sec6_Margulis_at_work1}
Let $\Psi:X\to[0,\infty]$ be a Margulis function for a Markov operator $\cT$
relative to a partition $(A,B)$.
Let $\zeta$ be any measure on $X$ such that $\int\Psi \, d\zeta < \infty$ and
$\int_X \cT\Psi \, d\zeta \ge \int_X \Psi \, d\zeta$. Then
\begin{equation}\label{eq_sec6_Margulis4}
\zeta(B) \ge \frac{\kappa_A}{\kappa_A+\kappa_B}\zeta(X).
\end{equation}
In particular, this holds if $\zeta$ is $\cT$-invariant and satisfies $\int\Psi \, d\zeta < \infty$.
\end{lemma}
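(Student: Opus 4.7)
The plan is to integrate the two defining inequalities \eqref{eq_sec6_Margulis2} and \eqref{eq_sec6_Margulis3} against $\zeta$ and rearrange. Split the total integral of $\cT\Psi$ along the partition:
$$
\int_X \cT\Psi \, d\zeta
= \int_A \cT\Psi \, d\zeta + \int_B \cT\Psi \, d\zeta
\le \int_A (\Psi - \kappa_A) \, d\zeta + \int_B (\Psi + \kappa_B) \, d\zeta
= \int_X \Psi \, d\zeta - \kappa_A \zeta(A) + \kappa_B \zeta(B).
$$
Combining this with the hypothesis $\int_X \cT\Psi \, d\zeta \ge \int_X \Psi \, d\zeta$ and cancelling the (finite) quantity $\int_X \Psi \, d\zeta$ on both sides yields $\kappa_A \zeta(A) \le \kappa_B \zeta(B)$. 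Writing $\zeta(A) = \zeta(X) - \zeta(B)$ and solving for $\zeta(B)$ gives \eqref{eq_sec6_Margulis4}.

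For the ``in particular'' clause, one uses that $\cT$-invariance means $\cT^*\zeta = \zeta$, which via the duality $\int_X \cT\Psi \, d\zeta = \int_X \Psi \, d(\cT^*\zeta)$ (valid for non-negative $\Psi$ by the monotone convergence remark in the paper) turns the hypothesis $\int_X \cT\Psi \, d\zeta \ge \int_X \Psi \, d\zeta$ into an equality, and so the main case applies directly.

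The only subtlety worth flagging is the need for $\int_X \Psi \, d\zeta < \infty$ in order to legitimately subtract it from both sides of the integrated inequality; without this one cannot rule out an $\infty - \infty$ situation. Since $\Psi$ may take the value $+\infty$ (as it does in the motivating multiplicative case), this finiteness hypothesis is essential and is used precisely here; no other step requires real work. Thus the proof is essentially a one-line computation once the integrals are split along $(A,B)$.
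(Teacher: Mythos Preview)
Your proof is correct and follows essentially the same approach as the paper: integrate the Margulis inequalities over $A$ and $B$, combine with the hypothesis $\int_X \cT\Psi\,d\zeta \ge \int_X \Psi\,d\zeta$, cancel the finite integral, and rearrange $\kappa_A\zeta(A)\le\kappa_B\zeta(B)$ into \eqref{eq_sec6_Margulis4}. Your added remarks on the role of the finiteness hypothesis and the duality identity for the $\cT$-invariant case are accurate and merely elaborate on what the paper leaves implicit.
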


\begin{proof}
We have
$$
\int_X \Psi \, d\zeta \le \int_X \cT\Psi \, d\zeta \le \int_X \Psi \, d\zeta -\kappa_A\zeta(A) + \kappa_B\zeta(B).
$$
Thus, $\kappa_A\zeta(A) - \kappa_B\zeta(B) \le 0$, which is the same as \eqref{eq_sec6_Margulis4}.
To get the last claim, just note that $\int_X \cT\Psi \, d\zeta = \int_X \Psi \, d\zeta$ if $\zeta$ is
$\cT$-invariant.
\end{proof}

\begin{remark}\label{r.additive_multiplicative}
Given a set $Y\subset X$, we call \emph{multiplicative Margulis function} for $\cT$ relative to
$(X,Y)$ any measurable function $\Phi:X\to[1,\infty]$ such that $\log\Phi$ is uniformly continuous,
$\Phi(x) = \infty$ if and only if $x\in Y$, $\Phi$ is a proper function on $X\setminus Y$,
and there exist constants $c < 1$ and $b<\infty$ such that
\begin{equation}\label{eq_sec6_Margulis1}
\cT\Phi(x) \le c \Phi(x) + b
\text{ for all $x\in X$.}
\end{equation}
If $\Phi$ is a multiplicative Margulis function then $\log\Phi$ is an additive  Margulis function
relative to the partition $(A,B)$ defined by
$$
A = \{x\in X: \Phi(x) > \alpha\} \quand B = \{x\in X: \Phi(x) \le \alpha\}
$$
for any $\alpha>b/(1-c)$. Indeed, the Jensen inequality implies that
$$
\cT\log\Phi(x) \le \log \cT\Phi(x) \le \log(c\Phi(x)+b)
\text{ for every $x$.}
$$
Moreover,
$$
\log(cy+b) \le \left\{\begin{array}{ll}\log y + \log(c+b) & \text{for every $y\ge 1$}\\
                                       \log y + \log(c+b/\alpha) & \text{if $y>\alpha$.}\end{array}\right.
$$
Thus, we may take $\kappa_A = - \log(c+b/\alpha)>0$ and any $\kappa_B\ge\log(c+b)$.
On the other hand, as was already pointed out in the Introduction, it is not true that if $\Psi$ is an
additive Margulis function then $\exp\Psi$ is a multiplicative one.
%In fact, the inequality \eqref{eq_sec6_Margulis1} is very sensitive to the ``worst case'' behavior of $\Psi$
%in the support of the measure $\tau_x$, whereas \eqref{eq_sec6_Margulis2} and \eqref{eq_sec6_Margulis3}
%depend more on the ``average case'' behavior. For this reason, it is often much easier to construct an
%additive Margulis function than a multiplicative one. In fact, we do not know how to construct a useful
%multiplicative Margulis function in our setting beyond the case $d=2$ (see \cite[Chapter~10]{LLE} and \cite{MaV15}).
\end{remark}

\begin{proposition}\label{p_sec6_Margulis_finite_energy}
Assume that $X$ is compact and let $\tT$ be a continuous self-coupling of $\cT$.
Let $\Psi:X \times X\to[0,+\infty]$ be a
Margulis function for $\tT$ which is bounded outside every neighborhood of the diagonal,
and let $\eta$ be a non-atomic $\cT$-invariant measure on $X$.
Then one can find a $\tT$-invariant self-coupling $\teta$ of $\eta$ and a sequence
$(\teta_j)_j$ of self-couplings of $\eta$ converging to $\teta$ in the weak$^*$ topology
and such that
\begin{equation}\label{eq_sec6_Margulis_finite_energy}
\int_{X \times X} \Psi \, d\teta_j <\infty \quand
\int_{X \times X} \tT\Psi \, d\teta_j \ge \int_{X \times X}  \Psi \, d\teta_j
\text{ for every $j$.}
\end{equation}
\end{proposition}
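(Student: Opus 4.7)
The plan is to produce the $\tT$-invariant coupling $\teta$ as a weak$^*$ accumulation point of Cesàro averages of iterates of a well-chosen starting self-coupling, and to extract the desired approximating sequence $\teta_j$ along the way. First I would apply Corollary~\ref{c_sec6_coupling_diagonal}, valid because $\eta$ is non-atomic, to obtain a self-coupling $\teta_0$ of $\eta$ that vanishes on a neighborhood of the diagonal of $X\times X$. The hypothesis that $\Psi$ is bounded outside every neighborhood of the diagonal then forces $a_0:=\int \Psi\, d\teta_0 <\infty$.

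Set $a_k=\int \Psi\,d\tT^{*k}\teta_0=\int \tT^k\Psi\,d\teta_0$ and $\hat\teta_n = \tfrac1n\sum_{k=0}^{n-1}\tT^{*k}\teta_0$. By repeated application of Lemma~\ref{l_sec6_coupling_iterate}, every $\tT^{*k}\teta_0$ is a self-coupling of $\eta$, hence so is $\hat\teta_n$. The Margulis inequalities yield $\tT\Psi\le \Psi+\kappa_B$ everywhere, so iterating gives $a_k\le a_0+k\kappa_B<\infty$ for every $k$, and in particular $\int \Psi\, d\hat\teta_n <\infty$. A telescoping computation produces the key identity
$$
\int \tT\Psi\, d\hat\teta_n - \int \Psi\, d\hat\teta_n = \frac{a_n-a_0}{n},
$$
so the required inequality $\int \tT\Psi\, d\hat\teta_n \ge \int \Psi\, d\hat\teta_n$ reduces to $a_n\ge a_0$.

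At this point I would split into two cases. If there exist infinitely many $n$ with $a_n\ge a_0$, pick such a sequence $n_j\to\infty$ and set $\teta_j=\hat\teta_{n_j}$: each is a self-coupling of $\eta$ with finite $\Psi$-energy satisfying the required inequality, and by weak$^*$-compactness a subsequence converges to some self-coupling $\teta$ of $\eta$, which is $\tT$-invariant by Lemma~\ref{l_sec6_coupling_invariant_exists}. Otherwise $a_n<a_0$ for all sufficiently large $n$, hence $(a_k)$ is bounded, say by $M$, so $\int \Psi\, d\hat\teta_n\le M$ uniformly. Extract a weak$^*$-accumulation point $\teta$ of $(\hat\teta_n)$; again $\teta$ is a $\tT$-invariant self-coupling of $\eta$. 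The technical hypothesis that $\Psi$ is lower semi-continuous on $\Psi^{-1}([L,\infty])$ makes $(\Psi-L)_+$ lower semi-continuous and non-negative, so the Portmanteau theorem gives $\int (\Psi-L)_+\, d\teta\le M$, whence $\int \Psi\, d\teta\le L+M<\infty$. Take $\teta_j=\teta$ constant; $\tT$-invariance together with monotone approximation by the bounded truncations $\min(\Psi,N)$ and the duality \eqref{eq_sec6_Markov_dual} yield $\int \tT\Psi\, d\teta=\int \Psi\, d\tT^*\teta=\int \Psi\, d\teta$, and the inequality is satisfied with equality.

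The main obstacle I anticipate is that the natural limit $\teta$ of the Cesàro averages may have infinite $\Psi$-energy: $\Psi$ is infinite on the diagonal, and there is no a priori reason that iteration under $\tT^*$ should not steadily push mass toward the diagonal. The dichotomy above is designed precisely to sidestep this. Either the energies $a_n$ catch up to $a_0$ infinitely often, giving finite-energy approximants with the required inequality built in, or they stay strictly below $a_0$ forever, in which case the averages have uniformly bounded energy and lower semi-continuity promotes this bound to the limit so that $\teta$ itself serves as the constant approximating sequence.
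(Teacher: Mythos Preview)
Your proof is correct and follows essentially the same approach as the paper's: start from an off-diagonal self-coupling $\teta_0$ via Corollary~\ref{c_sec6_coupling_diagonal}, form the Ces\`aro averages, use the telescoping identity $\int \tT\Psi\,d\hat\teta_n - \int \Psi\,d\hat\teta_n = (a_n-a_0)/n$, and split into two cases using the lower semi-continuity hypothesis. The only cosmetic difference is that the paper organizes the dichotomy around whether $\int\Psi\,d\teta$ is finite or infinite (deducing in the infinite case that $a_n$ is unbounded, hence $a_{m_j}\ge a_0$ along a subsequence), whereas you split directly on whether $a_n\ge a_0$ infinitely often; these are contrapositive formulations of the same argument.
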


\begin{proof}
By Corollary~\ref{c_sec6_coupling_diagonal}, there exists some self-coupling $\heta_0$
of $\eta$ that vanishes on a neighborhood of the diagonal. Then $\int_{X \times X} \Psi \, d\heta_0$
is finite. Conditions \eqref{eq_sec6_Margulis2} and \eqref{eq_sec6_Margulis3} imply that
$\tT^j\Psi (x) \le \Psi(x) + j \kappa_B$ for every $x\in X$, and so
$$
\int_{X \times X} \tT^j\Psi \, d\heta_0 \le  \int_{X \times X} \Psi \, d\heta_0 +  j\kappa_B < \infty
$$
for every $j$. Let $\teta = \lim_i \heta_{n_i}$ be any weak$^*$ accumulation point of the sequence
\begin{equation*}%\label{eq_sec6_tetan}
\heta_n = \frac{1}{n} \sum_{j=0}^{n-1} \tT^{n*}\heta_0.
\end{equation*}
As noted in Lemma~\ref{l_sec6_coupling_invariant_exists},
every $\heta_n$ is a self-coupling of $\eta$ with $\int_{X \times X} \Psi \, d\heta_n < \infty$,
and $\teta$ is a $\tT$-invariant self-coupling of $\eta$.

If $\int_X \Psi \, d\teta$ is finite then the claim follows by taking $\teta_j=\teta$ for every $j$.
In this case the equality holds on the second part of \eqref{eq_sec6_Margulis_finite_energy}.
Now suppose that $\int_X \Psi \, d\teta$ is infinite.
By the lower semi-continuity assumption on $\Psi$, this implies that
\begin{equation}\label{eq_sec6_semi-continuous}
\int_X \Psi \, d\heta_{n_i} \to \infty \text{ as } i\to\infty.
\end{equation}
Then $\int_X \Psi \, d\left(\tT^{*n}\heta_0\right)$ must be unbounded.
In particular, one can find $(m_j)_j\to\infty$ such that
$$
\int_{X \times X} \tT\Psi \, d\heta_{m_j} - \int_{X \times X} \Psi \, d\heta_{m_j}
= \frac{1}{m_j} \left(\int_{X \times X} \Psi \, d\left(\tT^{*m_j}\heta_0\right) - \int_{X \times X}\Psi \, d\heta_0\right)
\ge 0.
$$
Thus, it suffices to take $\teta_j = \heta_{m_j}$ for every $j$.
\end{proof}

\subsection{Adapted operators}\label{ss_adapted_operators}

Let $Z$ be an algebraic variety endowed with a continuous $G$--action $(g,z) \mapsto gz$.
Let $\nu$ be a compactly supported probability measure on $G$,
and $\eta$ be a $\nu$-stationary probability measure on $Z$.
For each $z\in Z$, let $\nu_z$ denote the push-forward of $\nu$ under the map $g \mapsto gz$.

Let $X\subset Z$ and $a_X>0$ be some small number.
Consider the family of subsets $X_t=\{x\in X: d(x,Z \setminus X)\ge a_X t\}$, $t\in [0,2]$.
Note that $t \mapsto X_t$ is monotone decreasing. Define the \emph{$\nu$--core} of $X$ to be
\begin{equation}\label{eq_sec6_core}
\Chi_\nu X = \left\{x \in X_2: gx \in X_2 \text{ for all } g\in\supp\nu\right\}.
\end{equation}
The complement $\partial_\nu X = X \setminus\Chi_\nu X$ is called the \emph{$\nu$--border of $X$}.
Define also
\begin{equation}\label{eq_sec6_core2}
\Chi^\#_\nu X = \left\{x \in \Chi_\nu X: g^{-1}x \in \Chi_\nu X \text{ for all } g\in\supp\nu \right\}.
\end{equation}
It is clear that $\Chi^\#_\nu X \subset \Chi_\nu X \subset X_2 \subset X$. Moreover,
\begin{align}
\nu_x\left(X \setminus X_2\right)&=0 \text{ for all $x\in\Chi_\nu X$, and }
\label{eq_def_adapted_core}\\
\nu_x\left(\Chi^\#_\nu X\right)&=0 \text{ for all $x \in \partial_\nu X$.}
\label{eq_def_adapted_border}
\end{align}

\begin{example}\label{ex_sec6_core}
In our initial applications, $Z$ will be a Grassmannian manifold $\grass(r,d)$ and $X$ will be the
closed $\vep$-neighborhood $E_r(\vep)$ of the subset of $r$-dimension\-al subspaces of $\RR^d$
contained in the equator $E$. Later we will also take $Z$ to be a flag variety $\cF(r,d)$ and $X$
to be the closed subset $\fE_r(\vep)$ of flags whose $r$-coordinate $F_r$ is in $E_r(\vep)$.
We will always take $a_X=\vep/100$, which means that $X_t=E_r((1-t/100)\vep)$ and
$X_t=\fE_r((1-t/100)\vep)$, respectively, for all $t \in [0,2]$.
\end{example}

A Markov operator $\cT:\Bd(X) \to \Bd(X)$ is said to be \emph{adapted to $(\nu,X)$} if the
associated family of probability measures $\{\sigma_x: x \in X\}$ satisfies
\begin{itemize}
\item[(a)] $\sigma_x=\nu_x$ for every $x$ in a neighborhood of the $\nu$-core $\Chi_\nu X$;
\item[(b)] $\sigma_x(\Chi^\#_\nu X) = 0$ for every $x$ in the $\nu$-border $\partial_\nu X$;
\item[(c)] $\sigma_x$ is generic for every $x\in X$.
\end{itemize}

The assumption of the next proposition implies that $x \mapsto\sigma_x$ is continuous on $U$,
and so Proposition~\ref{p_sec6_localized2_strong} does hold in this setting.
The assumption is satisfied, in particular, if the Markov operator $\cT$ is adapted to $(\nu,X)$
and $U$ is a subset of the $\nu$-core of $X$.

\begin{proposition}\label{p_sec6_adapted_strong}
Let $\cT:\Bd(X)\to\Bd(X)$ be a Markov operator, $\eta$ be a $\cT$-invariant measure, and
$U\subset X$ be such that $\eta(U)>0$. Suppose that
\begin{itemize}
\item[(a)] $\sigma_x = \nu_x$ for every $x \in U$;
\item [(b)] $\sigma_x(\Chi^\#_\nu U)=0$ for every $x \notin U$;
\item[(c)] $\sigma_x$ is a generic measure for every $x\in X$.
\end{itemize}
Then the continuous Markov operator $\cT_U:\Bd(U)\to\Bd(U)$
given by Proposition~\ref{p_sec6_localized2_strong} is adapted to $(\nu,U)$.
\end{proposition}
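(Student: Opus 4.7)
The plan is to verify the three properties (a), (b), (c) from the definition of ``adapted to $(\nu,U)$'' directly from the explicit construction of $\cT_U$ in the proof of Proposition~\ref{p_sec6_localized2_strong}. First I would check that the hypothesis of that proposition is satisfied: by our assumption (a), we have $\sigma_x = \nu_x$ on $U$, and $x \mapsto \nu_x$ is continuous since it is the push-forward of $\nu$ under the continuous map $g\mapsto gx$. So $\cT_U$ is well-defined and continuous, built from the family
\[
\sigma_{U,x} = \int_0^1 \sigma_{x,t}\, dt, \qquad \sigma_{x,t} = (\sigma_x\mid U_t) + \sigma_x(U_t^c)\,\xi_t,
\]
with $\xi_t$ determined by $\int_U (\sigma_x\mid U_t)\, d\eta(x) + J(t)\xi_t = \eta\mid U$.

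Next I would handle genericity. By hypothesis (c) every $\sigma_x$ is generic, and the restriction of a generic measure to any set is again generic. Since $\eta = \cT^*\eta = \int \sigma_x \, d\eta(x)$, Remark~\ref{r_sec5_generic_elementary} shows $\eta$ is generic, so $\eta\mid U$ is generic. The defining identity for $\xi_t$ then gives, for any proper subvariety $M$, that $J(t)\xi_t(M) \le (\eta\mid U)(M) = 0$, so $\xi_t$ is generic. Hence each $\sigma_{x,t}$ is a non-negative combination of generic measures, and the integral $\sigma_{U,x}$ is generic by Remark~\ref{r_sec5_generic_elementary} again.

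For property (a) of the adaptedness, I would argue that if $x$ lies in a small enough neighborhood of $\Chi_\nu U$, then by continuity of the $G$-action and compactness of $\supp\nu$, every $g\in\supp\nu$ satisfies $gx\in U_2$ (since $gx$ is close to a point of $U_2$ and the map $g \mapsto gx$ is jointly continuous). Thus $\nu_x$ is supported in $U_2$. Combined with $\sigma_x=\nu_x$ from hypothesis (a), we get $\sigma_x(U_t^c)=0$ and $\sigma_x\mid U_t = \sigma_x$ for every $t\in[0,1]$ (using $U_2 \subset U_t$ for $t\le 2$). Therefore $\sigma_{x,t}=\sigma_x=\nu_x$ for all $t$, and integrating gives $\sigma_{U,x}=\nu_x$.

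The main obstacle, and last step, is property (b): for $x\in\partial_\nu U$ we must show $\sigma_{U,x}(\Chi_\nu^\# U)=0$. The crucial input is the general identity $\nu_x(\Chi_\nu^\# U)=0$ for $x\in\partial_\nu U$ (equation \eqref{eq_def_adapted_border}); together with hypothesis (a) this gives $\sigma_x(\Chi_\nu^\# U)=0$ whenever $x\in U$. To show $\xi_t$ does not charge $\Chi_\nu^\# U$, I would test the defining identity for $\xi_t$ on the set $\Chi_\nu^\# U$, using that $\Chi_\nu^\# U \subset \Chi_\nu U \subset U_2 \subset U_t$ for $t\in[0,2]$. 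Invariance of $\eta$ under $\cT$ yields
\[
\eta(\Chi_\nu^\# U) = \int_X \sigma_x(\Chi_\nu^\# U)\, d\eta(x) = \int_U \sigma_x(\Chi_\nu^\# U)\, d\eta(x),
\]
where the first equality is $\cT$-invariance and the second uses hypothesis (b) to kill the $X\setminus U$ part. Plugging in and using $\Chi_\nu^\# U \subset U_t$ gives $J(t)\xi_t(\Chi_\nu^\# U)=0$, hence $\xi_t(\Chi_\nu^\# U)=0$. Then for $x\in\partial_\nu U$,
\[
\sigma_{x,t}(\Chi_\nu^\# U) = \sigma_x(\Chi_\nu^\# U\cap U_t) + \sigma_x(U_t^c)\,\xi_t(\Chi_\nu^\# U) = \nu_x(\Chi_\nu^\# U) + 0 = 0,
\]
and integrating over $t$ concludes.
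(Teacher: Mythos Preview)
Your argument is essentially correct and follows the paper's approach, with one small slip in checking condition (a): for $x$ merely in a neighborhood of $\Chi_\nu U$, continuity only gives that $gx$ is \emph{close to} $U_2$, not that $gx\in U_2$ (and $U_2$ is closed, so ``close to a point of $U_2$'' does not imply membership). The paper fixes this by observing that $U_1$ is a neighborhood of $U_2$: for $x$ in a small enough neighborhood $V$ of $\Chi_\nu U$ one gets $gx\in U_1$ for all $g\in\supp\nu$, whence $\sigma_x(U_1^c)=0$ and hence $\sigma_x(U_t^c)=0$ for every $t\in[0,1]$; after this adjustment your argument goes through unchanged.

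Your verification of condition (b) --- testing the defining identity for $\xi_t$ directly on $\Chi_\nu^\# U$ using $\cT$-invariance of $\eta$ and hypothesis (b) --- is a clean variant of the paper's argument, which instead first expands $\xi_t$ via $\eta=\int_X\sigma_y\,d\eta(y)$ into the explicit three-term formula \eqref{eq_sec6_localized9} and then checks that each of the three terms vanishes on $\Chi_\nu^\# U$. Both routes use exactly the same ingredients (invariance of $\eta$, hypothesis (b), and $\Chi_\nu^\# U\subset U_t$); yours just avoids writing out the expansion.
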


\begin{proof}
Let $U^c = X \setminus U$ and $U_t^c = X \setminus U_t$. Recall that $\cT_U$
is given by the probability measures $\sigma_{U,x}$ defined in \eqref{eq_sec6_localized8b}.
Since $\eta = \int_X \sigma_y \, d\eta(y)$, because $\eta$ is assumed to be $\cT$-invariant,
the definition \eqref{eq_sec6_localized4} of $\xi_t$ means that
$$
\xi_t = \frac{1}{J(t)} \left(\int_{U^c} (\sigma_y \mid U) \, d\eta(y)
+ \int_U \left(\sigma_y \mid U \cap U^c_t \right) \, d\eta(y)\right),
$$
and so
\begin{equation}\label{eq_sec6_localized9}
\sigma_{x,t} = (\sigma_x \mid U_t) + \frac{\sigma_x(U_t^c)}{J(t)} \left(\int_{U^c} (\sigma_y \mid U) \, d\eta(y) + \int_U \left(\sigma_y \mid U \cap U^c_t \right) \, d\eta(y)\right).
\end{equation}
Condition (c) in the definition of an adapted operator follows directly from
Remark~\ref{r_sec6_nonatomic}. Let us check conditions (a) and (b).

If $x \in \Chi_\nu U$ then $gx \in U_2$ for every $g\in\supp\nu$.
Since $U_1$ is a neighborhood of $U_2$, and the support of $\nu$ is compact,
it follows that there exists a neighborhood $V$ of $\Chi_\nu U$ such that $gx \in U_1$
for every $x \in V$ and $g\in\supp\nu$. Thus $\sigma_x(U_1^c)=\nu_x(U_1^c)=0$,
and so,  for all $t\in [0,1]$, the second term on the right hand side of
\eqref{eq_sec6_localized9} vanishes, which means that $\sigma_{x,t} = \sigma_x = \nu_x$.
Integrating with respect to $t$, we find that $\sigma_{U,x} = \sigma_x  = \nu_x$ for all
$x\in V$, which proves condition (a).

Now consider $x \in \partial_\nu U$. We claim that all three terms on the right-hand side of
\eqref{eq_sec6_localized9} vanish on $\Chi^\#_\nu U$. Initially, \eqref{eq_def_adapted_border}
implies that $\sigma_x(\Chi^\#_\nu U) = \nu_x(\Chi^\#_\nu U) = 0$, which sets the claim for the
first term. The case of the second term is an immediate consequence of the assumption that
$\sigma_y(\Chi^\#_\nu U) = 0$ for every $y \in U^c$. Finally,
$(\sigma_y \mid U \cap U^c_t)(\Chi^\#_\nu U)=0$ for every $y\in U$ and $t \in [0,1]$, just because
$\Chi^\#_\nu U$ is contained in $U_t$. Hence, the third term is also zero on $\Chi^\#_\nu U$.
This proves that $\sigma_{x,t}(\Chi^\#_\nu U)=0$ for all $t\in[0,1]$, which implies that
$\sigma_{U,x}$ satisfies (b).
\end{proof}

\begin{remark}\label{r_sec6_notdeep}
By definition $\nu_x(\Chi^\#_\nu U)=0$ for every $x \notin U$.
Compare conditions (a) and (b) in Proposition~\ref{p_sec6_adapted_strong}.
\end{remark}

\section{Outline of the proof}\label{s_outline_of_the_proof}

Given any subspaces $U$ and $V$ of $\RR^d$, define
\begin{equation}\label{eq_sec7_Theta1}
d(U,V) = \sup_{u \in U}\inf_{v\in V} d(u,v) = \sup_{u \in U}\inf_{v\in V} |\sin\angle(u,v)|.
\end{equation}
Then $0 \le d(U,V) \le 1$, with $d(U,V)=0$ if and only if $U\subset V$ and $d(U,V)=1$ if and only if $U \cap V^\perp \neq \{0\}$.
In particular,
\begin{equation}\label{eq_sec7_Theta2}
\dim U > \dim V \quad \Rightarrow\quad d(U,V) = 1.
\end{equation}
It is also clear that $d(U_2,V_1) \ge d(U_1,V_2)$ whenever $U_1\subset U_2$ and $V_1 \subset V_2$.

The function $d(\cdot,\cdot)$ in \eqref{eq_sec7_Theta1} is clearly not symmetric, in general.
However, its restriction to each Grassmannian manifold $\grass(r,d)$ turns out to be a distance,
invariant under the action of the orthogonal group $\OO(d)$ on the
Grassmannian. The case $r=1$ is just \eqref{eq_sec5_distance_P}.

Let $E_r \subset\grass(r,d)$ be the set of $r$-dimensional subspaces contained in the equator $E$.
In particular, $\grass(1,d)=P$ and $E_1=E$.
For $\vep>0$, let $E_r(\vep)\subset\grass(r,d)$ be the closed $\vep$-neighborhood of $E_r$.
We also denote $E_r(\vep,\vep')=E_r(\vep)\setminus E_r(\vep')$ for $0 < \vep' < \vep$.
For each $X=E_r(\vep)$, we always take $a_X=\vep/100$ in the definition \eqref{eq_sec6_core}
of the $\nu$--core $\Chi_\nu X$ and the $\nu$--border $\partial_\nu X$.

%%%
%Positivity. We already know that $d(U,V)\ge 0$ and $d(U,V)=0$ if and only if $U=V$.
%
%Triangle inequality. Given $u\in U$ take $v_u\in V$ (all vectors are unit vectors) such that
%$$
%d(u,v_u)=\inf_v d(u,v).
%$$
%Then for any $w\in W$,
%$$
%d(u,w) \le d(u,v_u)+d(v_u,w) = \inf_v d(u,v)+d(v_u,w).
%$$
%Thus,
%$$
%\inf_w d(u,w) \le \inf_v d(u,v) + \inf_w d(v_u,w)
%$$
%and so
%$$
%\sup_u \inf_w d(u,w)
%\le \sup_u \inf_v d(u,v) + \sup_v \inf_w d(v,w).
%$$
%This means that $d(U,W) \le d(U,V) + d(V,W)$ for any $U$, $V$, $W$.
%
%Symmetry. Note that $U \cap V^\perp \neq \{0\}$ if and only if
%$V \cap U^\perp \neq\{0\}$. Thus, $d(U,V)=\pi/2$ if and only if $d(U,V)=\pi/2$.
%From now on, assume that $U\cap V^\perp =\{0\}$.
%Then $U$ is the graph of a linear map $\phi:V\to V^\perp$: every $u\in U$ may be written (uniquely) as $u=v+\phi(v)$ with $v\in V$. Then,
%$$
%d(u,V) = \angle(u,v) = \arctan\frac{\|\phi(v)\|}{\|v\|}
%\quand
%d(v, U) = \angle(u,v) = \arctan\frac{\|\phi(v)\|}{\|v\|}.
%$$
%Taking the supremum, we get that both $d(U,V)$ and $d(V,U)$ are equal to $\arctan \|\phi\|$.

\subsection{Main inductive statement}\label{ss_reduction_to_an_inductive_statement}

We are going to prove Theorem~\ref{theorem:second} by contradiction:
suppose that there exists a discontinuity point $\nu_\infty$ for the largest Lyapunov exponent
$\lambda_1$. Then, as we saw in Section~\ref{s_random_repeller} (Proposition~\ref{p_sec5_GM}
and Corollary~\ref{c_sec5_Zariski2}), there exists a sequence $(\nu_k)_k$ of generic measures
with Zariski-dense supports converging to some $\nu_\infty$ on $G$ and such that their
(unique) stationary measures $(\eta_k)_k$ on $P$ converge to a $\nu_\infty$-stationary measure
$\eta_\infty$ satisfying $\eta_\infty(E)>0$.

\begin{theorem}\label{theorem:inductive}
Let $1\le r \le d$ and suppose that there exist constants $\vep_r>0$ and $n_r\in\NN$ such that:
\begin{itemize}
\item[(i)] For each $k\in\NN$ there is a continuous Markov operator
$$
\cT_{k,r}:\Bd(E_r(\vep_r)) \to \Bd(E_r(\vep_r)), \quad
\cT_{k,r}\varphi(x)  =\int_{E_r(\vep_r)} \varphi(y) \, d\sigma_{k,r,x}(y)
$$
adapted to $(\nu_k^{(n_r)},E_r(\vep_r))$.
\item[(ii)] For each $k\in\NN$ there is a $\cT_{k,r}$-invariant probability measure $\eta_{k,r}$ on
$E_r(\vep_r)$ such that $\eta_{\infty,r}=\lim_k \eta_{k,r}$ exists and satisfies $\eta_{\infty,r}(E_r)>0$.
\end{itemize}
If $r < \dim E$ then there exist constants $\vep_{r+1}$ and $K_{r+1}\in\NN$ such that (i) and (ii)
hold when $r$ is replaced with $r+1$. If $r=\dim E$ then (i) and (ii) cannot happen.
\end{theorem}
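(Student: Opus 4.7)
The plan is to prove Theorem~\ref{theorem:inductive} by induction on $r$, where the base case $r = 1$ and the inductive step $r \to r+1$ share a common four-step Margulis-function scheme: prepare a Margulis function $\Psi$, build a compatible self-coupling, invoke the Margulis inequality to bound the mass near the ``bad'' set, and then push the resulting coupling forward to produce the next-level data. The degenerate case $r = \dim E$ yields a contradiction because $E_{\dim E} = \{E\}$ is a single point, so a non-atomic limit measure $\eta_{\infty, r}$ (non-atomicity being preserved by the adapted construction, Remarks~\ref{r_sec6_must_be_generic} and~\ref{r_sec6_nonatomic}) cannot charge $E_r$ with positive mass.

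First I would prepare an additive Margulis function: for $r = 1$ the natural choice is essentially $-\log d(\cdot, E)$ on $E_1(\vep_1)$, and for higher $r$ a function on the product $E_r(\vep_r) \times E_r(\vep_r)$ that blows up on the diagonal and on $E_r \times E_r$, designed so that $\Psi$ is small exactly when the pair $(x, x')$ both sits near the equator and spans a well-defined $(r+1)$-dimensional direction close to $E$. The key contractive estimate $\cT_{k,r}\Psi \le \Psi - \kappa_A$ on a neighborhood of the equator will come from the repeller behavior in Proposition~\ref{p_sec5_derivative_estimate} and Corollaries~\ref{c_sec5_distance_contraction1}, \ref{c_sec5_distance_contraction2}, applied after $n_r$ steps (with $n_r$ chosen large enough that the exponential push dominates), while the complementary bound $\cT_{k,r}\Psi \le \Psi + \kappa_B$ off the equator is a coarse estimate from the boundedness of $\log\|g^{\pm 1}\|$ over the compact support of $\nu_k^{(n_r)}$. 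Then I would build a continuous symmetric self-coupling $\tT_{k,r}$ of $\cT_{k,r}$ via Proposition~\ref{p_sec6_coupling_Kset_parametrized}, together with an initial self-coupling of $\eta_{k,r}$ avoiding the set $\{\Psi = \infty\}$ via Corollary~\ref{c_sec6_coupling_diagonal} (or more generally Proposition~\ref{p_sec6_coupling_Kset}). Proposition~\ref{p_sec6_Margulis_finite_energy} then produces a $\tT_{k,r}$-invariant self-coupling $\teta_{k,r}$ together with finite-energy approximants for which Lemma~\ref{l_sec6_Margulis_at_work1} delivers a uniform-in-$k$ upper bound on the mass of the region where $\Psi$ is large.

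Passing to the limit $k \to \infty$, the Margulis bound forces $\teta_{\infty, r}$ to concentrate on pairs close to $E_r \times E_r$ but at positive distance from the diagonal. A ``spanning'' map that sends such a pair to an $(r+1)$-dimensional subspace containing both components then pushes $\teta_{\infty, r}$ into $E_{r+1}(\vep_{r+1})$, sending non-diagonal pairs in $E_r \times E_r$ into $E_{r+1}$ and thus yielding $\eta_{\infty, r+1}(E_{r+1}) > 0$. The finitary analogs $\eta_{k, r+1}$ arise by pushing forward the finite-$k$ couplings, while the corresponding adapted operator $\cT_{k, r+1}$ is built by choosing $n_{r+1}$ suitably and localizing a pushforward of $\nu_k^{(n_{r+1})}$ via Propositions~\ref{p_sec6_localized2_strong} and~\ref{p_sec6_adapted_strong}. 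When $r = \dim E$, this recoupling step degenerates because $E_r \times E_r \setminus \Delta$ is empty, contradicting the positive Margulis mass we obtained on the complement of the diagonal.

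The principal obstacle throughout is Step~1: constructing an additive Margulis function that works \emph{uniformly} over the entire family $\{\cT_{k,r}\}_k$, whose behavior near $\partial E_r(\vep_r)$ differs qualitatively from the genuine projective random walk because of the adaptation, and whose equator-repelling estimates must be transferred from $\nu_\infty$ to $\nu_k$ only for $k \ge k_0$ (forcing $K_{r+1}$ to absorb this threshold). This localization issue, together with the fact that adapted operators can modify the dynamics substantially at the boundary, is precisely why, as stressed in the Introduction, we are forced to work with additive rather than multiplicative Margulis functions: the latter would be too sensitive to worst-case behavior to tolerate these boundary modifications.
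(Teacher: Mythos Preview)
Your proposal has the right high-level architecture---Margulis function on a product space, self-coupling, push-forward to the next Grassmannian---but there are two genuine gaps.

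First, your contradiction mechanism for $r = \dim E$ fails. You argue that $\eta_{\infty,r}$ is non-atomic (citing Remarks~\ref{r_sec6_must_be_generic} and~\ref{r_sec6_nonatomic}) and hence cannot charge the singleton $E_r = \{E\}$. But those remarks only guarantee that the \emph{finite-$k$} measures $\eta_{k,r}$ are generic; their weak$^*$ limit $\eta_{\infty,r}$ can perfectly well acquire an atom. The paper's contradiction is entirely different: the Margulis inequality on the \emph{product} forces positive mass on a set $B''_k$ of pairs $(x,x')$ with $\VA_r(x,x') = d(F_r + F'_1, E) \le \omega_{k,r}$, and this set is empty when $r = \dim E$ because then $\dim(F_r + F'_1) > \dim E$ forces $d(F_r+F'_1,E)=1$ by \eqref{eq_sec7_Theta2}.

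Second, and relatedly, your Margulis function is wrong. For $r=1$ you propose $-\log d(\cdot, E)$ on a single copy of $E_1(\vep_1)$, but the argument lives on the \emph{product} even for $r=1$, and the function must be built from the \emph{vertical angle} $\VA_r(x,x')$ and the vertical projection $\VP_r$ of \eqref{eq_r=1_VP1_def}, \eqref{eq_VPr_def}, so that the ``bad'' set $B''_k$ on which the Margulis inequality delivers positive mass is exactly the set whose image under the span map lands near $E_{r+1}$. Without this link the push-forward step has no content. Moreover, this $-\log\psi_r$ still cannot serve as a Margulis function for the adapted operators: because $E$ is not $\nu_k$-invariant it must be \emph{stabilized} (Section~\ref{ss_step1_stabilization}); because the adapted operators are uncontrolled on the border it must be \emph{cut off} (Section~\ref{ss_step1_cut-off}); the resulting discontinuity forces a \emph{recoupling} of the self-coupling (Section~\ref{ss_step1_recoupling}); and for $r>1$ the cut-off is still insufficient near the border, so the whole construction moves to the flag variety $\cF(r,d)$ with the span map $(x,x')\mapsto F'_1+F_r$, and an additional \emph{spreading-out} operator $\tQ_r$ (Section~\ref{s_spreading_out}) is composed into the coupling. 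These are not refinements---the outline in Section~\ref{ss_step1_outline} is explicit that the naive Margulis function fails without them.
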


By Remark~\ref{r_sec6_must_be_generic}, the invariant measures $\eta_{k,r}$ are automatically generic measures.

\begin{remark}
Since the support of $\nu_k$ is Zariski-dense, it follows from Proposition~\ref{p_sec5_GM} that
for each $1\le r \le d$ there exists a unique $\nu_k$-stationary probability measure
$\heta_{k,r}$ in $\grass(r,d)$.
However, even if we assume that there exists a subsequence along which $(\heta_{k,1})_k$
converges to a measure $\heta_{\infty,1}$ with $\heta_{\infty,1}(E)>0$, it is not clear that for any
$r>1$ the  sequence $(\heta_{k,r})_k$ admits a subsequence converging to some measure
$\heta_{\infty,r}$ with $\heta_{\infty,r}(E_r)>0$. Thus, Theorem~\ref{theorem:inductive} is not
immediately obvious. Indeed, our measures $\eta_{k,r}$ on $E_r(\vep_r)$ are \emph{not}
the normalized restrictions of the $\heta_{k,r}$.
\end{remark}

To deduce Theorem~\ref{theorem:second}, note that the assumptions of Theorem~\ref{theorem:inductive} hold for $r=1$, with $n_1=1$ and $\vep_1=1$,
so that $E_1(\vep_1)=P$,  and
$$\cT_{k,1}=\cP_{\nu_k}: \Bd(P) \to \Bd(P)
\text{ and } \eta_{k,1} = \eta_k \text{ for $k\in\NN$.}
$$
Indeed, it is clear that $\cP_{\nu_k}$ is adapted to $(\nu_k,P)$ and leaves $\eta_k$ invariant.
Recall that $\nu_k$ is taken to be generic for $k\in\NN$. Moreover, the limit
$\eta_{\infty,1}=\eta_\infty$ satisfies $\eta_\infty(E)>0$. Then we can iterate
Theorem~\ref{theorem:inductive} until we arrive at the case $r=\dim E$,
which leads to a contradiction, which proves Theorem~\ref{theorem:second}.

The proof of Theorem~\ref{theorem:inductive} occupies what is left of this paper.
In the remainder of the present section we outline the main ideas.
Initially, we discuss the case $r=1$, which involves many of the ingredients of the general step,
though not all. Then we hint at how these ideas can be extended to $r>1$.

Throughout, $E_1(\vep)^2 = E_1(\vep) \times E_1(\vep)$ and
$E_1(\vep,\vep')^2 = E_1(\vep,\vep') \times E_1(\vep,\vep')$ for any $0< \vep'<\vep$.
The following simple fact will be used a few times:

\begin{remark}\label{r_sec7_repeatedly}
Given any $\beta>0$ and $n\in\NN$, the neighborhood $E_r(\alpha)$ is contained in the
$\nu_\infty^{(n)}$-core of $E_r(\beta/2)$ for all $1 \le r \le d$ if $\alpha$ is sufficiently small,
depending only on $\nu_\infty$, $n$ and $\beta$.
This is because the equator $E$ is $\nu_\infty$-invariant.
Since $\supp\nu_k^{(n)}$ converges to $\supp\nu_\infty^{(n)}$, it follows that if $\alpha>0$ is
sufficiently small and $k\in\NN$ is large enough, depending only on $\nu_\infty$, $n$ and $\beta$,
then $E_r(\alpha)$ is contained in the $\nu_k^{(n)}$-core of $E_r(\beta)$ for all $1 \le r \le d$.
\end{remark}

\subsection{First step}\label{ss_step1_outline}
Let us consider constants $\vep_1>0$ and $n_1\in\NN$, continuous Markov operators
$$
\cT_{k,1}:\Bd(E_1(\vep_1))\to \Bd(E_1(\vep_1)), \quad
\cT_{k,1}\varphi(x) = \int_G \varphi(y) \, d\sigma_{k,1,x}(y)
$$
and $\cT_{k,1}$-invariant measures $\eta_{k,1}$ as in Theorem~\ref{theorem:inductive}.
We may start from $n_1=1$ and $\vep_1=1$, as in the previous section,
but along the way we replace the operators with convenient iterates, which means that
$n_1$ is increased, and we also localize them to suitable neighborhoods of the equator $E$,
using Propositions~\ref{p_sec6_localized2_strong} and~\ref{p_sec6_adapted_strong} and
Remark~\ref{r_sec7_repeatedly}, which entails reducing $\vep_1$.

The way we set this is by replacing $n_1$ with a variable $n\in\NN$, which we think of
as a free parameter, and by taking $\cT_{k,1}$ and $\vep_1$ as functions of $n$.
The conditions on $\vep_1$ are given in \eqref{eq_sec7_um01} and \eqref{eq_r=1_vep1small},
depending on $\nu_\infty$, $\delta$ and $n$. The condition on $n$ is stated only near the end
of the construction, in \eqref{eq_r=1_defdeltan}, depending on $\nu_\infty$ and $\delta$.

\medskip\noindent{\bf 1.}
Up to reducing $\vep_1$ if necessary, it is no restriction to assume that
\begin{equation}\label{eq_sec7_um01}
\eta_{\infty,1}(E_1(\vep_1)\setminus E) < \frac{1}{10}\eta_{\infty,1}(E).
\end{equation}
That may be seen as follows.
Since $\eta_{\infty,1}(E_1(\alpha))$ converges to $\eta_{\infty,1}(E)$ as $\alpha\to 0$, and the limit
is assumed to be positive, we have
\begin{equation}\label{eq_sec7_um00a}
\eta_{\infty,1}(E_1(\alpha) \setminus E) < \frac{1}{10} \eta_{\infty,1}(E)
\end{equation}
for every small $\alpha > 0$. By Remark~\ref{r_sec7_repeatedly},
\begin{equation}\label{eq_sec7_um00c}
E_1(\alpha) \text{ is contained in the $\nu_k^{(n)}$-core of $E_1(\vep_1)$}
\end{equation}
for every large $k$ and every small $\alpha > 0$.
Clearly,
\begin{equation}\label{eq_sec7_um00b}
\eta_{\infty,1}(\partial E_1(\alpha))=0
\end{equation}
for all but countably many values of $\alpha$.
Fix $\alpha>0$, depending only on $\nu_\infty$ and $\vep_1$,
satisfying \eqref{eq_sec7_um00a}, \eqref{eq_sec7_um00c} and \eqref{eq_sec7_um00b}.

Applying Proposition~\ref{p_sec6_adapted_strong} with $X=E_1(\vep_1)$, $U = E_1(\alpha)$,
$\nu = \nu_k^{(n)}$ and $\cT=\cT_{k,1}$, for $k$ large, we get a continuous Markov operator
$$
\cT'_{k,1}: \Bd(E_1(\alpha)) \to \Bd(E_1(\alpha))
$$
adapted to $(\nu_k^{(n)}, E_1(\alpha))$
and leaving invariant the normalized restriction $\eta'_{k,1}$ of $\eta_{k,1}$ to $E(\alpha)$.
Replace $\vep_1$, $\cT_{k,1}$, and $\eta_{k,1}$ with $\alpha$, $\cT'_{k,1}$, and $\eta'_{k,1}$,
respectively. Conditions (i) and (ii) in Theorem~\ref{theorem:inductive} are not affected by
this: in particular, observe that \eqref{eq_sec7_um00b} ensures that $(\eta'_{k,1})_k \to \eta'_{\infty,1}$.
Then \eqref{eq_sec7_um01} just corresponds to \eqref{eq_sec7_um00a}.

%Moreover, by Proposition~\ref{p_sec6_adjusted_coupling} there exists a self-coupling
%\begin{equation}\label{eq_sec7_um06}
%\begin{aligned}
%& \tT_{k,1}:\Bd(\prodspa1) \to \Bd(\prodspa1) \\
%& \tT_{k,1}\tvarphi(x,x') = \int_{\prodspa1} \tvarphi(y,y') \, d\sigma_{k,1,x,x'}(y,y')
%\end{aligned}
%\end{equation}
%of $\cT'_{k,1}$, continuous and adjusted to
%$(\nu_k^{(n)},E_1(\vep_1))$. In particular,
%\begin{equation}\label{eq_sec7_um07}
%\tT_{k,1}\tvarphi(x,x') = \int_{G} \tvarphi(gx,gx') \, d\nu_k^{(n)}(g)
%\end{equation}
%when $x$ and $x'$ are both in the $\nu_k^{(n)}$-core of $E_1(\vep_1)$.
We also introduce a suitable continuous self-coupling
\begin{equation}\label{eq_sec7_um06}
\begin{aligned}
& \tT_{k,1}:\Bd(\prodspa1) \to \Bd(\prodspa1) \\
& \tT_{k,1}\tvarphi(x,x') = \int_{\prodspa1} \tvarphi(y,y') \, d\tsigma_{k,1,x,x'}(y,y')
\end{aligned}
\end{equation}
of the operator $\cT_{k,1}$ such that
\begin{equation}\label{eq_sec7_um07}
\tT_{k,1}\tvarphi(x,x') = \int_{G} \tvarphi(gx,gx') \, d\nu_k^{(n)}(g)
\end{equation}
when $x$ and $x'$ are both close to the equator $E$.

\medskip\noindent{\bf 2.}
Consider $\vep'''_1\in(0,\vep_1)$ and a sequence $(\omega_{k,1})_k$ decreasing to zero.
Property \eqref{eq_sec7_um01} ensures that
\begin{equation}\label{eq_sec7_um02}
\eta_{k,1}(E_1(\vep_1,\vep'''_1)) < \frac{2}{10} \eta_{k,1}(E_1(\vep_1))
\end{equation}
for every large $k$. Define $A_k$ and $B_k=B' \cup B''_k$ through
\begin{align}
\label{eq_sec7_r=1_A}
A_k & = \{(x,x') \in E_1(\vep'''_1)^2: d(x+x', E) > \omega_{k,1}\}\\
\label{eq_sec7_r=1_Bprime}
B' & = \{(x,x') \in \prodspa1: d(x, E) > \vep'''_1 \text{ or } d(x',E) > \vep'''_1\}\\
\label{eq_sec7_r=1_Bsecond}
B''_k & = \{(x,x')\in \prodspa1: d(x+x', E) \le \omega_{k,1}\}.
\end{align}
It is clear that $A_k$ and $B_k=B' \cup B''_k$ are disjoint, and their union is the whole $\prodspa1$.
For every large $k$, we have $\omega_{k,1} < \vep'''_1$ and, in view of the definition
\eqref{eq_sec7_Theta1}, that ensures that $B'$ and $B''_k$ are also disjoint.
Moreover, \eqref{eq_sec7_Theta2} implies that  $B''_k=\emptyset$ if $\dim E=1$.

\begin{figure}[ht]
\begin{center}
\psfrag{E5}{$E_1(\vep_1)^2$}\psfrag{E3}{$E_1(\vep'''_1)^2$}
\psfrag{A}{$A_k$}\psfrag{B1}{$B'$}\psfrag{B2}{$B''_k$}
\includegraphics[height=2in]{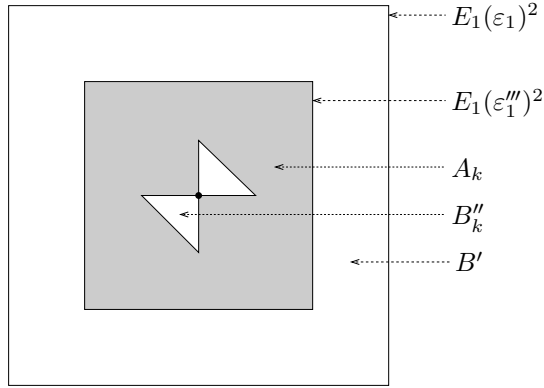}
\caption{\label{f_partition}
A sketch of the partition of $\prodspa1$ into the sets $A_k$, $B'$, and $B''_k$.
The latter converges to the central point $(E,E)$ when $k\to\infty$,
because $\omega_{k,1}\to 0$.}
\end{center}
\end{figure}

\emph{Suppose} that for $n\in\NN$ sufficiently large there exist constants
$\kappa_A, \kappa_B >0$ with $\kappa_A > 9 \kappa_B$,
and for each large $k\in\NN$ there exists a lower semi-continuous function
$\Psi_{k,1}:\prodspa1\to\RR$ such that
\begin{align}
\label{eq_sec7_Margulis21}
\tT_{k,1} \Psi_{k,1} (x,x') \le \Psi_{k,1}(x,x') - \kappa_A \text{ for every $(x,x')\in A_k$}\\
\label{eq_sec7_Margulis22}
\tT_{k,1} \Psi_{k,1}(x,x') \le \Psi_{k,1}(x,x') + \kappa_B \text{ for every $(x,x')\in B_k$.}
\end{align}
Then $\Psi_{k,1}$ is a Margulis function for the operator $\tT_{k,1}$ relative to the partition $(A_k,B_k)$ of $\prodspa1$.
Combining Proposition~\ref{p_sec6_Margulis_finite_energy}
with Lemma~\ref{l_sec6_Margulis_at_work1}, we conclude that there exist self-couplings
$\teta_{k,1,j}$ of $\eta_{k,1}$ such that $\int_{\prodspa1} \Psi_{k,1} \, d\teta_{k,1,j}$ is finite and
\begin{equation}\label{eq_sec7_um14}
\begin{aligned}
\teta_{k,1,j}(B_k)
& \ge \frac{\kappa_A}{\kappa_A+\kappa_B} \teta_{k,1,j}(\prodspa1)\\
& > \frac{9}{10} \teta_{k,1,j}(\prodspa1)
= \frac{9}{10} \eta_{k,1}(E_1(\vep_1)).
\end{aligned}
\end{equation}
Moreover, $(\teta_{k,1,j})_j$ may be taken to converge to a $\tT_{k,1}$-invariant self-coupling
$\teta_{k,1}$ of $\eta_{k,1}$.
%All these couplings are necessarily non-atomic, since $\eta_{k,1}$ is non-atomic.

The fact that $\teta_{k,1,j}$ is a self-coupling of $\eta_{k,1}$ together with the inequality
\eqref{eq_sec7_um02} ensure that
%may be written as
%$$
%\eta_{k,1}(E(\vep_1,\vep'''_1)) < \frac{2}{10} \eta_{k,1}(E(\vep_1)),
%$$
%because $E(r)$ and $E_1(r)$ are the same subset of $\grass(1,d)$ for any $r>0$.
%Thus, since ,
\begin{equation}\label{eq_sec7_um04}
\teta_{k,1,j}(B')
\le 2 \eta_{k,1}(E_1(\vep_1,\vep'''_1))
< \frac{4}{10} \eta_{k,1}(E_1(\vep_1)).
\end{equation}
Subtracting \eqref{eq_sec7_um04} from \eqref{eq_sec7_um14}, we conclude that
\begin{equation}\label{eq_sec7_um05}
\teta_{k,1,j}(B''_k) > \frac{5}{10} \eta_{k,1}(E_1(\vep_1))>0.
\end{equation}
This yields a contradiction when $\dim E=1$ because, as observed previously,
in that case the set $B''_k$ is empty.

\medskip\noindent{\bf 3.}
If $\dim E\ge 2$, consider the map
$$
\Sigma: \prodspa1 \to \grass(2,d), \quad \Sigma(x,x') = x+x'.
$$
The fact that $\Sigma$ is not defined on the diagonal of $\prodspa1$ need not concern us at this stage:
we will deal with it in Section~\ref{ss_step1_closure}.
For each large $k$, let $\eta_{k,2}=\Sigma_*\teta_{k,1}$ and $\{\heta_{k,1,y}: y \in \grass(2,d)\}$
be a disintegration of $\teta_{k,1}$ with respect to the partition $\{\Sigma^{-1}y: y\in\grass(2,d)\}$
of $\prodspa1$. Then define
$$
\cT_{k,2}:\Bd(\Sigma(\prodspa1)) \to \Bd(\Sigma(\prodspa1)),\
\cT_{k,2}\psi(y) = \int_{\Sigma^{-1}y} \tT_{k,1} (\psi \circ\Sigma)  \, d\heta_{k,1,y}.
$$

The measure $\eta_{k,2}$ is $\cT_{k,2}$-invariant. Indeed, since $\teta_{k,1}$ is $\tT_{k,1}$-invariant,
$$
\begin{aligned}
\int_{\Sigma(\prodspa1)} \cT_{k,2} \psi \, d\eta_{k,2}
& = \int_{\Sigma(\prodspa1)} \int_{\Sigma^{-1}y} \tT_{k,1} (\psi \circ\Sigma)  \, d\heta_{k,1,y}  \, d\eta_{k,2}(y) \\
& = \int_{\prodspa1} \tT_{k,1}(\psi \circ\Sigma) \, d\teta_{k,1}
= \int_{\prodspa1} (\psi\circ\Sigma) \, d\teta_{k,1}\\
& = \int_{\Sigma(\prodspa1)} \psi \, d\eta_{k,2}
\end{aligned}
$$
for any $\psi\in \Bd(\Sigma(\prodspa1))$.
Taking the limit as $j\to\infty$ in \eqref{eq_sec7_um05}, we find that
\begin{equation}\label{eq_sec7_um05_lim}
\teta_{k,1}(B''_k) \ge \frac{5}{10} \eta_{k,1}(E_1(\vep_1)).
\end{equation}
Recall that we take $\omega_{k,1}\to 0$ as $k\to\infty$. Then, in view of the definition of $B''_k$ in
\eqref{eq_sec7_r=1_Bsecond}, the sets $\Sigma(B''_k)$ approach $E_2$.
Taking the limit $k\to\infty$ in \eqref{eq_sec7_um05_lim}, we find that
$$
\eta_{\infty,2}(E_2) \ge \frac{5}{10} \eta_{\infty,1}(E_1(\vep_1))>0.
$$

Now define $n_2=n$.
If $y$ is close to $E_2$ in $\grass(2,d)$ then $x$ and $x'$ are close to $E_1$ in $\grass(1,d)$
and, in particular, they are in the $\nu^{(n_2)}_k$-core of $E_1(\vep_1)$ for every large $k$.
Then, using \eqref{eq_sec7_um07},
$$
\begin{aligned}
\cT_{k,2}\psi(y)
%& = \int_{\Sigma^{-1}y} \int_{G\times G} \psi\circ \Sigma(gx,g'x') \, d\tnu^{(N)}(g,g') d\teta_{k,1,y}(x,x')\\
& = \int_{\Sigma^{-1}y} \int_{G} \psi\circ \Sigma(gx,gx') \, d\nu_k^{(n_2)}(g) \, d\teta_{k,1,y}(x,x')\\
& = \int_{\Sigma^{-1}y} \int_{G} \psi(gy) \, d\nu_k^{(n_2)}(g) \, d\teta_{k,1,y}(x,x')
 = \int_{G} \psi(gy) \, d\nu_k^{(n_2)}(g),
\end{aligned}
$$
because each $\teta_{k,1,y}$ is a probability.
Pick $\vep_2>0$ such that this holds for every $y\in E_2(\vep_2)$.
Apply Proposition~\ref{p_sec6_adapted_strong} with $X=\Sigma(\prodspa1)$, $U=E_2(\vep_2)$,
$\nu = \nu_k^{(n_2)}$, and $\cT=\cT_{k,2}$, for $k$ large.
Replace $\cT_{k,2}$ and $\eta_{k,2}$ with this new Markov operator and invariant measure,
respectively.

This would complete the proof of Theorem~\ref{theorem:inductive} for $r=1$.

\medskip\noindent{\bf 4.}
However, in general we cannot construct a Margulis function $\Psi_{k,1}$ as required.
Essentially, the problem is that, since we do not control the measures $\tsigma_{k,1,x,x'}$ in the
border region, the inequality  \eqref{eq_sec7_Margulis22} cannot be proved to hold unless the
Margulis function $\Psi_{k,1}$ is taken to be bounded in the border region.
However, outside the border region $\Psi_{k,1}(x,x')$ must be very large if $x$ and $x'$ are close.
The only way to reconcile these two requirements is to introduce some drastic discontinuities in
$\Psi_{k,1}$ which then cause \eqref{eq_sec7_Margulis22} to fail at some points.

This problem is solved as follows. We do indeed create a discontinuity by cutting $\Psi_{k,1}$ off in such a way
that it is bounded in the border region. The main idea for dealing with the discontinuity, that we call
\emph{recoupling}, involves replacing $\tT_{k,1}$ with another Markov operator $\hT_{k,1}$ that still projects to
$\cT_{k,1}$ on either factor. The recoupling modification is restricted to a region which is disjoint from $B''_k$
and relatively far from the equator, so that the properties of the measures $\eta_{k,2}$ are not affected.
These arguments are detailed in Sections~\ref{s_step1_preparing_a_Margulis_function} through~\ref{s_step_1_recoupling_and_conclusion}.

In Section~\ref{s_step1_preparing_a_Margulis_function} we introduce the notion of \emph{vertical projection} $\VP_1(x,x')$ of a pair of
points $x$ and $x'$ in $P$, and we use it to construct a candidate $-\log \VP_1(x,x')$ to a Margulis function
for $\tT_{\infty,1}$. The problem with this function is that it refers explicitly to $E$ and, since the equator
is not $\nu_k$-invariant for $k\in\NN$, the estimates break down for $\tT_{k,1}$ when $k$ is finite.

This is fixed in Section~\ref{ss_step1_stabilization}, where we replace the vertical projection with a
kind of cut-off that we call the \emph{stabilized vertical projection} and denote as $\SVP_1(x,x')$.
Then, in Section~\ref{ss_step1_cut-off} we give the precise recipe for the other cut-off, that was
mentioned previously.
By the end of Section~\ref{s_step1_turning_the_perturbation_on} point we will have a much more viable candidate
$\Psi_{k,1}$ for Margulis function when $k$ is finite.

The details of the recoupling procedure are described in Section~\ref{ss_step1_recoupling}.
Proposition~\ref{p_r=1_Prop11.3} essentially states that this function $\Psi_{k,1}$ is a Margulis function
for the recoupled Markov operator $\hT_{k,1}$.
In Sections~\ref{ss_step1_contradiction} and~\ref{ss_step1_closure}
we wrap the arguments up to conclude the step $r=1$ of the proof.

\subsection{General step}\label{ss_general_outline}

For $r > 1$ the strategy is similar, except that we need to work with \emph{flag varieties}.
Recall that $\cF(r,d)$ denotes the space of flags
$$
F_1 \subset F_2 \subset \cdots \subset F_{r-1} \subset F_r \subset \RR^d,
$$
where each $F_i$ has dimension $i$. We use $x=(F_1, \dots, F_r)$ and $x'=(F'_1, \dots, F'_r)$
to denote generic elements of $\cF(r,d)$.
Define $\fE_r = \{x \in\cF(r,d): F_r \in E_r\}$ and, for each $\vep > \vep' > 0$,
$$
\fE_r(\vep) = \big\{x \in\cF(r,d): F_r \in E_r(\vep)\big\}
\quand
\fE_r(\vep,\vep') =\fE_r(\vep) \setminus \fE_r(\vep').
$$
By definition, for any $\nu\in\cMc$, the $\nu$-core of $\fE(\vep)$ is the subset of flags
$x$ such that $F_r\in\Chi_{\nu}E_r(\vep_r)$, and the $\nu$-border of $\fE(\vep)$ is
defined in a similar fashion.

We consider $\prodspar = \fE_r(\vep_r) \times \fE_r(\vep_r)$ and we aim to construct a Margulis
function $\Psi_{k,r}$ for a suitable Markov operator $\fhT_{k,r}: \Bd(\prodspar) \to \Bd(\prodspar)$
relative to a convenient generalization $\prodspar = A_k \cup B'_k \cup B''_k$ of the
partition \eqref{eq_sec7_r=1_A}--\eqref{eq_sec7_r=1_Bsecond}.
For $i = 1, 2$, define $\theta_i : \prodspar \to \grass(r, d)$ to be the projection to either factor
followed by the canonical map
\begin{equation}\label{eq_sec7_forgetfulness}
\cF(r,d) \to \grass(r, d), \quad x \mapsto F_r.
\end{equation}
A constraint on $\fhT_{k,r}$ is that it must project to $\cT_{k,r}$ under both $\theta_i$.
Then we can find a $\fhT_{k,r}$-invariant measure $\fheta_{k,r}$ which maps to $\eta_{k,r}$
under both projections.

As in the case $r = 1$, once we have constructed $\fhT_{k,r}$, $\Psi_{k,r}$, and $\fheta_{k,r}$
we can try to get an estimate of the form
$$
\fheta_{k,r}(B''_k) > c \fheta_{k,r}(\prodspar),
$$
where $c>0$ is some absolute constant. Then, if we push $\fheta_{k,r}$ forward by one
of the maps
$$
\begin{aligned}
\Sigma_r: \prodspar \to \grass(r+1,d), \quad
\Sigma_r\left(x,x'\right) = F'_1 + F_r \\
\Sigma_r: \prodspar \to \grass(r+1,d), \quad
\Sigma_r\left(x,x'\right) = F_1 + F'_r
\end{aligned}
$$
we obtain probability measures $\eta_{k,r+1}$ on $\grass(r+1,d)$ satisfying the conditions of
Theorem~\ref{theorem:inductive}.

However, for $r>1$ the simple cut-off procedure we use in the initial step of the induction is
no longer sufficient to ensure that the Margulis function is bounded on the border region.
To fix that, we take advantage of the additional freedom provided by the flag space, which is
that we may vary the projections to $\grass(j, d)$ for $j < r$.
More precisely, we modify the dynamics by averaging ("spreading out") the Markov operator over
the subspaces of dimension less than $r$ in the flag.
Thus we end up with modified Markov operators $\qhT_{k,r}$ that still project to the $\cT_{k,r}$
under both $\theta_i$. Then the kind of argument we sketched in the previous paragraphs can
actually be carried out for $\qhT_{k,r}$-invariant measures $\qheta_{k,r}$ that map to
$\eta_{k,r}$ under both projections.

The detailed arguments are structured as follows in Sections~\ref{s_preparing_a_Margulis_function}
through~\ref{s_recoupling_conclusion}.
In Section~\ref{s_preparing_a_Margulis_function} we extend the notions of \emph{vertical angle}
and \emph{vertical projection} to $r>1$, and we use them to exhibit a candidate $-\log \psi_r(x,x')$
to a Margulis function.
In Section~\ref{ss_stabilization} we move to introduce the $r>1$ version
of the \emph{stabilized vertical projection} $\SVP_r(x,x')$, and in Section~\ref{ss_cut-off} we
describe the corresponding version for the cut-off procedure. In Section~\ref{s_turning_the_perturbation_on}
we extend this analysis to the perturbed random walks, that is, to finite $k$. The Margulis function
$\Psi_{k,r}$ is defined at the end of that section.

The spreading out modification is detailed in Section~\ref{s_spreading_out}. Essentially, we define
$\qhT_{k,r} = \fhT_{q,r} \circ \tQ_r$ where $\tQ_r: Bd(\prodspar) \to \Bd(\prodspar)$ has the form
$$
\tQ_r\psi(x,x') = \int \Psi \, d\tq_{r,x,x'}
$$
where $\tq_{r,x,x'}$ is a smooth probability measure on the subset of pairs of flags whose
$r$-coordinate is $(F_r,F'_r)$. A relevant feature is that $\tQ_r$ maps to the identity under the
canonical map \eqref{eq_sec7_forgetfulness}.
The details of the recoupling procedure are described in Section~\ref{ss_recoupling}.
In Sections~\ref{ss_contradiction} and~\ref{ss_closure} we wrap up the proof.

\part{First step of the induction}\label{p_three}

\section{Preparing a Margulis function}\label{s_step1_preparing_a_Margulis_function}

We are going to construct a positive function $\psi_1$ such that $-\log\psi_1$ has some features of a
Margulis function for large iterates of the operator $\cP_{\nu_\infty}$:

\begin{proposition}\label{p_r=1_Prop5.1}
There exist $\kappa'_1=\kappa'_1(\nu_\infty)>0$ and $C'_1=C'_1(\nu_\infty)>0$
and for any $\delta>0$ there exists $N_1=N_1(\nu_\infty,\delta)\in\NN$ such that
for every $n \ge N_1$ there exists $\rho'_1=\rho'_1(\nu_\infty,\delta,n)>0$ satisfying
\begin{equation}\label{eq_r=1_VPaver}
\int_G - \log\psi_1(gx,gx') \, d\nu_\infty^{(n)}(g) \le -\log\psi_1(x,x') - (\kappa'_1-C'_1\delta)n
\end{equation}
for every $x \neq x'$ in $E_1(\rho'_1)$.
\end{proposition}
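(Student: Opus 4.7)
I would define $\psi_1(x,x')$ as a ``vertical projection'' of the pair relative to the equator $E$, capturing how much the 2-plane $x+x'$ sticks out of $E$. Concretely, for unit representatives $v, v'$ of $x, x'$, decompose $v = v_E + v^\perp$ and $v' = v'_E + (v')^\perp$ in the splitting $\RR^d = E \oplus E^\perp$, and set
\[
\psi_1(x,x') \;=\; \frac{\|\pi(v \wedge v')\|}{\|v\|\,\|v'\|},
\]
where $\pi: \Lambda^2 \RR^d \to \Lambda^2 \RR^d/\Lambda^2 E$ is the quotient by the $\nu_\infty$-invariant subspace $\Lambda^2 E$. This is equivalent, up to bounded factors, to the Grassmannian distance $d(x+x', E_2)$; it is scaling invariant, vanishes precisely when $x+x' \subset E$, and enjoys a clean transformation law. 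Since $g$ preserves $E$ (Remark~\ref{r_sec3_perp}), $g$ descends to the quotient $\Lambda^2\RR^d/\Lambda^2 E \simeq (E \wedge E^\perp) \oplus \Lambda^2 E^\perp$, with the $\Lambda^2 E^\perp$-summand acted on by $\Lambda^2 g^\perp$ and the mixed $E \wedge E^\perp$-summand involving $g^E$, $g^\perp$, and the off-diagonal block $h$ of $g$.

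Next, using the block form of $g$ I would expand
\[
\pi(gv \wedge gv') \;=\; \Lambda^2 g^\perp\bigl(v^\perp \wedge (v')^\perp\bigr) \;+\; gv_E \wedge g^\perp(v')^\perp \;+\; g^\perp v^\perp \wedge gv'_E \;+\; (\text{mixed } h\text{-terms})
\]
and split
\[
\log\frac{\psi_1(gx,gx')}{\psi_1(x,x')} \;=\; \log\frac{\|\pi(gv\wedge gv')\|}{\|\pi(v\wedge v')\|} - \log\frac{\|gv\|}{\|v\|} - \log\frac{\|gv'\|}{\|v'\|}.
\]
Let $\alpha = \lim_n \frac{1}{n}\log\|g_{n-1}\cdots g_0 \mid E\| < \lambda_1(\nu_\infty)$, the top growth rate of $\nu_\infty$ on the equator. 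Combining the large-deviation principle of Theorem~\ref{t_sec4_uniform_estimate} (applied to the cocycles induced by $\nu_\infty^\perp$ on $E^\perp$ and on $\Lambda^2 E^\perp$, and to $\nu_\infty$ on $E$) with Proposition~\ref{p_sec5_derivative_estimate} yields a subset $\cE \subset \supp\nu_\infty^{(n)}$ of $\nu_\infty^{(n)}$-measure at least $1 - \delta$ on which each factor has quasi-optimal growth. In particular, the numerator grows at rate at least $(\lambda_1(\nu_\infty) + \alpha)n - O(\delta n)$ from the dominant $E \wedge E^\perp$-term; and for $v$ in a sufficiently small neighborhood $E_1(\rho'_1)$ with $\rho'_1 \ll e^{-n(\lambda_1(\nu_\infty) - \alpha)}$, the $E^\perp$-component of $v$ has not yet overtaken the $E$-component after $n$ iterations, so $\log\|gv\|/\|v\|$ is bounded above by $\alpha n + O(\delta n)$ on $\cE$. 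Subtracting produces the pointwise estimate $\log[\psi_1(gx,gx')/\psi_1(x,x')] \ge \kappa'_1 n - O(\delta n)$ on $\cE$, with $\kappa'_1 = \lambda_1(\nu_\infty) - \alpha > 0$.

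On the complement $\cE^c$, which has $\nu_\infty^{(n)}$-measure at most $\delta$, I would apply the crude universal bound $|\log\psi_1(gx,gx')| \le C(\nu_\infty)\, n$ coming from compactness of $\supp\nu_\infty$, which controls $\log\|g\|$ and $\log\|g^{-1}\|$ by $O(n)$ uniformly on $\supp\nu_\infty^{(n)}$. Integrating the good-set and bad-set contributions against $\nu_\infty^{(n)}$ yields the claimed inequality, with $C'_1 = C'_1(\nu_\infty)$ absorbing the universal constants. The main obstacle in this plan is the uniformity over all pairs $(x,x') \in E_1(\rho'_1)^2$ with $x \ne x'$: the effective growth rate of $\psi_1$ depends on the relative sizes of $v_E, v^\perp, v'_E, (v')^\perp$, and these can change regime within the $n$ iterations (the $\Lambda^2 E^\perp$-piece versus the $E \wedge E^\perp$-piece, or the $E$-part versus the $E^\perp$-part of $\|gv\|$). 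The role of choosing $\rho'_1 = \rho'_1(\nu_\infty, \delta, n)$ exponentially small in $n$ is precisely to keep the slow $E$-regime dominant throughout the window $[0,n]$, producing a single uniform estimate. The degenerate cases where $x \in E$ or $x' \in E$ are handled trivially in extended reals, where the inequality becomes $+\infty \le +\infty - (\kappa'_1 - C'_1\delta)n$.
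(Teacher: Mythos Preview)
Your function $\psi_1(x,x') = \|\pi(v\wedge v')\|/(\|v\|\|v'\|)$ is not, up to bounded factors, the Grassmannian distance $d(x+x',E_2)$: since $\|\pi(v\wedge v')\|/\|v\wedge v'\|\asymp \VA_1(x,x')$ and $\|v\wedge v'\|=d(x,x')$, your $\psi_1$ is the paper's $\VP_1$ with exponent $\gamma_1=1$. The paper deliberately chooses $\gamma_1$ \emph{small} (see \eqref{eq_r=1_constants}), and this is not cosmetic.

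The substantive gap is your lower bound on the numerator. On the quotient $\Lambda^2\RR^d/\Lambda^2 E$, the invariant summand $E\wedge E^\perp$ is acted on by $g^E\otimes g^\perp$. It is true that $g^\perp$ expands every direction of $E^\perp$ at rate $\lambda_1(\nu_\infty)$ on the good set (Theorem~\ref{t_sec4_uniform_estimate}), but $\alpha$ is only the \emph{top} exponent of $g^E$ on $E$: nothing prevents $E$ from containing an invariant line on which $g^E$ contracts strongly. If $v=e_s\in E$ lies in such a direction (exponent $\alpha_s\ll\alpha$) and $v'=e_1+\epsilon f$ with $e_1\in E$ generic, then $\pi(v\wedge v')=\epsilon\,e_s\wedge f$ and
\[
\frac{\|\pi(g(v\wedge v'))\|}{\|\pi(v\wedge v')\|}\;\asymp\;\|g^E e_s\|\,\|g^\perp f\|\;\asymp\;e^{(\alpha_s+\lambda_1)n},
\]
not $e^{(\alpha+\lambda_1)n}$. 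Your denominator bound $\|gv\|\|gv'\|\le e^{2\alpha n}$ is correct but too coarse to compensate: subtracting gives $(\alpha_s+\lambda_1-2\alpha)n$, which can be arbitrarily negative. In examples one observes that the denominator is \emph{also} small (here $\|gv\|\asymp e^{\alpha_s n}$) and a cancellation rescues the final ratio, but your argument does not capture this correlation—you bound numerator and denominator separately, and the separate bounds are incompatible.

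The paper's route avoids this by working with $\VA_1$ and $d(x,x')^{\gamma_1}$ separately and splitting into two regimes. When $\VA_1$ is small, Lemma~\ref{l_r=1_Lem5.4} gives $-\log\VA_1(gx,gx')\le -\log\VA_1(x,x')-\tilde\kappa_1 n$; only the crude bound \eqref{eq_AAA} is used for $d$, and the weight $\gamma_1\le\tilde\kappa_1/(2A)$ makes the $d$-contribution harmless. When $\VA_1$ is already bounded below, Lemmas~\ref{l_r=1_Lem5.6}--\ref{l_r=1_Lem5.7} show that the difference vector $w=v'-v$ is ``roughly vertical'' and hence $d(x,x')$ itself expands. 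Averaging the pointwise estimate over the good set and the linear bound \eqref{eq_r=1_VP1_linear} over its complement then gives \eqref{eq_r=1_VPaver}. Your wedge-product formulation collapses these two regimes into a single inequality that, as written, does not hold.
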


The conclusion of Proposition~\ref{p_r=1_Prop5.1} does not quite fit the definition of a Margulis function
because the set $E_1(\rho'_1) \times E_1(\rho'_1)$ where the estimate holds is not invariant under the
$G$-action restricted to $\supp\nu_\infty$ and thus $\cP_{\nu_\infty}$ cannot be considered a
Markov operator on this set. Nevertheless, $-\log\psi_1$ is an important ingredient in the definition
of the actual Margulis function, which will be completed in Section~\ref{s_step_1_recoupling_and_conclusion}.

The proof of Proposition~\ref{p_r=1_Prop5.1} occupies the remainder of this section.
Throughout, we think of the equator as being ``horizontal'' and use the word ``vertical'' to refer to
the orthogonal direction.
The numbers $\delta>0$ and $n\in\NN$ will remain as free parameters for most of our arguments.
Near the end, in \eqref{eq_r=1_defdeltan}, we will fix $\delta>0$ to be small, depending only on
$\nu_\infty$, and $n\in\NN$ large, depending on $\nu_\infty$ and $\delta>0$.

Fix $A=A(\nu_\infty)>0$ and a compact neighborhood $\cW_0=\cW_0(\nu_\infty)\subset G$ of the
support of $\nu_\infty$ such that
\begin{equation*}%\label{eq_def_AAA}
- \log d(U,V) - A \le - \log d(fU,fV) \le - \log d(U,V) + A
\end{equation*}
for any $f\in \cW_0$, any pair $U \neq V$ in $\grass(r,d)$ and any $1\le r\le d$.
Moreover, let $B=B(\nu_\infty)>0$ be defined by
\begin{equation*}%\label{eq_def_BBB}
B = \sup\big\{\log \|f\| + \log \|f^{-1}\|: f\in \cW_0\big\} + 2.
\end{equation*}
Since $(\nu_k)_k$ converges to $\nu_\infty$ in the space $\cMc$, it is no restriction to assume that
$\supp\nu_k \subset \cW_0$ for every $k\in\NN$. In particular,
\begin{align}
\label{eq_AAA}
- \log d(fU,fV) & \le - \log d(U,V) + An\\
\label{eq_BBB}
\log \|f\| \|f^{-1}\| & \le Bn
\end{align}
for every $U \neq V$ in $\grass(r,d)$, $1 \le r \le d$, $f\in\supp\nu_k^{(n)}$, $k\in\NN\cup\{\infty\}$,
and $n\in\NN$.

\subsection{Vertical angle function}\label{ss_step1_vertical_angle}

Given distinct points $x$ and $x'$ in $P=\grass(1,d)$, let $y=y(x,x')\subset P$ denote
the \emph{great circle} through $x$ and $x'$, that is, the subset of $P$ associated with
the element $x+x'$ of $\grass(2,d)$ that contains both $x$ and $x'$.

\begin{figure}[ht]
\begin{center}
\psfrag{x}{$x$}\psfrag{x1}{$x'$}\psfrag{u}{$u$}
\psfrag{up}{$u^\perp$}\psfrag{y}{$y$}\psfrag{E}{$E$}
\includegraphics[height=1.8in]{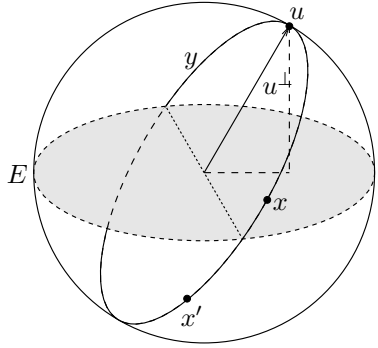}
\caption{\label{f_angle}
Geometric meaning of the vertical angle function $\VA_1$.
By making $x$ and $x'$ go to $E$ along a given great circle, one sees that $\VA_1(x,x')$
need not go to infinity when $x$ and $x'$ approach the equator.}
\end{center}
\end{figure}

The \emph{vertical angle function} $\VA_1$ is the sine of the angle between the great circle $y=y(x,x')$
and the equator, that is (recall \eqref{eq_sec7_Theta1} and check Figure~\ref{f_angle}),
\begin{equation}\label{eq_r=1_VA1_def}
\VA_1(x,x') = d(y,E) = \sup_{u\in y} d(u,E) = \sup_{u\in y} \frac{\|u^\perp\|}{\|u\|}.
\end{equation}
For any $u\in y$ that realizes the supremum,
\begin{equation}\label{eq_r=1_VA23again}
\VA_1(gx,gx')
\ge \frac{\|(g u)^\perp\|}{\|gu\|}
= \VA_1(x,x') \frac{\|(g u)^\perp\|}{\|gu\|}\frac{\|u\|}{\|u^\perp\|}
\end{equation}
for any $g\in G$. By Remark~\ref{r_sec3_perp}, when $g\in\supp\nu_\infty^{(n)}$ this means that
\begin{equation}\label{eq_r=1_VA5again}
\VA_1(gx,gx')
\ge \frac{\|g^\perp u^\perp\|}{\|gu\|}
= \VA_1(x,x') \frac{\|g^\perp u^\perp\|}{\|gu\|}\frac{\|u\|}{\|u^\perp\|}.
\end{equation}
Still for $g\in\supp\nu_\infty^{(n)}$, using Remark~\ref{r_sec3_perp} and \eqref{eq_BBB} we get that
$$
\frac{\|g^\perp u^\perp\|}{\|gu\|}\frac{\|u\|}{\|u^\perp\|}
\ge \frac{1}{\|(g^\perp)^{-1}\|\|g\|} \ge \frac{1}{\|g^{-1}\|\|g\|} \ge e^{-Bn},
$$
and so
\begin{equation}\label{eq_r=1_VA1_def_linear}
- \log\VA_1(gx,gx') \le - \log \VA_1(x,x') + Bn.
\end{equation}

\begin{lemma}\label{l_r=1_Lem5.4}
There exists $\tilde\kappa_1=\tilde\kappa_1(\nu_\infty)>0$ and for each $\delta>0$ there exist $\tilde\theta_1=\tilde\theta_1(\nu_\infty,\delta)>0$
and $\widetilde{N}_1=\widetilde{N}_1(\nu_\infty,\delta)\in\NN$ such that for every $n \ge \widetilde{N}_1$ and $x \neq x'$ in $P$ there exists
$\tE_1=\tE_1(\nu_\infty,\delta,n,x,x')\subset\supp\nu_\infty^{(n)}$ with $\nu_\infty^{(n)}(\tE_1^c)<\delta$ and
\begin{equation}\label{eq_r=1_VA4}
-\log\VA_1(gx,gx') \le \max\{-\log\VA_1(x,x') - \tilde\kappa_1 n, \tilde\theta_1\}
\text{ for every $g\in\tE_1$.}
\end{equation}
\end{lemma}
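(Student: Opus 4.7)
My plan is to reduce the statement to the two conclusions of Proposition~\ref{p_sec5_derivative_estimate}, using them \emph{simultaneously}: the anti-contraction estimate (2) will handle 2-planes that are already ``vertical enough'' (and thus only need to be bounded from below), while the infinitesimal expansion estimate (1) will handle the nearly horizontal ones (which must actually improve). Given $x\ne x'$ in $P$, let $y=y(x,x')$ be their great circle and pick $u^*\in y$ with $\|u^*\|=1$ realizing the supremum in \eqref{eq_r=1_VA1_def}, so that $\VA_1(x,x')=\|u^{*\perp}\|$. Decompose $u^*=u^*_E+u^{*\perp}$ with $u^*_E\in E$ and $u^{*\perp}\in E^\perp$. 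If $\VA_1(x,x')=0$ then $y\subset E$, hence $gy\subset E$ for every $g\in\supp\nu_\infty$ by $\nu_\infty$-invariance of $E$, and both sides of \eqref{eq_r=1_VA4} are $+\infty$; so assume $u^{*\perp}\ne 0$. I will take $\tE_1$ to be the set $\cE_0(\nu_\infty,\delta,n,u^{*\perp}/\|u^{*\perp}\|)$ produced by Proposition~\ref{p_sec5_derivative_estimate}, of $\nu_\infty^{(n)}$-measure $>1-\delta$ once $n\ge N_0(\nu_\infty,\delta)$. Since $g$ preserves $E$, Remark~\ref{r_sec3_perp} gives $(gu^*)^\perp=g^\perp u^{*\perp}$, and using $gu^*\in gy$ as a test vector yields the key lower bound
\[
\VA_1(gx,gx')\;\ge\;\frac{\|g^\perp u^{*\perp}\|}{\|gu^*\|}.
\]

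I will estimate the denominator by $\|gu^*\|\le\|gu^*_E\|+\|gu^{*\perp}\|$ and split according to which summand dominates. If $\|gu^{*\perp}\|\ge\|gu^*_E\|$ then $\|gu^*\|\le 2\|gu^{*\perp}\|$, and part (2) of Proposition~\ref{p_sec5_derivative_estimate} gives $\|g^\perp u^{*\perp}\|\ge\tau_0\|gu^{*\perp}\|$, hence $\VA_1(gx,gx')\ge\tau_0/2$ and so $-\log\VA_1(gx,gx')\le\log(2/\tau_0)$. If instead $\|gu^{*\perp}\|<\|gu^*_E\|$ then $u^*_E\ne 0$, $\|gu^*\|\le 2\|gu^*_E\|$, and part (1) of Proposition~\ref{p_sec5_derivative_estimate}, rewritten as $\|g^\perp u^{*\perp}\|/\|u^{*\perp}\|\ge e^{\kappa_0 n}\|gu^*_E\|/\|u^*_E\|$, combines with $\|u^*_E\|\le\|u^*\|=1$ to give
\[
\VA_1(gx,gx')\;\ge\;\frac{\|g^\perp u^{*\perp}\|}{2\|gu^*_E\|}\;\ge\;\frac{e^{\kappa_0 n}}{2}\,\frac{\|u^{*\perp}\|}{\|u^*_E\|}\;\ge\;\frac{e^{\kappa_0 n}}{2}\,\VA_1(x,x'),
\]
whence $-\log\VA_1(gx,gx')\le -\log\VA_1(x,x')-\kappa_0 n+\log 2$.

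Setting $\tilde\kappa_1=\kappa_0/2$, $\tilde\theta_1=\log(2/\tau_0)$, and $\widetilde N_1=\max\{N_0,\,2(\log 2)/\kappa_0\}$ absorbs the additive $\log 2$ into the exponential factor and combines the two cases into \eqref{eq_r=1_VA4}; as required, $\tilde\kappa_1$ depends only on $\nu_\infty$ (via $\kappa_0$), whereas $\tilde\theta_1$ and $\widetilde N_1$ may depend on $\delta$ through $\tau_0$ and $N_0$. The one real idea is to coordinate the two estimates of Proposition~\ref{p_sec5_derivative_estimate} along a single test vector $gu^*\in gy$: this is immediate here for $r=1$, but I expect it to become the main obstacle in the inductive step, where one deals with vertical angles between higher-dimensional subspaces on flag varieties and the choice of analogous test objects is considerably more delicate.
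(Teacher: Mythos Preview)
Your proof is correct and follows essentially the same approach as the paper: pick $u^*\in y$ realizing the supremum, set $\tE_1=\cE_0(\nu_\infty,\delta,n,u^{*\perp})$, and use the test vector $gu^*\in gy$ together with both parts of Proposition~\ref{p_sec5_derivative_estimate}. The only cosmetic difference is the dichotomy: the paper splits on whether $\|g^\perp u^\perp\|/\|gu\|\ge\tau_0/2$ (and then invokes part~(2) to deduce $\|gu^E\|/\|gu\|>1/2$ in the complementary case), whereas you split directly on $\|gu^{*\perp}\|\gtrless\|gu^*_E\|$; the resulting constants differ harmlessly ($\tilde\theta_1=\log(4/\tau_0)$ versus your $\log(2/\tau_0)$).
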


\begin{proof}
Let $\kappa_0=\kappa_0(\nu_\infty)>0$, $N_0=N_0(\nu_\infty,\delta)\in\NN$,
$\cE_0=\cE_0(\nu_\infty,\delta,n,u^\perp)\subset \supp\nu_\infty^{(n)}$,
and $\tau_0=\tau_0(\nu_\infty,\delta)>0$ be as in Proposition~\ref{p_sec5_derivative_estimate}.
Given $x \neq x'$ in $P$, take $u\in y$ realizing the supremum in \eqref{eq_r=1_VA1_def}.
Write $u=u^E+u^\perp$ with $u^E\in E$ and $u^\perp\in E^\perp$. Take
\begin{equation}\label{eq_r=1_VA5.5}
\begin{aligned}
\tilde\kappa_1={\kappa_0}/{2}, & \quad \tilde\theta_1=-\log({\tau_0}/{4}),\\
\widetilde{N}_1 > \max\left\{N_0, {4}/{\tilde\kappa_1}\right\}, & \quad \text{and }\tE_1=\cE_0(\nu_\infty,\delta,n,u^\perp).
\end{aligned}
\end{equation}
Let $n\ge \widetilde{N}_1$ and $g\in\tE_1\subset\supp\nu_\infty^{(n)}$.
If  ${\|g^\perp u^\perp\|}/{\|gu\|} \ge {\tau_0}/{2}$ then the inequality in \eqref{eq_r=1_VA5again}
implies that
\begin{equation}\label{eq_r=1_VA6}
- \log\VA_1(gx,gx') \le - \log\frac{\tau_0}{2} \le \tilde\theta_1.
\end{equation}
If ${\|g^\perp u^\perp\|}/{\|gu\|} < {\tau_0}/{2}$ then part (2) of Proposition~\ref{p_sec5_derivative_estimate} gives that
$$
\frac{\|g u^\perp\|}{\|gu\|}
%= \frac{\|g u^\perp\|}{\|g^\perp u^\perp\|} \frac{\|g^\perp u^\perp\|}{\|gu\|} < \frac{1}{\tau_0}\frac{\tau_0}{2}
< \frac 12,
\text{ which implies }
\frac{\|g u^E\|}{\|gu\|}
> \frac 12.
$$
Substituting the latter inequality and $\|u\| \ge \|u^E\|$ in \eqref{eq_r=1_VA5again},
we find that
\begin{equation}\label{eq_r=1_VA6.25}
\VA_1(gx,gx')
\ge \frac 12 \VA_1(x,x') \frac{\|g^\perp u^\perp\|}{\|u^\perp\|}\frac{\|u^E\|}{\|g u^E\|}.
\end{equation}
Thus, recalling the definition \eqref{eq_sec5_derivative2} and part (1) of
Proposition~\ref{p_sec5_derivative_estimate},
$$
\VA_1(gx,gx')
\ge \frac 12 \VA_1(x,x')\frac{\|Dg_{u^E}^\perp u^\perp\|}{\|u^\perp\|}
\ge \frac 12 \VA_1(x,x') e^{\kappa_0 n}.
$$
By the choices of $\tilde\kappa_1$ and $\widetilde{N}_1$ in \eqref{eq_r=1_VA5.5}, this implies that
\begin{equation}\label{eq_r=1_VA7}
\begin{aligned}
-\log\VA_1(gx,gx')
& \le - \log \VA_1(x,x') + \log 2 - 2 \tilde\kappa_1 n \\
& \le - \log \VA_1(x,x') - \tilde\kappa_1 n.
\end{aligned}
\end{equation}
The conclusion of the lemma is contained in \eqref{eq_r=1_VA6} and \eqref{eq_r=1_VA7}.
\end{proof}

\subsection{Vertical projection function}\label{ss_step1_vertical_projection}

The function $-\log\VA_1(x,x')$ cannot be used as a Margulis function to detect the equator
because (unless $\dim E=1$) it is possible that $-\log\VA_1(x,x')$ remains bounded even as $d(x,E)$
and $d(x',E)$ go to zero: the great circle through points close to the equator need not be close to the
equator, as illustrated in Figure~\ref{f_angle}.

To (partially) rectify this, we introduce the \emph{vertical projection} function $\VP_1$, defined as follows:
\begin{equation}\label{eq_r=1_VP1_def}
\VP_1(x,x') = \VA_1(x,x')d(x,x')^{\gamma_1}
\end{equation}
where $\gamma_1=\gamma_1(\nu_\infty)$ is a small positive constant to be chosen shortly (Proposition~\ref{p_r=1_Lem5.5}).
Note that if $\gamma_1=1$ then $\VP_1(x,x')$ would indeed be a sort of projection on the orthogonal complement
to the equator. It is clear from the definitions \eqref{eq_r=1_VA1_def} and \eqref{eq_r=1_VP1_def} that
$\VA_1$ and $\VP_1$ are symmetric functions:
\begin{equation}\label{eq_r=1_commute}
\VA_1(x,x')=\VA_1(x',x)
\quand
\VP_1(x,x')=\VP_1(x',x)
\text{ for any $x\neq x'$.}
\end{equation}

%%% The original statement has an additional constant $\eta'_0$
\begin{proposition}\label{p_r=1_Lem5.5}
There exist $\gamma_1=\gamma_1(\nu_\infty)>0$ and $\kappa'_1=\kappa'_1(\nu_\infty)>0$ and for each
$\delta>0$ there exists $N_1=N_1(\nu_\infty,\delta)\in\NN$ such that for every $n\ge N_1$ there exists
$\rho'_1=\rho'_1(\nu_\infty,\delta,n)>0$ such that for any $x \neq x'$ in $E_1(\rho'_1)$ there exists
$\cE'_1=\cE'_1(\nu_\infty,\delta,n,x,x')\subset\supp\nu_\infty^{(n)}$ with $\nu_\infty^{(n)}((\cE'_1)^c)<\delta$ and
\begin{equation}\label{eq_r=1_VPstat}
- \log \VP_1(gx,gx') \le -\log\VP_1(x,x')-\kappa'_1 n \text{ for every $g\in\cE'_1$.}
\end{equation}
\end{proposition}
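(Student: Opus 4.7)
The plan is to choose the constants $\gamma_1 := 2\tilde\kappa_1/(\tilde\kappa_1 + 2A)$ and $\kappa'_1 := \gamma_1\tilde\kappa_1/2 = \tilde\kappa_1^2/(\tilde\kappa_1 + 2A)$, where $\tilde\kappa_1 = \tilde\kappa_1(\nu_\infty)$ is the exponent from Lemma~\ref{l_r=1_Lem5.4} and $A = A(\nu_\infty)$ is the Lipschitz-type constant from \eqref{eq_AAA}; these depend only on $\nu_\infty$, as required. For distinct $x, x' \in E_1(\rho'_1)$, denoting by $p = y(x,x') \cap E$ the (soon-to-be-proved-unique) intersection of the great circle through $x,x'$ with the equator, $v$ a unit tangent to $y$ at $p$, and $v^\perp$ its orthogonal projection to $E^\perp$ (so that $\|v^\perp\| = \VA_1(x,x')$), I would take
\begin{equation*}
\cE'_1 = \tE_1(\nu_\infty,\delta/2,n,x,x') \cap \cE_0(\nu_\infty,\delta/2,n,v^\perp),
\end{equation*}
so that $\nu_\infty^{(n)}((\cE'_1)^c) < \delta$ by a union bound on Lemma~\ref{l_r=1_Lem5.4} and Proposition~\ref{p_sec5_derivative_estimate}. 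The threshold $N_1$ and the scale $\rho'_1$ will be dictated at the end.

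The first technical input is a purely geometric bound: for $\rho'_1$ small enough, whenever $x,x' \in E_1(\rho'_1)$ are distinct, the 2-plane $W = \operatorname{span}(x,x') \subset \RR^d$ meets $E$ in a line (so $p$ is well defined), and moreover
\begin{equation*}
d(x,x')\cdot\VA_1(x,x') \le 2\max\{d(x,E), d(x',E)\} \le 2\rho'_1.
\end{equation*}
To prove this, write $W = \operatorname{span}(e_0,f_0)$ with $e_0 \in W \cap E$ a unit vector and $f_0 \in W$ orthogonal to $e_0$; the explicit formula $d([ae_0+bf_0],E) = |b|\VA_1(x,x')/\sqrt{a^2+b^2}$ forces $|b/a| \le \rho'_1/\VA_1(x,x')$ for any projective representative of $W$ in $E_1(\rho'_1)$, while $d(x,x')$ is comparable to $|b/a - b'/a'|$ when both representatives lie close to $p$.

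With this in hand, the estimate \eqref{eq_r=1_VPstat} splits into two regimes for any $g \in \cE'_1$. In Regime 1, where $-\log\VA_1(x,x') \ge \tilde\theta_1 + \tilde\kappa_1 n$, Lemma~\ref{l_r=1_Lem5.4} gives $-\log\VA_1(gx,gx') \le -\log\VA_1(x,x') - \tilde\kappa_1 n$, and combining with the Lipschitz bound $-\log d(gx,gx') \le -\log d(x,x') + An$ from \eqref{eq_AAA} yields $-\log\VP_1(gx,gx') \le -\log\VP_1(x,x') - (\tilde\kappa_1 - \gamma_1 A)n = -\log\VP_1(x,x') - \kappa'_1 n$, with equality by the choice of $\gamma_1$. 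In Regime 2, where $-\log\VA_1(x,x') < \tilde\theta_1 + \tilde\kappa_1 n$, Lemma~\ref{l_r=1_Lem5.4} only buys $-\log\VA_1(gx,gx') \le \tilde\theta_1$; but the geometric bound forces $\max\{d(x,p),d(x',p)\} \lesssim \rho'_1 e^{\tilde\theta_1+\tilde\kappa_1 n}$, which is arbitrarily small once $\rho'_1 \le e^{-\tilde\theta_1 - 2\tilde\kappa_1 n}$. Then part (1) of Proposition~\ref{p_sec5_derivative_estimate} applied at $p$ gives $\|Dg^\perp_p v^\perp\|/\|v^\perp\| \ge e^{\kappa_0 n}$, and using \eqref{eq_sec5_derivative1}--\eqref{eq_sec5_derivative2} together with the block-triangular form from Remark~\ref{r_sec3_perp} I obtain $\|Dg_p v\| \ge e^{\kappa_0 n}\VA_1(x,x')$. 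A mean value argument along the minimizing geodesic from $x$ to $x'$ in $P$, combined with uniform continuity of $Dg$ on $\supp\nu_\infty^{(n)}\times P$, upgrades this to $d(gx,gx')/d(x,x') \ge (1-o(1))e^{\kappa_0 n}\VA_1(x,x') \ge (1-o(1))e^{\tilde\kappa_1 n - \tilde\theta_1}$, using $\tilde\kappa_1 = \kappa_0/2$. Substituting into $-\log\VP_1 = -\log\VA_1 + \gamma_1(-\log d)$ and bounding $\log\VA_1(x,x') \le 0$, this produces a decrease of at least $\gamma_1\tilde\kappa_1 n - O(\tilde\theta_1)$, which exceeds $\kappa'_1 n = \gamma_1\tilde\kappa_1 n/2$ once $N_1$ is chosen large enough (noting that $\tilde\theta_1 = \tilde\theta_1(\nu_\infty,\delta)$ grows as $\delta \to 0$, forcing $N_1$ to depend on $\delta$).

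The main obstacle I anticipate is making the mean value step in Regime 2 uniform. The derivative $Dg$ varies both with $g$ and along the geodesic from $x$ to $x'$, so the crude bound $d(gx,gx') \ge d(x,x')\inf_t\|Dg_{\gamma(t)}\dot\gamma(t)\|$ must be promoted to a bound with prefactor $\|Dg_p v\|(1-o(1))$; this requires the whole segment from $x$ to $x'$ to lie in a neighborhood of $p$ on which $Dg$ is almost constant, uniformly in $g \in \supp\nu_\infty^{(n)}$. Compactness of the latter and continuity of the derivative deliver such a neighborhood, but its radius shrinks with $n$, and this is precisely what forces $\rho'_1$ to be exponentially small in $n$ in the statement. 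A secondary bookkeeping issue is that once $\gamma_1$ is fixed by the Regime 1 balance, the margin in Regime 2 is only $\gamma_1\tilde\kappa_1 n/2$, so all $O(1)$ errors (from $\tilde\theta_1$, the mean value correction, and the $\log\VA_1(x,x') \le 0$ slack) must be carefully tracked and absorbed by the choice of $N_1$.
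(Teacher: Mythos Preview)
Your Regime 1 argument is essentially the paper's, and your choice of constants there is fine. The gap is in Regime 2: the intersection point $p = y(x,x') \cap E$ need not exist. You assert that for $\rho'_1$ small the $2$--plane $W = \operatorname{span}(x,x')$ meets $E$ in a line, and your sketch of the geometric bound already presupposes this by writing $W = \operatorname{span}(e_0,f_0)$ with $e_0 \in W \cap E$. But take $d=3$, $E = \RR e_1$, $x = [e_1 + \epsilon e_2]$, $x' = [e_1 + \epsilon e_3]$. Then $W \cap E = \{0\}$ for every $\epsilon > 0$, while $d(x,E), d(x',E) \approx \epsilon < \rho'_1$ and $\VA_1(x,x') = 1$ (the vector $e_2 - e_3 \in W$ is orthogonal to $E$). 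So you are squarely in Regime 2, yet $p$ is undefined; your set $\cE'_1$ and the entire mean--value step collapse. No smallness of $\rho'_1$ repairs this when $\dim E \le d-2$.

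The paper avoids the intersection point altogether. It works instead with the difference vector $w = v' - v$ of unit representatives of $x,x'$: Lemma~\ref{l_r=1_Lem5.6} shows that whenever $d(x,E) < \tfrac14 \VA_1(x,x')$ one has $\|w^\perp\|/\|w\| > \tfrac14 \VA_1(x,x')$, so $w$ is ``roughly vertical'' regardless of whether $W$ meets $E$. Lemma~\ref{l_r=1_Lem5.7} then feeds $w^\perp$ (not your $v^\perp$) into Proposition~\ref{p_sec5_derivative_estimate} and extracts the expansion of $d(x,x')$ via the explicit projection formula \eqref{eq_r=1_dista7}, bypassing any mean--value argument along a geodesic. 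Your geometric inequality $d(x,x')\cdot\VA_1(x,x') \le 2\rho'_1$ is in fact correct and provable without $p$ (it follows from writing $u = \alpha v + \alpha' v'$ and bounding $\|u^\perp\|/\|u\|$), but it is not what drives Regime 2; the vertical component of $w$ is.
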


\begin{proof}
The overall strategy goes as follows. If the vertical angle  $\VA_1(x,x')$ is small then the conclusion of the
present proposition is a consequence of Lemma~\ref{l_r=1_Lem5.4},
provided that we choose the constant $\gamma_1$
small enough to make the variation of $d(x,x')^{\gamma_1}$ negligible.
If $\VA_1(x,x')$ is large, let $v$ and $v'$ be unit vectors in the direction of $x$ and $x'$ and such that the
angle between them is non-obtuse (the latter may always be obtained by replacing $v$ with $-v$ if necessary).
\begin{figure}[ht]
\begin{center}
\psfrag{x}{$x$}\psfrag{x1}{$x'$}\psfrag{u}{$u$}
\psfrag{w}{$w$}\psfrag{y}{$y$}\psfrag{E}{$E$}
\includegraphics[height=1.8in]{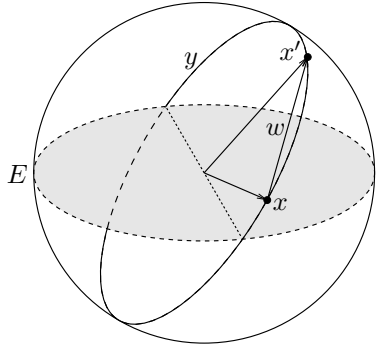}
\caption{\label{f_dilation}
Interpreting Lemma~\ref{l_r=1_Lem5.6}:
if $d(x,E)$ is small compared to $\VA_1(x,x')=d(y,E)$ then $w=v'-v$ is roughly vertical.}
\end{center}
\end{figure}
In Lemma~\ref{l_r=1_Lem5.6} we check that if $x$ is close to $E$ then the difference $w=v'-v$ is roughly vertical,
meaning that the angle between $w$ and the equator is bounded from below.
See Figure~\ref{f_dilation}.
In Lemma~\ref{l_r=1_Lem5.7} we deduce that in this situation $-\log d(x,x')$ decreases under most large iterates:
the reason is that the dynamics increases vertical components and, for $n$ sufficiently large, the vertical
component of the iterate of $w$ dominates. This implies the conclusion of the proposition because in this regime
the variation of $\VA_1(x,x')$ is bounded. Let us fill-in the details.

\begin{lemma}\label{l_r=1_Lem5.6}
Given $x \neq x'$ in $P$, let $w=v'-v$ be the difference between unit vectors in the directions of $x$ and $x'$,
respectively. Then
\begin{equation}\label{eq_r=1_ww}
d(x,E) < \frac 14 \VA_1(x,x') \text{ implies }
\frac{\|w^\perp\|}{\|w\|} > \frac 14 \VA_1(x,x').
\end{equation}
\end{lemma}

\begin{figure}[ht]
\begin{center}
\psfrag{v}{$v$}\psfrag{v1}{$v'$}\psfrag{w}{$w$}
\psfrag{a}{\tiny{$d(x,x')$}}
\includegraphics[height=1.8in]{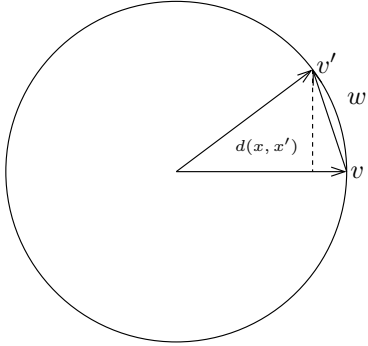}
\end{center}
\caption{\label{f.dist1}
Illustrating the estimates in \eqref{eq_r=1_dista12}: the distance from $x$ to $x'$ is given by the
length of the vertical dashed segment, which is bounded above by $\|w\|$;
the angle between $v$ and $w$ is at least $\pi/4$, as the angle between $v$ and $v'$ is non-obtuse.}
\end{figure}

\begin{proof}
As the angle between $v$ and $v'$ was taken to be non-obtuse (check Figure~\ref{f.dist1})
\begin{equation}\label{eq_r=1_dista12}
d(x,x') = |\sin\angle(v,v')| \le \|w\|
\quand
\angle(w,v) \ge \frac{\pi}{4}.
\end{equation}
Take $u\in y$ realizing the supremum in \eqref{eq_r=1_VA1_def}. Since $y=\gene\{v,v'\}=\gene\{v,w\}$,
we may write $u=av + bw$ with $a, b\in\RR$.
The angle bound in \eqref{eq_r=1_dista12} implies that $\|av\|$ and $\|bw\|$ are both less than
$2\|u\|$. Thus,
$$
\begin{aligned}
\VA_1(x,x')
= \frac{\|u^\perp\|}{\|u\|}
& \le \frac{\|av^\perp\|}{\|u\|} + \frac{\|bw^\perp\|}{\|u\|} \\
& < 2 \frac{\|v^\perp\|}{\|v\|} + 2 \frac{\|w^\perp\|}{\|w\|}
\le 2 d(x,E) + 2 \frac{\|w^\perp\|}{\|w\|}.
\end{aligned}
$$
Thus, $d(x,E)$ and $\|w^\perp\|/\|w\|$ cannot be both less than $\VA_1(x,x')/4$.
\end{proof}

Take $\tilde\kappa_1=\tilde\kappa_1(\nu_\infty)>0$ and $\tilde\theta_1=\tilde\theta_1(\nu_\infty,\delta)>0$ to be as
in Lemma~\ref{l_r=1_Lem5.4}.

\begin{lemma}\label{l_r=1_Lem5.7}
There exists $\hat\kappa_1=\hat\kappa_1(\nu_\infty)>0$ and for each $\delta>0$ there exists
$\widehat{N}_1=\widehat{N}_1(\nu_\infty, \delta)\in\NN$ such that for each $n\ge \widehat{N}_1$ there exists
$\hat\rho_1=\hat\rho_1(\nu_\infty,\delta,n)>0$ such that for any $x\neq x'$ in $E_1(\hat\rho_1)$
with $-\log\VA_1(x,x') \le \tilde\theta_1+\tilde\kappa_1 n$ there exists
$\hE_1=\hE_1(\nu_\infty,\delta,n,x,x')\subset\supp\nu_\infty^{(n)}$ with
$\nu_\infty^{(n)}(\hE_1^c)<\delta$ and
\begin{equation}\label{eq_r=1_5.17}
-\log d(gx,gx') \le -\log d(x,x')-\log\VA_1(x,x')-\hat\kappa_1n\text{ for every $g\in\hE_1$.}
\end{equation}
\end{lemma}

\begin{figure}[ht]
\psfrag{gv}{$gv$}\psfrag{gv1}{$gv'$}
\psfrag{gx}{$gx$}\psfrag{gx1}{$gx'$}
\psfrag{gw}{$gw$}
\psfrag{a}{\tiny{$d(gx,gx')$}}
\psfrag{b}{$\|\pi_{gv} gw \| = \|\pi_{gv} gv'\|$}
\begin{center}
\includegraphics[height=1.8in]{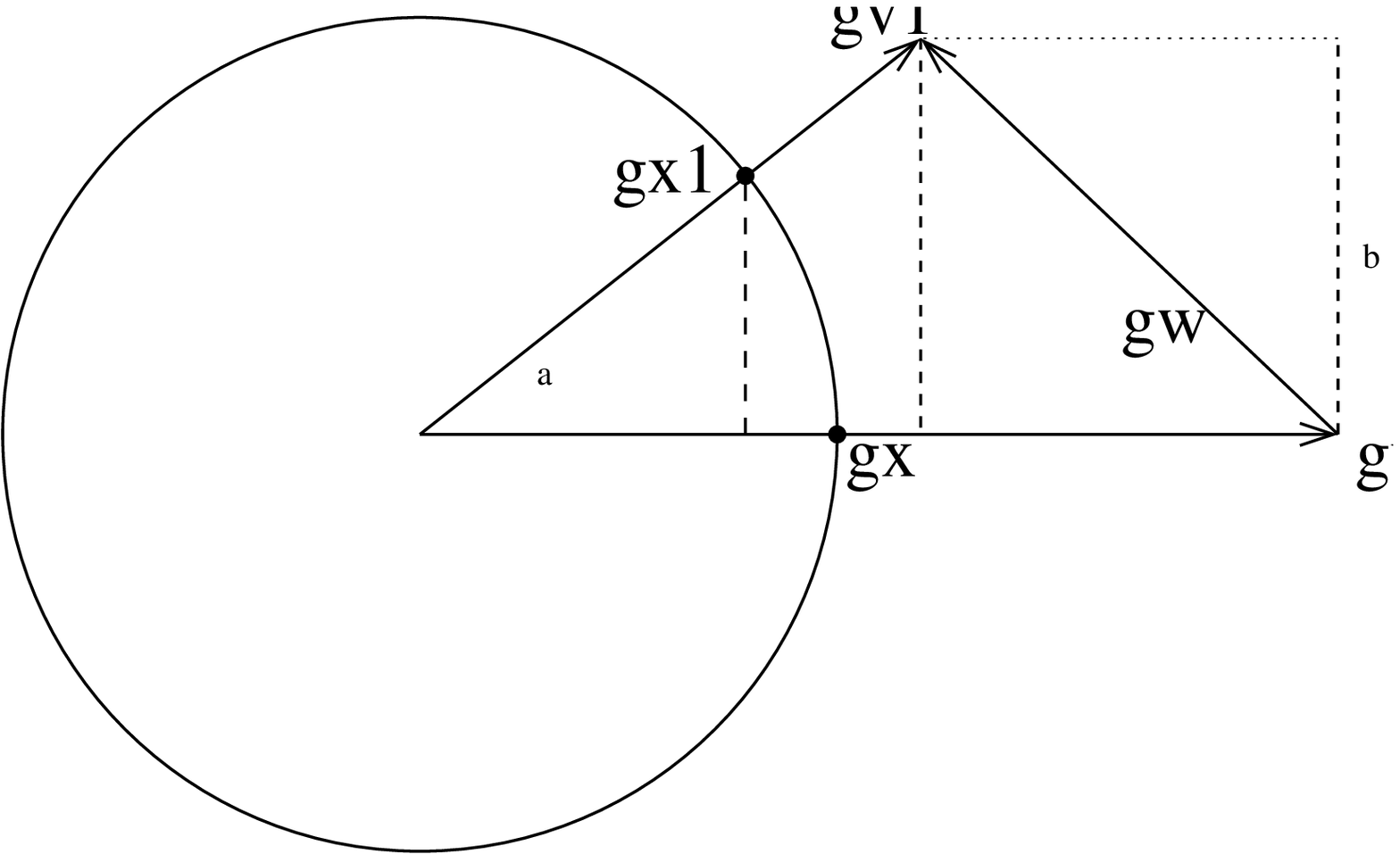}
\end{center}
\caption{\label{f_dist2}Verifying the inequality \eqref{eq_r=1_dista7}, when $\|gv\| \ge \|gv'\|$:
the distance between $gx$ and $gx')$ is given by the length of the vertical dashed segment on the
left, which is greater than the length of the vertical dashed segment on the right divided by $\|gv\|$.}
\end{figure}

\begin{proof}
Let $\kappa_0=\kappa_0(\nu_\infty)>0$, $N_0=N_0(\nu_\infty,\delta)\in\NN$, and
$\cE_0=\cE_0(\nu_\infty,\delta,n,v^\perp)\subset\supp\nu_\infty^{(n)}$ be as in Proposition~\ref{p_sec5_derivative_estimate}.
Given $x \ne x'$ in $P$, let $w=v'-v$ be the difference between unit vectors $v$ and $v'$ in the directions
of $x$ and $x'$, respectively. Take
\begin{equation}\label{eq_r=1_dista22}
\begin{aligned}
& \hat\kappa_1= \kappa_0/2, \quad
\widehat{N}_1 > \max\{N_0, {5}/{\hat\kappa_1}\}, \\
& \hat\rho_1 < e^{- \tilde\theta_1 - \tilde\kappa_1 n}/10,
\quand
\hE_1=\cE_0(\nu_\infty,\delta,n,w^\perp).
\end{aligned}
\end{equation}
Let $n\ge \widehat{N}_1$ and $g\in\hE_1\subset\supp\nu_\infty^{(n)}$. As observed in \eqref{eq_r=1_dista12},
\begin{equation}\label{eq_r=1_dista6}
d(x,x') \le \|w\| %= \frac{\|w\|}{\|v\|} = \frac{\|w\|}{\|v'\|}.
\end{equation}
Let us suppose that $\|gv\|\ge\|gv'\|$; the case $\|gv\|\le\|gv'\|$ is analogous,
reversing the roles of $x$ and $x'$. Then (check Figure~\ref{f_dist2})
\begin{equation}\label{eq_r=1_dista7}
d(gx,gx')
= \left\|\Pi_{gv} \frac{gv'}{\|gv'\|}\right\|
= \left\|\Pi_{gv} \frac{gw}{\|gv'\|}\right\|
\ge \frac{1}{{\|gv\|}} \left\|\Pi_{gv} gw\right\|,
\end{equation}
and so (keep in mind that $\|v\|=\|v'\|=1$),
\begin{equation}\label{eq_r=1_dg1}
-\log d(gx,gx') \le - \log d(x,x') - \log \frac{\|\Pi_{gv}gw\|}{\|gv\|}\frac{\|v\|}{\|w\|}.
\end{equation}

By the condition on $\hat\rho_1$ in \eqref{eq_r=1_dista22}, if $d(x,E)\le\hat\rho_1$ then
$$
4d(x,E) < e^{- \tilde\theta_1 - \tilde\kappa_1 n} < \VA_1(x,x'),
$$
and then Lemma~\ref{l_r=1_Lem5.6} gives that
\begin{equation}\label{eq_r=1_dista75}
\frac{\|w^\perp\|}{\|w\|} > \frac 14 \VA_1(x,x') > \frac 14 e^{- \tilde\theta_1 - \tilde\kappa_1 n}.
\end{equation}
Up to further reducing $\hat\rho_1$, we may also assume that
\begin{equation}\label{eq_r=1_dista9fresh}
\frac{\|hz\|}{\|z\|} \ge \frac 12 \frac{\|hz^E\|}{\|z^E\|}
\quand
\|\Pi_{hz} hw\| \ge \frac 12 \|\Pi_{hz^E} hw\|
\end{equation}
for any non-zero $z=z^E+z^\perp$ in $E\oplus E^\perp$ with $\|z^\perp\|/\|z\| \le \hat\rho_1$
and any $h\in\supp\nu_\infty^{(n)}$.
Indeed, the first part of \eqref{eq_r=1_dista9fresh} is a simple consequence of continuity;
in the second one note also that $w$ is bounded away from the horizontal, by \eqref{eq_r=1_dista75}.
This last part of \eqref{eq_r=1_dista9fresh} implies that
\begin{equation}\label{eq_r=1_dista10}
\|\Pi_{hz} hw\| \ge \frac 12\|\Pi_{hz^E} hw\| \ge \frac 12\|\Pi_E hw\|
= \frac 12\|(hw)^\perp\| = \frac 12\|h^\perp w^\perp\|.
\end{equation}

Noting that $\|v^\perp\|/\|v\|=d(x,E)\le\hat\rho_1$, take $z=v$ and $h=g$ in the previous two relations.
Thus, substituting \eqref{eq_r=1_dista9fresh} and \eqref{eq_r=1_dista10}  in \eqref{eq_r=1_dg1},
\begin{equation*}
- \log d(gx,gx') \le - \log d(x,x') + \log 4 - \log \frac{\|g^\perp w^\perp\|}{\|w\|}\frac{\|v^E\|}{\|gv^E\|}.
\end{equation*}
Then, using also \eqref{eq_r=1_dista75} and \eqref{eq_sec5_derivative2},
\begin{equation*}
\begin{aligned}
- \log d(gx,gx')
& \le -\log d(x,x') - \log\VA_1(x,x') + \log 16 - \log \frac{\|g^\perp w^\perp\|}{\|w^\perp\|}\frac{\|v^E\|}{\|gv^E\|}\\
&= - \log d(x,x') - \log \VA_1(x,x') + \log 16 -  \log\frac{\|Dg^\perp_{v^E} w^\perp\|}{\|w^\perp\|}.
\end{aligned}
\end{equation*}
By part (1) of Proposition~\ref{p_sec5_derivative_estimate}
and the choice of $\hat\kappa_1$ and $\widehat{N}_1$ in \eqref{eq_r=1_dista22}, it follows that
$$
\begin{aligned}
-\log d(gx,gx')
& \le - \log d(x,x') - \log \VA_1(x,x') + 5 - 2\hat\kappa_1 n \\
& \le -\log d(x,x') - \log \VA_1(x,x') - \hat\kappa_1 n,
\end{aligned}
$$
as claimed.
\end{proof}

We are ready to prove Proposition~\ref{p_r=1_Lem5.5}. Take $A=A(\nu_\infty)$ as in \eqref{eq_AAA} and then define
\begin{equation}\label{eq_r=1_constants}
\begin{aligned}
& \gamma_1 = \min\big\{1,{\tilde\kappa_1}/{(2A)}\big\},\\
& \kappa'_1=\min\big\{{\tilde\kappa_1}/{2}, {\gamma_1\hat\kappa_1}/{2}\big\},\\
& \rho'_1=\min\{\hat\rho_1(\nu_\infty,\delta),\hat\rho_1(\nu_\infty,{\delta}/{2})\}\\
& N_1 > \max\big\{\widetilde{N}_1(\nu_\infty,{\delta}/{2}), \widehat{N}_1(\nu_\infty,{\delta}/{2}),
                         {2\tilde\theta_1}/{(\gamma\hat\kappa_1)}\big\} \quand\\
& \cE'_1=\tE_1(\nu_\infty,{\delta}/{2},n,x,x') \cap \hE_1(\nu_\infty,{\delta}/{2},n,x,x').
\end{aligned}
\end{equation}
Observe that $\nu_\infty^{(n)}((\cE'_1)^c)<\delta$. By definition,
\begin{equation}\label{eq_r=1_VP1}
-\log\VP_1(gx,gx')=-\log\VA_1(gx,gx') - \gamma_1\log d(gx,gx').
\end{equation}
%Condition \eqref{eq_def_AAA} implies that
%\begin{equation}\label{eq_r=1_battery1}
%- \log d(gx,gx') \le - \log d(x,x') + A n
%\end{equation}
%for any $g \in \supp\nu_\infty^{(n)}$.

Consider $x \neq x'$ in $E_1(\rho'_1)$, $n\ge N_1$ and $g\in\cE'_1$.
If $-\log\VA_1(x,x')\ge\tilde\theta_1+ \tilde\kappa_1 n$ then, by Lemma~\ref{l_r=1_Lem5.4},
\begin{equation}\label{eq_r=1_VP2}
-\log\VA_1(gx,gx') \le - \log\VA_1(x,x') - \tilde\kappa_1 n
 \end{equation}
 Substituting \eqref{eq_AAA} and \eqref{eq_r=1_VP2} in \eqref{eq_r=1_VP1} we find that
 \begin{equation}\label{eq_r=1_VP3}
 \begin{aligned}
-\log\VP_1(gx,gx')
& \le  - \log \VA_1(x,x') - \tilde\kappa_1 n - \gamma_1 \log d(x,x') + \gamma_1 A n \\
& \le - \log \VP_1(x,x') - \frac{\tilde\kappa_1}{2} n \le - \log \VP_1(x,x') - \kappa'_1 n.
\end{aligned}
\end{equation}
Now assume that $-\log\VA_1(x,x') \le \tilde\theta_1+\tilde\kappa_1 n$.  In this case, Lemma~\ref{l_r=1_Lem5.4} yields
\begin{equation}\label{eq_r=1_VP4}
-\log\VA_1(gx,gx') \le \tilde\theta_1
\end{equation}
and Lemma~\ref{l_r=1_Lem5.7} gives that
\begin{equation}\label{eq_r=1_VP5}
- \log d(gx,gx') \le -\log d(x,x') - \log\VA_1(x,x') - \hat\kappa_1 n.
\end{equation}
Substituting \eqref{eq_r=1_VP4} and \eqref{eq_r=1_VP5} in \eqref{eq_r=1_VP1} we obtain
$$
\begin{aligned}
-\log\VP_1(gx,gx')
& \le \tilde\theta_1 - \gamma_1 \log d(x,x') - \gamma_1 \log\VA_1(x,x') - \gamma_1\hat\kappa_1n \\
& \le -\log\VP_1(x,x')  + \tilde\theta_1 + (1-\gamma_1) \log \VA_1(x,x') - \gamma_1\hat\kappa_1n.
\end{aligned}
$$
Since $\gamma_1 \le 1$, $\VA_1(x,x') \le 1$,
and $n \ge N_1 \ge 2\tilde\theta_1/(\gamma_1\hat\kappa_1)$, this yields
\begin{equation}\label{eq_r=1_VP6}
\begin{aligned}
-\log\VP_1(gx,gx')
& \le -\log\VP_1(x,x') + \tilde\theta_1 - \gamma_1\hat\kappa_1n\\
& \le -\log\VP_1(x,x') - \frac{\gamma_1\hat\kappa_1}{2} n
\le  - \log\VP_1(x,x') - \kappa'_1 n.
\end{aligned}
\end{equation}
The relations \eqref{eq_r=1_VP3} and \eqref{eq_r=1_VP6} contain the conclusion of Proposition~\ref{p_r=1_Lem5.5}.
\end{proof}

\subsection{The function $-\log\psi_1$}

Now we are going to prove that $\psi_1=\VP_1$ satisfies Proposition~\ref{p_r=1_Prop5.1}.
Let $A=A(\nu_\infty)$ and $B=B(\nu_\infty)>0$ be as in \eqref{eq_AAA} and \eqref{eq_BBB}.
Take $\kappa'_1>0$, $N_1\in\NN$ and $\rho'_1>0$ as in Proposition~\ref{p_r=1_Lem5.5}. By definition,
\begin{equation}\label{eq_r=1_logVP1}
-\log\psi_1(x,x')=-\log\VP_1(x,x') = -\log\VA_1(x,x')-\gamma_1\log d(x,x')
\end{equation}
for every $x \neq x'$ in $P$.
%For $g\in\supp\nu^{(n)}_\infty$, condition \eqref{eq_def_AAA} implies that
%\begin{equation}\label{eq_r=1_battery2}
%- \log d(gx,gx') \le - \log d(x,x') + A n
%\end{equation}
%and we have seen in \eqref{eq_r=1_VA1_def_linear} that
%\begin{equation}\label{eq_r=1_VA8new}
%- \log\VA_1(gx,gx') \le - \log\VA_1(x,x') + Bn.
%\end{equation}
Define
\begin{equation}\label{eq_r=1_C1linha}
C'_1 = B+\gamma_1 A.
\end{equation}
Substituting \eqref{eq_r=1_VA1_def_linear} and \eqref{eq_AAA} in \eqref{eq_r=1_logVP1}, we find that
\begin{equation}\label{eq_r=1_VP1_linear}
\begin{aligned}
-\log\VP_1(gx,gx')
& \le - \log\VA_1(x,x') + Bn - \gamma_1 \log(dx,dx') + \gamma_1 A n\\
& \le - \log\VP_1(x,x') + C'_1n
\end{aligned}
\end{equation}
for every $x \neq x'$ in $P$ and $g\in\supp\nu^{(n)}_\infty$. Integrating \eqref{eq_r=1_VPstat} over $\cE'_1$
and \eqref{eq_r=1_VP1_linear} over the complement, and using the fact that $\nu_\infty^{(n)}((\cE'_1)^c)<\delta$, we get that
$$
\int_G -\log\VP_1(gx,gx') \, d\nu_\infty^{(n)}(g) \le -\log\VP_1(x,x') - \kappa'_1 n + C'_1\delta n.
$$
for every $n\ge N_1$ and $x \neq x'$ in $E_1(\rho'_1)$. This completes the proof of Proposition~\ref{p_r=1_Prop5.1}.

\medskip

Another relevant feature is that $-\log\psi_1(x,x')$ goes to infinity when $x$ and $x'$ approach the equator $E$:

\begin{lemma}\label{l_r=1_Lem5.8}
Given any $R>0$, there exists $\tilde\rho_1=\tilde\rho_1(\nu_\infty,R)>0$ such that $-\log\psi_1(x,x')>R$
for any $x\neq x'$ in $E_1(\tilde\rho_1)$.
\end{lemma}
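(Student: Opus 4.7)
The plan is to exploit the definition $-\log\psi_1(x,x')=-\log\VA_1(x,x')-\gamma_1\log d(x,x')$ by splitting into two regimes according to the size of the vertical angle $\VA_1(x,x')$. In one regime the first term is directly large; in the other, the second term carries the weight because a great circle with definite angle to $E$ can only come close to $E$ at (essentially) a single point, so $x$ and $x'$ must be close to each other.

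First I would dispose of the trivial case: if the plane $y=x+x'$ is contained in $E$, then $\VA_1(x,x')=0$, $\psi_1(x,x')=0$, and the conclusion holds automatically. Otherwise $\VA_1(x,x')>0$. Given $R>0$, I fix $\tilde\rho_1$ small (to be chosen at the end) and take $x\ne x'$ in $E_1(\tilde\rho_1)$. Let $v,v'$ be unit vectors representing $x,x'$, chosen with $\angle(v,v')$ non-obtuse, so that $w=v'-v$ satisfies $d(x,x')\le\|w\|$ as recorded in \eqref{eq_r=1_dista12}, and note that $\|v^\perp\|=d(x,E)\le\tilde\rho_1$ and similarly for $v'$.

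\emph{Case 1:} $\VA_1(x,x')\le 4\tilde\rho_1$. Since $d(x,x')\le 1$ and $\gamma_1\le 1$ by \eqref{eq_r=1_constants},
\begin{equation*}
-\log\psi_1(x,x')\ge -\log\VA_1(x,x')\ge -\log(4\tilde\rho_1).
\end{equation*}

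\emph{Case 2:} $\VA_1(x,x')>4\tilde\rho_1$. Then $d(x,E)\le\tilde\rho_1<\tfrac14\VA_1(x,x')$, so Lemma~\ref{l_r=1_Lem5.6} applies and gives $\|w^\perp\|/\|w\|>\tfrac14\VA_1(x,x')$. Since $\|w^\perp\|\le\|v^\perp\|+\|v'^\perp\|\le 2\tilde\rho_1$, this yields
\begin{equation*}
d(x,x')\le\|w\|<\frac{8\tilde\rho_1}{\VA_1(x,x')}.
\end{equation*}
Consequently, using again $\VA_1(x,x')\le 1$ and $\gamma_1\le 1$,
\begin{equation*}
-\log\psi_1(x,x')\ge -\log\VA_1(x,x')-\gamma_1\log\!\Big(\tfrac{8\tilde\rho_1}{\VA_1(x,x')}\Big)
=-(1-\gamma_1)\log\VA_1(x,x')-\gamma_1\log(8\tilde\rho_1)\ge -\gamma_1\log(8\tilde\rho_1).
\end{equation*}

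Combining both cases, $-\log\psi_1(x,x')\ge\min\{-\log(4\tilde\rho_1),-\gamma_1\log(8\tilde\rho_1)\}$, and it suffices to choose $\tilde\rho_1$ small enough that this minimum exceeds $R$, for instance $\tilde\rho_1<\min\{e^{-R}/4,\,e^{-R/\gamma_1}/8\}$. No step here is delicate; the only point to verify is the orientation choice for $w$ and the applicability of Lemma~\ref{l_r=1_Lem5.6} in Case 2, both of which are automatic from the hypotheses.
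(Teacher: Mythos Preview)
Your proof is correct. It takes a genuinely different route from the paper's argument, though both are elementary. The paper splits according to whether $d(x,x')$ is smaller or larger than $\tilde\rho_1^{1/2}$: in the former case the factor $d(x,x')^{\gamma_1}$ is small; in the latter, an explicit linear-algebra estimate (writing $u=\alpha v+\alpha'v'$ and using that the angle between $v,v'$ is bounded below) shows directly that $\VA_1(x,x')\le 2\tilde\rho_1^{1/2}$. Your argument splits instead according to the size of $\VA_1(x,x')$ and, in the large-angle regime, recycles Lemma~\ref{l_r=1_Lem5.6} to force $d(x,x')$ small. The two approaches are dual: the paper shows ``large $d(x,x')$ forces small $\VA_1$'', you show ``large $\VA_1$ forces small $d(x,x')$''. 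Your route has the minor advantage of reusing an existing lemma rather than redoing the geometry; the paper's route is self-contained and yields the slightly cleaner uniform bound $\VP_1(x,x')\le 2\tilde\rho_1^{\gamma_1/2}$.
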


\begin{proof}
Consider any $\tilde\rho_1>0$ and let $v$ and $v'$ be unit vectors in the direction of $x$ and $x'$,
respectively. Observe that
$$
\frac{\|v^\perp\|}{\|v\|} = d(x,E) \le \tilde\rho_1
\quand
\frac{\|(v')^\perp\|}{\|v'\|} = d(x',E) \le \tilde\rho_1.
$$
If $d(x,x') \le \tilde\rho_1^{1/2}$ then
\begin{equation}\label{eq_r=1_VPtoinfty1}
\VP_1(x,x') = \VA_1(x,x') d(x,x')^{\gamma_1} \le \tilde\rho_1^{\gamma_1/2}.
\end{equation}
Now suppose that $d(x,x')=|\sin\angle(v,v')|$ is greater than $\tilde\rho_1^{1/2}$.
Then every $u$ in the great circle $y$ generated by $x$ and $x'$ may be written as
$u=\alpha v + \alpha'v'$ with $\alpha, \alpha'\in \RR$ such that
$\|u\| \ge \tilde\rho_1^{1/2} \|\alpha v\|$ and $\|u\| \ge \tilde\rho_1^{1/2} \|\alpha' v'\|$.
Thus,
$$
\frac{\|u^\perp\|}{\|u\|}
\le \frac{\|\alpha v^\perp\|}{\|u\|} +  \frac{\|\alpha' (v')^\perp\|}{\|u\|}
\le \frac{1}{\tilde\rho_1^{1/2}} \left(\frac{\|v^\perp\|}{\|v\|} + \frac{\|(v')^\perp\|}{\|v'\|}\right)
\le 2\tilde\rho_1^{1/2}.
$$
Since $u$ is arbitrary, this proves that $\VA_1(x,x')\le 2\tilde\rho_1^{1/2}$, and so
\begin{equation}\label{eq_r=1_VPtoinfty2}
\VP_1(x,x') = \VA_1(x,x') d(x,x')^{\gamma_1} \le 2\tilde\rho_1^{1/2} \le 2 \tilde\rho_1^{\gamma_1/2}
\end{equation}
(because $\gamma_1\le 1$).
The inequalities \eqref{eq_r=1_VPtoinfty1} and \eqref{eq_r=1_VPtoinfty2} imply that
$$
-\log\psi_1(x,x')=-\log\VP_1(x,x') \ge - \frac{\gamma_1}{2}\log\tilde\rho_1 - \log 2
$$
for any $x\neq x'$ in $E_1(\tilde\rho_1)$, which yields the claim.
%Let $x\neq x'$ in $E_1(\rho)$ for some $\rho>0$.
%If $\VA_1(x,x')\le\sqrt\rho$ then \eqref{eq_r=1_logVP1} gives that (keep in mind that $d(x,x')\le\pi/2$)
%$$
%-\log\psi_1(x,x') \ge - \log \sqrt\rho - \gamma_1\log\frac{\pi}{2}
%$$
%and the right-hand side is greater than $R$ if $\rho$ is small enough.
%Form now on, suppose that $\VA_1(x,x')\ge\sqrt\rho$. Define $w$ as in \eqref{eq_r=1_w} and assume
%that $\rho<1/25$. Then
%$$
%\VA_1(x,x') > 5 \rho \ge 5 d(x,E)
%$$
%and so we may apply Lemma~\ref{l_r=1_Lem5.6} to conclude that
%$$
%\|w^\perp\| > \frac 15 \|w\| \VA_1(x,x').
%$$
%The condition that $x, x' \in E_1(\rho)$ implies that $\|w^\perp\|\le 2\rho$. Using also \eqref{eq_r=1_dista6}, we
%get that $d(x,x') \le 10\pi \sqrt\rho/\sqrt 2 < 25 \sqrt\rho$. Then, keeping in mind that $\VA_1(x,x')\le\pi/2$,
%the relation \eqref{eq_r=1_logVP1} gives that
%$$
%-\log\psi_1(x,x') \ge \log\frac{\pi}{2} - \gamma_1 \log 25\sqrt\rho.
%$$
%Since $\gamma_1>0$ is fixed, depending only on $\nu_\infty$, the right-hand side is greater than $R$ if $\rho$
%is small enough.
\end{proof}

\section{Stabilization and cut-off}\label{s_step1_stabilization_and_cut-off}

Previously, we constructed a function $-\log\psi_1=-\log\VP_1$ that exhibits some of the features
of a Margulis function. In this section we modify this function to correct two important defects.

One problem with $-\log\psi_1$, that originates from $\VA_1$, is that the definition refers explicitly
to $E$. The reason why this is a problem is that for $k\in\NN$ the equator is not necessarily a
$\nu_k$-invariant set. Thus, even for $k$ large, for $g\in\supp\nu_k$ there is no uniform upper
bound on the absolute value of
$$
-\log \psi_1(gx,gx')+\log\psi_1(x,x')
$$
because $-\log\VA_1(gx,gx')$ may be wildly different from $-\log\VA_1(x,x')$.
To rectify this, in Section~\ref{ss_step1_stabilization} we define ``stabilized'' versions of the functions
$\VA_1$ and $\VP_1$. This will come at a price: the analogue of Proposition~\ref{p_r=1_Prop5.1}
will hold only outside a ``stabilization region'' near $E$.
Still, as we will explain in Section~\ref{s_step1_turning_the_perturbation_on},
stabilization does allow us to by-pass this first difficulty in a satisfactory way.

Another problem is that, because of the way the Markov operators $\cT_{k,1}$
will be constructed, we have little control over the border region.
The simplest way to by-pass this is to take the Margulis function to be bounded when either $x$
or $x'$ is in the border region.
The function $-\log\psi_1$ does not satisfy this: for instance, it can get arbitrarily large when $x$
and $x'$ are close to each other and, of course, that may occur even if they are both in the
border region. Thus, it is necessary to cut that function off in (a neighborhood of) the border region.

However, doing a cut-off creates a discontinuity that translates into a drastic failure of
\eqref{eq_sec7_Margulis22} near the discontinuity. The main idea to handle this, which we call
\emph{recoupling}, will be explained in Section~\ref{ss_step1_recoupling}.
The recoupling technique is far from universal, it can only handle certain types of discontinuities.
Thus, it makes sense to make the discontinuity as mild as possible.
In Section~\ref{ss_step1_cut-off} we explain just how to do this.
In particular, we only cut $-\log\psi_1$ off if $x$ and $x'$ are both in (a neighborhood of) the
border region. In the present ($r=1$) situation this is fine because if one of the points, $x$ or $x'$,
is close to the border region and the other one is far outside then $-\log\psi_1(x,x')$ is bounded
anyway. The steps $r>1$ involve additional issues, which we will discuss in Sections~\ref{s_stabilization_and_cut-off}
and~\ref{s_spreading_out}.

We use the following elementary inequalities, whose proof we leave to the reader:
given any $C>0$, $c>0$ and $\psi>0$,
\begin{align}\label{eq_r=1_elementary1}
& \log(\Omega+c\psi^{-1}) \le \log(\Omega+\psi^{-1}) + \log c
\quad\text{if}\quad c>1\\ \label{eq_r=1_elementary2}
& \log(\Omega+c\psi^{-1}) \le \log(\Omega+\psi^{-1})
\quad\text{if}\quad c<1\\ \label{eq_r=1_elementary3}
&\log(\Omega+c\psi^{-1}) \le \log(\Omega+\psi^{-1}) + \log \sqrt c
\quad\text{if}\quad c<1 \quand \psi^{-1}\ge \Omega/\sqrt{c}.
\end{align}
%\item There exists $K=K(\Omega)>0$ such that if $\psi^{-1}>K$ and $c<1$ then
%$$
%\log(\Omega+c\psi^{-1}) - \log(\Omega+\psi^{-1}) \le 0.5 \log c.
%$$
%\item If $c\psi^{-1} < \Omega/10$ and $\psi^{-1}<\Omega/10$ then
%$$
%\log(\Omega+c\psi^{-1}) - \log(\Omega+\psi^{-1}) \le 1.
%$$

Let $\kappa'_1=\kappa'_1(\nu_\infty)>0$, $C'_1=C'_1(\nu_\infty)>0$,
$N_1=N_1(\nu_\infty,\delta)\in\NN$ be as in Propositions~\ref{p_r=1_Prop5.1} and~\ref{p_r=1_Lem5.5}.
Keep in mind that $\vep_1<\rho'_1\le\hat\rho_1$ and $C'_1=B+\gamma_1A$,
according to \eqref{eq_r=1_constants}, \eqref{eq_r=1_C1linha} and \eqref{eq_r=1_vep1small}.

\subsection{Stabilization}\label{ss_step1_stabilization}

Let $B=B(\nu_\infty)>0$ and $\gamma_1=\gamma_1(\nu_\infty)>0$ be as in \eqref{eq_BBB} and
\eqref{eq_r=1_constants}, respectively. For each $\omega>0$, define the
\emph{stabilized vertical angle}
\begin{equation}\label{eq_r=1_SVA1_def}
\SVA_1(x,x';\omega) = \max\big\{\VA_1(x,x'),\omega e^{-Bn}\big\}
\end{equation}
and the \emph{stabilized vertical projection} by
\begin{equation}\label{eq_r=1_SVP1_def}
\psi_1(x,x';\omega) = \SVP_1(x,x';\omega) = \SVA_1(x,x';\omega) d(x,x')^{\gamma_1}
\end{equation}
for every $x \neq x'$ in $E_1(\vep_1)$.
The estimate \eqref{eq_r=1_VP1_linear} remains valid for the stabilized vertical projection:

\begin{lemma}\label{l_r=1_stabilizedbound}
For every $x \neq x'$ in $P$, $g \in \supp\nu_\infty^{(n)}$, and $\omega >0$,
\begin{equation}\label{eq_r=1_SVP1_linear}
-\log\psi_1(gx,gx';\omega) \le - \log\psi_1(x,x';\omega) + C'_1 n.
\end{equation}
\end{lemma}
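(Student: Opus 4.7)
The plan is to split $\psi_1(\cdot,\cdot;\omega) = \SVA_1(\cdot,\cdot;\omega)\,d(\cdot,\cdot)^{\gamma_1}$ into its two factors and estimate each separately. The distance factor is handled directly by \eqref{eq_AAA}, which after raising to the power $\gamma_1$ and taking logarithms contributes exactly $\gamma_1 A n$. It therefore suffices to establish the stabilized analog of \eqref{eq_r=1_VA1_def_linear}, namely
$$-\log\SVA_1(gx,gx';\omega) \le -\log\SVA_1(x,x';\omega) + Bn,$$
and then add the two contributions to recover the constant $C'_1 = B + \gamma_1 A$ from \eqref{eq_r=1_C1linha}.

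To prove the stabilized angle inequality I case-split on whether $\VA_1(x,x')$ lies above or below the threshold $\omega$. In the high regime $\VA_1(x,x') \ge \omega$, the unstabilized estimate \eqref{eq_r=1_VA1_def_linear} yields $\VA_1(gx,gx') \ge \VA_1(x,x')\,e^{-Bn} \ge \omega e^{-Bn}$, so both $\SVA_1(x,x';\omega)$ and $\SVA_1(gx,gx';\omega)$ are attained by their $\VA_1$ component, and the stabilized inequality reduces verbatim to \eqref{eq_r=1_VA1_def_linear}.

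In the low regime $\VA_1(x,x') < \omega$, the floor dominates on both sides. On the source, $\SVA_1(x,x';\omega) = \max\{\VA_1(x,x'),\omega e^{-Bn}\} \le \omega$, which gives $-\log\SVA_1(x,x';\omega) \ge -\log\omega$. On the image side, the definition alone forces $\SVA_1(gx,gx';\omega) \ge \omega e^{-Bn}$, hence $-\log\SVA_1(gx,gx';\omega) \le -\log\omega + Bn$. Subtracting produces the desired $Bn$-Lipschitz bound in this regime as well.

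The only (modest) obstacle is the calibration of the floor level $\omega e^{-Bn}$ in the definition \eqref{eq_r=1_SVA1_def}: the factor $e^{-Bn}$ is precisely tuned so that in Case A the image angle cannot be pushed below the floor in a single step, which is what allows the stabilized bound to collapse onto the unstabilized one; any milder cutoff would require an extra term in the estimate. Once this calibration is in hand, the proof is a straightforward case split followed by addition of the two logarithmic contributions.
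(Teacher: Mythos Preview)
Your proposal is correct and follows essentially the same approach as the paper: split off the distance factor via \eqref{eq_AAA}, prove the stabilized angle bound $-\log\SVA_1(gx,gx';\omega)\le -\log\SVA_1(x,x';\omega)+Bn$ by the same two-case split on $\VA_1(x,x')\gtrless\omega$, and sum to recover $C'_1=B+\gamma_1 A$. The only cosmetic difference is that in the high regime the paper uses the trivial inequality $\SVA_1(gx,gx';\omega)\ge\VA_1(gx,gx')$ rather than your stronger equality, so your observation about the calibration of the floor $\omega e^{-Bn}$, while correct, is not actually needed for the bound.
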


\begin{proof}
We begin by claiming that
\begin{equation}\label{eq_r=1_SVA1_linear}
-\log\SVA_1(gx,gx';\omega) \le - \log\SVA_1(x,x';\omega) + B n.
\end{equation}
The proof can be split into two cases. First, suppose that $\VA_1(x,x') < \omega$.
Then, by the definition \eqref{eq_r=1_SVA1_def},
\begin{align*}
- \log\SVA_1(x,x';\omega)  & \ge - \log\omega \quand\\
-\log\SVA_1(gx,gx';\omega) & \le -\log\omega + B n \le - \log\SVA_1(x,x';\omega)+Bn,
\end{align*}
as claimed. Now suppose that $\VA_1(x,x') \ge \omega$. Then, again by \eqref{eq_r=1_SVA1_def},
\begin{align*}
-\log\SVA_1(x,x';\omega)
& = - \log\VA_1(x,x') \quand\\
-\log\SVA_1(gx,gx';\omega)
& \le  - \log\VA_1(gx,gx').
\end{align*}
Then \eqref{eq_r=1_SVA1_linear} is a direct consequence of \eqref{eq_r=1_VA1_def_linear}.
This completes the proof of \eqref{eq_r=1_SVA1_linear}.
%Now observe that \eqref{eq_def_AAA} implies that
%\begin{equation}\label{eq_r=1_SVA1_linear_pre}
%- \log d(gx,gx') \le - \log d(x,x') + An
%\end{equation}
%for any $x\neq x'$ and $g\in\supp\nu_\infty^{(n)}$.
Now \eqref{eq_r=1_SVP1_linear} follows easily from  \eqref{eq_r=1_SVA1_linear} and
\eqref{eq_AAA}: recalling the definition of $C'_1$ in \eqref{eq_r=1_C1linha},
\begin{equation}\label{eq_r=1_VP8stab}
\begin{aligned}
-\log\SVP_1(gx,gx';\omega)
& = - \log \SVA_1(gx,gx';\omega) - \gamma_1 \log d(gx,gx')\\
& \le- \log \SVA_1(x,x';\omega) +Bn - \gamma_1 \log d(x,x') - \gamma_1 A n \\
& = - \log \SVP_1(x,x') + (B+\gamma_1A) n.
\end{aligned}
\end{equation}
This completes the argument.
\end{proof}

The \emph{stabilization region} is the set of pairs $(x,x')$ such that $\VA_1(x,x') < \omega$.
We have seen in \eqref{eq_r=1_VA1_def_linear} that
$$
\VA_1(gx,gx') \ge \VA_1(x,x') e^{-Bn}
$$
for any $x \neq x'$ in $P$, $g\in\supp\nu_\infty^{(n)}$ and $n\in\NN$.
Consequently, if $(x,x')$ is not in the stabilization region then
\begin{equation}\label{eq_r=1_SVP1}
\begin{aligned}
\SVA_1(x,x';\omega) = \VA_1(x,x')  \quand
\SVA_1(gx,gx';\omega) = \VA_1(gx,gx') \\
\SVP_1(x,x';\omega) = \VP_1(x,x')  \quand
\SVP_1(gx,gx';\omega) = \VP_1(gx,gx')
\end{aligned}
\end{equation}
for any $g \in \supp\nu_\infty^{(n)}$ and $n\in\NN$.

Propositions~\ref{p_r=1_Prop5.1} and~\ref{p_r=1_Lem5.5} immediately yield the following
analogues for stabilized vertical angles and vertical projections:

%%% The original statement has an additional constant $\theta''_1$
\begin{proposition}\label{p_r=1_Prop6.1Lem6.2}
For every $\delta>0$, $n \ge N_1$, $x \neq x'$ in $E_1(\rho'_1)$,
and $\omega>0$ with $\VA_1(x,x')\ge\omega$,
\begin{equation}\label{eq_r=1_SVP3}
-\log\psi_1(gx,gx';\omega) \le - \log\psi_1(x,x';\omega) - \kappa'_1 n
\text{ for every $g\in\cE'_1$,}
\end{equation}
and
\begin{equation}\label{eq_r=1_SVP2}
\int_G - \log\psi_1(gx,gx';\omega) \, d\nu_\infty^{(n)}(g)
\le -\log\psi_1(x,x';\omega) - (\kappa'_1-C'_1\delta)n.
\end{equation}
\end{proposition}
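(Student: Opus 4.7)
The plan is to reduce both inequalities to the unstabilized statements already established in Propositions~\ref{p_r=1_Prop5.1} and \ref{p_r=1_Lem5.5} via the observation recorded in \eqref{eq_r=1_SVP1}: namely, that \emph{outside the stabilization region the stabilized functions coincide with the unstabilized ones}. The hypothesis $\VA_1(x,x') \ge \omega \ge \omega e^{-Bn}$ gives immediately that $\SVA_1(x,x';\omega) = \VA_1(x,x')$, hence $\psi_1(x,x';\omega) = \VP_1(x,x')$. Moreover, by the uniform lower bound \eqref{eq_r=1_VA1_def_linear} in force over all of $\supp\nu_\infty^{(n)}$, one has $\VA_1(gx,gx') \ge \VA_1(x,x') e^{-Bn} \ge \omega e^{-Bn}$ for every $g\in\supp\nu_\infty^{(n)}$, so $\SVA_1(gx,gx';\omega) = \VA_1(gx,gx')$ and $\psi_1(gx,gx';\omega) = \VP_1(gx,gx')$ as well. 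Thus the stabilization is transparent for every pair $(gx,gx')$ reachable in one step, and we can replace $\psi_1$ by $\VP_1$ throughout.

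With this identification in hand, the pointwise inequality \eqref{eq_r=1_SVP3} is literally \eqref{eq_r=1_VPstat} from Proposition~\ref{p_r=1_Lem5.5}, with the same $\kappa'_1$ and exceptional set $\cE'_1 = \cE'_1(\nu_\infty,\delta,n,x,x')$. No further argument is needed.

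To obtain \eqref{eq_r=1_SVP2} I would then run the standard split of the integral,
\begin{equation*}
\int_G -\log\psi_1(gx,gx';\omega)\,d\nu_\infty^{(n)}(g) = \int_{\cE'_1} + \int_{(\cE'_1)^c},
\end{equation*}
bounding the first piece by the pointwise estimate \eqref{eq_r=1_SVP3} and the second by the unconditional linear bound from Lemma~\ref{l_r=1_stabilizedbound}. Since $\nu_\infty^{(n)}((\cE'_1)^c) < \delta$, this gives
\begin{equation*}
\int_G -\log\psi_1(gx,gx';\omega)\,d\nu_\infty^{(n)}(g) \le -\log\psi_1(x,x';\omega) - \kappa'_1 n(1-\delta) + C'_1 n \delta,
\end{equation*}
which, after the harmless regrouping $-\kappa'_1 n(1-\delta) + C'_1 n \delta = -\kappa'_1 n + (\kappa'_1+C'_1)\delta n$, matches \eqref{eq_r=1_SVP2} provided we enlarge the constant $C'_1$ of \eqref{eq_r=1_C1linha} to $C'_1 + \kappa'_1$ if necessary (the exact same cosmetic adjustment is made implicitly in the proof of Proposition~\ref{p_r=1_Prop5.1}).

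There is no real obstacle here; the proposition is essentially a definitional consequence of having set up the stabilization so that the truncation threshold $\omega e^{-Bn}$ lies below $\VA_1(x,x')$ precisely when $\VA_1(x,x') \ge \omega$, and of having built into the definition of $\SVA_1$ enough slack (the factor $e^{-Bn}$) to absorb one step of the dynamics. The only subtlety worth flagging is the need to verify that the image pair $(gx,gx')$ is automatically outside the stabilization region for every $g\in\supp\nu_\infty^{(n)}$, which is the role played by the choice of the exponent $-Bn$ in \eqref{eq_r=1_SVA1_def} and which was already anticipated in the derivation of \eqref{eq_r=1_SVP1}.
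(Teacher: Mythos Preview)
Your proof is correct and follows the same approach as the paper: reduce to the unstabilized case via \eqref{eq_r=1_SVP1}, then invoke Propositions~\ref{p_r=1_Lem5.5} and~\ref{p_r=1_Prop5.1}. The paper's own proof is two lines: once $\psi_1(\cdot,\cdot;\omega)=\VP_1(\cdot,\cdot)$ is established for both $(x,x')$ and all $(gx,gx')$ with $g\in\supp\nu_\infty^{(n)}$, \eqref{eq_r=1_SVP3} \emph{is} \eqref{eq_r=1_VPstat} and \eqref{eq_r=1_SVP2} \emph{is} \eqref{eq_r=1_VPaver}, verbatim.

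Your re-derivation of \eqref{eq_r=1_SVP2} by splitting the integral over $\cE'_1$ and its complement is unnecessary---Proposition~\ref{p_r=1_Prop5.1} already gives the integrated inequality directly, so you can simply cite it once the identification $\psi_1(\cdot,\cdot;\omega)=\VP_1$ is made. This also avoids the cosmetic constant issue you flag: the paper's Proposition~\ref{p_r=1_Prop5.1} already carries the constant $C'_1$ in the form needed, so no enlargement is required here. (You are right, incidentally, that the same $\kappa'_1\delta n$ term is silently absorbed in the paper's proof of Proposition~\ref{p_r=1_Prop5.1}; but that is where any adjustment would live, not in the present proposition.)
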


\begin{proof}
Since it is assumed that $(x,x')$ is not in the stabilization region,
\eqref{eq_r=1_SVP3} is a restatement of Proposition~\ref{p_r=1_Lem5.5} and
\eqref{eq_r=1_SVP2} is a restatement of Proposition~\ref{p_r=1_Prop5.1}.
\end{proof}

\subsection{Cutoff}\label{ss_step1_cut-off}

Recall that $\vep_1=\vep_1(\nu_\infty,\delta,n)>0$ was chosen in Section~\ref{ss_step1_outline},
in the context of \eqref{eq_sec7_um01}. As observed then, it may be taken to be arbitrarily small.
In particular, it is no restriction to suppose that
\begin{equation}\label{eq_r=1_vep1small}
\vep_1 < \min\{\rho_0, \rho'_1\}
\end{equation}
where $\rho_0=\rho_0(\nu_\infty,n)>0$ is as in Corollaries~\ref{c_sec5_distance_contraction1} and~\ref{c_sec5_distance_contraction2}
and $\rho'_1=\rho'_1(\nu_\infty,\delta,n)>0$ is as in Propositions~\ref{p_r=1_Prop5.1} and~\ref{p_r=1_Lem5.5}.

Arguing twice as in Remark~\ref{r_sec7_repeatedly}, we find constants
$\vep'_1=\vep'_1(\nu_\infty,\delta,n)>0$ and $\tilde\vep_1=\tilde\vep_1(\nu_\infty,\delta,n)>0$ with
$0 < \vep'_1 < \tilde\vep_1 < \vep_1$, and a compact neighborhood
$\cW_1=\cW_1(\nu_\infty,\delta,n)$ of $\supp\nu_\infty^{(n)}$, such that
\begin{align}
\label{eq_r=1_I}
gx \in E_1(\vep_1/2) &\text{ for every } x\in E_1(2\tilde\vep_1) \quand g\in \cW_1 \text{ and}\\
\label{eq_r=1_II}
g^{-1}x   \in E_1(\tilde\vep_1/2) & \text{ for every } x\in E_1(2\vep'_1)  \quand g \in \cW_1.
\end{align}

Let $\kappa_0=\kappa_0(\nu_\infty)>0$ be as in Proposition~\ref{p_sec5_derivative_estimate},
and define $\vep''_1=\vep''_1(\nu_\infty,\delta,n)$ by
\begin{equation}\label{eq_r=1_epsilon}
\vep''_1 = 3 \vep'_1e^{-\kappa_0 n/2}.
\end{equation}
Taking $\rho=\vep''_1$ in Corollary~\ref{c_sec5_distance_contraction2},
and recalling that $\vep_1$ was chosen smaller than $\rho_0$,
we get that there are $\tilde k_1=\tilde k_1(\nu_\infty,\delta,n)\in\NN$ and
$\cD_k(x) =\cD_k(\nu_\infty,\delta,n,x)\subset\supp\nu_k^{(n)}$
such that $\nu_k^{(n)}(\cD_k(x)^c) < \delta$ and
$$
d(gx,E) > e^{\kappa_0 n/2} d(x,E) > e^{\kappa_0 n/2} \vep''_1 > 2\vep'_1
$$
for any $g\in\cD_k(x)$, $x\in E(\vep_1,\vep''_1)$ and $k\ge \tilde k_1$.
In other words, for $k \ge \tilde k_1$,
\begin{equation}\label{eq_r=1_III}
x \in E(\vep_1, \vep''_1) \Rightarrow gx \notin E(2\vep'_1) \text{ for every } g\in\cD_k(x).
\end{equation}

Increasing $\tilde k_1$ if necessary, we may suppose that $\supp\nu_k^{(n)} \subset \cW_1$ for every $k\ge \tilde k_1$.
Then \eqref{eq_r=1_I} and \eqref{eq_r=1_II} imply
\begin{align}
\label{eq_r=1_IV}
E_1(2\tilde\vep_1) \subset \Chi_{\nu_k^{(n)}} E_1(\vep_1)
\quand
E_1(2\vep'_1) \subset \Chi^\#_{\nu_k^{(n)}} E_1(\vep_1)\\
\label{eq_r=1_V}
x \notin E_1(\tilde\vep_1/2) \Rightarrow g x \notin E_1(2\vep'_1)
\text{ for every } g \in \supp\nu_k^{(n)}.
\end{align}

Finally, fix $\Omega_1=\Omega_1(\nu_\infty,\delta,n)>1$ large enough that
\begin{equation}\label{eq_r=1_defOmega}
\Omega_1 \, \vep'_1 (\vep'_1-2\vep''_1)^{\gamma_1} \ge 1
\end{equation}
and define
\begin{equation}\label{eq_r=1_Psi1_def}
\Psi_1(x,x';\omega) = \left\{\begin{array}{ll}\log\left(\Omega_1+\psi_1(x,x';\omega)^{-1}\right)
                                               & \text{if $x\in E_1(2\vep''_1)$ or $x'\in E_1(2\vep''_1)$} \\
                                               \log\Omega_1 & \text{otherwise.}\end{array}\right.
\end{equation}
We will refer to the set $E_1(2\vep''_1)^c \times E_1(2\vep''_1)^c$ as the \emph{cut-off region}.
See Figure~\ref{f_cut-off}.

\begin{figure}[ht]
\begin{center}
\psfrag{E0}{{core region}}
\psfrag{E1}{{$E_1(\vep''_1)^2$}}
\psfrag{E2}{{$E_1(2\vep''_1)^2$}}
%\psfrag{E3}{{$E_1(\tilde\vep_1)^2$}}
%\psfrag{E4}{{$E_1(2\tilde\vep_1)^2$}}
\psfrag{E5}{{$E_1(\vep_1)^2$}}
\includegraphics[height=2in]{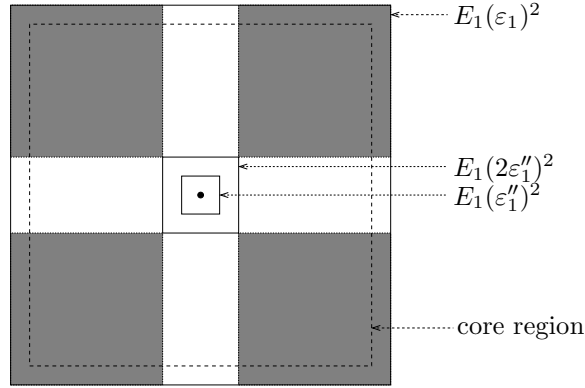}
\caption{\label{f_cut-off}
Illustrating the cut-off in the definition of the Margulis function.
The black dot at the center marks the point $(E,E)$.
The dashed lined represents the boundary between the $\nu_k^{(n)}$-core and the
$\nu_k^{(n)}$-border of $E_1(\vep_1)$.
The shaded area is the cut-off region, where $\Psi_1\equiv\log\Omega_1$.}
\end{center}
\end{figure}

%%% The original statement has an additional constant $\omega_1^+$
\begin{proposition}\label{p_r=1_Lem7.3}
There exist $\kappa'''_1=\kappa'''_1(\nu_\infty)>0$ and for each $\delta>0$ and $n\ge N_1$
there exists $\vep'''_1=\vep'''_1(\nu_\infty,\delta,n)>0$ such that
\begin{itemize}
\item[(i)] For any $x \neq x'$ in $E_1(\vep_1)$ with $\Psi_1(x,x';\omega)>\log\Omega_1$,
$$
\int_G \Psi_1(gx,gx';\omega) \, d\nu_\infty^{(n)}(g) \le \Psi_1(x,x';\omega) + C'_1 n.
$$
\item[(ii)] For any $x \neq x'$ in $E_1(\vep_1)$ with $\Psi_1(x,x';\omega)>\log\Omega_1$ and
$\VA_1(x,x')\ge\omega$,
$$
\int_G \Psi_1(gx,gx';\omega) \, d\nu_\infty^{(n)}(g) \le \Psi_1(x,x';\omega) + C'_1\delta n.
$$
\item[(iii)] For any $x \neq x'$ in $E_1(\vep'''_1)$ with $\VA_1(x,x')\ge\omega$,
$$
\int_G \Psi_1(gx,gx';\omega) \, d\nu_\infty^{(n)}(g) \le \Psi_1(x,x';\omega) - (\kappa'''_1 - C'_1\delta) n.
$$
\end{itemize}
\end{proposition}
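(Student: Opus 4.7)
The proof naturally splits according to the three assertions, all three relying on the same strategy: decompose $G$ into the "good" set $\cE_1'$ provided by Proposition~\ref{p_r=1_Prop6.1Lem6.2}, estimate $\Psi_1(gx,gx';\omega)$ pointwise on $\cE_1'$ and on its complement using the elementary inequalities \eqref{eq_r=1_elementary1}--\eqref{eq_r=1_elementary3} together with Lemma~\ref{l_r=1_stabilizedbound}, and then integrate. A preliminary observation, used in all three cases, is that the definition \eqref{eq_r=1_Psi1_def} ensures $\Psi_1(y,y';\omega) \le \log(\Omega_1+\psi_1(y,y';\omega)^{-1})$ for every $(y,y')$, with equality when at least one of the two points is in $E_1(2\vep''_1)$. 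Moreover, under the hypothesis $\Psi_1(x,x';\omega)>\log\Omega_1$ we have $\Psi_1(x,x';\omega)=\log(\Omega_1+\psi_1(x,x';\omega)^{-1})$.

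For (i), I would argue uniformly in $g\in\supp\nu_\infty^{(n)}$. Lemma~\ref{l_r=1_stabilizedbound} yields $\psi_1(gx,gx';\omega)^{-1}\le e^{C_1'n}\psi_1(x,x';\omega)^{-1}$, so \eqref{eq_r=1_elementary1} with $c=e^{C_1'n}>1$ gives
\[
\Psi_1(gx,gx';\omega)\le \log(\Omega_1+e^{C_1'n}\psi_1(x,x';\omega)^{-1})\le \Psi_1(x,x';\omega)+C_1'n.
\]
Integrating over $\nu_\infty^{(n)}$ immediately produces the desired bound.

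For (ii), the extra assumption $\VA_1(x,x')\ge\omega$ places $(x,x')$ outside the stabilization region, so Proposition~\ref{p_r=1_Prop6.1Lem6.2} applies: for every $g\in\cE_1'$ one has $\psi_1(gx,gx';\omega)^{-1}\le e^{-\kappa_1'n}\psi_1(x,x';\omega)^{-1}$, and then \eqref{eq_r=1_elementary2} (with $c=e^{-\kappa_1'n}<1$) yields
\[
\Psi_1(gx,gx';\omega)\le \log(\Omega_1+\psi_1(x,x';\omega)^{-1})=\Psi_1(x,x';\omega).
\]
On the complement $(\cE_1')^c$, which has $\nu_\infty^{(n)}$-measure at most $\delta$, I would use the pointwise bound from (i), $\Psi_1(gx,gx';\omega)\le \Psi_1(x,x';\omega)+C_1'n$. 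Integrating then gives
\[
\int_G \Psi_1(gx,gx';\omega)\,d\nu_\infty^{(n)}(g)\le \Psi_1(x,x';\omega)+\delta C_1'n,
\]
which is (ii).

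For (iii) the same splitting is used, but on $\cE_1'$ I would replace \eqref{eq_r=1_elementary2} by the sharper \eqref{eq_r=1_elementary3} with $c=e^{-\kappa_1'n}$. This inequality requires $\psi_1(x,x';\omega)^{-1}\ge \Omega_1 e^{\kappa_1'n/2}$, and this is precisely where I would choose $\vep'''_1$: since $\VA_1(x,x')\ge\omega$ forces $\psi_1(x,x';\omega)=\VP_1(x,x')$, Lemma~\ref{l_r=1_Lem5.8} applied with $R=\log\Omega_1+\kappa_1'n/2$ produces $\vep'''_1=\vep'''_1(\nu_\infty,\delta,n)\in(0,2\vep''_1]$ such that the required lower bound on $\psi_1^{-1}$ holds throughout $E_1(\vep'''_1)$. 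Taking $\vep'''_1\le 2\vep''_1$ also ensures that $\Psi_1(x,x';\omega)=\log(\Omega_1+\psi_1(x,x';\omega)^{-1})$. Then \eqref{eq_r=1_elementary3} gives $\Psi_1(gx,gx';\omega)\le\Psi_1(x,x';\omega)-\tfrac12\kappa_1'n$ for every $g\in\cE_1'$, while on $(\cE_1')^c$ we retain the bound from (i). Integration produces
\[
\int_G\Psi_1(gx,gx';\omega)\,d\nu_\infty^{(n)}(g)\le \Psi_1(x,x';\omega)-(1-\delta)\tfrac12\kappa_1'n+\delta C_1'n,
\]
and setting $\kappa'''_1=\kappa_1'/4$ (and absorbing the $\delta\kappa_1'/2$ term into the $C_1'\delta n$ summand by enlarging $C_1'$ harmlessly, or noting $\delta$ will be taken small later) yields the desired inequality. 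The only real delicacy in the whole proof is the calibration of $\vep'''_1$ in (iii), which must be done as a function of $n$ so that the quantitative "$\psi_1^{-1}$ large" hypothesis for \eqref{eq_r=1_elementary3} is met uniformly on $E_1(\vep'''_1)$; once Lemma~\ref{l_r=1_Lem5.8} is invoked this is automatic.
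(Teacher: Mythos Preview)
Your proof is correct and follows essentially the same route as the paper: the pointwise bound from Lemma~\ref{l_r=1_stabilizedbound} with \eqref{eq_r=1_elementary1} for part~(i), the split over $\cE_1'$ and its complement using Proposition~\ref{p_r=1_Prop6.1Lem6.2} with \eqref{eq_r=1_elementary2} for part~(ii), and the same split with \eqref{eq_r=1_elementary3} plus Lemma~\ref{l_r=1_Lem5.8} to calibrate $\vep'''_1$ for part~(iii). Your handling of the constants in (iii) is in fact slightly more careful than the paper's (you notice the extra $\delta\kappa_1'/2$ term from integration and absorb it by taking $\kappa'''_1=\kappa_1'/4$ rather than $\kappa_1'/2$), and you make explicit the requirement $\vep'''_1\le 2\vep''_1$ needed to ensure $\Psi_1(x,x';\omega)=\log(\Omega_1+\psi_1(x,x';\omega)^{-1})$, which the paper leaves implicit.
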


\begin{proof}
Define $\kappa'''_1=\kappa'_1/2$. Let $n \ge N_1$.
Part (i) of the proposition is a consequence of the following lemma:

\begin{lemma}\label{l_r=1_eq7.5}
If $x \neq x'$ in $P$ are such that $\Psi_1(x,x';\omega)>\log\Omega_1$ then
\begin{equation}\label{eq_r=1_Psi1_linear}
\Psi_1(gx,gx';\omega) \le \Psi_1(x,x';\omega) + C'_1 n
\text{ for any $g\in\supp\nu_\infty^{(n)}$.}
\end{equation}
\end{lemma}

\begin{proof}
%By the definition \eqref{eq_r=1_SVP1_def},
%\begin{equation}\label{eq_r=1_logpsistab}
%-\log\Psi_1(x,x';\omega) = - \log \SVA_1(x,x';\omega) - \gamma_1\log d(x,x').
%\end{equation}
By \eqref{eq_r=1_SVP1_linear} and \eqref{eq_r=1_elementary1},
\begin{equation*}%\label{eq_r=1_SVP1_linearbis}
\begin{aligned}
\Psi_1(gx,gx';\omega)
& \le \log\left(\Omega_1+\psi_1(gx,gx';\omega)^{-1}\right)
\le \log\left(\Omega_1+e^{C'_1n}\psi_1(x,x';\omega)^{-1}\right)\\
& \le \log\left(\Omega_1+ \psi_1(x,x';\omega)^{-1}\right) + C'_1 n
 = \Psi_1(x,x';\omega) + C'_1 n.
\end{aligned}
\end{equation*}
This gives the claim.
\end{proof}

To prove part (ii) we use Proposition~\ref{p_r=1_Prop6.1Lem6.2}:
given any $\omega>0$ and $x \neq x'$ in $E_1(\vep_1)$,
\begin{equation}\label{eq_r=1_Psi5}
-\log\psi_1(gx,gx';\omega)\le-\log\psi_1(x,x';\omega)-\kappa'_1n\le-\log\psi_1(x,x';\omega)
\end{equation}
for every $g\in\cE'_1$. Then,
\begin{equation}\label{eq_r=1_Psi6}
\begin{aligned}
\Psi_1(gx,gx';\omega)
& \le \log\left(\Omega_1+\psi_1(gx,gx';\omega)^{-1}\right)\\
& \le \log\left(\Omega_1+ \psi_1(x,x';\omega)^{-1}\right)
 = \Psi_1(gx,gx';\omega),
\end{aligned}
\end{equation}
for every $g \in \cE'_1$. Integrating \eqref{eq_r=1_Psi6} over $\cE'_1$ and \eqref{eq_r=1_Psi1_linear}
over the complement, we obtain the estimate in part (ii).

Now take $c=e^{-2\kappa'''_1n}$ in the relation \eqref{eq_r=1_elementary3}.
By Lemma~\ref{l_r=1_Lem5.8}, there exists $\vep'''_1>0$ depending only on $\nu_\infty$, $\delta$ and
$n$ (through $c$ and $\Omega_1$) such that
\begin{equation}\label{eq_r=1_3lines}
-\log \psi_1(x,x';\omega) = -\log \psi_1(x,x') \ge \log \left(\Omega_1/\sqrt{c}\right)
\end{equation}
for any $x \neq x'$ in $E_1(\vep'''_1)$.
Then, using  \eqref{eq_r=1_Psi5}, \eqref{eq_r=1_3lines}, and \eqref{eq_r=1_elementary3},
\begin{equation}\label{eq_r=1_Psi7}
\begin{aligned}
\Psi_1(gx,gx';\omega)
& \le \log\left(\Omega_1+\psi_1(gx,gx';\omega)^{-1}\right)\\
& \le \log\left(\Omega_1+e^{-2\kappa'''_1n} \psi_1(x,x';\omega)^{-1}\right)\\
& \le \log\left(\Omega_1+\psi_1(x,x';\omega)^{-1}\right) -\kappa'''_1 n = \Psi_1(x,x';\omega)-\kappa'''_1 n
\end{aligned}
\end{equation}
for every $g \in \cE'_1$. Integrating \eqref{eq_r=1_Psi7} over $\cE'_1$ and \eqref{eq_r=1_Psi1_linear}
over the complement, we obtain the estimate in part (iii) of the proposition.
\end{proof}

\begin{lemma}\label{l_r=1_Prop8.1}
For any $\omega > 0$ and $x\neq x'$ in $E_1(\vep_1)$ such that $x\notin E_1(\vep'_1)$ or $x'\notin E_1(\vep'_1)$,
$$
\log \Omega_1 \le \Psi_1(x,x';\omega) \le \log \Omega_1 + \log 2.
$$
\end{lemma}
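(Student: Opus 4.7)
The lower bound is immediate from the definition of $\Psi_1$: in the second branch of \eqref{eq_r=1_Psi1_def} we have equality, and in the first branch $\psi_1(x,x';\omega)>0$ gives $\Psi_1>\log\Omega_1$. The real content is the upper bound $\Psi_1\le\log\Omega_1+\log 2$, which amounts to showing $\psi_1(x,x';\omega)^{-1}\le\Omega_1$, i.e. $\psi_1(x,x';\omega)\ge 1/\Omega_1$. If both points lie outside $E_1(2\vep''_1)$, then $\Psi_1=\log\Omega_1$ and we are done; so we may assume that we are in the first branch.

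By symmetry, suppose that $x\notin E_1(\vep'_1)$, so $d(x,E)>\vep'_1$. First note that, by the definitions \eqref{eq_r=1_epsilon} of $\vep''_1$ and \eqref{eq_r=1_constants} of $N_1$, choosing $N_1$ slightly larger if necessary one has $2\vep''_1=6\vep'_1 e^{-\kappa_0 n/2}<\vep'_1$ for all $n\ge N_1$. Thus $x\notin E_1(2\vep''_1)$, and so being in the first branch forces $x'\in E_1(2\vep''_1)$, meaning $d(x',E)\le 2\vep''_1$. The plan now is to extract two geometric inequalities. First, the triangle inequality $d(x,E)\le d(x,x')+d(x',E)$ yields
\[
d(x,x')\ge d(x,E)-d(x',E)>\vep'_1-2\vep''_1.
\]
Second, since $x$ itself lies on the great circle $y=y(x,x')$, the defining supremum \eqref{eq_r=1_VA1_def} gives $\VA_1(x,x')\ge d(x,E)>\vep'_1$, and hence $\SVA_1(x,x';\omega)\ge \VA_1(x,x')>\vep'_1$ regardless of $\omega$.

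Combining these in the definition \eqref{eq_r=1_SVP1_def} of $\psi_1=\SVP_1$ gives
\[
\psi_1(x,x';\omega)=\SVA_1(x,x';\omega)\,d(x,x')^{\gamma_1}>\vep'_1(\vep'_1-2\vep''_1)^{\gamma_1}\ge \Omega_1^{-1},
\]
where the last step is exactly the choice \eqref{eq_r=1_defOmega} of $\Omega_1$. Therefore $\psi_1(x,x';\omega)^{-1}\le\Omega_1$, and so
\[
\Psi_1(x,x';\omega)=\log(\Omega_1+\psi_1(x,x';\omega)^{-1})\le\log(2\Omega_1)=\log\Omega_1+\log 2,
\]
completing the proof. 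The argument is essentially bookkeeping: the only nontrivial ingredient is the observation that when one of the two points stays far from the equator, the vertical angle of the great circle through them is automatically bounded below, which is precisely what drives the lower bound on $\psi_1$ and hence the constraint \eqref{eq_r=1_defOmega} built into the choice of $\Omega_1$.
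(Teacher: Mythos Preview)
Your proof is correct and follows essentially the same approach as the paper: reduce to the case $x\notin E_1(\vep'_1)$, $x'\in E_1(2\vep''_1)$, then use $d(x,x')\ge \vep'_1-2\vep''_1$ together with $\VA_1(x,x')\ge d(x,E)>\vep'_1$ to bound $\psi_1$ from below and invoke the defining inequality \eqref{eq_r=1_defOmega} for $\Omega_1$. Your explicit remark that $2\vep''_1<\vep'_1$ is in fact already guaranteed by the existing lower bounds on $N_1$ (via $\widetilde{N}_1>8/\kappa_0$), so no enlargement of $N_1$ is needed.
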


\begin{proof}
The inequality on the left is an immediate consequence of the definition \eqref{eq_r=1_Psi1_def},
and the same is true for the one on the right if both points $x$ and $x'$ are outside $E_1(2\vep''_1)$.
Let us suppose that $x\notin E_1(\vep'_1)$ but $x'\in E_1(2\vep''_1)$; the case when
$x\in E_1(2\vep''_1)$ but $x'\notin E_1(\vep'_1)$ is analogous.
Let $y=y(x,x')$ be the great circle associated to $x$ and $x'$. Then
$$
d(x,x')\ge \vep'_1 - 2\vep''_1
\quand
\VA_1(x,x') = d(y,E) \ge d(x,E) > \vep'_1
$$
and so
$$
\psi_1(x,x';\omega) \ge \psi_1(x,x') \ge \vep'_1 (\vep'_1-2\vep''_1)^{\gamma_1}.
$$
In view of the definition of $\Omega_1$ in \eqref{eq_r=1_defOmega}, it follows that
$$
\Psi_1(x,x';\omega)
= \log(\Omega_1 + \psi_1(x,x';\omega)^{-1})
\le \log 2\Omega_1,
$$
as claimed.
\end{proof}

\section{Turning the perturbation on}\label{s_step1_turning_the_perturbation_on}

Now we show that the conclusions of Proposition~\ref{p_r=1_Lem7.3} hold for $\nu_k^{(n)}$ instead
of $\nu_\infty^{(n)}$, as long as $k\in\NN$ is sufficiently large. More precisely, we prove:

\begin{proposition}\label{p_r=1_Lem9.1}
Given $\delta>0$, $n\ge N_1$, and $\omega>0$ there is
$k_1=k_1(\nu_\infty, \delta, n,\omega)\in\NN$ such that the following holds for every $k\ge k_1$:
\begin{itemize}
\item[(i)] For any $x \neq x'$ in $E_1(\vep_1)$ with $\Psi_1(x,x';\omega)>\log\Omega_1$,
$$
\int_G \Psi_1(gx,gx';\omega) \, d\nu_k^{(n)}(g) \le \Psi_1(x,x';\omega) + C'_1 n.
$$
\item[(ii)] For any $x \neq x'$ in $E_1(\vep_1)$ with $\Psi_1(x,x';\omega)>\log\Omega_1$ and
$\VA_1(x,x')\ge\omega$,
$$
\int_G \Psi_1(gx,gx';\omega) \, d\nu_k^{(n)}(g) \le \Psi_1(x,x';\omega) + C'_1\delta n.
$$
\item[(iii)] For any $x \neq x'$ in $E_1(\vep'''_1)$ with $\VA_1(x,x')\ge\omega$,
$$
\int_G \Psi_1(gx,gx';\omega) \, d\nu_k^{(n)}(g) \le \Psi_1(x,x';n) - (\kappa'_1 - C'_1\delta) n.
$$
\end{itemize}
\end{proposition}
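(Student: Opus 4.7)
The plan is to run through the proof of Proposition~\ref{p_r=1_Lem7.3} with $\nu_\infty^{(n)}$ replaced by $\nu_k^{(n)}$ throughout, tracking how large $k$ must be. This reduces the task to obtaining two $\nu_k$-analogues of the key ingredients used there.

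Ingredient (I): a pointwise linear bound
\[
\Psi_1(gx,gx';\omega) \le \Psi_1(x,x';\omega) + C'_1 n \text{ for every } g\in\supp\nu_k^{(n)},
\]
the $\nu_k$-version of Lemma~\ref{l_r=1_eq7.5}. The proof of that lemma uses only the operator bound $\|g\|\|g^{-1}\| \le e^{Bn}$ (valid for $g \in \cW_0^{(n)} \supseteq \supp\nu_k^{(n)}$ by construction of $\cW_0$) and the identity $(gu)^\perp = g^\perp u^\perp$ from Remark~\ref{r_sec3_perp}, which requires $g$ to preserve~$E$. The identity enters only in the regime $\VA_1(x,x')\ge\omega$; in the stabilization regime $\VA_1(x,x')<\omega$, the bound is immediate from $\SVA_1(\cdot\,;\omega)\ge\omega e^{-Bn}$. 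In the regime $\VA_1(x,x')\ge\omega$, since $g(E)$ is Hausdorff-close to $E$ for $g$ in a small neighborhood of $\supp\nu_\infty$, an approximate version of the identity (projecting onto the true $E^\perp$ rather than $(g(E))^\perp$) recovers the bound up to an error that vanishes with $k$ and is absorbed by $C'_1 n$.

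Ingredient (II): a strong contraction estimate in the style of Corollary~\ref{c_sec5_distance_contraction2}. For $(x,x')$ with $\VA_1(x,x')\ge\omega$, I construct $\cE'_{k,1}(x,x')\subset\supp\nu_k^{(n)}$ with $\nu_k^{(n)}((\cE'_{k,1})^c)<\delta$ and $-\log\psi_1(gx,gx';\omega)\le-\log\psi_1(x,x';\omega)-\kappa'''_1 n$ for $g\in\cE'_{k,1}$. I fix a finite $\theta$-net $\{(x_j,x'_j)\}$ in the relevant compact parameter region with $\theta$ chosen from a uniform continuity modulus so that \eqref{eq_r=1_VPstat} at any net point remains valid, with constant $\kappa'''_1 = \kappa'_1/2$, under $\theta$-perturbations of both $(x,x')$ and $g$; then set $\cE'_{k,1}(x,x'):=B(\cE'_1(\nu_\infty,\delta/2,n,x_j,x'_j),\theta)\cap\supp\nu_k^{(n)}$ for $(x_j,x'_j)$ the nearest net point. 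Weak-$*$ and Hausdorff convergence $\nu_k^{(n)}\to\nu_\infty^{(n)}$ in $\cMc$ then yield $\nu_k^{(n)}(\cE'_{k,1}(x,x'))>1-\delta$ uniformly in $(x,x')$ for all $k\ge k_1$.

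With (I) and (II), the three parts follow by the same integration splitting as in Proposition~\ref{p_r=1_Lem7.3}: (i) uses (I) alone; (ii) integrates (II) over $\cE'_{k,1}$ and (I) over its complement (the factor $\delta$ coming from the measure of the complement); (iii) additionally invokes Lemma~\ref{l_r=1_Lem5.8} to obtain $\psi_1^{-1}\ge \Omega_1/\sqrt{c}$ on $E_1(\vep'''_1)^2$, rendering the cut-off in $\Psi_1$ inactive there. The main obstacle is achieving uniformity in $(x,x')$ of the threshold $k_1$ for (II), since the set $\cE'_1(x,x')$ of Proposition~\ref{p_r=1_Lem5.5} depends on $(x,x')$ through the auxiliary choices of the maximizing vector $u$ and the difference $w=v'-v$, which degenerate near the diagonal $x=x'$. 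This degeneration is contained by Lemma~\ref{l_r=1_Prop8.1}, which bounds $\Psi_1\le\log(2\Omega_1)$ whenever either of $x,x'$ leaves $E_1(\vep'_1)$; thus the region where a non-trivial contraction is actually required is compact and bounded away from the diagonal, and on such a region the auxiliary quantities vary continuously so that the net argument closes.
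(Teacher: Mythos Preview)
Your Ingredient~(I) is essentially correct and matches the paper's Lemma~\ref{l_r=1_eq7.5bis}. The gap is in Ingredient~(II), specifically in the last paragraph where you try to handle the diagonal.

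You propose to run a finite $\theta$-net argument directly on pairs $(x,x')$, relying on uniform continuity of the quantity $-\log\psi_1(gx,gx')+\log\psi_1(x,x')$. This fails: the cocycle term $\log\big(d(gx,gx')/d(x,x')\big)$ does not extend continuously to the diagonal (its limit as $x'\to x$ depends on the direction of approach), so no finite net on $\{(x,x'):x\neq x'\}$ can give the needed uniform modulus. Your appeal to Lemma~\ref{l_r=1_Prop8.1} is a non-sequitur: that lemma bounds $\Psi_1$ when one of $x,x'$ leaves $E_1(\vep'_1)$, i.e.\ far from the \emph{equator}, whereas the problematic region is near the \emph{diagonal} with both points deep inside $E_1(\vep'_1)$. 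Parts~(ii) and~(iii) genuinely require the contraction estimate for $x,x'$ arbitrarily close together (both in $E_1(\vep'''_1)$, say), so you cannot excise a neighborhood of the diagonal.

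The paper's fix is to net not on $(x,x')$ but on the auxiliary data through which $\cE'_1(x,x')$ actually depends on $(x,x')$: the maximizing direction $u\in P_\omega=\{v:\|v^\perp\|/\|v\|\ge\omega/2\}$ for the $\VA_1$ part (Lemma~\ref{l_r=1_Lem5.4bis}), and the pair $(v,w/\|w\|)\in\hat P$ for the distance part (Lemma~\ref{l_r=1_Lem5.7bis}). These live in genuinely compact sets on which the relevant expressions \emph{are} uniformly continuous; the constraint $\VA_1(x,x')\ge\omega$ together with Lemma~\ref{l_r=1_Lem5.6} forces $\|w^\perp\|/\|w\|$ to stay bounded below, so the normalized $w$ does not degenerate even as $x'\to x$. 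This reparametrization is the missing idea.
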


Keep in mind that we have chosen $\vep_1<\rho'_1\le\hat\rho_1$ and
$C'_1=B+\gamma_1A$. Recall also that $\tilde k_1=\tilde k_1(\nu_\infty,\delta,n)\in\NN$
was chosen so that the relations \eqref{eq_r=1_III} through \eqref{eq_r=1_V} hold for every
$k\ge \tilde k_1$. Moreover, $N_1\in\NN$ is given by Propositions~\ref{p_r=1_Lem5.5}
and~\ref{p_r=1_Prop6.1Lem6.2}.

\begin{proof}
We are going to extend to large $k\in\NN$ several estimates in the proof of Proposition~\ref{p_r=1_Lem7.3}.
This will require a number of conditions on $k$, depending on $\nu_\infty$, $\delta$, $n$
and $\omega$, that we state along the way.
We begin with the following extension of Lemma~\ref{l_r=1_eq7.5}:

\begin{lemma}\label{l_r=1_eq7.5bis}
Given $n\ge N_1$ and $\omega>0$, there is $\hat{k}_1=\hat{k}_1(\nu_\infty,n,\omega)\in\NN$
such that if $x \neq x'$ in $P$ are such that $\Psi_1(x,x';\omega)>\log\Omega_1$ then
\begin{equation}\label{eq_r=1_Psi1_linear_finite}
\Psi_1(gx,gx';\omega) \le \Psi_1(x,x';\omega) + C'_1 n
\end{equation}
for any $g\in\supp\nu_k^{(n)}$ and any $k\ge \hat{k}_1$.
\end{lemma}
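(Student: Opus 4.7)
The plan is to transcribe the proof of Lemma~\ref{l_r=1_eq7.5} verbatim, after first observing that the underlying linear bound on $-\log\psi_1$ (the content of Lemma~\ref{l_r=1_stabilizedbound}) actually holds uniformly for $g\in\supp\nu_k^{(n)}$, not just for $g\in\supp\nu_\infty^{(n)}$. Indeed, tracing through the proof of Lemma~\ref{l_r=1_stabilizedbound} one sees that its two-case verification of
\[
-\log\SVA_1(gx,gx';\omega)\le-\log\SVA_1(x,x';\omega)+Bn
\]
uses nothing about $g$ beyond the bound \eqref{eq_BBB}: the case $\VA_1(x,x')<\omega$ is a tautology from the definition of the stabilization threshold $\omega e^{-Bn}$, while the case $\VA_1(x,x')\ge\omega$ amounts to \eqref{eq_r=1_VA1_def_linear}, which was itself a direct application of \eqref{eq_BBB}. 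The remaining term $d(x,x')^{\gamma_1}$ in $\psi_1$ is handled by \eqref{eq_AAA}. Both \eqref{eq_AAA} and \eqref{eq_BBB} were set up at the outset to apply to $f\in\supp\nu_k^{(n)}$ for every $k\in\NN\cup\{\infty\}$, thanks to the standing assumption $\supp\nu_k\subset\cW_0$ for all $k$.

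With this uniform version of Lemma~\ref{l_r=1_stabilizedbound} in hand, the proof of \eqref{eq_r=1_Psi1_linear_finite} proceeds exactly as in Lemma~\ref{l_r=1_eq7.5}. Take $x\neq x'$ in $P$ with $\Psi_1(x,x';\omega)>\log\Omega_1$ and $g\in\supp\nu_k^{(n)}$. If $\Psi_1(gx,gx';\omega)=\log\Omega_1$ the inequality is immediate since $\log\Omega_1<\Psi_1(x,x';\omega)$. Otherwise $\Psi_1(gx,gx';\omega)=\log(\Omega_1+\psi_1(gx,gx';\omega)^{-1})$, and the (uniform) linear bound combined with the elementary estimate \eqref{eq_r=1_elementary1} yields
\[
\Psi_1(gx,gx';\omega)\le\log\bigl(\Omega_1+e^{C'_1n}\psi_1(x,x';\omega)^{-1}\bigr)\le\Psi_1(x,x';\omega)+C'_1n,
\]
as required.

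The role of $\hat k_1=\hat k_1(\nu_\infty,n,\omega)$ is simply to record that we need $\supp\nu_k^{(n)}$ to sit in the $n$-fold product of $\cW_0$ before \eqref{eq_AAA}--\eqref{eq_BBB} may be invoked; Hausdorff convergence of supports (part (ii) of convergence in $\cMc$) ensures that this is achieved for all sufficiently large $k$, with threshold depending only on $\nu_\infty$, $n$, and, implicitly through the stabilization parameter $\omega e^{-Bn}$, on $\omega$. I do not anticipate any substantive obstacle: the only conceivable pitfall would be if some step in Lemma~\ref{l_r=1_stabilizedbound} quietly exploited a property of $\nu_\infty$ stronger than the coarse norm bounds \eqref{eq_AAA}--\eqref{eq_BBB}, and the case analysis above confirms that this does not happen.
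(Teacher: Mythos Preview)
Your proposal has a genuine gap in the case $\VA_1(x,x')\ge\omega$. You claim that this case ``amounts to \eqref{eq_r=1_VA1_def_linear}, which was itself a direct application of \eqref{eq_BBB}.'' But \eqref{eq_r=1_VA1_def_linear} is \emph{not} a consequence of \eqref{eq_BBB} alone. Its derivation passes through \eqref{eq_r=1_VA5again}, which replaces $(gu)^\perp$ by $g^\perp u^\perp$ via Remark~\ref{r_sec3_perp}; that replacement is valid only when $g$ preserves $E$, i.e.\ for $g\in\supp\nu_\infty^{(n)}$. For finite $k$ the equator need not be $\nu_k$-invariant, $g^\perp$ is not even defined, and $\|(gu)^\perp\|$ can be made arbitrarily small relative to $\|u^\perp\|$ by a bounded $g$ that tilts $E$ slightly (take $u$ nearly horizontal and $g$ a small shear in the opposite direction). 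This is precisely the failure mode flagged at the start of Section~\ref{s_step1_stabilization_and_cut-off}, and it is the whole reason stabilization was introduced.

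The paper's proof handles the case $\VA_1(x,x')\ge\omega$ differently: it uses the general inequality \eqref{eq_r=1_VA23again} (valid for every $g\in G$, since it involves only $(gu)^\perp$), picks $f\in\supp\nu_\infty^{(n)}$ close to $g$ via Hausdorff convergence, and then controls $\|(gu)^\perp-(fu)^\perp\|\le r_k\|u\|\le(r_k/\omega)\|u^\perp\|$. The hypothesis $\|u^\perp\|\ge\omega\|u\|$ is exactly what makes this approximation effective, and it is the source of the genuine dependence of $\hat{k}_1$ on $\omega$---not merely the cosmetic appearance of $\omega$ in the stabilization threshold that you describe.
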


\begin{proof}
Let $n\ge N_1$ and $\omega>0$.
We claim that there is $\hat{k}_1=\hat{k}_1(\nu_\infty,n,\omega)$ such that
\begin{equation}\label{eq_r=1_SVA1_linear_finite}
-\log\SVA_1(gx,gx';\omega) \le - \log\SVA_1(x,x';\omega) + B n
\end{equation}
for any $x\neq x'$, $g\in\supp\nu_k^{(n)}$, and $k\ge \hat{k}_1$. This can be seen as follows.
If $\VA_1(x,x') < \omega$ then, by the definition \eqref{eq_r=1_SVA1_def},
\begin{align*}
- \log\SVA_1(x,x';\omega)  & \ge - \log\omega \quand\\
-\log\SVA_1(gx,gx';\omega) & \le -\log\omega + B n \le - \log\SVA_1(x,x';\omega)+Bn,
\end{align*}
as claimed. Now suppose that $\VA_1(x,x') \ge \omega$. The relation \eqref{eq_r=1_VA1_def_linear}
does not apply here. Instead, from \eqref{eq_r=1_VA23again} and \eqref{eq_r=1_SVA1_def} we get that
\begin{equation}\label{eq_r=1_VA1_def_new3}
-\log\SVA_1(gx,gx';\omega)
\le - \log\SVA_1(x,x';\omega)  - \log \frac{\|(g u)^\perp\|}{\|gu\|}\frac{\|u\|}{\|u^\perp\|}
\end{equation}
for every $g\in\supp\nu_k^{(n)}$, with $u=u(x,x')$ realizing the supremum in \eqref{eq_r=1_VA1_def}.
The assumption $\VA_1(x,x') \ge \omega$ means that $\|u^\perp\|\ge\omega\|u\|$.
Since $\supp\nu_k$ converges to $\supp\nu_\infty$ in the Hausdorff topology, we may find
$r_k=r_k(\nu_\infty,n) \to 0$ such that every $g\in\supp\nu_k^{(n)}$ is in the $r_k$-neighborhood of some $f\in\supp\nu_\infty^{(n)}$. Then
$$
\|(gu)^\perp-(fu)^\perp\| \le \|gu-fu\| \le r_k \|u\| \le \frac{r_k}{\omega}\|u^\perp\|.
$$
Fix $\hat{k}_1\in\NN$ large enough that, for every $g\in\supp\nu_k^{(n)}$ and $k \ge \hat{k}_1$,
$$
\begin{aligned}
- \log \frac{\|(g u)^\perp\|}{\|gu\|}\frac{\|u\|}{\|u^\perp\|}
& \le - \log \frac{\|(f u)^\perp\|}{\|fu\|}\frac{\|u\|}{\|u^\perp\|} + \log 2\\
& = - \log \frac{\|f^\perp u^\perp\|}{\|fu\|}\frac{\|u\|}{\|u^\perp\|} + \log 2\\
& \le \log\|(f^\perp)^{-1}\| + \log \|f\| + \log 2 \le B n.
\end{aligned}
$$
Substituting this in \eqref{eq_r=1_VA1_def_new3} completes the proof of
\eqref{eq_r=1_SVA1_linear_finite}.

Next, substituting \eqref{eq_r=1_SVA1_linear_finite} and \eqref{eq_AAA} in the definition
\eqref{eq_r=1_SVP1_def}
%\begin{equation*}%\label{eq_r=1_logpsistab_finite}
%-\log\Psi_1(x,x';\omega) = - \log \SVA_1(x,x';\omega) - \gamma_1\log d(x,x').
%\end{equation*}
we find that
\begin{equation}\label{eq_r=1_SVP1_linear_finite}
\begin{aligned}
-\log\psi_1(gx,gx';\omega)
& \le -\log\SVA_1(x,x';\omega) + Bn - \log d(x,x') + \gamma_1 A n \\
& = - \log\psi_1(x,x';\omega) + C'_1 n
\end{aligned}
\end{equation}
for any $x \neq x'$, $g\in\supp\nu_k^{(n)}$, and $k \ge \hat{k}_1$.
Using \eqref{eq_r=1_elementary1}, it follows that
\begin{equation*}%\label{eq_r=1_SVP1_linearbis_finite}
\begin{aligned}
\Psi_1(gx,gx';\omega)
& \le \log\left(\Omega_1+\psi_1(gx,gx';\omega)^{-1}\right)\\
& \le \log\left(\Omega_1+e^{C'_1n}\psi_1(x,x';\omega)^{-1}\right)\\
& \le \log\left(\Omega_1+ \psi_1(x,x';\omega)^{-1}\right) + C'_1 n
 = \Psi_1(x,x';\omega) + C'_1 n,
\end{aligned}
\end{equation*}
as stated.
\end{proof}

Next, we prove the following extension of Lemma~\ref{l_r=1_Lem5.4}:

\begin{lemma}\label{l_r=1_Lem5.4bis}
Given $\delta>0$, $n\ge N_1$ and $\omega>0$, there is
$k'_1 = k'_1(\nu_\infty,\delta,n,\omega)\in\NN$ and for any $x \neq x'$ in $P$ with $\VA_1(x,x') \ge \omega$
there is $\cE'_{k,1}=\cE'_{k,1}(\nu_\infty,\delta,n,x,x',\omega)\subset\supp\nu_k^{(n)}$ with $\nu_k^{(n)}((\cE'_{k,1})^c)<\delta$
and
\begin{equation}\label{eq_r=1_VA4bis}
-\log\SVA_1(gx,gx';\omega) \le \max\{-\log\SVA_1(x,x';\omega) - \tilde\kappa_1 n, \tilde\theta_1\}
\end{equation}
for every $g\in\cE'_{k,1}$ and $k\ge k'_1$.
\end{lemma}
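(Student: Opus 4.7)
\medskip

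\noindent\emph{Plan.} The statement is the finite-$k$ analogue of Lemma~\ref{l_r=1_Lem5.4} for the stabilized vertical angle, so I will mimic the proof of Lemma~\ref{l_r=1_Lem5.4} while transferring the dynamical input of Proposition~\ref{p_sec5_derivative_estimate} from $\nu_\infty^{(n)}$ to $\nu_k^{(n)}$. The first observation is that the stabilization costs nothing on the right-hand side: since $\VA_1(x,x')\ge\omega$, the definition \eqref{eq_r=1_SVA1_def} gives $\SVA_1(x,x';\omega)=\VA_1(x,x')$, and on the left $\SVA_1(gx,gx';\omega)\ge\VA_1(gx,gx')$, so it suffices to prove
\[
-\log\VA_1(gx,gx') \le \max\{-\log\VA_1(x,x') - \tilde\kappa_1 n,\, \tilde\theta_1\}
\]
for every $g$ in the desired set $\cE'_{k,1}$.

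\medskip

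\noindent\emph{Transferring Proposition~\ref{p_sec5_derivative_estimate} to finite $k$.} Fix $n\ge N_1$. Because $\supp\nu_k^{(n)}\to\supp\nu_\infty^{(n)}$ in the Hausdorff topology, there exist $r_k=r_k(\nu_\infty,n)\to 0$ such that every $g\in\supp\nu_k^{(n)}$ lies within distance $r_k$ of some $f\in\supp\nu_\infty^{(n)}$. By continuity of the maps $g\mapsto g^\perp$ and of the expressions in \eqref{eq_sec5_derivative2}, for any fixed $n$ and any unit $v^\perp\in E^\perp$ the estimates
\[
\log\frac{\|Dg^\perp_v v^\perp\|}{\|v^\perp\|} > \tfrac{3\kappa_0}{4}\,n \quad\text{and}\quad \frac{\|g^\perp v^\perp\|}{\|gv^\perp\|} > \tfrac{\tau_0}{2}
\]
hold for $g$ in a uniform $\theta=\theta(\nu_\infty,n)$-neighborhood of the set $\cE_0(\nu_\infty,\delta/2,n,v^\perp)$ of Proposition~\ref{p_sec5_derivative_estimate}. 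Thickening as in the proof of Corollary~\ref{c_sec5_distance_contraction2} (and, if need be, applying Lemma~\ref{l_sec6_elementary2}--type continuity so that boundaries have $\nu_\infty^{(n)}$-measure zero), weak$^*$ convergence $\nu_k^{(n)}\to\nu_\infty^{(n)}$ supplies $k'_1=k'_1(\nu_\infty,\delta,n,\omega)\in\NN$ and a set
\[
\cE_{k,0}(v^\perp) \subset \supp\nu_k^{(n)}
\]
with $\nu_k^{(n)}(\cE_{k,0}(v^\perp)^c)<\delta$ and satisfying the two displayed inequalities for every $g\in\cE_{k,0}(v^\perp)$ and every $k\ge k'_1$. (The dependence on $\omega$ enters only through the required uniformity over the relevant set of directions $v^\perp$, handled by a finite-cover argument.)

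\medskip

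\noindent\emph{Running the argument of Lemma~\ref{l_r=1_Lem5.4}.} Given $x\neq x'$ with $\VA_1(x,x')\ge\omega$, pick $u\in y(x,x')$ realizing the supremum in \eqref{eq_r=1_VA1_def}, write $u=u^E+u^\perp$, and set $\cE'_{k,1}=\cE_{k,0}(u^\perp)$. Let $g\in\cE'_{k,1}$. If $\|g^\perp u^\perp\|/\|gu\|\ge\tau_0/2$, then from \eqref{eq_r=1_VA5again} and Remark~\ref{r_sec3_perp} we get $-\log\VA_1(gx,gx')\le -\log(\tau_0/2)\le\tilde\theta_1$, after adjusting the constant if needed. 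Otherwise $\|g^\perp u^\perp\|/\|gu\|<\tau_0/2$; by the second estimate above (applied with the roles of $v$ reshuffled exactly as in \eqref{eq_r=1_VA6.25}) this forces $\|gu^E\|/\|gu\|>\tfrac12$, and feeding this together with the derivative bound into \eqref{eq_r=1_VA5again} yields
\[
\VA_1(gx,gx') \ge \tfrac12\, \VA_1(x,x')\, e^{(3\kappa_0/4)n},
\]
hence $-\log\VA_1(gx,gx')\le -\log\VA_1(x,x') - \tilde\kappa_1 n$ provided $n$ (already $\ge N_1$) absorbs the $\log 2$ (which is already arranged by the choice \eqref{eq_r=1_VA5.5}). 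Combined with the reduction at the start of the proof, this establishes \eqref{eq_r=1_VA4bis}.

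\medskip

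\noindent\emph{Main obstacle.} The only genuine difficulty is ensuring that the transferred set $\cE_{k,0}(v^\perp)$ has $\nu_k^{(n)}$-measure at least $1-\delta$ uniformly in $(x,x')$ and $k\ge k'_1$. The subtle point is that $v^\perp$ depends on $(x,x')$, so weak$^*$ convergence of $\nu_k^{(n)}$ must be combined with a uniform-in-$v^\perp$ continuity/covering argument (finite $\theta$-cover of the unit sphere in $E^\perp$, as in Corollary~\ref{c_sec5_distance_contraction2}), together with care that the thickened sets have $\nu_\infty^{(n)}$-null boundaries. Every other ingredient — the geometric manipulations, the stabilization bookkeeping, and the derivative estimates — is imported directly from the proofs of Lemma~\ref{l_r=1_Lem5.4} and Lemma~\ref{l_r=1_eq7.5bis}.
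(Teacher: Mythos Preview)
Your overall plan---reduce from $\SVA_1$ to $\VA_1$ using $\VA_1(x,x')\ge\omega$, transfer the input of Proposition~\ref{p_sec5_derivative_estimate} from $\nu_\infty^{(n)}$ to $\nu_k^{(n)}$ via a finite-cover/weak$^*$ argument, then rerun Lemma~\ref{l_r=1_Lem5.4}---is exactly what the paper does. But the transfer step as you wrote it has a real gap: the objects $g^\perp$, $Dg^\perp_v$, and the identity $(gu)^\perp=g^\perp u^\perp$ underlying \eqref{eq_r=1_VA5again} are only defined for $g$ that preserves $E$ (Remark~\ref{r_sec3_perp}, equation~\eqref{eq_sec3_triangular_matrix}). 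Elements of $\supp\nu_k^{(n)}$ need not preserve $E$, so you cannot ``transfer'' the estimates $\|Dg^\perp_v v^\perp\|/\|v^\perp\|>e^{3\kappa_0 n/4}$ and $\|g^\perp v^\perp\|/\|gv^\perp\|>\tau_0/2$ to such $g$, nor invoke \eqref{eq_r=1_VA5again} for them. Your case dichotomy on $\|g^\perp u^\perp\|/\|gu\|$ therefore breaks down.

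The fix, which is what the paper does, is to work with the expression $\|(gu)^\perp\|/\|gu\|$ from \eqref{eq_r=1_VA23again}, valid for \emph{all} $g\in G$. One uses uniform continuity of $(v,g)\mapsto -\log\frac{\|(gv)^\perp\|}{\|gv\|}$ on the compact set $P_\omega\times V_\omega$, where $P_\omega=\{v:\|v^\perp\|/\|v\|\ge\omega/2\}$ (this is precisely where $\omega$ enters; note that $u\in P_\omega$ since $\VA_1(x,x')\ge\omega$). A finite $\alpha$-cover of $P_\omega$ by points $v_1,\dots,v_l$ yields the sets
\[
\cE_{k,0}=\big[\alpha\text{-neighborhood of }\cE_0(\nu_\infty,\delta,n,v_j^\perp)\big]\cap\supp\nu_k^{(n)},
\]
with the desired $\nu_k^{(n)}$-measure by weak$^*$ convergence. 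For $g\in\cE'_{k,1}$ one then picks nearby $v=v_j$ and $f\in\cE_0(\nu_\infty,\delta,n,v^\perp)\subset\supp\nu_\infty^{(n)}$, and the continuity estimate gives
\[
\VA_1(gx,gx')\ge\frac{\|(gu)^\perp\|}{\|gu\|}\ge\frac12\,\frac{\|(fv)^\perp\|}{\|fv\|}=\frac12\,\frac{\|f^\perp v^\perp\|}{\|fv\|}.
\]
The dichotomy on $\|f^\perp v^\perp\|/\|fv\|$ versus $\tau_0/2$ and the application of Proposition~\ref{p_sec5_derivative_estimate} are then carried out for $f$, not for $g$; this is legitimate because $f$ does preserve $E$. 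The extra factors of $2$ picked up by the approximation are absorbed by the choices $\tilde\theta_1=-\log(\tau_0/4)$ and $\widetilde N_1>4/\tilde\kappa_1$ already made in \eqref{eq_r=1_VA5.5}.
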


\begin{proof}
Fix $\delta>0$ and $n\ge N_1$ and $\omega>0$.
Let $P_\omega$ denote the (compact) subset of all $v\in P$ such that $\|v^\perp\|/\|v\|\ge\omega/2$.
For $v\in P_\omega$ and $g$ in some compact neighborhood $V_\omega$ of $\supp\nu_\infty^{(n)}$,
consider
\begin{equation}\label{eq_r=1_stab1}
(v,g) \mapsto - \log\frac{\|(gv)^\perp\|}{\|gv\|}.
\end{equation}
As long as $V_\omega$ is sufficiently small, depending on $\nu_\infty$, $n$ and $\omega$,
the map \eqref{eq_r=1_stab1} is well defined and (uniformly) continuous. So, there exists
$\alpha=\alpha(\nu_\infty,n,\omega)>0$ such that
\begin{equation}\label{eq_r=1_stab2}
- \log\frac{\|(gu)^\perp\|}{\|gu\|}
\le - \log\frac{\|(f v)^\perp\|}{\|f v\|} + \log 2
\end{equation}
whenever $d(u,v) <\alpha$ and $d(g,f)<\alpha$.
Reducing $\alpha$ if necessary, depending only on $\omega$, we may also assume that
\begin{equation}\label{eq_r=1_stab2bis}
d(u,v) <\alpha
\quad\Rightarrow\quad
- \log\frac{\|u^\perp\|}{\|u\|}
\ge - \log\frac{\|v^\perp\|}{\|v\|} - \log 2.
\end{equation}
Fix $v_1, \dots, v_l \in P_\omega$ such that $P_\omega \subset B(v_1,\alpha) \cup \cdots \cup B(v_l,\alpha)$.
For each $v \in P_\omega$ choose $j\in\{1, \dots, l\}$ such that $v \in B(v_j,\alpha)$ and then define
$\cE_{k,0}=\cE_{k,0}(\nu_\infty,\delta,n,v,\omega)\subset\supp\nu_k^{(n)}$ by
\begin{equation}\label{eq_r=1_stab3}
\cE_{k,0} = \big[\text{$\alpha$-neighborhood of } \cE_0(\nu_\infty,\delta,n,v_j^\perp)\big] \cap\supp\nu_k^{(n)},
\end{equation}
where $\cE_0(\nu_\infty,\delta,n,v_j^\perp)\subset\supp\nu_\infty^{(n)}$ is as defined in Proposition~\ref{p_sec5_derivative_estimate}.
Since $\nu_k^{(n)}$ converges to $\nu_\infty^{(n)}$ in the weak$^*$ topology, the limit inferior of the
$\nu_k^{(n)}$-measure of \eqref{eq_r=1_stab3} as $k\to\infty$ is greater than or equal to
$$
\nu^{(n)}_\infty\left(\cE_0(\nu_\infty,\delta,n,v_j^\perp)\right) > 1 - \delta
$$
for every $j=1, \dots, l$. In particular, there is $k'_1=k'_1(\nu_\infty,\delta, n,\omega)\in\NN$
such that
\begin{equation}\label{eq_r=1_stab4}
\nu_k^{(n)}(\cE_{k,0}) > 1 - \delta
\text{ for every $k\ge k'_1$ and $v\in P_\omega$.}
\end{equation}

Given $x \neq x'$ with $\VA_1(x,x')\ge\omega$, take $u=u^E+u^\perp$ to be a non-zero vector that realizes
the supremum in \eqref{eq_r=1_VA1_def}. Then,
\begin{align}
\label{eq_r=1_stab5}
& \omega \le \VA_1(x,x')  = \frac{\|u^\perp\|}{\|u\|} \quad(\text{in particular, } u\in P_\omega) \quand\\
\label{eq_r=1_stab6}
& \VA_1(gx,gx') \ge \frac{\|(g u)^\perp\|}{\|gu\|}
\ge \VA_1(x,x') \frac{\|(g u)^\perp\|}{\|gu\|}\frac{\|u\|}{\|u^\perp\|}
\end{align}
for any $g\in G$. Then define
\begin{equation}\label{eq_r=1_stab6.5}
\cE'_{k,1}=\cE_{k,0}(\nu_\infty,\delta,n,u,\omega).
\end{equation}
It follows from \eqref{eq_r=1_stab4} that $\nu_k^{(n)}((\cE'_{k,1})^c)<\delta$ for every $k\ge k'_1$.

Let $g\in\cE'_{k,1}$ and $k \ge k'_1$.
Then, by definition, there exist $v=v^E+v^\perp$ in $P_\omega$
(take $v=v_j$ as in \eqref{eq_r=1_stab3})
and $f\in\cE_0(\nu_\infty,\delta,n,v^\perp)\subset \supp\nu_\infty^{(n)}$ such that $d(u,v)<\alpha$
and $d(g,f)<\alpha$. %Write $v=v^E+v^\perp$ with $v^E\in E$ and $v^\perp\in E^\perp$.
Thus, substituting \eqref{eq_r=1_stab2} and \eqref{eq_r=1_stab2bis} in \eqref{eq_r=1_stab6}, we find that
\begin{equation}\label{eq_r=1_stab7}
\VA_1(gx,gx')
\ge \frac 12 \frac{\|f^\perp v^\perp\|}{\|f v\|}
\ge \frac{1}{4} \VA_1(x,x') \frac{\|f^\perp v^\perp\|}{\|f v\|}\frac{\|v\|}{\|v^\perp\|}.
\end{equation}
Let $\tau_0=\tau_0(\nu_\infty,\delta)>0$ be as in Proposition~\ref{p_sec5_derivative_estimate}.
If  ${\|f^\perp v^\perp\|}/{\|f v\|} \ge {\tau_0}/{2}$ then the first part of \eqref{eq_r=1_stab7} gives that
(recall \eqref{eq_r=1_VA5.5} also)
\begin{equation}\label{eq_r=1_stab8}
- \log\VA_1(gx,gx') \le - \log\frac{\tau_0}{4} \le \tilde\theta_1.
\end{equation}
Now suppose that ${\|f^\perp v^\perp\|}/{\|f v\|} < {\tau_0}/{2}$.
Then part (2) of Proposition~\ref{p_sec5_derivative_estimate} gives that
\begin{equation}\label{eq_r=1_fv}
\frac{\|f v^\perp\|}{\|f v\|}
< \frac 12
\text{ and so }
\frac{\|f v^E\|}{\|f v\|}
> \frac 12.
\end{equation}
Substituting \eqref{eq_r=1_fv} and $\|v\| \ge \|v^E\|$ in \eqref{eq_r=1_stab7}, we find that
\begin{equation}\label{eq_r=1_stab8.5}
\begin{aligned}
\VA_1(gx,gx')
& \ge \frac 1{8} \VA_1(x,x') \frac{\|f^\perp v^\perp\|}{\|v^\perp\|}\frac{\|v^E\|}{\|f v^E\|}\\
& = \frac 1{8} \VA_1(x,x') \frac{\|Df^\perp_{v^E} v^\perp\|}{\|v^\perp\|}.
\end{aligned}
\end{equation}
By part (1) of Proposition~\ref{p_sec5_derivative_estimate} and the choices of $\tilde\kappa_1>0$
and $\widetilde{N}_1\in\NN$ in \eqref{eq_r=1_VA5.5}, this implies
\begin{equation}\label{eq_r=1_stab9}
\begin{aligned}
-\log\VA_1(gx,gx')
& \le - \log \VA_1(x,x') + \log 8 - 2 \tilde\kappa_1 n\\
& \le - \log \VA_1(x,x') - \tilde\kappa_1 n.
\end{aligned}
\end{equation}
The conclusion of the lemma is contained in \eqref{eq_r=1_stab8} and \eqref{eq_r=1_stab9}.
\end{proof}

Next, let us prove the following extension of Lemma~\ref{l_r=1_Lem5.7}:

\begin{lemma}\label{l_r=1_Lem5.7bis}
Given $\delta>0$ and $n\ge N_1$
there exists $k''_1=k''_1(\nu_\infty,\delta,n)\in\NN$ such that for any $x\neq x'$ in $E_1(\vep_1)$
with $-\log\VA_1(x,x') \le \tilde\theta_1+\tilde\kappa_1 n$ there exists
$\cE''_{k,1}=\cE''_{k,1}(\nu_\infty,\delta,n,x,x')\subset\supp\nu_k^{(n)}$ with
$\nu_k^{(n)}((\cE''_{k,1})^c)<\delta$ and
\begin{equation}\label{eq_r=1_hatE1}
-\log d(gx,gx') \le -\log d(x,x')-\log\VA_1(x,x')-\hat\kappa_1 n
\end{equation}
for every $g\in\cE''_{k,1}$ and $k\ge k''_1$.
\end{lemma}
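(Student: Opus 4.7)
The plan is to extend the derivative-based estimate of Lemma~\ref{l_r=1_Lem5.7} from $\nu_\infty^{(n)}$ to $\nu_k^{(n)}$ by the same continuity/thickening scheme that was used for Lemma~\ref{l_r=1_Lem5.4bis} and Corollary~\ref{c_sec5_distance_contraction2}. The inequality \eqref{eq_r=1_5.17} depends on $g$ only through (a) the derivative lower bound $\|Dg^\perp_{v^E}w^\perp\|/\|w^\perp\|\ge e^{\kappa_0 n}$ from part (1) of Proposition~\ref{p_sec5_derivative_estimate}, and (b) the continuity estimates \eqref{eq_r=1_dista9fresh} together with \eqref{eq_r=1_dista10}. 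Both survive under small perturbations of $g$, which is exactly what is needed to pass from $\nu_\infty^{(n)}$ to nearby $\nu_k^{(n)}$.

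First I would observe that the hypothesis $-\log\VA_1(x,x')\le\tilde\theta_1+\tilde\kappa_1 n$ combined with $\vep_1\le\hat\rho_1<e^{-\tilde\theta_1-\tilde\kappa_1 n}/10$ from \eqref{eq_r=1_dista22} still yields the key lower bound $\|w^\perp\|/\|w\|\ge (1/4)e^{-\tilde\theta_1-\tilde\kappa_1 n}$ from \eqref{eq_r=1_dista75}, with $w=v'-v$ as before. Consequently the unit vertical direction $w^\perp/\|w^\perp\|$ ranges over a compact subset of the unit sphere of $E^\perp$ depending only on $(\nu_\infty,\delta,n)$, and $v^E/\|v^E\|$ ranges over a compact subset of the unit sphere of $E$ (by $\angle(w,v)\ge\pi/4$ and $d(x,E)\le\vep_1$). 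In particular, all continuity moduli below can be chosen uniform in $(x,x')$.

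Then, fixing $\alpha=\alpha(\nu_\infty,\delta,n)>0$ small enough that the continuity inequalities \eqref{eq_r=1_dista9fresh} and the derivative-continuity bound
$$
\Bigl|\log\frac{\|Df^\perp_{z^E}z^\perp\|}{\|z^\perp\|}-\log\frac{\|Dh^\perp_{u^E}u^\perp\|}{\|u^\perp\|}\Bigr|<\frac{\kappa_0}{4}n
$$
hold whenever $(f,z^E,z^\perp)$ and $(h,u^E,u^\perp)$ are $\alpha$-close in the compact region described above, I would pick a finite $\alpha$-net $\{w_j:j=1,\dots,l\}$ of candidate directions for $w^\perp/\|w^\perp\|$. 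For $(x,x')$ satisfying the hypotheses, let $j=j(x,x')$ be the nearest net index and define, in the spirit of \eqref{eq_r=1_stab3},
$$
\cE''_{k,1}(\nu_\infty,\delta,n,x,x')=\bigl[\alpha\text{-neighborhood of }\cE_0(\nu_\infty,\delta,n,w_j)\bigr]\cap\supp\nu_k^{(n)}.
$$
Since $\nu_k^{(n)}\to\nu_\infty^{(n)}$ weak$^*$ and the net is finite, the same argument as in \eqref{eq_r=1_stab4} produces $k''_1=k''_1(\nu_\infty,\delta,n)\in\NN$ such that $\nu_k^{(n)}((\cE''_{k,1})^c)<\delta$ for all $k\ge k''_1$. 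For any $g\in\cE''_{k,1}$ one picks $f\in\cE_0(\nu_\infty,\delta,n,w_j)\subset\supp\nu_\infty^{(n)}$ with $d(f,g)<\alpha$ and re-runs the chain \eqref{eq_r=1_dg1}--\eqref{eq_r=1_dista10} with $f$ in place of $g$; the continuity loss is an additive constant in $-\log$, absorbed by slightly enlarging $\widehat{N}_1$ (take, e.g., $\widehat{N}_1>10/\hat\kappa_1$ in \eqref{eq_r=1_dista22}).

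The main obstacle will be the bookkeeping of uniformity: the relevant compact sets and continuity moduli depend on $n$ through the factor $e^{\tilde\kappa_1 n}$ entering the hypothesis on $\VA_1(x,x')$, so $\alpha$, the net size $l$, and hence $k''_1$ all grow with $n$. This is permitted by the statement, but it forces careful verification that at no stage does a constant secretly depend on $(x,x')$; the key point is that Proposition~\ref{p_sec5_derivative_estimate} is stated uniformly in $v^\perp$, and that the directions $w^\perp/\|w^\perp\|$ and $v^E/\|v^E\|$ live in a single $(\nu_\infty,\delta,n)$-compact set. A minor additional subtlety is the case split $\|gv\|\ge\|gv'\|$ versus $\|gv\|<\|gv'\|$ inherited from the original proof; it is handled by performing the construction symmetrically in both orderings and intersecting the resulting two sets, at the cost of replacing $\delta$ with $\delta/2$.
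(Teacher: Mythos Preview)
Your proposal is correct and follows essentially the same approach as the paper: compactness of the relevant $(v,w)$-parameter space, uniform continuity, a finite net, thickening of the sets $\cE_0$ from Proposition~\ref{p_sec5_derivative_estimate}, and weak$^*$ convergence of $\nu_k^{(n)}$ to produce $k''_1$. The paper organizes the continuity transfer slightly differently---it singles out the projection ratio $(v,w,g)\mapsto -\log\frac{\|\Pi_{gv}gw\|}{\|gv\|}\frac{\|v\|}{\|w\|}$ from \eqref{eq_r=1_dg1} as the uniformly continuous quantity on the compact set $\hat P\times\hat V$ and nets over pairs $(v,w)\in\hat P$ rather than over $w^\perp$-directions alone---but this is a cosmetic difference, not a substantive one.
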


\begin{proof}
Let $\hat P$ denote the (compact) subset of pairs $(v,w) \in P\times P$ such that
\begin{equation}\label{eq_r=1_vw}
\frac{\|v^\perp\|}{\|v\|} \le \hat\rho_1 < 2\hat\rho_1 \le \frac{\|w^\perp\|}{\|w\|}
\end{equation}
(as before, $v^\perp$ and $w^\perp$ denote the vertical components of $v$ and $w$).
Let $n\ge N_1$ and $\hat V$ be a compact neighborhood of the support of $\nu_\infty^{(n)}$.
Since, $\hat\rho_1=\hat\rho_1(\nu_\infty,n)$, $\hat P$ and $\hat V$ depend only on $\nu_\infty$ and $n$.
Condition \eqref{eq_r=1_vw} ensures that the angle between $v$ and $w$ is bounded away from zero and,
consequently, so is the angle between $gv$ and $gw$ for any $g\in\hat V$;
both bounds depend only on $\nu_\infty$ and $n$. Thus, the map
\begin{equation}\label{eq_r=1_hatE2}
(v,w,g) \mapsto - \log \frac{\|\Pi_{gv} gw\|}{\|gv\|}\frac{\|v\|}{\|w\|}
\end{equation}
is well-defined and (uniformly) continuous on the domain $(v,w)\in\hat P$ and $g\in\hat V$.
In particular, there exists $\hat\alpha=\hat\alpha(\nu_\infty,n)>0$ such that
\begin{equation}\label{eq_r=1_hatE3}
- \log \frac{\|\Pi_{gv} gw\|}{\|gv\|}\frac{\|v\|}{\|w\|}
\le - \log \frac{\|\Pi_{fu} f z\|}{\|f u\|}\frac{\|u\|}{\|z\|} + \log 2
\end{equation}
whenever $d(v,u)<\hat\alpha$ and $d(z,w)<\hat\alpha$ and $d(g,f) < \hat\alpha$.
Reducing $\hat\alpha$ if necessary, depending only on $\nu_\infty$ and $n$,
we may also suppose that
\begin{equation}\label{eq_r=1_hatE3.5}
d(z,w)<\hat\alpha
\quad\Rightarrow\quad
-\log\frac{\|z^\perp\|}{\|z\|} \le -\log\frac{\|w^\perp\|}{\|w\|} + \log 2.
\end{equation}

Fix points $(v_1,w_1), \dots, (v_l,w_l)\in \hat P$ such that the balls of radius $\hat\rho$
around these points cover $\hat P$.
For each $(v,w)\in\hat P$ choose $j\in\{1, \dots, l\}$ such that $v\in B(v_j,\hat\alpha)$ and
$w\in B(w_j,\hat\alpha)$ and then define
$\hat\cE_{k,0}=\hat\cE_{k,0}(\nu_\infty,\delta,n,v,w)\subset G$ by
\begin{equation}\label{eq_r=1_hatE4}
\hat\cE_{k,0} = \big[\text{$\hat\alpha$-neighborhood of $\cE_0(\nu_\infty,\delta,n,w_j^\perp)\big] \cap \supp\nu_k^{(n)}$},
\end{equation}
where $\cE_0(\nu_\infty,\delta,n,w_j^\perp)$ is given by Proposition~\ref{p_sec5_derivative_estimate}.
Since $\nu_k^{(n)}\to\nu_\infty^{(n)}$ in the weak$^*$ topology, the limit inferior of the
$\nu_k^{(n)}$-measure of \eqref{eq_r=1_hatE4} as $k\to\infty$ is greater than or equal to
$$
\nu^{(n)}_\infty\left(\cE_0(\nu_\infty,\delta,n,w_j^\perp)\right) > 1 - \delta
$$
for every $j=1, \dots, l$. In particular, there is $k''_1=k''_1(\nu_\infty,\delta, n)\in\NN$
such that
\begin{equation}\label{eq_r=1_hatE5}
\nu_k^{(n)}(\hat\cE_{k,0}) > 1- \delta
\text{ for every $k\ge k''_1$ and $(v,w) \in \hat P$.}
\end{equation}

Given $x \neq x'$ take $w=v - v'$ to be the difference between unit vectors $v$ and $v'$ in the
directions of $x$ and $x'$, respectively. Just as in
%\eqref{eq_r=1_dista6} and \eqref{eq_r=1_dista7},
%$$
%d(x,x') \le \|w\| = \frac{\|w\|}{\|v\|}
%\quand
%d(gx,gx') \ge \frac{\|\Pi_{gv} gw\|}{\|gv\|}
%$$
%for any $g\in G$ such that $\|gv\|\ge\|gv'\|$
%(the case $\|gv\|\le\|gv'\|$ is analogous, reversing the roles of $x$ and $x'$).
%Thus,
\eqref{eq_r=1_dg1},
\begin{equation}\label{eq_r=1_dg4}
-\log d(gx,gx')
\le - \log d(x,x') - \log \frac{\|\Pi_{gv}gw\|}{\|gv\|}\frac{\|v\|}{\|w\|}
\end{equation}
for any $g\in G$ such that $\|gv\|\ge\|gv'\|$
(the case $\|gv\|\le\|gv'\|$ is analogous, reversing the roles of $x$ and $x'$).
The assumption $x\in E_1(\vep_1)$ implies that
\begin{equation}\label{eq_r=1_hatE6a}
\frac{\|v^\perp\|}{\|v\|}
\le d(x,E) \le \vep_1 < \hat\rho_1.
\end{equation}
Now assume that $-\log\VA_1(x,x')<\tilde\theta_1+\tilde\kappa_1 n$.
Then, using \eqref{eq_r=1_dista22} and \eqref{eq_r=1_dista75},
\begin{equation}\label{eq_r=1_hatE6b}
\frac{\|w^\perp\|}{\|w\|}
> \frac{1}{4} \VA_1(x,x')
\ge \frac {1}{4} e^{-\tilde\theta_1-\tilde\kappa_1 n}
> 2\hat\rho_1.
\end{equation}
Thus, $(v,w)\in\hat P$. Then define
\begin{equation}\label{eq_r=1_hatE6c}
\cE''_{k,1}=\hat\cE_{k,0}(\nu_\infty,\delta,n,v,w).
\end{equation}
It follows from \eqref{eq_r=1_hatE5} that $\nu_k^{(n)}((\cE''_{k,1})^c)<\delta$ for every $k\ge k''_1$.

Take $u=v_j$ and $z=w_j$ as in \eqref{eq_r=1_hatE4}. By definition, $(u,z)\in\hat P$ and $d(u,v)<\hat\alpha$
and $d(z,w)<\hat\alpha$. Let $g\in\cE''_{k,1}$ and $k \ge \hat{k}_1$.
The definitions \eqref{eq_r=1_hatE4} and
\eqref{eq_r=1_hatE6c} imply that there exists $f\in\cE_0(\nu_\infty,\delta,n,z^\perp)\subset\supp\nu_\infty^{(n)}$
such that $d(g,f)<\hat\alpha$. Replacing $z$, $w$ and $g$ with $u$, $z$ and $f$, respectively, in
\eqref{eq_r=1_dista9fresh} and \eqref{eq_r=1_dista10}, we find that
\begin{equation*}
\frac{\|fu\|}{\|u\|} \ge \frac 12 \frac{\|fu^E\|}{\|u^E\|}
\quand
\|\Pi_{fu} fz\| \ge \frac 12\|f^\perp z^\perp\|.
\end{equation*}
Combining this with \eqref{eq_r=1_dg4} and \eqref{eq_r=1_hatE3}, we obtain
\begin{equation}\label{eq_r=1_dg5}
\begin{aligned}
-\log d(gx,gx')
&\le - \log d(x,x') + \log 2 - \log \frac{\|\Pi_{fu} fz\|}{\|fu\|}\frac{\|u\|}{\|z\|}\\
&\le - \log d(x,x') + \log 8 - \log \frac{\|f^\perp z^\perp\|}{\|z\|}\frac{\|u^E\|}{\|fu^E\|}.
\end{aligned}
\end{equation}
Conditions \eqref{eq_r=1_hatE3.5} and \eqref{eq_r=1_hatE6b} give that
$$
-\log\frac{\|z^\perp\|}{\|z\|}
\le -\log\frac{\|w^\perp\|}{\|w\|} + \log 2
\le -\log\VA_1(x,x') + \log 8.
$$
Substituting this in \eqref{eq_r=1_dg5}, we find that
$$
\begin{aligned}
-\log d(gx,gx')
& \le - \log d(x,x') -\log\VA_1(x,x') + \log 64 - \log \frac{\|f^\perp z^\perp\|}{\|z^\perp\|}\frac{\|u^E\|}{\|f u^E\|}\\
& = - \log d(x,x') - \log \VA_1(x,x') + \log 64 - \log\frac{\|Df^\perp_{u^E} z^\perp\|}{\|z^\perp\|}.
\end{aligned}
$$
By part (1) of Proposition~\ref{p_sec5_derivative_estimate} and the choice of $\widehat{N}_1\in\NN$
in \eqref{eq_r=1_dista22}, this implies that
$$
\begin{aligned}
-\log d(gx,gx')
& \le - \log d(x,x') - \log \VA_1(x,x') + 5 - 2\hat\kappa_1 n \\
& \le -\log d(x,x') - \log \VA_1(x,x') - \hat\kappa_1 n,
\end{aligned}
$$
as claimed.
\end{proof}

Now we deduce the following extension of Proposition~\ref{p_r=1_Lem5.5}:

\begin{lemma}\label{l_r=1_Prop7.1bis}
For $\delta>0$ and $n\ge N_1$ and $\omega>0$ there is $k'''_1=k'''_1(\nu_\infty,\delta,n,\omega)\in\NN$
and for $x\neq x'$ in $E_1(\vep_1)$ with $\VA_1(x,x')\ge\omega$ there is
$\cE'''_{k,1}=\cE'''_{k,1}(\nu_\infty,\delta,n,x,x',\omega)\subset\supp\nu_k^{(n)}$
such that $\nu_k^{(n)}((\cE'''_{k,1})^c)<\delta$ and
\begin{equation}\label{eq_r=1_Prop7.1bis}
-\log\psi_1(gx,gx';\omega) \le - \log\psi_1(x,x';\omega) - \kappa'_1n.
\end{equation}
for any $g\in\cE'''_{k,1}$ and $k\ge k'''_1$.
\end{lemma}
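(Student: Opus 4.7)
The plan is to repeat the argument of Proposition~\ref{p_r=1_Lem5.5}, but replacing each use of Lemma~\ref{l_r=1_Lem5.4} and Lemma~\ref{l_r=1_Lem5.7} by their just-proved perturbed counterparts Lemma~\ref{l_r=1_Lem5.4bis} and Lemma~\ref{l_r=1_Lem5.7bis}, and using the stabilized-projection bound \eqref{eq_r=1_SVP1} to handle the definition of $\psi_1(\cdot,\cdot;\omega)$. Set
\[
k'''_1=\max\{k'_1(\nu_\infty,\delta/2,n,\omega),\,k''_1(\nu_\infty,\delta/2,n)\},
\]
and for $x\neq x'$ in $E_1(\vep_1)$ with $\VA_1(x,x')\ge\omega$, define
\[
\cE'''_{k,1}=\cE'_{k,1}(\nu_\infty,\delta/2,n,x,x',\omega)\cap\cE''_{k,1}(\nu_\infty,\delta/2,n,x,x').
\]
Then $\nu_k^{(n)}((\cE'''_{k,1})^c)<\delta$ for every $k\ge k'''_1$. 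Note that the assumption $\VA_1(x,x')\ge\omega$ implies $\SVA_1(x,x';\omega)=\VA_1(x,x')$, hence $\psi_1(x,x';\omega)=\VP_1(x,x')$, so that
\[
-\log\psi_1(x,x';\omega)=-\log\VA_1(x,x')-\gamma_1\log d(x,x').
\]

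For the first case, assume $-\log\VA_1(x,x')\ge\tilde\theta_1+\tilde\kappa_1 n$. Then $-\log\SVA_1(x,x';\omega)-\tilde\kappa_1 n\ge\tilde\theta_1$, and Lemma~\ref{l_r=1_Lem5.4bis} yields
\[
-\log\SVA_1(gx,gx';\omega)\le -\log\SVA_1(x,x';\omega)-\tilde\kappa_1 n
\]
for every $g\in\cE'''_{k,1}$. Combining this with the trivial bound $-\log d(gx,gx')\le-\log d(x,x')+An$ from \eqref{eq_AAA} and the choice $\gamma_1\le\tilde\kappa_1/(2A)$ in \eqref{eq_r=1_constants}, we conclude that
\[
-\log\psi_1(gx,gx';\omega)\le -\log\psi_1(x,x';\omega)-\tilde\kappa_1 n+\gamma_1 An
\le -\log\psi_1(x,x';\omega)-\kappa'_1 n,
\]
using that $\kappa'_1\le\tilde\kappa_1/2$.

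For the second case, assume $-\log\VA_1(x,x')\le\tilde\theta_1+\tilde\kappa_1 n$. Then the maximum in \eqref{eq_r=1_VA4bis} is at most $\tilde\theta_1$ in either sub-case, so Lemma~\ref{l_r=1_Lem5.4bis} gives
\[
-\log\SVA_1(gx,gx';\omega)\le\tilde\theta_1,
\]
and Lemma~\ref{l_r=1_Lem5.7bis} gives
\[
-\log d(gx,gx')\le -\log d(x,x')-\log\VA_1(x,x')-\hat\kappa_1 n
\]
for $g\in\cE'''_{k,1}$. Adding the first inequality to $\gamma_1$ times the second, we obtain
\[
\begin{aligned}
-\log\psi_1(gx,gx';\omega)
&\le \tilde\theta_1-\gamma_1\log d(x,x')-\gamma_1\log\VA_1(x,x')-\gamma_1\hat\kappa_1 n\\
&= -\log\psi_1(x,x';\omega)+\tilde\theta_1+(1-\gamma_1)\log\VA_1(x,x')-\gamma_1\hat\kappa_1 n\\
&\le -\log\psi_1(x,x';\omega)+\tilde\theta_1-\gamma_1\hat\kappa_1 n,
\end{aligned}
\]
since $\gamma_1\le 1$ and $\log\VA_1(x,x')\le 0$. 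Because $n\ge N_1\ge 2\tilde\theta_1/(\gamma_1\hat\kappa_1)$ by \eqref{eq_r=1_constants}, the right-hand side is bounded by $-\log\psi_1(x,x';\omega)-\gamma_1\hat\kappa_1 n/2\le-\log\psi_1(x,x';\omega)-\kappa'_1 n$, using $\kappa'_1\le\gamma_1\hat\kappa_1/2$. Since no calculation here is delicate once the perturbed Lemmas~\ref{l_r=1_Lem5.4bis} and~\ref{l_r=1_Lem5.7bis} are in place, the only step that could require care is to check that the stabilization level $\omega e^{-Bn}$ is inactive throughout in the regime $\VA_1(x,x')\ge\omega$—which is automatic for $n\ge 1$—so the proof goes through verbatim from the $k=\infty$ case.
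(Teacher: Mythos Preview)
Your proof is correct and follows essentially the same approach as the paper: define $k'''_1$ and $\cE'''_{k,1}$ as the intersection of the sets from Lemmas~\ref{l_r=1_Lem5.4bis} and~\ref{l_r=1_Lem5.7bis} at level $\delta/2$, then split into the two cases $-\log\VA_1(x,x')\gtrless\tilde\theta_1+\tilde\kappa_1 n$ and repeat the computation from Proposition~\ref{p_r=1_Lem5.5}. The only cosmetic difference is that you bound $-\log\SVA_1(gx,gx';\omega)$ directly from Lemma~\ref{l_r=1_Lem5.4bis}, whereas the paper first invokes \eqref{eq_r=1_SVP1} to rewrite the claim as an inequality for $\VP_1$ and then works with $\VA_1(gx,gx')$; your route is slightly cleaner since it avoids checking that stabilization is inactive on the $g$-side.
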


\begin{proof}
Fix $\delta>0$ and $n\ge N_1$ and $\omega>0$. Recall that
$$
N_1 > \max\big\{\widetilde{N}_1(\nu_\infty,{\delta}/{2}), \widehat{N}_1(\nu_\infty,{\delta}/{2}), {2\tilde\theta_1}/{(\gamma\hat\kappa_1)}\big\},
$$
by \eqref{eq_r=1_constants}. Define
\begin{equation}\label{eq_r=1_kkEE}
\begin{aligned}
&k'''_1=\max\{k'_1(\nu_\infty,\delta/2,n,\omega),k''_1(\nu_\infty,\delta/2,n)\}\\
&\hspace*{1cm}\quand\cE'''_{k,1}=\cE'_{k,1}(\nu_\infty,\delta/2,n,x,x',\omega) \cap \cE''_{k,1}(\nu_\infty,\delta/2,x,x'n).
\end{aligned}
\end{equation}
By construction, $\cE'''_{k,1}$ is contained in the support of $\nu_k^{(n)}$ and $\nu_k^{(n)}((\cE'''_{k,1})^c)<\delta$.

Consider $x \neq x'$ in $E_1(\vep_1)$ with $\VA_1(x,x')\ge\omega$.
As observed in \eqref{eq_r=1_SVP1}, the latter implies that
$\SVA_1(x,x';\omega)=\VA_1(x,x')$ and $\SVA_1(gx,gx';\omega)=\VA_1(gx,gx')$.
Thus, the claim \eqref{eq_r=1_Prop7.1bis} may be rewritten as
\begin{equation}\label{eq_r=1_Prop7.1tris}
-\log\VP_1(gx,gx';\omega) \le - \log\VP_1(x,x';\omega) - \kappa'_1n.
\end{equation}
Let $g\in\cE'''_{k,1}$ and $k \ge k'''_1$. Suppose first that $-\log\VA_1(x,x')\ge\tilde\theta_1+ \tilde\kappa_1 n$.
Then, by Lemma~\ref{l_r=1_Lem5.4bis},
\begin{equation}\label{eq_r=1_VP2bis}
-\log\VA_1(gx,gx') \le - \log\VA_1(x,x') - \tilde\kappa_1 n.
 \end{equation}
 Substituting \eqref{eq_r=1_VP2bis} and \eqref{eq_AAA} in the definition \eqref{eq_r=1_VP1_def},
 we find that
 \begin{equation}\label{eq_r=1_VP3bis}
 \begin{aligned}
-\log\VP_1(gx,gx')
& \le  - \log \VA_1(x,x') - \tilde\kappa_1 n - \gamma_1 \log d(x,x') + \gamma_1 A n \\
& \le - \log \VP_1(x,x') - \frac{\tilde\kappa_1}{2} n % \le - \log \psi_1(x,x') - \kappa'_1 n.
\end{aligned}
\end{equation}
Now suppose that $-\log\VA_1(x,x') \le \tilde\theta_1+\tilde\kappa_1 n$.  In this case, Lemma~\ref{l_r=1_Lem5.4bis} yields
\begin{equation}\label{eq_r=1_VP4bis}
-\log\VA_1(gx,gx') \le \tilde\theta_1,
\end{equation}
whereas Lemma~\ref{l_r=1_Lem5.7bis} asserts that
\begin{equation}\label{eq_r=1_VP5bis}
- \log d(gx,gx') \le -\log d(x,x') - \log\VA_1(x,x') - \hat\kappa_1 n.
\end{equation}
Substituting \eqref{eq_r=1_VP4bis} and \eqref{eq_r=1_VP5bis}
in the definition \eqref{eq_r=1_VP1_def}, we obtain
$$
\begin{aligned}
-\log\VP_1(gx,gx')
& \le \tilde\theta_1 - \gamma_1 \log d(x,x') - \gamma_1 \log\VA_1(x,x') - \gamma_1\hat\kappa_1n \\
& \le -\log\VP_1(x,x')  + \tilde\theta_1 + (1-\gamma_1) \log \VA_1(x,x') - \gamma_1\hat\kappa_1n.
\end{aligned}
$$
Since $\VA_1(x,x') \le 1$, $\gamma_1\le 1$, and $n \ge N_1 \ge 2\tilde\theta_1/(\gamma_1\hat\kappa_1)$,
it follows that
\begin{equation}\label{eq_r=1_VP6bis}
\begin{aligned}
-\log\VP_1(gx,gx')
& \le -\log\VP_1(x,x') + \tilde\theta_1 - \gamma_1\hat\kappa_1n\\
& \le -\log\VP_1(x,x') - \frac{\gamma_1\hat\kappa_1}{2} n
% \le  - \log\VP_1(x,x') - \kappa'_1 n.
\end{aligned}
\end{equation}
In view of choice of the constant $\kappa'_1$ in \eqref{eq_r=1_constants},
the claim  \eqref{eq_r=1_Prop7.1tris} is contained in \eqref{eq_r=1_VP3bis} and
\eqref{eq_r=1_VP6bis}.
\end{proof}

Let us go back to proving Proposition~\ref{p_r=1_Lem9.1}. Define
\begin{equation}\label{eq_r=1_k1}
\begin{aligned}
k_1=\max\{\tilde k_1, \hat{k}_1, k'_1, k''_1, k'''_1\}.
\end{aligned}
\end{equation}
By construction, $k_1$ depends only on $\nu_\infty$, $\delta$, $n$ and $\omega$.
Part (i) of the proposition is an immediate consequence of Lemma~\ref{l_r=1_eq7.5bis}.
To prove part (ii), consider the set $\cE'''_{k,1}=\cE'''_{k,1}(\nu_\infty,\delta,n,x,x',\omega)$ given by
Lemma~\ref{l_r=1_Prop7.1bis}. Then $\nu_k^{(n)}((\cE'''_{k,1})^c)<\delta$ and
\begin{equation}\label{eq_r=1_Psi5.8}
-\log\psi_1(gx,gx';\omega)\le-\log\psi_1(x,x';\omega)-\kappa'_1n\le-\log\psi_1(x,x';\omega)
\end{equation}
for every $g\in\cE'''_{k,1}$ and $k\ge k'''_1$. By \eqref{eq_r=1_elementary2}, this implies that
\begin{equation}\label{eq_r=1_Psi6bis}
\Psi_1(gx,gx';\omega)\le\Psi(x,x';\omega)
\text{ for every $g \in \cE'''_{k,1}$.}
\end{equation}
Integrating \eqref{eq_r=1_Psi6bis} over $\cE'''_{k,1}$ and \eqref{eq_r=1_Psi1_linear_finite} over the complement,
we obtain the estimate in part (ii).

Next, recall that we took $\kappa'''_1=\kappa'_1/2$, $c=e^{-2\kappa'''_1n}$,
and $\vep'''_1>0$ such that \eqref{eq_r=1_3lines} holds:
$$
-\log \psi_1(x,x';\omega) = -\log \psi_1(x,x') \ge \Omega_1/\sqrt{c}
$$
for any $x \neq x'$ in $E_1(\vep'''_1)$. Then, by \eqref{eq_r=1_elementary3} and the first
inequality in \eqref{eq_r=1_Psi5.8},
\begin{equation}\label{eq_r=1_Psi7bis}
\Psi_1(gx,gx';\omega)\le\Psi_1(x,x';\omega)+\log\sqrt{c}=\Psi_1(x,x';\omega)-\kappa'''_1 n
\end{equation}
for every $g \in \cE'''_{k,1}$. Integrating \eqref{eq_r=1_Psi7bis} over $\cE'''_{k,1}$ and \eqref{eq_r=1_Psi1_linear_finite}
over the complement, we obtain the estimate in part (iii) of the proposition.
This completes the proof of Proposition~\ref{p_r=1_Lem9.1}.
\end{proof}

It is clear from the statements of Lemma~\ref{l_r=1_eq7.5bis}, Lemma~\ref{l_r=1_Lem5.4bis},
and Lemma~\ref{l_r=1_Prop7.1bis} that $\hat{k}_1$, $k'_1$, and $k'''_1$ may be taken to increase
to $\infty$ when $\omega$ decreases to zero and $\nu_\infty$, $\delta$, $n$ remain fixed.
Then the same is true about the map $\omega \mapsto k_1(\nu_\infty,\delta,n,\omega)$ in \eqref{eq_r=1_k1}.
Hence, we may find $\omega_{k,1}=\omega_{k,1}(\nu_\infty,\delta,n)>0$ such that
\begin{equation}\label{eq_r=1_large_k}
(\omega_{k,1})_k \text{ decreases to $0$ and }
k\ge k_1(\nu_\infty,\delta,n,\omega_{k,1})
\end{equation}
for every large $k\in\NN$. For instance, $\omega_{k,1} = 2\inf\{\omega>0: k_1(\nu_\infty,\delta,n,\omega)\le k\}$.
Fix $\check{k}_1=\check{k}_1(\nu_\infty, \delta, n)\in\NN$ such that \eqref{eq_r=1_large_k}
holds for all $k \ge \check{k}_1$. Define
\begin{equation}\label{eq_r=1_psiPsi_def}
\psi_{k,1}(x,x') = \psi_1(x,x';\omega_{k,1})
\quand
\Psi_{k,1}(x,x') = \Psi_1(x,x';\omega_{k,1}).
\end{equation}
Then the following statement is contained in Proposition~\ref{p_r=1_Lem9.1}:

\begin{corollary}\label{c_r=1_Lem9.1}
For any $\delta>0$, $n\ge N_1$, and $k\ge \check{k}_1$:
\begin{itemize}
\item[(i)] For any $x \neq x'$ in $E_1(\vep_1)$ with $\Psi_{k,1}(x,x')>\log\Omega_1$,
$$
\int_G \Psi_{k,1}(gx,gx') \, d\nu_k^{(n)}(g) \le \Psi_{k,1}(x,x') + C'_1 n.
$$
\item[(ii)] For any $x \neq x'$ in $E_1(\vep_1)$ with $\Psi_{k,1}(x,x')>\log\Omega_1$ and
$\VA_1(x,x')\ge\omega_{k,1}$,
$$
\int_G \Psi_{k,1}(gx,gx') \, d\nu_k^{(n)}(g) \le \Psi_{k,1}(x,x') + C'_1\delta n.
$$
\item[(iii)] For any $x \neq x'$ in $E_1(\vep'''_1)$ with $\VA_1(x,x')\ge\omega_{k,1}$,
$$
\int_G \Psi_{k,1}(gx,gx') \, d\nu_k^{(n)}(g) \le \Psi_{k,1}(x,x') - (\kappa'_1 - C'_1\delta) n.
$$
\end{itemize}
\end{corollary}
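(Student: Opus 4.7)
The plan is to observe that Corollary~\ref{c_r=1_Lem9.1} is a direct consequence of Proposition~\ref{p_r=1_Lem9.1} applied with the specific choice $\omega=\omega_{k,1}$, once one checks that the admissibility requirement on $k$ in Proposition~\ref{p_r=1_Lem9.1} is automatic for $k\ge\check k_1$. No new estimate is needed; the work is entirely in unwinding the definitions introduced in \eqref{eq_r=1_large_k} and \eqref{eq_r=1_psiPsi_def}.

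First I would fix $\delta>0$, $n\ge N_1$, and $k\ge\check k_1$. By the defining property of $\check k_1$ and $\omega_{k,1}$ recorded in \eqref{eq_r=1_large_k}, we have
\begin{equation*}
k \ge k_1(\nu_\infty,\delta,n,\omega_{k,1}),
\end{equation*}
which is precisely the hypothesis of Proposition~\ref{p_r=1_Lem9.1} with parameter $\omega=\omega_{k,1}$. Thus for any $x\neq x'$ in $E_1(\vep_1)$ the three conclusions of that proposition hold verbatim at $\omega=\omega_{k,1}$, with the functions $\Psi_1(\cdot,\cdot;\omega_{k,1})$, $\VA_1$, and $\psi_1(\cdot,\cdot;\omega_{k,1})$ appearing on both sides of the inequalities.

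Next I would simply substitute the notation. By \eqref{eq_r=1_psiPsi_def} we have $\Psi_{k,1}(x,x')=\Psi_1(x,x';\omega_{k,1})$, so the conditions ``$\Psi_{k,1}(x,x')>\log\Omega_1$'' and ``$\VA_1(x,x')\ge\omega_{k,1}$'' appearing in parts (i)--(iii) of the corollary match exactly the hypotheses of parts (i)--(iii) of Proposition~\ref{p_r=1_Lem9.1}. Likewise, the integrals $\int_G \Psi_{k,1}(gx,gx')\,d\nu_k^{(n)}(g)$ coincide with the integrals of $\Psi_1(gx,gx';\omega_{k,1})$ against $\nu_k^{(n)}$ that appear on the left-hand sides there. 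The three inequalities therefore translate literally into parts (i), (ii), (iii) of the corollary.

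There is no real obstacle: the only subtlety lies upstream, in the construction of the sequence $(\omega_{k,1})_k$. That construction is possible precisely because the function $\omega\mapsto k_1(\nu_\infty,\delta,n,\omega)$ produced in Proposition~\ref{p_r=1_Lem9.1} (through Lemmas~\ref{l_r=1_eq7.5bis}, \ref{l_r=1_Lem5.4bis}, and \ref{l_r=1_Prop7.1bis}) is finite for every fixed $\omega>0$; letting $\omega$ decrease along the reciprocal of this function gives a sequence $\omega_{k,1}\to 0$ for which the hypothesis on $k$ holds for all $k\ge\check k_1$, as required.
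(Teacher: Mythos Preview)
Your proposal is correct and matches the paper's approach exactly: the paper simply states that the corollary ``is contained in Proposition~\ref{p_r=1_Lem9.1}'' after setting up $\omega_{k,1}$ and $\check{k}_1$ via \eqref{eq_r=1_large_k} and \eqref{eq_r=1_psiPsi_def}, and you have spelled out the (trivial) substitution $\omega=\omega_{k,1}$ that makes this work.
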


\section{Recoupling and conclusion}\label{s_step_1_recoupling_and_conclusion}

Here we resolve the difficulty arising out of the discontinuity of the Margulis function $\Psi_{k,1}$.
The main issue is that the inequality in Lemma~\ref{l_r=1_eq7.5bis} may not hold when trajectories leave
the cut-off region, that is, when $\Psi_{k,1}(x,x';n)=\log\Omega_1$ but $\Psi(gx,gx';n)\neq\log\Omega_1$
for some $g$ in the support of $\nu_k^{(n)}$. That may cause the analogue of \eqref{eq_sec6_Margulis3}
to fail at such points, which is catastrophic for our proof.

From the form of the cut-off, that problem can only happen if both $x$ and $x'$ are outside
$E_1(2\vep''_1)$ and at least one of $gx$ or $gx'$ is inside $E_1(2\vep''_1)$.
This is a relatively rare occurrence: for instance, Proposition~\ref{p_sec5_derivative_estimate}
shows that, for every $x$ not too far from $E$, the image $gx$ is further away from $E$
for the majority of $g$ in the support of $\nu_\infty^{(n)}$. However, we still need to handle those
rare cases where at least one of the points $gx$ or $gx'$ is in $E_1(2\vep''_1)$.

The idea is to modify the dynamics on the space of pairs, more precisely the self-coupling
$\tT_{k,1}$ of the Markov operator $\cT_{k,1}$, to allow for the points $x$ and $x'$ to move in a
more independent way: instead of the diagonal embedding, we will consider couplings supported on
pairs $(u,u')$ such that if one of the components is in $E_1(2\vep''_1)$ then the other lies outside
$E_1(\vep'_1)$.
Then $\Psi_{k,1}(u,u')$ remains bounded above by a constant, which avoids the catastrophe.

\subsection{Recoupling}\label{ss_step1_recoupling}

As in Section~\ref{ss_step1_outline}, for each $k \in \NN$ let
$$
\cT_{k,1}:\Bd(E_1(\vep_1))\to \Bd(E_1(\vep_1)), \quad
\cT_{k,1}\varphi(x) = \int_{E_1(\vep_1)} \varphi(y) \, d\sigma_{k,1,x}(y)
$$
be a continuous Markov operator adapted to $(\nu_k^{(n)},E_1(\vep_1))$, and let
$\eta_{k,1}$ be a $\cT_{k,1}$-invariant probability measure converging, as $k\to\infty$,
to a probability measure $\eta_{\infty,1}$ such that $\eta_{\infty,1}(E)>0$.
Recall that $\vep_1>0$ was chosen small enough that
\begin{equation}\label{eq_r=1_um01_again}
\eta_{\infty,1}(E_1(\vep_1)\setminus E_1) < \frac{1}{10} \eta_{\infty,1}(E_1).
\end{equation}
Then, for every $k$ sufficiently large,
\begin{equation}\label{eq_r=1_um01_cons}
\eta_{k,1}(E_1(\vep_1, \vep'''_1)) < \frac{2}{10} \eta_{k,1}(E_1(\vep_1)).
\end{equation}

Consider $X=X'=E_1(\vep_1)$ and $Y=Y'=E_1(\vep_1)$, and $\eta_y=\eta'_y=\sigma_{k,1,y}$ for every $y \in Y$.
Moreover, let $K=\Diag_1$ be the diagonal of $\prodspa1$. Clearly, $K(x)=K'(x)=\{x\}$ for every $x\in X$.
Since the $\sigma_{k,1,y}$ are non-atomic measures, it follows that
$\sigma_{k,1,y}(K(x'))=\sigma_{k,1,y}(K'(x))=\{0\}$ for every $x, x'$ and $y$.
Thus \eqref{eq_nonatomicKY} holds in this case, and so we may use
Proposition~\ref{p_sec6_coupling_Kset_parametrized} to find a continuous family
$$
\{\ttheta_{k,1,x,x'}: (x,x') \in \prodspa1\}
$$
of generic probability measures on $\prodspa1$ such that each $\ttheta_{k,1,x,x'}$ is a coupling
of $\sigma_{k,1,x}$ and $\sigma_{k,1,x'}$ vanishing on a uniform neighborhood of the diagonal.

Let $\nu^{(n)}_{k,1,x}$ and $\nu^{(n)}_{k,1,x,x'}$ denote the push-forwards of $\nu^{(n)}_k$ under
the maps $G \to P$, $g \mapsto gx$ and $G \to P \times P$, $g \mapsto (gx,gx')$, respectively.
Let $\tomega:\prodspa1 \to [0,1]$ be a continuous function such that $\tomega(x,x')=0$ if $x$
and $x'$ are both in $E_1(\tilde\vep_1)$,
and $\tomega(x,x')=1$ if either point is outside $E_1(2\tilde\vep_1)$. Then
\begin{equation}\label{eq_def_adjusted2}
\tsigma_{k,1,x,x'} = \left(1-\tomega(x,x')\right) \nu^{(n)}_{k,1,x,x'} + \tomega(x,x') \ttheta_{k,1,x,x'}
\end{equation}
is a coupling of $\sigma_{k,1,x}$ and $\sigma_{k,1,x'}$ depending continuously on $(x, x')$,
and so
\begin{equation*}%\label{eq_r=1_tT1again}
\tT_{k,1}:\Bd(\prodspa1)\to \Bd(\prodspa1),\quad
\tT_{k,1}\varphi(x,x') = \int_{\prodspa1} \varphi(y,y') \, d\tsigma_{k,1,x,x'}(y,y'),
\end{equation*}
is a continuous self-coupling of $\cT_{k,1}$.
Since $\nu^{(n)}_k$ and $\ttheta_{k,1,x,x'}$ are generic measures, so is $\tsigma_{k,1,x,x'} $.
We are going to modify these Markov operators on the \emph{recoupling region}
$E_1(2\tilde\vep_1, \vep''_1)^2$ as follows. See Figure~\ref{f_coupling}.

For $x \in E_1(2\tilde\vep_1, \vep''_1)$, it follows from \eqref{eq_r=1_III} that the subset of
$g\in\supp\nu^{(n)}_k$ such that $gx \in E_1(2\vep'_1)$ is disjoint from the set $\cD_k(x)$
given by Corollary~\ref{c_sec5_distance_contraction2}. Hence,
\begin{equation}\label{eq_r=1_Dk}
\nu^{(n)}_{k,1,x}\left(E_1(2\vep'_1)\right) \le \nu^{(n)}_k\left(\cD_k(x)^c\right)<\delta
\text{ for every $x \in E_1(2\tilde\vep_1, \vep''_1)$.}
%\quand
%\nu^{(n)}_{k,1,x'}\left(E_1(2\vep'_1)\right) \le \nu^{(n)}_k\left(\cD_k(x')^c\right)
\end{equation}
Keep in mind that $\sigma_{k,1,x}=\nu^{(n)}_{k,1,x}$ if $x$ is in the $\nu_k^{(n)}$-core of $E_1(\vep_1)$.

\begin{figure}[ht]
\begin{center}
\psfrag{E0}{core region}
\psfrag{E1}{{$E_1(\vep''_1)^2$}}
\psfrag{E2}{{$E_1(2\vep''_1)^2$}}
\psfrag{E3}{{$E_1(\tilde\vep_1)^2$}}
\psfrag{E4}{{$E_1(2\tilde\vep_1)^2$}}
\psfrag{E5}{{$E_1(\vep_1)^2$}}
\includegraphics[height=2in]{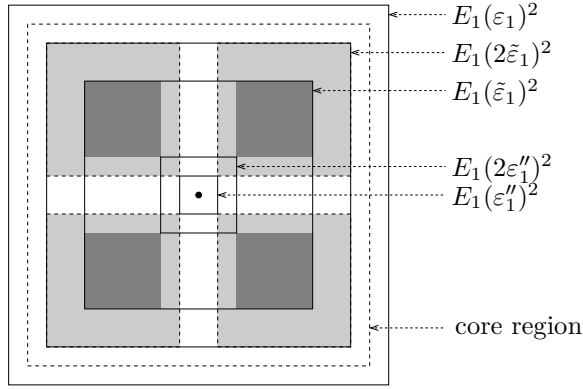}
\caption{\label{f_coupling}
Illustrating the recoupling of the Markov operators.
The black dot at the center marks the point $(E,E)$.
The dashed lined corresponds to the boundary between the $\nu_k^{(n)}$-core and the
$\nu_k^{(n)}$-border of $E_1(\vep_1)$. The shaded area is the recoupling region.
On the complement, marked in white, $\hsigma_{k,1,x,x'}=\tsigma_{k,1,x,x'}$.
On the dark gray area $\tsigma_{k,1,x,x'}$ is replaced with $\zeta_{k,1,x,x'}$, and
on the light gray area we interpolate between the two.}
\end{center}
\end{figure}

Consider $X=X'=E_1(\vep_r)$, $Y=Y'=E_1(2\tilde\vep_1, \vep''_1)^2$, and $\eta_y=\eta'_y=\sigma_{k,1,y}$ for every $y \in Y$. Moreover, let $K = E_1(\vep'_1)^2 \cup \Diag_1$.
Then
$$
K(x') = E_1(\vep'_1) \cup \{x'\} \text{ and } K'(x) = E_1(\vep'_1) \cup \{x\}
$$
for every $(x,x')\in\prodspa1$.
Since the  $\sigma_{k,1,y}$ are non-atomic measures, it follows from \eqref{eq_r=1_Dk} that
$\sigma_{k,1,y}(K(x'))$ and $\sigma_{k,1,y}(K'(x))$ are less than $\delta < 1/2$
for every $x, x'$ and $y$.
This ensures that \eqref{eq_nonatomicKY} holds in this case, and so we may use
Proposition~\ref{p_sec6_coupling_Kset_parametrized} to find a continuous family
$$
\{\zeta_{k,1,x,x'}: (x, x') \in E_1(2\tilde\vep_1, \vep''_1)^2\}
$$
of generic probability measures on $\prodspa1$ such that every $\zeta_{k,1,x,x'}$ is a coupling
of $\sigma_{k,1,x}$ and $\sigma_{k,1,x'}$ which vanishes on a uniform neighborhood of the diagonal
and satisfies
\begin{equation}\label{eq_r=1_construction_zeta}
\zeta_{k,1,x,x'}\left(E_1(\vep'_1)^2\right) = 0.
\end{equation}

Fix a continuous function $\tau:\prodspa1 \to [0,1]$ such that $\tau \equiv 1$
on $E_1(\tilde\vep_1,2\vep''_1)^2$ and $\tau \equiv 0$ on the complement of
$E_1(2\tilde\vep_1,\vep''_1)^2$. Check Figure~\ref{f_coupling}. Then define
\begin{equation}\label{eq_r=1_interpolate1}
\hsigma_{k,1,x,x'}= \left(1-\tau(x,x')\right)\tsigma_{k,1,x,x'} + \tau(x,x') \zeta_{k,1,x,x'}
\end{equation}
for every $(x,x') \in \prodspa1$.
It is clear that $\hsigma_{k,1,x,x'}$ is a coupling of $\sigma_{k,1,x}$ and $\sigma_{k,1,x'}$
depending continuously on $(x,x')$. Thus
$$
\hT_{k,1}:\Bd(\prodspa1) \to \Bd(\prodspa1), \quad
\hT_{k,1}\varphi(x,x') = \int_{\prodspa1} \varphi(y,y') \, d\hsigma_{k,1,x,x'}(y,y')
$$
is also a continuous self-coupling of $\cT_{k,1}$.
Moreover, $\hsigma_{k,1,x,x'}$ is a generic measure, since $\tsigma_{k,1,x,x'}$ and
$\zeta_{k,1,x,x'}$ are generic, and it coincides with $\tsigma_{k,1,x,x'}$ outside the
\emph{recoupling region} $E_1(2\tilde\vep_1,\vep''_1)^2$.

\begin{lemma}\label{l_r=1_recoupling_operation}
Let $(x,x') \in \prodspa1$ be such that
\begin{itemize}
\item[(a)] either at least one of the points $x$ or $x'$ is in the $\nu_k^{(n)}$-border of $E_1(\vep_1)$,
\item[(b)] or both $x$ and $x'$ are in the $\nu_k^{(n)}$-core of $E_1(\vep_1)$ but outside $E_1(2\vep''_1)$.
\end{itemize}
Then $\hsigma_{k,1,x,x'}(E_1(\vep'_1)^2)=0$ and so
$\hT_{k,1}\Psi_{k,1}(x,x') \le \log \Omega_1+ \log 2$.
\end{lemma}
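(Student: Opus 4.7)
The plan is to first reduce everything to the key measure-theoretic claim $\hsigma_{k,1,x,x'}(E_1(\vep'_1)^2)=0$; the stated bound on $\hT_{k,1}\Psi_{k,1}(x,x')$ will then be an immediate consequence. Indeed, once one knows that $\hsigma_{k,1,x,x'}$ assigns no mass to $E_1(\vep'_1)^2$, the integrand in
\[
\hT_{k,1}\Psi_{k,1}(x,x')=\int \Psi_{k,1}(y,y')\,d\hsigma_{k,1,x,x'}(y,y')
\]
may be estimated pointwise by Lemma~\ref{l_r=1_Prop8.1}, which yields $\Psi_{k,1}(y,y')\le\log\Omega_1+\log 2$ as soon as $y\notin E_1(\vep'_1)$ or $y'\notin E_1(\vep'_1)$.

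For case (a), I would argue by symmetry, assuming $x\in\partial_{\nu_k^{(n)}}E_1(\vep_1)$. Since $\cT_{k,1}$ is adapted to $(\nu_k^{(n)},E_1(\vep_1))$, condition (b) of adaptedness gives $\sigma_{k,1,x}(\Chi^\#_{\nu_k^{(n)}}E_1(\vep_1))=0$, and the inclusions $E_1(\vep'_1)\subset E_1(2\vep'_1)\subset \Chi^\#_{\nu_k^{(n)}}E_1(\vep_1)$ from \eqref{eq_r=1_IV} promote this to $\sigma_{k,1,x}(E_1(\vep'_1))=0$. Because $\hsigma_{k,1,x,x'}$ is a coupling of $\sigma_{k,1,x}$ and $\sigma_{k,1,x'}$, its first marginal equals $\sigma_{k,1,x}$, so $\hsigma_{k,1,x,x'}(E_1(\vep'_1)^2)\le\sigma_{k,1,x}(E_1(\vep'_1))=0$.

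For case (b), with $x,x'\in\Chi_{\nu_k^{(n)}}E_1(\vep_1)\setminus E_1(2\vep''_1)$, I would split according to whether or not both points lie in $E_1(\tilde\vep_1)$. If they do, then both belong to $E_1(\tilde\vep_1,2\vep''_1)$, so the definition of $\tau$ forces $\tau(x,x')=1$ and hence $\hsigma_{k,1,x,x'}=\zeta_{k,1,x,x'}$; the conclusion is then built into the construction of $\zeta$ via \eqref{eq_r=1_construction_zeta}. Otherwise at least one point, say $x$, satisfies $d(x,E)>\tilde\vep_1$ and in particular $x\notin E_1(\tilde\vep_1/2)$; combined with the core property $\sigma_{k,1,x}=\nu^{(n)}_{k,1,x}$ and with \eqref{eq_r=1_V}, which guarantees $gx\notin E_1(2\vep'_1)\supset E_1(\vep'_1)$ for every $g\in\supp\nu_k^{(n)}$, this yields $\sigma_{k,1,x}(E_1(\vep'_1))=0$, and the first-marginal argument of case (a) concludes.

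The main obstacle, such as it is, is bookkeeping: one has to verify that the nested thresholds $\vep''_1<2\vep''_1<\vep'_1<\tilde\vep_1/2<\tilde\vep_1<2\tilde\vep_1<\vep_1$ chosen in Section~\ref{ss_step1_cut-off} really do match up so that the two mechanisms---the designed vanishing of $\zeta_{k,1,x,x'}$ on $E_1(\vep'_1)^2$ where $\tau=1$, and the vanishing of $\sigma_{k,1,x}$ on $E_1(\vep'_1)$ via \eqref{eq_r=1_V} where $x$ is far enough from the equator---jointly cover every configuration of $(x,x')$ admitted by the hypotheses. No additional analytic estimate seems to be required beyond those already established in earlier sections.
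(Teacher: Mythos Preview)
Your proof is correct and follows essentially the same approach as the paper's: the reduction via Lemma~\ref{l_r=1_Prop8.1}, the handling of case~(a) through adaptedness and~\eqref{eq_r=1_IV}, and the split in case~(b) into $\tau=1$ versus the marginal argument using~\eqref{eq_r=1_V} all match. The only cosmetic difference is that in the last sub-case of~(b) you apply the first-marginal bound directly to $\hsigma_{k,1,x,x'}$, whereas the paper first isolates the $\zeta_{k,1,x,x'}$ contribution via~\eqref{eq_r=1_construction_zeta} and then bounds the remaining $\tsigma_{k,1,x,x'}$ term; both routes are equivalent since $\hsigma_{k,1,x,x'}$ is itself a coupling of $\sigma_{k,1,x}$ and $\sigma_{k,1,x'}$.
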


\begin{proof}
Let us begin by proving the claim that $\hsigma_{k,1,x,x'}$ vanishes on $E_1(\vep'_1)^2$.
If $x$ is in the $\nu_k^{(n)}$-border of $E_1(\vep_1)$ then, using \eqref{eq_r=1_IV},
$$
\hsigma_{k,1,x,x'}(E_1(\vep'_1)^2)
\le \sigma_{k,1,x}(E_1(\vep'_1))
\le \sigma_{k,1,x}\left(\Chi^\#_{\nu_k^{(n)}}E_1(\vep_1)\right)=0.
$$
The same argument applies when $x'$ is in the $\nu_k^{(n)}$-border of $E_1(\vep_1)$.
This settles the claim in case (a). Now let $x$ and $x'$ be as in (b).
Keep in mind that $\sigma_{k,1,x}=\nu_{k,1,x}^{(n)}$ and $\sigma_{k,1,x'}=\nu_{k,1,x'}^{(n)}$.
By \eqref{eq_r=1_construction_zeta}, $\zeta_{k,1,x,x'}$ vanishes on $E_1(\vep'_1)^2$,
and so  \eqref{eq_r=1_interpolate1} gives that
$$
\hsigma_{k,1,x,x'}\left(E_1(\vep'_1)^2\right) = (1-\tau(x,x'))\tsigma_{k,1,x,x'}\left(E_1(\vep'_1)^2\right).
$$
If $x$ and $x'$ are both in $E_1(\tilde\vep_1)$ then $\tau(x,x')=1$, and the claim follows.
When $x \notin E_1(\tilde\vep_1)$ we get from \eqref{eq_r=1_V} that $\nu^{(n)}_{k,1,x}(E_1(\vep'_1))=0$.
Then
$$
\tsigma_{k,1,x,x'}(E_1(\vep'_1)^2)
\le \sigma_{k,1,x}(E_1(\vep'_1))
= \nu^{(n)}_{k,1,x}(E_1(\vep'_1))=0.
$$
The case when $x'\notin E_1(\tilde\vep_1)$ is analogous.
We have shown that $\hsigma_{k,1,x,x'}(E_1(\vep'_1)^2)=0$ also in case (b).

By Lemma~\ref{l_r=1_Prop8.1}, it follows that $\Psi_{k,1}(u,u') \le \log\Omega_1 + \log 2$ for
$\hsigma_{k,1,x,x'}$-almost every $(u,u')\in\prodspa1$. Integrating with respect to $\hsigma_{k,1,x,x'}$
we immediately get that $\hT_{k,1}\Psi_{k,1}(x,x') \le \log \Omega_1+ \log 2$.
\end{proof}

\begin{proposition}\label{p_r=1_Prop11.3}
There exist $\kappa'''_1=\kappa'''_1(\nu_\infty)>0$ and $C'''_1=C'''_1(\nu_\infty)>0$
such that given any $\delta>0$ and $n \ge N_1$ the following holds for every $k\ge \check{k}_1$:
\begin{itemize}
\item[(i)] For any $x \neq x'$ in $E_1(\vep_1)$
$$
\hT_{k,1} \Psi_{k,1}(x,x') \le \Psi_{k,1}(x,x') + C'''_1 n.
$$
\item[(ii)] For any $x \neq x'$ in $E_1(\vep_1)$ with $\VA_1(x,x')\ge\omega_{k,1}$,
$$
\hT_{k,1} \Psi_{k,1}(x,x') \le \Psi_{k,1}(x,x') + C'''_1(1+\delta n).
$$
\item[(iii)] For any $x \neq x'$ in $E_1(\vep'''_1)$ with $\VA_1(x,x')\ge\omega_{k,1}$,
$$
\hT_{k,1} \Psi_{k,1}(x,x') \le \Psi_{k,1}(x,x') - (\kappa'''_1 - C'''_1\delta) n.
$$
\end{itemize}
\end{proposition}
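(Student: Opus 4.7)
The strategy is to decompose the measure $\hsigma_{k,1,x,x'} = (1-\tau(x,x'))\tsigma_{k,1,x,x'} + \tau(x,x')\zeta_{k,1,x,x'}$ and analyze the integral of $\Psi_{k,1}$ against each component according to where $(x,x')$ sits. The two tools available are Corollary~\ref{c_r=1_Lem9.1} (which delivers all three inequalities, in their strong form, whenever the underlying measure is the diagonal push-forward $\nu_{k,1,x,x'}^{(n)}$ and the core condition $\sigma_{k,1,\cdot}=\nu_{k,1,\cdot}^{(n)}$ is in force), and Lemma~\ref{l_r=1_recoupling_operation} together with Lemma~\ref{l_r=1_Prop8.1}, which produce the universal bound $\hT_{k,1}\Psi_{k,1}(x,x')\le\log\Omega_1+\log 2$ whenever recoupling is effective enough to force $\hsigma_{k,1,x,x'}$ to avoid $E_1(\vep'_1)^2$.

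First I would treat pairs outside the recoupling region, where $\tau(x,x')=0$ and $\hsigma_{k,1,x,x'}=\tsigma_{k,1,x,x'}$. If both points lie in $E_1(\tilde\vep_1)$, then $\tomega(x,x')=0$ by construction of \eqref{eq_def_adjusted2}, so $\tsigma_{k,1,x,x'}=\nu_{k,1,x,x'}^{(n)}$; moreover \eqref{eq_r=1_IV} gives $\sigma_{k,1,x}=\nu_{k,1,x}^{(n)}$ and likewise for $x'$, so Corollary~\ref{c_r=1_Lem9.1} yields (i)--(iii) verbatim. If instead at least one of $x,x'$ lies outside $E_1(\tilde\vep_1)$, I would verify that the conditions of Lemma~\ref{l_r=1_recoupling_operation} hold: either that point lies in the $\nu_k^{(n)}$-border (covered by case (a) of the lemma), or both lie in the core but at least one sits outside $E_1(2\vep''_1)\supset E_1(\tilde\vep_1)^c\cap E_1(\vep_1)$; in the latter case \eqref{eq_r=1_V} forces the relevant coordinate of $\tsigma_{k,1,x,x'}$ to avoid $E_1(2\vep'_1)$, and Lemma~\ref{l_r=1_Prop8.1} bounds $\Psi_{k,1}$ on the support. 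Since $\Psi_{k,1}(x,x')\ge\log\Omega_1$ everywhere, all three inequalities follow with constant overhead $\log 2$, which is absorbed into any $C'''_1 n$.

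For pairs in the recoupling region $E_1(2\tilde\vep_1,\vep''_1)^2$ I would split the integral into the $\tsigma_{k,1,x,x'}$ part and the $\zeta_{k,1,x,x'}$ part. For the $\zeta$ part, \eqref{eq_r=1_construction_zeta} says $\zeta_{k,1,x,x'}(E_1(\vep'_1)^2)=0$, so Lemma~\ref{l_r=1_Prop8.1} gives $\int\Psi_{k,1}\,d\zeta_{k,1,x,x'}\le\log\Omega_1+\log 2$. For the $\tsigma$ part I would again split by whether $\tomega(x,x')$ equals $0$, in which case Corollary~\ref{c_r=1_Lem9.1} applies (both points in $E_1(\tilde\vep_1)\subset$~core), or takes value in $(0,1]$, in which case one point is outside $E_1(\tilde\vep_1)$ and the bounded-from-above estimate of the previous paragraph applies to the $\ttheta_{k,1,x,x'}$ component while the remaining $\nu_{k,1,x,x'}^{(n)}$ component is again handled by Corollary~\ref{c_r=1_Lem9.1}. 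Taking the convex combination produces (i) directly, (ii) on the subset where $\VA_1(x,x')\ge\omega_{k,1}$ (since the sharper Corollary~\ref{c_r=1_Lem9.1}(ii) applies to the $\nu_{k,1,x,x'}^{(n)}$-part and the $\zeta$-part is absorbed), and (iii) on $E_1(\vep'''_1)$ with $\VA_1(x,x')\ge\omega_{k,1}$ (applying Corollary~\ref{c_r=1_Lem9.1}(iii) on the $\nu_{k,1,x,x'}^{(n)}$-part); choosing $\kappa'''_1=\kappa'_1/2$ and $C'''_1$ large enough (depending only on $\log 2$, $C'_1$ and $\kappa'_1$) absorbs all overheads.

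The main obstacle will be the transition zone, where $\tau\in(0,1)$ and where the $\ttheta_{k,1,x,x'}$ component of $\tsigma_{k,1,x,x'}$ is active. In principle we have no strong decrease estimate for the $\ttheta$-coupling because $\ttheta$ was built only to avoid the diagonal, not to control the vertical projection. The key observation to handle this is that throughout the recoupling zone, at least one of $x$ or $x'$ lies outside $E_1(\vep''_1)$, so the pair $(x,x')$ is already at definite distance from $(E,E)$; combined with the fact that in this regime the strong decrease (iii) is never required (because $\vep'''_1$ can be chosen $\le\vep''_1$), only the mild upper bounds (i)--(ii) are needed, and these follow from the uniform bound $\log\Omega_1+\log 2$ plus $\Psi_{k,1}\ge\log\Omega_1$.
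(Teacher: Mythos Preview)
Your plan is correct and matches the paper's approach: a case analysis combining Corollary~\ref{c_r=1_Lem9.1} on the diagonal push-forward $\nu^{(n)}_{k,1,x,x'}$ with the uniform bound $\log\Omega_1+\log 2$ coming from Lemma~\ref{l_r=1_Prop8.1} and \eqref{eq_r=1_V} or \eqref{eq_r=1_construction_zeta}. The paper slices by core versus border and then by whether the points lie in $E_1(\vep''_1)$ or $E_1(2\vep''_1)$, whereas you slice by the recoupling region and by $\tomega$; both decompositions cover the same ground.

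One spot to tighten: when you invoke Corollary~\ref{c_r=1_Lem9.1} for the $\nu^{(n)}_{k,1,x,x'}$ component inside the recoupling region, its hypothesis $\Psi_{k,1}(x,x')>\log\Omega_1$ requires at least one point in $E_1(2\vep''_1)$, which is not automatic there. In the residual sub-case where both lie outside $E_1(2\vep''_1)$ and $\tomega>0$ (so one point is outside $E_1(\tilde\vep_1)$), the Corollary does not apply, but your own \eqref{eq_r=1_V}+Lemma~\ref{l_r=1_Prop8.1} argument does: the first marginal of $\nu^{(n)}_{k,1,x,x'}$ is supported outside $E_1(2\vep'_1)$, so $\int\Psi_{k,1}\,d\nu^{(n)}_{k,1,x,x'}\le\log\Omega_1+\log 2$. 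In the companion sub-case where both lie outside $E_1(2\vep''_1)$ and $\tomega=0$ (both in $E_1(\tilde\vep_1)$), you have $\tau\equiv 1$, so the $\tsigma$-term carries zero weight and only $\zeta$ contributes. With these two remarks your argument is complete.
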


\begin{proof}
Take $\kappa'''_1=\kappa'_1$ and $C'''_1=\max\{C'_1,\log 2\}$, and let $k \ge \check{k}_1$.
We split the argument into four cases (check Figure~\ref{f_coupling}).

First, suppose that both $x$ and $x'$ are in the $\nu_k^{(n)}$-core of $E_1(\vep_1)$, and at least
one of them is in $E_1(\vep''_1)$. This is necessarily the case in the setting of (iii).
In particular $(x,x')$ is outside the cut-off region,
$$
\Psi_{k,1}(x,x') = \log(\Omega_1 + \psi_{k,1}(x,x')^{-1})>\log\Omega_1,
$$
and there is no recoupling either:
$$
\hsigma_{k,1,x,x'}=\nu^{(n)}_{k,1,x,x'},
\quad
\hT_{k,1} \Psi_{k,1}(x,x') = \int_G \Psi_{k,1}(gx,gx') \, d\nu_k^{(n)}(g).
$$
Hence the claims in (i), (ii) and (iii) are contained in Corollary~\ref{c_r=1_Lem9.1}.

Now suppose that both $x$ and $x'$ are in the $\nu_k^{(n)}$-core of $E_1(\vep_1)$ but outside
$E_1(\vep''_1)$, and at least one of them is in $E_1(2\vep''_1)$.
It is still true that $(x,x')$ is outside the cut-off region, and so $\Psi_{k,1}(x,x')>\log\Omega_1$.
Thus the estimates in Corollary~\ref{c_r=1_Lem9.1} remain valid for
\begin{equation}\label{eq_r=1_linear_combination1}
\int \Psi_{k,1} \, d\tsigma_{k,1,x,x}  = \int_G \Psi_{k,1}(gx,gx') \, d\nu_k^{(n)}(g).
\end{equation}
However, this time $(x,x')$ may be in the recoupling region. That is dealt with as follows.
By \eqref{eq_r=1_construction_zeta}, the measure $\zeta_{k,1,x,x'}$ vanishes on $E_1(\vep'_1)^2$.
Thus, by Lemma~\ref{l_r=1_Prop8.1},
\begin{equation}\label{eq_r=1_linear_combination2}
\int_{\prodspa1} \Psi_{k,1} \, d\zeta_{k,1,x,x}
\le \log\Omega_1+\log 2
\le \Psi_{k,1}(x,x') + \log 2.
\end{equation}
The claims (i) and (ii) follow because, by \eqref{eq_r=1_interpolate1}, $\hT_{k,1} \Psi_{k,1}(x,x')$
is a convex combination of the integrals in \eqref{eq_r=1_linear_combination1} and
\eqref{eq_r=1_linear_combination2}.

Next suppose that both $x$ and $x'$ are in the $\nu_k^{(n)}$-core of $E_1(\vep_1)$ but outside $E_1(2\vep''_1)$.
This corresponds to case (b) of Lemma~\ref{l_r=1_recoupling_operation}: claims (i) and (ii) are
contained in the conclusion of that lemma. Finally, suppose that at lest one of the points $x$ and $x'$
is in the $\nu_k^{(n)}$-border of $E_1(\vep_1)$. This is precisely the situation in case (a) of
Lemma~\ref{l_r=1_recoupling_operation}, and so claims (i) and (ii) are again contained in the conclusion
of that lemma.
\end{proof}

%Applying Lemma~\ref{l_sec6_coupling_invariant_exists} with $\cT=\cT_{k,1}$,
%$\eta=\eta_{k,1}$ and $\tT=\hT_{k,1}$, we find a $\hT_{k,1}$-invariant self-coupling
%$\heta_{k,1}$ of $\eta_{k,1}$.

\subsection{Contradicting $\dim E=1$}\label{ss_step1_contradiction}

We are going to use the following refinement of Lemma~\ref{l_sec6_Margulis_at_work1}:

\begin{lemma}\label{l_sec10_Lem12.1}
Let $\cT:\Bd(X) \to \Bd(X)$ be a Markov operator and $\psi:X\to[0,\infty]$ be a measurable function.
Suppose that there exist constants $\kappa_A$, $\kappa'_B$, $\kappa''_B \in \RR$ and pairwise
disjoint sets $A$, $B'$, $B''$ such that $X=A \cup B' \cup B''$ and
\begin{itemize}
\item[(i)] $\cT\psi(x) \le \psi(x) - \kappa_A$ for $x\in A$,
\item[(ii)] $\cT\psi(x) \le \psi(x) + \kappa'_B$ for $x\in B'$,
\item[(iii)] $\cT\psi(x) \le \psi(x) + \kappa''_B$ for $x\in B''$.
\end{itemize}
Let $\heta$ be a measure on $X$ with $\int\psi \, d\heta < \infty$ and
$\int_X \cT\psi(x) \, d\heta(x) \ge \int_X \psi(x) \, d\heta(x)$. Then
\begin{equation}\label{eq_r=1_Margulis9}
\heta(B'') \ge \frac{\kappa_A\heta(X) - (\kappa_A+\kappa'_B)\heta(B')}{\kappa_A+\kappa''_B}.
\end{equation}
\end{lemma}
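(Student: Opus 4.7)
The proof strategy is a direct three-case generalization of Lemma~\ref{l_sec6_Margulis_at_work1}. The plan is to integrate the three defining inequalities over their respective pieces $A$, $B'$, $B''$ of the partition, add them to obtain a single inequality for $\int_X \cT\psi\,d\heta$, and then combine it with the hypothesis $\int_X \cT\psi\,d\heta \ge \int_X \psi\,d\heta$ to extract the lower bound on $\heta(B'')$.

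More concretely, first I would note that since $\int\psi\,d\heta<\infty$, all the integrals below are well-defined (in particular $\int_A\psi\,d\heta$, $\int_{B'}\psi\,d\heta$, $\int_{B''}\psi\,d\heta$ are finite and their sum equals $\int_X\psi\,d\heta$). Integrating (i)--(iii) over $A$, $B'$, and $B''$ respectively and adding, one obtains
\begin{equation*}
\int_X \cT\psi \, d\heta \le \int_X \psi \, d\heta - \kappa_A \heta(A) + \kappa'_B \heta(B') + \kappa''_B \heta(B'').
\end{equation*}
Combining this with the assumption $\int_X \cT\psi\,d\heta \ge \int_X\psi\,d\heta$ gives
\begin{equation*}
\kappa_A \heta(A) \le \kappa'_B \heta(B') + \kappa''_B \heta(B'').
\end{equation*}

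Next I would use the fact that $\heta(A)=\heta(X)-\heta(B')-\heta(B'')$ (since $A$, $B'$, $B''$ form a partition of $X$), which turns the previous inequality into
\begin{equation*}
\kappa_A \heta(X) - (\kappa_A+\kappa'_B)\heta(B') \le (\kappa_A+\kappa''_B)\heta(B'').
\end{equation*}
Dividing through by $\kappa_A+\kappa''_B$ (which is positive in the regime of interest, as otherwise the claimed bound is trivial or vacuous) yields \eqref{eq_r=1_Margulis9}. There is really no substantive obstacle here: the argument is a bookkeeping exercise that extends the two-set Margulis estimate of Lemma~\ref{l_sec6_Margulis_at_work1} to the three-set partition needed in the recoupling step, where the ``bad'' behavior of $\psi$ on $B''$ (near the equator) must be quantitatively distinguished from its behavior on $B'$ (away from the cut-off region).
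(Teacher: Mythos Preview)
Your proposal is correct and matches the paper's proof essentially line for line: integrate the three inequalities over $A$, $B'$, $B''$, sum, combine with $\int\cT\psi\,d\heta\ge\int\psi\,d\heta$ to get $\kappa_A\heta(A)\le\kappa'_B\heta(B')+\kappa''_B\heta(B'')$, and then substitute $\heta(A)=\heta(X)-\heta(B')-\heta(B'')$ to obtain \eqref{eq_r=1_Margulis9}. The paper's version is in fact terser than yours.
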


\begin{proof}
Conditions (i) - (iii) imply
$$
\begin{aligned}
\int_X \psi(x) \, d\heta(x)
& \le \int_X \cT\psi(x) \, d\heta(x)\\
& \le \int_X \psi(x) \, d\heta(x) -\kappa_A\theta(A) + \kappa'_B\theta(B')+\kappa''_B\theta(B'').
\end{aligned}
$$
Thus, $-\kappa_A\heta(A) + \kappa'_B\heta(B') + \kappa''_B \heta(B'') \ge 0$,
which implies \eqref{eq_r=1_Margulis9}.
\end{proof}

Take $X = \prodspa1$, $\cT=\hT_{k,1}$, $\psi=\Psi_{k,1}$, $\heta=\heta_{k,1}$, and
\begin{align}
A_k & = \{(x,x') \in E_1(\vep'''_1)^2: \VA_1(x,x') > \omega_{k,1}\},
\label{eq_r=1_A_bis}\\
B' & = \{(x,x') \in \prodspa1: d(x, E) > \vep'''_1 \text{ or } d(x',E) > \vep'''_1\},
\label{eq_r=1_Bprime_bis}\\
B''_k & = \{(x,x')\in \prodspa1: \VA_1(x,x') \le \omega_{k,1}\}.
\label{eq_r=1_Bsecond_bis}
\end{align}
It is clear that $A=A_k$ is disjoint from $B=B' \cup B''_k$, and their union is the whole $\prodspa1$.
The sets $B'$ and $B''=B''_k$ are also disjoint if $k$ is sufficiently large, because
$$
\VA_1(x,x') = d(x+x',E) \ge \max\{d(x,E),d(x',E)\}
$$
is greater than $\vep'''_1$ whereas  $(\omega_{k,1})_k$ converges to zero when $k\to\infty$.
Also, $A \cup B' \cup B''$ is the whole $\prodspa1$.
Moreover, \eqref{eq_sec7_Theta2} implies that $B''=\emptyset$ when $\dim E =1$.

Proposition~\ref{p_r=1_Prop11.3} shows that, assuming that $k$ is sufficiently large,
the hypotheses of Lemma~\ref{l_sec10_Lem12.1} are satisfied for these choices, with
$$%\begin{equation}\label{eq_r=1_kappas}
\kappa_A = (\kappa'''_1-C'''_1\delta) n, \quad
\kappa'_B = C'''_1(1+\delta n), \quand
\kappa''_B = C'''_1 n.
$$%\end{equation}
Take $\delta>0$ to be sufficiently small, depending on $\nu_\infty$, and $n\in\NN$ to be
sufficiently large, depending on $\nu_\infty$ and $\delta$, that
\begin{equation}\label{eq_r=1_defdeltan}
\kappa_A > 9 \kappa'_B.
\end{equation}

As in Proposition~\ref{p_sec6_Margulis_finite_energy}, we find a sequence $(\heta_{k,1,j})_j$ of
probability measures on $\prodspa1$ converging to a $\hT_{k,1}$-invariant self-coupling $\heta_{k,1}$
of $\eta_{k,1}$ such that every $\heta_{k,1,j}$ satisfies $\int_{\prodspa1} \Psi_{k,1} \, d\heta_{k,1,j} <\infty$
and
$$
\int_{\prodspa1} \hT_{k,1}\Psi_{k,1}(x,x') \, d\heta_{k,1,j}(x,x') \ge  \int_{\prodspa1} \Psi_{k,1}(x,x') \, d\heta_{k,1,j}(x,x').
$$
Applying Lemma~\ref{l_sec10_Lem12.1} with $\cT=\hT_{k,1}$ and $\heta=\heta_{k,1,j}$
we get that
\begin{equation}\label{eq_r=1_conclusion1}
\heta_{k,1,j}(B'')
\ge \frac{\kappa_A \heta_{k,1,j}(\prodspa1) - (\kappa_A + \kappa'_B)\heta_{k,1,j}(B')}{\kappa_A + \kappa''_B}
\end{equation}
for every $j$. Passing to the limit as $j\to\infty$, we conclude that
\begin{equation}\label{eq_r=1_conclusion2}
\heta_{k,1}(B'')
\ge \frac{\kappa_A \heta_{k,1}(\prodspa1) - (\kappa_A + \kappa'_B)\heta_{k,1}(B')}{\kappa_A + \kappa''_B}.
\end{equation}
Observe that $\heta_{k,1}(\prodspa1)=\eta_{k,1}(E_1(\vep_1))$ and, using \eqref{eq_r=1_um01_cons},
\begin{equation}\label{eq_r=1_conclusion3}
\heta_{k,1}(B') \le 2 \eta_{k,1}(E_1(\vep_1,\vep'''_1)) < \frac{4}{10} \eta_{k,1}(E_1(\vep_1)).
\end{equation}
Thus, \eqref{eq_r=1_defdeltan} and  \eqref{eq_r=1_conclusion2} imply
\begin{equation}\label{eq_r=1_conclusion4}
\heta_{k,1}(B'')
\ge \frac{\kappa_A - \frac{4}{10}(\kappa_A + \kappa'_B)}{\kappa_A + \kappa''_B}\eta_{k,1}(E_1(\vep_1))
\ge \frac{5\kappa'_B}{\kappa_A + \kappa''_B}\eta_{k,1}(E_1(\vep_1))>0.
\end{equation}
When $\dim E=1$ this yields a contradiction, because $B''$ is empty in that case. Thus $\dim E \ge 2$.

\subsection{Completing the first step}\label{ss_step1_closure}

Let us consider the map
$$
\Sigma:\prodspa1\setminus\Diag_1 \to \grass(2,d), \quad \Sigma(x,x') = x+x'.
$$
We would like to define $\eta_{k,2}=\Sigma_*(\teta_{k,1})$ but there is a problem in that
$\Sigma(x,x')$ is not defined on $\Diag_1$ and we cannot exclude the possibility that $\teta_{k,1}$
is positive on the diagonal.

To by-pass this difficulty, we introduce the compact topological spaces
$$
\begin{aligned}
\cY_1 & = \{(x,x',y) \in \grass(1,d)^2 \times \grass(2,d): x \subset y \quand x '  \subset y\} \\
\cY_1(\vep) & = \{(x,x',y) \in \cY_1: x, x' \in E_1(\vep)\} \text{ for } \vep>0,
\end{aligned}
$$
together with the canonical projections
$$
\begin{aligned}
& p_1: \cY_1 \to \grass(1,d)^2, \quad (x,x',y) \mapsto (x,x')\\
& p_2: \cY_1 \to \grass(2,d), \quad (x,x',y) \mapsto y.
\end{aligned}
$$
For $(x,x',y)\in\cY_1$, $k \in \NN$, and $n\in\NN$, denote by $\nu^{(n)}_{k,1,x,x',y}$ the image
of $\nu^{(n)}_k$ under the diagonal action
$$
G \to \cY_1,  \quad (g \mapsto (gx,gx',gy).
$$
Clearly, each $\nu^{(n)}_{k,1,x,x',y}$ is a lift of $\nu^{(n)}_{k,1,x,x'}$ relative to $p_1:\cY_1 \to \grass(1,d)^2$.
The complement of the diagonal in $\grass(1,d)^2$ embeds in $\cY_1$ through
$$(x,x') \mapsto (x,x',x+x').$$ In particular, every measure $\xi$ on $\grass(1,d)^2$ that vanishes on
the diagonal has a (unique) lift $\check\xi$ to $\cY_1$.

From the relations \eqref{eq_def_adjusted2} and \eqref{eq_r=1_interpolate1}, we see that
\begin{equation}\label{eq_r=1_interpolate2}
\hsigma_{k,1,x,x'}= \left(1-\chomega(x,x')\right)\nu^{(n)}_{k,1,x,x'} + \chomega(x,x') \htheta_{k,1,x,x'}
\end{equation}
where $\chomega:\prodspa1 \to [0,1]$ is a continuous function that vanishes identically on
$E_1(\vep''_1)^2$, and each $\htheta_{k,1,x,x'}$ is a coupling of $\sigma_{k,1,x}$ and
$\sigma_{k,1,x'}$ vanishing on a uniform neighborhood of the diagonal.
In view of the previous remarks, it follows that the $\hsigma_{k,1,x,x'}$ lift to probability measures
\begin{equation}\label{eq_r=1_interpolate3}
\chsigma_{k,1,x,x',y}= \left(1-\chomega(x,x')\right)\nu^{(n)}_{k,1,x,x',y} + \chomega(x,x') \chtheta_{k,1,x,x',y}
\end{equation}
on $\cY_1(\vep_1)$, where $\chtheta_{k,1,x,x',y}$ is the unique lift of $\htheta_{k,1,x,x'}$.
Since $\nu^{(n)}$ and $\htheta_{k,1,x,x'}$ are generic measures, so is $\chsigma_{k,1,x,x',y}$
for every $(x,x',y)\in\cY_1(\vep_1)$.

It is clear that $\chnu^{(n)}_{k,x,x',y}$ varies continuously on $\cY_1(\vep_1)$ and,
by uniqueness, so does $\chtheta_{k,1,x,x',y}$. Thus,
$$
\chT_{k,1}:\Bd(\cY_1(\vep_1)) \to \Bd(\cY_1(\vep_1)), \quad
\chT_{k,1}\Psi(x,x',y) = \int_{\cY_1(\vep_1)} \Psi \, d\chsigma_{k,1,x,x',y}.
$$
defines a continuous Markov operator. From the definition we see that $\chT_{k,1}$ is a lift of
$\hT_{k,1}$, in the sense that
$$
\chT_{k,1}(\psi \circ p_1) = \left(\hT_{k,1}\psi\right) \circ p_1
\text{ for every } \psi \in \Bd(\cY_1(\vep_1)).
$$
Thus, the construction in Proposition~\ref{p_sec6_Margulis_finite_energy} can be applied
simultaneously to the two operators, to yield a sequence of probability measures $\cheta_{k,1,j}$
converging to a $\chT_{k,1}$-invariant measure $\cheta_{k,1}$ on $\cY_1(\vep_1)$,
and whose projections down to $\prodspa1$ are self-couplings $\heta_{k,1,j}$ of the $\eta_{k,1}$
vanishing on neighborhoods of the diagonal of $\prodspa1$,
and converging to the $\hT_{k,1}$-invariant measure $\heta_{k,1}$.

Next, define $\eta_{k,2} = p_{2*} \cheta_{k,1}$ and let $\{d\cheta_{k,1,v}: v \in p_2 \cY_1(\vep_1)\}$
be a disintegration of $\cheta_{k,1}$ with respect to the partition $\{p_2^{-1}(v): v \in p_2 \cY_1(\vep_1)\}$.
Then define
\begin{equation*}%\label{eq_r=1_cTk2a}
\begin{aligned}
& \cT_{k,2}:\Bd(p_2 \cY_1(\vep_1)) \to \Bd(p_2 \cY_1(\vep_1)), \\
& \cT_{k,2}\Phi(y) = \int_{p_2^{-1}(y)} \chT_{k,1} (\Phi \circ p_2) (x,x',y) \, d\cheta_{k,1,y}(x,x').
\end{aligned}
\end{equation*}
Equivalently, $\cT_{k,2}\Phi(y) = \int_{p_2\cY_1(\vep_1)} \Phi \, d\sigma_{k,2,y}$ with
\begin{equation}\label{eq_r=1_cTk2b}
\sigma_{k,2,y} = \int_{p_2^{-1}(y)} p_{2*} \chsigma_{k,1,x,x',y} \, d\cheta_{k,1,y}(x,x').
\end{equation}

Let $\cB_k=p_2p_1^{-1}(B''_k)=\{x+x': (x, x') \in B''_k\}$, where $B''_k$ is as in the
previous section. Define also $\eta_{k,2,j} = p_{2*} \cheta_{k,1,j}$ for $j\in\NN$. Then
$$
\eta_{k,2,j} (\cB_k) \ge \cheta_{k,1,j} \left(p_1^{-1}(B''_k)\right) = \heta_{k,1,j}(B''_k).
$$
Passing to the limit as $j \to \infty$ and arguing as in
\eqref{eq_r=1_conclusion1}--\eqref{eq_r=1_conclusion4} we find that
$$
\eta_{k,2}(\cB_k) \ge \frac{5\kappa'_B}{\kappa_A + \kappa''_B}\eta_{k,1}(E_1(\vep_1)).
$$
Now, the definition \eqref{eq_r=1_Bsecond_bis} implies that $\cB_k$ converges to $E_2$
as  $k \to \infty$, because $\omega_{k,1}\to 0$.
Thus, any accumulation point $\eta_{\infty,2}$ of $\eta_{k,2}$ must satisfy
\begin{equation}\label{eq_r=1_limit_weight_is_positive}
\eta_{\infty,2}(E_2) \ge \frac{5\kappa'_B}{\kappa_A + \kappa''_B}\eta_{\infty,1}(E_1) >0.
\end{equation}

Take $n_2=n$ and $\vep_2 = \vep''_1$. Let $\nu^{(n_2)}_{k,2,y}$ denote the push-forward
of $\nu^{(n_2)}_k$ under the map $G \to \grass(2,d)$, $g \mapsto gy$.

\begin{lemma}\label{l_r=1_downloading1}~
\begin{enumerate}
\item[(i)] $\sigma_{k,2,y} = \nu^{(n_2)}_{k,2,y}$ for every $y \in E_2(\vep_2)$.
\item[(ii)] $\sigma_{k,2,y} \left(\Chi_{\nu^{(n)}_k}^{\#} E_2(\vep_2)\right) = 0$ for every $y \notin E_2(\vep_2)$
\item[(iii)] The measure $\eta_{k,2}$ is $\cT_{k,2}$-invariant.
\end{enumerate}
\end{lemma}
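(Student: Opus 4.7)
The proof splits into three parts; (i) and (iii) are relatively quick while (ii) carries the bulk of the analysis.

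For part (i), I will show the stronger geometric fact that any $(x,x',y)\in\cY_1(\vep_1)$ with $y\in E_2(\vep_2)$ automatically satisfies $(x,x')\in E_1(\vep''_1)^2$. This follows from the monotonicity of the Grassmannian distance $d(\cdot,\cdot)$: if $y_0\in E_2$ satisfies $d(y,y_0)\le\vep_2=\vep''_1$, then for any $x\subset y$ we have $d(x,E)\le d(x,y_0)\le d(y,y_0)\le\vep''_1$, and similarly for $x'$. Since $\chomega$ vanishes identically on $E_1(\vep''_1)^2$, the decomposition \eqref{eq_r=1_interpolate3} reduces to $\chsigma_{k,1,x,x',y}=\nu^{(n)}_{k,1,x,x',y}$, whose pushforward under $p_2$ is $\nu^{(n_2)}_{k,2,y}$. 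Substituting into \eqref{eq_r=1_cTk2b} and using that $\cheta_{k,1,y}$ is a probability measure on $p_2^{-1}(y)$ gives $\sigma_{k,2,y}=\nu^{(n_2)}_{k,2,y}$.

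For part (ii), the plan is to show that $p_{2*}\chsigma_{k,1,x,x',y}$ gives zero mass to $\Chi^\#_{\nu_k^{(n)}}E_2(\vep_2)$ for every $(x,x',y)\in\cY_1(\vep_1)$ with $y\notin E_2(\vep_2)$, and then integrate against $\cheta_{k,1,y}$. Splitting via \eqref{eq_r=1_interpolate3}, the $\nu^{(n)}$-part pushes forward to $\nu^{(n_2)}_{k,2,y}$, which avoids $\Chi^\#E_2(\vep_2)$ by the very definition of $\Chi^\#$: if $gy$ were in this set for some $g\in\supp\nu_k^{(n)}$, then $y=g^{-1}(gy)$ would have to lie in $\Chi_{\nu_k^{(n)}}E_2(\vep_2)\subset E_2(\vep_2)$, contradicting the hypothesis. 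The $\chtheta$-part is the lift of $\htheta_{k,1,x,x'}$ under $(u,u')\mapsto(u,u',u+u')$, so by the monotonicity argument from (i), $u+u'\in E_2(\vep''_1)$ forces $u,u'\in E_1(\vep''_1)\subset E_1(2\vep'_1)\subset\Chi^\#_{\nu_k^{(n)}}E_1(\vep_1)$, where the last inclusion uses \eqref{eq_r=1_IV}. Thus it suffices to prove that $\htheta_{k,1,x,x'}$ gives zero mass to $\Chi^\#_{\nu_k^{(n)}}E_1(\vep_1)^2$ whenever $\chomega(x,x')>0$, i.e.\ whenever $(x,x')\notin E_1(\vep''_1)^2$. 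I handle this by case analysis: if either $x$ or $x'$ lies in the $\nu_k^{(n)}$-border of $E_1(\vep_1)$ then adaptedness of $\cT_{k,1}$ makes the corresponding marginal of $\htheta_{k,1,x,x'}$ vanish on $\Chi^\#_{\nu_k^{(n)}}E_1(\vep_1)$; otherwise both points lie in the core, and then either some component lies outside $E_1(\tilde\vep_1)\supset E_1(\tilde\vep_1/2)$ and the escape estimate \eqref{eq_r=1_V} forces $\sigma_{k,1,x}=\nu^{(n)}_{k,1,x}$ to give zero mass to $E_1(2\vep'_1)$, or both lie in $E_1(\tilde\vep_1)$, in which case $\tomega(x,x')=0$ so that $\chomega\cdot\htheta$ reduces (via the formulas \eqref{eq_def_adjusted2}--\eqref{eq_r=1_interpolate1}) to $\tau\cdot\zeta_{k,1,x,x'}$, and $\zeta_{k,1,x,x'}$ vanishes on $E_1(\vep'_1)^2\supset E_1(\vep''_1)^2$ by \eqref{eq_r=1_construction_zeta}.

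For part (iii), the identity is a direct disintegration computation: unwinding the definition of $\cT_{k,2}$, using the disintegration of $\cheta_{k,1}$ over $p_2$ and the $\chT_{k,1}$-invariance of $\cheta_{k,1}$, one obtains $\int\cT_{k,2}\Phi\,d\eta_{k,2}=\int\chT_{k,1}(\Phi\circ p_2)\,d\cheta_{k,1}=\int\Phi\circ p_2\,d\cheta_{k,1}=\int\Phi\,d\eta_{k,2}$ for every bounded measurable $\Phi$ on $p_2\cY_1(\vep_1)$. The main obstacle is the case analysis in part (ii), which must simultaneously leverage three distinct structural features of the construction: the adaptedness of $\cT_{k,1}$ on the border of $E_1(\vep_1)$, the escape estimate \eqref{eq_r=1_V} for core points not deep inside the equator, and the explicit avoidance property \eqref{eq_r=1_construction_zeta} of the recoupling measure $\zeta$; verifying that these three regimes together cover every pair $(x,x')\notin E_1(\vep''_1)^2$ requires carefully unwinding the supports of the cut-off functions $\tomega$ and $\tau$.
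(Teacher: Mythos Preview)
Your arguments for parts (i) and (iii) match the paper's almost verbatim and are correct.

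For part (ii) your overall strategy works but differs from the paper's, and there is a logical slip in the presentation. You reduce the $\chtheta$-part to showing that $\htheta_{k,1,x,x'}$ gives zero mass to $\Chi^{\#}_{\nu_k^{(n)}}E_1(\vep_1)^2$, having first observed that the relevant set is contained in $E_1(\vep''_1)^2\subset E_1(2\vep'_1)^2\subset\Chi^{\#}_{\nu_k^{(n)}}E_1(\vep_1)^2$. But then in your core sub-cases you only establish that the marginal $\sigma_{k,1,x}$ vanishes on $E_1(2\vep'_1)$ (via \eqref{eq_r=1_V}) or that $\zeta_{k,1,x,x'}$ vanishes on $E_1(\vep'_1)^2$ (via \eqref{eq_r=1_construction_zeta}); neither of these gives zero mass on the strictly larger set $\Chi^{\#}_{\nu_k^{(n)}}E_1(\vep_1)^2$, since \eqref{eq_r=1_IV} says $E_1(2\vep'_1)\subset\Chi^{\#}_{\nu_k^{(n)}}E_1(\vep_1)$, not the reverse. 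The fix is trivial: state the sufficient condition as ``$\htheta_{k,1,x,x'}$ gives zero mass to $E_1(\vep''_1)^2$'' (any of your intermediate sets would do), and then all three sub-cases verify it. With this correction your argument is complete.

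The paper takes a shorter route for (ii). Rather than splitting $\chsigma_{k,1,x,x',y}$ into its $\nu^{(n)}$- and $\htheta$-components, it bounds the whole thing at once via the inclusion $p_2^{-1}\bigl(\Chi^{\#}_{\nu_k^{(n)}}E_2(\vep_2)\bigr)\subset \Chi^{\#}_{\nu_k^{(n)}}E_1(\vep_2)^2\times\Chi^{\#}_{\nu_k^{(n)}}E_2(\vep_2)$, obtaining $p_{2*}\chsigma_{k,1,x,x',y}\bigl(\Chi^{\#}_{\nu_k^{(n)}}E_2(\vep_2)\bigr)\le\sigma_{k,1,x}\bigl(\Chi^{\#}_{\nu_k^{(n)}}E_1(\vep_2)\bigr)$ whenever (say) $x\notin E_1(\vep_2)$. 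It then needs only two sub-cases: if $x$ lies in the $\nu_k^{(n)}$-border of $E_1(\vep_1)$, adaptedness gives the result; if $x$ lies in the core, then $\sigma_{k,1,x}=\nu^{(n)}_{k,1,x}$ and the abstract fact from Remark~\ref{r_sec6_notdeep} (namely $\nu_x(\Chi^{\#}_\nu U)=0$ whenever $x\notin U$) applies with $U=E_1(\vep_2)$. This avoids any appeal to the concrete escape estimate \eqref{eq_r=1_V} or to the structure of $\zeta_{k,1,x,x'}$, and collapses your four-case analysis to three. Your approach has the compensating virtue of making visible exactly which features of the recoupling construction are being used.
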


\begin{proof}
It is clear that $\nu^{(n_2)}_{k,2,y}$ coincides with the push-forward of $\nu^{(n_2)}_{k,1,x,x',y}$
under the projection $p_2$. Thus \eqref{eq_r=1_interpolate3} gives that
\begin{equation*}%\label{eq_r=1_interpolate4}
p_{2*} \chsigma_{k,1,x,x',y}= \left(1-\chomega(x,x')\right)\nu^{(n_2)}_{k,2,y} +
\chomega(x,x') p_{2*}\chtheta_{k,1,x,x',y},
\end{equation*}
and so,
\begin{equation*}%\label{eq_r=1_interpolate5}
\begin{aligned}
\chsigma_{k,2,y}
& = \left(1-\int_{p_2^{-1}(y)} \chomega(x,x')  \, d\cheta_{k,1,y}(x,x')\right)\nu^{(n_2)}_{k,2,y} \\
& \hspace{3cm} + \int_{p_2^{-1}(y)} \chomega(x,x') p_{2*}\chtheta_{k,1,x,x',y} \, d\cheta_{k,1,y}(x,x'),
\end{aligned}
\end{equation*}
If $y \in E_2(\vep_2)$ then both $x$ and $x'$ are necessarily in $E_1(\vep_2)$,
by \eqref{eq_sec7_Theta1}, in which case $\chomega(x,x')=0$.
Then $\sigma_{k,2,y} = \nu^{(n_2)}_{k,2,y}$, as claimed in (i).

In view of the expression \eqref{eq_r=1_cTk2b}, to prove part (ii) it suffices to show that if
$y \notin E_2(\vep_2)$ then
\begin{equation}\label{eq_r=1_pi2vanish}
p_{2*} \chsigma_{k,1,x,x',y}\left(\Chi_{\nu^{(n)}_k}^{\#} E_2(\vep_2)\right)=0
\end{equation}
for any $x, x' \subset y$. If $x$ and $x'$ are both in $E_1(\vep_2)$ then
$$
p_{2*} \chsigma_{k,1,x,x',y}
=p_{2*} \nu^{(n_2)}_{k,1,x,x',y}
=\nu^{(n_2)}_{k,2,y}
$$
and then the claim follows from Remark~\ref{r_sec6_notdeep}. From now on, we
assume that one of the points, $x$ say, is not in $E_1(\vep_2)$.
It follows from the definitions that
%It is clear that if $v \in E_2(\vep_2)$ and $u, u' \subset v$ then $u, u' \in E_1(\vep_2)$.
%Thus $p_2^{-1}(E_2(\vep_2))$ is contained in $E_1(\vep_2)^2 \times E_2(\vep_2)$.
%Analogously,
$$
p_2^{-1}\left(\Chi_{\nu^{(n)}_k}^{\#} E_2(\vep_2)\right)
\subset \Chi_{\nu^{(n)}_k}^{\#} E_1(\vep_2)^2 \times \Chi_{\nu^{(n)}_k}^{\#} E_2(\vep_2),
$$
and so
\begin{equation}\label{eq_r=1_vanishingmeasure}
\begin{aligned}
p_{2*} \chsigma_{k,1,x,x',y}\left(\Chi_{\nu^{(n)}_k}^{\#} E_2(\vep_2)\right)
& \le \chsigma_{k,1,x,x',y}\left(\Chi_{\nu^{(n)}_k}^{\#} E_1(\vep_2)^2 \times \Chi_{\nu^{(n)}_k}^{\#} E_2(\vep_2)\right) \\
& \le \hsigma_{k,1,x,x'}\left(\Chi_{\nu^{(n)}_k}^{\#} E_1(\vep_2)^2\right)\\
& \le \sigma_{k,1,x}\left(\Chi_{\nu^{(n)}_k}^{\#} E_1(\vep_2)\right).
\end{aligned}
\end{equation}
If $x$ is in the $\nu^{(n)}_k$-border of $E_1(\vep_1)$ then
$$
\sigma_{k,1,x}\left(\Chi_{\nu^{(n)}_k}^{\#} E_1(\vep_2)\right)
\le \sigma_{k,1,x}\left(\Chi_{\nu^{(n)}_k}^{\#} E_1(\vep_1)\right) = 0,
$$
because the operator $\cT_{k,1}$ is adapted to $(\nu_k^{(n)},E_1(\vep_1))$.
If $x$ is in the $\nu^{(n)}_k$-core of $E_1(\vep_1)$ then Remark~\ref{r_sec6_notdeep} gives that
$$
\sigma_{k,1,x}\left(\Chi_{\nu^{(n)}_k}^{\#} E_1(\vep_2)\right)
= \nu^{(n)}_{k,1,x}\left(\Chi_{\nu^{(n)}_k}^{\#} E_1(\vep_2)\right) = 0.
$$
Thus the right-hand side of \eqref{eq_r=1_vanishingmeasure} vanishes in either case.
That completes the proof of \eqref{eq_r=1_pi2vanish} and of part (ii) of the lemma.

Finally, by definition,
$$
\begin{aligned}
\int_{p_2\cY_1(\vep_1)} \left(\cT_{k,2}\Phi\right) & \, d\eta_{k,2}\\
& =  \int_{p_2\cY_1(\vep_1)} \int_{p_2^{-1}(y)} \chT_{k,1} \left(\Phi \circ p_2\right) (x,x',y) \, d\cheta_{k,1,y}(x,x')\, d\eta_{k,2}(y)\\
& = \int_{\cY_1(\vep_1)}  \chT_{k,1}(\Phi \circ p_2) (x,x',y) \, d\cheta_{k,1}(x, x', y)
\end{aligned}
$$
for any $\Phi\in \Bd(p_2\cY_1(\vep_1))$.
Since $\cheta_{k,1}$ is $\chT_{k,1}$-invariant, this gives
$$
\begin{aligned}
\int_{p_2\cY_1(\vep_1)} \left(\cT_{k,2}\Phi\right)(y) \, d\eta_{k,2}(y)
& = \int_{\cY_1(\vep_1)}  (\Phi \circ p_2) (x,x',y) \, d\cheta_{k,1}(x, x', y)\\
& = \int_{p_2\cY_1(\vep_1)} \Phi \, d\heta_{k,2},
\end{aligned}
$$
which proves claim (iii).
\end{proof}

Since the $\chsigma_{k,1,x,x',y}$ are generic measures and the projection $p_2$ is algebraic,
it follows readily from \eqref{eq_r=1_cTk2b} and Remark~\ref{r_sec5_generic_elementary}
that every $\sigma_{k,2,y}$ is a generic measure.
Then, conclusions (i) and (ii) in Lemma~\ref{l_r=1_downloading1} allow us to apply Propositions~\ref{p_sec6_localized2_strong} and~\ref{p_sec6_adapted_strong} to
$X=p_2\cY_1(\vep_1)$ and $U=E_2(\vep_2)$.
In this way we get a continuous Markov operator adapted to $(\nu_k^{(n_2)},E_2(\vep_2))$
which leaves invariant the restriction of $\eta_{k,2} \mid E_2(\vep_2)$.
Replace $\cT_{k,2}$ and $\eta_{k,2}$ with this new Markov operator and invariant measure,
respectively. This finishes the first step of the induction.

\part{General step of the induction}\label{p_four}

\section{Preparing a Margulis function}\label{s_preparing_a_Margulis_function}

Let $r>1$ be fixed. Here we extend the construction in Section~\ref{s_step1_preparing_a_Margulis_function},
to find a positive function $\psi_r$ such that $-\log \psi_r$ has some of the features of a Margulis function.
The main result is Proposition~\ref{p_Prop5.1Lem5.5}, an extension of Proposition~\ref{p_r=1_Prop5.1}.
Throughout, $\delta>0$ and $n\in\NN$ should be seen as free parameters, whose values are fixed at the end of the construction.

Recall that $\cF(r,d)$ denotes the set of flags $F_1 \subset F_2 \subset \cdots \subset F_{r-1} \subset F_r \subset \RR^d$,
where each $F_i$ has dimension $i$. Moreover,
$$
\begin{aligned}
\fE_r & = \{(F_1, \dots, F_r) \in\cF(r,d): F_r \in E_r\} \text{ and},\\
\fE_r(\vep) & = \big\{(F_1, \dots, F_r) \in\cF(r,d): F_r \in E_r(\vep)\big\}
\text{ for each $\vep >0$.}
\end{aligned}
$$
We use $x=(F_1, \dots, F_r)$ and $x'=(F'_1, \dots, F'_r)$ to denote generic elements of $\cF(r,d)$.

Most steps towards Proposition~\ref{p_Prop5.1Lem5.5} are rather straightforward translations
of the arguments in Section~\ref{s_step1_preparing_a_Margulis_function}.
One significant difference is that Lemma~\ref{l_r=1_Lem5.8} no longer holds:
for $r>1$ it is possible to have $x$ and $x'$ with $F_r$ and $F'_r$ arbitrarily close to the equator $E$
without $\VP_r(x,x)'$ getting close to zero. For that reason, we cannot take $\psi_r=\VP_r$.
This is dealt with in Section~\ref{s_function_psir}: instead, we define $\psi_r$ inductively in terms of
both $\VP_r$ and $\psi_{r-1}$.

\subsection{Vertical angle function}\label{ss_vertical_angle}

Assume $x, x' \in \cF(r,d)$ to be such that $F'_1\not\subset F_r$.
By definition, the \emph{great circle} through $x$ and $x'$ is the subset $y=y(x,x')$ of $\grass(r,d)$
defined by
$$
y = \{\xi \in \grass(r,d): F_{r-1} \subset \xi \subset F'_1+F_r\}.
$$
This is consistent with the case $r=1$, as long as we follow the convention that $F_0=\{0\}$.
On the other hand, the great circle $y$ depends on $x$ through $F_{r-1}$ and $F_r$, whereas it depends on $x'$
through $F'_1$ only. In particular, $y(x,x')$ need not coincide with $y(x',x)$ when $r>1$. Related to this,
the analogue of \eqref{eq_r=1_commute} is usually false when $r>1$: the values of $\VA_r(x,x')$ and $\VP_r(x,x')$
that we define in the following may change when the roles of $x$ and $x'$ are exchanged.

The \emph{vertical angle function} $\VA_r$ is defined by
\begin{equation}\label{eq_VAR_def}
\VA_r(x,x')
= \sup_{\xi \in y} d(\xi,E)
= \sup_{\xi \in y} \sup_{u \in \xi} d(u,E).
\end{equation}
Note that this is consistent with the case $r=1$. Following the intuition from $r=1$, we think of $\VA_r(x,x')$ as
the angle between the great circle $y$ and $E_r$. Indeed,
\begin{equation}\label{eq_VAR_defbis}
\VA_r(x,x')
= \sup_{u \in F'_1+F_r} d(u,E)
= d(F'_1+F_r,E),
\end{equation}
and so $\VA_r(x,x')=0$ if and only if $F'_1+F_r\subset E$, that is, if and only if $y\subset E_r$.

We are going to prove the following extension of Proposition~\ref{p_r=1_Prop5.1} for $r>1$:

\begin{proposition}\label{p_Prop5.1}
There exist $\kappa'_r=\kappa'_r(\nu_\infty)>0$ and $C'_r=C'_r(\nu_\infty)>0$ and for any $\delta>0$
there exists $N_r=N_r(\nu_\infty,\delta)$ such that for every $n \ge N_r$ there exists
$\rho'_r=\rho'_r(\nu_\infty,\delta,n)>0$ such that
$$
\int_G - \log\VP_r(gx,gx') \, d\nu_\infty^{(n)}(g) \le -\log\VP_r(x,x') - (\kappa'_r-C'_r\delta)n
$$
for every $x, x' \in \fE_r(\rho'_r)$ with $F'_1 \not\subset F_r$.
\end{proposition}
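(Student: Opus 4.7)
The plan is to follow the architecture of the proof of Proposition~\ref{p_r=1_Prop5.1}, with the appropriate geometric reinterpretation for flags. First I would record the global linear bounds \eqref{eq_AAA}, \eqref{eq_BBB} applied to $(r+1)$-dimensional subspaces: since the equator is $\nu_\infty$-invariant, $g(F'_1 + F_r) = gF'_1 + gF_r$ and \eqref{eq_VAR_defbis} gives the universal estimate
\[
-\log \VA_r(gx,gx') \le -\log\VA_r(x,x') + Bn, \qquad g \in \supp\nu_\infty^{(n)}.
\]
This, together with $-\log d(gF'_1, gF_r) \le -\log d(F'_1, F_r) + An$, yields the crude linear upper bound $-\log \VP_r(gx,gx') \le -\log \VP_r(x,x') + C'_r n$ with $C'_r = B + \gamma_r A$, where the definition $\VP_r(x,x') = \VA_r(x,x')\, d(F'_1,F_r)^{\gamma_r}$ plays the role of \eqref{eq_r=1_VP1_def} and $\gamma_r>0$ is a small constant to be pinned down.

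The next step is the analog of Lemma~\ref{l_r=1_Lem5.4} for $\VA_r$. Pick a unit vector $u \in F'_1 + F_r$ attaining the supremum in \eqref{eq_VAR_defbis}, decompose $u = u^E + u^\perp$, and reason exactly as in Section~\ref{ss_step1_vertical_angle}: bound $\VA_r(gx,gx')$ from below by $\|(gu)^\perp\|/\|gu\|$ and apply Proposition~\ref{p_sec5_derivative_estimate} to $u^\perp$ to obtain, for $g$ in a set $\tE_r \subset \supp\nu_\infty^{(n)}$ of measure $>1-\delta$, the dichotomy
\[
-\log \VA_r(gx,gx') \le \max\bigl\{-\log\VA_r(x,x') - \tilde\kappa_r n,\ \tilde\theta_r\bigr\}.
\]
The two alternatives correspond to whether $\|g^\perp u^\perp\|/\|gu\|$ is above or below $\tau_0/2$, producing either a bounded output or genuine exponential decay of $-\log\VA_r$.

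For the ``distance-contraction'' half, I introduce the flag analog of $w = v' - v$: pick a unit vector $v' \in F'_1$ and orthogonally decompose $v' = v + w$ with $v \in F_r$, $w \perp F_r$, so that $\|w\| = d(F'_1, F_r)$ and $F'_1 + F_r = F_r \oplus \RR w$. The analog of Lemma~\ref{l_r=1_Lem5.6} says: if $d(F_r, E)$ is much smaller than $\VA_r(x,x')$, then $\|w^\perp\|/\|w\|$ is bounded below by a fixed multiple of $\VA_r(x,x')$ — because every vertical component in $F_r + \RR w$ must come from $w$, since $F_r$ is nearly horizontal. The analog of Lemma~\ref{l_r=1_Lem5.7} then asserts that, for $x, x' \in \fE_r(\hat\rho_r)$ with $-\log\VA_r(x,x') \le \tilde\theta_r + \tilde\kappa_r n$, most $g$ satisfy
\[
-\log d(gF'_1, gF_r) \le -\log d(F'_1,F_r) - \log \VA_r(x,x') - \hat\kappa_r n.
\]
The proof mimics Lemma~\ref{l_r=1_Lem5.7}: estimate $d(gF'_1, gF_r) \ge \|\Pi_{gF_r} gw\|/\|gv'\|$, show $\Pi_{gF_r} gw$ is dominated by $(gw)^\perp = g^\perp w^\perp$ (using that $gF_r$ is close to $E$, since $F_r$ is), and apply Proposition~\ref{p_sec5_derivative_estimate} to $w^\perp$ to produce the exponential factor.

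With these two ingredients in hand, the end of the proof is a direct transcription of the case analysis in Proposition~\ref{p_r=1_Lem5.5}: in the angle-dominated regime $-\log\VA_r(x,x') \ge \tilde\theta_r + \tilde\kappa_r n$ the decay $\tilde\kappa_r n$ in $\VA_r$ dominates the $\gamma_r A n$ expansion of $d(F'_1, F_r)^{\gamma_r}$ once $\gamma_r \le \tilde\kappa_r/(2A)$; in the distance-dominated regime, the factor $-\log \VA_r(x,x') \ge 0$ appearing in the distance-contraction estimate cancels the $(1-\gamma_r)\log \VA_r$ term and the residual $\gamma_r \hat\kappa_r n$ swallows $\tilde\theta_r$ for $n \ge N_r$ large. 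Setting $\kappa'_r = \min\{\tilde\kappa_r/2, \gamma_r \hat\kappa_r/2\}$ and integrating the pointwise contraction over $\cE'_r$ while using the linear bound on its complement of measure $<\delta$ gives the stated average estimate. The main technical obstacle I expect is the analog of Lemma~\ref{l_r=1_Lem5.7}: unlike the case $r=1$, the vector $w$ is now a direction in $\RR^d$ whose relationship to the projection $\Pi_{gF_r} gw$ must be controlled through the closeness of $F_r$ (not just a single line $v$) to $E$, which forces a more delicate continuity argument to replace the estimates \eqref{eq_r=1_dista9fresh}--\eqref{eq_r=1_dista10}.
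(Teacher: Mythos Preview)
Your overall architecture is exactly the paper's: a crude linear bound, the $\VA_r$ dichotomy via Proposition~\ref{p_sec5_derivative_estimate} applied to a maximizing $u \in F'_1+F_r$, a ``$w$ is roughly vertical'' lemma, a distance-contraction lemma, and the two-regime combination. But you have the distance factor in $\VP_r$ wrong. The paper's definition \eqref{eq_VPr_def} is $\VP_r(x,x') = \VA_r(x,x')\, d(F_r, F_{r-1}+F'_1)^{\gamma_r}$, the Grassmannian distance between the two $r$-planes $F_r$ and $F_{r-1}+F'_1$, not $d(F'_1, F_r)$. One checks that $d(F'_1, F_r) = d(F'_1, F_{r-1})\cdot d(F_r, F_{r-1}+F'_1)$, and the first factor has no lower bound, so you are proving a different inequality.

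The paper's choice of distance factor is precisely what makes the generalization painless. Since $F_r$ and $F_{r-1}+F'_1$ share the common $(r-1)$-plane $F_{r-1}$, one picks unit vectors $v \in F_r$ and $v' \in F_{r-1}+F'_1$ both \emph{orthogonal to $F_{r-1}$}, with $d(F_r, F_{r-1}+F'_1) = |\sin\angle(v,v')|$, and sets $w = v'-v$; see \eqref{eq_vvprime}--\eqref{eq_dista12}. This reduces everything to the two-dimensional span of $v,v'$, and Lemmas~\ref{l_Lem5.6} and~\ref{l_Lem5.7} become near-verbatim copies of Lemmas~\ref{l_r=1_Lem5.6} and~\ref{l_r=1_Lem5.7}: the projection appearing in the distance-contraction step is $\Pi_{gv}$ onto a single hyperplane (see \eqref{eq_dg1}), not $\Pi_{gF_r}$ onto a codimension-$r$ subspace. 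In particular, the ``more delicate continuity argument'' you anticipate as the main obstacle never arises; with the correct $v,v',w$ the estimates \eqref{eq_dista9fresh}--\eqref{eq_dista10} are identical to \eqref{eq_r=1_dista9fresh}--\eqref{eq_r=1_dista10}.
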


Begin by noting that the properties \eqref{eq_r=1_VA23again} through
\eqref{eq_r=1_VA1_def_linear} extend to $r>1$.
More precisely, it follows from \eqref{eq_VAR_defbis} that
\begin{equation}\label{eq_VA2again}
\VA_r(x,x') \ge d(u,E) = \frac{\|u^\perp\|}{\|u\|}
\end{equation}
for any $u \in F'_1+F_r$ and $x, x' \in\cF(r,d)$ with $F'_1\not\subset F_r$.
Moreover, if $u \in F'_1+F_r$ realizes the supremum in the definition \eqref{eq_VAR_defbis} then
\begin{equation}\label{eq_VA3again}
\VA_r(x,x') = d(u,E) = \frac{\|u^\perp\|}{\|u\|}.
\end{equation}
Then, combining \eqref{eq_VA2again} with \eqref{eq_VA3again},
\begin{equation}\label{eq_VA23again}
\VA_r(gx,gx')
\ge \frac{\|(g u)^\perp\|}{\|gu\|}
= \VA_r(x,x') \frac{\|(g u)^\perp\|}{\|gu\|}\frac{\|u\|}{\|u^\perp\|}
\end{equation}
for any $g\in G$.  By Remark~\ref{r_sec3_perp}, when $g\in\supp\nu_\infty^{(n)}$ this means that
\begin{equation}\label{eq_VA5again}
\VA_r(gx,gx')
\ge \frac{\|g^\perp u^\perp\|}{\|gu\|}
\ge \VA_r(x,x') \frac{\|g^\perp u^\perp\|}{\|gu\|}\frac{\|u\|}{\|u^\perp\|}.
\end{equation}
Furthermore, just as for \eqref{eq_r=1_VA1_def_linear},
\begin{equation}\label{eq_VAr_linear}
- \log\VA_r(gx,gx') \le - \log\VA_r(x,x') + B n
\end{equation}
for any $x, x' \in \cF(r,d)$ with $F'_1 \not\subset F_r$ and $g\in\supp\nu^{(n)}_\infty$.

The proof of Proposition~\ref{p_Prop5.1} is analogous to that of Proposition~\ref{p_r=1_Prop5.1},
replacing $d(x,x')$, $d(gx,gx')$ and $d(x,E)$ with $d(F_r,F_{r-1}+F'_1)$, $d(gF_r,g(F_{r-1}+F'_1))$
and $d(F_r,E)$, respectively, and substituting $r$ for $1$ in the subscript.
The details follow, but the reader may choose to skip them and proceed directly
to Section~\ref{s_function_psir}.

\begin{lemma}\label{l_Lem5.4}
There exists $\tilde\kappa_r=\tilde\kappa_r(\nu_\infty)>0$ and for each $\delta>0$ there exist
$\tilde\theta_r=\tilde\theta_r(\nu_\infty,\delta)>0$ and $\widetilde{N}_r=\widetilde{N}_r(\nu_\infty,\delta)\in\NN$ such that for every $n \ge \widetilde{N}_r$
and $x, x' \in\cF(r,d)$ with $F'_1\not\subset F_r$ there exists
$\tE_r=\tE_r(\nu_\infty,\delta,n,x,x')\subset\supp\nu_\infty^{(n)}$
with $\nu_\infty^{(n)}(\tE_r^c)<\delta$ and
\begin{equation}\label{eq_VA4}
-\log\VA_r(gx,gx') \le \max\{-\log\VA_r(x,x') - \tilde\kappa_r n, \tilde\theta_r\}
\text{ for every $g\in\tE_r$.}
\end{equation}
\end{lemma}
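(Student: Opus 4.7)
The proof will proceed by essentially the same template as Lemma~\ref{l_r=1_Lem5.4}, exploiting the fact that the key identity
\[
\VA_r(x,x') = \sup_{u \in F'_1+F_r} \frac{\|u^\perp\|}{\|u\|}
\]
from \eqref{eq_VAR_defbis} reduces everything to a vector-level analysis, no different from the case $r=1$. My plan is as follows. First, let $\kappa_0 = \kappa_0(\nu_\infty)$, $N_0 = N_0(\nu_\infty,\delta)$, $\tau_0=\tau_0(\nu_\infty,\delta)$, and $\cE_0(\nu_\infty,\delta,n,v^\perp)$ be as in Proposition~\ref{p_sec5_derivative_estimate}. Given $x, x' \in \cF(r,d)$ with $F'_1 \not\subset F_r$, select a non-zero $u \in F'_1 + F_r$ realizing the supremum in \eqref{eq_VAR_defbis}, and write $u = u^E + u^\perp$ with $u^E \in E$ and $u^\perp \in E^\perp$. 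Set
\[
\tilde\kappa_r = \kappa_0/2, \quad \tilde\theta_r = -\log(\tau_0/4), \quad \widetilde{N}_r > \max\{N_0, 4/\tilde\kappa_r\}, \quad \tE_r = \cE_0(\nu_\infty,\delta,n,u^\perp).
\]
Proposition~\ref{p_sec5_derivative_estimate} gives $\nu_\infty^{(n)}(\tE_r^c) < \delta$.

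Next, fix $n \ge \widetilde{N}_r$ and $g \in \tE_r \subset \supp\nu_\infty^{(n)}$, and split into two cases according to the size of $\|g^\perp u^\perp\|/\|gu\|$. If this ratio is at least $\tau_0/2$, then the first inequality in \eqref{eq_VA5again} immediately gives $\VA_r(gx,gx') \ge \tau_0/2$, so $-\log \VA_r(gx,gx') \le \tilde\theta_r$. If instead $\|g^\perp u^\perp\|/\|gu\| < \tau_0/2$, then part (2) of Proposition~\ref{p_sec5_derivative_estimate} applied to $u^\perp$ forces $\|g^\perp u^\perp\| \ge \tau_0 \|g u^\perp\|$, and hence $\|g u^\perp\|/\|gu\| < 1/2$, which in turn gives $\|g u^E\|/\|gu\| > 1/2$. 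Substituting $\|u\| \ge \|u^E\|$ and this bound into \eqref{eq_VA5again} yields
\[
\VA_r(gx,gx') \ge \tfrac{1}{2} \VA_r(x,x')\, \frac{\|g^\perp u^\perp\|}{\|u^\perp\|}\,\frac{\|u^E\|}{\|g u^E\|} = \tfrac{1}{2} \VA_r(x,x')\, \frac{\|Dg^\perp_{u^E} u^\perp\|}{\|u^\perp\|},
\]
using the definition \eqref{eq_sec5_derivative2} of $Dg^\perp$. Applying part (1) of Proposition~\ref{p_sec5_derivative_estimate} gives $\|Dg^\perp_{u^E} u^\perp\|/\|u^\perp\| \ge e^{\kappa_0 n}$, and then the choices of $\tilde\kappa_r$ and $\widetilde{N}_r$ convert this to
\[
-\log \VA_r(gx,gx') \le -\log \VA_r(x,x') + \log 2 - 2\tilde\kappa_r n \le -\log \VA_r(x,x') - \tilde\kappa_r n.
\]
Combining the two cases gives \eqref{eq_VA4}.

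Since the proof is a direct translation of the $r=1$ argument, no real obstacle is expected; the only delicate point is ensuring that the quantities $\kappa_0$, $\tau_0$, and the set $\cE_0$ from Proposition~\ref{p_sec5_derivative_estimate} depend only on $\nu_\infty$ and $\delta$ (not on $r$, nor on the particular flag), so that the resulting constants $\tilde\kappa_r$, $\tilde\theta_r$ and $\widetilde{N}_r$ are uniform. This is guaranteed by the statement of Proposition~\ref{p_sec5_derivative_estimate}, which is phrased entirely at the level of $v^\perp \in \proj(E^\perp)$ and makes no reference to any flag structure, so the $r$-dependence is purely notational.
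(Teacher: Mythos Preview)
Your proposal is correct and follows essentially the same approach as the paper's own proof, which explicitly says ``Analogous to Lemma~\ref{l_r=1_Lem5.4}'' and proceeds with the identical choice of constants $\tilde\kappa_r=\kappa_0/2$, $\tilde\theta_r=-\log(\tau_0/4)$, $\widetilde{N}_r>\max\{N_0,4/\tilde\kappa_r\}$, $\tE_r=\cE_0(\nu_\infty,\delta,n,u^\perp)$, and the same two-case split on $\|g^\perp u^\perp\|/\|gu\|$ relative to $\tau_0/2$.
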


\begin{proof}
Analogous to Lemma~\ref{l_r=1_Lem5.4}.
Let $\kappa_0=\kappa_0(\nu_\infty)>0$, $N_0=N_0(\nu_\infty,\delta)\in\NN$, and
$\cE_0=\cE_0(\nu_\infty,\delta,n,u^\perp)\subset \supp\nu_\infty^{(n)}$ and $\tau_0=\tau_0(\nu_\infty,\delta)>0$
be as in Proposition~\ref{p_sec5_derivative_estimate}.
Given $x, x' \in\cF(r,d)$ with $F'_1\not\subset F_r$, take $u$ to be a non-zero vector that
realizes the supremum in the definition \eqref{eq_VAR_defbis}.
Write $u=u^E+u^\perp$ with $u^E\in E$ and $u^\perp\in E^\perp$. Take
\begin{equation}\label{eq_VA5.5}
\begin{aligned}
\tilde\kappa_r=\kappa_0/2,
& \quad \tilde\theta_r=-\log(\tau_0/4), \\
\widetilde{N}_r > \max\{N_0, 4/\tilde\kappa_r\},
& \quad \text{and }
\tE_r=\cE_0(\nu_\infty,\delta,n,u^\perp).
\end{aligned}
\end{equation}
Let $n\ge \widetilde{N}_r$ and $g\in\tE_r$; then $g \in \supp \nu_\infty^{(n)}$.
If  ${\|g^\perp u^\perp\|}/{\|gu\|} \ge {\tau_0}/{2}$ then the first inequality in \eqref{eq_VA5again} implies that
\begin{equation}\label{eq_VA6}
- \log\VA_r(gx,gx') \le - \log\frac{\tau_0}{2} \le \tilde\theta_r.
\end{equation}
If ${\|g^\perp u^\perp\|}/{\|gu\|} < {\tau_0}/{2}$ then part (2) of Proposition~\ref{p_sec5_derivative_estimate} gives that
$$
\frac{\|g u^\perp\|}{\|gu\|}
%= \frac{\|g u^\perp\|}{\|g^\perp u^\perp\|} \frac{\|g^\perp u^\perp\|}{\|gu\|} < \frac{1}{\tau_0}\frac{\tau_0}{2}
< \frac 12,
\text{ and so }
\frac{\|g u^E\|}{\|gu\|}
> \frac 12.
$$
Substituting the latter inequality and $\|u\| \ge \|u^E\|$ in \eqref{eq_VA5again}, we find that
\begin{equation}\label{eq_VA6.25}
\VA_r(gx,gx')
\ge \frac {1}{2} \VA_r(x,x') \frac{\|g^\perp u^\perp\|}{\|u^\perp\|}\frac{\|u^E\|}{\|g u^E\|}.
\end{equation}
Thus, recalling the definition \eqref{eq_sec5_derivative2} and part (1) of
Proposition~\ref{p_sec5_derivative_estimate},
$$
\VA_r(gx,gx')
\ge \frac {1}{2} \VA_r(x,x') \frac{\|Dg_{u^E}^\perp u^\perp\|}{\|u^\perp\|}
\ge \frac {1}{2} \VA_r(x,x') e^{\kappa_0 n}.
$$
By the choices of $\tilde\kappa_r$ and $\widetilde{N}_r$ in \eqref{eq_VA5.5}, this implies
\begin{equation}\label{eq_VA7}
\begin{aligned}
-\log\VA_r(gx,gx')
& \le - \log \VA_r(x,x') + \log 2 - 2 \tilde\kappa_r n \\
& \le - \log \VA_r(x,x') - \tilde\kappa_r n.
\end{aligned}
\end{equation}
The conclusion of the lemma is contained in \eqref{eq_VA6} and \eqref{eq_VA7}.
\end{proof}

\subsection{Vertical projection function}\label{ss_vertical_projection}

The \emph{vertical projection function} $\VP_r$ is defined by
\begin{equation}\label{eq_VPr_def}
\VP_r(x,x') = \VA_r(x,x')d(F_r,F_{r-1}+F'_1)^{\gamma_r}
\end{equation}
where $\gamma_r=\gamma_r(\nu_\infty)$ is a small positive constant chosen through the following
result, which extends Proposition~\ref{p_r=1_Lem5.5} to $r>1$:

\begin{proposition}\label{p_Lem5.5}
There exist $\gamma_r=\gamma_r(\nu_\infty)>0$ and $\kappa'_r=\kappa'_r(\nu_\infty)>0$ and for each
$\delta>0$ there exists $N_r=N_r(\nu_\infty,\delta)\in\NN$ such that for every $n\ge N_r$ there exists
$\rho'_r=\rho'_r(\nu_\infty,\delta,n)>0$ such that for any $x, x' \in \fE_r(\rho'_r)$ with $F'_1\not\subset F_r$
there exists $\cE'_r=\cE'_r(\nu_\infty,\delta,n,x,x')\subset\supp\nu_\infty^{(n)}$ with
$\nu_\infty^{(n)}((\cE'_r)^c)<\delta$ and
\begin{equation}\label{eq_VPstat}
- \log \VP_r(gx,gx') \le -\log\VP_r(x,x')-\kappa'_r n \text{ for every $g\in\cE'_r$.}
\end{equation}
\end{proposition}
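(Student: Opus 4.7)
The plan is to adapt the proof of Proposition~\ref{p_r=1_Lem5.5} to higher $r$ by substituting $d(F_r, F_{r-1}+F'_1)$ for $d(x,x')$ and working with the natural flag variables. Fix a unit vector $v' \in F'_1$ and decompose $v' = v^\parallel + v^\circ$ with $v^\parallel \in F_r$ and $v^\circ \in F_r^\perp$; since $F'_1 \not\subset F_r$, we have $v^\circ \neq 0$. Parameterizing $u \in g(F_{r-1}+F'_1)$ as $gw_{r-1}+\gamma gv'$, noting that $gw_{r-1},gv^\parallel\in gF_r$ so $\Pi_{gF_r}(u)=\gamma\Pi_{gF_r}(gv^\circ)$, and minimizing $\|u\|$ over $w_{r-1}\in F_{r-1}$, one obtains the identity
\begin{equation*}
d(gF_r, g(F_{r-1}+F'_1)) = \frac{\|\Pi_{gF_r}(gv^\circ)\|}{\|\Pi_{gF_{r-1}}(gv')\|},
\end{equation*}
which reduces to \eqref{eq_r=1_dista7} when $F_{r-1}=\{0\}$.

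First I would prove the geometric analog of Lemma~\ref{l_r=1_Lem5.6}: if $d(F_r,E) \le \VA_r(x,x')/4$, then $\|(v^\circ)^\perp\|/\|v^\circ\| > \VA_r(x,x')/4$. Since $F_r+F'_1 = F_r\oplus\gene\{v^\circ\}$ is an orthogonal decomposition, any unit $u=u_r+\beta v^\circ\in F_r+F'_1$ satisfies $\|u_r\|/\|u\|\le 1$ and $|\beta|\|v^\circ\|/\|u\|\le 1$, and hence $\|u^\perp\|/\|u\| \le \|u_r^\perp\|/\|u_r\| + \|(v^\circ)^\perp\|/\|v^\circ\| \le d(F_r,E) + \|(v^\circ)^\perp\|/\|v^\circ\|$; taking the supremum on the left gives the claim.

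Next I would establish the expansion analog of Lemma~\ref{l_r=1_Lem5.7}: for $x,x' \in \fE_r(\hat\rho_r)$ with $F'_1 \not\subset F_r$ and $-\log\VA_r(x,x')\le \tilde\theta_r+\tilde\kappa_r n$, there is $\hE_r\subset\supp\nu_\infty^{(n)}$ of measure at least $1-\delta$ on which
\begin{equation*}
-\log d(gF_r,g(F_{r-1}+F'_1)) \le -\log d(F_r,F_{r-1}+F'_1) - \log\VA_r(x,x') - \hat\kappa_r n.
\end{equation*}
Taking $\hat\rho_r$ sufficiently small depending on $\nu_\infty,\delta,n$, the subspaces $gF_{r-1}\subset gF_r$ stay uniformly close to $E$ for all $g\in\supp\nu_\infty^{(n)}$, yielding quantitative analogs of \eqref{eq_r=1_dista9fresh}. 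These give a lower bound $\|\Pi_{gF_r}(gv^\circ)\|\ge\tfrac12\|g^\perp(v^\circ)^\perp\|$ and a compatible upper bound on $\|\Pi_{gF_{r-1}}(gv')\|$ in terms of the horizontal growth of an appropriate anchor vector in $E$. Applying Proposition~\ref{p_sec5_derivative_estimate}(1) with anchor $(v^\parallel)^E\in E$ and vertical vector $(v^\circ)^\perp\in E^\perp$, and using the geometric lemma above to replace $\|(v^\circ)^\perp\|$ by a constant multiple of $\VA_r(x,x')\|v^\circ\|$, one obtains the desired exponential separation with $\hat\kappa_r=\kappa_0/2$ for $n$ sufficiently large.

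Finally, I combine the two lemmas as in the proof of Proposition~\ref{p_r=1_Lem5.5}, setting $\gamma_r = \min\{1,\tilde\kappa_r/(2A)\}$, $\kappa'_r = \min\{\tilde\kappa_r/2, \gamma_r\hat\kappa_r/2\}$, and taking $\cE'_r=\tE_r\cap\hE_r$ with $N_r$ and $\rho'_r$ mirroring \eqref{eq_r=1_constants}. In the regime $-\log\VA_r(x,x')\ge\tilde\theta_r+\tilde\kappa_r n$, Lemma~\ref{l_Lem5.4} handles $\VA_r$ and the crude bound coming from \eqref{eq_AAA} keeps the $\gamma_r$-weighted distance term negligible; in the complementary regime, the expansion lemma contributes $-\gamma_r\hat\kappa_r n - \gamma_r\log\VA_r(x,x')$ which, together with $(1-\gamma_r)\log\VA_r(x,x')\le 0$, dominates $\tilde\theta_r$ once $n\ge 2\tilde\theta_r/(\gamma_r\hat\kappa_r)$. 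The main obstacle will be the expansion lemma, specifically verifying that $\|\Pi_{gF_r}(gv^\circ)\|$ is controlled below by a constant multiple of $\|g^\perp(v^\circ)^\perp\|$: for $r>1$, $gF_r$ is an entire rotating $r$-dimensional subspace whose proximity to $E$ degrades with $n$ at the rate of the spectral gap between $\lambda_1(\nu_\infty)$ and the top exponent on $E$, forcing $\hat\rho_r$ (and hence $\rho'_r$) to depend on $n$ exactly as in the $r=1$ case; however, no conceptually new ingredient beyond what is already present in Section~\ref{s_step1_preparing_a_Margulis_function} is required.
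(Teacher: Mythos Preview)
Your strategy is correct and matches the paper's: extend Lemmas~\ref{l_r=1_Lem5.6} and~\ref{l_r=1_Lem5.7} to $r>1$ and then combine them exactly as in \eqref{eq_r=1_constants}. The difference lies only in the choice of auxiliary vectors. The paper picks unit vectors $v\in F_r\cap F_{r-1}^\perp$ and $v'\in(F_{r-1}+F'_1)\cap F_{r-1}^\perp$ satisfying $d(F_r,F_{r-1}+F'_1)=|\sin\angle(v,v')|$, and sets $w=v'-v$; with this choice the problem becomes \emph{literally} the $r=1$ problem for the pair $(v,v')$, so Lemmas~\ref{l_Lem5.6} and~\ref{l_Lem5.7} are line-by-line copies of their $r=1$ counterparts, with $\|\Pi_{gv}gw\|/\|gv\|$ controlling the ratio of distances exactly as in \eqref{eq_r=1_dg1}. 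Your parametrization via $v'\in F'_1$ and $v^\circ=\Pi_{F_r}v'$ yields instead an exact formula involving the subspace projections $\Pi_{gF_r}$, $\Pi_{gF_{r-1}}$; your geometric lemma (the orthogonal-decomposition variant of Lemma~\ref{l_Lem5.6}) is actually slightly cleaner than the paper's, but the expansion lemma becomes more awkward because the denominator $\|\Pi_{gF_{r-1}}(gv')\|$ is not directly comparable to the growth of a fixed anchor in $E$ when $v'$ has a large component along $F_{r-1}$. This is fixable---replacing your $v'$ by its normalized $F_{r-1}^\perp$-component brings you exactly to the paper's $v'$---but it shows why the paper's choice is the more economical one. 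Your proposed anchor $(v^\parallel)^E$ can degenerate (e.g.\ when $F'_1$ is nearly orthogonal to $F_r$ inside $E$); since Proposition~\ref{p_sec5_derivative_estimate}(1) holds uniformly over all anchors in $E$, this is harmless, but the paper's anchor $v^E$ with $v\in F_r\cap F_{r-1}^\perp$ is always well-conditioned.
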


For the proof of Proposition~\ref{p_Lem5.5} we need to extend Lemmas~\ref{l_r=1_Lem5.6} and~\ref{l_r=1_Lem5.7}
to $r>1$, which we do in the couple of statements that follow.

Let $v\in F_r$ and $v'\in F_{r-1} + F'_1$ be unit vectors orthogonal to $F_{r-1}$ such that
\begin{equation}\label{eq_vvprime}
\begin{aligned}
F_r & = F_{r-1} + \RR v, \\
F_{r-1}+F'_1 & = F_{r-1} + \RR v', \text{ and}\\
d(F_r,F_{r-1}+F'_1)& = |\sin \angle(v,v')|.
\end{aligned}
\end{equation}
It is no restriction to take the angle between $v$ and $v'$ to be non-obtuse.
Then the vector $w=v'-v$ satisfies (compare Figure~\ref{f.dist1})
\begin{equation}\label{eq_dista12}
d(F_r,F_{r-1}+F'_1) \le \|w\|
\quand
\angle(w,v) \ge \frac{\pi}{4}.
\end{equation}

\begin{lemma}\label{l_Lem5.6}
Given $x, x' \in \cF(r,d)$ with $F'_1\not\subset F_r$, let $w=v'-v$ be as in \eqref{eq_dista12}.
Then
\begin{equation}\label{eq_swc}
d(F_r,E) < \frac {1}{4} \VA_r(x,x')
\text{ implies }
\frac{\|w^\perp\|}{\|w\|} > \frac {1}{4} \VA_r(x,x').
\end{equation}
\end{lemma}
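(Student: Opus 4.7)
The plan is to mimic the argument of Lemma~\ref{l_r=1_Lem5.6}, but to replace the decomposition $u=av+bw$ by one that is genuinely orthogonal relative to $F_r$, since now $u$ may have an arbitrary component in $F_{r-1}$ that no angle-type estimate can control directly. I would pick a unit vector $u\in F_r+F'_1$ realizing the supremum in \eqref{eq_VAR_defbis}, so that $\VA_r(x,x')=\|u^\perp\|$. Because $F'_1\not\subset F_r$, the subspace $F_r+F'_1$ has dimension $r+1$ and decomposes as the $\RR^d$-orthogonal sum of $F_r$ with the one-dimensional complement spanned by $\tilde w:=w-\langle w,v\rangle v$; hence one may write $u=p+s\tilde w$ uniquely, with $p\in F_r$, $s\in\RR$ and $p\perp\tilde w$, so that
\[
\|p\|\le 1\quand |s|\,\|\tilde w\|\le 1.
\]

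Next I would identify $\tilde w$ explicitly. Both $v$ and $v'$ are orthogonal to $F_{r-1}$, so $w=v'-v\perp F_{r-1}$, and the $\RR^d$-orthogonal projection of $w$ onto $F_r=F_{r-1}\oplus\RR v$ reduces to its projection onto $\RR v$, namely $tv$ with $t=\langle v,w\rangle=-\|w\|^2/2$. A direct computation gives $\|\tilde w\|=\|w\|\sqrt{1-\|w\|^2/4}\ge\|w\|/\sqrt 2$, where $\|w\|\le\sqrt 2$ follows from $\angle(v,v')$ being non-obtuse. Since $v\in F_r$ is a unit vector and $p\in F_r$, one has $\|v^\perp\|\le d(F_r,E)$ and $\|p^\perp\|\le d(F_r,E)\|p\|\le d(F_r,E)$; combining this with $\|\tilde w^\perp\|\le\|w^\perp\|+|t|\|v^\perp\|$ and the triangle inequality applied to $u=p+s\tilde w$ yields
\[
\|u^\perp\|\;\le\;d(F_r,E)+\frac{1}{\|\tilde w\|}\Bigl(\|w^\perp\|+\tfrac{\|w\|^2}{2}\,d(F_r,E)\Bigr).
\]
The elementary bound $\|w\|^2/(2\|\tilde w\|)\le 1$, valid throughout $\|w\|\le\sqrt 2$, then gives
\[
\VA_r(x,x')\;\le\;2\,d(F_r,E)+\sqrt 2\,\frac{\|w^\perp\|}{\|w\|}.
\]

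The conclusion follows by contraposition: if both $d(F_r,E)<\tfrac14\VA_r(x,x')$ and $\|w^\perp\|/\|w\|\le\tfrac14\VA_r(x,x')$ held simultaneously, the displayed inequality would force $\VA_r(x,x')<\tfrac{2+\sqrt 2}{4}\VA_r(x,x')<\VA_r(x,x')$, an impossibility. The one real obstacle compared with $r=1$ is precisely this decomposition step: one must abandon the non-orthogonal basis $\{v,w\}$ of $(F_r+F'_1)/F_{r-1}$ that sufficed in Lemma~\ref{l_r=1_Lem5.6}, and work instead with the direction $\tilde w$ orthogonal to $F_r$ in $\RR^d$, so that any $F_{r-1}$-contribution to $u$ is harmlessly swallowed by $p\in F_r$ and the ``new'' vertical contribution from $F'_1\setminus F_r$ is captured entirely by $s\tilde w$.
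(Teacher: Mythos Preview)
Your proof is correct, but it takes a different route from the paper's. The paper keeps the non-orthogonal pair $\{v,w\}$ and simply augments it by the $F_{r-1}$-component: writing $u=u_0+av+bw$ with $u_0\in F_{r-1}$, it observes that $v,w\perp F_{r-1}$ and $\angle(v,w)\ge\pi/4$, which forces $\|u_0+av\|<2\|u\|$ and $\|bw\|<2\|u\|$. Since $u_0+av\in F_r$, the bound $\|(u_0+av)^\perp\|/\|u_0+av\|\le d(F_r,E)$ is immediate, and one gets $\VA_r(x,x')<2d(F_r,E)+2\|w^\perp\|/\|w\|$ exactly as in the $r=1$ case. Your orthogonal decomposition $u=p+s\tilde w$ with $\tilde w\perp F_r$ is a legitimate alternative and yields the slightly sharper coefficient $\sqrt 2$ in place of $2$, at the cost of the explicit computation of $\|\tilde w\|$ and the auxiliary bound $\|w\|^2/(2\|\tilde w\|)\le 1$. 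Your closing remark that one ``must abandon the non-orthogonal basis $\{v,w\}$'' is therefore not accurate: the paper's argument shows that the $r=1$ decomposition carries over verbatim once the $F_{r-1}$-part $u_0$ is grouped with $av$ inside $F_r$.
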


\begin{proof}
Analogous to Lemma~\ref{l_r=1_Lem5.6}.
Let $u\in F'_1+F_r$ be any non-zero vector that realizes the supremum in the definition \eqref{eq_VAR_defbis}.
Since $F'_1+F_r=\gene\{F_{r-1},v,w\}$, we may write $u=u_0 + av + bw$
with $u_0 \in F_{r-1}$ and $a, b\in\RR$. Recall that $v$ and $w$ are orthogonal to $F_{r-1}$.
Moreover, by \eqref{eq_dista12} the angle between them is no less than $\pi/4$.
This implies that $\|u_0+av\|$ and $\|bw\|$ are both less than $2\|u\|$. Thus,
$$
\begin{aligned}
\VA_r(x,x')
& = \frac{\|u^\perp\|}{\|u\|}
\le \frac{\|(u_0+av)^\perp\|}{\|u\|} + \frac{\|bw^\perp\|}{\|u\|} \\
& < 2 \frac{\|(u_0+av)^\perp\|}{\|u_0+av\|} + 2 \frac{\|w^\perp\|}{\|w\|}
\le 2 d(F_r,E) + 2 \frac{\|w^\perp\|}{\|w\|}.
\end{aligned}
$$
Thus, $d(F_r,E)$ and $\|w^\perp\|/\|w\|$ cannot be both less than $\VA_r(x,x')/4$.
\end{proof}

Take $\tilde\kappa_r=\tilde\kappa_r(\nu_\infty)>0$ and $\tilde\theta_r=\tilde\theta_r(\nu_\infty,\delta)>0$ to be as
in Lemma~\ref{l_Lem5.4}.

\begin{lemma}\label{l_Lem5.7}
There exists $\hat\kappa_r=\hat\kappa_r(\nu_\infty)>0$ and for each $\delta>0$ there exists
$\widehat{N}_r=\widehat{N}_r(\nu_\infty, \delta)\in\NN$ such that for each $n\ge \widehat{N}_r$ there exists
$\hat\rho_r=\hat\rho_r(\nu_\infty,\delta,n)>0$ such that for any $x, x' \in \fE_r(\hat\rho_r)$
with $F'_1\not\subset F_r$ and $-\log\VA_r(x,x') \le \tilde\theta_r+\tilde\kappa_r n$ there exists
$\hat\cE_r=\hat\cE_r(\nu_\infty,\delta,n,x,x')\subset\supp\nu_\infty^{(n)}$ with
$\nu_\infty^{(n)}(\hat\cE_r^c)<\delta$ and
\begin{equation}\label{eq_5.17}
-\log d(gF_r,g(F_{r-1}+F'_1)) \le -\log d(F_r,F_{r-1}+F'_1)-\log\VA_r(x,x')-\hat\kappa_rn
\end{equation}
for every $g\in\hat\cE_r$.
\end{lemma}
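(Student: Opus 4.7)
The plan is to follow the template of Lemma~\ref{l_r=1_Lem5.7}, with the two lines $x,x'$ replaced by the two $r$-dimensional subspaces $F_r$ and $F_{r-1}+F'_1$, which agree on $F_{r-1}$. Following \eqref{eq_vvprime}--\eqref{eq_dista12}, I pick unit vectors $v\in F_r$ and $v'\in F_{r-1}+F'_1$, both orthogonal to $F_{r-1}$ and with non-obtuse angle, and set $w=v'-v$. The constants are chosen as in \eqref{eq_r=1_dista22}, mutatis mutandis: $\hat\kappa_r=\kappa_0/2$, $\widehat{N}_r>\max\{N_0,5/\hat\kappa_r\}$, $\hat\rho_r<\tfrac{1}{10}e^{-\tilde\theta_r-\tilde\kappa_r n}$, and $\hat\cE_r=\cE_0(\nu_\infty,\delta,n,w^\perp)$ from Proposition~\ref{p_sec5_derivative_estimate}. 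The hypothesis $-\log\VA_r(x,x')\le\tilde\theta_r+\tilde\kappa_r n$ combined with $d(F_r,E)\le\hat\rho_r$ verifies the hypothesis of Lemma~\ref{l_Lem5.6}, so $\|w^\perp\|/\|w\|>\tfrac{1}{4}\VA_r(x,x')$: $w$ has a definite vertical component.

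To lower-bound the image distance, I take $u=gv\in gF_r$ in the supremum defining $d(\cdot,\cdot)$. Since $gv'\in g(F_{r-1}+F'_1)$ is annihilated by the orthogonal projection $\Pi_{g(F_{r-1}+F'_1)}$ onto $g(F_{r-1}+F'_1)^\perp$, this gives
\begin{equation*}
d(gF_r,g(F_{r-1}+F'_1))\ge\frac{\|\Pi_{g(F_{r-1}+F'_1)}\,gv\|}{\|gv\|}=\frac{\|\Pi_{g(F_{r-1}+F'_1)}\,gw\|}{\|gv\|},
\end{equation*}
the direct analog of \eqref{eq_r=1_dg1}.

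The hard part, and the one genuinely new geometric point compared to $r=1$, is to show that $\|\Pi_{g(F_{r-1}+F'_1)}\,gw\|$ is comparable to the vertical component $\|g^\perp w^\perp\|$. The key observation is that $F_{r-1}+F'_1$ lies close to an $r$-dimensional subspace of $E$: indeed $F_{r-1}\subset F_r$ and $F'_1\subset F'_r$ with $F_r,F'_r\in E_r(\hat\rho_r)$, so projecting an orthonormal basis of $F_{r-1}+F'_1$ onto $E$ yields an $r$-dimensional $E^\#\subset E$ with $d(F_{r-1}+F'_1,E^\#)=O(\hat\rho_r)$. By $\nu_\infty$-invariance of $E$, $gE^\#\subset gE=E$, hence $E^\perp\subset(gE^\#)^\perp$, which gives $\|\Pi_{gE^\#}y\|\ge\|\Pi_E y\|=\|y^\perp\|$ for every $y$. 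Uniform continuity of $(F,g)\mapsto\Pi_{gF}$ on the relevant compact set then yields, for $\hat\rho_r$ small enough (depending on $\nu_\infty$ and $n$),
\begin{equation*}
\|\Pi_{g(F_{r-1}+F'_1)}\,gw\|\ge\tfrac{1}{2}\|\Pi_{gE^\#}\,gw\|\ge\tfrac{1}{2}\|g^\perp w^\perp\|,
\end{equation*}
which is the analog of \eqref{eq_r=1_dista10}.

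To finish, since $\|v^\perp\|/\|v\|\le\hat\rho_r$, a continuity argument identical to the first part of \eqref{eq_r=1_dista9fresh} gives $\|gv\|/\|v\|\ge\tfrac{1}{2}\|gv^E\|/\|v^E\|$ for $\hat\rho_r$ small enough. Assembling the three inequalities, invoking the definition \eqref{eq_sec5_derivative2} of $Dg^\perp_{v^E}$, and using $\|w\|\ge d(F_r,F_{r-1}+F'_1)$ together with $\|w^\perp\|\ge\tfrac{1}{4}\VA_r(x,x')\|w\|$, yields
\begin{equation*}
-\log d(gF_r,g(F_{r-1}+F'_1))\le-\log d(F_r,F_{r-1}+F'_1)-\log\VA_r(x,x')+\log C-\log\frac{\|Dg^\perp_{v^E}\,w^\perp\|}{\|w^\perp\|}
\end{equation*}
for an absolute constant $C$ absorbing the various factors of $2$ and $4$. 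Part~(1) of Proposition~\ref{p_sec5_derivative_estimate} bounds the last term by $-\kappa_0 n$ on $\hat\cE_r$, and enlarging $\widehat{N}_r$ if necessary to absorb $\log C$ into $(\kappa_0-\hat\kappa_r)n=\tfrac{1}{2}\kappa_0 n$ delivers the claim with $\hat\kappa_r=\kappa_0/2$. The measure bound $\nu_\infty^{(n)}(\hat\cE_r^c)<\delta$ is inherited directly from Proposition~\ref{p_sec5_derivative_estimate}.
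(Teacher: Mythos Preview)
Your argument has a genuine gap at the ``key observation'' that $F_{r-1}+F'_1$ lies $O(\hat\rho_r)$-close to an $r$-dimensional subspace $E^\#$ of $E$. This is false. While $F_{r-1}\subset F_r$ and $F'_1\subset F'_r$ are each individually close to $E$, their \emph{sum} need not be: if the $E$-components of $F'_1$ and $F_{r-1}$ nearly coincide, the sum can tilt entirely into $E^\perp$. Concretely, with $r=2$, $d=3$, $E=\gene\{e_1,e_2\}$, take $F_1=\gene\{e_1\}$, $F_2=E$, $F'_1=\gene\{e_1+\hat\rho_r e_3\}$, $F'_2=\gene\{e_1+\hat\rho_r e_3,\,e_2\}$. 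All hypotheses hold (in particular $\VA_r(x,x')=1$), yet $F_{r-1}+F'_1=\gene\{e_1,e_3\}$ satisfies $d(F_{r-1}+F'_1,E)=1$; projecting any basis onto $E$ collapses to a line. So your $E^\#$ does not exist and the continuity step $\|\Pi_{g(F_{r-1}+F'_1)}gw\|\ge\tfrac12\|\Pi_{gE^\#}gw\|$ is unfounded.

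The repair is simple and is essentially what the paper does: project away from $gF_r$ rather than from $g(F_{r-1}+F'_1)$. Since $d$ is symmetric on $\grass(r,d)$, taking $u=gv'\in g(F_{r-1}+F'_1)$ gives
\[
d(gF_r,g(F_{r-1}+F'_1))=d(g(F_{r-1}+F'_1),gF_r)\ge\frac{\|\Pi_{gF_r}gv'\|}{\|gv'\|}=\frac{\|\Pi_{gF_r}gw\|}{\|gv'\|}\ge\frac{\|\Pi_{gF_r}gw\|}{\|gv\|}.
\]
Now $F_r$ \emph{is} $\hat\rho_r$-close to an $r$-plane $F_r^E\subset E$ (just the orthogonal projection of $F_r$ onto $E$, well-defined and $r$-dimensional since $d(F_r,E)\le\hat\rho_r<1$), so by continuity $\|\Pi_{gF_r}gw\|\ge\tfrac12\|\Pi_{gF_r^E}gw\|\ge\tfrac12\|\Pi_Egw\|=\tfrac12\|g^\perp w^\perp\|$ for $\hat\rho_r$ small enough. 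From here your assembly step goes through unchanged. The paper phrases this with the line $gv$ in place of the plane $gF_r$ (which coincide when $r=1$); either version of the projection makes the continuity argument work because the relevant subspace is contained in $F_r$, whose closeness to $E$ is a \emph{hypothesis}, whereas closeness of $F_{r-1}+F'_1$ to $E$ is not.
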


\begin{proof}
Analogous to Lemma~\ref{l_r=1_Lem5.7}.
Let $\kappa_0=\kappa_0(\nu_\infty)>0$, $N_0=N_0(\nu_\infty,\delta)\in\NN$, and
$\cE_0=\cE_0(\nu_\infty,\delta,n,v^\perp)\subset\supp\nu_\infty^{(n)}$ be as in Proposition~\ref{p_sec5_derivative_estimate}.
Given $x$ and $x'$ with $F'_1\not\subset F_r$, let $w=v'-v$ be as in \eqref{eq_dista12}. Take
\begin{equation}\label{eq_dista22}
\begin{aligned}
& \hat\kappa_r= \tilde\kappa_r/2, \quad
\widehat{N}_r = \max\left\{N_0, {5}/{\hat\kappa_r}\right\}, \\
& \hat\rho_r < e^{- \tilde\theta_r - \tilde\kappa_r n}/10,
\quand
\hat\cE_r=\cE_0(\nu_\infty,\delta,n,w^\perp).
\end{aligned}
\end{equation}
Let $n\ge \widehat{N}_r$ and $g\in\hat\cE_r \subset \supp\nu_\infty^{(n)}$.
As observed in \eqref{eq_dista12},
\begin{equation}\label{eq_dista6}
d(F_r,F_{r-1}+F'_1) \le \|w\|.
\end{equation}
Let us suppose that $\|gv\|\ge\|gv'\|$; the case $\|gv\|\le\|gv'\|$ is analogous,
reversing the roles of $F_r$ and $F_{r-1}+F'_1$.
%As in \eqref{eq_r=1_dista7},
%\begin{equation}\label{eq_dista7}
%d(gF_r,g(F_{r-1}+F'_1))  \ge \Big\|\Pi_{gv} \frac{gw}{\|gv\|}\Big\|
%\text{ for any $g\in G$,}
%\end{equation}
%and thus (recall that $v$ and $v'$ are unit vectors),
Just as in \eqref{eq_r=1_dg1},
\begin{equation}\label{eq_dg1}
\begin{aligned}
-\log d(gF_r,g(F_{r-1}+F'_1))
\le - \log d(F_r,F_{r-1}+F'_1) - \log \frac{\|\Pi_{gv}gw\|}{\|gv\|}\frac{\|v\|}{\|w\|}.
\end{aligned}
\end{equation}
for any $g\in G$.
By the condition on $\hat\rho_r$ in \eqref{eq_dista22}, if $d(F_r,E)\le\hat\rho_r$ then
$$
4d(F_r,E) < e^{- \tilde\theta_r - \tilde\kappa_r n} < \VA_r(x,x'),
$$
and then Lemma~\ref{l_Lem5.6} gives that
\begin{equation}\label{eq_dista75}
\frac{\|w^\perp\|}{\|w\|} > \frac 14 \VA_r(x,x') > \frac 14 e^{- \tilde\theta_r - \tilde\kappa_r n}.
\end{equation}
Up to further reducing $\hat\rho_r$, we may also assume that
\begin{equation}\label{eq_dista9fresh}
\frac{\|hz\|}{\|z\|} \ge \frac 12 \frac{\|hz^E\|}{\|z^E\|}
\quand
\|\Pi_{hz} hw\| \ge \frac 12 \|\Pi_{hz^E} gw\|
\end{equation}
for any non-zero $z=z^E+z^\perp$ in $E\oplus E^\perp$ with $\|z^\perp\|/\|z\| \le \hat\rho_r$
and any $h\in\supp\nu_\infty^{(n)}$. The second part of \eqref{eq_dista9fresh} implies that
\begin{equation}\label{eq_dista10}
\|\Pi_{hz} hw\| \ge \frac 12\|\Pi_{hz^E} hw\| \ge \frac 12\|\Pi_E hw\|
= \frac 12\|(hw)^\perp\| = \frac 12\|h^\perp w^\perp\|.
\end{equation}

Noting that $\|v^\perp\|/\|v\| \le d(F_r,E)\le\hat\rho_r$, take $z=v$ and $h=g$ in the previous
two relations. Thus, substituting \eqref{eq_dista9fresh} and \eqref{eq_dista10} in \eqref{eq_dg1},
\begin{equation*}
-\log d(gF_r,g(F_{r-1}+F'_1)) \\
\le - \log d(F_r,F_{r-1}+F'_1) + \log 4 - \log \frac{\|g^\perp w^\perp\|}{\|w\|}\frac{\|v^E\|}{\|gv^E\|}.
\end{equation*}
Then, using also \eqref{eq_dista75} and \eqref{eq_sec5_derivative2},
\begin{equation*}
\begin{aligned}
& -\log d(gF_r,g(F_{r-1}+F'_1)) \\
& \hspace{1cm} \le - \log d(F_r,F_{r-1}+F'_1) - \log\VA_r(x,x') + \log 16 - \log \frac{\|g^\perp w^\perp\|}{\|w^\perp\|}\frac{\|v^E\|}{\|gv^E\|}\\
& \hspace{1cm} = - \log d(F_r,F_{r-1}+F'_1) - \log\VA_r(x,x') + \log 16 -  \log\frac{\|Dg^\perp_{v^E} w^\perp\|}{\|w^\perp\|}.
\end{aligned}
\end{equation*}
By part (1) of Proposition~\ref{p_sec5_derivative_estimate}
and the choice of $\hat\kappa_r$ and $\widehat{N}_r$ in \eqref{eq_dista22},
it follows that
$$
\begin{aligned}
-\log d(gF_r,g(F_{r-1}+F'_1))
& \le - \log d(F_r,F_{r-1}+F'_1) - \log \VA_r(x,x') + 5 - 2\hat\kappa_r n \\
& \le -\log d(F_r,F_{r-1}+F'_1) - \log \VA_r(x,x') - \hat\kappa_r n,
\end{aligned}
$$
as claimed.
\end{proof}

\begin{proof}[Proof of Proposition~\ref{p_Lem5.5}]
Analogous to Proposition~\ref{p_r=1_Lem5.5}. Take $A=A(\nu_\infty)$ as in \eqref{eq_AAA}
 and then define
\begin{equation}\label{eq_constants}
\begin{aligned}
& \gamma_r = \min\big\{1,{\tilde\kappa_r}/{(2A)}\big\},\\
& \kappa'_r=\min\big\{{\tilde\kappa_r}/{2}, {\gamma_r\hat\kappa_r}/{2}\big\}, \\
& \rho'_r=\min\{\hat\rho_r(\nu_\infty,\delta),\hat\rho_r(\nu_\infty,{\delta}/{2})\}, \\
& N_r=\max\big\{\widetilde{N}_r(\nu_\infty,{\delta}/{2}), \widehat{N}_r(\nu_\infty,{\delta}/{2}),
                         2\tilde\theta_r/(\gamma\hat\kappa_r)\big\}, \text{ and} \\
& \cE'_r=\tE_r(\nu_\infty,{\delta}/{2},n,x,x') \cap \hat\cE_r(\nu_\infty,{\delta}/{2},n,x,x').
\end{aligned}
\end{equation}
By construction, $\nu_\infty^{(n)}((\cE'_r)^c)<\delta$.
The definition \eqref{eq_VPr_def} gives that
\begin{equation}\label{eq_VP1}
-\log\VP_r(gx,gx')=-\log\VA_r(gx,gx') - \gamma_r \log d(gF_r,g(F_{r-1}+F'_1)).
\end{equation}
%The relation  \eqref{eq_def_AAA} implies that
%\begin{equation}\label{eq_AAAaplied}
%- \log d(gF_r,g(F_{r-1}+F'_1)) \le - \log d(F_r, F_{r-1}+F'_1) + A n
%\end{equation}
%for any $g \in \supp\nu_\infty^{(n)}$.

Consider $x, x' \in \fE_r(\rho'_r)$ with
$F'_1 \not\subset F$, and let $n\ge N_r$ and $g\in\cE'_r$.
First, suppose that $-\log\VA_r(x,x')\ge\tilde\theta_r+ \tilde\kappa_r n$. Then, by Lemma~\ref{l_Lem5.4},
\begin{equation}\label{eq_VP2}
-\log\VA_r(gx,gx') \le - \log\VA_r(x,x') - \tilde\kappa_r n
\end{equation}
Substituting \eqref{eq_VP2} and \eqref{eq_AAA} in \eqref{eq_VP1} we find that
$$
-\log\VP_r(gx,gx')
\le  - \log \VA_r(x,x') - \tilde\kappa_r n - \gamma_r \log d(F_r,F_{r-1}+F'_1) + \gamma_r A n.
$$
By \eqref{eq_VP1} and the choice of $\gamma_r$ and $\kappa'_r$ in \eqref{eq_constants}, this yields
\begin{equation}\label{eq_VP3}
-\log\VP_r(gx,gx')
\le - \log \VP_r(x,x') - \frac{\tilde\kappa_r}{2} n \le - \log \VP_r(x,x') - \kappa'_r n.
\end{equation}

Now assume that $-\log\VA_r(x,x') \le \tilde\theta_r+\tilde\kappa_r n$.  In this case, Lemma~\ref{l_Lem5.4} yields
\begin{equation}\label{eq_VP4}
-\log\VA_r(gx,gx') \le \tilde\theta_r
\end{equation}
and Lemma~\ref{l_Lem5.7} gives that
\begin{equation}\label{eq_VP5}
- \log d(gF_r,g(F_{r-1}+F'_1)) \le -\log d(F_r,F_{r-1}+F'_1) - \log\VA_r(x,x') - \hat\kappa_r n.
\end{equation}
Substituting \eqref{eq_VP4} and \eqref{eq_VP5} in \eqref{eq_VP1} we obtain
$$
\begin{aligned}
-\log\VP_r(gx,gx')
& \le \tilde\theta_r - \gamma_r \log d(F_r,F_{r-1}+F'_1) - \gamma_r \log\VA_r(x,x') - \gamma_r\hat\kappa_rn \\
& \le -\log\VP_r(x,x')  + \tilde\theta_r + (1-\gamma_r) \log \VA_r(x,x') - \gamma_r\hat\kappa_rn.
\end{aligned}
$$
Since $\VA_r(x,x') \le 1$, $\gamma_r \le 1$, and $n \ge N_r \ge 2\tilde\theta_r/(\gamma_r\hat\kappa_r)$,
this yields
\begin{equation}\label{eq_VP6}
\begin{aligned}
-\log\VP_r(gx,gx')
& \le -\log\VP_r(x,x') + \tilde\theta_r - \gamma_r\hat\kappa_rn\\
& \le -\log\VP_r(x,x') - \frac{\gamma_r\hat\kappa_r}{2} n
\le  - \log\VP_r(x,x') - \kappa'_r n.
\end{aligned}
\end{equation}
The relations \eqref{eq_VP3} and \eqref{eq_VP6} contain the conclusion of Proposition~\ref{p_Lem5.5}.
\end{proof}

\subsection{The function $-\log\psi_r$}\label{s_function_psir}
At this point, the proof of Proposition~\ref{p_Prop5.1} is analogous to that of Proposition~\ref{p_r=1_Prop5.1}.
Take $\kappa'_r>0$, $N_r\in\NN$ and $\rho'_r>0$ as in Proposition~\ref{p_Lem5.5}.
%Let $g\in\supp\nu^{(n)}_\infty$ and $x, x' \in \cF(r,d)$ be such that $F'_1 \not\subset F_r$.
%It follows from \eqref{eq_VA5again} and \eqref{eq_def_BBB} that
%\begin{equation}\label{eq_VA8a}
%- \log\VA_r(gx,gx') \le - \log\VA_r(x,x') + B n.
%\end{equation}
%Similarly, \eqref{eq_def_AAA} gives that
%\begin{equation}\label{eq_VA8b}
%- \log d(gF_r,g(F_{r-1}+F'_1)) \le - \log d(F_r, F_{r-1}+F'_1) + A n.
%\end{equation}
Let $C'_r=C'_r(\nu_\infty)>0$ be given by
\begin{equation}\label{eq_Crlinha}
C'_r = B+\gamma_r A.
\end{equation}
Substituting \eqref{eq_VAr_linear} and \eqref{eq_AAA} in the definition \eqref{eq_VPr_def},
we find that
%\begin{equation*}%\label{eq_VPr_def_again}
%-\log \VP_r(x,x') = -\log \VA_r(x,x') + \gamma_r \log d(F_r,F_{r-1}+F'_1).
%\end{equation*}
\begin{equation}\label{eq_VP7}
\begin{aligned}
-\log\VP_r(gx,&gx')\\
& = - \log\VA_r(gx,gx') - \gamma_r \log d(gF_r,g(F_{r-1}+F'_1)) \\
& \le - \log\VA_r(x,x') + Bn - \gamma_r \log d(F_r,F_{r-1}+F'_1) + \gamma_r A n \\
& = - \log \VP_r(x,x') + C'_r n.
\end{aligned}
\end{equation}
for any $g\in\supp\nu^{(n)}_\infty$ and $x, x' \in \cF(r,d)$ with $F'_1 \not\subset F_r$.
Integrating \eqref{eq_VPstat} over $\cE'_r$ and \eqref{eq_VP7}
over the complement, and using the fact that $\nu_\infty^{(n)}((\cE'_r)^c)<\delta$, we get that
$$
\int_G -\log\VP_r(gx,gx') \, d\nu_\infty^{(n)}(g) \le -\log\VP_r(x,x') - \kappa'_r n + C'_r\delta n.
$$
for every $n\ge N_r$ and $x, x' \in \fE_r(\rho'_r)$ with $F'_1\not\subset F_r$.
This completes the proof of Proposition~\ref{p_Prop5.1}.

However, as mentioned before, for $r>1$ it is possible to have $x$ and $x'$ with $F_r$ and $F'_r$
arbitrarily close to the equator $E$ without $\VP_r(x,x')$ getting close to zero.
Here is a simple example (see also part (2) of Lemma~\ref{l_Lem5.8}):

\begin{example}
Denote by $(x_1,x_2,x_3,x_4)$ the elements of $\RR^4$ and let $E = \{x_4=0\}$,
$F_1=\{x_2=x_3=x_4=0\}$, $F_2=\{x_3=x_4=0\}$,
$F'_1=\{x_2=x_3=0, x_4=\vep x_1\}$ and $F'_2=\{x_3=0,x_4=\vep x_1\}$.
It is clear that $F_2\subset E$ and $F'_2 \to E$ when $\vep \to 0$. However,
\begin{itemize}
\item $\VA_2(x,x') = d(F_2+F'_1,E) = 1$, since $(0,0,0,1) \in (F_2 + F'_1) \cap E^\perp$;
\item $d(F_2,F_1+F'_1) = 1$, since $(0,1,0,0) \in F_2 \cap (F_1 + F'_1)^\perp$.
\end{itemize}
It follows that $\VP_2(x,x')$ remains bounded from zero when $\vep \to 0$.
\end{example}

Thus, $-\log\VP_r(x,x')$ cannot be used as a Margulis function for the equator $\fE_r$ in the flag space.
To rectify this problem, we define the function $\psi_r$ inductively in $r$ as follows.
Let $x_-$, $x'_- \in \cF_{r-1}(\RR^d)$ be the \emph{truncated flags} obtained by dropping the
$r$-dimensional subspaces from $x$ and $x'$, respectively. The assumption $F'_1\not\subset F_r$ implies that
$F'_1\not\subset F_{r-1}$, and so we may assume that $\psi_{r-1}(x_-,x'_-)$ has already been defined.
Then define
\begin{equation}\label{eq_psir_def}
\psi_r(x,x') = \psi_{r-1}(x_-,x'_-)^{\beta_{r-1}}\VP_r(x,x'),
\end{equation}
where the exponent $\beta_{r-1}=\beta_{r-1}(\nu_\infty)$ is a small constant to be chosen as follows.

It follows from this definition and \eqref{eq_VP7} that
$$
\begin{aligned}
-\log\psi_j(gx,gx')&+\log\psi_j(x,x')\\
& \le
\big[-\log\psi_{j-1}(gx,gx')+\log\psi_{j-1}(x,x')\big]\beta_{j-1}+C_j'n
\end{aligned}
$$
for every $j=2, \dots, r$. Thus, recalling also \eqref{eq_r=1_VP1_linear},
\begin{equation}\label{eq_psir_linear}
\begin{aligned}
-\log\psi_r&(gx,gx') \\
& \le -\log\psi_r(x,x') + \big[C'_1 \beta_1 \cdots \beta_{r-1} + \cdots + C'_{r-1}\beta_{r-1}+C'_r\big]n \\
& \le -\log\psi_r(x,x') + C''_r n,
\end{aligned}
\end{equation}
where $C''_r=C''_r(\nu_\infty)$ is defined by
\begin{equation}\label{eq_Crdoislinha}
C''_r = C'_1 + \cdots + C'_{r-1}+C'_r.
\end{equation}
Take  the exponent $\beta_{r-1}$ in \eqref{eq_psir_def} small enough that
\begin{equation}\label{eq_bound_betar}
\beta_{r-1} C''_{r-1}\le \frac 12 \kappa'_r
\end{equation}
where $\kappa'_r=\kappa'_r(\nu_\infty)$ is as in Proposition~\ref{p_Prop5.1}.

The function $-\log\psi_r(x,x')$ thus defined does go to infinity when the flags $x$ and $x'$
approach the equator $\fE_r$:

\begin{lemma}\label{l_Lem5.8}
Given any $R>0$, there exists $\tilde\rho_r=\tilde\rho_r(\nu_\infty,R)>0$ such that for any $x, x' \in \fE_r(\tilde\rho_r)$
with $F'_1 \not\subset F_r$,
\begin{enumerate}
\item  $-\log\psi_r(x,x')>R$ and
\item $-\log\VP_r(x,x')>R$ unless $\VA_{r-1}(x_-,x'_-)>\tilde\rho_r$.
\end{enumerate}
\end{lemma}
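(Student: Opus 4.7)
I would prove both parts simultaneously by induction on $r$, with Lemma~\ref{l_r=1_Lem5.8} providing the base case (where parts (1) and (2) coincide because $\psi_1=\VP_1$). For $r>1$, part~(1) will follow almost immediately from the recursive definition $\psi_r(x,x')=\psi_{r-1}(x_-,x'_-)^{\beta_{r-1}}\VP_r(x,x')$ together with the inductive hypothesis, while part~(2) is the substantive step and is handled by a direct geometric decomposition in $F'_1+F_r$.

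For part~(1), the monotonicity of $d$ stated after \eqref{eq_sec7_Theta1}, applied to $F_{r-1}\subset F_r$, gives $d(F_{r-1},E)\le d(F_r,E)\le\tilde\rho_r$, and likewise for $F'_{r-1}$, so $x_-,x'_-\in\fE_{r-1}(\tilde\rho_r)$; moreover $F_{r-1}\subset F_r$ forces $F'_1\not\subset F_{r-1}$. Choosing $\tilde\rho_r\le\tilde\rho_{r-1}(\nu_\infty,R/\beta_{r-1})$, the inductive hypothesis yields $-\log\psi_{r-1}(x_-,x'_-)>R/\beta_{r-1}$, and since both factors in \eqref{eq_VPr_def} are sines of angles we have $\VP_r\le 1$, so
$$
-\log\psi_r(x,x')=\beta_{r-1}\bigl(-\log\psi_{r-1}(x_-,x'_-)\bigr)-\log\VP_r(x,x')>R.
$$

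For part~(2), assume $\VA_{r-1}(x_-,x'_-)=d(F_{r-1}+F'_1,E)\le\tilde\rho_r$. Let $v_r$ be the unit vector in $F_r$ orthogonal to $F_{r-1}$, let $w$ be the orthogonal projection of $v_r$ onto $F_{r-1}+F'_1$, and set $\alpha=\|v_r-w\|$; one verifies by maximizing over unit $u\in F_r$ that $\alpha=d(F_r,F_{r-1}+F'_1)$, and the assumption $F'_1\not\subset F_r$ forces $\alpha>0$. Then $v=(v_r-w)/\alpha$ is a unit vector orthogonal to $F_{r-1}+F'_1$ and lying inside $F'_1+F_r$. Projecting the identity $v_r=w+\alpha v$ onto $E^\perp$ gives
$$
\|v^\perp\|\le\frac{\|v_r^\perp\|+\|w^\perp\|}{\alpha}\le\frac{2\tilde\rho_r}{\alpha},
$$
since $\|v_r^\perp\|\le d(F_r,E)\le\tilde\rho_r$ and $\|w^\perp\|\le\|w\|\,\VA_{r-1}(x_-,x'_-)\le\tilde\rho_r$. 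For any unit $u\in F'_1+F_r$, the orthogonal decomposition $u=u_0+av$ with $u_0\in F_{r-1}+F'_1$ gives
$$
\|u^\perp\|\le\|u_0^\perp\|+|a|\,\|v^\perp\|\le\tilde\rho_r+\frac{2\tilde\rho_r}{\alpha},
$$
whence \eqref{eq_VAR_defbis} yields $\VA_r(x,x')\le\tilde\rho_r(1+2/\alpha)$. Multiplying by $\alpha^{\gamma_r}$ and splitting on whether $\alpha\ge\tilde\rho_r^{1/2}$ gives $\VP_r(x,x')\le 3\tilde\rho_r^{\gamma_r/2}$ in both cases. Taking $\tilde\rho_r\le\min\{\tilde\rho_{r-1}(\nu_\infty,R/\beta_{r-1}),(e^{-R}/3)^{2/\gamma_r}\}$ handles both parts simultaneously.

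The main obstacle, and what distinguishes the $r>1$ case from Lemma~\ref{l_r=1_Lem5.8}, is that $\VP_r$ alone fails to vanish as the flags approach the equator: the subspace $F'_1+F_r$ can remain genuinely transverse to $E$ even when $F_r,F'_r\to E$, as the example preceding the statement of this lemma shows. That is precisely why $\psi_r$ is defined recursively in \eqref{eq_psir_def} and why part~(2) carries an ``unless'' clause: together they encode the dichotomy that either the geometric estimate forces $\VP_r$ small, or else $\VA_{r-1}(x_-,x'_-)$ is bounded below and one recovers smallness of $\psi_r$ via the factor $\psi_{r-1}^{\beta_{r-1}}$.
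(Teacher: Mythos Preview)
Your proof is correct and follows essentially the same strategy as the paper: part~(1) via the recursion \eqref{eq_psir_def} reduced to the $r=1$ case, and part~(2) via the dichotomy on the size of $\alpha=d(F_r,F_{r-1}+F'_1)$, bounding $\VA_r$ by decomposing $u\in F'_1+F_r$ into a piece controlled by $d(F_r,E)$ and one controlled by $\VA_{r-1}(x_-,x'_-)$. The only cosmetic differences are that the paper unwinds the recursion in part~(1) all the way to $\psi_1$ in one shot, and in part~(2) it uses the pair $(v,v')$ from \eqref{eq_vvprime} rather than your single orthogonal direction $v=(v_r-w)/\alpha$; both decompositions encode the same geometry and yield the same estimate.
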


\begin{proof}
It is clear from the definitions \eqref{eq_VAR_defbis} and \eqref{eq_VPr_def} that $\VP_j(x,x') \le 1$
for every $1 \le j \le r$ and any $x, x'\in\cF(j,d)$ with $F'_1\not\subset F_j$.
So, the definition \eqref{eq_psir_def} implies that
\begin{equation}\label{eq_betas}
-\log \psi_r(x,x')  \ge - \log\psi_1(F_1,F'_1)\beta_1 \cdots \beta_{r-1}.
\end{equation}
By definition, the $\beta_j$ depend only on $\nu_\infty$. Then Lemma~\ref{l_r=1_Lem5.8} gives
that for any $R>0$ there exists $\rho>0$ depending only on $\nu_\infty$ and $R$ such that the
right-hand side of \eqref{eq_betas} is greater than $R$ for any $F_1 \neq F'_1$ in $E_1(\rho)$.
Since $d(F_1,E) \le d(F_r,E)$ and $d(F'_1,E) \le d(F'_r,E)$,
because $F_1\subset F_r$ and $F'_1\subset F'_r$, we get that $-\log\psi_r(x,x') > R$ for any
$x, x'\in \fE_r(\rho)$ with $F'_1 \not\subset F_r$. This proves part (1).

To prove part (2), consider $x, x' \in \cF(r,d)$ with $d(F_r, E) \le \rho$, $d(F'_r, E) \le \rho$, and
\begin{equation}\label{eq_DFF}
\VA_{r-1}(x_-,x'_-) = d(F_{r-1}+F'_1,E) \le \rho.
\end{equation}
If $d(F_r,F'_{r-1}+F'_1) \le \sqrt\rho$, then
\begin{equation}\label{eq_VPtoinfty1}
\VP_r(x,x')
\le d(F_r,F_{r-1}+F'_1)^{\gamma_r}
\le \rho^{\gamma_r/2}.
\end{equation}
Now suppose that $d(F_r,F'_{r-1}+F'_1) \ge \sqrt\rho$.
Let $v$ and $v'$ be unit vectors orthogonal to $F_{r-1}$ as in \eqref{eq_vvprime}.
Take $u\in F'_1+F_r$ realizing the supremum in \eqref{eq_VAR_defbis}, and write $u=u_0+av+bv'$
with $u_0\in F_{r-1}$ and $a, b\in \RR$. Since $|\sin\angle(v',F_r)| = d(F_r,F'_1+F'_{r-1})$
is taken to be greater than $\sqrt\rho$,
$$
\|u_0+av\| \le \frac{2}{\sqrt\rho} \|u\| \quand \|b v'\| \le \frac{2}{\sqrt\rho} \|u\|.
$$
Then,
$$
\begin{aligned}
\VA_r(x,x') = \frac{\|u^\perp\|}{\|u\|}
& \le \frac{\|(u_0+av)^\perp\|}{\|u\|} +  \frac{\|(bv')^\perp\|}{\|u\|}\\
& \le \frac{2}{\sqrt\rho}\frac{\|(u_0+av)^\perp\|}{\|u_0+av\|} + \frac{2}{\sqrt\rho}\frac{\|(v')^\perp\|}{\|v'\|}.
\end{aligned}
$$
Recalling also \eqref{eq_DFF}, we get that
\begin{equation}\label{eq_VPtoinfty2}
\VP_r(x,x')
\le \VA_r(x,x')
\le \frac{2}{\sqrt\rho} d(F_r,E) + \frac{2}{\sqrt\rho}d(F'_1+F'_{r-1},E))
\le 4\sqrt\rho.
\end{equation}
Since $\gamma_r$ and $\rho$ may be taken to be smaller than $1$, both inequalities \eqref{eq_VPtoinfty1} and
\eqref{eq_VPtoinfty2} imply that
$$
-\log\VP_r(x,x') \ge - \frac{\gamma_r}{2}\log\rho - \log 4
$$
for any $x, x'$ in $\fE_r(\rho)$ with $F'_1 \not\subset F_r$.
The right-hand side is larger than $R$, as long as $\rho$ is chosen to be sufficiently small.
\end{proof}

For completeness, we include the following version of Propositions~\ref{p_Prop5.1}
and~\ref{p_Lem5.5} for the function $\psi_r$, although it will not be needed in what
follows (the related Proposition~\ref{p_Prop6.1Lem6.2} will be used instead):

\begin{proposition}\label{p_Prop5.1Lem5.5}
There exists $\kappa''_r=\kappa''_r(\nu_\infty)>0$ such that for each $\delta>0$ and $n \ge N_r$
there exists $\rho''_r=\rho''_r(\nu_\infty,\delta,n)>0$ such that for any $x, x' \in \fE_r(\rho''_r)$ with
$F'_1\not\subset F_r$ there exists $\cE''_r=\cE''_r(\nu_\infty,\delta,n,x,x')\subset\supp\nu_\infty^{(n)}$
with $\nu_\infty^{(n)}((\cE''_r)^c)<\delta$ and
\begin{equation}\label{eq_psir_A0}
- \log \psi_r(gx,gx') \le -\log\psi_r(x,x')-\kappa''_r n \text{ for every $g\in\cE''_r$}
\end{equation}
and
\begin{equation}\label{eq_psir_B0}
\int_G - \log\psi_r(gx,gx') \, d\nu_\infty^{(n)}(g) \le -\log\psi_r(x,x') - (\kappa''_r-C''_r\delta)n.
\end{equation}
\end{proposition}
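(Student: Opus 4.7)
I would prove Proposition~\ref{p_Prop5.1Lem5.5} by induction on $r$; once the right structural decomposition is exposed, this becomes essentially bookkeeping. The base case $r=1$ is immediate: by the convention $\psi_1 = \VP_1$, \eqref{eq_psir_A0} reduces to Proposition~\ref{p_r=1_Lem5.5} and \eqref{eq_psir_B0} to Proposition~\ref{p_r=1_Prop5.1}, with $\kappa''_1 = \kappa'_1$, $C''_1 = C'_1$, $\rho''_1 = \rho'_1$, and $\cE''_1 = \cE'_1$.

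For the inductive step, assume the proposition for $r-1$. Taking the logarithm of \eqref{eq_psir_def} gives
\[
-\log\psi_r(x,x') = \beta_{r-1}\bigl(-\log\psi_{r-1}(x_-,x'_-)\bigr) + \bigl(-\log\VP_r(x,x')\bigr),
\]
which I would exploit to treat the two summands independently. For $x, x' \in \fE_r(\rho)$ with $F'_1\not\subset F_r$, note that $F_{r-1}\subset F_r\in E_r(\rho)$ forces $F_{r-1}\in E_{r-1}(\rho)$ (and likewise $F'_{r-1}\in E_{r-1}(\rho)$), and $F'_1\not\subset F_{r-1}$ since otherwise $F'_1\subset F_{r-1}\subset F_r$. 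Thus $x_-, x'_-\in\fE_{r-1}(\rho)$ and the inductive hypothesis applies to the truncated pair. I would set $N_r \ge N_{r-1}$ and also at least as large as the threshold required by Propositions~\ref{p_Prop5.1} and~\ref{p_Lem5.5}, and $\rho''_r = \min\bigl\{\rho''_{r-1}(\nu_\infty,\delta/2,n), \rho'_r(\nu_\infty,\delta/2,n)\bigr\}$.

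For the almost-sure bound \eqref{eq_psir_A0}, define
\[
\cE''_r = \cE''_{r-1}(\nu_\infty,\delta/2,n,x_-,x'_-) \cap \cE'_r(\nu_\infty,\delta/2,n,x,x'),
\]
so that $\nu_\infty^{(n)}((\cE''_r)^c) < \delta$. On $\cE''_r$, the inductive \eqref{eq_psir_A0} at level $r-1$ and Proposition~\ref{p_Lem5.5} respectively yield $-\log\psi_{r-1}(gx_-,gx'_-) \le -\log\psi_{r-1}(x_-,x'_-) - \kappa''_{r-1}n$ and $-\log\VP_r(gx,gx') \le -\log\VP_r(x,x') - \kappa'_r n$. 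Multiplying the first by $\beta_{r-1}$ and adding gives \eqref{eq_psir_A0} with $\kappa''_r := \beta_{r-1}\kappa''_{r-1} + \kappa'_r > 0$.

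The integrated bound \eqref{eq_psir_B0} is actually easier, as no intersection of exceptional sets is required: I would apply the inductive \eqref{eq_psir_B0} to $(x_-,x'_-)$ and Proposition~\ref{p_Prop5.1} to $(x,x')$, multiply the former by $\beta_{r-1}$, and add, to obtain
\[
\int_G -\log\psi_r(gx,gx')\,d\nu_\infty^{(n)}(g) \le -\log\psi_r(x,x') - \kappa''_r n + (\beta_{r-1}C''_{r-1} + C'_r)\delta n,
\]
so \eqref{eq_psir_B0} holds with $C''_r := \beta_{r-1}C''_{r-1} + C'_r \le C''_{r-1} + C'_r$, consistent with the telescopic bound~\eqref{eq_Crdoislinha}. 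There is no genuine obstacle here; the only subtle point is verifying that the non-degeneracy condition $F'_1\not\subset F_r$ transfers to $F'_1\not\subset F_{r-1}$ so that the inductive hypothesis is applicable to the truncated flags. The constraint~\eqref{eq_bound_betar} on $\beta_{r-1}$ plays no role in this particular proposition; it will be needed in later arguments where the linear growth~\eqref{eq_psir_linear} on bad sets must be dominated by the gain $\kappa'_r n$ coming from $\VP_r$.
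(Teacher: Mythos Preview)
Your proposal is correct but takes a genuinely different route from the paper. The paper does \emph{not} apply the inductive sharp bound to $\psi_{r-1}$; instead it uses only the crude linear growth estimate \eqref{eq_psir_linear}, valid for every $g\in\supp\nu_\infty^{(n)}$, on the $\psi_{r-1}$ factor, and controls the $\VP_r$ factor via Proposition~\ref{p_Lem5.5}. This lets the paper take simply $\cE''_r=\cE'_r$ (no intersection, no halving of $\delta$), but then the loss $\beta_{r-1}C''_{r-1}n$ coming from $\psi_{r-1}$ must be absorbed by the gain $\kappa'_r n$ from $\VP_r$; this is exactly where the smallness condition \eqref{eq_bound_betar} is invoked, yielding $\kappa''_r=\kappa'_r/2$. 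Your approach trades that dependence for an intersection $\cE''_{r-1}(\delta/2)\cap\cE'_r(\delta/2)$ and produces the larger constant $\kappa''_r=\beta_{r-1}\kappa''_{r-1}+\kappa'_r$; in particular your closing remark that \eqref{eq_bound_betar} ``plays no role in this particular proposition'' is true for \emph{your} argument but false for the paper's. Both proofs are short; the paper's is slightly cleaner in its bookkeeping of exceptional sets, while yours is more self-contained and does not need the auxiliary constraint on $\beta_{r-1}$ at this stage.
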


\begin{proof}
The case $r=1$ consists of the inequalities \eqref{eq_r=1_VPstat} and \eqref{eq_r=1_VPaver}, respectively.
with $\kappa''_1=\kappa'_1$, $C''_1=C'_1$, $\rho''_1=\rho'_1$, and $\cE''_1=\cE'_1$.
Now suppose that $r>1$. Recall that $C''_r=C''_r(\nu_\infty)$ was defined in \eqref{eq_Crdoislinha}.
Define also
\begin{equation}\label{eq_psir_A4}
\begin{aligned}
\kappa''_r  = \kappa'_r(\nu_\infty)/2, \quad
\rho''_r = \rho'_r(\nu_\infty,\delta,n) \quand
\cE''_r = \cE'_r(\nu_\infty,\delta,n,x,x').
\end{aligned}
\end{equation}
Consider $n \ge N_r$, and $x, x' \in \fE_r(\rho''_r)$
with $F'_1 \not\subset F_r$.
By \eqref{eq_psir_linear},
\begin{equation}\label{eq_psir_A2}
- \log \psi_{r-1}(gx_-,gx'_-) \le -\log\psi_{r-1}(x_-,x'_-)+C''_{r-1}n
\end{equation}
for every $g\in\supp\nu_\infty^{(n)}$. By Proposition~\ref{p_Lem5.5},
\begin{equation}\label{eq_psir_A3}
- \log \VP_r(gx,gx') \le -\log\VP_r(x,x')-\kappa'_r n
\end{equation}
for every $g\in\cE''_r$.
Substituting \eqref{eq_psir_A2} and \eqref{eq_psir_A3} in the definition \eqref{eq_psir_def},
and recalling the choice of $\beta_{r-1}$ in \eqref{eq_bound_betar},
$$
\begin{aligned}
-\log\psi_r(gx,gx')
& - \log\psi_{r-1}(gx_-,gx'_-)\beta_{r-1} - \log\VP_r(gx,gx') \\
& \le -\log\psi_{r-1}(x_-,x'_-)\beta_{r-1}+C''_{r-1}n\beta_{r-1} -\log\VP_r(x,x')-\kappa'_r n \\
& \le -\log\psi_r(x,x') - (\kappa'_r/2)n
\end{aligned}
$$
for every $g\in\cE''_r$. This gives claim \eqref{eq_psir_A0}.
Moreover, integrating this inequality on $\cE''_r$ and \eqref{eq_psir_linear} on the complement,
we obtain
$$
\int_G - \log\psi_r(gx,gx') \, d\nu_\infty^{(n)}(g)
\le -\log\psi_r(x,x') - (\kappa'_r/2 - C''_r \delta) n,
$$
which gives claim \eqref{eq_psir_B0}.
\end{proof}

\section{Stabilization and cut-off}\label{s_stabilization_and_cut-off}

Next we present the analogues for $r>1$ of the two constructions in Section~\ref{s_step1_stabilization_and_cut-off}.
Both are fairly straightforward extensions of the case $r=1$, but it turns out that for $r>1$ they do not suffice to
deal with the questions discussed at the beginning of Section~\ref{s_step1_stabilization_and_cut-off}.
This difficulty will be handled later, in Section~\ref{s_spreading_out}.
Another issue is that the function $\psi_r$ we construct in the following is not symmetric when $r>1$.
Thus we will use instead the function $\hpsi_r$ defined by
$$
\hpsi_r(x,x') = \max\{\psi_r(x,x'),\psi_r(x',x)\}.
$$

Let $\kappa''_r=\kappa''_r(\nu_\infty)>0$, $C''_r=C''_r(\nu_\infty)>0$ and
$N_r=N_r(\nu_\infty,\delta)\in\NN$ be as in Proposition~\ref{p_Prop5.1Lem5.5}.
Keep in mind that $\vep_r<\rho''_r=\rho'_r \le \hat\rho_r$, $C'_r=B+\gamma_r A$,
and $C''_r=C'_1 + \cdots + C'_r$, according to \eqref{eq_constants},
\eqref{eq_Crlinha}, \eqref{eq_Crdoislinha},  \eqref{eq_psir_A4}, and \eqref{eq_veprsmall}.

\subsection{Stabilization}\label{ss_stabilization}

Let $\gamma_r=\gamma_r(\nu_\infty)>0$ and $B=B(\nu_\infty)>0$ be as in \eqref{eq_constants}
and \eqref{eq_BBB}, respectively. For each $\omega_r>0$ and $n\in\NN$, define the
\emph{stabilized vertical angle}
\begin{equation}\label{eq_SVAr_def}
\SVA_r(x,x';\omega_r) = \max\big\{\VA_r(x,x'),\omega_r e^{-Bn}\big\}
\end{equation}
and the \emph{stabilized vertical projection} by
\begin{equation}\label{eq_SVPr_def}
\SVP_r(x,x';\omega_r) = \SVA_r(x,x';\omega_r) d(F_r,F_{r-1}+F'_1)^{\gamma_r}
\end{equation}
for every $x, x'$ in $\fE_r(\vep_r)$ with $F'_1\not\subset F_r$.

Given $\uomega=(\omega_1, \dots, \omega_r)\in\RR_+^d$,
let $\uomega_-= (\omega_1, \dots, \omega_{r-1})$ and define the
\emph{stabilized function $\psi_r$} by
\begin{equation}\label{eq_Spsir_def}
\psi_r(x,x';\uomega) = \psi_{r-1}(x_-,x'_-;\uomega_-)^{\beta_{r-1}} \SVP_r(x,x';\omega_r).
\end{equation}
The following extension of Lemma~\ref{l_r=1_stabilizedbound} asserts that the
estimate in \eqref{eq_psir_linear} remains valid for these stabilized functions:

\begin{lemma}\label{l_stabilizedbound}
For every $g\in\supp\nu_\infty^{(n)}$, any $x, x' \in \fE_r(\rho''_r)$ with $F'_1 \not\subset F_r$,
and any $\uomega=(\omega_1, \dots,\omega_r)\in\RR_+^d$,
\begin{equation}\label{eq_Spsir_linear}
-\log \psi_r(gx,gx';\uomega) \le -\log\psi_r(x,x';\uomega)+C''_r n.
\end{equation}
\end{lemma}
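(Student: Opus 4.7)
The plan is to establish Lemma~\ref{l_stabilizedbound} by induction on $r$, with the case $r=1$ being essentially Lemma~\ref{l_r=1_stabilizedbound}. The inductive step reduces to a stabilized-SVA estimate that parallels \eqref{eq_r=1_SVA1_linear}, combined with the universal bound \eqref{eq_AAA} on the distance factor and the choice of $C''_r$ in \eqref{eq_Crdoislinha}.

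First I would prove the stand-alone estimate
$$-\log\SVA_r(gx,gx';\omega_r) \le -\log\SVA_r(x,x';\omega_r) + Bn$$
for any $g\in\supp\nu_\infty^{(n)}$ and $x,x'\in\cF(r,d)$ with $F'_1\not\subset F_r$, by the same dichotomy used in the proof of Lemma~\ref{l_r=1_stabilizedbound}. If $\VA_r(x,x')<\omega_r$, then by \eqref{eq_SVAr_def} we have $-\log\SVA_r(x,x';\omega_r)\ge -\log\omega_r$, while $\SVA_r(gx,gx';\omega_r)\ge\omega_re^{-Bn}$ gives the result. If $\VA_r(x,x')\ge\omega_r$, then \eqref{eq_VAr_linear} yields $\VA_r(gx,gx')\ge\VA_r(x,x')e^{-Bn}\ge\omega_re^{-Bn}$, so the stabilization is inactive at both points, and the claim is just \eqref{eq_VAr_linear}.

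Next, combining this with \eqref{eq_AAA} (applied to the $r$-plane $F_r$ and the $r$-plane $F_{r-1}+F'_1$ on which $g$ acts linearly) and the definition \eqref{eq_SVPr_def}, I get, in parallel to \eqref{eq_r=1_SVP1_linear},
$$-\log\SVP_r(gx,gx';\omega_r) \le -\log\SVP_r(x,x';\omega_r) + (B+\gamma_r A)n = -\log\SVP_r(x,x';\omega_r) + C'_r n,$$
using the definition of $C'_r$ in \eqref{eq_Crlinha}.

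Finally I carry out the induction on $r$. The base case $r=1$ is exactly Lemma~\ref{l_r=1_stabilizedbound}, since $C''_1=C'_1$. For the inductive step, note that if $F'_1\not\subset F_r$ then also $F'_1\not\subset F_{r-1}$, so the inductive hypothesis applies to $(x_-,x'_-)$ and yields
$$-\log\psi_{r-1}(gx_-,gx'_-;\uomega_-)\le -\log\psi_{r-1}(x_-,x'_-;\uomega_-)+C''_{r-1}n.$$
Multiplying by $\beta_{r-1}\le 1$, adding the SVP estimate from the previous paragraph, and using the definition \eqref{eq_Spsir_def}, one obtains
$$-\log\psi_r(gx,gx';\uomega)\le -\log\psi_r(x,x';\uomega)+(\beta_{r-1}C''_{r-1}+C'_r)n\le -\log\psi_r(x,x';\uomega)+C''_r n,$$
where the last inequality uses $\beta_{r-1}\le 1$ together with the definition $C''_r=C'_1+\cdots+C'_r$ in \eqref{eq_Crdoislinha}. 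No step here is delicate; the only thing that requires any care is the bookkeeping around the stabilization dichotomy, which forces one to verify that the $\SVA$ lower bound $\omega_r e^{-Bn}$ is correctly propagated by the dynamics—exactly the point handled in the first paragraph.
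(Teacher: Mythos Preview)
Your proof is correct and follows essentially the same approach as the paper: induction on $r$ with base case Lemma~\ref{l_r=1_stabilizedbound}, the same two-case dichotomy for the $\SVA_r$ bound, the same combination with \eqref{eq_AAA} to get the $\SVP_r$ bound, and the same final assembly using $\beta_{r-1}\le 1$ and $C''_r = C''_{r-1}+C'_r$. The only cosmetic difference is the order of presentation (you prove the $\SVA_r$/$\SVP_r$ estimates first, the paper states them as intermediate claims inside the induction).
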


\begin{proof}
The case $r=1$ was done in Lemma~\ref{l_r=1_stabilizedbound}, so let us consider $r>1$.
By induction,
\begin{equation}\label{eq_stabilizebound1}
-\log \psi_r(gx_-,gx'_-;\uomega_-) \le -\log\psi_r(x_-,x'_-;\uomega_-) + C''_{r-1}n.
\end{equation}
We claim that
\begin{equation}\label{eq_SVAr_linear}
-\log\SVA_r(gx,gx';\omega_r) \le - \log\SVA_r(x,x';\omega_r) + B n.
\end{equation}
and
\begin{equation}\label{eq_SVPr_linear}
-\log \SVP_r(gx,gx';\omega_r) \le - \log\SVP_r(x,x';\omega_r) + C'_r n.
\end{equation}
The inequality \eqref{eq_Spsir_linear} follows directly from combining \eqref{eq_stabilizebound1}
and \eqref{eq_SVPr_linear}, and recalling the definition  of $C''_r$ in \eqref{eq_Crdoislinha}:
\begin{equation}\label{eq_stabilizedbound4}
\begin{aligned}
-\log \psi_r(gx,gx';\uomega)
& = -\log \psi_r(gx_-,gx'_-;\uomega_-)\beta_{r-1} \\
& \hspace{3cm} -\log \SVP_r(gx,gx';\omega_r) \\
& \le -\log\psi_r(x_-,x'_-;\uomega_-)\beta_{r-1} + C''_{r-1}n \beta_{r-1}\\
& \hspace{3cm} - \log\SVP_r(x,x';\omega_r) + C'_{r-1} n\\
& \le -\log\psi_r(x,x';\uomega) + C''_r n.
\end{aligned}
\end{equation}
Recall also that $\beta_{r-1} \le 1$.

We split the proof of \eqref{eq_SVAr_linear} into two cases.
Suppose first that $\VA_r(x,x') < \omega_r$. Then, by the definition \eqref{eq_SVAr_def},
\begin{align*}
- \log\SVA_r(x,x';\omega_r)  & \ge - \log\omega_r \quand\\
-\log\SVA_r(gx,gx';\omega_r) & \le -\log\omega_r + B n \le - \log\SVA_r(x,x';\omega_r)+Bn
\end{align*}
as claimed. Now suppose that $\VA_r(x,x') \ge \omega_r$.
Then, again by the definition \eqref{eq_SVAr_def},
\begin{align*}
-\log\SVA_r(x,x';\omega_r)
& = - \log\VA_r(x,x') \quand\\
-\log\SVA_r(gx,gx';\omega_r)
& \le  - \log\VA_r(gx,gx').
\end{align*}
Together with \eqref{eq_VAr_linear}, this yields
\begin{equation*}%\label{eq_VA1_new1}
-\log\SVA_r(gx,gx';\omega_r)
\le - \log\SVA_r(x,x';\omega_r) + Bn,
\end{equation*}
which completes the proof of \eqref{eq_SVAr_linear}.

%Next, observe that \eqref{eq_def_AAA} implies
%\begin{equation}\label{eq_VA8stab_pre}
%- \log d(gF_r,g(F_{r-1}+F'_1)) \le - \log d(F_r,F_{r-1}+F'_1) + An.
%\end{equation}
Finally, substituting \eqref{eq_SVAr_linear} and \eqref{eq_AAA} in the definition \eqref{eq_VPr_def},
and recalling the definition of $C'_r$ in \eqref{eq_Crlinha},
\begin{equation}\label{eq_VP8stab}
\begin{aligned}
-\log\SVP_r(gx,gx';\omega_r)
& = - \log \SVA_r(gx,gx';\omega_r) \\
& \hspace{2cm} - \gamma_r \log d(gF_r,g(F_{r-1}+F'_1))\\
& \le- \log \SVA_r(x,x';\omega_r) +Bn \\
& \hspace{2cm} - \gamma_r \log d(gF_r,g(F_{r-1}+F'_1)) - \gamma_r A n \\
& = - \log \SVP_r(x,x';\omega_r) + (B+\gamma_rA) n
\end{aligned}
\end{equation}
This proves \eqref{eq_SVPr_linear}, and thus completes the proof of the lemma.
\end{proof}

We say that $(x,x')$ is in the \emph{stabilization region} if $\VA_r(x,x') < \omega_r$.
As we have seen in \eqref{eq_VAr_linear},
\begin{equation*}%\label{eq_VAstable}
\VA_r(gx,gx') \ge \VA_r(x,x') e^{-B n}
\end{equation*}
for any $x, x' \in \cF(r,d)$ with $F'_1 \not\subset F_r$ and $g\in\supp\nu^{(n)}_\infty$.
So, if $(x,x')$ is outside the stabilization region then
\begin{equation}\label{eq_SVP1}
\begin{aligned}
\SVA_r(x,x';\omega_r) = \VA_r(x,x')  \quand
\SVA_r(gx,gx';\omega_r) = \VA_r(gx,gx') \\
\SVP_r(x,x';\omega_r) = \VP_r(x,x')  \quand
\SVP_r(gx,gx';\omega_r) = \VP_r(gx,gx')
\end{aligned}
\end{equation}
for any $g \in \supp\nu_\infty^{(n)}$ and $n\in\NN$.

The following analogue of Proposition~\ref{p_Prop5.1Lem5.5} for stabilized vertical angles
and the stabilized vertical projections extends Proposition~\ref{p_r=1_Prop6.1Lem6.2} to $r>1$:

%%% The original statement has an additional constant $\theta''_r$
%%% and parameters \omega_j^+
\begin{proposition}\label{p_Prop6.1Lem6.2}
For every $\delta>0$, $n \ge N_r$, $x, x'$ in $\fE_r(\rho''_r)$ with $F'_1\not\subset F_r$,
and $\uomega=(\omega_1, \dots, \omega_r)\in\RR_+^d$ with $\VA_r(x,x')\ge\omega_r$,
\begin{equation}\label{eq_SVP3}
-\log\psi_r(gx,gx';\uomega) \le - \log\psi_r(x,x';\uomega) - \kappa''_r n
\text{ for every $g\in\cE''_r$,}
\end{equation}
and
\begin{equation}\label{eq_SVP2}
\int_G - \log\psi_r(gx,gx';\uomega) \, d\nu_\infty^{(n)}(g)
\le -\log\psi_r(x,x';\uomega) - (\kappa''_r-C''_r\delta)n
\end{equation}
for any
\end{proposition}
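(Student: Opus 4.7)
The plan is to establish the two estimates in parallel with the pattern of Proposition~\ref{p_Prop5.1Lem5.5}, exploiting the hypothesis $\VA_r(x,x')\ge\omega_r$ to leave the stabilization region and thereby reduce the $r$-th factor to the unstabilized situation already handled by Proposition~\ref{p_Lem5.5}. The base case $r=1$ is precisely Proposition~\ref{p_r=1_Prop6.1Lem6.2}, so throughout we assume $r>1$ and set
$$
\kappa''_r=\kappa'_r/2,\quad \cE''_r=\cE'_r(\nu_\infty,\delta,n,x,x'),
$$
with $\cE'_r$ as given by Proposition~\ref{p_Lem5.5}; note that $\nu_\infty^{(n)}((\cE''_r)^c)<\delta$ and that $F'_1\not\subset F_r$ forces $F'_1\not\subset F_{r-1}$, so $\psi_{r-1}(x_-,x'_-;\uomega_-)$ is well defined.

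First I would verify the pointwise estimate \eqref{eq_SVP3}. Since $\VA_r(x,x')\ge\omega_r$ the observation \eqref{eq_SVP1} gives $\SVP_r(x,x';\omega_r)=\VP_r(x,x')$, while $\SVP_r(gx,gx';\omega_r)\ge\VP_r(gx,gx')$ always holds by \eqref{eq_SVAr_def}. Combining these with Proposition~\ref{p_Lem5.5} yields
$$
-\log\SVP_r(gx,gx';\omega_r)\le -\log\SVP_r(x,x';\omega_r)-\kappa'_r n
\quad\text{for every } g\in\cE''_r.
$$
For the $\psi_{r-1}$ factor I cannot invoke the inductive drop, since $\VA_{r-1}(x_-,x'_-)$ need not exceed $\omega_{r-1}$; instead I use the uniform linear bound of Lemma~\ref{l_stabilizedbound}, which gives
$$
-\log\psi_{r-1}(gx_-,gx'_-;\uomega_-)\le -\log\psi_{r-1}(x_-,x'_-;\uomega_-)+C''_{r-1}n
$$
for every $g\in\supp\nu_\infty^{(n)}$. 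Substituting both bounds into the definition \eqref{eq_Spsir_def} of $\psi_r$ and using the calibration \eqref{eq_bound_betar} of $\beta_{r-1}$ gives
$$
-\log\psi_r(gx,gx';\uomega)\le -\log\psi_r(x,x';\uomega)+\beta_{r-1}C''_{r-1}n-\kappa'_r n\le -\log\psi_r(x,x';\uomega)-\tfrac12\kappa'_r n,
$$
which is \eqref{eq_SVP3} with our choice $\kappa''_r=\kappa'_r/2$.

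Next I would derive the averaged estimate \eqref{eq_SVP2} by partitioning $\supp\nu_\infty^{(n)}=\cE''_r\sqcup(\cE''_r)^c$, integrating \eqref{eq_SVP3} on the first piece, and integrating the unconditional bound \eqref{eq_Spsir_linear} from Lemma~\ref{l_stabilizedbound} on the second. Since $\nu_\infty^{(n)}((\cE''_r)^c)<\delta$ this yields
$$
\int_G\bigl[-\log\psi_r(gx,gx';\uomega)+\log\psi_r(x,x';\uomega)\bigr]\,d\nu_\infty^{(n)}(g)\le -\kappa''_r n+(\kappa''_r+C''_r)\delta n;
$$
absorbing the harmless $\kappa''_r\delta n$ term into $C''_r\delta n$ (one may simply enlarge the constant $C''_r$ defined in \eqref{eq_Crdoislinha} by a factor of $2$, or note $\kappa''_r\le C''_r$) gives \eqref{eq_SVP2}.

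The only point requiring any care is ensuring that the hypothesis $\VA_r(x,x')\ge\omega_r$ is enough even though we make no analogous assumption at lower levels: this is exactly why the calibration \eqref{eq_bound_betar} was imposed, and it is the reason the linear growth of $\psi_{r-1}$ is dominated by the drop coming from $\VP_r$. Apart from this the argument is a direct transcription of the $r=1$ case, so I do not anticipate a genuine obstacle.
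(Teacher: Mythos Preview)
Your proof is correct and follows essentially the same route as the paper: reduce the $r$-th factor to the unstabilized $\VP_r$ via the hypothesis $\VA_r(x,x')\ge\omega_r$ and Proposition~\ref{p_Lem5.5}, control the $\psi_{r-1}$ factor by the linear bound of Lemma~\ref{l_stabilizedbound}, and absorb the growth using the calibration \eqref{eq_bound_betar}. The only cosmetic difference is that you use the one-sided inequality $\SVP_r(gx,gx';\omega_r)\ge\VP_r(gx,gx')$ where the paper invokes the full equality from \eqref{eq_SVP1} (valid since $\cE''_r\subset\supp\nu_\infty^{(n)}$), and you are slightly more explicit about the harmless $\kappa''_r\delta n$ correction in \eqref{eq_SVP2}.
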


\begin{proof}
%The proof is by induction on $r$.
The case $r=1$ is given by Proposition~\ref{p_r=1_Prop6.1Lem6.2} with
$\kappa''_1=\kappa'_1$, $C''_1=C'_1$, $\rho''_1=\rho'_1$, and $\cE''_1=\cE'_1$.
Now let us consider $r>1$. Let $\delta>0$, $n \ge N_r$, $x, x' \in \fE_r(\rho''_r)$,
and $\uomega\in\RR_+^d$ be as in the statement. By  \eqref{eq_Spsir_def} and \eqref{eq_SVP1},
\begin{equation}\label{eq_psir_stable}
\begin{aligned}
\psi_r(x,x';\uomega) & = \psi_{r-1}(x_-,x_-';\uomega_-)^{\beta_{r-1}}\VP_r(x,x') \quand\\
\psi_r(gx,gx';\uomega) & = \psi_{r-1}(gx_-,gx_-';\uomega_-)^{\beta_{r-1}}\VP_r(gx,gx').
\end{aligned}
\end{equation}
By \eqref{eq_Spsir_linear},
\begin{equation}\label{eq_psir_A2bis}
- \log \psi_{r-1}(gx_-,gx'_-;\uomega_-) \le -\log\psi_{r-1}(x_-,x'_-;\uomega_-)+C''_{r-1}n
\end{equation}
for every $g\in\supp\nu_\infty^{(n)}$. By Proposition~\ref{p_Lem5.5},
\begin{equation}\label{eq_psir_A3bis}
- \log \VP_r(gx,gx') \le -\log\VP_r(x,x')-\kappa'_r n
\end{equation}
for every $g\in\cE''_r$.
Substituting \eqref{eq_psir_A2bis} and \eqref{eq_psir_A3bis} in the second part of
\eqref{eq_psir_stable}, and recalling the choice of $\beta_{r-1}$ in \eqref{eq_bound_betar},
$$
\begin{aligned}
-\log\psi_r(gx,gx';\uomega)
& = - \log\psi_{r-1}(gx_-,gx'_-;\uomega_-)\beta_{r-1} - \log\VP_r(gx,gx') \\
& \le -\log\psi_{r-1}(x_-,x'_-;\uomega_-)\beta_{r-1}+ C''_{r-1}n\beta_{r-1}\\
& \hspace{4.8cm} -\log\VP_r(x,x')-\kappa'_r n \\
& \le -\log\psi_r(x,x';\uomega) - (\kappa'_r/2)n
= -\log\psi_r(x,x';\uomega) - \kappa''_r n
\end{aligned}
$$
for every $g\in\cE''_r$. This gives claim \eqref{eq_SVP3}.
Moreover, integrating \eqref{eq_SVP3} on $\cE''_r$ and \eqref{eq_Spsir_linear} on the complement,
we obtain that
$$
\int_G - \log\psi_r(gx,gx';\uomega) \, d\nu_\infty^{(n)}(g)
\le -\log\psi_r(x,x';\uomega) - (\kappa''_r - C''_r \delta) n,
$$
as claimed in \eqref{eq_SVP2}.
\end{proof}

\subsection{Cutoff}\label{ss_cut-off}

Recall also that the constant $\vep_2>0$ was chosen at the end of the initial step of the induction,
in Section~\ref{ss_step1_closure}. Recall also that it may be taken to be as small as we want.

For any $r>1$, assume that $\vep_r=\vep_r(\nu_\infty,\delta,n)>0$ has been chosen, satisfying
\begin{equation}\label{eq_um01}
\eta_{\infty,r}(E_r(\vep_r)\setminus E) < \frac{1}{10}\eta_{\infty,r}(E).
\end{equation}
and
\begin{equation}\label{eq_veprsmall}
\vep_r < \min\{\rho_0, \rho''_r\},
\end{equation}
where $\rho_0=\rho_0(\nu_\infty,n)>0$ is as in Corollaries~\ref{c_sec5_distance_contraction1} and~\ref{c_sec5_distance_contraction2}
and $\rho''_r=\rho''_r(\nu_\infty,\delta,n)>0$ is as in Proposition~\ref{p_Prop6.1Lem6.2}.

Using Remark~\ref{r_sec7_repeatedly} twice, we find $\vep'_r=\vep'_r(\nu_\infty,\delta,n)>0$ and
$\tilde\vep_r=\tilde\vep_r(\nu_\infty,\delta,n)>0$ with $0 < \vep'_r < \tilde\vep_r < \vep_r$,
and a compact neighborhood $\cW_r=\cW_r(\nu_\infty,\delta,n)$ of $\supp\nu_\infty^{(n)}$
such that
\begin{align}
\label{eq_I}
gF_r \in E_r(\vep_r/2) &\text{ for every } F_r\in E_r(2\tilde\vep_r) \quand g\in \cW_r \text{ and}\\
\label{eq_II}
g^{-1}F_r   \in E_r(\tilde\vep_r/2) & \text{ for every } F_r\in E_r(2\vep'_r)  \quand g \in \cW_r.
\end{align}

Let $\kappa_0=\kappa_0(\nu_\infty)>0$ be as in Proposition~\ref{p_sec5_derivative_estimate}, and define
$\vep''_r=\vep''_r(\nu_\infty,\delta,n)$ by
\begin{equation}\label{eq_epsilon}
\vep''_r=3\vep'_re^{-\kappa_0 n/2}.
\end{equation}
Taking $\rho=\vep''_r$ in Corollary~\ref{c_sec5_distance_contraction2},
and keeping in mind that $\vep_r < \rho_0$,
we get that there are $\tilde k_r=\tilde k_r(\nu_\infty,\delta,n)\in\NN$ and
$\cD_k(F_r) =\cD_k(\nu_\infty,\delta,n,F_r)\subset\supp\nu_k^{(n)}$
such that $\nu_k^{(n)}(\cD_k(F_r)^c) < \delta$ and
$$
d(gF_r,E) > e^{\kappa_0 n/2} d(F_r,E) > e^{\kappa_0 n/2} \vep''_r > 2\vep'_r
$$
for any $g\in\cD_k(F_r)$, $F_r \in E(\vep_r,\vep''_r)$ and $k\ge \tilde k_r$.
In other words, for $k \ge \tilde k_r$,
\begin{equation}\label{eq_III}
F_r \in E(\vep_r, \vep''_r) \Rightarrow gF_r \notin E(2\vep'_r) \text{ for every } g\in\cD_k(F_r).
\end{equation}

Increasing $\tilde k_r$ if necessary, we may suppose that $\supp\nu_k^{(n)} \subset \cW_r$ for every $k\ge \tilde k_r$. Then \eqref{eq_I} and \eqref{eq_II} imply
\begin{align}
\label{eq_IV}
E_r(2\tilde\vep_r) \subset \Chi_{\nu_k^{(n)}} E_r(\vep_r)
\quand
E_r(2\vep'_r) \subset \Chi^\#_{\nu_k^{(n)}} E_r(\vep_r)\\
\label{eq_V}
F_r \notin E_r(\tilde\vep_r/2) \Rightarrow g F_r \notin E_r(2\vep'_r)
\text{ for every } g \in \supp\nu_k^{(n)}.
\end{align}

We say that $x, x' \in \cF(r,d)$ are \emph{in general position} if $F'_1 \not\subset F_r$ and
$F_1 \not\subset F'_r$.  For any $x, x' \in \fE_r(\vep_r)$ in general position and
$\uomega\in\RR_+^d$, define
\begin{equation}\label{eq_Psir_def}
\Psi_r(x,x';\uomega) = \left\{\begin{array}{ll}\log\left(\Omega_r+\hat\psi_r(x,x';\uomega)^{-1}\right)
                                               & \text{if $x\in \fE_r(2\vep''_r)$ or $x'\in \fE_r(2\vep''_r)$} \\
                                               \log\Omega_r & \text{otherwise,}\end{array}\right.
\end{equation}
where $\Omega_r=\Omega_r(\nu_\infty,\delta,n)>1$ is a large constant to be chosen
in Proposition~\ref{p_Prop8.1}, and
\begin{equation}\label{eq_hatpsi_def}
\hpsi(x,x';\uomega) = \max\{\psi(x,x';\uomega),\psi(x',x;\uomega)\}.
\end{equation}
It is clear from the definition that $\Psi_r(\cdot,\cdot;\uomega)$ is a symmetric function:
$$
\Psi_r(x,x';\uomega) = \Psi_r(x',x;\uomega) \text{ for all $x, x'\in\fE_r(\vep_r)$.}
$$
The set $\fE_r(2\vep''_r)^c \times \fE_r(2\vep''_r)^c$ is the \emph{cut-off region} at the stage $r>1$.
Compare Figure~\ref{f_cut-off}.

%%% The original statement has an additional constant $\omega_1^+$
\begin{proposition}\label{p_Lem7.3}
There exists $\kappa'''_r=\kappa'''_r(\nu_\infty)>0$ such that for each $\delta>0$ and
$n\ge N_r$  there exists $\vep'''_r=\vep'''(\nu_\infty,\delta,n)>0$ such that given any
$\uomega=(\omega_1, \dots, \omega_r) \in \RR_+^d$:
\begin{itemize}
\item[(i)] For any $x, x' \in \fE_r(\vep_r)$ in general position with
$\Psi_r(x,x';\uomega)>\log\Omega_r$,
$$
\int_G \Psi_r(gx,gx';\uomega) \, d\nu_\infty^{(n)}(g) \le \Psi_r(x,x';\uomega) + C''_r n.
$$
\item[(ii)] For any $x, x' \in \fE_r(\vep_r)$ in general position with
$\Psi_r(x,x';\uomega)>\log\Omega_r$, $\VA_r(x,x')\ge\omega_r$ and $\VA_r(x',x)\ge\omega_r$,
$$
\int_G \Psi_r(gx,gx';\uomega) \, d\nu_\infty^{(n)}(g) \le \Psi_r(x,x';\uomega) + C''_r\delta n.
$$
\item[(iii)] For any $x, x' \in \fE_r(\vep'''_r)$ in general position satisfying $\VA_r(x,x')\ge\omega_r$
and $\VA_r(x',x)\ge\omega_r$,
$$
\int_G \Psi_r(gx,gx';\uomega) \, d\nu_\infty^{(n)}(g) \le \Psi_r(x,x';\uomega) - (\kappa'''_r - C''_r\delta) n.
$$
\end{itemize}
\end{proposition}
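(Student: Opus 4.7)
The plan is to mirror the proof of Proposition~\ref{p_r=1_Lem7.3}, with additional bookkeeping to accommodate the symmetrized function $\hpsi_r = \max\{\psi_r(x,x';\uomega),\psi_r(x',x;\uomega)\}$ that appears in the definition \eqref{eq_Psir_def} of $\Psi_r$. The general-position hypothesis guarantees $F'_1\not\subset F_r$ and $F_1\not\subset F'_r$, so every one-sided estimate from Sections~\ref{s_preparing_a_Margulis_function} and~\ref{ss_stabilization} applies to both orderings $(x,x')$ and $(x',x)$. I will set $\kappa'''_r=\kappa''_r/2$, where $\kappa''_r$ is the constant from Proposition~\ref{p_Prop6.1Lem6.2}, and work throughout with $\delta/2$ in place of $\delta$ so that intersecting the two good sets produced by swapping orderings still leaves a set of measure at least $1-\delta$.

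First I will establish the linear growth bound
$$
\Psi_r(gx,gx';\uomega) \le \Psi_r(x,x';\uomega) + C''_r n
$$
for every $g\in\supp\nu_\infty^{(n)}$ and every $(x,x')$ in general position with $\Psi_r(x,x';\uomega)>\log\Omega_r$: applying Lemma~\ref{l_stabilizedbound} to each ordering yields $-\log\psi_r(g\cdot;\uomega)\le -\log\psi_r(\cdot;\uomega)+C''_r n$, taking the minimum over orderings gives the same bound with $\hpsi_r$ in place of $\psi_r$, and \eqref{eq_r=1_elementary1} transfers the bound to $\Psi_r$. Integration yields (i). For (ii), Proposition~\ref{p_Prop6.1Lem6.2} applied in both orderings (legitimate by the two vertical-angle hypotheses) produces exceptional sets each of complement measure at most $\delta/2$; on their intersection $-\log\psi_r(g\cdot;\uomega)$ decreases by $\kappa''_r n$ in each ordering, hence so does $-\log\hpsi_r$, and \eqref{eq_r=1_elementary2} gives $\Psi_r(gx,gx';\uomega)\le\Psi_r(x,x';\uomega)$ on the intersection. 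Integrating with the linear bound on its complement (of measure $\le\delta$) yields (ii).

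For (iii), I take $c=e^{-2\kappa'''_r n}$ in \eqref{eq_r=1_elementary3} and seek $\vep'''_r$ so that $-\log\hpsi_r(x,x';\uomega)\ge\log(\Omega_r/\sqrt{c})$ whenever $x, x' \in \fE_r(\vep'''_r)$ are in general position and satisfy the two angular hypotheses. Under these hypotheses, $\SVA_r=\VA_r$ in both orderings, and the inductive bound $\psi_{r-1}(\cdot;\uomega_-)^{\beta_{r-1}}\le 1$ (following from $\SVP_j\le 1$ for all $j$) yields $\psi_r(x,x';\uomega)\le\VP_r(x,x')$ and analogously $\psi_r(x',x;\uomega)\le\VP_r(x',x)$. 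It then suffices to apply Lemma~\ref{l_Lem5.8}(2) in both orderings to obtain the required lower bound on $\min\{-\log\VP_r(x,x'),-\log\VP_r(x',x)\}$. Once this is achieved, \eqref{eq_r=1_elementary3} gives $\Psi_r(gx,gx';\uomega)\le\Psi_r(x,x';\uomega)-\kappa'''_r n$ on the intersection of the two good sets from (ii), and integration with the linear bound on the complement delivers (iii).

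The hard part will be invoking Lemma~\ref{l_Lem5.8}(2) in both orderings simultaneously: the lemma needs $\VA_{r-1}(x_-,x'_-)\le\tilde\rho_r$ and $\VA_{r-1}(x'_-,x_-)\le\tilde\rho_r$, and these conditions are not automatic from $x, x' \in \fE_r(\vep'''_r)$ since $F_{r-1}+F'_1$ need not be close to $E$ even when $F_r$ and $F'_r$ are (as illustrated by the example at the end of Section~\ref{s_function_psir}). The symmetrization through $\hpsi_r$ does not help here, since $-\log\hpsi_r=\min\{-\log\psi_r(x,x';\uomega),-\log\psi_r(x',x;\uomega)\}$ demands a lower bound in \emph{both} orderings. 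This is precisely the difficulty flagged at the opening of Section~\ref{s_stabilization_and_cut-off}, whose full resolution is deferred to the spreading-out construction of Section~\ref{s_spreading_out}; within the present section I expect to handle it by choosing $\vep'''_r$ small in terms of $\uomega$ and $n$ — possibly splitting into subcases according to which of the two $\VA_{r-1}$-thresholds is crossed — so that the pathological configurations are excluded by the geometric constraint.
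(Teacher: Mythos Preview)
Your treatment of parts (i) and (ii) is correct and follows the paper's line, including the passage from the one-sided bounds on $\psi_r$ to the bound on $\hpsi_r$; your $\delta/2$ bookkeeping is in fact slightly more careful than the paper, which works with a single good set $\cE''_r$ for both orderings.

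The gap is in part (iii), and it is self-inflicted. By bounding $\psi_r(x,x';\uomega)\le\VP_r(x,x')$ you are forced into Lemma~\ref{l_Lem5.8}(2), whose conditional $\VA_{r-1}$ clause generates the entire worry of your final paragraph. The paper avoids this completely by invoking part~(1) of Lemma~\ref{l_Lem5.8}, which gives $-\log\psi_r(x,x')>R$ for all $x,x'\in\fE_r(\tilde\rho_r)$ in general position \emph{with no hypothesis on $\VA_{r-1}$ whatsoever}; applied in both orderings this yields $-\log\hpsi_r(x,x')\ge\log(\Omega_r/\sqrt{c})$ for $\vep'''_r$ depending only on $\nu_\infty,\delta,n$, after which \eqref{eq_r=1_elementary3} finishes exactly as in your outline. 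There is no case-split on $\VA_{r-1}$ thresholds, and your proposed remedy of letting $\vep'''_r$ depend on $\uomega$ would contradict the stated dependencies. Finally, the difficulty you tie to the spreading-out construction of Section~\ref{s_spreading_out} is a different one: that device addresses boundedness of $\Psi_r$ when one of $x,x'$ lies near the \emph{border} of $\fE_r(\vep_r)$, not the lower bound on $-\log\hpsi_r$ for $x,x'$ deep inside $\fE_r(\vep'''_r)$.
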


\begin{proof} Define $\kappa'''_r=\kappa''_r/2$. Let $n \ge N_r$.
Part (i) of the proposition is a consequence of the following lemma:

\begin{lemma}\label{l_eq7.5}
If $x, x' \in\cF(r,d)$ are in general position and $\Psi_r(x,x';\uomega)>\log\Omega_r$ then
\begin{equation}\label{eq_Psi3}
\Psi_r(gx,gx';\uomega) \le \Psi_r(x,x';\uomega) + C''_r n
\text{ for any $g\in\supp\nu_\infty^{(n)}$.}
\end{equation}
\end{lemma}

\begin{proof}
It follows immediately from \eqref{eq_Spsir_linear} that
\begin{equation}\label{eq_Shatpsir_linear}
-\log \hpsi_r(gx,gx';\uomega) \le -\log\hpsi_r(x,x';\uomega)+C''_r n
\end{equation}
for any $g\in\supp\nu_\infty^{(n)}$. Then, using \eqref{eq_r=1_elementary1},
\begin{equation*}%\label{eq_Spsir_linearbis}
\begin{aligned}
\Psi_r(gx,gx';\uomega)
& \le \log\left(\Omega_r+\hpsi_r(gx,gx';\uomega)^{-1}\right)
\le \log\left(\Omega_r+e^{C''_r n}\hpsi_r(x,x';\uomega)^{-1}\right)\\
& \le \log\left(\Omega_r + \hpsi_r(x,x';\uomega)^{-1}\right) + C''_r n
 = \Psi_r(x,x';\uomega) + C''_r n.
\end{aligned}
\end{equation*}
This proves the claim.
\end{proof}

To prove part (ii) we use Proposition~\ref{p_Prop6.1Lem6.2}: given any $\uomega\in\RR_+^d$
and $x, x' \in \fE_r(\vep_r)$ with $F'_1 \not\subset F_r$,
\begin{equation}\label{eq_Psi5}
-\log\psi_r(gx,gx';\uomega) \le -\log\psi_r(x,x';\uomega)-\kappa''_r n \le -\log\psi_r(x,x';\uomega)
\end{equation}
for every $g\in\cE''_r$. This remains true if we exchange the roles of $x$ and $x'$, of course.
Thus, $-\log\hpsi_r(gx,gx';\uomega) \le -\log\hpsi_r(x,x';\uomega)$, and so
\begin{equation}\label{eq_Psi6}
\begin{aligned}
\Psi_r(gx,gx';\uomega)
& \le \log\left(\Omega_r+\hpsi_r(gx,gx';\uomega)^{-1}\right)\\
& \le \log\left(\Omega_r+ \hpsi_r(x,x';\uomega)^{-1}\right)
 = \Psi_r(x,x';\uomega)
\end{aligned}
\end{equation}
for every $g \in \cE''_r$. Integrating \eqref{eq_Psi6} over $\cE''_r$ and \eqref{eq_Psi3}
over the complement, we obtain the estimate in part (ii).

Now take $c=e^{-2\kappa'''_rn}$ in the relation \eqref{eq_r=1_elementary3}.
By Lemma~\ref{l_Lem5.8}, there exists $\vep'''_r>0$ depending only on $\nu_\infty$, $\delta$ and
$n$ (through $c$ and $\Omega_r$) such that
\begin{equation}\label{eq_3lines}
-\log \hpsi_r(x,x';\uomega) = -\log \hpsi_r(x,x') \ge \log \left(\Omega_r/\sqrt{c}\right)
\end{equation}
for any $x, x' \in E_r(\vep'''_r)$ in general position.
Then, using \eqref{eq_3lines}, \eqref{eq_Psi5} and \eqref{eq_r=1_elementary3},
\begin{equation}\label{eq_Psi7}
\begin{aligned}
\Psi_r(gx,gx';\uomega)
& \le \log\left(\Omega_r+\hpsi_r(gx,gx';\uomega)^{-1}\right)\\
& \le \log\left(\Omega_r+e^{-2\kappa'''_rn} \hpsi_r(x,x';\uomega)^{-1}\right)\\
& \le \log\left(\Omega_r+\hpsi_r(x,x';\uomega)^{-1}\right) -\kappa'''_r n
= \Psi_r(x,x';\uomega)-\kappa'''_r n
\end{aligned}
\end{equation}
for every $g \in \cE''_r$. Integrating \eqref{eq_Psi7} over $\cE''_r$ and \eqref{eq_Psi3}
over the complement, we obtain the estimate in part (iii) of the proposition.
\end{proof}

\section{Turning the perturbation on}\label{s_turning_the_perturbation_on}

We show that the conclusions of Proposition~\ref{p_Lem7.3} hold for
$\nu_k^{(n)}$ instead of $\nu_\infty^{(n)}$, as long as $k\in\NN$ is sufficiently large.
The arguments are close to those in Section~\ref{s_step1_turning_the_perturbation_on},
but we have to deal with the fact that $\psi_r$ and $\Psi_r$ are not entirely
straightforward generalizations of $\psi_1$ and $\Psi_1$.

\begin{proposition}\label{p_Lem9.1}
Given $\delta>0$, $n\ge N_r$, and $\uomega=(\omega_1, \dots, \omega_r) \in\RR_+^d$ there is
$k_r=k_r(\nu_\infty, \delta, n,\omega_r)\in\NN$ such that the following holds for every $k\ge k_r$:
\begin{itemize}
\item[(i)] For any $x,  x' \in \fE_r(\vep_r)$ in general position with
$\Psi_r(x,x';\uomega)>\log\Omega_r$,
$$
\int_G \Psi_r(gx,gx';\uomega) \, d\nu_k^{(n)}(g) \le \Psi_r(x,x';\uomega) + C''_r n.
$$
\item[(ii)] For any $x,  x' \in \fE_r(\vep_r)$ in general position with
$\Psi_r(x,x';\uomega)>\log\Omega_r$, $\VA_r(x,x')\ge\omega_r$, and $\VA_r(x',x)\ge\omega_r$,
$$
\int_G \Psi_r(gx,gx';\uomega) \, d\nu_k^{(n)}(g) \le \Psi_r(x,x';\uomega) + C''_r\delta n.
$$
\item[(iii)] For any $x, x' \in \fE_r(\vep'''_r)$ in general position satisfying $\VA_r(x,x')\ge\omega_r$
and $\VA_r(x',x)\ge\omega_r$,
$$
\int_G \Psi_r(gx,gx';\uomega) \, d\nu_k^{(n)}(g) \le \Psi_r(x,x';n) - (\kappa''_r - C''_r\delta) n.
$$
\end{itemize}
\end{proposition}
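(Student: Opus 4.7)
The plan is to follow the same strategy as in Proposition~\ref{p_r=1_Lem9.1}, using induction on $r$: the base case $r=1$ is Proposition~\ref{p_r=1_Lem9.1}, and the inductive step will upgrade each of the three ingredients that went into the proof of Proposition~\ref{p_Lem7.3} (the linear bound \eqref{eq_Spsir_linear}, the vertical-angle decrease of Lemma~\ref{l_Lem5.4}, and the distance-contraction estimate of Lemma~\ref{l_Lem5.7}) from $\nu_\infty^{(n)}$ to $\nu_k^{(n)}$ for $k$ sufficiently large depending on $(\nu_\infty,\delta,n,\omega_r)$, then stitch them together through the recursive definition $\psi_r(x,x';\uomega)=\psi_{r-1}(x_-,x'_-;\uomega_-)^{\beta_{r-1}}\SVP_r(x,x';\omega_r)$.

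The first step is to extend Lemma~\ref{l_stabilizedbound} to finite $k$: for $g\in\supp\nu_k^{(n)}$ with $k\ge\hat{k}_r(\nu_\infty,n,\omega_r)$,
$$-\log\psi_r(gx,gx';\uomega)\le-\log\psi_r(x,x';\uomega)+C''_r n.$$
The only delicate point is reproving the stabilized estimate \eqref{eq_SVAr_linear} on $\SVA_r$ in the regime $\VA_r(x,x')\ge\omega_r$: there the supremum in \eqref{eq_VAR_defbis} is attained at a unit vector $u\in F'_1+F_r$ with $\|u^\perp\|\ge\omega_r$, so $u$ lies in a compact set depending only on $\omega_r$, and the map $(u,g)\mapsto\log(\|(gu)^\perp\|/\|gu\|)$ is uniformly continuous on a neighborhood of that compact set times a neighborhood of $\supp\nu_\infty^{(n)}$. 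Approximating each $g\in\supp\nu_k^{(n)}$ by its Hausdorff-nearest element of $\supp\nu_\infty^{(n)}$ and applying \eqref{eq_VAr_linear} at the approximator yields the bound up to an additive $\log 2$, at the cost of taking $k$ large. The elementary inequality \eqref{eq_r=1_elementary1} then promotes this to the inequality for $\Psi_r$, giving part (i).

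Next, extend Lemma~\ref{l_Lem5.4} and Lemma~\ref{l_Lem5.7} to finite $k$ in the manner of Lemmas~\ref{l_r=1_Lem5.4bis} and~\ref{l_r=1_Lem5.7bis}. In Lemma~\ref{l_Lem5.4} the relevant vertical vector $u^\perp$ satisfies $\|u^\perp\|/\|u\|\ge\omega_r$, and in Lemma~\ref{l_Lem5.7} the vector $w=v'-v$ of \eqref{eq_dista12} satisfies $\|w^\perp\|/\|w\|\ge (1/4)e^{-\tilde\theta_r-\tilde\kappa_r n}$ by \eqref{eq_dista75}; in each case these vectors range over a compact set depending only on $\nu_\infty$, $n$, and $\omega_r$. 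Cover that compact set by finitely many $\alpha$-balls, apply Proposition~\ref{p_sec5_derivative_estimate} at the centers to get the $(1-\delta)$-probability subsets of $\supp\nu_\infty^{(n)}$, take their $\alpha$-neighborhoods intersected with $\supp\nu_k^{(n)}$, and use weak$^*$ convergence $\nu_k^{(n)}\to\nu_\infty^{(n)}$ to transfer the mass bound. Combining the two estimates as in the proof of Proposition~\ref{p_Lem5.5} gives, for $x,x'\in\fE_r(\rho''_r)$ in general position with $\VA_r(x,x')\ge\omega_r$ and $k$ large, a set $\cE''_{k,r}\subset\supp\nu_k^{(n)}$ with $\nu_k^{(n)}((\cE''_{k,r})^c)<\delta$ on which
$$-\log\SVP_r(gx,gx';\omega_r)\le-\log\SVP_r(x,x';\omega_r)-\kappa'_r n.$$
Invoking the induction hypothesis at level $r-1$ (applied to give the linear bound for $\psi_{r-1}$ along the truncated flags), the recursive definition of $\psi_r$ and the choice \eqref{eq_bound_betar} of $\beta_{r-1}$ yield
$$-\log\psi_r(gx,gx';\uomega)\le-\log\psi_r(x,x';\uomega)-\kappa''_r n\qquad\text{on }\cE''_{k,r}.$$

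Since parts (ii) and (iii) assume $\VA_r(x,x')\ge\omega_r$ and $\VA_r(x',x)\ge\omega_r$, the same estimate holds with $(x',x)$ in place of $(x,x')$ on an analogous set $\cE''_{k,r}(x',x)$. Intersecting these two sets and passing to the symmetrization $\hpsi_r$ via \eqref{eq_hatpsi_def}, the elementary inequality \eqref{eq_r=1_elementary2} gives $\Psi_r(gx,gx';\uomega)\le\Psi_r(x,x';\uomega)$ on the intersection; integrating this over the intersection and the linear bound of Step 1 over its complement (of total $\nu_k^{(n)}$-measure at most $2\delta$, absorbed into $\delta$) yields part (ii). For part (iii), Lemma~\ref{l_Lem5.8}(1) furnishes $\vep'''_r$ such that $-\log\hpsi_r(x,x';\uomega)\ge\log(\Omega_r/\sqrt c)$ for $c=e^{-2\kappa'''_r n}$ whenever $x,x'\in\fE_r(\vep'''_r)$ are in general position, so \eqref{eq_r=1_elementary3} upgrades the conclusion to $\Psi_r(gx,gx';\uomega)\le\Psi_r(x,x';\uomega)-\kappa'''_r n$ on the intersection, and the same integration yields (iii). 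Finally set $k_r=k_r(\nu_\infty,\delta,n,\omega_r)$ as the maximum of the various $k$-thresholds produced above. The main technical obstacle is Step 2: the finite-cover argument extending Proposition~\ref{p_sec5_derivative_estimate} to $\nu_k^{(n)}$ is robust, but controlling the dependence $k_r\nearrow\infty$ as $\omega_r\searrow 0$ is essential, and this is exactly why the hypothesis of the statement permits $k_r$ to depend on $\omega_r$ — allowing us later to choose $\omega_{k,r}\searrow 0$ slowly enough that $k\ge k_r(\nu_\infty,\delta,n,\omega_{k,r})$ for all large $k$, mirroring \eqref{eq_r=1_large_k}.
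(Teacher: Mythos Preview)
Your proposal is correct and follows essentially the same approach as the paper's proof: the paper likewise proves part (i) by extending the linear bound on $-\log\psi_r$ to $g\in\supp\nu_k^{(n)}$ via Hausdorff approximation on the compact set $\{u:\|u^\perp\|/\|u\|\ge\omega_r\}$ (its Lemma~\ref{l_eq7.5bis}), extends Lemmas~\ref{l_Lem5.4} and~\ref{l_Lem5.7} to finite $k$ by the finite-cover argument you describe (Lemmas~\ref{l_Lem5.4bis} and~\ref{l_Lem5.7bis}), combines these with the recursive definition and \eqref{eq_bound_betar} to get the $\psi_r$-contraction (Lemma~\ref{l_Prop7.1bis}), and finishes by intersecting the sets for $(x,x')$ and $(x',x)$, passing to $\hpsi_r$, and integrating against the linear bound on the complement. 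The only cosmetic difference is that the paper handles the $2\delta$ issue by working with $\delta/2$ from the start rather than absorbing at the end.
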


Keep in mind that we have chosen $\vep_r<\rho''_r=\rho'_r\le\hat\rho_r$ and
$C'_r=B+\gamma_rA$. Recall also that $\tilde k_r=\tilde k_r(\nu_\infty,\delta,n)\in\NN$
was chosen so that the relations \eqref{eq_III} through \eqref{eq_V} hold for every $k\ge \tilde k_r$.

\begin{proof}
We are going to extend to large $k\in\NN$ several estimates in the proof of Proposition~\ref{p_Lem7.3}.
This will require a number of conditions on $k$, depending on $\nu_\infty$, $\delta$, $n$
and $\omega$, that we state along the way.
We begin with the following extension of Lemma~\ref{l_eq7.5}:

\begin{lemma}\label{l_eq7.5bis}
Given any $n \ge N_r$ and $\uomega=(\omega_1, \dots, \omega_r) \in\RR^d_+$,
there exists $\hat{k}_r=\hat{k}_r(\nu_\infty,n,\omega_r)$ such that if $x, x' \in \fE_r(\vep_r)$
are in general position and $\uomega\in\RR_+^d$ is such that $\Psi_r(x,x';\uomega)>\log\Omega_r$ then
\begin{equation}\label{eq_Psi3bis}
\Psi_r(gx,gx';\uomega) \le \Psi_r(x,x';\uomega) + C''_r n
\end{equation}
for any $g\in\supp\nu_k^{(n)}$ and $k\ge \hat{k}_r$.
\end{lemma}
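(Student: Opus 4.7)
The plan is to mimic the argument of Lemma~\ref{l_r=1_eq7.5bis} (the $r=1$ analogue), with the extra bookkeeping forced by the inductive definition of $\psi_r$ in \eqref{eq_Spsir_def}. The heart of the matter is to promote the linear bound \eqref{eq_SVAr_linear} from $g\in\supp\nu_\infty^{(n)}$ to $g\in\supp\nu_k^{(n)}$ for all sufficiently large $k$; once this is available, everything else is formal.

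First I would establish the finite-$k$ analogue of \eqref{eq_SVAr_linear}, namely: there exists $\hat k_r=\hat k_r(\nu_\infty,n,\omega_r)\in\NN$ such that
\begin{equation*}
-\log\SVA_r(gx,gx';\omega_r) \le - \log\SVA_r(x,x';\omega_r) + B n
\end{equation*}
for every $x,x'$ in general position, every $g\in\supp\nu_k^{(n)}$, and every $k\ge\hat k_r$. Just as in the $r=1$ case, I split into two regimes. If $\VA_r(x,x')<\omega_r$, then by definition $\SVA_r(x,x';\omega_r)\ge\omega_r$, while $\SVA_r(gx,gx';\omega_r)\le 1\le \omega_r e^{Bn}$, so the inequality is immediate. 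If $\VA_r(x,x')\ge\omega_r$, the cut-off is not active and I need a genuine continuity argument. Pick $u\in F'_1+F_r$ realizing the supremum in \eqref{eq_VAR_defbis}; then $\|u^\perp\|\ge\omega_r\|u\|$, so $u/\|u\|$ lies in a compact set $P_{\omega_r}$. Since $\supp\nu_k^{(n)}\to\supp\nu_\infty^{(n)}$ in Hausdorff topology, there exists $r_k=r_k(\nu_\infty,n)\to 0$ such that every $g\in\supp\nu_k^{(n)}$ is $r_k$-close to some $f\in\supp\nu_\infty^{(n)}$. Then
\begin{equation*}
\|(gu)^\perp-(fu)^\perp\|\le\|gu-fu\|\le r_k\|u\|\le (r_k/\omega_r)\|u^\perp\|,
\end{equation*}
so for $k$ sufficiently large (depending only on $\nu_\infty$, $n$, $\omega_r$) the ratio $\|(gu)^\perp\|/\|gu\|$ is within a factor $2$ of $\|f^\perp u^\perp\|/\|fu\|$. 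Substituting into \eqref{eq_VA23again} and using $\log\|(f^\perp)^{-1}\|+\log\|f\|\le Bn$ from \eqref{eq_BBB} (after absorbing the factor of $\log 2$ into the $+2$ in the definition of $B$) gives the claim.

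Next I would upgrade this to the linear bound on $\psi_r$ itself, namely
\begin{equation*}
-\log\psi_r(gx,gx';\uomega) \le -\log\psi_r(x,x';\uomega) + C''_r n
\end{equation*}
for $g\in\supp\nu_k^{(n)}$ and $k\ge\hat k_r$. I would proceed by induction on $r$, exactly as in the derivation of \eqref{eq_stabilizedbound4}: combine the SVA bound just proved with the deterministic estimate \eqref{eq_AAA} for $-\log d(gF_r,g(F_{r-1}+F'_1))$ to get the stabilized vertical projection bound $-\log\SVP_r(gx,gx';\omega_r)\le -\log\SVP_r(x,x';\omega_r)+C'_r n$, then plug in the inductive hypothesis for $\psi_{r-1}$ (which requires taking $\hat k_r$ at least as large as $\hat k_{r-1}$) and use $\beta_{r-1}\le 1$ together with the definition $C''_r=C'_1+\cdots+C'_r$ from \eqref{eq_Crdoislinha}. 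This gives the bound on $\psi_r$, and by symmetry the same bound holds with $(x,x')$ replaced by $(x',x)$, hence on $\hpsi_r$ as well. Finally, applying \eqref{eq_r=1_elementary1} with $c=e^{C''_r n}$ and $\psi=\hpsi_r(x,x';\uomega)$ converts this into the desired inequality
\begin{equation*}
\Psi_r(gx,gx';\uomega)\le\log(\Omega_r+e^{C''_r n}\hpsi_r(x,x';\uomega)^{-1})\le \Psi_r(x,x';\uomega)+C''_r n,
\end{equation*}
valid whenever $\Psi_r(x,x';\uomega)>\log\Omega_r$ (i.e.\ we are outside the cut-off region, so both sides of the inequality really are evaluated by the $\log(\Omega_r+\hpsi_r^{-1})$ formula).

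The one step I expect to be slightly subtle is the first one, specifically verifying that the required $k$-threshold in the Hausdorff approximation depends only on $\nu_\infty$, $n$ and $\omega_r$ (and not on the pair $(x,x')$). This is where compactness of $P_{\omega_r}$ matters: one must choose a finite $\hat\alpha$-net of unit vectors $u_1,\dots,u_l$ in $P_{\omega_r}$ and obtain a uniform modulus of continuity for $(u,g)\mapsto -\log(\|(gu)^\perp\|/\|gu\|)$ on the compact set $P_{\omega_r}\times V$, where $V$ is a fixed compact neighbourhood of $\supp\nu_\infty^{(n)}$. Once this uniformity is in hand, the bound $\hat k_r$ on $k$ depends on $\omega_r$ (because $P_{\omega_r}$ does) but not on $(x,x')$, which is exactly what the statement requires.
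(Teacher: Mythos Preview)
Your approach is exactly the paper's: establish the finite-$k$ version of the $\SVA_r$ linear bound, upgrade it to $\SVP_r$ via \eqref{eq_AAA}, induct on $r$ using \eqref{eq_Spsir_def} to get the bound on $\psi_r$ (hence on $\hpsi_r$ by symmetry), and finish with \eqref{eq_r=1_elementary1}. The structure is correct and matches the paper line by line.

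There is one slip in your treatment of the stabilization case $\VA_r(x,x')<\omega_r$. You write $\SVA_r(x,x';\omega_r)\ge\omega_r$ and $\SVA_r(gx,gx';\omega_r)\le\omega_r e^{Bn}$, but the first is false (the definition $\SVA_r=\max\{\VA_r,\omega_r e^{-Bn}\}$ only gives $\SVA_r\le\omega_r$ here), and in any case those two inequalities point the wrong way for what you need. The correct argument is the trivial one: from the definition, $\SVA_r(gx,gx';\omega_r)\ge\omega_r e^{-Bn}$ always, while $\SVA_r(x,x';\omega_r)\le\omega_r$ in this case, so the ratio is at least $e^{-Bn}$. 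This is a bookkeeping error, not a structural one.

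Your final paragraph slightly overcomplicates the uniformity issue: the paper does not need a finite net here, because the only smallness required is $r_k/\omega_r\to 0$, and $r_k$ already depends only on $\nu_\infty$ and $n$ (Hausdorff convergence of supports), not on $(x,x')$.
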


\begin{proof}
Consider $\delta>0$, $n \ge N_r$, and $\uomega =(\omega_1, \dots, \omega_r) \in\RR_+^d$.
We begin by claiming that there exists
$\hat{k}_r=\hat{k}_r(\nu_\infty,n, \uomega)\in\NN$ such that
\begin{equation}\label{eq_SVAr_linear_finite}
-\log\SVA_r(gx,gx';\omega_r) \le - \log\SVA_r(x,x';\omega_r) + B n
\end{equation}
for any $x, x' \in \fE_r(\vep_r)$ in general position, $g \in\supp\nu_k^{(n)}$, and $k \ge \hat{k}_r$.
This can be seen as follows. If $\VA_r(x,x') < \omega_r$ then, by the definition \eqref{eq_SVAr_def},
\begin{align*}
- \log\SVA_r(x,x';\omega_r)  & \ge - \log\omega_r \text{ and}\\
-\log\SVA_r(gx,gx';\omega_r) & \le -\log\omega_r + B n \le - \log\SVA_r(x,x';\omega_r)+Bn,
\end{align*}
as claimed. Now suppose that $\VA_r(x,x') \ge \omega_r$. The relation \eqref{eq_VAr_linear}
does not apply here. Instead, from \eqref{eq_VA23again} and \eqref{eq_SVAr_def} we get that
\begin{equation}\label{eq_VA1_new3}
-\log\SVA_r(gx,gx';\omega_r)
\le - \log\SVA_r(x,x';\omega_r)  - \log \frac{\|(g u)^\perp\|}{\|gu\|}\frac{\|u\|}{\|u^\perp\|}
\end{equation}
for every $g\in\supp\nu_k^{(n)}$, with $u=u(x,x')$ realizing the supremum in \eqref{eq_VAR_defbis}.
The assumption $\VA_r(x,x') \ge \omega_r$ means that $\|u^\perp\|\ge\omega_r\|u\|$.
Since $\supp\nu_k$ converges to $\supp\nu_\infty$ in the Hausdorff topology, we may find
$r_k=r_k(\nu_\infty,n) \to 0$ such that every $g\in\supp\nu_k^{(n)}$ is in the $r_k$-neighborhood of some $f\in\supp\nu_\infty^{(n)}$. Then
$$
\|(gu)^\perp-(fu)^\perp\| \le \|gu-fu\| \le r_k \|u\| \le \frac{r_k}{\omega_r}\|u^\perp\|.
$$
Then we may take $\hat{k}_r=\hat{k}_r(\nu_\infty, n, \omega_r)\in\NN$ large enough so that this
inequality implies
$$
\begin{aligned}
- \log \frac{\|(g u)^\perp\|}{\|gu\|}\frac{\|u\|}{\|u^\perp\|}
& \le - \log \frac{\|(f u)^\perp\|}{\|fu\|}\frac{\|u\|}{\|u^\perp\|} + \log 2\\
& = - \log \frac{\|f^\perp u^\perp\|}{\|fu\|}\frac{\|u\|}{\|u^\perp\|} + \log 2\\
& \le \log\|(f^\perp)^{-1}\| + \log \|f\| + \log 2 \le B n
\end{aligned}
$$
for every $g\in\supp\nu_k^{(n)}$ and $k \ge \hat{k}_r$.
This proves \eqref{eq_SVAr_linear_finite}.

Substituting \eqref{eq_SVAr_linear_finite} and \eqref{eq_AAA} in the definition \eqref{eq_SVPr_def},
and recalling the definition of $C'_r$ in \eqref{eq_Crlinha}, we find that
\begin{equation}\label{eq_SVPr_linear_finite}
\begin{aligned}
-\log\SVP_r(gx,gx';\omega_r)
& = - \log \SVA_r(gx,gx';\omega_r) \\
& \hspace{2cm} - \gamma_r\log d(gF_r,g(F_{r-1}+F'_1)\\
& \le -\log\SVA_1(x,x';\omega_r) + Bn\\
& \hspace{2cm} - \gamma_r\log d(F_r,F_{r-1}+F'_1) + \gamma_r An \\
& \le - \log\SVP_r(x,x';\omega_r) + C'_r n
\end{aligned}
\end{equation}
for any $x, x' \in \fE_r(\vep_r)$ in general position, $g \in\supp\nu_k^{(n)}$, and $k \ge \hat{k}_r$.

Next we claim that
\begin{equation}\label{eq_Spsir_linear_finite}
- \log\psi_r(gx,gx';\uomega) \le - \log\psi_r(x,x';\uomega) + C''_r n
\end{equation}
for any $x, x' \in \fE_r(\vep_r)$ in general position, $g \in\supp\nu_k^{(n)}$, and $k \ge \hat{k}_r$.
The case $r=1$ of \eqref{eq_Spsir_linear_finite} was done in \eqref{eq_r=1_SVP1_linear_finite},
so let us consider $r>1$. By induction,
\begin{equation}\label{eq_Spsir_induction}
- \log\psi_{r-1}(gx_-,gx'_-;\uomega_-) \le - \log\psi_{r-1}(x_-,x'_-;\uomega_-) + C''_{r-1} n.
\end{equation}
Replacing this and \eqref{eq_SVPr_linear_finite} in the definition \eqref{eq_Spsir_def},
we find that
\begin{equation*}%\label{eq_Spsir_defbis}
\begin{aligned}
-\log\psi_r(x,x';\uomega)
& = -\beta_{r-1} \log \psi_{r-1}(gx_-,gx'_-;\uomega_-) \\
& \hspace{3cm}  - \log \SVP_r(gx,gx';\omega_r) \\
& \le -\beta_{r-1} \log \psi_{r-1}(x_-,x'_-;\uomega_-) + \beta_{r-1} C''_{r-1} n \\
& \hspace{3cm} - \log \SVP_r(x,x';\omega_r) + C'_r n \\
& = \psi_r(x,x';\uomega) + (\beta_{r-1}C''_{r-1}+C'_r) n.
\end{aligned}
\end{equation*}
Since $C''_r = C''_{r-1}+C'_r$ and $\beta_{r-1}<1$, this proves \eqref{eq_Spsir_linear_finite}.
The estimate remains valid if we exchange the roles of $x$ and $x'$, obviously.
Thus, we have also shown that
\begin{equation}\label{eq_hatpsir_linear}
- \log\hpsi_r(gx,gx';\omega) \le - \log\hpsi_r(x,x';\uomega) + C''_r n
\end{equation}
for any $x, x' \in \fE_r(\vep_r)$ in general position, $g \in\supp\nu_k^{(n)}$, and $k \ge \hat{k}_r$.

Combining \eqref{eq_hatpsir_linear} with \eqref{eq_r=1_elementary1} in the definition \eqref{eq_Psir_def}, we get that
\begin{equation*}%\label{eq_Spsir_linearbis_finite}
\begin{aligned}
\Psi_r(gx,gx';\uomega)
& \le \log\left(\Omega_r+\hpsi_r(gx,gx';\uomega)^{-1}\right)
\le \log\left(\Omega_r+e^{C''_rn}\hpsi_r(x,x';\uomega)^{-1}\right)\\
& \le \log\left(\Omega_r+ \hpsi_r(x,x';\uomega)^{-1}\right) + C''_r n
 = \Psi_r(x,x';\uomega) + C''_r n,
\end{aligned}
\end{equation*}
as stated.
\end{proof}

Next, we prove the following extension of Lemma~\ref{l_Lem5.4}:

\begin{lemma}\label{l_Lem5.4bis}
Given $\delta>0$, $n \ge N_r$ and $\omega_r>0$, there is
$k'_r = k'_r(\nu_\infty,\delta,n,\omega_r)\in\NN$ and for any $x, x'\in\cF(r,d)$ with
$F'_1 \not\subset F_r$ and $\VA_r(x,x') \ge \omega_r$ there is
$\cE''_{k,r}=\cE''_{k,r}(\nu_\infty,\delta,n,x,x',\omega_r)\subset\supp\nu_k^{(n)}$
with $\nu_k^{(n)}((\cE''_{k,r})^c)<\delta$ and
\begin{equation}\label{eq_VA4bis}
-\log\SVA_r(gx,gx';\omega_r) \le \max\{-\log\SVA_r(x,x';\omega_r) - \tilde\kappa_r n, \tilde\theta_r\}
\end{equation}
for every $g\in\cE''_{k,r}$ and $k\ge k'_r$.
\end{lemma}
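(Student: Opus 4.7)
The plan is to repeat verbatim the argument of Lemma~\ref{l_r=1_Lem5.4bis}, exploiting the fact that the vertical angle $\VA_r(x,x')$ is defined through a supremum over $u \in F_r + F'_1$ of the quantity $\|u^\perp\|/\|u\|$, so the analysis reduces to the behavior of a single vector $u\in P$ under the random walk. Concretely, fix $\delta>0$, $n\ge N_r$, and $\omega_r>0$, and take $\tilde\kappa_r$, $\tilde\theta_r$, $\widetilde{N}_r$ from Lemma~\ref{l_Lem5.4}. Let $P_{\omega_r}$ denote the compact subset of $v\in P$ with $\|v^\perp\|/\|v\|\ge\omega_r/2$, and choose a compact neighborhood $V_{\omega_r}$ of $\supp\nu_\infty^{(n)}$. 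The map $(v,g)\mapsto -\log(\|(gv)^\perp\|/\|gv\|)$ is well defined and uniformly continuous on $P_{\omega_r}\times V_{\omega_r}$, so (as in \eqref{eq_r=1_stab2}-\eqref{eq_r=1_stab2bis}) one can pick $\alpha=\alpha(\nu_\infty,n,\omega_r)>0$ ensuring that $\alpha$-close inputs change the relevant logarithms by at most $\log 2$.

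Next, choose a finite cover $P_{\omega_r}\subset\bigcup_{j=1}^{l}B(v_j,\alpha)$, and for each $v\in P_{\omega_r}$ pick some $j$ with $v\in B(v_j,\alpha)$ and set
\[
\cE_{k,0}(\nu_\infty,\delta,n,v,\omega_r) \;=\; \bigl[\text{$\alpha$-nbhd of } \cE_0(\nu_\infty,\delta,n,v_j^\perp)\bigr]\cap\supp\nu_k^{(n)},
\]
exactly as in \eqref{eq_r=1_stab3}. Since $\nu_k^{(n)}\to\nu_\infty^{(n)}$ in the weak$^*$ topology and the cover is finite, one obtains $k'_r=k'_r(\nu_\infty,\delta,n,\omega_r)\in\NN$ with $\nu_k^{(n)}(\cE_{k,0})>1-\delta$ for all $k\ge k'_r$ and all $v\in P_{\omega_r}$. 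Now, given $x,x'\in\cF(r,d)$ with $F'_1\not\subset F_r$ and $\VA_r(x,x')\ge\omega_r$, select a unit vector $u\in F_r+F'_1$ realizing the supremum in \eqref{eq_VAR_defbis}; then $\|u^\perp\|/\|u\|=\VA_r(x,x')\ge\omega_r$, so $u\in P_{\omega_r}$. Define
\[
\cE''_{k,r}(\nu_\infty,\delta,n,x,x',\omega_r) \;=\; \cE_{k,0}(\nu_\infty,\delta,n,u,\omega_r),
\]
which has $\nu_k^{(n)}$-measure greater than $1-\delta$ for $k\ge k'_r$.

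For any $g\in\cE''_{k,r}$ one finds $v=v_j$ with $d(u,v)<\alpha$ and $f\in\cE_0(\nu_\infty,\delta,n,v^\perp)\subset\supp\nu_\infty^{(n)}$ with $d(g,f)<\alpha$. Combining the inequality
\[
\VA_r(gx,gx') \;\ge\; \VA_r(x,x')\frac{\|(gu)^\perp\|}{\|gu\|}\frac{\|u\|}{\|u^\perp\|}
\]
from \eqref{eq_VA23again} with the two $\log 2$ estimates given by the choice of $\alpha$ produces (as in \eqref{eq_r=1_stab7})
\[
\VA_r(gx,gx') \;\ge\; \tfrac14\,\VA_r(x,x')\,\frac{\|f^\perp v^\perp\|}{\|fv\|}\,\frac{\|v\|}{\|v^\perp\|}.
\]
One now splits into two cases exactly as before. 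If $\|f^\perp v^\perp\|/\|fv\|\ge\tau_0/2$, then $-\log\VA_r(gx,gx')\le-\log(\tau_0/4)=\tilde\theta_r$, which yields the $\tilde\theta_r$ branch in \eqref{eq_VA4bis} (and thus the $\SVA_r$ version as well, since $\SVA_r\ge\VA_r$). Otherwise, part (2) of Proposition~\ref{p_sec5_derivative_estimate} gives $\|fv^E\|/\|fv\|>1/2$, so using $\|v\|\ge\|v^E\|$ and part (1) of the same proposition, together with the choices $\tilde\kappa_r=\kappa_0/2$ and $\widetilde{N}_r>4/\tilde\kappa_r$,
\[
-\log\VA_r(gx,gx') \;\le\; -\log\VA_r(x,x') + \log 8 - 2\tilde\kappa_r n \;\le\; -\log\VA_r(x,x') - \tilde\kappa_r n,
\]
which, since $\VA_r\ge\omega_r$ means $\SVA_r=\VA_r$ on the left side too (by \eqref{eq_SVP1}), gives the other branch of \eqref{eq_VA4bis}. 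Finally, set $k'_r$ to be this value (enlarging it if needed to absorb $\hat{k}_r$ from Lemma~\ref{l_eq7.5bis}).

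The argument involves no conceptual difficulty beyond the $r=1$ case: the \emph{main obstacle} is purely bookkeeping, namely verifying that taking $u$ to realize the supremum defining $\VA_r$ really does reduce the general $r$ estimate to an estimate about a single vector in $P$, so that the compactness-cover-convergence mechanism from Lemma~\ref{l_r=1_Lem5.4bis} applies without modification. Everything else is a mechanical transcription.
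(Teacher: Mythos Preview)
Your proposal is correct and follows essentially the same approach as the paper's own proof: both reduce to the $r=1$ argument by selecting a vector $u\in F_r+F'_1$ realizing the supremum in \eqref{eq_VAR_defbis}, covering $P_{\omega_r}$ by finitely many $\alpha$-balls, defining $\cE_{k,0}$ via $\alpha$-thickenings of the sets $\cE_0(\nu_\infty,\delta,n,v_j^\perp)$, and then splitting into the two cases $\|f^\perp v^\perp\|/\|fv\|\gtrless\tau_0/2$. Your explicit remark that $\VA_r(x,x')\ge\omega_r$ forces $\SVA_r=\VA_r$ on both sides (via \eqref{eq_SVP1}) is a small clarification the paper leaves implicit.
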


\begin{proof}
Fix $\delta>0$ and $n \ge N_r$ and $\omega_r>0$.
Let $P_{\omega_r}$ denote the (compact) subset of all $v\in P$ such that $\|v^\perp\|/\|v\|\ge\omega_r/2$.
For $v\in P_{\omega_r}$ and $g$ in some compact neighborhood $V_{\omega_r}$ of $\supp\nu_\infty^{(n)}$,
consider
\begin{equation}\label{eq_stab1}
(v,g) \mapsto - \log\frac{\|(gv)^\perp\|}{\|gv\|}.
\end{equation}
As long as $V_{\omega_r}$ is sufficiently small, depending on $\nu_\infty$, $n$ and $\omega_r$,
the map \eqref{eq_stab1} is well defined and (uniformly) continuous. So, there exists
$\alpha=\alpha(\nu_\infty,n,\omega_r)>0$ such that
\begin{equation}\label{eq_stab2}
- \log\frac{\|(gu)^\perp\|}{\|gu\|}
\le - \log\frac{\|(f v)^\perp\|}{\|f v\|} + \log 2
\end{equation}
whenever $d(u,v) <\alpha$ and $d(g,f)<\alpha$.
Reducing $\alpha$ if necessary, depending only on $\omega_r$, we may also assume that
\begin{equation}\label{eq_stab2bis}
d(u,v) <\alpha
\quad\Rightarrow\quad
- \log\frac{\|u^\perp\|}{\|u\|}
\ge - \log\frac{\|v^\perp\|}{\|v\|} - \log 2.
\end{equation}
Fix $v_1, \dots, v_l \in P_{\omega_r}$ such that
$P_{\omega_r} \subset B(v_1,\alpha) \cup \cdots \cup B(v_l,\alpha)$.
For each $v \in P_{\omega_r}$ choose $j\in\{1, \dots, l\}$ such that $v \in B(v_j,\alpha)$ and define
$\cE_{k,0}=\cE_{k,0}(\nu_\infty,\delta,n,v,\omega_r)\subset\supp\nu_k^{(n)}$ by
\begin{equation}\label{eq_stab3}
\cE_{k,0} = \big[\text{$\alpha$-neighborhood of } \cE_0(\nu_\infty,\delta,n,v_j^\perp)\big] \cap\supp\nu_k^{(n)},
\end{equation}
where $\cE_0(\nu_\infty,\delta,n,v_j^\perp)\subset\supp\nu_\infty^{(n)}$ is as defined in Proposition~\ref{p_sec5_derivative_estimate}.
Since $\nu_k^{(n)}$ converges to $\nu_\infty^{(n)}$ in the weak$^*$ topology, the limit inferior of the
$\nu_k^{(n)}$-measure of \eqref{eq_stab3} as $k\to\infty$ is greater than or equal to
$$
\nu^{(n)}_\infty\left(\cE_0(\nu_\infty,\delta,n,v_j^\perp)\right) > 1 - \delta
$$
for every $j=1, \dots, l$. In particular, there is $k'_r=k'_r(\nu_\infty,\delta, n,\omega_r)\in\NN$
such that
\begin{equation}\label{eq_stab4}
\nu_k^{(n)}(\cE_{k,0}) > 1 - \delta
\text{ for every $k\ge k'_r$ and $v\in P_{\omega_r}$.}
\end{equation}

Given $x, x' \in \cF(r,d)$ with $F'_1 \not\subset F_r$ and $\VA_r(x,x')\ge\omega_r$,
take $u=u^E+u^\perp$ to be a non-zero vector that realizes the supremum in the definition \eqref{eq_VAR_defbis}.
Then,
\begin{align}
\label{eq_stab5}
& \omega_r \le \VA_r(x,x')  = \frac{\|u^\perp\|}{\|u\|} \quad(\text{in particular, } u\in P_{\omega_r}) \text{ and}\\
\label{eq_stab6}
& \VA_r(gx,gx') \ge \frac{\|(g u)^\perp\|}{\|gu\|}
\ge \VA_r(x,x') \frac{\|(g u)^\perp\|}{\|gu\|}\frac{\|u\|}{\|u^\perp\|}
\end{align}
for any $g\in G$. Then define
\begin{equation}\label{eq_stab6.5}
\cE''_{k,r}=\cE_{k,0}(\nu_\infty,\delta,n,u,\omega_r).
\end{equation}
It follows from \eqref{eq_stab4} that $\nu_k^{(n)}((\cE''_{k,r})^c)<\delta$ for every $k\ge k'_r$.

Let $g\in\cE''_{k,r}$ and $k \ge k'_r$.
Then, by definition, there exist $v=v^E+v^\perp$ in $P_{\omega_r}$ (take $v=v_j$ as in \eqref{eq_stab3})
and $f\in\cE_0(\nu_\infty,\delta,n,v^\perp)\subset \supp\nu_\infty^{(n)}$ such that $d(u,v)<\alpha$
and $d(g,f)<\alpha$. %Write $v=v^E+v^\perp$ with $v^E\in E$ and $v^\perp\in E^\perp$.
Thus, substituting \eqref{eq_stab2} and \eqref{eq_stab2bis} in \eqref{eq_stab6}, we find that
\begin{equation}\label{eq_stab7}
\VA_r(gx,gx')
\ge \frac 12 \frac{\|f^\perp v^\perp\|}{\|f v\|}
\ge \frac{1}{4} \VA_r(x,x') \frac{\|f^\perp v^\perp\|}{\|f v\|}\frac{\|v\|}{\|v^\perp\|}.
\end{equation}
Let $\tau_0=\tau_0(\nu_\infty,\delta)>0$ be as in Proposition~\ref{p_sec5_derivative_estimate}.
If  ${\|f^\perp v^\perp\|}/{\|f v\|} \ge {\tau_0}/{2}$ then the first part of \eqref{eq_stab7} gives that
(recall \eqref{eq_VA5.5} also)
\begin{equation}\label{eq_stab8}
- \log\VA_r(gx,gx') \le - \log\frac{\tau_0}{4} \le \tilde\theta_r.
\end{equation}
Now suppose that ${\|f^\perp v^\perp\|}/{\|f v\|} < {\tau_0}/{2}$.
Then part (2) of Proposition~\ref{p_sec5_derivative_estimate} gives that
\begin{equation}\label{eq_fv}
\frac{\|f v^\perp\|}{\|f v\|}
< \frac 12
\text{ and so }
\frac{\|f v^E\|}{\|f v\|}
> \frac 12.
\end{equation}
Substituting \eqref{eq_fv} and $\|v\| \ge \|v^E\|$ in \eqref{eq_stab7}, we find that
\begin{equation}\label{eq_stab8.5}
\begin{aligned}
\VA_r(gx,gx')
& \ge \frac 1{8} \VA_r(x,x') \frac{\|f^\perp v^\perp\|}{\|v^\perp\|}\frac{\|v^E\|}{\|f v^E\|}\\
& = \frac 1{8} \VA_r(x,x') \frac{\|Df^\perp_{v^E} v^\perp\|}{\|v^\perp\|}.
\end{aligned}
\end{equation}
By part (1) of Proposition~\ref{p_sec5_derivative_estimate} and the choices of $\tilde\kappa_r>0$ and $\widetilde{N}_r\in\NN$ in \eqref{eq_VA5.5},
this implies
\begin{equation}\label{eq_stab9}
\begin{aligned}
-\log\VA_r(gx,gx')
& \le - \log \VA_r(x,x') + \log 8 - 2 \tilde\kappa_r n\\
& \le - \log \VA_r(x,x') - \tilde\kappa_r n.
\end{aligned}
\end{equation}
The conclusion of the lemma is contained in \eqref{eq_stab8} and \eqref{eq_stab9}.
\end{proof}

Next, let us prove the following extension of Lemma~\ref{l_Lem5.7}:

\begin{lemma}\label{l_Lem5.7bis}
Given $\delta>0$ and $n \ge N_r$
there exists $\hat k_r=\hat k_r(\nu_\infty,\delta,n)\in\NN$ such that for any $x, x' \in \fE_r(\vep_r)$
with $F'_1 \not\subset F_r$ and $-\log\VA_r(x,x') \le \tilde\theta_r+\tilde\kappa_r n$ there exists
$\cE''_{k,r}=\cE''_{k,r}(\nu_\infty,\delta,n,x,x')\subset\supp\nu_k^{(n)}$ with
$\nu_k^{(n)}((\cE''_{k,r})^c)<\delta$ and
\begin{equation}\label{eq_hatE1}
-\log d(gF_r,g(F_{r-1}+F'_1)) \le -\log d(F_r, F_{r-1}+F'_1)-\log\VA_r(x,x')-\hat\kappa_r n
\end{equation}
for every $g\in\cE''_{k,r}$ and $k\ge \hat k_r$.
\end{lemma}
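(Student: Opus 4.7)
The proof will transcribe the argument of Lemma~\ref{l_r=1_Lem5.7bis} into the flag-variety setting, using the unit vectors $v \in F_r$ and $v' \in F_{r-1}+F'_1$ orthogonal to $F_{r-1}$ introduced in \eqref{eq_vvprime}. In that formulation the relation $d(F_r,F_{r-1}+F'_1) = |\sin\angle(v,v')|$ and the angle bound $\angle(w,v)\ge\pi/4$ for $w = v'-v$ (from \eqref{eq_dista12}) make the local geometry identical to the one-dimensional case. Consequently, arguing as in \eqref{eq_dg1}, for any $g \in G$ with $\|gv\|\ge\|gv'\|$ one has
$$
-\log d(gF_r,g(F_{r-1}+F'_1)) \le -\log d(F_r, F_{r-1}+F'_1) - \log \frac{\|\Pi_{gv}gw\|}{\|gv\|}\frac{\|v\|}{\|w\|},
$$
and symmetrically when $\|gv\|\le\|gv'\|$. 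So it remains to bound the last term below by $\log\VA_r(x,x') + \hat\kappa_r n - O(1)$, uniformly for large $k$ and $g\in\supp\nu_k^{(n)}$ in a set of measure close to $1$.

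\textbf{Compactness setup and uniform continuity.} I would define $\hat P$ to be the compact set of pairs $(v,w) \in P \times P$ satisfying $\|v^\perp\|/\|v\| \le \hat\rho_r$ and $\|w^\perp\|/\|w\| \ge 2\hat\rho_r$, and fix a compact neighborhood $\hat V$ of $\supp\nu_\infty^{(n)}$ in $G$. The constraint on $w$ keeps $v$ and $w$ (and then $gv$ and $gw$ for $g\in\hat V$) uniformly bounded away from parallel, so the map $(v,w,g) \mapsto -\log\frac{\|\Pi_{gv}gw\|}{\|gv\|}\frac{\|v\|}{\|w\|}$ is well defined and uniformly continuous on $\hat P\times\hat V$. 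I would then choose $\hat\alpha=\hat\alpha(\nu_\infty,n)>0$ small enough that this map varies by at most $\log 2$ on $\hat\alpha$-balls, and also that $-\log(\|z^\perp\|/\|z\|) \le -\log(\|w^\perp\|/\|w\|) + \log 2$ whenever $d(z,w)<\hat\alpha$ (as in \eqref{eq_r=1_hatE3} and \eqref{eq_r=1_hatE3.5}). Covering $\hat P$ by finitely many products $B(v_j,\hat\alpha)\times B(w_j,\hat\alpha)$, for each $(v,w)\in\hat P$ I would set $\hat\cE_{k,0}(v,w)$ equal to the $\hat\alpha$-neighborhood (intersected with $\supp\nu_k^{(n)}$) of $\cE_0(\nu_\infty,\delta,n,w_j^\perp)$ from Proposition~\ref{p_sec5_derivative_estimate}, with $j$ chosen so that $(v,w)$ lies in the corresponding product ball. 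Weak$^*$-convergence of $\nu_k^{(n)}$ to $\nu_\infty^{(n)}$ then yields $\hat k_r=\hat k_r(\nu_\infty,\delta,n)$ such that $\nu_k^{(n)}(\hat\cE_{k,0}(v,w))>1-\delta$ for every $k\ge\hat k_r$ and every $(v,w)\in\hat P$.

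\textbf{Verifying $(v,w)\in\hat P$ and concluding.} For $x,x'\in\fE_r(\vep_r)$ in the statement, with $\hat\rho_r$ chosen as in \eqref{eq_dista22}, the bound $\|v^\perp\|/\|v\|\le d(F_r,E)\le\vep_r<\hat\rho_r$ is immediate, while the hypothesis $-\log\VA_r(x,x')\le\tilde\theta_r+\tilde\kappa_r n$ together with Lemma~\ref{l_Lem5.6} gives $\|w^\perp\|/\|w\| > \VA_r(x,x')/4 > 2\hat\rho_r$. Hence $(v,w)\in\hat P$ and I set $\cE''_{k,r}:=\hat\cE_{k,0}(v,w)$; the measure estimate is then automatic. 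Given $g\in\cE''_{k,r}$, pick the associated $(u,z)=(v_j,w_j)\in\hat P$ with $d(u,v)<\hat\alpha$ and $d(z,w)<\hat\alpha$, and an $f\in\cE_0(\nu_\infty,\delta,n,z^\perp)\subset\supp\nu_\infty^{(n)}$ with $d(g,f)<\hat\alpha$. Combining the displayed inequality with the analogues of \eqref{eq_dista9fresh}–\eqref{eq_dista10} (obtained by further shrinking $\hat\rho_r$) and the uniform-continuity bounds in $\hat\alpha$, one obtains
$$
-\log d(gF_r,g(F_{r-1}+F'_1)) \le -\log d(F_r,F_{r-1}+F'_1) - \log\VA_r(x,x') + \log 64 - \log\frac{\|Df^\perp_{u^E} z^\perp\|}{\|z^\perp\|}.
$$
Finally, part (1) of Proposition~\ref{p_sec5_derivative_estimate} applied to $f$ yields the $e^{\kappa_0 n}$ factor that absorbs $\log 64$ once $\widehat{N}_r$ is taken as in \eqref{eq_dista22}, delivering \eqref{eq_hatE1} with $\hat\kappa_r=\kappa_0/2$.

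\textbf{Main obstacle.} There is essentially no new difficulty beyond what was already resolved for $r=1$: the argument is a line-by-line port, and the only point requiring genuine care is the geometric fact that the pair $(v,w)$ built from \eqref{eq_vvprime} still lies in the compact set $\hat P$, which is exactly the content of Lemma~\ref{l_Lem5.6} under the hypothesis $-\log\VA_r(x,x')\le\tilde\theta_r+\tilde\kappa_r n$ and $x\in\fE_r(\hat\rho_r)$. Once this is checked, the uniform-continuity scheme transferring Proposition~\ref{p_sec5_derivative_estimate} from $\nu_\infty^{(n)}$ to $\nu_k^{(n)}$ runs identically to the $r=1$ case.
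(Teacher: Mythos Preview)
Your proposal is correct and follows essentially the same approach as the paper's own proof: both are a direct transcription of the $r=1$ argument (Lemma~\ref{l_r=1_Lem5.7bis}) using the vectors $v,v',w$ from \eqref{eq_vvprime}--\eqref{eq_dista12}, the same compact set $\hat P$, the same uniform-continuity parameter $\hat\alpha$, the same finite cover and definition of $\hat\cE_{k,0}$ via $\cE_0(\nu_\infty,\delta,n,w_j^\perp)$, and the same chain of inequalities leading to the $\log 64$ estimate absorbed by Proposition~\ref{p_sec5_derivative_estimate}. Your identification of the only genuine checkpoint---that $(v,w)\in\hat P$ via Lemma~\ref{l_Lem5.6} under the hypothesis on $\VA_r(x,x')$---matches the paper exactly.
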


\begin{proof}
Let $\hat P$ denote the (compact) subset of pairs $(v,w) \in P\times P$ such that
\begin{equation}\label{eq_vw}
\frac{\|v^\perp\|}{\|v\|} \le \hat\rho_1 < 2\hat\rho_1 \le \frac{\|w^\perp\|}{\|w\|}
\end{equation}
($v^\perp$ and $w^\perp$ denote the components of $v$ and $w$ orthogonal to the equator). Let $\hat V$ be some compact neighborhood of the support of $\nu_\infty^{(n)}$.
Since, $\hat\rho_r=\hat\rho_r(\nu_\infty,n)$, both $\hat P$ and $\hat V$ depend only on $\nu_\infty$ and $n$.
Condition \eqref{eq_vw} ensures that the angle between $v$ and $w$ is bounded away from zero and,
consequently, so is the angle between $gv$ and $gw$ for any $g\in\hat V$;
both bounds depend only on $\nu_\infty$ and $n$. Thus, the map
\begin{equation}\label{eq_hatE2}
(v,w,g) \mapsto - \log \frac{\|\Pi_{gv} gw\|}{\|gv\|}\frac{\|v\|}{\|w\|}
\end{equation}
is well-defined and (uniformly) continuous on the domain $(v,w)\in\hat P$ and $g\in\hat V$.
In particular, there exists $\hat\alpha=\hat\alpha(\nu_\infty,n)>0$ such that
\begin{equation}\label{eq_hatE3}
- \log \frac{\|\Pi_{gv} gw\|}{\|gv\|}\frac{\|v\|}{\|w\|}
\le - \log \frac{\|\Pi_{fu} f z\|}{\|f u\|}\frac{\|u\|}{\|z\|} + \log 2
\end{equation}
whenever $d(v,u)<\hat\alpha$ and $d(z,w)<\hat\alpha$ and $d(g,f) < \hat\alpha$.
Reducing $\hat\alpha$ if necessary, depending only on $\nu_\infty$ and $n$,
we may also suppose that
\begin{equation}\label{eq_hatE3.5}
d(z,w)<\hat\alpha
\quad\Rightarrow\quad
-\log\frac{\|z^\perp\|}{\|z\|} \le -\log\frac{\|w^\perp\|}{\|w\|} + \log 2.
\end{equation}

Fix points $(v_1,w_1), \dots, (v_l,w_l)\in \hat P$ such that the balls of radius $\hat\rho$
around these points cover $\hat P$.
For each $(v,w)\in\hat P$ choose $j\in\{1, \dots, l\}$ such that $v\in B(v_j,\hat\alpha)$ and
$w\in B(w_j,\hat\alpha)$ and then define
$\hat\cE_{k,0}=\hat\cE_{k,0}(\nu_\infty,\delta,n,v,w)\subset G$ by
\begin{equation}\label{eq_hatE4}
\hat\cE_{k,0} = \big[\text{$\hat\alpha$-neighborhood of $\cE_0(\nu_\infty,\delta,n,w_j^\perp)\big] \cap \supp\nu_k^{(n)}$},
\end{equation}
where $\cE_0(\nu_\infty,\delta,n,w_j^\perp)$ is given by Proposition~\ref{p_sec5_derivative_estimate}.
Since $\nu_k^{(n)}\to\nu_\infty^{(n)}$ in the weak$^*$ topology, the limit inferior of the
$\nu_k^{(n)}$-measure of \eqref{eq_hatE4} as $k\to\infty$ is greater than or equal to
$$
\nu^{(n)}_\infty\left(\cE_0(\nu_\infty,\delta,n,w_j^\perp)\right) > 1 - \delta
$$
for every $j=1, \dots, l$. In particular, there is $\hat k_r=\hat k_r(\nu_\infty,\delta, n)\in\NN$
such that
\begin{equation}\label{eq_hatE5}
\nu_k^{(n)}(\hat\cE_{k,0}) > 1- \delta
\text{ for every $k\ge \hat k_r$ and $(v,w) \in \hat P$.}
\end{equation}

Given $x, x' \in \fE_r(\vep_r)$ with $F'_1 \not\subset F_r$,
let $v\in F_r$, $v'\in F_{r-1} + F'_1$ and $w=v-v'$ be as in \eqref{eq_vvprime} and \eqref{eq_dista12}.
%Just as in \eqref{eq_dista6} and \eqref{eq_dista7},
%$$
%d(F_r,F_{r-1}+F'_1) \le \|w\| = \frac{\|w\|}{\|v\|}
%\quand
%d(gF_r,g(F_{r-1}+F'_1)) \ge \frac{\|\Pi_{gv} gw\|}{\|gv\|}
%$$
%for any $g\in G$ such that $\|gv\|\ge\|gv'\|$ (the case $\|gv\|\le\|gv'\|$ is analogous,
%reversing the roles of $F_r$ and $F_{r-1}+F'1$). Thus,
Just as in \eqref{eq_dg1},
\begin{equation}\label{eq_dg4}
-\log d(gF_r,g(F_{r-1}+F'_1))
\le - \log d(F_r,F_{r-1}+F'_1) - \log \frac{\|\Pi_{gv}gw\|}{\|gv\|}\frac{\|v\|}{\|w\|}
\end{equation}
for any $g\in G$ such that $\|gv\|\ge\|gv'\|$ (the case $\|gv\|\le\|gv'\|$ is analogous,
reversing the roles of $F_r$ and $F_{r-1}+F'_1$). The assumption $x\in E_r(\vep_r)$ implies that
\begin{equation}\label{eq_hatE6a}
\frac{\|v^\perp\|}{\|v\|}
\le d(x,E) \le \vep_r < \hat\rho_r.
\end{equation}
Now assume that $-\log\VA_r(x,x') \le \tilde\theta_r+\tilde\kappa_r n$.
Then, using \eqref{eq_dista22} and \eqref{eq_dista75},
\begin{equation}\label{eq_hatE6b}
\frac{\|w^\perp\|}{\|w\|}
> \frac{1}{4} \VA_r(x,x')
\ge \frac {1}{4} e^{-\tilde\theta_r-\tilde\kappa_r n}
> 2\hat\rho_r.
\end{equation}
Thus, $(v,w)\in\hat P$. Then define
\begin{equation}\label{eq_hatE6c}
\cE''_{k,r}=\hat\cE_{k,0}(\nu_\infty,\delta,n,v,w).
\end{equation}
It follows from \eqref{eq_hatE5} that $\nu_k^{(n)}((\cE''_{k,r})^c)<\delta$ for every $k\ge \hat k_r$.

Take $u=v_j$ and $z=w_j$ as in \eqref{eq_hatE4}. By definition, $(u,z)\in\hat P$ and $d(u,v)<\hat\alpha$
and $d(z,w)<\hat\alpha$. Let $g\in\cE''_{k,r}$ and $k\ge \hat k_r$. The definitions \eqref{eq_hatE4} and
\eqref{eq_hatE6c} imply that there exists $f\in\cE_0(\nu_\infty,\delta,n,z^\perp)\subset\supp\nu_\infty^{(n)}$
such that $d(g,f)<\theta$. Combining \eqref{eq_dg4} with \eqref{eq_hatE3},
$$
-\log d(gF_r,g(F_{r-1}+F'_1))
\le - \log d(F_r,F_{r-1}+F'_1) + \log 2 - \log \frac{\|\Pi_{fu} fz\|}{\|fu\|}\frac{\|u\|}{\|z\|}.
$$
Replacing $z$, $w$ and $g$ with $u$, $z$ and $f$ in
\eqref{eq_dista9fresh} and \eqref{eq_dista10}, we get that
\begin{equation*}
\frac{\|fu\|}{\|u\|} \ge \frac 12 \frac{\|fu^E\|}{\|u^E\|}
\text{ and}
\|\Pi_{fu} fz\| \ge \frac 12\|f^\perp z^\perp\|.
\end{equation*}
Substituting this in the previous inequality,
\begin{equation*}%\label{eq_dg5}
-\log d(gF_r,g(F_{r-1}+F'_1))
\le - \log d(F_r,F_{r-1}+F'_1) + \log 8 - \log \frac{\|f^\perp z^\perp\|}{\|z\|}\frac{\|u^E\|}{\|fu^E\|}.
\end{equation*}
Conditions \eqref{eq_hatE3.5} and \eqref{eq_hatE6b} give that
$$
-\log\frac{\|z^\perp\|}{\|z\|}
\le -\log\frac{\|w^\perp\|}{\|w\|} + \log 2
\le -\log\VA_r(x,x') + \log 8.
$$
Combining this with the previous inequality, we find that
$$
\begin{aligned}
-\log d(gF_r,g(F_{r-1}+F'_1))
& \le - \log d(F_r,F_{r-1}+F'_1) -\log\VA_r(x,x') \\
& \hspace{3cm} + \log 64 - \log \frac{\|f^\perp z^\perp\|}{\|z^\perp\|}\frac{\|u^E\|}{\|f u^E\|}\\
& = - \log d(F_r,F_{r-1}+F'_1) - \log \VA_r(x,x') \\
& \hspace{3cm}+ \log 64 - \log\frac{\|Df^\perp_{u^E} z^\perp\|}{\|z^\perp\|}.
\end{aligned}
$$
By part (1) of Proposition~\ref{p_sec5_derivative_estimate},
and the choice of $\widehat{N}_r\in\NN$ in \eqref{eq_dista22}, this implies that
$$
\begin{aligned}
-\log d(gF_r,g(F_{r-1}+F'_1))
& \le - \log d(F_r,F_{r-1}+F'_1) - \log \VA_r(x,x') + 5 - 2\hat\kappa_r n \\
& \le -\log d(F_r,F_{r-1}+F'_1) - \log \VA_r(x,x') - \hat\kappa_r n,
\end{aligned}
$$
as claimed.
\end{proof}

We deduce the following extension of Proposition~\ref{p_Lem5.5}:

\begin{lemma}\label{l_Prop7.1bis}
For $\delta>0$, $n\ge N_r$ and $\uomega=(\omega_1, \dots, \omega_r)\in\RR^d_+$ there is
$k'''_r=k'''_r(\nu_\infty,\delta,n,\omega_r)\in\NN$
and for $x, x' \in \fE_r(\vep_r)$ with $F'_1 \not\subset F_r$ and $\VA_r(x,x')\ge\omega_r$ there is
$\cE'''_{k,r}=\cE'''_{k,r}(\nu_\infty,\delta,n,x,x',\uomega)\subset\supp\nu_k^{(n)}$
such that $\nu_k^{(n)}((\cE'''_{k,r})^c)<\delta$ and
\begin{equation}\label{eq_Prop7.1bis}
-\log\psi_r(gx,gx';\uomega) \le - \log\psi_r(x,x';\uomega) - \kappa''_rn.
\end{equation}
for any $g\in\cE'''_{k,r}$ and $k\ge k'''_r$.
\end{lemma}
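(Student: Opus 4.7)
The plan is to mirror Proposition~\ref{p_Lem5.5} (which handled the $k=\infty$ analogue) by replacing each $k=\infty$ ingredient with its already-established finite-$k$ counterpart: namely, Lemma~\ref{l_Lem5.4bis} in place of Lemma~\ref{l_Lem5.4}, Lemma~\ref{l_Lem5.7bis} in place of Lemma~\ref{l_Lem5.7}, and the universal upper bound \eqref{eq_Spsir_linear_finite} in place of \eqref{eq_psir_linear}. The overall architecture parallels Lemma~\ref{l_r=1_Prop7.1bis}, which is the $r=1$ version of the same statement.

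First I would set
\[
k'''_r = \max\bigl\{\hat k_r(\nu_\infty,n,\omega_r),\ k'_r(\nu_\infty,\delta/2,n,\omega_r),\ \hat k_r(\nu_\infty,\delta/2,n)\bigr\},
\]
where the three constants come from Lemma~\ref{l_eq7.5bis}, Lemma~\ref{l_Lem5.4bis}, and Lemma~\ref{l_Lem5.7bis}, respectively, and define $\cE'''_{k,r}$ to be the intersection of the two $(\delta/2)$-good subsets of $\supp\nu_k^{(n)}$ supplied by Lemmas~\ref{l_Lem5.4bis} and~\ref{l_Lem5.7bis} at the given pair $(x,x')$; subadditivity gives $\nu_k^{(n)}((\cE'''_{k,r})^c)<\delta$ for every $k\ge k'''_r$. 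Since $\VA_r(x,x')\ge\omega_r$, the stabilization is inactive, so $\SVP_r=\VP_r$ at $(x,x')$ and, by \eqref{eq_SVP1} together with \eqref{eq_VAr_linear}, also at $(gx,gx')$. Thus \eqref{eq_Spsir_def} reduces the task to estimating the two factors $-\log\psi_{r-1}(gx_-,gx'_-;\uomega_-)$ and $-\log\VP_r(gx,gx')$ separately. The first factor is handled for \emph{every} $g\in\supp\nu_k^{(n)}$ by \eqref{eq_Spsir_linear_finite}, and the small exponent $\beta_{r-1}$ was chosen in \eqref{eq_bound_betar} precisely so that $\beta_{r-1}C''_{r-1}\le \kappa'_r/2$, which will absorb this term.

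For the $\VP_r$ factor I would proceed by the same two-case dichotomy used in Proposition~\ref{p_Lem5.5} and in Lemma~\ref{l_r=1_Prop7.1bis}. If $-\log\VA_r(x,x')\ge\tilde\theta_r+\tilde\kappa_r n$, then Lemma~\ref{l_Lem5.4bis} yields $-\log\VA_r(gx,gx')\le -\log\VA_r(x,x')-\tilde\kappa_r n$, while the $d(F_r,F_{r-1}+F'_1)^{\gamma_r}$ factor is controlled by \eqref{eq_AAA}; the choice $\gamma_r\le \tilde\kappa_r/(2A)$ from \eqref{eq_constants} absorbs the bad term and yields an estimate of the form \eqref{eq_VP3}. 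Otherwise, Lemma~\ref{l_Lem5.4bis} only provides the ceiling $-\log\VA_r(gx,gx')\le\tilde\theta_r$, but now Lemma~\ref{l_Lem5.7bis} produces a gain of $\gamma_r\hat\kappa_r n$ on the $d$-factor, and the additive $\tilde\theta_r$ is absorbed using $n\ge N_r\ge 2\tilde\theta_r/(\gamma_r\hat\kappa_r)$ as in \eqref{eq_VP6}. In both cases one gets $-\log\VP_r(gx,gx')\le -\log\VP_r(x,x')-\kappa'_r n$ on $\cE'''_{k,r}$. Combining with the $\beta_{r-1}$-scaled inductive bound gives
\[
-\log\psi_r(gx,gx';\uomega) \le -\log\psi_r(x,x';\uomega) - \kappa'_r n/2 = -\log\psi_r(x,x';\uomega) - \kappa''_r n,
\]
which is the desired inequality with $\kappa''_r = \kappa'_r/2$ as set in \eqref{eq_psir_A4}.

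The proof is essentially bookkeeping, since the substantive dynamical content---the finite-$k$ contraction of $\VA_r$ and of the separation $d(F_r,F_{r-1}+F'_1)$---is already encapsulated in Lemmas~\ref{l_Lem5.4bis} and~\ref{l_Lem5.7bis}. The only thing requiring minor care is the two-case split on $-\log\VA_r(x,x')$ relative to $\tilde\theta_r+\tilde\kappa_r n$, and ensuring that $k'''_r$ is taken large enough that the good sets in both sublemmas simultaneously have $\nu_k^{(n)}$-measure at least $1-\delta/2$. No genuine obstacle is anticipated; the structure is fully forced by the $r=1$ case.
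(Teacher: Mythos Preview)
Your proposal is correct and follows essentially the same route as the paper's own proof: define $k'''_r$ and $\cE'''_{k,r}$ from the $(\delta/2)$-good sets of Lemmas~\ref{l_Lem5.4bis} and~\ref{l_Lem5.7bis}, reduce to bounding $-\log\VP_r$ via the two-case dichotomy on $-\log\VA_r(x,x')$ relative to $\tilde\theta_r+\tilde\kappa_r n$, and then absorb the $\psi_{r-1}$ term using \eqref{eq_Spsir_linear_finite} and the choice \eqref{eq_bound_betar}. The only cosmetic difference is that you fold the threshold $\hat k_r(\nu_\infty,n,\omega_r)$ from Lemma~\ref{l_eq7.5bis} into $k'''_r$ directly, whereas the paper defers it to the outer constant $k_r$ in \eqref{eq_k1}; this is harmless.
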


\begin{proof}
Fix $\delta>0$ and $n\ge N_r$ and $\uomega\in\RR_+^d$. Recall that
$$
N_r > \max\big\{\widetilde{N}_r(\nu_\infty,{\delta}/{2}), \widehat{N}_r(\nu_\infty,{\delta}/{2}), {2\tilde\theta_r}/{(\gamma\hat\kappa_r)}\big\},
$$
by \eqref{eq_constants}. Define
\begin{equation}\label{eq_kkEE}
\begin{aligned}
&k'''_r=\max\{k'_r(\nu_\infty,\delta/2,n,\omega),\hat k_r(\nu_\infty,\delta/2,n)\}\\
&\hspace*{1cm}\text{and }\cE'''_{k,r}=\cE''_{k,r}(\nu_\infty,\delta/2,n,x,x',\omega) \cap \cE''_{k,r}(\nu_\infty,\delta/2,x,x'n).
\end{aligned}
\end{equation}
By construction, $\cE'''_{k,r}$ is contained in the support of $\nu_k^{(n)}$ and $\nu_k^{(n)}((\cE'''_{k,r})^c)<\delta$.

We claim that given any $x, x' \in \fE_r(\vep_r)$ with $F'_1 \not\subset F_r$
and $\VA_r(x,x')\ge\omega_r$,
\begin{equation}\label{eq_SVP_bounding_bis}
- \log \SVP_r(gx,gx';\omega_r) \le  - \log\SVP_r(x,x';\omega_r) - \kappa'_r n
\end{equation}
for every $g\in\cE'''_{k,r}$ and $k\ge k'''_r$. As observed in \eqref{eq_SVP1}, the assumption on
$(x,x')$ implies that $\SVA_r(x,x';\omega_r)=\VA_r(x,x')$ and $\SVA_r(x,x';\omega_r)=\VA_r(gx,gx')$,
and so \eqref{eq_SVP_bounding_bis} may be rewritten as
\begin{equation}\label{eq_SVP_bounding_tris}
- \log \VP_r(gx,gx') \le  - \log\VP_r(x,x') - \kappa'_r n.
\end{equation}

Let $g\in\cE'''_{k,r}$ and $k\ge k'''_r$. Suppose first that $-\log\VA_r(x,x')\ge\tilde\theta_r+ \tilde\kappa_r n$.
Then, by Lemma~\ref{l_Lem5.4bis},
\begin{equation}\label{eq_VP2bis}
-\log\VA_r(gx,gx') \le - \log\VA_r(x,x') - \tilde\kappa_r n.
 \end{equation}
 Substituting \eqref{eq_VP2bis} and \eqref{eq_AAA} in the definition \eqref{eq_VPr_def} we find that
 \begin{equation}\label{eq_VP3bis}
 \begin{aligned}
-\log\VP_r(gx,gx')
& \le  - \log \VA_r(x,x') - \tilde\kappa_r n \\
& \hspace{2cm} - \gamma_r \log d(F_r,F_{r-1}+F'_1) + \gamma_r A n \\
& \le - \log \VP_r(x,x') - \frac{\tilde\kappa_r}{2} n
%\le - \log \SVP_r(x,x';\omega_r) - \kappa'_r n.
\end{aligned}
\end{equation}
(we chose $\gamma_r \le \tilde\kappa_r/(2A)$ in \eqref{eq_constants}).
Now suppose that $-\log\VA_r(x,x') \le \tilde\theta_r+\tilde\kappa_r n$.  In this case, Lemma~\ref{l_Lem5.4bis} yields
\begin{equation}\label{eq_VP4bis}
-\log\VA_r(gx,gx') \le \tilde\theta_r
\end{equation}
whereas Lemma~\ref{l_Lem5.7bis} yields
\begin{equation}\label{eq_VP5bis}
- \log d(gx,gx') \le -\log d(x,x') - \log\VA_r(x,x') - \hat\kappa_r n.
\end{equation}
Substituting \eqref{eq_VP4bis} and \eqref{eq_VP5bis} in the definition \eqref{eq_VPr_def},
we obtain
$$
\begin{aligned}
-\log\VP_r(gx,gx')
& \le - \gamma_r \log d(F_r,F_{r-1}+F'_1) - \gamma_r \log\VA_r(x,x') - \gamma_r\hat\kappa_rn \\
& \le -\log\VP_r(x,x')  + \tilde\theta_r + (1-\gamma_r) \log \VA_r(x,x') - \gamma_r\hat\kappa_rn.
\end{aligned}
$$
Since $\VA_r(x,x') \le 1$, $\gamma_r \le 1$, and $ n \ge N_r \ge 2\tilde\theta_r/(\gamma_r\hat\kappa_r)$,
it follows that
\begin{equation}\label{eq_VP6bis}
\begin{aligned}
-\log\VP_r(gx,gx')
& \le -\log\VP_r(x,x') + \tilde\theta_r - \gamma_r\hat\kappa_rn\\
& \le -\log\VP_r(x,x') - \frac{\gamma_r\hat\kappa_r}{2} n.
%\le  - \log\VP_r(x,x') - \kappa'_r n.
\end{aligned}
\end{equation}
Because of the way we chose $\kappa'_r$ in \eqref{eq_constants},
the relations \eqref{eq_VP3bis} and \eqref{eq_VP6bis} contain the claim \eqref{eq_SVP_bounding_bis}.

Now we prove the claim \eqref{eq_Prop7.1bis}.
The case $r=1$ was done in \eqref{eq_r=1_Prop7.1bis}, so let us suppose $r>1$.
By \eqref{eq_Spsir_induction},
\begin{equation*}
- \log\psi_{r-1}(gx_-,gx'_-;\omega) \le - \log\psi_{r-1}(x_-,x'_-;\uomega) + C''_{r-1} n.
\end{equation*}
Substituting this and \eqref{eq_SVP_bounding_bis} in the definition \eqref{eq_psir_def},
$$
\begin{aligned}
- \log\psi_r(x,x';\uomega)
& = - \beta_{r-1} \log\psi_{r-1}(gx_-,gx'_-;\omega)
- \log \SVP_r(gx,gx';\omega_r) \\
& \le  - \beta_{r-1} \log\psi_{r-1}(x_-,x'_-;\uomega) + \beta_{r-1} C''_{r-1} n\\
& \hspace{3cm} - \log\SVP_r(x,x';\omega_r) - \kappa'_r n \\
& \le - \log\psi_r(x,x';\uomega) - \kappa''_r,
\end{aligned}
$$
where the last step uses our choices of $\beta_{r-1}$, $C''_r$, and $\kappa''_r$
in \eqref{eq_bound_betar} and \eqref{eq_psir_A4}.
\end{proof}

Let us go back to proving Proposition~\ref{p_Lem9.1}. Define
\begin{equation}\label{eq_k1}
\begin{aligned}
k_r=\max\{\tilde k_r(\nu_\infty,\delta,n), \hat{k}_r(\nu_\infty,n,\omega_r), & k'_r(\nu_\infty,\delta,n\,\omega_r),\\
&  \hat k_r(\nu_\infty,\delta,n), k'''_r(\nu_\infty,\delta/2,n,\omega_r)\}.
\end{aligned}
\end{equation}
Then, $k_r$ depends only on $\nu_\infty$, $\delta$, $n$ and $\omega_r$.
Part (i) of the proposition is an immediate consequence of Lemma~\ref{l_eq7.5bis}.
To prove part (ii), consider
$$
\cE = \cE'''_{k,r}(\nu_\infty,\delta/2,n,x,x',\uomega) \cap \cE'''_{k,r}(\nu_\infty,\delta/2,n,x',x,\uomega)
$$
where $\cE'''_{k,r}$ is as given by Lemma~\ref{l_Prop7.1bis}. Then $\nu_k^{(n)}(\cE^c)<\delta$ and
\begin{equation}\label{eq_Psi5.8}
-\log\hpsi_r(gx,gx';\uomega) \le -\log\hpsi_r(x,x';\uomega)-\kappa'_rn \le -\log\hpsi_r(x,x';\uomega)
\end{equation}
for every $g\in\cE$ and $k\ge k'''_r$. By \eqref{eq_r=1_elementary2}, this implies that
\begin{equation}\label{eq_Psi6bis}
\Psi_r(gx,gx';\omega)\le\Psi_r(x,x';\omega)
\text{ for every $g \in \cE$.}
\end{equation}
Integrating \eqref{eq_Psi6bis} over $\cE$ and \eqref{eq_Psi3bis} over the complement,
we obtain part (ii).

Next, recall that we took $\kappa'''_r=\kappa''_r/2=\kappa'_r/2$ and $c=e^{-2\kappa'''_rn}$ and $\vep'''_r>0$
such that \eqref{eq_3lines} holds:
$$
-\log \hpsi_r(x,x';\uomega) = -\log \hpsi(x,x') \ge \Omega_r/\sqrt{c}
$$
for any $x, x' \in \fE_r(\vep'''_r)$ in general position.
Then, by \eqref{eq_r=1_elementary3} and the first inequality in \eqref{eq_Psi5.8},
\begin{equation}\label{eq_Psi7bis}
\Psi_r(gx,gx';\uomega)\le\Psi_r(x,x';\uomega)+\log\sqrt{c}=\Psi_r(x,x';\uomega)-\kappa'''_r n
\end{equation}
for every $g \in \cE$. Integrating \eqref{eq_Psi7bis} over $\cE$ and \eqref{eq_Psi3bis} over the complement,
we obtain part (iii) of the proposition.
\end{proof}

It is clear from the statements of Lemmas~\ref{l_eq7.5bis},~\ref{l_Lem5.4bis} and~\ref{l_Prop7.1bis}
that one may take $\hat{k}_r$, $k'_r$ and $k'''_r$ to increase to $\infty$ when $\omega_r$
decreases to zero and $\nu_\infty$, $\delta$, $n$ remain fixed.
Then the same is true about the map $\omega \mapsto k_r(\nu_\infty,\delta,n,\omega_r)$ defined in \eqref{eq_k1}.
Hence, we may find $\check{k}_r=\check{k}_r(\nu_\infty, \delta, n)\in\NN$ and
$\omega_{k,r}=\omega_{k,r}(\nu_\infty,\delta,n)>0$ such that
\begin{itemize}
\item the sequence $(\omega_{k,r})_k$ decreases to $0$, and
\item $k\ge k_r(\nu_\infty,\delta,n,\omega_{k,r})$ for every $k \ge \check{k}_r$.
\end{itemize}
Denote $\uomega_{k,r}=(\omega_{k,1}, \dots, \omega_{k,r})$ and then define
\begin{equation}\label{eq_psis_def}
\hpsi_{k,r}(x,x') = \hpsi_r(x,x';\uomega_{k,r})
\text{ and }
\Psi_{k,r}(x,x') = \Psi_r(x,x';\uomega_{k,r}) .
\end{equation}
%\begin{equation}\label{eq_psis_def}
%\psi_{k,r}(x,x') = \psi_r(x,x';\omega_{k,r})
%\quand
%\Psi_{k,r}(x,x') = \Psi_r(x,x';\omega_{k,r}).
%\end{equation}
%Then the following statement is contained in Proposition~\ref{p_Lem9.1}:
%
%\begin{corollary}\label{c_Lem9.1}
%For any $\delta>0$, $n\ge N_r$, the following holds for any $k\ge\check{k}_r$:
%\begin{itemize}
%\item[(i)] For any $x, x' \in \fE_r(\vep_r)$ in general position with $\Psi_{k,r}(x,x')>\log\Omega_r$,
%$$
%\int_G \Psi_{k,r}(gx,gx') \, d\nu_k^{(n)}(g) \le \Psi_{k,r}(x,x') + C''_r n.
%$$
%\item[(ii)] For any $x, x' \in \fE_r(\vep_r)$ in general position satisfying $\Psi_{k,r}(x,x')>\log\Omega_r$,
%$\VA_r(x,x')\ge\omega_{k,r}$, and $\VA_r(x',x)\ge\omega_{k,r}$,
%$$
%\int_G \Psi_{k,r}(gx,gx') \, d\nu_k^{(n)}(g) \le \Psi_{k,r}(x,x') + C''_r\delta n.
%$$
%\item[(iii)] For any $x, x' \in \fE_r(\vep'''_r)$ in general position satisfying $\VA_r(x,x')\ge\omega_{k,r}$
%and $\VA_r(x',x)\ge\omega_{k,r}$,
%$$
%\int_G \Psi_{k,r}(gx,gx') \, d\nu_k^{(n)}(g) \le \Psi_{k,r}(x,x') - (\kappa''_r - C''_r\delta) n.
%$$
%\end{itemize}
%\end{corollary}

\section{Spreading out}\label{s_spreading_out}

For $r>1$ the cut-off in Section~\ref{ss_cut-off} is insufficient to ensure that $\Psi_r(x,x';\uomega)$
is bounded near the border region of $\fE_r(\vep_r)$.
To explain why, let us consider $x=(F_1, \dots, F_r)$ and $x=(F'_1, \dots, F'_r)$ such that $F_r$ is near
the border and $F'_r$ is far from the border of $E_r(\vep_r)$.
Then $-\log\VP_r(x,x')$ is bounded, but the problem is that the term $-\log\psi_{r-1}(x_-,x'_-)$ may be
arbitrarily large, which forces $\Psi_r(x,x';\uomega)$ to be arbitrarily large as well.
For instance, $d(F_1,F'_1)$ may be very small, in which case $-\log\psi_1(F_1,F'_1)$ is very large.
Thus, Lemma~\ref{l_r=1_Prop8.1} as stated does not extend to $r>1$.

To fix this difficulty, we introduce a ``spreading out" Markov operator $\tQ_r$ which leaves
$F_r$ and $F'_r$ fixed, but averages the function out over the pairs of flags whose $r$-dimensional
components are $F_r$ and $F'_r$. This is done only on certain domains far from the equator:
elsewhere we just take $\tQ_r=\id$. The main properties of this operator are stated
in Proposition~\ref{p_Prop8.1}: roughly speaking, $\tQ_r\Psi_r$ is never much bigger than $\Psi_r$
itself, with equality close to the equator, and it is bounded near the border region.
That replaces Lemma~\ref{l_r=1_Prop8.1} when $r>1$.

The details of the spreading out construction follow. In Section~\ref{ss_Markov_on_flags}
we will incorporate $\tQ_r$ into the definition of our main Markov operators, $\qhT_{k,r}$,
defined on the space of pairs of flags, and $\qchT_{k,r}$, its lift to the blow-up space
$\cY_r(\vep_r)$.

\subsection{Homogeneous measures on flag varieties}\label{ss_homogeneous_measures}

The orthogonal group $\OO(d)$ acts transitively on the flag space $\cF(r,d)$.
We denote by $\mu$ the corresponding \emph{homogeneous measure} on $\cF(r,d)$.
This may be described as the image of the Haar probability measure of the orthogonal group
under
$$
\OO(d) \to \cF(r,d), \quad g \mapsto gz=(gH_1, \dots, gH_r),
$$
for any choice of $z=(H_1, \dots, H_r)$, and it is invariant under the $\OO(d)$-action.

Analogously, for any $F_r \in\grass(r,d)$, denote by $\mu_{F_r}$ the homogeneous measure on
\begin{equation}\label{eq_flag_Fr}
\cF(F_r) = \{(G_1, \dots, G_{r-1}, G_r)\in\cF(r,d): G_r=F_r\}
\end{equation}
corresponding to the natural action of the orthogonal group $\OO(F_r)$ on $\cF(F_r)$.
We also consider
\begin{equation}\label{eq_flag_tFr}
\cF^*(F_r) = \left\{(G_1, \dots, G_{r-1}, F_r)\in\cF(F_r): d(G_1,E) \ge \frac 12 d(F_r,E)\right\}.
\end{equation}
It is clear that there exists $a_r>0$, depending only on $r$, such that
$\mu_{F_r}(\cF^*(F_r))\ge a_r$. See Figure~\ref{f_homogeneous1}.
We denote by $\mu^*_{F_r}$ the normalized restriction of $\mu_{F_r}$ to $\cF^*(F_r)$.

\begin{figure}[ht]
\begin{center}
\psfrag{E}{{$E$}}
\psfrag{y}{{$F_r$}}
\includegraphics[height=1.8in]{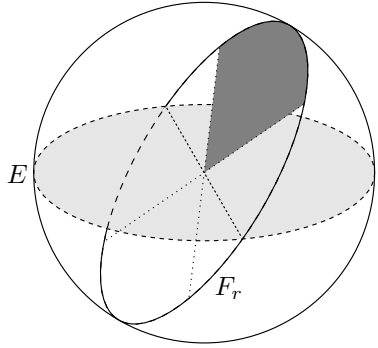}
\caption{\label{f_homogeneous1}
The subset $\cF^*(F_r)$ of the flags $(G_1, \dots, G_{r-1},F_r)$ such that $d(G_1,E)\ge d(F_r,E)/2$
is represented (for $r=2$ and $d=3$) by the dark gray region.
It corresponds to a definite fraction of all the flags in $\cF(F_r)$, relative to the homogeneous
measure $\mu_{F_r}$.
}\end{center}
\end{figure}

More generally, there exist $c_r>0$ and $b_r>0$, depending only on $r$, such that
\begin{equation}\label{eq_homogeneous2}
\mu^*_{F_r}(\{(G_1, \dots, G_{r-1}, F_r) \in \cF^*(F_r): d(G_1,F) < \rho d(F_r,F)\}) \le c_r \rho^{b_r}
\end{equation}
for any $F \neq F_r$ in $\grass(r,d)$ and $\rho>0$.
Also (increasing $c_r$ and decreasing $b_r$ if necessary),
\begin{equation}\label{eq_homogeneous3}
\mu^*_{F_r}(\{(G_1, \dots, G_{r-1}, F_r) \in \cF^*(F_r): d(G_1,F_-) < \rho\}) \le c_r \rho^{b_r}
\end{equation}
for any $F_- \in \grass(r-1,d)$ and $\rho>0$ (because $G_1$ is allowed to vary in a domain
whose dimension is strictly greater than $\dim F_-$).

\subsection{Spreading out operators}\label{ss_spreading_operators}

Let $\delta_{(x,x')}$ denote the Dirac mass at a point $(x,x')\in\prodspar$,
and $\tau:E_r(\vep_r)^2 \to [0,1]$ be a continuous symmetric function such that
\begin{equation}\label{eq_q_def1}
\begin{aligned}
\tau \equiv 1 &\text{ on } E_r(\vep_r,\vep'_r) \times E_r(2\vep''_r) \cup E_r(2\vep''_r) \times E_r(\vep_r,\vep'_r) \\
\tau \equiv 0 &\text{ outside } E_r(\vep_r,\vep'_r/2) \times E_r(3\vep''_r) \cup E_r(3\vep''_r) \times E_r(\vep_r,\vep'_r/2).
\end{aligned}
\end{equation}
See Figure~\ref{f_spreading}. Then
\begin{equation}\label{eq_q_def2}
\tq_{r,x,x'} = (1-\tau(F_r,F'_r)) \delta_{(x,x')} + \tau(F_r,F'_r)\mu^*_{F_r} \times \mu^*_{F'_r},
\end{equation}
defines a probability measure on $\cF(F_r) \times \cF(F'_r) \subset \prodspar$ depending
continuously on $(x,x') \in \prodspar$.
The \emph{spreading out operator} is the corresponding continuous Markov operator
\begin{equation}\label{eq_spreading3}
\begin{aligned}
& \tQ_r:\Bd(\prodspar) \to \Bd (\prodspar), \
 \tQ_r\tpsi(x,x') =
\int_{\cF(F_r) \times \cF(F'_r)} \tpsi \, d\tq_{r,x,x'}.
\end{aligned}
\end{equation}

\begin{figure}[ht]
\begin{center}
\psfrag{E0}{core region}
\psfrag{E1}{{$\fE_r(\vep''_r)^2$}}
\psfrag{E2}{{$\fE_r(2\vep''_r)^2$}}
\psfrag{E3}{{$\fE_r(\vep'_r/2)^2$}}
\psfrag{E4}{{$\fE_r(\vep'_r)^2$}}
\psfrag{E5}{{$\fE_r(\vep_r)^2$}}
\includegraphics[height=2in]{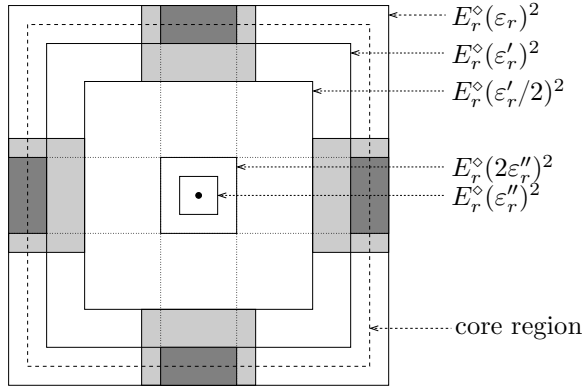}
\caption{\label{f_spreading}
Illustrating the spreading out construction. The black dot at the center marks the point $(E,E)$.
The dashed lined represents the boundary between the $\nu_k^{(n)}$-core and the $\nu_k^{(n)}$-border
of $E_1(\vep_1)$. On the dark gray area we do full averaging.
No averaging is needed on the white area. On the light gray area we interpolate between the two.
In the cut-off region $\fE(\vep_r,2\vep''_r)^2$, where $\Psi_{k,r}\equiv\log\Omega_r$,
averaging is innocuous.
 }
\end{center}
\end{figure}
It is clear from the definition that $\tQ_r$ is a lift of the identity relative to $(f,f)$, meaning that
\begin{equation}\label{eq_lift_of_identity}
\tQ_r\big(\tpsi \circ (f,f)\big) = \tpsi \circ (f,f) \text{ for every } \psi\in\Bd(E_r(\vep_r)^2),
\end{equation}
where $f:\cF(r,d) \to \grass(r,d)$ denotes the \emph{forgetfulness map}
\begin{equation}\label{eq_forgetfulness}
(F_1, \dots, F_r) \mapsto F_r.
\end{equation}
Since $\tau$ is assumed to be symmetric, we also have that $\tQ_r$ preserves the space
of symmetric functions.

For each fixed $\uomega$, denote by $\tQ_r\Psi_r(\cdot,\cdot;\uomega)$ the image of
$\Psi_r(\cdot,\cdot;\uomega)$ under the operator $\tQ_r$.
Since $\Psi_r(\cdot,\cdot;\uomega)$ is symmetric, by ~\eqref{eq_Psir_def} and
\eqref{eq_hatpsi_def}, the function $\tQ_r\Psi_r(\cdot,\cdot;\uomega)$ is also symmetric.

\begin{proposition}\label{p_Prop8.1}
There exist $K_r=K_r(\nu_\infty)>0$ and $\Omega_r=\Omega_r(\nu_\infty,\delta,n)>1$ such that
for every $(x, x') \in \prodspar$ and $\uomega=(\omega_1, \dots, \omega_r) \in \RR_+^d$
with $\omega_r < \vep'''_r$,
\begin{enumerate}
\item[(i)] $\tQ_r\Psi_r(x,x';\uomega) = \Psi_r(x,x';\uomega)$ if $(x,x') \in \fE_r(\vep'_r/2)^2$;
\item[(ii)] $\tQ_r\Psi_r(x,x';\uomega)
\le \log \Omega_r + K_r \le \Psi_r(x,x';\uomega) + K_r$ if $(x,x') \notin \fE_r(\vep'_r)^2$.
\item[(iii)] $\tQ_r\Psi_r(x,x';\uomega) \le \Psi_r(x,x';\uomega) + K_r$ if $(x,x') \notin \fE_r(\vep'_r/2)^2$.
\end{enumerate}
\end{proposition}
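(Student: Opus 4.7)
The plan is to handle (i) by direct inspection of the support of $\tau$ in~\eqref{eq_q_def1}, and to reduce (ii) and (iii) to a single uniform bound on $\tQ_r(\hat\psi_r^{-1})$ that is then absorbed by choosing $\Omega_r$ large enough, keeping $K_r$ dependent only on $\nu_\infty$.

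For (i): if $(x,x')\in\fE_r(\vep'_r/2)^2$ then $(F_r,F'_r)\in E_r(\vep'_r/2)^2$. Both factors of the set $E_r(\vep_r,\vep'_r/2)\times E_r(3\vep''_r)\cup E_r(3\vep''_r)\times E_r(\vep_r,\vep'_r/2)$ outside of which $\tau$ vanishes by~\eqref{eq_q_def1} require at least one coordinate to lie outside $E_r(\vep'_r/2)$; hence $\tau(F_r,F'_r)=0$, and~\eqref{eq_q_def2} gives $\tq_{r,x,x'}=\delta_{(x,x')}$, so $\tQ_r\Psi_r(x,x';\uomega)=\Psi_r(x,x';\uomega)$.

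For (ii) and (iii), combine $\Psi_r\le\log(\Omega_r+\hat\psi_r^{-1})$ (valid on both branches of~\eqref{eq_Psir_def}, since $\log\Omega_r\le\log(\Omega_r+u)$ for $u\ge 0$) with the elementary bound $\log(\Omega+u)\le\log\Omega+u/\Omega$ to obtain pointwise
\begin{equation*}
\Psi_r(y,y';\uomega)\le\log\Omega_r+\Omega_r^{-1}\hat\psi_r(y,y';\uomega)^{-1}.
\end{equation*}
Applying the positive operator $\tQ_r$ yields
\begin{equation*}
\tQ_r\Psi_r(x,x';\uomega)\le\log\Omega_r+\Omega_r^{-1}\tQ_r(\hat\psi_r^{-1})(x,x';\uomega),
\end{equation*}
so it suffices to establish a uniform bound $\tQ_r(\hat\psi_r^{-1})\le M_r(\nu_\infty,\delta,n)$ (for $\omega_r<\vep'''_r$); choosing $\Omega_r\ge M_r/K_r$ with any fixed $K_r=K_r(\nu_\infty)$ then gives the first inequality in (ii), and (iii) together with the second inequality in (ii) follow from $\Psi_r\ge\log\Omega_r$. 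The principal case is the averaging component of~\eqref{eq_q_def2}: on the support of $\tau$, at least one of $F_r,F'_r$ (say $F_r$) lies in $E_r(\vep_r,\vep'_r/2)$, so that for $y\in\cF^*(F_r)$ one has $d(F^y_1,E)\ge d(F_r,E)/2>\vep'_r/4$ by~\eqref{eq_flag_tFr}, whence $\SVA_r(y',y;\omega_r)=\VA_r(y',y)\ge\vep'_r/4$ independently of $\omega_r$. Using $\hat\psi_r^{-1}\le\psi_r(y',y;\uomega)^{-1}$ and expanding $\psi_r$ recursively via~\eqref{eq_psir_def} as $\prod_{j=1}^r\VP_j^{e_j}$ with $e_j=\beta_j\cdots\beta_{r-1}$, the integrand reduces to a product of inverse powers of distances $d(F^{y'}_j,F^{y'}_{j-1}+F^y_1)^{-\gamma_je_j}$, each controlled by H\"older's inequality combined with the homogeneous-measure tail bounds~\eqref{eq_homogeneous2}--\eqref{eq_homogeneous3}. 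The $(1-\tau)\delta_{(x,x')}$ piece is handled by a direct case analysis: whenever $\tau(F_r,F'_r)<1$ and $\Psi_r(x,x';\uomega)$ exceeds $\log\Omega_r$, one coordinate of $(F_r,F'_r)$ lies in $E_r(\vep_r,\vep'_r/2)$ and the same lower bound on $\SVA_r$ applies.

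The main obstacle is the compounded integrability: the exponents $\gamma_je_j$ must remain below the tail exponents $b_j$ in~\eqref{eq_homogeneous2}--\eqref{eq_homogeneous3} for the power-law singularities to be integrable. This is arranged by further reducing $\gamma_r$ in~\eqref{eq_constants} and $\beta_{r-1}$ in~\eqref{eq_bound_betar} if necessary, a reduction that does not disturb any preceding step of the construction, since those arguments demand only upper bounds on these exponents. Once the uniform bound $\tQ_r(\hat\psi_r^{-1})\le M_r$ is secured, one fixes $\Omega_r=\Omega_r(\nu_\infty,\delta,n)\ge M_r/K_r$ with any convenient $K_r=K_r(\nu_\infty)$, completing the proof.
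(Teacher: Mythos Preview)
Your argument for part~(i) is correct and matches the paper. The difficulty lies in parts~(ii) and~(iii), and specifically in~(iii) your approach has a genuine gap.

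The problem is with the Dirac component of $\tq_{r,x,x'}$ in the transition zone. You attempt to establish a uniform bound $\tQ_r(\hat\psi_r^{-1})\le M_r$ on the region $\{(x,x')\notin\fE_r(\vep'_r/2)^2\}$, since this would yield $\tQ_r\Psi_r\le\log\Omega_r+K_r\le\Psi_r+K_r$. But consider $(x,x')$ with $F_r\in E_r(\vep'_r,\vep'_r/2)$ and $F'_r\in E_r(2\vep''_r)$: here $\tau(F_r,F'_r)$ may lie strictly between $0$ and $1$, so the Dirac contribution $(1-\tau)\hat\psi_r(x,x')^{-1}$ is present. Your claim that ``the same lower bound on $\SVA_r$ applies'' is correct but insufficient: it only controls the top-level factor $\SVA_r(x,x')\ge\vep'_r/2$, not the full $\hat\psi_r(x,x')$. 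With $F_r,F'_r$ held fixed, one can still take the lower-order flag coordinates $F_1,F'_1$ arbitrarily close to each other (for instance both approaching the same line in $F_r\cap F'_r$, while staying in general position), which forces $\psi_1(F_1,F'_1)\to 0$ and hence $\psi_r(x,x'),\psi_r(x',x)\to 0$ via the factor $\psi_{r-1}^{\beta_{r-1}}$. Thus $\hat\psi_r(x,x')^{-1}$ is unbounded on this region, and your uniform bound cannot hold.

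The paper avoids this by never linearizing: it keeps $\Psi_r=\log(\Omega_r+\hat\psi_r^{-1})$ intact and writes $\tQ_r\Psi_r(x,x')$ as the convex combination $(1-\tau)\Psi_r(x,x')+\tau\int\Psi_r\,d(\mu^*_{F_r}\times\mu^*_{F'_r})$. The Dirac piece is then trivially $\le\Psi_r(x,x')$, and the averaged piece is bounded by $\log\Omega_r+K_r\le\Psi_r(x,x')+K_r$ via a \emph{logarithmic} tail estimate (Lemma~\ref{l_Lem8.2} applied through Lemma~\ref{l_Lem8.3} and Corollary~\ref{c_Prop8.1cor}). The logarithm is essential: it converts the power-law tail of $\psi_r^{-1}$ into an integrable quantity with a bound depending only on the tail exponent $\alpha''_r=\alpha''_r(\nu_\infty)$, which is what makes $K_r$ depend only on $\nu_\infty$. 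Your linearization $\log(\Omega+u)\le\log\Omega+u/\Omega$ throws away exactly this structure, demanding integrability of $\hat\psi_r^{-1}$ itself at points where no averaging occurs.
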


The constants $K_r$ and $\Omega_r$ are determined in \eqref{eq_Omegar_def} below.
The proof of this proposition occupies the remainder of the present section.
The first step is the following elementary lemma:

\begin{lemma}\label{l_Lem8.2}
Let $a$, $b >0$ and $(Z,\theta)$ be a probability space.
Let $f:Z\to(0,\infty)$ be a measurable function such that $\theta(Z \setminus Z_\tau) \le \tau$
for every $\tau\in(0,1]$, where $Z_\tau$ denotes the subset of points $z\in Z$ such that
\begin{equation}\label{eq_tau}
f(z) \ge a \tau^b.
\end{equation}
Then
$$
\int_Z \log(\Omega + f(z)^{-1}) \, d\theta(z)
\le \log(\Omega + a^{-1}) + 10 b
$$
\end{lemma}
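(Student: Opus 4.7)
My plan is to use the layer-cake representation. Write $h(z)=\log(\Omega+f(z)^{-1})$, and note that since $\Omega\ge 1$ (in the application $\Omega_r$ is a large constant) we have $h\ge 0$, so
\begin{equation*}
\int_Z h\,d\theta=\int_0^\infty\theta(\{h>s\})\,ds.
\end{equation*}
The inequality $h(z)>s$ is equivalent to $\Omega+f(z)^{-1}>e^s$. When $e^s\le\Omega$ this is automatic, giving measure at most $1$; when $e^s>\Omega$ it is equivalent to $f(z)<(e^s-\Omega)^{-1}$.

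Next I would split the $s$-integral at the threshold $s_0:=\log(\Omega+a^{-1})$. On the interval $[0,s_0]$ I simply use the trivial estimate $\theta(\{h>s\})\le 1$, which contributes at most $\log(\Omega+a^{-1})$. For $s>s_0$, the quantity $(e^s-\Omega)^{-1}$ is strictly smaller than $a$, so I may set
\begin{equation*}
\tau(s)=\bigl[a(e^s-\Omega)\bigr]^{-1/b}\in(0,1),\qquad\text{so that }(e^s-\Omega)^{-1}=a\,\tau(s)^b,
\end{equation*}
and invoke the hypothesis to obtain $\theta(\{f<(e^s-\Omega)^{-1}\})\le\tau(s)=[a(e^s-\Omega)]^{-1/b}$.

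It remains to bound the tail integral. Substituting $v=e^s-\Omega$, so that $dv=e^s\,ds=(v+\Omega)\,ds$ and hence $ds\le dv/v$, I get
\begin{equation*}
\int_{s_0}^\infty [a(e^s-\Omega)]^{-1/b}\,ds
\le a^{-1/b}\int_{a^{-1}}^\infty v^{-1/b-1}\,dv
=a^{-1/b}\cdot b\cdot (a^{-1})^{-1/b}=b.
\end{equation*}
Adding the two contributions gives $\int_Z h\,d\theta\le\log(\Omega+a^{-1})+b$, which is stronger than the stated bound $\log(\Omega+a^{-1})+10b$.

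There is no real obstacle here; the only point requiring a modicum of care is verifying that the substitution yields the convergent integral $\int v^{-1/b-1}\,dv$ (which uses $b>0$) and controlling the factor $(v+\Omega)^{-1}$ by $v^{-1}$, which is valid because $\Omega\ge 0$. The slack between $b$ and $10b$ in the statement is a convenience for later applications and need not be optimized.
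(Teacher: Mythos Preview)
Your proof is correct and in fact gives the sharper bound $\log(\Omega+a^{-1})+b$. The argument via the layer-cake representation is clean: the threshold $s_0=\log(\Omega+a^{-1})$ is exactly where $\tau(s)$ hits $1$, and the substitution $v=e^s-\Omega$ together with $ds\le dv/v$ reduces the tail to the elementary integral $\int_{a^{-1}}^\infty v^{-1-1/b}\,dv=b\,a^{1/b}$. One minor remark: you do not actually need $\Omega\ge 1$, since for any real-valued $h$ one has $\int_Z h\,d\theta\le\int_Z h^+\,d\theta=\int_0^\infty\theta(\{h>s\})\,ds$, and the rest of the argument is unchanged.

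The paper takes a different route. Rather than working with the distribution function of $h$, it decomposes the \emph{domain} $Z$ into the annuli $Z_{e^{-j}}\setminus Z_{e^{-(j-1)}}$ for $j\ge 1$ (noting that on $Z_1$ the integrand is already $\le\log(\Omega+a^{-1})$). On $Z_\tau$ one has the pointwise bound $\log(\Omega+f^{-1})\le\log(\Omega+a^{-1})-b\log\tau$, and combining this with $\theta(Z\setminus Z_{e\tau})\le e\tau$ gives a contribution of at most $-e\tau b\log\tau$ from each annulus; summing over $\tau=e^{-j}$ yields $eb\sum_{j\ge 1}je^{-j}\approx 2.5\,b$. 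Your layer-cake approach avoids this discrete summation entirely and lands on the cleaner constant~$b$; the paper's decomposition is slightly more hands-on but makes the role of the sets $Z_\tau$ more visible. Either way the slack to $10b$ is immaterial.
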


\begin{proof}
Define $q(z) = f(z) a^{-1}$. The assumption means that $q(z) \ge \tau^b$ for every $z \in Z_\tau$.
Then, by \eqref{eq_r=1_elementary1} and \eqref{eq_r=1_elementary2},
$$
\begin{aligned}
\log (\Omega+f(z)^{-1})
& \le \log(\Omega+a^{-1}) + \max\{0,\log q(z)^{-1}\} \\
& \le \log(\Omega+a^{-1}) - b \log\tau
\end{aligned}
$$
for every $z \in Z_\tau$. In particular,
$$
\int_{Z_\tau\setminus Z_{e\tau}} \left(\log(\Omega+f(z)^{-1})- \log(\Omega+a^{-1})\right) \, d\theta(x)
\le - e \tau b \log\tau.
$$
Considering $\tau=e^{-j}$ and summing over all $j$, we get
$$
\int_{Z} \left(\log(\Omega+f(z)^{-1}) - \log(\Omega+a^{-1})\right) \, d\theta(x)
\le eb \sum_{j=1}^\infty je^{-j},
$$
which implies the claim.
\end{proof}

Keep in mind that we write $x=(F_1, \dots, F_{r-1}, F_r)$ and $x' = (F'_1, \dots, F'_{r-1}, F'_r)$.
We use $y=(G_1, \dots, G_{r-1}, F_r)$ and $y' = (G'_1, \dots, G'_{r-1},F'_r)$ to denote the
generic elements of $\cF(F_r)$ and $\cF(F'_r)$, respectively.

\begin{lemma}\label{l_Lem8.3}
There exist $c'_r=c'_r(\mu_\infty,\delta,n)>0$, $c''_r=c''_r(\mu_\infty,\delta,n)>0$,
$\alpha'_r=\alpha'_r(\nu_\infty)>0$, and $\alpha''_r=\alpha''_r(\nu_\infty)>0$ such that
for any $\tau\in(0,1]$, $x \in \cF(r,d)$, $x' \in \fE_r(\vep_r,\vep'_r/2)$,
and $\uomega=(\omega_1, \dots, \omega_r)$ with $\omega_j \in(0,\vep'''_j)$ for $j=1, \dots, r$:
\begin{enumerate}
\item[(i)] For every $y'$ in a set $X'_\tau \subset \cF^*(F'_r)$ with
$\mu^*_{F'_r}(\cF^*(F'_r) \setminus X'_\tau) < \tau$,
\begin{equation}\label{eq_spreading4b}
-\log\psi_r(x,y';\uomega) \le -\gamma_r \log d(G'_1,F_r) + c'_r - \alpha'_r \log \tau.
\end{equation}
\item[(ii)] For every $y'$ in a set $X''_\tau \subset \cF^*(F'_r)$
with $\mu^*_{F'_r}(\cF^*(F'_r) \setminus X''_\tau) < \tau$,
\begin{equation}\label{eq_spreading5}
-\log\psi_r(x,y';\uomega) \le -\gamma_r \log d(F'_r,F_r) + c''_r - \alpha''_r \log \tau.
\end{equation}
\end{enumerate}
\end{lemma}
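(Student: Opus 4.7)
The plan is to unroll the recursion \eqref{eq_Spsir_def} into the telescoping sum
\[
-\log\psi_r(x,y';\uomega) = \sum_{j=1}^r e_j\Bigl[-\log\SVA_j(x_j,y'_j;\omega_j) - \gamma_j \log d(F_j, F_{j-1}+G'_1)\Bigr],
\]
where $e_j = \prod_{i=j}^{r-1}\beta_i \in (0,1]$ (with $e_r = 1$), $F_0 = \{0\}$, and $x_j$, $y'_j$ denote the truncations to $\cF(j,d)$. Each $\SVA_j$ factor is harmless: the condition $y' \in \cF^*(F'_r)$ forces $d(G'_1,E) \ge \tfrac{1}{2} d(F'_r,E) \ge \vep'_r/4$, hence $\SVA_j(x_j,y'_j;\omega_j) \ge \VA_j(x_j,y'_j) = d(G'_1+F_j,E) \ge \vep'_r/4$, contributing to the sum a constant depending only on $\nu_\infty$, $\delta$, $n$.

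For the geometric factors I will use the elementary inequality $d(F_j, F_{j-1}+G'_1) \ge d(G'_1, F_j)$ (with $d = 1$ when $G'_1 \subset F_{j-1}$): decomposing a unit vector $v' \in G'_1$ as $u + \alpha v + w$ with $u \in F_{j-1}$, $v$ unit in $F_j \cap F_{j-1}^\perp$, and $w \perp F_j$, a direct principal-angle computation yields $d(F_j, F_{j-1}+G'_1) = |w|/\sqrt{\alpha^2+|w|^2} \ge |w| = d(G'_1, F_j)$ since $\alpha^2 + |w|^2 \le 1$. Since $F_j \subset F_{r-1}$ for $j \le r-1$, we also have the monotonicity $d(G'_1, F_j) \ge d(G'_1, F_{r-1})$, so a single lower bound $d(G'_1, F_{r-1}) \ge \rho$ controls all $j < r$ geometric terms simultaneously. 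By \eqref{eq_homogeneous3} with $F_- = F_{r-1}$, the $\mu^*_{F'_r}$-measure of $\{d(G'_1, F_{r-1}) < \rho\}$ is at most $c_r \rho^{b_r}$.

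To prove (i), I set $X'_\tau = \{y' \in \cF^*(F'_r) : d(G'_1, F_{r-1}) \ge (\tau/c_r)^{1/b_r}\}$, whose complement has measure at most $\tau$. On $X'_\tau$ the $j < r$ contributions are each bounded by $e_j\gamma_j \cdot b_r^{-1}(\log c_r - \log\tau)$, while the $j = r$ contribution satisfies $-\gamma_r\log d(F_r, F_{r-1}+G'_1) \le -\gamma_r \log d(G'_1, F_r)$ pointwise; assembling gives \eqref{eq_spreading4b} with $\alpha'_r = b_r^{-1}\sum_{j<r}e_j\gamma_j$ and a suitable constant $c'_r$. For (ii), I will further restrict to the subset where $d(G'_1, F_r) \ge (\tau/2c_r)^{1/b_r} d(F'_r, F_r)$, whose complement has measure at most $\tau/2$ by \eqref{eq_homogeneous2} applied with $F = F_r$; this replaces $-\log d(G'_1, F_r)$ by $-\log d(F'_r, F_r) + O(-\log\tau)$ in the $j = r$ contribution and yields \eqref{eq_spreading5}, with $\alpha''_r = b_r^{-1}(\gamma_r + \sum_{j<r}e_j\gamma_j)$.

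The only step that requires any actual computation is the principal-angle inequality $d(F_j, F_{j-1}+G'_1) \ge d(G'_1, F_j)$; once that is in hand, the rest is routine bookkeeping with the telescoping sum, choosing $\rho$ as the appropriate power of $\tau$ dictated by the exponent $b_r$ in \eqref{eq_homogeneous3}, and collecting constants to identify $c'_r$, $\alpha'_r$, $c''_r$, $\alpha''_r$.
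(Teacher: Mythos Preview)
Your proof is correct and takes a genuinely different route from the paper's.

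The paper argues by induction on $r$: it bounds $-\log\SVP_r(x,y';\omega_r)$ directly (using the same inequality $d(F_{r-1}+G'_1,F_r)\ge d(G'_1,F_r)$ you prove), and then invokes the inductive hypothesis at level $r-1$ to bound $-\log\psi_{r-1}(x_-,z';\uomega_-)$ for most $z'\in\cF^*(G'_{r-1})$. This is then ``integrated over all admissible $G'_{r-1}$'' to transfer the estimate from $\mu^*_{G'_{r-1}}$ to $\mu^*_{F'_r}$, which implicitly uses a compatibility between the homogeneous measures and the star-restrictions at consecutive levels.

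Your approach avoids that step entirely. By fully unrolling the recursion \eqref{eq_Spsir_def} and bounding every $-\log\SVA_j$ by the single constant $-\log(\vep'_r/4)$ (valid for all $j$ since $d(G'_1,E)\ge\vep'_r/4$ on $\cF^*(F'_r)$), and every $-\log d(F_j,F_{j-1}+G'_1)$ for $j<r$ by $-\log d(G'_1,F_{r-1})$ via your principal-angle inequality and the monotonicity $d(G'_1,F_j)\ge d(G'_1,F_{r-1})$, you reduce the whole problem to a \emph{single} application of \eqref{eq_homogeneous3} with $F_-=F_{r-1}$. This is cleaner: no induction, no disintegration of $\mu^*_{F'_r}$ over intermediate flag coordinates, and the constants $\alpha'_r,\alpha''_r$ come out explicitly as $b_r^{-1}\sum_{j<r}e_j\gamma_j$ and $b_r^{-1}\bigl(\gamma_r+\sum_{j<r}e_j\gamma_j\bigr)$. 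The paper's approach has the minor advantage that the constants $c'_r,\alpha'_r$ satisfy a tidy recursion (see \eqref{eq_spreading7a}, \eqref{eq_spreading_7b}), but this is not used elsewhere.
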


\begin{proof}
By assumption, $x' \notin \fE_r(\vep'_r/2)$, that is, $F'_r \notin E_r(\vep'_r/2)$. Thus,
\begin{equation*}
d(G'_1,E) \ge \frac 12 d(F'_r,E)> \frac 14 \vep'_r
\end{equation*}
for any $y' \in \cF^*(F'_r)$. Consequently, recalling \eqref{eq_VAR_defbis},
\begin{equation*}
\SVA_r(x,y';\omega_r) \ge \VA_r(x,y') = d(F_r+G'_1,E) \ge d(G'_1,E) > \frac 14 \vep'_r
\end{equation*}
for any $y'\in\cF(F'_r)$. Therefore, using \eqref{eq_VPr_def},
\begin{equation}\label{eq_spreading8a}
\begin{aligned}
- \log \SVP_r(x,y';\omega_r)
& \le - \log \VP_r(x,y') \\
& \le - \gamma_r \log d(F_{r-1}+G'_1,F_r) - \log (\vep'_r/4) \\
& \le - \gamma_r \log d(G'_1,F_r) - \log (\vep'_r/4)
\end{aligned}
\end{equation}
for any $y'\in\cF(F'_r)$ (keep in mind that $d(\cdot,\cdot)$ is a distance restricted to $\grass(r,d)$).
In particular, \eqref{eq_spreading8a} contains the case $r=1$ of the claim (i):
\begin{equation}\label{eq_spreading9}
\begin{aligned}
- \log\psi_1(x,x';\omega_1)
= - \log \SVP_1(x,x';\omega_1)
\le - \gamma_1 \log d(y',x) + c'_1
\end{aligned}
\end{equation}
with $c'_1 = - \log(\vep'_1/4)$ and $\alpha'_1=0$ (and $X'_\tau=\cF^*(x')=\{x'\}$).

Now let $r>1$.
By induction, given $y'_-=(G'_1, \dots, G'_{r-1})$ in $\fE_{r-1}(\vep_{r-1},\vep'_{r-1}/2)$,
one has
\begin{equation}\label{eq_spreading9a}
-\log\psi_{r-1}(x_-,z';\uomega_-) \le -\gamma_{r-1} \log d(H'_1,F_{r-1}) + c'_{r-1} - \alpha'_{r-1} \log(\tau/2)
\end{equation}
for every $z'=(H'_1, \dots, H'_{r-2}, G'_{r-1})$ in a set $Z' \subset \cF^*(G'_{r-1})$
with
$$
\mu^*_{G'_{r-1}}(\cF^*(G'_{r-1}) \setminus Z') < \tau/2.
$$
Integrating this estimate over all admissible $G'_{r-1}$, we find that
\begin{equation}\label{eq_spreading9b}
-\log\psi_{r-1}(x_-,y'_-;\uomega_-) \le -\gamma_{r-1} \log d(G'_1,F_{r-1}) + c'_{r-1} - \alpha'_{r-1} \log(\tau/2)
\end{equation}
for every $y'$ in a set $Y' \subset \cF^*(F'_r)$
with
$$
\mu^*_{F'_r}(\cF^*(F'_r) \setminus Y') < \tau/2.
$$
On the other hand, taking $\tau = 2c_r \rho^{b_r}$ and $F_-=F_{r-1}$ in \eqref{eq_homogeneous3},
we find that
\begin{equation}\label{eq_spreading9c}
- \log d(G'_1,F_{r-1}) \le \frac{1}{b_r} \left( - \log \tau + \log c_r \right)
\end{equation}
for every $y'$ in a set $Y'' \subset \cF^*(F'_r)$
with
$$
\mu^*_{F'_r}(\cF^*(F'_r) \setminus Y'') < \tau/2.
$$
Define $X'_\tau = Y' \cap Y''$. Then $\mu^*_{F'_r}(\cF^*(F'_r) \setminus X'_\tau) < \tau$
and, by \eqref{eq_spreading9b} and \eqref{eq_spreading9c},
\begin{equation}\label{eq_spreading9d}
\begin{aligned}
-\log\psi_{r-1}(x_-,y'_-;\uomega_-)
%& \le \frac{\gamma_{r-1}}{b_r}  \left( - \log \tau + \log c_r \right) + c'_{r-1} - \alpha'_{r-1} \log(\tau/2) \\
%& \le \left(\frac{\gamma_{r-1}}{b_r}\log c_r + c'_{r-1} + \alpha'_r \log 2 \right)
%     - \left(\frac{\gamma_{r-1}}{b_r}  + \alpha'_{r-1}\right) \log \tau \\
& \le \tilde{c}_r - \tilde\alpha_r \log \tau
\end{aligned}
\end{equation}
for every $y'\in X'_\tau$, with
\begin{equation}\label{eq_spreading7a}
\tilde{c}_r = \frac{\gamma_{r-1}}{b_r}\log c_r + c'_{r-1} + \alpha'_r \log 2
\quand
\tilde\alpha_r = \frac{\gamma_{r-1}}{b_r}  + \alpha'_{r-1}.
\end{equation}
Combining \eqref{eq_spreading8a} and \eqref{eq_spreading9d}, we find that
\begin{equation}\label{eq_spreading9e}
\begin{aligned}
- \log \psi_r(x,y';\uomega)
& = - \beta_{r-1} \log \psi_{r-1}(x_-,y'_-;\uomega_-) - \log \SVP_r(x,y';\omega_r) \\
& \le \beta_{r-1} \left( \tilde{c}_r - \tilde\alpha_r \log \tau \right)
  - \gamma_r \log d(G'_1,F_r) - \log(\vep'_r/4)
\end{aligned}
\end{equation}
for every $y'$ in $X''$. This proves the claim (ii), with
\begin{equation}\label{eq_spreading_7b}
c'_r = \beta_{r-1} \tilde{c}_r - \log(\vep'_r/4) \quand \alpha'_r =  \beta_{r-1}\tilde\alpha_r.
\end{equation}

Now we deduce the claim (ii). By part (i), there exists $X'\subset\cF(F'_r)$ such that
$\mu^*_{F'_r}(\cF^*(F'_r) \setminus X')< \tau/2$ and
\begin{equation}\label{eq_spreading6a}
-\log\psi_r(x,y';\uomega) \le -\gamma_r \log(G'_1,F_r) + c'_r - \alpha'_r \log (\tau/2).
\end{equation}
for every $y'\in X'$. Taking $\tau = 2c_r\rho^{b_r}$ and $F=F'_r$ in \eqref{eq_homogeneous2},
we get that
\begin{equation}\label{eq_spreading6b}
\begin{aligned}
-\log d(G'_1,F_r)
& \le -\log d(F'_r,F_r) - \log \rho \\
& = - \log d(F'_r,F_r) + \frac{1}{b_r} \left(\log 2c_r - \log \tau\right)
\end{aligned}
\end{equation}
for every $y'$ in a set $Y \subset \cF(F'_r)$ with $\mu^*_r(\cF^*(F'_r) \setminus Y) < \tau/2$.
Define $X''_\tau=X' \cap Y$. Then  $\mu^*_{F'_r}(\cF^*(F'_r) \setminus X''_\tau) < \tau$ and
the relations \eqref{eq_spreading6a} and \eqref{eq_spreading6b} imply that \eqref{eq_spreading5}
holds for every $y'\in X''_\tau$, with
%\begin{equation}\label{eq_spreading8b}
%\begin{aligned}
%-\log\psi_r(x,y';\uomega)
%& \le \gamma_r \Big[- \log d(F'_r,F_r) + \frac{1}{b_r} \left(\log 2c_r - \log \tau\right) \Big] + c'_r - \alpha_r \log(\tau/2)\\
%& \le - \gamma_r \log d(F'_r,F_r) + \frac{\gamma_r}{b_r} \log 2c_r + c'_r + \alpha_r \log 2 - \left(\frac{\gamma_r}{b_r}+\alpha_r\right) - \log \tau
%\end{aligned}
%\end{equation}
\begin{equation}\label{eq_spreading8b}
 c''_r = c'_r + \frac{\gamma_r}{b_r} \log 2c_r + \alpha'_r \log 2
 \quand
 \alpha''_r=\alpha_r + \frac{\gamma_r}{b_r}.
\end{equation}
This completes the proof of the lemma.
\end{proof}

\begin{corollary}\label{c_Prop8.1cor}
There exist $K_r=K_r(\nu_\infty)>0$ and $\Omega_r=\Omega_r(\nu_\infty,\delta,n)>1$
such that for all $(x, x') \in \prodspar$ such that $x \notin \fE_r(\vep'_r/2)$
or $x' \notin \fE_r(\vep'_r/2)$, and for all
$\uomega=(\omega_1, \dots, \omega_r) \in \RR_+^d$ with $\omega_r < \vep'''_r$,
\begin{equation}\label{eq_twin_integrals}
\int_{\cF(F'_r)} \Psi_r(x,y';\uomega) \, d\mu^*_{F'_r}(y')
\quand
\int_{\cF(F_r)} \Psi_r(y,x';\uomega) \, d\mu^*_{F_r}(y)
\end{equation}
are both bounded above by $\log \Omega_r + K_r$.
\end{corollary}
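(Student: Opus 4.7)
The plan is to reduce both integrals to a single type of estimate and then combine Lemma~\ref{l_Lem8.3}(ii) with the concentration bound Lemma~\ref{l_Lem8.2}. By the symmetry $\Psi_r(x, y'; \uomega) = \Psi_r(y', x; \uomega)$, the second integral equals $\int_{\cF^*(F_r)} \Psi_r(x', y; \uomega)\,d\mu^*_{F_r}(y)$, which has exactly the form of the first integral with $x$ and $x'$ exchanged. Since the hypothesis of the corollary is invariant under this exchange, it suffices to bound the first integral for any admissible $(x, x')$.

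The crucial simplification is that $\Psi_r(x, y'; \uomega) = \log \Omega_r$ whenever neither $x$ nor $y'$ lies in $\fE_r(2\vep''_r)$. Since $y'$ has fixed $r$-coordinate $F'_r$, the condition $y' \in \fE_r(2\vep''_r)$ depends only on $x'$ (not on the varying $y'$). Together with the hypothesis and the inequality $2\vep''_r < \vep'_r/4$ (which follows from \eqref{eq_epsilon} for $n$ sufficiently large), the only nontrivial configurations are the two ``cross-configurations'' in which exactly one of $F_r, F'_r$ lies in $E_r(2\vep''_r)$ while the other lies in $E_r(\vep_r)\setminus E_r(\vep'_r/2)$. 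In any such configuration, the triangle inequality on $\grass(r, d)$ gives $d(F_r, F'_r) \ge \vep'_r/2 - 2\vep''_r \ge \vep'_r/4$.

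In the first sub-case, $F'_r \in E_r(\vep_r)\setminus E_r(\vep'_r/2)$, Lemma~\ref{l_Lem8.3}(ii) applies directly and provides, for each $\tau \in (0, 1]$, a set $X''_\tau \subset \cF^*(F'_r)$ of $\mu^*_{F'_r}$-measure at least $1 - \tau$ on which $\psi_r(x, y'; \uomega) \ge d(F_r, F'_r)^{\gamma_r} e^{-c''_r} \tau^{\alpha''_r} \ge (\vep'_r/4)^{\gamma_r} e^{-c''_r}\tau^{\alpha''_r}$. Applying Lemma~\ref{l_Lem8.2} with $a = (\vep'_r/4)^{\gamma_r} e^{-c''_r}$ and $b = \alpha''_r$ to $f(y') = \psi_r(x, y'; \uomega)$, and using $\Psi_r(x, y'; \uomega) \le \log(\Omega_r + \psi_r(x, y'; \uomega)^{-1})$ (since $\hpsi_r^{-1} \le \psi_r^{-1}$), one obtains
$$
\int_{\cF^*(F'_r)} \Psi_r(x, y'; \uomega)\,d\mu^*_{F'_r}(y') \le \log\bigl(\Omega_r + a^{-1}\bigr) + 10\alpha''_r.
$$
Choosing $\Omega_r = \Omega_r(\nu_\infty, \delta, n)$ so that $\Omega_r \ge a^{-1}$ reduces the right-hand side to $\log \Omega_r + \log 2 + 10\alpha''_r$, and we take $K_r \ge \log 2 + 10\alpha''_r$, which depends only on $\nu_\infty$.

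The complementary sub-case --- $F_r \in E_r(\vep_r)\setminus E_r(\vep'_r/2)$ and $F'_r \in E_r(2\vep''_r)$ --- is not covered by the statement of Lemma~\ref{l_Lem8.3}, but the proof adapts directly. Inspection shows that the hypothesis $F'_r \notin E_r(\vep'_r/2)$ is used in that proof only to secure the uniform lower bound $\SVA_r(x, y'; \omega_r) \ge d(G'_1, E) > \vep'_r/4$. Here this is replaced by $\SVA_r(x, y'; \omega_r) \ge \VA_r(x, y') = d(F_r + G'_1, E) \ge d(F_r, E) > \vep'_r/2$, valid because $F_r \subset F_r + G'_1$. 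The rest of the argument, including the induction on $r$ controlling the $\psi_{r-1}$-factor, goes through with the ``far from $E$'' role transferred from $F'_r$ to $F_r$, and Lemma~\ref{l_Lem8.2} again yields a bound of the desired form after possibly enlarging $\Omega_r$ and $K_r$. The main technical obstacle is precisely to propagate this substitution cleanly through the induction on $r$: since $F'_r$ near the equator forces the lower components of $y'$ to also be near the equator, the inductive hypothesis at level $r-1$ cannot be applied to $y'_-$ verbatim, and one must track the analogous cross-configuration (now involving $x_-$ rather than $y'_-$ as the flag far from $E$) at each reduced dimension.
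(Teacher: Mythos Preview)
Your symmetry reduction and handling of the trivial case (both $F_r, F'_r$ outside $E_r(2\vep''_r)$) match the paper. You also correctly observe that in the nontrivial case there are two cross-configurations and that Lemma~\ref{l_Lem8.3} as stated only covers the one with $F'_r \notin E_r(\vep'_r/2)$. The paper's proof does not distinguish them and simply invokes Lemma~\ref{l_Lem8.3}(ii), so your concern is well-founded --- but your proposed patch for the second sub-case cannot be completed, and the ``technical obstacle'' you flag in the last paragraph is fatal, not cosmetic.

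The point is that when $F'_r \in E_r(2\vep''_r)$, every component of every $y' \in \cF^*(F'_r)$ lies within $2\vep''_r$ of $E$; and although $F_r \notin E_r(\vep'_r/2)$, the lower components $F_1, \dots, F_{r-1}$ of $x$ are completely unconstrained and may themselves lie in $E$. Take $r=2$, $F_1 \subset E$, $d(F_2, E) \in (\vep'_2/2, \vep_2)$, and $F'_2 \subset E$. For every $y' = (G'_1, F'_2)$ one then has $F_1 + G'_1 \subset E$, so $\VA_1(F_1, G'_1) = 0$ and $\psi_1(F_1, G'_1; \omega_1) \le \omega_1 e^{-Bn}$; since $\psi_1$ is symmetric and each $\SVP_j \le 1$, this forces $\hat\psi_2(x, y'; \uomega) \le (\omega_1 e^{-Bn})^{\beta_1}$. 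Hence $\Psi_2(x, y'; \uomega) \ge -\beta_1 \log\omega_1 + \beta_1 Bn$ for \emph{every} such $y'$, and the first integral in \eqref{eq_twin_integrals} exceeds any fixed $\log\Omega_2 + K_2$ once $\omega_1$ is small enough. So the corollary as written is actually false in this sub-case, and no adaptation of the induction can repair it.

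What the paper's argument does establish, and what suffices for the only application (Proposition~\ref{p_Prop8.1}), is the asymmetric version: the first integral in \eqref{eq_twin_integrals} is bounded whenever $x' \notin \fE_r(\vep'_r/2)$, and the second whenever $x \notin \fE_r(\vep'_r/2)$. In Proposition~\ref{p_Prop8.1} one is bounding $\int\int \Psi_r(u,u')\,d\mu^*_{F'_r}(u')\,d\mu^*_{F_r}(u)$ with one of $F_r, F'_r$ outside $E_r(\vep'_r/2)$; by Fubini one integrates first over the factor whose $r$-coordinate is far from $E$, and there Lemma~\ref{l_Lem8.3} applies directly.
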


\begin{proof}
Since the function $\Psi_r(\cdot,\cdot;\uomega)$ is symmetric, it suffices to consider the
first integral in \eqref{eq_twin_integrals}.
Initially, suppose that $x\notin \fE_r(2\vep''_r)$ and $x' \notin \fE_r(2\vep''_r)$.
Then $\Psi_r(x,x';\uomega)=\log\Omega_r$ and, in fact,
$\Psi_r(\cdot,\cdot;\uomega) \equiv \log\Omega_r$ on $\cF(F_r) \times \cF(F'_r)$.
Hence the integral is equal to $\log \Omega_r$, and so the claim is trivial in this case.

From now on, let us assume that some of the points $x$ and $x'$ is in $\fE_r(2\vep''_r)$.
Since the other is necessarily outside $\fE_r(\vep'/2)$, by hypothesis, it follows that
\begin{equation}\label{eq_distance_FF}
d(F_r,F'_r) \ge \vep'_r/2 - 2\vep''_r.
\end{equation}
We are going to apply Lemma~\ref{l_Lem8.2} to
$$
\begin{aligned}
Z=\cF(F'_r), \quad
\theta&=\mu^*_{F'_r}, \quad
\Omega = \Omega_r, \quad \\
f(y')=\psi_r(x,y';\uomega), \quad
a &= d(F_r,F'_r)^{\gamma_r} e^{-c''_r},
\quand b = \alpha''_r.
\end{aligned}
$$
The assumption \eqref{eq_tau} of Lemma~\ref{l_Lem8.2} corresponds precisely to part (ii)
of the conclusion of Lemma~\ref{l_Lem8.3}: given any $\tau\in(0,1]$,
\begin{equation*}
-\log\psi_r(x,y';\uomega) \le -\gamma_r \log d(F'_r,F_r) + c''_r - \alpha''_r \log \tau
\end{equation*}
for every $y'$ in a set $Z''_\tau \subset Z$ with $q_{r,x'}(Z \setminus Z''_\tau)< \tau$.
The conclusion of Lemma~\ref{l_Lem8.2} asserts that
\begin{equation*}
\begin{aligned}
\int_{\cF(F'_r)} \Psi_r(x,y';\uomega) \, d\mu^*_{F'_r}(y')
& = \int_{\cF(F'_r)} \log(\Omega_r + \psi_r(x,y';\uomega)^{-1}) \, d\mu^*_{F'_r}(y')\\
& \le \log(\Omega_r + e^{c''_r}d(F_r,F'_r)^{-\gamma_r}) + 10 \alpha''_r
\end{aligned}
\end{equation*}
Define
\begin{equation}\label{eq_Omegar_def}
\Omega_r = e^{c''_r} (\vep'_r/2-2\vep''_r)^{-\gamma_r}
\quand
K_r = 10\alpha''_r + \log 2.
\end{equation}
Replacing \eqref{eq_distance_FF} in the previous inequality we find that
\begin{equation*}
\begin{aligned}
\int_{\cF(F'_r)} \Psi_r(x,y';\uomega) \, d\mu^*_{F'_r}(y')
& \le \log(\Omega_r + e^{c''_r}(\vep'_r/2 - 2\vep''_r)^{-\gamma_r}) + 10 \alpha''_r \\
& \le \log\Omega_r + K_r
\end{aligned}
\end{equation*}
as claimed.
\end{proof}

\begin{proof}[Proof of Proposition~\ref{p_Prop8.1}]
It follows immediately from the definitions \eqref{eq_q_def1} and \eqref{eq_q_def2} that
$\tQ_r\Psi_r=\Psi_r$ outside $\fE_r(\vep_r,\vep'_r/2) \times \fE_r(3\vep''_r) \cup \fE_r(3\vep''_r) \times \fE_r(\vep_r,\vep'_r/2)$
This contains part (i) of the proposition.
We also have that $\tQ_r\Psi_r=\Psi_r = \log\Omega_r$ on the cut-off region $\fE_r(\vep_r,2\vep''_r)^2$.
Thus (check Figure~\ref{f_spreading}), to complete the proof of part (ii) we only have to consider the case when
$(x,x') \in \fE_r(\vep_r,\vep'_1) \times \fE_r(2\vep''_r) \cup \fE_r(2\vep''_r) \times \fE_r(\vep_r,\vep'_r)$.
In this case $\tq_{r,r,x'} = \mu^*_{F_r} \times \mu^*_{F'_r}$, and so Corollary~\ref{c_Prop8.1cor}
gives that
$$
\begin{aligned}
\tQ_r\Psi_r(x,x';\omega)
& = \int_{\cF(F_r)} \int_{\cF(F'_r)} \Psi_r(u,u';\uomega) \, d\mu^*_{F'_r}(u') \, d\mu^*_{F_r}(u) \\
& \le \int_{\cF(F_r)} (\log\Omega_r + K_r)  \, d\mu^*_{F_r}(u)
= \log\Omega_r + K_r.
\end{aligned}
$$
This proves the first inequality in part (ii). The second one is an immediate consequence of the definition of $\Psi_r$.

We are left to proving part (iii) of the proposition.
Combining the definition \eqref{eq_q_def2}  with Corollary~\ref{c_Prop8.1cor},
we see that
\begin{equation}\label{eq_estimate_Kr}
\begin{aligned}
\int_{\cF(F_r) \times \cF(F'_r)} & \Psi_r(u,u';\uomega) \, d\tq_{r,x,x'}(u,u') \\
& \le \left(1-\tau(F_r,F'_r)\right) \Psi_r(x,x';\uomega) + \tau(F_r,F'_r) \left(\log \Omega_r + K_r\right)\\
& \le \Psi_r(x,x';\uomega) + \tau(F_r, F'_r) K_r
 \le \Psi_r(x,x';\uomega) + K_r,
\end{aligned}
\end{equation}
as claimed.
\end{proof}

\begin{corollary}\label{c_Lem9.1bis}
For any $\delta>0$, $n\ge N_r$, the following holds for any $k\ge\check{k}_r$:
\begin{itemize}
\item[(i)] For any $x, x' \in \fE_r(\vep_r)$ in general position with $\Psi_{k,r}(x,x')>\log\Omega_r$,
$$
\int_G \tQ_r\Psi_{k,r}(gx,gx') \, d\nu_k^{(n)}(g) \le \Psi_{k,r}(x,x') + C''_r n + K_r.
$$
\item[(ii)] For any $x, x' \in \fE_r(\vep_r)$ in general position satisfying $\Psi_{k,r}(x,x')>\log\Omega_r$,
$\VA_r(x,x')\ge\omega_{k,r}$, and $\VA_r(x',x)\ge\omega_{k,r}$,
$$
\int_G \tQ_r\Psi_{k,r}(gx,gx') \, d\nu_k^{(n)}(g) \le \Psi_{k,r}(x,x') + C''_r\delta n + K_r.
$$
\item[(iii)] For any $x, x' \in \fE_r(\vep'''_r)$ in general position satisfying $\VA_r(x,x')\ge\omega_{k,r}$
and $\VA_r(x',x)\ge\omega_{k,r}$,
$$
\int_G \tQ_r\Psi_{k,r}(gx,gx') \, d\nu_k^{(n)}(g) \le \Psi_{k,r}(x,x') - (\kappa''_r - C''_r\delta) n.
$$
\end{itemize}
\end{corollary}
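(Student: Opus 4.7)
The plan is to derive the corollary as a direct consequence of Propositions~\ref{p_Lem9.1} and~\ref{p_Prop8.1}. The first gives sharp control of the integral $\int_G \Psi_{k,r}(gx,gx')\,d\nu_k^{(n)}(g)$, while the second controls the pointwise effect of the spreading out operator $\tQ_r$ on $\Psi_{k,r}$.

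First I would verify the hypothesis needed to apply Proposition~\ref{p_Prop8.1}: since $(\omega_{k,r})_k$ decreases to $0$, we may assume $\omega_{k,r} < \vep'''_r$ for every $k \ge \check{k}_r$, enlarging $\check{k}_r$ if necessary. Combining parts (i) and (iii) of Proposition~\ref{p_Prop8.1}, the uniform pointwise inequality
$$
\tQ_r\Psi_{k,r}(y,y') \le \Psi_{k,r}(y,y') + K_r
$$
holds for every $(y,y') \in \prodspar$. Applying this with $(y,y')=(gx,gx')$ and integrating against $\nu_k^{(n)}$ gives
$$
\int_G \tQ_r\Psi_{k,r}(gx,gx') \, d\nu_k^{(n)}(g)
\le \int_G \Psi_{k,r}(gx,gx') \, d\nu_k^{(n)}(g) + K_r.
$$
The claims (i) and (ii) of the corollary follow immediately from parts (i) and (ii) of Proposition~\ref{p_Lem9.1} applied to the right-hand side.

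For claim (iii), the naive version of this argument produces a spurious additive error $+K_r$. The way to remove it is to exploit that the parameter $\vep'''_r$ was introduced in Proposition~\ref{p_Lem7.3} via Lemma~\ref{l_Lem5.8}, which allows making it as small as desired without affecting any prior conclusion. I would therefore further reduce $\vep'''_r$ (and correspondingly enlarge $\check{k}_r$) so as to ensure that $E_r(\vep'''_r)$ is contained in the $\nu_k^{(n)}$-core of $E_r(\vep'_r/2)$ for every $k \ge \check{k}_r$. This is possible by Remark~\ref{r_sec7_repeatedly}, since the equator $E$ is $\nu_\infty$-invariant and $\vep'_r$ depends only on $\nu_\infty$, $\delta$, $n$. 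Under this additional smallness, for every $x, x' \in \fE_r(\vep'''_r)$ and every $g \in \supp\nu_k^{(n)}$, both $gF_r$ and $gF'_r$ lie in $E_r(\vep'_r/2)$, so $(gx,gx') \in \fE_r(\vep'_r/2)^2$. Part (i) of Proposition~\ref{p_Prop8.1} then gives $\tQ_r\Psi_{k,r}(gx,gx') = \Psi_{k,r}(gx,gx')$ pointwise on $\supp\nu_k^{(n)}$, and integrating against $\nu_k^{(n)}$ and invoking Proposition~\ref{p_Lem9.1}(iii) yields exactly the inequality stated in (iii).

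The only nontrivial point in this plan is the arrangement of the additional smallness of $\vep'''_r$ for part (iii); this requires checking that all the quantitative choices made in Section~\ref{s_turning_the_perturbation_on} remain compatible after shrinking $\vep'''_r$, which they do because $\vep'''_r$ only appears in hypotheses and shrinking it never weakens the prior statements. No further ingredient beyond Propositions~\ref{p_Lem9.1}, \ref{p_Prop8.1}, and Remark~\ref{r_sec7_repeatedly} is needed.
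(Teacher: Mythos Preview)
Your proof is correct and follows the same approach as the paper: use the uniform bound $\tQ_r\Psi_{k,r}\le\Psi_{k,r}+K_r$ (from parts (i) and (iii) of Proposition~\ref{p_Prop8.1}) together with Proposition~\ref{p_Lem9.1} for parts (i) and (ii), and use the exact equality $\tQ_r\Psi_{k,r}=\Psi_{k,r}$ on $\fE_r(\vep'_r/2)^2$ for part (iii). Your treatment of part (iii) is in fact more careful than the paper's, which simply asserts that ``in the context of part (iii) we even have $\tQ_r\Psi_r(x,x')=\Psi_r(x,x')$'' without spelling out that this must hold at the image points $(gx,gx')$; your explicit shrinking of $\vep'''_r$ via Remark~\ref{r_sec7_repeatedly} is exactly the missing justification.
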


\begin{proof}
Parts (i) and (iii) of Proposition~\ref{p_Prop8.1} imply that
\begin{equation*}%\label{eq_upper_bound}
\tQ_r\Psi_r(x,x') \le \Psi_r(x,x') + K_r \text{ for every } (x,x') \in \prodspar,
\end{equation*}
Then the claims in parts (i) and (ii) of the corollary follow immediately from the corresponding
statements in Proposition~\ref{p_Lem9.1}.
In the context of part (iii) of the corollary, we even have that $\tQ_r\Psi_r(x,x')=\Psi_r(x,x')$,
and so the claim corresponds exactly to part (iii) of  Proposition~\ref{p_Lem9.1}.
\end{proof}

\section{Recoupling and conclusion}\label{s_recoupling_conclusion}

By induction, there are constants $\vep_r>0$ and $n_r \in \NN$, continuous Markov operators
\begin{equation}\label{eq_flag_operator1}
\cT_{k,r}: \Bd(E_r(\vep_r)) \to \Bd(E_r(\vep_r)),
\quad
\cT_{k,r}\varphi (F_r) = \int_{E_r(\vep_r)} \psi \, d\sigma_{k,r,F_r}
\end{equation}
adapted to $(\nu_k,E_r(\vep_r))$, and $\cT_{k,r}$-invariant probability measures
$\eta_{k,r}$ on $E_r(\vep_r)$ such that the sequence $\eta_{\infty,r} = \lim_k \eta_{k,r}$
exists and satisfies $\eta_{\infty,r}\left(E_r\right) > 0$.
Up to "localizing" the Markov operators as described in Section~\ref{ss_adapted_operators},
if necessary, we may assume that $\vep_r>0$ is small enough that
\begin{equation}\label{eq_um00}
\eta_{\infty,r}(E_r(\vep_r) \setminus E_r) < \frac 1{10} \eta_{k,r}(E_r).
\end{equation}
Then, for every $k$ sufficiently large,
\begin{equation}\label{eq_um01_cons}
\eta_{k,r}(E_r(\vep_r, \vep'''_r)) < \frac{2}{10} \eta_{k,r}(E_r(\vep_r)).
\end{equation}
We are going to show that this leads to a contradiction when $\dim E=r$, and to recover all this
information for $r+1$ when $\dim E > r$.

\subsection{Markov operators on flag varieties}\label{ss_Markov_on_flags}

Let $k$ and $r$ be fixed.
We are going to extend $\cT_{k,r}$ to a suitable Markov operator $\fcT_{k,r}$ in the space of flags, as follows.

The first step is to find a suitable lift of $\sigma_{k,r,x}$ to a probability measure on the group $G=\GL(\RR^d)$.
For each $F_r \in\grass(r,d)$ define
$$
\ev_{F_r} : G \to \grass(r,d), \quad g \mapsto gF_r.
$$

\begin{lemma}\label{l_lifting}
There exists a continuous family $\{\mu_{k,r,F_r}: F_r \in E_r(\vep_r)\}$  of probability measures
on $G$ such that $(\ev_{F_r})_*\mu_{k,r,F_r} =  \sigma_{k,r,F_r}$ for every  $F_r \in E_r(\vep_r)$.
\end{lemma}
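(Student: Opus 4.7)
The plan is to lift via the orthogonal group $\OO(d) \subset G$, which acts transitively on $\grass(r,d)$ with compact stabilizers. For each pair $(F_r, F'_r) \in \grass(r,d) \times \grass(r,d)$, I first define a probability measure $\lambda_{F_r, F'_r}$ on $\OO(d)$ supported on the coset $\{R \in \OO(d) : RF_r = F'_r\}$ as follows: pick any $R_0 \in \OO(d)$ with $R_0 F_r = F'_r$, and set $\lambda_{F_r, F'_r}$ to be the push-forward of the normalized Haar measure on the stabilizer $\OO(d)_{F_r} = \{S \in \OO(d) : SF_r = F_r\}$ under $S \mapsto R_0 S$. Right-invariance of the Haar measure makes this independent of the choice of $R_0$. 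Viewing each $\lambda_{F_r,F'_r}$ as a (compactly supported) probability measure on $G$, I then set
\begin{equation*}
\mu_{k,r,F_r} \;:=\; \int_{E_r(\vep_r)} \lambda_{F_r, F'_r} \, d\sigma_{k,r,F_r}(F'_r).
\end{equation*}
Since every $R \in \supp \lambda_{F_r,F'_r}$ satisfies $\ev_{F_r}(R) = F'_r$, we have $(\ev_{F_r})_* \lambda_{F_r,F'_r} = \delta_{F'_r}$ and hence $(\ev_{F_r})_* \mu_{k,r,F_r} = \sigma_{k,r,F_r}$, giving the required identity.

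The key remaining point is the joint weak-$*$ continuity of $(F_r, F'_r) \mapsto \lambda_{F_r, F'_r}$ on $\grass(r,d)^2$. To establish it I fix a reference flag $F_{r,0}$ and consider the smooth principal bundle map
\begin{equation*}
\OO(d) \times \OO(d) \to \grass(r,d) \times \grass(r,d), \quad (R_1, R_2) \mapsto (R_1 F_{r,0}, R_2 F_{r,0}),
\end{equation*}
whose structure group is the compact $\OO(d)_{F_{r,0}} \times \OO(d)_{F_{r,0}}$. This bundle admits local continuous sections $(F_r, F'_r) \mapsto (R_1(F_r, F'_r), R_2(F_r, F'_r))$. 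On the domain of such a section, setting $R_0 := R_2 R_1^{-1}$ and using the conjugation identity $\OO(d)_{F_r} = R_1 \OO(d)_{F_{r,0}} R_1^{-1}$, one checks that $\lambda_{F_r, F'_r}$ coincides with the push-forward of the fixed Haar measure on $\OO(d)_{F_{r,0}}$ under the jointly continuous map $T \mapsto R_2(F_r,F'_r) \, T \, R_1(F_r,F'_r)^{-1}$. This makes continuity transparent on the domain of each local section, and hence globally, since $\lambda_{F_r,F'_r}$ is defined canonically (independent of the section).

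With joint continuity in hand, continuity of $F_r \mapsto \mu_{k,r,F_r}$ follows: for any $\varphi \in C_b(G)$ the function $\Phi(F_r, F'_r) := \int_G \varphi \, d\lambda_{F_r, F'_r}$ is continuous and bounded, so by compactness of $\grass(r,d)^2$ it is uniformly continuous in the first variable. If $F_r \to F_r^*$ then $\sigma_{k,r,F_r} \to \sigma_{k,r,F_r^*}$ weakly by continuity of $\cT_{k,r}$, while $\Phi(F_r, \cdot) \to \Phi(F_r^*, \cdot)$ uniformly on $E_r(\vep_r)$, and combining these yields
\begin{equation*}
\int_G \varphi \, d\mu_{k,r,F_r} \;=\; \int_{E_r(\vep_r)} \Phi(F_r, \cdot) \, d\sigma_{k,r,F_r} \;\longrightarrow\; \int_{E_r(\vep_r)} \Phi(F_r^*, \cdot) \, d\sigma_{k,r,F_r^*} \;=\; \int_G \varphi \, d\mu_{k,r,F_r^*}.
\end{equation*}
The main obstacle is the joint continuity of the coset averaging measures $\lambda_{F_r,F'_r}$, which is the content of the local section argument above; once that is settled the push-forward identity and the final continuity assertion are immediate.
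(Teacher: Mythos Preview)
Your argument is correct, and it takes a genuinely different route from the paper. The paper does not average over cosets at all: it builds an explicit continuous \emph{section} of $\ev_{F_r}$ by exploiting the fact that every $F\in E_r(\vep_r)$ is the graph of a linear map $u_F:E\to E^\perp$, setting $h_F(v^E+v^\perp)=v^E+(v^\perp+u_F(v^E))$ and $\li_{F_r}(F)=h_F\circ h_{F_r}^{-1}$, so that $\mu_{k,r,F_r}=(\li_{F_r})_*\sigma_{k,r,F_r}$ is a push-forward through a Dirac-valued kernel. Your construction instead lifts each $F'_r$ to the Haar measure on the full $\OO(d)$-coset $\{R:RF_r=F'_r\}$, and then argues joint continuity of $(F_r,F'_r)\mapsto\lambda_{F_r,F'_r}$ via local sections of the principal $\OO(d)_{F_{r,0}}$-bundle $\OO(d)\to\grass(r,d)$. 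What the paper's approach buys is elementarity: pure linear algebra, no bundle language, and an explicit formula for the lifted group elements. What your approach buys is canonicity and scope: the kernel $\lambda_{F_r,F'_r}$ is intrinsically defined (independent of any choice of section), is invariant under the stabilizer, and makes sense for all $(F_r,F'_r)\in\grass(r,d)^2$, not just for $F'_r$ in the graph neighborhood $E_r(\vep_r)$; it also sidesteps any concern about how the graph description behaves when $\dim E>r$. Either construction suffices for the sequel, since all that is used downstream is the continuity of $F_r\mapsto\mu_{k,r,F_r}$ and the identity $(\ev_{F_r})_*\mu_{k,r,F_r}=\sigma_{k,r,F_r}$.
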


\begin{proof}
Write $\RR^d=E\oplus E^\perp$. Every $F \in E_r(\vep_r)$ is the graph of a linear map
$u_{F}:E \to E^\perp$. Define
$$
h_{F}:\RR^d \to \RR^d, \quad v^E+v^\perp \mapsto v^E + \left(v^\perp + u_{F}(v^E)\right).
$$
Then $F \mapsto h_{F}$ is a continuous injective map from $E_r(\vep_r)$ to $G=$
with $h_{F}(E) = F$. For each fixed $F_r \in E_r(\vep_r)$, define
$$
\li_{F_r}: E_r(\vep_r) \to G, \quad F \mapsto h_F \circ h^{-1}_{F_r}.
$$
Then $\li_{F_r}$ is a continuous injection and a right-inverse of $\ev_{F_r}$:
\begin{equation}\label{eq_flag_operator4}
\ev_{F_r}\left(\li_{F_r}(F)\right)
= \li_{F_r}(F) F_r
= \left(h_F \circ h^{-1}_{F_r}\right)(F_r)
= h_F(E)
= F
\end{equation}
for every $F\in E_r(\vep_r)$. Define
$
\mu_{k,r,F_r} = (\li_{F_r})_*\sigma_{k,r,F_r}.
$
It is clear that this varies continuously with $F_r$.
The claim in the lemma follows directly from \eqref{eq_flag_operator4}.
\end{proof}

Since $\cT_{k,r}$ is adapted to $(\nu_k^{(n)},E_r(\vep_r))$, there exists a neighborhood  $V$ of
the $\nu_k^{(n)}$-core of $E_r(\vep_r)$ such that $\sigma_{k,r,F_r}$ coincides with
$\nu^{(n)}_{k,F_r}=(\ev_{F_r})_*\nu_k^{(n)}$  for every $F_r \in V$.
Let $\tau:E_r(\vep_r) \to [0,1]$ be a continuous function vanishing on a neighborhood
$U \subset V$ of the $\nu_k^{(n)}$-core of $E_r(\vep_r)$, and constant equal to $1$
outside $U$. Define
\begin{equation*}
\nu_{k,r,F_r} = (1- \tau(F_r)) \nu_k^{(n)} + \tau(F_r) \mu_{k,r,F_r}.
\end{equation*}
Observe that $(\ev_{F_r})_*\nu_{k,r,F_r} = \sigma_{k,r,F_r}$ for all $F_r \in E_r(\vep_r)$,
and so the operator $\cT_{k,r}:\Bd(E_r(\vep_r)) \to \Bd(E_r(\vep_r))$ may be rewritten as
$$
\cT_{k,r}\psi (F_r) = \int_{G} \psi(gF_r) \, d\nu_{k,r,F_r}(g).
$$

We extend this to
\begin{equation}\label{eq_flag_operator5}
\fcT_{k,r}:\Bd(\fE_r(\vep_r)) \to \Bd(\fE_r(\vep_r)), \quad
\fcT_{k,r} \psi(x) = \int_{G} \psi (gx) \, d\nu_{k,r,F_r}(g),
\end{equation}
where $x=(F_1, \dots, F_r)$ and $gx=(gF_1, \dots, gF_r)$.
It is clear that $\fcT_{k,r}$ projects to $\cT_{k,r}$ under the forgetfulness
map \eqref{eq_forgetfulness}. Let $\feta_{k,r}$ be a $\fcT_{k,r}$-invariant
probability measure projecting to $\eta_{k,r}$.

\subsection{Recoupling}\label{ss_recoupling}

We move to construct suitable self-couplings for the Markov operators $\fcT_{k,r}$.
Begin by writing the definition in \eqref{eq_flag_operator5} as
\begin{equation}\label{eq_flag_revert1}
\fcT_{k,r} \psi(x) = \int_{\fE_r(\vep_r)} \psi \, d\fsigma_{k,r,x} \text{ for } \psi\in\Bd(\fE_r(\vep_r)),
\end{equation}
where $\fsigma_{k,r,x}$ is the push-forward of $\nu_{k,r,F_r}$ under the map
$G \to \cF(r,d)$, $g \mapsto gx$. By construction, $\fsigma_{k,r,x}$ coincides with the
push-forward $\nu^{(n)}_{k,r,x}$ of $\nu^{(n)}_k$ whenever $F_r$ is in the neighborhood
$U$ of $\nu^{(n)}_k$-core of $E_r(\vep_r)$.
We denote by $\nu^{(n)}_{k,r,x,x}$ the push-forward of $\nu^{(n)}_k$ under the diagonal
embedding $G \to \cF(r,d)^2$, $g \mapsto (gx,gx')$.

Consider $X=X'=\fE_r(\vep_r)$, $Y=Y'=\fE_r(\vep_r)$, and $\eta_y=\eta'_y=\fsigma_{k,r,y}$
for every $y \in Y$. Let $K$ be the (compact) subset $\Diag_r$ of pairs $(x,x')\in\fE_r(\vep_r)^2$
which are \emph{not} in general position, that is, such that either $F'_1 \subset F_r$ or
$F_1 \subset F'_r$. It is clear that $K(x')$ and $K'(x)$ are algebraic subvarieties of $\fE_r(\vep_r)$,
and so
$$
\fsigma_{k,r,y}(K(x'))=\fsigma_{k,r,y}(K'(x))=\{0\}
$$
for every $x, x'$ and $y$.
This means that \eqref{eq_nonatomicKY} holds in this setting, and so we may use
Proposition~\ref{p_sec6_coupling_Kset_parametrized} to find a continuous family
$$
\left\{\ttheta_{k,r,x,x'}: (x,x') \in \prodspar\right\}
$$
of generic probability measures on $\prodspar$ such that each $\theta_{k,r,x,x'}$ is a coupling
of $\fsigma_{k,r,x}$ and  $\fsigma_{k,r,x'}$ vanishing on a uniform neighborhood of $\Diag_r$.

Let $\tomega:\prodspar \to [0,1]$ be a continuous function such that $\tomega(x,x')=0$ if
$F_r$ and $F'_r$ are both in $\fE_r(\tilde\vep_r)$ and $\tomega(x,x')=1$ if either of them is
outside $\fE_r(2\tilde\vep_r))$. Then
\begin{equation}\label{eq_def_adjusted2r}
\ftsigma_{k,r,x,x'} = \left(1-\tomega(x,x')\right) \nu^{(n)}_{k,r,x,x'} + \tomega(x,x') \ttheta_{k,r,x,x'}
\end{equation}
is a coupling of $\fsigma_{k,r,x}$ and $\fsigma_{k,r,x'}$ depending continuously on $(x, x')$,
and so
\begin{equation*}
\ftT_{k,r}:\Bd(\prodspar)\to \Bd(\prodspar),\
\ftT_{k,r}\tpsi(x,x') = \int_{\prodspar} \tpsi \, d\ftsigma_{k,r,x,x'},
\end{equation*}
is a continuous self-coupling of $\fcT_{k,r}$. Just as we did for $r=1$, we must modify these
operators, by recoupling the measures $\fsigma_{k,r,x}$ and $\fsigma_{k,r,x'}$ in a suitable way
on the region $\fE_r(2\tilde\vep_r, \vep''_r)^2$.

For $x \in \fE_r(2\tilde\vep_r, \vep''_r)$, it follows from \eqref{eq_r=1_III} that the subset of
$g\in\supp\nu^{(n)}_k$ such that $gx \in \fE_r(2\vep'_r)$ is disjoint from the set $\cD_k(F_r)$
given by Corollary~\ref{c_sec5_distance_contraction2}.
Hence,
\begin{equation}\label{eq_Dk}
\nu^{(n)}_{k,r,x}\left(\fE_r(2\vep'_r)\right) \le \nu^{(n)}_k\left(\cD_k(F_r)^c\right)<\delta
%\quand
%\nu^{(n)}_{k,r,x'}\left(\fE_r(2\vep'_r)\right) \le \nu^{(n)}_k\left(\cD_k(F'_r)^c\right)
\text{ for every $x$ in $\fE_r(2\tilde\vep_r, \vep''_r)$.}
\end{equation}
Note that $\fsigma_{k,r,x}=\nu^{(n)}_{k,r,x}$ if $F_r$ is in the $\nu_k^{(n)}$-core of $E_r(\vep_r)$.

Take $X=X'=\fE_r(\vep_r)$, $Y=Y'=\fE_r(2\tilde\vep_r, \vep''_r)^2$,
and $\eta_y=\eta'_y=\fsigma_{k,r,y}$ for every $y \in Y$.
Moreover, let $K = \fE_r(\vep'_r)^2 \cup \Diag_r$.
On the one hand, \eqref{eq_Dk} implies that $\fsigma_{k,r,y} (\fE_r(\vep'_r))$ and
$\fsigma_{k,r,y} (\fE_r(\vep'_r))$ are less than $\delta<1/2$ for every $y \in Y$.
On the other hand,
$$
\begin{aligned}
\Diag_r(x') & = \{x \in \fE_r(\vep_r): F'_1 \subset F_r \text{ or } F_1 \subset F'_r\}
\text{ and } \\
\Diag'_r(x) & = \{x' \in \fE_r(\vep_r): F'_1 \subset F_r \text{ or } F_1 \subset F'_r\}
\end{aligned}
$$
are algebraic subvarieties of $\fE_r(\vep_r)$, and so they have zero $\fsigma_{k,r,y}$-measure
for every $y \in Y$. These two observations show that
$$
K(x') = \fE_r(\vep'_r) \cup \Diag_r(x') \text{ and } K'(x) = \fE_r(\vep'_r) \cup \Diag'_r(x)
$$
satisfy \eqref{eq_nonatomicKY}. So we may use Proposition~\ref{p_sec6_coupling_Kset_parametrized}
to find a continuous family
$$
\{\zeta_{k,r,x,x'}: (x, x') \in \fE_r(2\tilde\vep_r, \vep''_r)^2\}
$$
of generic probability measures on $\prodspar$ such that every $\zeta_{k,r,x,x'}$ is a
coupling of $\fsigma_{k,r,x}$ and $\fsigma_{k,r,x'}$ vanishing on a uniform neighborhood
of $\Diag_r$ and such that
\begin{equation}\label{eq_construction_zeta}
\zeta_{k,r,x,x'}\left(\fE_r(\vep'_r)^2\right)=0.
\end{equation}

Fix a continuous function $\tau:\prodspar \to [0,1]$ such that $\tau\equiv 1$ on
$\fE_r(\tilde\vep_r,2\vep''_r)^2$ and $\tau\equiv 0$ on the complement of $\fE_r(2\tilde\vep_r,\vep''_r)^2$,
and then define
\begin{equation}\label{eq_interpolate1}
\fhsigma_{k,r,x,x'}= \left(1-\tau(x,x')\right)\ftsigma_{k,r,x,x'} + \tau(x,x') \zeta_{k,r,x,x'}
\end{equation}
for every $(x,x') \in \prodspar$. Then $\fhsigma_{k,r,x,x'}$ is a coupling of $\fsigma_{k,r,x}$
and $\fsigma_{k,r,x'}$ depending continuously on $(x,x')$, and so
%and
$$
\fhT_{k,r}:\Bd(\prodspar) \to \Bd(\prodspar), \quad
\fhT_{k,r}\tpsi(x,x') = \int_{\prodspar} \tpsi \, d\fhsigma_{k,1,x,x'}
$$
is another continuous self-coupling of $\fcT_{k,r}$, coinciding with $\ftT_{k,r}$ outside the
\emph{recoupling region} $\fE_r(2\tilde\vep_r,\vep''_r)^2$.

Finally, define $\qhT_{k,r}:\Bd(\prodspar) \to \Bd(\prodspar)$ by $\qhT_{k,r} = \fhT_{k,r} \circ \tQ_r$,
that is,
\begin{equation}\label{eq_qT}
\qhT_{k,r}\tvarphi(x,x') = \int_{\prodspar} (\tQ_r\tvarphi) \, d\fhsigma_{k,1,x,x'}.
\end{equation}
Let $\pi_i:E_r(\vep_r)^2 \to E_r(\vep_r)$ and $\fpi_i:\fE_r(\vep_r)^2 \to \fE_r(\vep_r)$
denote the projections to the $i$th factor, $i=1,2$, and $f$ be the forgetfulness
map \eqref{eq_forgetfulness}. By \eqref{eq_qT} and \eqref{eq_lift_of_identity},
$$
\qhT_{k,r}\big(\tpsi \circ (f,f) \big)
= \fhT_{k,r}\big(\tQ_r\big(\tpsi \circ (f,f)\big)\big)
= \fhT_{k,r}\big(\tpsi \circ (f,f)\big)
$$
for any $\tpsi\in\Bd(E_r(\vep_r)^2)$. Take $\tpsi = \psi \circ \pi_i$ for any $\psi\in\Bd(E_r(\vep_r))$.
Observing that $\pi_i \circ (f,f) = f \circ \fpi_i$, and keeping in mind that $\fhT_{k,r}$ is a self-coupling
of $\fcT_{k,r}$ and the latter projects to $\cT_{k,r}$ under the forgetfulness map, we get that
\begin{equation}\label{eq_partial_lift}
\qhT_{k,r}\big(\psi \circ f \circ \fpi_i \big)
= \fhT_{k,r}\big(\psi \circ f \circ \fpi_i\big)
= \fcT_{k,r}\big(\psi \circ f\big) \circ \fpi_i
= \big(\cT_{k,r}\psi) \circ f \circ \fpi_i.
\end{equation}
In other words, $\qhT_{k,r}$ projects to $\cT_{k,r}$ under $f \circ \fpi_i$ for any $i=1, 2$.

\begin{remark}
Unlike $\fhT_{k,r}$, this $\qhT_{k,r}$ is not a coupling of operators on $\Bd(\fE_r(\vep_r))$
because $\tQ_r$ itself is not a coupling. That could be remedied by choosing $\tQ_r$ differently.
However, our choice is convenient for lifting these operators to the blow-up space $\cY_r(\vep_r)$,
as we will see in Section~\ref{ss_closure}.
\end{remark}

We will need the following extension of Lemma~\ref{l_r=1_recoupling_operation} to $r>1$:

\begin{lemma}\label{l_recoupling_operation}
Let $(x,x') \in \prodspar$ be such that
\begin{itemize}
\item[(a)] either at least one of the points $x$ or $x'$ is in the $\nu_k^{(n)}$-border of $\fE_r(\vep_r)$,
\item[(b)] or both $x$ and $x'$ are in the $\nu_k^{(n)}$-core of $\fE_r(\vep_r)$ but outside $\fE_r(2\vep''_r)$.
\end{itemize}
Then $\fhsigma_{k,r,x,x'}(\fE_r(\vep'_r)^2)=0$ and $\qhT_{k,r}\Psi_{k,r}(x,x') \le \log \Omega_r + K_r$.
\end{lemma}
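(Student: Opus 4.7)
The plan is to imitate directly the proof of Lemma~\ref{l_r=1_recoupling_operation} at the level of the $r$-dimensional subspaces $F_r, F'_r$, and then use Proposition~\ref{p_Prop8.1} to convert the vanishing statement about $\fhsigma_{k,r,x,x'}$ into the bound on $\qhT_{k,r}\Psi_{k,r}$.

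I first prove that $\fhsigma_{k,r,x,x'}$ vanishes on $\fE_r(\vep'_r)^2$. In case (a), if $x$ lies in the $\nu_k^{(n)}$-border of $\fE_r(\vep_r)$ then, using the second inclusion in \eqref{eq_IV} and the fact that $\cT_{k,r}$ is adapted to $(\nu_k^{(n)},E_r(\vep_r))$, I get
\[
\fhsigma_{k,r,x,x'}(\fE_r(\vep'_r)^2) \le \fsigma_{k,r,x}(\fE_r(\vep'_r)) \le \fsigma_{k,r,x}\bigl(\Chi^\#_{\nu_k^{(n)}}\fE_r(\vep_r)\bigr)=0,
\]
(and symmetrically if $x'$ is in the border). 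In case (b), both $\fsigma_{k,r,x}=\nu^{(n)}_{k,r,x}$ and $\fsigma_{k,r,x'}=\nu^{(n)}_{k,r,x'}$, and \eqref{eq_construction_zeta} tells me $\zeta_{k,r,x,x'}(\fE_r(\vep'_r)^2)=0$, so \eqref{eq_interpolate1} reduces the question to bounding $\ftsigma_{k,r,x,x'}(\fE_r(\vep'_r)^2)$. If $F_r, F'_r \in E_r(\tilde\vep_r)$ (both flags are in $\fE_r(\tilde\vep_r)$), then the assumption $x, x' \notin \fE_r(2\vep''_r)$ places $(x,x') \in \fE_r(\tilde\vep_r,2\vep''_r)^2$, where $\tau\equiv 1$, so the $\ftsigma$ term is absent. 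Otherwise, at least one of $F_r, F'_r$ is outside $E_r(\tilde\vep_r)$; say $F_r \notin E_r(\tilde\vep_r)$. Then \eqref{eq_V} gives $gF_r \notin E_r(2\vep'_r)$ for every $g\in\supp\nu_k^{(n)}$, so that $\nu^{(n)}_{k,r,x}(\fE_r(\vep'_r))=0$, and I conclude
\[
\ftsigma_{k,r,x,x'}(\fE_r(\vep'_r)^2) \le \nu^{(n)}_{k,r,x}(\fE_r(\vep'_r))=0.
\]

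For the second statement, I exploit the factorization $\qhT_{k,r}=\fhT_{k,r}\circ\tQ_r$, so
\[
\qhT_{k,r}\Psi_{k,r}(x,x') = \int_{\prodspar} \tQ_r\Psi_{k,r}(u,u') \, d\fhsigma_{k,r,x,x'}(u,u').
\]
Since $k$ is large, $\omega_{k,r}<\vep'''_r$, so Proposition~\ref{p_Prop8.1}(ii) applies: whenever $(u,u')\notin\fE_r(\vep'_r)^2$ one has $\tQ_r\Psi_{k,r}(u,u')\le\log\Omega_r+K_r$. The first part of this lemma (already proved) tells me that $\fhsigma_{k,r,x,x'}$ is supported on the complement of $\fE_r(\vep'_r)^2$, hence the integrand is at most $\log\Omega_r+K_r$ on a full-measure set and, since $\fhsigma_{k,r,x,x'}$ is a probability measure, the integral itself is at most $\log\Omega_r+K_r$.

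This argument is almost a formality once one has the right statement of Proposition~\ref{p_Prop8.1}; the only point requiring any thought is the bookkeeping in case (b) to verify that the two pieces of the recoupling each vanish on $\fE_r(\vep'_r)^2$, exactly as in the $r=1$ case, and the mild observation that the spreading out factor $\tQ_r$ does no damage because $\Psi_{k,r}$ is already controlled by $\log\Omega_r+K_r$ on the support of $\fhsigma_{k,r,x,x'}$. No genuine obstacle appears; all the hard work has been front-loaded into Proposition~\ref{p_Prop8.1} and the construction of $\zeta_{k,r,x,x'}$ satisfying \eqref{eq_construction_zeta}.
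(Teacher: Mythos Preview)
Your proof is correct and follows essentially the same route as the paper's: the case analysis for showing $\fhsigma_{k,r,x,x'}(\fE_r(\vep'_r)^2)=0$ matches the paper's argument line by line, and the final integration step using $\qhT_{k,r}=\fhT_{k,r}\circ\tQ_r$ together with Proposition~\ref{p_Prop8.1}(ii) is exactly what the paper does.
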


\begin{proof}
Let us begin by proving the claim that $\fhsigma_{k,r,x,x'}$ vanishes on $\fE_r(\vep'_r)^2$.
If $x$ is in the $\nu_k^{(n)}$-border of $\fE_r(\vep_r)$ then, using \eqref{eq_IV},
$$
\fhsigma_{k,r,x,x'}(\fE_r(\vep'_r)^2)
\le \fsigma_{k,r,x}(\fE_r(\vep'_r))
\le \fsigma_{k,r,x}\left(\Chi^\#_{\nu_k^{(n)}}\fE_r(\vep_r)\right)=0.
$$
The same argument applies when $x'$ is in the $\nu_k^{(n)}$-border of $\fE_r(\vep_r)$.
This settles the claim in case (a). Now let $x$ and $x'$ be as in (b).
Keep in mind that $\fsigma_{k,r,x}=\nu_{k,r,x}^{(n)}$ and $\fsigma_{k,r,x'}=\nu_{k,r,x'}^{(n)}$.
By \eqref{eq_construction_zeta}, $\zeta_{k,r,x,x'}$ vanishes on $\fE_r(\vep'_r)^2$,
and so  \eqref{eq_interpolate1} gives that
$$
\fhsigma_{k,r,x,x'}\left(\fE_r(\vep'_r)^2\right) = (1-\tau(x,x'))\ftsigma_{k,r,x,x'}\left(\fE_r(\vep'_r)^2\right).
$$
If $x$ and $x'$ are both in $\fE_r(\tilde\vep_r)$ then $\tau(x,x')=1$, and the claim follows.
When $x \notin \fE_r(\tilde\vep_r)$ we get from \eqref{eq_V} that $\nu^{(n)}_{k,r,x}(\fE_r(\vep'_r))=0$.
Then
$$
\ftsigma_{k,r,x,x'}(\fE_r(\vep'_r)^2)
\le \fsigma_{k,r,x}(\fE_r(\vep'_r))
= \nu^{(n)}_{k,r,x}(\fE_r(\vep'_r))=0.
$$
The case when $x'\notin \fE_r(\tilde\vep_r)$ is analogous.
Thus $\fhsigma_{k,r,x,x'}(\fE_r(\vep'_r)^2)=0$ also in case (b).

By part (ii) of Proposition~\ref{p_Prop8.1},
it follows that $\tQ_r\Psi_{k,r}(u,u') \le \log\Omega_r + K_r$
for $\fhsigma_{k,r,x,x'}$-almost every $(u,u')\in\prodspar$.
Integrating with respect to $\fhsigma_{k,r,x,x'}$ we get that
$\qhT_{k,r}\Psi_{k,r}(x,x') \le \log \Omega_r + K_r$ as claimed.
\end{proof}

\begin{proposition}\label{p_Prop11.3}
There exist $\kappa'''_r=\kappa'''_r(\nu_\infty)>0$ and $C'''_r=C'''_r(\nu_\infty)>0$
such that given any $\delta>0$ and $n \ge N_r$ the following holds for every $k\ge \check{k}_r$:
\begin{itemize}
\item[(i)] For any $(x, x') \in \prodspar\setminus\Diag_r$,
$$
\qhT_{k,r} \Psi_{k,r}(x,x') \le \Psi_{k,r}(x,x') + C'''_r n.
$$
\item[(ii)] For any $(x, x') \in \prodspar\setminus\Diag_r$ with $\VA_r(x,x')\ge\omega_{k,r}$,
$$
\qhT_{k,r} \Psi_{k,r}(x,x') \le \Psi_{k,r}(x,x') + C'''_r(1+\delta n).
$$
\item[(iii)] For any $(x,x')\in \fE_r(\vep'''_r)\setminus\Diag_r$ with $\VA_r(x,x')\ge\omega_{k,r}$,
$$
\qhT_{k,r} \Psi_{k,r}(x,x') \le \Psi_{k,r}(x,x') - (\kappa'''_r - C'''_r\delta) n.
$$
\end{itemize}
\end{proposition}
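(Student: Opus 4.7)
The proof follows the blueprint of Proposition~\ref{p_r=1_Prop11.3}, adapted to incorporate the spreading-out operator $\tQ_r$. The plan is to choose $\kappa'''_r = \kappa''_r$ and $C'''_r = \max\{C''_r, K_r\}$ (possibly enlarged by a multiplicative absolute constant), and then split the argument into four cases according to where $(x,x')$ sits relative to the concentric neighborhoods $\fE_r(\vep_r) \supset \fE_r(2\tilde\vep_r) \supset \fE_r(\tilde\vep_r) \supset \fE_r(2\vep''_r) \supset \fE_r(\vep''_r) \supset \fE_r(\vep'_r)$. Note that by \eqref{eq_partial_lift} and the definition of the recoupling, outside the recoupling region we have $\fhsigma_{k,r,x,x'} = \nu^{(n)}_{k,r,x,x'}$, so $\qhT_{k,r}\Psi_{k,r}(x,x') = \int_G \tQ_r\Psi_{k,r}(gx,gx')\,d\nu_k^{(n)}(g)$; this is precisely what Corollary~\ref{c_Lem9.1bis} addresses.

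\textbf{Case A:} Both $x,x'$ lie in the $\nu_k^{(n)}$-core of $\fE_r(\vep_r)$ and at least one is in $\fE_r(2\vep''_r)$. Then $(x,x')$ is outside the cut-off region, so $\Psi_{k,r}(x,x') > \log\Omega_r$, and there is no recoupling, so the three claims follow directly from the three corresponding items of Corollary~\ref{c_Lem9.1bis}. Observe that case (iii) of the proposition is contained in Case~A, since $\fE_r(\vep'''_r) \subset \fE_r(2\vep''_r)$ lies in the core when $k$ is large.

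\textbf{Case B:} Both $x,x'$ lie in the $\nu_k^{(n)}$-core of $\fE_r(\vep_r)$, outside $\fE_r(\vep''_r)$, with at least one inside $\fE_r(2\tilde\vep_r)$ so that $(x,x')$ may lie in the recoupling region. Still $\Psi_{k,r}(x,x') > \log\Omega_r$. Here $\fhsigma_{k,r,x,x'}$ is a convex combination of $\nu^{(n)}_{k,r,x,x'}$ and $\zeta_{k,r,x,x'}$; on the first factor Corollary~\ref{c_Lem9.1bis} applies, while on the second factor \eqref{eq_construction_zeta} combined with part~(ii) of Proposition~\ref{p_Prop8.1} shows that $\tQ_r\Psi_{k,r}(u,u') \le \log\Omega_r + K_r \le \Psi_{k,r}(x,x') + K_r$ for $\zeta_{k,r,x,x'}$-a.e.\ $(u,u')$. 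Averaging the two bounds yields (i) and (ii).

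\textbf{Cases C and D:} Either both $x,x'$ are in the core but both outside $\fE_r(2\vep''_r)$, or at least one is in the $\nu_k^{(n)}$-border. These are precisely the two hypotheses of Lemma~\ref{l_recoupling_operation}, which gives $\qhT_{k,r}\Psi_{k,r}(x,x') \le \log\Omega_r + K_r \le \Psi_{k,r}(x,x') + K_r$, since $\Psi_{k,r} \ge \log\Omega_r$ everywhere. This proves (i) and (ii) in these cases with room to spare (the bound $K_r$ does not even grow with $n$).

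The only genuinely new difficulty compared with $r=1$ is Case~B: for $r>1$, the inductive factor $\psi_{r-1}(x_-,x'_-)^{\beta_{r-1}}$ inside $\hpsi_r$ can be arbitrarily small even when $F_r,F'_r$ are far from $E$, so that the naive analogue of Lemma~\ref{l_r=1_Prop8.1} fails and we cannot bound $\Psi_{k,r}$ directly on the recoupling region. This is exactly what the spreading-out operator $\tQ_r$ is designed to repair: part~(ii) of Proposition~\ref{p_Prop8.1} guarantees that after averaging over the fibers of the forgetfulness map, $\tQ_r\Psi_{k,r}$ is uniformly bounded by $\log\Omega_r + K_r$ on the complement of $\fE_r(\vep'_r)^2$. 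The role of part~(i) of Proposition~\ref{p_Prop8.1} is to ensure that this averaging leaves the near-equator estimates (Case~A) untouched, and part~(iii) controls the intermediate region. Once these ingredients are in place, the four-case bookkeeping is routine, mirroring line-by-line the proof for $r=1$.
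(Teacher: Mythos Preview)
Your proof is essentially identical to the paper's: the same four-case split, the same invocation of Corollary~\ref{c_Lem9.1bis} in the inner region, the same convex-combination argument with $\zeta_{k,r,x,x'}$ and Proposition~\ref{p_Prop8.1}(ii) in the recoupling zone, and the same appeal to Lemma~\ref{l_recoupling_operation} elsewhere. The paper takes $C'''_r = C''_r + K_r$ rather than $\max\{C''_r,K_r\}$, but this is immaterial.

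There is one slip in your case boundaries. In Case~A you require only that at least one point lie in $\fE_r(2\vep''_r)$ and then assert ``there is no recoupling''. But the recoupling region is $\fE_r(2\tilde\vep_r,\vep''_r)^2$, so if one point is in $\fE_r(2\vep''_r)\setminus\fE_r(\vep''_r)$ and the other in $\fE_r(2\tilde\vep_r,\vep''_r)$, recoupling \emph{does} occur. The correct threshold (as in the paper) is $\fE_r(\vep''_r)$: one point inside $\fE_r(\vep''_r)$ guarantees the pair is outside the recoupling region. Correspondingly, Case~B should read ``both outside $\fE_r(\vep''_r)$ and at least one inside $\fE_r(2\vep''_r)$'' (not $\fE_r(2\tilde\vep_r)$), which is what you actually use when you write ``still $\Psi_{k,r}(x,x')>\log\Omega_r$''. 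With these thresholds fixed, your argument is complete.
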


\begin{proof}
Take $\kappa'''_r=\kappa''_r$ and $C'''_r=C''_r + K_r$, and let $k \ge \check{k}_r$.
We split the argument into four cases (check Figure~\ref{f_coupling}).

First, suppose that both $x$ and $x'$ are in the $\nu_k^{(n)}$-core of $\fE_r(\vep_r)$,
and at least one of them is in $\fE_r(\vep''_r)$. This is necessarily the case in the
setting of (iii). In particular $(x,x')$ is outside the cut-off region, which means that
$\Psi_{k,r}(x,x') > \log\Omega_r$, and there is no recoupling either:
$$
\fhsigma_{k,r,x,x'}=\nu^{(n)}_{k,r,x,x'},
\quad
\qhT_{k,r} \Psi_{k,r}(x,x') = \int_G \tQ_r \Psi_{k,r}(gx,gx') \, d\nu_k^{(n)}(g).
$$
Hence the claims in (i), (ii) and (iii) are contained in Corollary~\ref{c_Lem9.1bis}.

Now suppose that both $x$ and $x'$ are in the $\nu_k^{(n)}$-core of $\fE_r(\vep_r)$ but outside
$\fE_r(\vep''_r)$, and at least one of them is in $\fE_r(2\vep''_r)$.
It is still true that $(x,x')$ is outside the cut-off region, and so $\Psi_{k,r}(x,x')>\log\Omega_r$.
Thus the estimates in Corollary~\ref{c_Lem9.1bis} remain valid for
\begin{equation}\label{eq_linear_combination1}
\int_{\prodspar} \tQ_r \Psi_{k,r} \, d\ftsigma_{k,r,x,x}  = \int_G \tQ_r \Psi_{k,r}(gx,gx') \, d\nu_k^{(n)}(g).
\end{equation}
By \eqref{eq_construction_zeta}, the measure $\zeta_{k,r,x,x'}$ vanishes on $\fE_r(\vep'_r)^2$.
So, part (ii) of Proposition~\ref{p_Prop8.1} gives that
\begin{equation}\label{eq_linear_combination2}
\int_{\prodspar} \tQ_r\Psi_{k,r} \, d\zeta_{k,r,x,x} \le \log\Omega_r+K_r \le \Psi_{k,r}(x,x') + K_r.
\end{equation}
By the definition \eqref{eq_interpolate1}, $\qhT_{k,r} \Psi_{k,r}(x,x')$ is a convex combination of
the integrals
in \eqref{eq_linear_combination1} and \eqref{eq_linear_combination2}.
Thus the claims (i) and (ii) follow in this case.

Next suppose that both $x$ and $x'$ are in the $\nu_k^{(n)}$-core of $\fE_r(\vep_r)$ but outside $\fE_r(2\vep''_r)$.
This corresponds to case (b) of Lemma~\ref{l_recoupling_operation}: claims (i) and (ii) are
contained in the conclusion of that lemma. Finally, suppose that at lest one of the points $x$ and $x'$
is in the $\nu_k^{(n)}$-border of $\fE_r(\vep_r)$. This is precisely the situation in case (a) of
Lemma~\ref{l_recoupling_operation}, and so claims (i) and (ii) are again contained in the conclusion
of that lemma.
\end{proof}

\subsection{Contradicting $\dim E=r$}\label{ss_contradiction}

Now we are going to apply Lemma~\ref{l_sec10_Lem12.1} with $X = \prodspar$, $\cT=\qhT_{k,r}$,
$\psi=\Psi_{k,r}$, $\heta=\heta_{k,r}$,
\begin{align}
\label{eq_A_bis}
A_k & = \big\{(x, x') \in \prodspar: \VA_r(x,x') > \omega_{k,r}, \ \VA_r(x',x) > \omega_{k,r},\\
\nonumber
& \hspace{4.5cm} d(F_r,E) \le \vep'''_r, \text{ and } d(F'_r,E) \le \vep'''_r\},\\
\label{eq_Bprime_bis}
B'_k & = \big\{(x, x') \in \prodspar: \VA_r(x,x') > \omega_{k,r}, \ \VA_r(x',x) > \omega_{k,r},\\
\nonumber
& \hspace{4.2cm} \text{and }d(F_r,E) > \vep'''_r \text{ or } d(F'_r,E) > \vep'''_r\},\\
\label{eq_Bone_bis}
B^1_k & = \big\{(x,x') \in \prodspar: \VA_r(x,x')  \le \omega_{k,r}\big\},\\
\label{eq_Btwo_bis}
B^2_k & =\big\{(x,x') \in \prodspar: \VA_r(x',x) \le \omega_{k,r}\big\}.
\end{align}
The sets $A=A_k$, $B'=B'_k$, and $B'' = B^1_k \cup B^2_k$ are pairwise disjoint,
and their union is the whole $\prodspar$.
Moreover, \eqref{eq_sec7_Theta2} implies that $B''=\emptyset$ when $\dim E =r$.

Proposition~\ref{p_Prop11.3} shows that, as long as $k$ is sufficiently large,
the hypotheses of Lemma~\ref{l_sec10_Lem12.1} are satisfied for these choices, with
$$%\begin{equation}\label{eq_kappas}
\kappa_A = (\kappa'''_r-C'''_r\delta) n, \quad
\kappa'_B = C'''_r(1+\delta n), \quand
\kappa''_B = C'''_r n.
$$%\end{equation}
Take $\delta>0$ to be sufficiently small, depending on $\nu_\infty$, and $n\in\NN$ to be
sufficiently large, depending on $\nu_\infty$ and $\delta$, that
\begin{equation}\label{eq_defdeltan}
\kappa_A > 9 \kappa'_B.
\end{equation}

Using Proposition~\ref{p_sec6_coupling_Kset}, we find a self-coupling $\fheta_0$ of $\feta_{k,r}$
vanishing on a neighborhood of $\Diag_r \subset\prodspar$.
Observe that $\fheta_0$ projects to $\eta_{k,r}$ under $f \circ \fpi_i$ for any $i=1, 2$.
Then the same is true about every $\qhT_{k,r}$-iterate of $\feta_0$, by \eqref{eq_partial_lift}.
Starting from $\fheta_0$ and arguing as in the proof of Proposition~\ref{p_sec6_Margulis_finite_energy},
we find a sequence $(\qheta_{k,r,j})_j$ of probability measures on $\prodspar$ projecting to
$\eta_{k,r}$ under $f \circ \fpi_i$ for any $i=1, 2$, converging to a $\qhT_{k,r}$-invariant
measure $\qheta_{k,r}$, and satisfying $\int_{\prodspar} \Psi_{k,r} \, d\qheta_{k,r,j} <\infty$
and
$$
\int_{\prodspar} \qhT_{k,r}\Psi_{k,r}(x,x') \, d\qheta_{k,r,j}(x,x')
\ge  \int_{\prodspar} \Psi_{k,r}(x,x') \, d\qheta_{k,r,j}(x,x')
$$
for every $j$.
Applying Lemma~\ref{l_sec10_Lem12.1} with $\cT=\qhT_{k,r}$ and $\heta=\qheta_{k,r,j}$,
we get that
\begin{equation}\label{eq_conclusion1}
\qheta_{k,r,j}(B'')
\ge \frac{\kappa_A \qheta_{k,r,j}(\prodspar) - (\kappa_A + \kappa'_B)\qheta_{k,r,j}(B')}{\kappa_A + \kappa''_B}
\end{equation}
for every $j$. Passing to the limit as $j\to\infty$, we conclude that
\begin{equation}\label{eq_conclusion2}
\qheta_{k,r}(B'')
\ge \frac{\kappa_A \qheta_{k,r}(\prodspar) - (\kappa_A + \kappa'_B)\qheta_{k,r}(B')}{\kappa_A + \kappa''_B}.
\end{equation}
By definition, $B'$ is contained in the union of the pre-images
$\left(f\circ\fpi_i\right)^{-1}(E_r(\vep_r,\vep'''_r))$, $i=1, 2$. Thus, using \eqref{eq_um01_cons},
\begin{equation}\label{eq_conclusion3}
\qheta_{k,r}(B') \le 2 \eta_{k,r}(E_r(\vep_r,\vep'''_r)) < \frac{4}{10} \eta_{k,r}(E_r(\vep_r)).
\end{equation}
It is clear that $\qheta_{k,r}(\prodspar)=\eta_{k,r}(E_r(\vep_r))$.
Substituting these relations in \eqref{eq_conclusion2} and using \eqref{eq_defdeltan}, we find that
\begin{equation}\label{eq_conclusion4}
\qheta_{k,r}(B'')
\ge \frac{\kappa_A - \frac{4}{10}(\kappa_A + \kappa'_B)}{\kappa_A + \kappa''_B}\eta_{k,r}(E_r(\vep_r))
\ge \frac{5\kappa'_B}{\kappa_A + \kappa''_B}\eta_{k,r}(E_r(\vep_r)) > 0.
\end{equation}
When $\dim E=r$ this is a contradiction, because $B''$ is empty in that case.
Thus $\dim E\ge r+1$.

\subsection{Completing step $r$}\label{ss_closure}

By \eqref{eq_conclusion4}, there exists $i=1, 2$ such that
\begin{equation}\label{eq_conclusion5}
\qheta_{k,r}(B^i_k)
\ge \frac{2\kappa'_B}{\kappa_A + \kappa''_B}\eta_{k,r}(E_r(\vep_r)).
\end{equation}
It is no restriction to assume that $i=1$, as the other case can be deduced just by
exchanging the roles of $x$ and $x'$.

Consider the map
$$
\Sigma:\prodspar\setminus\Diag_r \to \grass(r+1,d),
\quad \Sigma(x,x') = F'_1+F_r
$$
and the compact topological spaces
$$
\begin{aligned}
\cY_r & = \{(x,x',y) \in \cF(r,d)^2 \times \grass(r+1,d): F'_1 \subset y \text{ and } F_r  \subset y\} \\
\cY_r(\vep) & = \{(x,x',y) \in \cY_r: x, x' \in \fE_r(\vep)\} \text{ for } \vep>0,
\end{aligned}
$$
together with the canonical projections
$$
\begin{aligned}
& p_1: \cY_r \to \cF(r,d)^2, \quad  (x,x',y) \mapsto (x,x')\\
& p_2: \cY_r \to \grass(r+1,d), \quad (x,x',y) \mapsto y.
\end{aligned}
$$
For $(x,x',y)\in\cY_r$ and $n\in\NN$, denote by $\nu^{(n)}_{k,r,x,x',y}$ the image of
$\nu^{(n)}_k$ under the diagonal action
$$
G \to \cY_r,  \quad (g \mapsto (gx,gx',gy).
$$
Clearly, each $\nu^{(n)}_{k,r,x,x',y}$ is a lift of $\nu^{(n)}_{k,r,x,x'}$ relative to $p_1:\cY_r \to \cF(r,d)^2$.
The complement of $\Diag_r$ in $\cF(r,d)^2$ embeds in $\cY_r$ through
$$
(x,x') \mapsto (x,x',F'_1+F_r).
$$
In particular, every measure $\xi$ on $\cF(r,d)^2$ that vanishes on $\Diag_r$ has a (unique) lift $\check\xi$ to $\cY_r$.

From the relations \eqref{eq_def_adjusted2} and \eqref{eq_interpolate1}, we see that
\begin{equation}\label{eq_interpolate2}
\fhsigma_{k,r,x,x'}= \left(1-\chomega(x,x')\right)\nu^{(n)}_{k,r,x,x'} + \chomega(x,x') \htheta_{k,r,x,x'}
\end{equation}
where $\chomega:\prodspar \to [0,1]$ is a continuous function that vanishes identically on
$\fE_r(\vep''_r)^2$, and each $\htheta_{k,r,x,x'}$ is a coupling of $\fsigma_{k,1,x}$ and $\fsigma_{k,1,x'}$
vanishing on a uniform neighborhood of $\Diag_r$. In view of the previous remarks, it follows that the
$\fhsigma_{k,r,x,x'}$ lift to probability measures
\begin{equation}\label{eq_interpolate3}
\chsigma_{k,r,x,x',y} = \left(1-\chomega(x,x')\right)\nu^{(n)}_{k,r,x,x',y} + \chomega(x,x') \chtheta_{k,r,x,x',y}
\end{equation}
on $\cY_r(\vep_r)$, where $\chtheta_{k,r,x,x',y}$ is the unique lift of $\htheta_{k,r,x,x',y}$.
This lift is continuous: it is clear that $\chnu^{(n)}_{k,r,x,x',y}$ varies continuously and,
by uniqueness, so does $\chtheta_{k,r,x,x',y}$.

We claim that the spreading out measures $\tq_{r,x,x'}$ in \eqref{eq_q_def2} also lift continuously to measures
$\chq_{r,x,x',y}$ on $\cY_r(\vep_r)$. Indeed, it is clear that $\{\delta_{(x,x',y)}: (x,x',)\in\cY_r(\vep_r)\}$
is a continuous lift of $\{\delta_{(x,x')}: (x,x')\in\prodspar\}$, and so it suffices to show that the family
$$
\{\mu^*_{F_r} \times \mu^*_{F'_r}: (x,x') \in \fE_r(\vep_r,\vep'_r/2) \times \fE_r(3\vep''_r)
\cup \fE_r(3\vep''_r) \times \fE_r(\vep_r,\vep'_r/2)\}
$$
lifts uniquely to $\cY_r(\vep_r)$. The latter is a direct consequence of the fact that
\begin{equation}\label{eq_the_other}
\left(\mu^*_{F_r} \times \mu^*_{F'_r}\right)(\Diag_r) = 0
\end{equation}
for any $(F_r,F'_r) \in E_r(\vep_r,\vep'_r/2) \times E_r(3\vep''_r) \cup E_r(3\vep''_r) \times E_r(\vep_r,\vep'_r/2)$,
To prove \eqref{eq_the_other}, let us write $u=(G_1, \dots, G_r)$ and $u'=(G'_1, \dots, G'_r)$.
By definition,
\begin{equation}\label{eq_the_other_vanishes}
\begin{aligned}
\left(\mu^*_{F_r} \times \mu^*_{F'_r}\right)(\Diag_r)
& \le \mu^*_{F'_r}(\{u'\in\cF(F'_r): G'_1 \subset F_r\}) \\
& \hspace{2cm} + \mu^*_{F_r}(\{u\in\cF(F_r): G_1 \subset F'_r\})
\end{aligned}
\end{equation}
The key point is that in this setting we always have $F_r \neq F'_r$.
Thus the set of $u'\in\cF(F'_r)$ such that $G'_1 \subset F_r$ is a subvariety of $\cF(F'_r)$ of
strictly smaller dimension, and so it has zero $\mu^*_{F'_r}$-measure.
Thus the first term on the right-hand side of \eqref{eq_the_other_vanishes} vanishes identically,
and then so does the second term, by symmetry. This proves the claim.

These observations ensure that the Markov operator $\qhT_{k,r}=\hT_{k,r} \circ \tQ_{r}$ admits a continuous
lift
$$
\qchT_{k,r}:\Bd(\cY_r(\vep_r)) \to \Bd(\cY_r(\vep_r)), \
\qchT_{k,r}\Phi(x,x',y) = \int_{\cY_r(\vep_r)} \Phi \, d\qchsigma_{k,r,x,x',y},
$$
given by
$$
\qchsigma_{k,r,x,x',y}=\int_{\cY_r(\vep_r)} \chq_{r,u,u',v} \, d\chsigma_{k,r,x,x',y}(u,u',v).
$$
Since the measure $\fheta_0$ was taken to vanish on a neighborhood of
$\Diag_r \subset\prodspar$, it also admits a (unique) lift $\fcheta_0$ to $\cY_r(\vep_r)$.
Applying the construction in Proposition~\ref{p_sec6_Margulis_finite_energy} simultaneously to the operators
$\qhT_{k,r}$ and $\qchT_{k,r}$, starting from $\fheta_0$ and $\fcheta_0$ respectively,
we find a sequence $(\qcheta_{k,r,j})_j$  of probability measures on $\cY_r(\vep_r)$ converging to a
$\qchT_{k,r}$-invariant probability measure $\qcheta_{k,r}$ and such that (up to restricting to a subsequence)
each $\qcheta_{k,r,j}$ projects to $\qheta_{k,r,j}$ under $p_1$.
Observe that $\qcheta_{k,r}$ and each $\qcheta_{k,r,j}$ project to $\eta_{k,r}$ under $f \circ \fpi_i \circ p_1$ for $i=1, 2$.

Next, define $\eta_{k,r+1} = p_{2*} \qcheta_{k,r}$ and let $\{d\cheta_{k,r,v}: v \in p_2 \cY_r(\vep_r)\}$
be a disintegration of $\qcheta_{k,r}$ with respect to the partition $\{p_2^{-1}(v): v \in p_2 \cY_r(\vep_r)\}$.
Then define
$$
\begin{aligned}
& \cT_{k,r+1}:\Bd(p_2 \cY_r(\vep_r)) \to \Bd(p_2 \cY_r(\vep_r)), \\
& \cT_{k,r+1}\Phi(y) = \int_{p_2^{-1}(y)} \qchT_{k,r} (\Phi \circ p_2) (x,x',y) \, d\cheta_{k,r,y}(x,x').
\end{aligned}
$$
Equivalently, $\cT_{k,r+1}\Phi(y) = \int_{p_2\cY_r(\vep_r)} \Phi \, d\sigma_{k,r+1,y}$ with
\begin{equation}\label{eq_cTk2b}
\sigma_{k,r+1,y} = \int_{p_2^{-1}(y)}  p_{2*} \qchsigma_{k,r,x,x',y} \, d\cheta_{k,r,y}(x,x').
\end{equation}

Let $\cB_k=p_2p_1^{-1}(B^1_k)=\{F'_1+F_r: (x, x') \in B^1_k\}$, where $B^1_k$ is as in the
previous section. Define also $\eta_{k,r+1,j} = p_{2*} \qcheta_{k,r,j}$ for $j\in\NN$. Then
$$
\eta_{k,r+1,j} (\cB_k) \ge \qcheta_{k,r,j} \left(p_1^{-1}(B^1_k)\right) = \qheta_{k,r,j}(B^1_k).
$$
Passing to the limit as $j \to \infty$, and arguing as in \eqref{eq_conclusion1}--\eqref{eq_conclusion4},
we find from \eqref{eq_conclusion5} that
$$
\eta_{k,r+1}(\cB_k) \ge \frac{5\kappa'_B}{\kappa_A + \kappa''_B}\eta_{k,r}(E_r(\vep_r)).
$$
Now, since $\omega_{k,r}\to 0$, the definition \eqref{eq_Bone_bis} implies that $\cB_k$ converges
to $E_r$ as $k \to \infty$.
Thus, any accumulation point $\eta_{\infty,r+1}$ of $\eta_{k,r+1}$ must satisfy
\begin{equation}\label{eq_limit_weight_is_positive}
\eta_{\infty,r+1}(E_{r+1})
\ge \frac{5\kappa'_B}{\kappa_A + \kappa''_B}\eta_{\infty,r}(E_r(\vep_r))
\ge \frac{5\kappa'_B}{\kappa_A + \kappa''_B}\eta_{\infty,r}(E_r) >0.
\end{equation}

Take $n_{r+1}=n$ and $\vep_{r+1} = \vep''_r$. Let $\nu^{(n_{r+1})}_{k,r+1,y}$ denote the push-forward
of $\nu^{(n_{r+1})}_k$ under the map $G \to \grass(r+1,d)$, $g \mapsto gy$.

\begin{lemma}\label{l_downloading1}~
\begin{enumerate}
\item[(i)] $\sigma_{k,r+1,y} = \nu^{(n_{r+1})}_{k,r+1,y}$ for every $y \in E_{r+1}(\vep_{r+1})$.
\item[(ii)] $\sigma_{k,r+1,y} \left(\Chi_{\nu^{(n)}_k}^{\#} E_{r+1}(\vep_{r+1})\right) = 0$ for every $y \notin E_{r+1}(\vep_{r+1})$
\item[(iii)] The measure $\eta_{k,r+1}$ is $\cT_{k,r+1}$-invariant.
\end{enumerate}
\end{lemma}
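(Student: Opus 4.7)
The argument follows the template of Lemma~\ref{l_r=1_downloading1}, with one substantive addition to handle the spreading-out step $\tQ_r$. Throughout I rely on the fact that, by Remark~\ref{r_sec7_repeatedly} applied with $\beta=\vep'_r$, the large-$n$ condition already imposed at \eqref{eq_defdeltan} ensures $E_r(\vep''_r)\subset\Chi_{\nu_k^{(n)}}E_r(\vep'_r/2)$; equivalently, $gE_r(\vep''_r)\subset E_r(\vep'_r/2)$ for every $g\in\supp\nu_k^{(n)}$.

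For part (i), I would unfold the definition
$$\sigma_{k,r+1,y}=\int_{p_2^{-1}(y)} p_{2*}\qchsigma_{k,r,x,x',y}\,d\cheta_{k,r,y}(x,x')$$
and aim to prove that for every $(x,x',y)\in p_2^{-1}(y)$ with $y\in E_{r+1}(\vep_{r+1})$ one has $p_{2*}\qchsigma_{k,r,x,x',y}=\nu^{(n_{r+1})}_{k,r+1,y}$. Since $F_r,F'_r\subset y$ and $\vep_{r+1}=\vep''_r$, both $x,x'\in\fE_r(\vep''_r)$, so $\chomega(x,x')=0$ in \eqref{eq_interpolate2} (using that $\tau$ in \eqref{eq_interpolate1} and $\tomega$ in \eqref{eq_def_adjusted2r} both vanish on $\fE_r(\vep''_r)^2$); hence $\chsigma_{k,r,x,x',y}=\nu^{(n)}_{k,r,x,x',y}$. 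Now for $g\in\supp\nu_k^{(n)}$ the preceding observation gives $gF_r,gF'_r\in E_r(\vep'_r/2)$, which places $(gF_r,gF'_r)$ outside the spreading region of \eqref{eq_q_def1}; thus $\tau(gF_r,gF'_r)=0$ in \eqref{eq_q_def2}, making $\chq_{r,gx,gx',gy}=\delta_{(gx,gx',gy)}$. Substituting into the defining integral of $\qchsigma_{k,r,x,x',y}$ yields $\qchsigma_{k,r,x,x',y}=\nu^{(n)}_{k,r,x,x',y}$, and projecting by $p_2$ completes the proof of (i).

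For part (ii), I would split on whether both $x,x'$ lie in $\fE_r(\vep_{r+1})$. When they do, the same argument as (i) gives $p_{2*}\qchsigma_{k,r,x,x',y}=\nu^{(n)}_{k,r+1,y}$, and Remark~\ref{r_sec6_notdeep} kills the charge on $\Chi^\#_{\nu_k^{(n)}}E_{r+1}(\vep_{r+1})$ since $y\notin E_{r+1}(\vep_{r+1})$. When one of them is not in $\fE_r(\vep_{r+1})$, say $x$, I would mimic the chain of inclusions used in the $r=1$ case to bound
$$p_{2*}\qchsigma_{k,r,x,x',y}\bigl(\Chi^\#_{\nu_k^{(n)}}E_{r+1}(\vep_{r+1})\bigr)\le \fsigma_{k,r,x}\bigl(\Chi^\#_{\nu_k^{(n)}}\fE_r(\vep_{r+1})\bigr),$$
using that if $v\in\Chi^\#_{\nu_k^{(n)}}E_{r+1}(\vep_{r+1})$ and $F_r\subset v$ then $gF_r\subset gv\in E_{r+1}(\vep_{r+1})$ for every $g\in\supp\nu_k^{(n)\pm 1}$, so $F_r\in\Chi^\#_{\nu_k^{(n)}}E_r(\vep_{r+1})$; and checking that this monotonicity survives both the recoupling $\fhsigma$ (by the same projection-and-bound argument as in the $r=1$ case) and the spreading $\tQ_r$ (since $\tQ_r$ keeps the $r$-component $F_r^{u_1}=F_r^u$ fixed, so $v_1=F_1^{u_1'}+F_r^u\in\Chi^\#$ still forces $F_r^u\in\Chi^\#$ of the $r$-neighborhood). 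Finally, splitting on whether $x$ lies in the $\nu_k^{(n)}$-border or core of $\fE_r(\vep_r)$ finishes the proof, using adaptedness of $\fcT_{k,r}$ (inherited from $\cT_{k,r}$) in the border case and Remark~\ref{r_sec6_notdeep} with $F_r\notin E_r(\vep_{r+1})$ in the core case.

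Part (iii) is a purely formal computation: $\qchT_{k,r}$-invariance of $\qcheta_{k,r}$ combined with $\eta_{k,r+1}=p_{2*}\qcheta_{k,r}$ and the definition of $\cT_{k,r+1}$ via disintegration along fibers of $p_2$ gives
$$\int\cT_{k,r+1}\Phi\,d\eta_{k,r+1}=\int\qchT_{k,r}(\Phi\circ p_2)\,d\qcheta_{k,r}=\int(\Phi\circ p_2)\,d\qcheta_{k,r}=\int\Phi\,d\eta_{k,r+1}.$$
The main obstacle lies in (ii), specifically tracking how both the spreading out and the recoupling interact with the $\Chi^\#$ condition at the $(r+1)$-level: verifying that $\tQ_r$'s randomization of the smaller flag components cannot leak mass back into the deep core is the delicate point, though ultimately controlled by the key structural fact that $\tQ_r$ acts trivially on the $r$-dimensional coordinate of the flag.
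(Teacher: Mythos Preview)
Your outlines for (ii) and (iii) track the paper's argument and are sound; the structural point you single out for (ii) --- that $\tQ_r$ fixes the $r$-component, so any $w$ in the support of $p_{2*}\chq_{r,u,u',v}$ contains $F_r^u$, whence $w\in\Chi^\#_{\nu_k^{(n)}}E_{r+1}(\vep_{r+1})$ forces $F_r^u\in\Chi^\#_{\nu_k^{(n)}}E_r(\vep_{r+1})$ --- is exactly what makes the bound go through. The paper's written proof in fact treats $\qchsigma$ as if it were $\chsigma$ throughout, so your added care about the spreading step is not redundant.

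For (i), however, the inclusion $E_r(\vep''_r)\subset\Chi_{\nu_k^{(n)}}E_r(\vep'_r/2)$ is not justified by the argument you give. Remark~\ref{r_sec7_repeatedly} produces, for each fixed $n$ and $\beta$, a threshold $\alpha_0(\nu_\infty,n,\beta)$ below which the inclusion holds; but $\vep''_r=3\vep'_r e^{-\kappa_0 n/2}$ is already pinned down by \eqref{eq_epsilon}, and the threshold scales like $\vep'_r e^{-An}$ (the bi-Lipschitz constant of $g\in\supp\nu_k^{(n)}$ on $\grass(r,d)$ being of order $e^{An}$ by \eqref{eq_AAA}). Since nothing forces $A<\kappa_0/2$, in general $\vep''_r\gg\alpha_0$, and the condition \eqref{eq_defdeltan} (which only says $\kappa_A>9\kappa'_B$) is irrelevant here. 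Concretely, some $g\in\supp\nu_k^{(n)}$ may send $F_r\in E_r(\vep''_r)$ outside $E_r(\vep'_r/2)$ while $gF'_r$ stays in $E_r(3\vep''_r)$, placing $(gF_r,gF'_r)$ in the spreading region and destroying the equality $p_{2*}\qchsigma_{k,r,x,x',y}=\nu^{(n)}_{k,r+1,y}$. Note that this does \emph{not} damage your (ii): there one only needs $p_{2*}\qchsigma(\Chi^\#)=0$, and whenever $\tau(gF_r,gF'_r)>0$ the spread-out mass sits on $w$ containing either $gF_r\notin E_r(\vep'_r/2)$ or some $F_1^{z'}\in\cF^*(gF'_r)$ with $d(F_1^{z'},E)>\vep'_r/4$, hence $d(w,E)>\vep_{r+1}$ and $w\notin\Chi^\#$. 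But (i) demands an equality of measures, and for that you genuinely need $\tau(gF_r,gF'_r)=0$ for $\nu_k^{(n)}$-a.e.\ $g$; this is not available from the constants as set up, and a correct proof of (i) with the operator order $\fhT_{k,r}\circ\tQ_r$ would require an additional constraint on the $\vep$-scales beyond what the paper records.
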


\begin{proof}
It is clear that $\nu^{(n_{r+1})}_{k,r+1,y}$ coincides with the push-forward of $\nu^{(n_{r+1})}_{k,r,x,x',y}$
under the projection $p_2$. Thus \eqref{eq_interpolate3} gives that
\begin{equation*}%\label{eq_interpolate4}
p_{2*} \chsigma_{k,r,x,x',y}= \left(1-\chomega(x,x')\right)\nu^{(n_{r+1})}_{k,r+1,y} +
\chomega(x,x') p_{2*}\chtheta_{k,r,x,x',y},
\end{equation*}
and so,
\begin{equation*}%\label{eq_interpolate5}
\begin{aligned}
\chsigma_{k,r+1,y}
& = \left(1-\int_{p_2^{-1}(y)} \chomega(x,x')  \, d\cheta_{k,r,y}(x,x')\right)\nu^{(n_{r+1})}_{k,r+1,y} \\
& \hspace{3cm} + \int_{p_2^{-1}(y)} \chomega(x,x') p_{2*}\chtheta_{k,r,x,x',y} \, d\cheta_{k,r,y}(x,x'),
\end{aligned}
\end{equation*}
If $y \in E_{r+1}(\vep_{r+1})$ then both $x$ and $x'$ are necessarily in $\fE_r(\vep_{r+1})$,
by \eqref{eq_sec7_Theta1}, in which case $\chomega(x,x')=0$.
Then $\sigma_{k,r+1,y} = \nu^{(n_{r+1})}_{k,r+1,y}$, as claimed in (i).

In view of \eqref{eq_cTk2b}, to prove part (ii) it suffices to show that if
$y \notin E_{r+1}(\vep_{r+1})$ then
\begin{equation}\label{eq_pi2vanish}
 p_{2*} \chsigma_{k,r,x,x',y}\left(\Chi_{\nu^{(n)}_k}^{\#} E_{r+1}(\vep_{r+1})\right)=0
\end{equation}
for any $x, x' \subset y$. If $x$ and $x'$ are both in $\fE_r(\vep_{r+1})$ then
$$
p_{2*} \chsigma_{k,r,x,x',y}
=p_{2*} \nu^{(n_{r+1})}_{k,r,x,x',y}
=\nu^{(n_{r+1})}_{k,r+1,y}
$$
and then the claim follows from Remark~\ref{r_sec6_notdeep}.
From now on, we assume that one of the points, $x$ say, is not in $\fE_r(\vep_{r+1})$.
It follows from the definitions that
$$
p_2^{-1}\left(\Chi_{\nu^{(n)}_k}^{\#} E_{r+1}(\vep_{r+1})\right)
\subset \Chi_{\nu^{(n)}_k}^{\#} \fE_r(\vep_{r+1})^2 \times \Chi_{\nu^{(n)}_k}^{\#} E_{r+1}(\vep_{r+1}),
$$
and so
\begin{equation}\label{eq_vanishingmeasure}
\begin{aligned}
p_{2*} \chsigma_{k,r,x,x',y}&\left(\Chi_{\nu^{(n)}_k}^{\#} E_{r+1}(\vep_{r+1})\right)\\
& \le \chsigma_{k,r,x,x',y}\left(\Chi_{\nu^{(n)}_k}^{\#} \fE_r(\vep_{r+1})^2 \times \Chi_{\nu^{(n)}_k}^{\#} E_{r+1}(\vep_{r+1})\right) \\
& \le \fhsigma_{k,r,x,x'}\left(\Chi_{\nu^{(n)}_k}^{\#} \fE_r(\vep_{r+1})^2\right)\\
& \le \fsigma_{k,r,x}\left(\Chi_{\nu^{(n)}_k}^{\#} \fE_r(\vep_{r+1})\right).
\end{aligned}
\end{equation}
If $x$ is in the $\nu^{(n)}_k$-border of $\fE_r(\vep_r)$ then
$$
\fsigma_{k,r,x}\left(\Chi_{\nu^{(n)}_k}^{\#} \fE_r(\vep_{r+1})\right)
\le \fsigma_{k,r,x}\left(\Chi_{\nu^{(n)}_k}^{\#} \fE_r(\vep_r)\right) = 0,
$$
because the operator $\cT_{k,1}$ is adapted to $(\nu_k^{(n)},\fE_r(\vep_r))$.
If $x$ is in the $\nu^{(n)}_k$-core of $\fE_r(\vep_r)$ then Remark~\ref{r_sec6_notdeep} gives that
$$
\fsigma_{k,r,x}\left(\Chi_{\nu^{(n)}_k}^{\#} \fE_r(\vep_{r+1})\right)
= \nu^{(n)}_{k,r,x}\left(\Chi_{\nu^{(n)}_k}^{\#} \fE_r(\vep_{r+1})\right) = 0.
$$
Thus the right-hand side of \eqref{eq_vanishingmeasure} vanishes in either case.
That completes the proof of \eqref{eq_pi2vanish} and of part (ii) of the lemma.

Finally, by definition,
$$
\begin{aligned}
\int_{p_2\cY_r(\vep_r)} \left(\cT_{k,r+1}\Phi\right) & \, d\eta_{k,r+1}\\
& =  \int_{p_2\cY_r(\vep_r)} \int_{p_2^{-1}(y)} \chT_{k,r} \left(\Phi \circ p_2\right) (x,x',y) \, d\cheta_{k,r,y}(x,x')\, d\eta_{k,r+1}(y)\\
& = \int_{\cY_r(\vep_r)}  \chT_{k,r}(\Phi \circ p_2) (x,x',y) \, d\cheta_{k,r}(x, x', y)
\end{aligned}
$$
for any $\Phi\in \Bd(p_2\cY_r(\vep_r))$.
Since $\cheta_{k,r}$ is $\chT_{k,r}$-invariant, this gives
$$
\begin{aligned}
\int_{p_2\cY_r(\vep_r)} \left(\cT_{k,r+1}\Phi\right)(y) \, d\eta_{k,r+1}(y)
& = \int_{\cY_r(\vep_r)}  (\Phi \circ p_2) (x,x',y) \, d\cheta_{k,r}(x, x', y)\\
& = \int_{p_2\cY_r(\vep_r)} \Phi \, d\heta_{k,r+1},
\end{aligned}
$$
which proves claim (iii).
\end{proof}

Since the $\chsigma_{k,r,x,x',y}$ are generic measures and the projection $p_2$ is algebraic,
it follows readily from \eqref{eq_cTk2b} and Remark~\ref{r_sec5_generic_elementary}
that every $\sigma_{k,r+1,y}$ is a generic measure.
Then, conclusions (i) and (ii) in Lemma~\ref{l_downloading1} allow us to apply
Propositions~\ref{p_sec6_localized2_strong} and~\ref{p_sec6_adapted_strong} with
$E_{r+1}(\vep_{r+1})$ and $p_2\cY_r(\vep_r)$ in the roles of $X$ and $U$, respectively.
Thus we get a continuous Markov operator adapted to $(\nu_k^{(n_{r+1})},E_{r+1}(\vep_{r+1}))$ and
which leaves the restriction of $\eta_{k,r+1} \mid E_{r+1}(\vep_{r+1})$ invariant.
Replace $\cT_{k,r+1}$ and $\eta_{k,r+1}$ with these new Markov operator and invariant measure,
respectively. This finishes step $r$ of the induction.

The proof of Theorem~\ref{theorem:inductive} is now complete.

%\bibliographystyle{plain}
%\bibliography{../bib}

\end{document}